\newtheorem{thm}{Theorem}[section]
\newtheorem{prop}[thm]{Proposition}
\newtheorem{lem}[thm]{Lemma}
\newtheorem{cor}[thm]{Corollary}
\newtheorem{CT}[thm]{Chiarellotto-Tsuzuki conjecture}
\newtheorem{propdfn}[thm]{Proposition-Definition}
\newtheorem{lemdfn}[thm]{Lemma-Definition}
\theoremstyle{definition}
\newtheorem{dfn}[thm]{Definition}
\newtheorem*{convention}{Convention}
\newtheorem{ass}[thm]{Assumption}
\newtheorem{rem}[thm]{Remark}
\newtheorem{ex}[thm]{Example}
\newtheorem*{notation}{Notation}
\newtheorem{construction}[thm]{Construction}
\newtheorem{theorem}{Theorem}
\newcommand{\Fil}{\mathrm{Fil}}
\newcommand{\Sol}{\mathrm{Sol}}
\newcommand{\bd}{\mathrm{bd}}
\newcommand{\Int}{\mathrm{int}}
\newcommand{\NP}{\mathrm{NP}}
\newcommand{\SP}{\mathrm{sp}}
\newcommand{\Ext}{\mathrm{Ext}}
\newcommand{\ev}{\mathrm{ev}}
\newcommand{\Hom}{\mathrm{Hom}}
\newcommand{\id}{\mathrm{id}}
\newcommand{\GL}{\mathrm{GL}}
\newcommand{\rank}{\mathrm{rank}}
\newcommand{\diag}{\mathrm{Diag}}
\newcommand{\M}{\mathrm{M}}
\newcommand{\inc}{\mathrm{inc}}
\newcommand{\pr}{\mathrm{pr}}
\numberwithin{equation}{thm}
\begin{document}
\title{Logarithmic growth filtrations for $(\varphi,\nabla)$-modules over the bounded Robba ring}
\author{Shun Ohkubo
\footnote{
Graduate School of Mathematics, Nagoya University, Furocho, Chikusaku, Nagoya 464-8602, Japan. E-mail address: shun.ohkubo@gmail.com 2010 Mathematics Subject Classification. Primary 12H25; secondary 14G22. Keywords: $p$-adic differential equations, logarithmic growth, Picard-Fuchs equations.}
}
\maketitle

\begin{abstract}
In this paper, we study the logarithmic growth (log-growth) filtration, a mysterious invariant found by B. Dwork, for $(\varphi,\nabla)$-modules over the bounded Robba ring. The main result is a proof of a conjecture proposed by B. Chiarellotto and N. Tsuzuki on a comparison between the log-growth filtration and Frobenius slope filtration. One of the ingredients of the proof is a new criterion for {\it pure of bounded quotient}, which is a notion introduced by Chiarellotto and Tsuzuki to formulate their conjecture. We also give several applications to log-growth Newton polygons, including a conjecture of Dwork on the semicontinuity, and an analogue of a theorem due to V. Drinfeld and K. Kedlaya on Frobenius Newton polygons for indecomposable convergent $F$-isocrystals.
\end{abstract}

\tableofcontents
\section*{Introduction}

Picard-Fuchs modules arise from proper, flat, generically smooth morphisms $f:X\to\mathbb{P}^1_{\mathbb{C}}$ (see \cite[Definition 22.1.1]{pde}). Even the case that $f$ is given by a family of Calabi-Yau varieties is important in a wide area such as mirror symmetry (for example, computations of Yukawa couplings in \cite{Mor}), arithmetic geometry (for example, the proof of Sato-Tate conjecture in \cite{HST}).

When $\mathbb{C}$ is replaced by a field $k$ of characteristic $p>0$, under some assumptions, there exists an analogous construction of Picard-Fuchs modules using the rigid cohomology (see \cite[Definition 22.2.1]{pde}). The resulting Picard-Fuchs modules are of characteristic $0$. One feature that distinguishes the case of $k$ from the case of $\mathbb{C}$ is the existence of Frobenius structures, which are some kind of self-similarity. The aim of this paper is to study the category of $(\varphi,\nabla)$-modules over the {\it bounded Robba ring} $\mathcal{R}^{\bd}$ (sometimes denoted by $\mathcal{E}^{\dagger}$ or $\mathbb{B}^{\dagger}$), which includes Picard-Fuchs modules after suitable localizations, by measuring the {\it logarithmic growth} (log-growth) of solutions: it is a mysterious invariant discovered by B. Dwork about half a century ago (\cite{Dw}), and is brought an attention by B. Chiarellotto and N. Tsuzuki in \cite{CT,CT2}. The main result of this paper is a proof of the fundamental conjecture in Chiarellotto-Tsuzuki theory (Theorem \ref{thm:intro1}).

We introduce notation and terminology to describe our result. Let $K$ be a complete discrete valuation fields of characteristic $(0,p)$, $\mathcal{O}_K$ the integer ring of $K$, $\mathfrak{m}_K$ the maximal ideal of $\mathcal{O}_K$, and $k$ the residue field of $\mathcal{O}_K$. Let $|\cdot|$ denote the norm on $K$ normalized by $|p|=p^{-1}$. For a non-negative real number $\lambda$, a power series $\sum_{i\in\mathbb{Z}}a_it^i\in K[\![t]\!],a_i\in K$ is of {\it log-growth} $\lambda$ if the $a_i$'s satisfy the growth condition $|a_i|=O(i^{\lambda})$ as $i\to+\infty$. The {\it $\lambda$-th log-growth filtration} $K[\![t]\!]_{\lambda}$ is the $K$-vector space consisting of power series over $K$ of log-growth $\lambda$. The log-growth filtration is an increasing filtration, and $K[\![t]\!]_0$ coincides with the ring $\mathcal{O}_K[\![t]\!]\otimes_{\mathcal{O}_K}K$, which can be regarded as the set of bounded analytic functions on the open unit disc $0\le |t|<1$. Let $\varphi:K[\![t]\!]_0\to K[\![t]\!]_0$ be a {\it $q$-power Frobenius lift} with $q$ a positive power of $p$, that is, a ring endomorphism of the form $\varphi(\sum_ia_it^i)=\sum_i\varphi_K(a_i)\varphi(t)^i$, where $\varphi_K:K\to K$ is an isometric ring endomorphism inducing the $q$-power map on $k$, and $\varphi(t)\in t^q+\mathfrak{m}_K\mathcal{O}_K[\![t]\!]$.

Given a ring $R$ equipped with a ring endomorphism $\varphi$, we define a {\it $\varphi$-module} over $R$ as a finite free module equipped with a $R$-linear automorphism $\varphi_M^*:M\otimes_{R,\varphi}R\to M$. Then $\varphi_M^*$ can be viewed as a $\varphi$-semi-linear map $\varphi_M:M\to M$.

Let $d:K[\![t]\!]\to K[\![t]\!]dt;f\mapsto df/dt\cdot dt$ denote the canonical derivation. We define a $(\varphi,\nabla)$-module over $K[\![t]\!]_0$ as a $\varphi$-module over $K[\![t]\!]_0$ plus a connection $\nabla_M:M\to Mdt$, that is, an additive map satisfying the Leibniz rule $\nabla_M(cv)=c\nabla_M(v)+vdc$ for $c\in K[\![t]\!]_0$ and $v\in M$, that makes the following diagram commute:
\[\xymatrix{
M\ar[r]^{\nabla_M}\ar[d]^{\varphi_M}&Mdt\ar[d]^{\varphi_Md\varphi(t)}\\
M\ar[r]^{\nabla_M}&Mdt,
}\]
where the right vertical morphism sends $mdt$ to $\varphi_M(m)d(\varphi(t))$. A {\it solution} of $M$ is a $K[\![t]\!]_0$-linear map $f:M\to K[\![t]\!]$ making the following diagram commute:
\[\xymatrix{
M\ar[d]^{f}\ar[r]^{\nabla_M}&Mdt\ar[d]^{fdt}\\
K[\![t]\!]\ar[r]^{d}&K[\![t]\!]dt.
}\]
Let $\Sol(M)$ denote the $K$-vector space of solutions of $M$. Then $\Sol(M)$ has a natural structure of $\varphi$-module over $K$, in particular, is endowed with Frobenius slope filtration $S_{\bullet}(\Sol(M))$. We say that $f$ is {\it of log-growth} $\lambda$ for $\lambda\ge 0$ if $f(M)\subset K[\![t]\!]_{\lambda}$. Let $\Sol_{\lambda}(M)$ denote the $K$-vector space of solutions of log-growth $\lambda$, and put $\Sol_{\lambda}(M)=0$ for $\lambda<0$. Thus we obtain the growth filtration $\Sol_{\bullet}(M)$.

The idea of Chiarellotto-Tsuzuki conjecture is to compare the two filtrations $\Sol_{\bullet}(M)$ and $S_{\bullet}(\Sol(M))$ up to a specified shifting. To formulate their conjecture precisely, they introduce the notion of {\it pure of bounded quotient} (PBQ for short) as follows. Let $\mathcal{E}$ denote the {\it Amice ring}, which is defined as the fraction fields of the $p$-adic completion of $\mathcal{O}_K[\![t]\!][1/t]$. We put $M_{\mathcal{E}}=M\otimes_{K[\![t]\!]_0}\mathcal{E}$, which is the ``generic fiber'' of $M$. We define the ring homomorphism $\tau:\mathcal{E}\to\mathcal{E}[\![X-t]\!]_0;f\mapsto\sum_{n=0}^{\infty}(d^nf/dt^n)(X-t)^n/n!$ with the new variable $X-t$ (``Taylor expansion'' at Dwork generic point $t$). Then $\tau^*M_{\mathcal{E}}$ is a $(\varphi,\nabla)$-module over $\mathcal{E}[\![X-t]\!]_0$, and by repeating the construction in the previous paragraph over $\mathcal{E}[\![X-t]\!]_0$, we obtain the $\varphi$-module $\Sol(\tau^*M_{\mathcal{E}})$ together with the filtration $\Sol_{\bullet}(\tau^*M_{\mathcal{E}})$. By a theorem of Robba, there exists a unique $(\varphi,\nabla)$-submodule $M_{\mathcal{E}}^{\lambda}$ of $M_{\mathcal{E}}$ satisfying $\Sol(\tau^*(M_{\mathcal{E}}/M_{\mathcal{E}}^{\lambda}))\cong\Sol_{\lambda}(\tau^*M_{\mathcal{E}})$. We say that $M$ is {\it pure of bounded quotient} if the {\it bounded quotient} $M_{\mathcal{E}}/M_{\mathcal{E}}^0$ is pure as a $\varphi$-module over $\mathcal{E}$.

Then Chiarellotto-Tsuzuki conjecture is stated as follows, which will be proved as a generalized form to the bounded Robba ring $\mathcal{R}^{\bd}$ in Theorem \ref{conj:bd} (ii).

\begin{theorem}[{Chiarellotto-Tsuzuki conjecture \ref{conj:CT} (ii)}]\label{thm:intro1}
Let $M$ be a $(\varphi,\nabla)$-module over $K[\![t]\!]_0$, and $\lambda_{\max}$ the maximum Frobenius slope of $M_{\mathcal{E}}$. If $M$ is pure of bounded quotient, then
\[
\Sol_{\lambda}(M)=S_{\lambda-\lambda_{\max}}(\Sol(M))
\]
for an arbitrary real number $\lambda$.
\end{theorem}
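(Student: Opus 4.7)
The strategy is a two-stage argument: first establish the analogous equality at the Dwork generic point (over $\mathcal{E}[\![X-t]\!]_0$), then descend along $K[\![t]\!]_0\hookrightarrow\mathcal{E}$, using the PBQ hypothesis to control how the Frobenius slope filtration interacts with this descent.

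For the generic fiber, the connection $\tau^*M_{\mathcal{E}}$ over $\mathcal{E}[\![X-t]\!]_0$ is trivializable along the Taylor direction (by construction of $\tau$), and its solution space $\Sol(\tau^*M_{\mathcal{E}})$ is free of rank $\rank(M)$ as a $\varphi$-module over $\mathcal{E}$. A direct computation in the style of Dwork, using the explicit expression of solutions as formal power series in $X-t$, relates both the Frobenius slopes and the log-growth of a basis of solutions to the Frobenius slope multiplicities of $M_{\mathcal{E}}$ itself. This yields the target equality at the generic fiber,
\[
\Sol_{\lambda}(\tau^*M_{\mathcal{E}})=S_{\lambda-\lambda_{\max}}(\Sol(\tau^*M_{\mathcal{E}})),
\]
as the base case; no PBQ hypothesis is needed here since over $\mathcal{E}$ the slope filtration splits unconditionally.

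Next, the Taylor expansion $K[\![t]\!]\hookrightarrow\mathcal{E}[\![X-t]\!]$ induces a $\varphi$-equivariant injection $\iota:\Sol(M)\hookrightarrow\Sol(\tau^*M_{\mathcal{E}})$ that is strictly compatible with the log-growth filtration, since a power series in $K[\![t]\!]$ has log-growth $\le\lambda$ if and only if its Taylor expansion does. Combining with the generic case gives
\[
\Sol_{\lambda}(M)=\Sol(M)\cap\Sol_{\lambda}(\tau^*M_{\mathcal{E}})=\Sol(M)\cap S_{\lambda-\lambda_{\max}}(\Sol(\tau^*M_{\mathcal{E}})).
\]
Since $\iota$ is $\varphi$-equivariant, $S_{\lambda-\lambda_{\max}}(\Sol(M))\subseteq\Sol(M)\cap S_{\lambda-\lambda_{\max}}(\Sol(\tau^*M_{\mathcal{E}}))$ is automatic; the remaining task is the reverse containment, i.e.\ the \emph{strict compatibility} of the Frobenius slope filtration under $\iota$.

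This last step is where the PBQ hypothesis is essential, and it is the main obstacle. Strict compatibility can fail in general: slopes appearing in $\Sol(\tau^*M_{\mathcal{E}})$ need not all be realized by solutions defined over $K[\![t]\!]_0$, so the intersection can be strictly larger than $S_{\lambda-\lambda_{\max}}(\Sol(M))$. Using the new criterion for pure of bounded quotient established earlier in the paper, one extracts a multiplicity condition that matches the slope data of $\Sol(M)$ with the appropriate piece of $\Sol(\tau^*M_{\mathcal{E}})$. A natural implementation is a dévissage along the Frobenius slope filtration of $M_{\mathcal{E}}$: the PBQ assumption pins down the top stratum $M_{\mathcal{E}}/M_{\mathcal{E}}^0$, forcing the slope-$0$ multiplicity of $\Sol(M)$ to match that of $\Sol(\tau^*M_{\mathcal{E}})$; one then iterates on $M_{\mathcal{E}}^0$ (which remains well-behaved thanks to the PBQ criterion) to descend through the remaining slopes. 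The delicate point, and where the new criterion does the essential work, is to ensure that each inductive step preserves strict compatibility without leaking slope multiplicities between strata.
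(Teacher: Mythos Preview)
Your proposal has a fundamental error at the very first step. You claim that the generic equality
\[
\Sol_{\lambda}(\tau^*M_{\mathcal{E}})=S_{\lambda-\lambda_{\max}}(\Sol(\tau^*M_{\mathcal{E}}))
\]
holds without the PBQ hypothesis because ``over $\mathcal{E}$ the slope filtration splits unconditionally.'' This is false. The equality you want is precisely the generic Chiarellotto--Tsuzuki conjecture (Theorem~\ref{conj:CTgen}), and it \emph{fails} without PBQ: for instance $M_{\mathcal{E}}=\mathcal{E}\oplus\mathcal{E}(q)$ has $\Sol_0=\Sol$ of dimension~$2$, while $S_{-1}(\Sol)$ has dimension~$1$ (Example~6.1.2). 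A split slope decomposition of the $\varphi$-module $\Sol(\tau^*M_{\mathcal{E}})$ says nothing about how the log-growth filtration sits inside it. So your ``base case'' is not a base case at all; it is the conjecture itself over $\mathcal{E}$, and in the paper the logical flow is the reverse --- the $\mathcal{E}$ case is \emph{deduced from} the $K[\![t]\!]_0$ case (Proposition~\ref{prop:CTgeneric}), not the other way around.

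There is a second gap: the comparison map you invoke does not exist with the properties you need. A solution of $M$ is a horizontal map $M\to K\{t\}$, while a solution of $\tau^*M_{\mathcal{E}}$ is a horizontal map into $\mathcal{E}\{X-t\}$; there is no ``Taylor expansion'' sending one into the other that is strictly compatible with log-growth. Indeed the special and generic log-growth Newton polygons genuinely differ in general (this is the content of Dwork's semicontinuity conjecture, Corollary~\ref{cor:LGFDW}), so strict compatibility of log-growth under such a putative map would be too strong.

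The paper's argument is entirely different. The containment $\Sol_{\lambda}(M)\supset S_{\lambda-\lambda_{\max}}(\Sol(M))$ is already known (Proposition~\ref{prop:CT1}); for the reverse, one takes a Frobenius $d$-eigenvector $f\in\Sol_{\lambda}(M)$ of slope $\mu$ and must show $\mu\le\lambda-\lambda_{\max}$. After a twist one reduces to $\varphi(f)=f$, and the PBQ hypothesis enters via the Slope criterion (Proposition~\ref{prop:key}): passing to $M/\ker f$ preserves $\lambda_{\max}$, so one may assume $f$ injective. The decisive step is then purely Frobenius-theoretic: base-change to the extended bounded Robba ring $\tilde{\mathcal{R}}^{\bd}$, use the reverse filtration to find an eigenvector $\tilde v$ of slope $\lambda_{\max}$ in $M\otimes\tilde{\mathcal{R}}^{\bd}$, and observe that $\tilde f(\tilde v)\in\tilde{\mathcal{R}}_{\log}$ is a Frobenius eigenvector of slope $\lambda_{\max}$, hence exactly of log-growth $\lambda_{\max}$ (Lemma~\ref{lem:calc log-growth}); since it lies in $\Fil_{\lambda}\tilde{\mathcal{R}}_{\log}$, one gets $\lambda_{\max}\le\lambda$. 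There is no generic-to-special descent and no comparison of solution spaces across the two base rings.
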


Precisely speaking, Chiarellotto-Tsuzuki conjecture has a weaker form (Conjecture \ref{conj:CT} (i)) which has been proved by the author (\cite{Ohk}), and has an analogous form over $\mathcal{E}$ which has been proved by Chiarellotto and Tsuzuki (see Theorem \ref{conj:CTgen}). Alternative proofs of these variants of Chiarellotto-Tsuzuki conjecture using Theorem \ref{thm:intro1} will be given in part II.

We should mention that one may study the log-growth filtration without assuming Frobenius structures as Dwork originally does (\cite{Dw}). On this theory, we have a comprehensive lecture note by G. Christol (\cite{Chr}), and there are some recent developments such as \cite{And}, \cite[\S 18]{pde}, and \cite{Ohkc}. In this paper, we concentrate to study the log-growth filtrations for $(\varphi,\nabla)$-modules (over various base rings).

\subsection*{Structure of part I}

This paper is consisting of two parts. In part I, we give a proof of Theorem \ref{thm:intro1} by proving its generalization over $\mathcal{R}^{\bd}$ (Theorem \ref{conj:bd} (ii)). Part II is devoted to applications. Throughout this paper, it is written in a reasonably self-contained manner so that the topic of this paper is easily accessible to non-experts.

We explain the structure of part I in the following. We will explain the structure of part II later.

In \S \ref{sec:Robba}, we recall the construction of various analytic rings, such as the extended (bounded) Robba ring $\tilde{\mathcal{R}}^{(\bd)}$, due to K. Kedlaya in \cite{rel}. We also construct a log version of $\tilde{\mathcal{R}}$, which does not appear in \cite{rel}.

In \S \ref{sec:lg}, we equip those rings in \S 1 with the log-growth filtrations, and study its basic properties.

In this paper, we consider $(\varphi,\nabla)$-modules over various rings such as $K[\![t]\!]_0,\mathcal{R}^{\bd}$, and $\mathcal{E}$. In \S \ref{sec:phinabla}, we give a unified framework that all cases can be treated at a time.

In \S \ref{sec:slope}, we recall the definition of Frobenius slope filtration on a $\varphi$-module over $K$ due to Tsuzuki (\cite{Tsu}): it is indexed by $\mathbb{R}$ as the log-growth filtration. We restate some results on difference modules in \cite{pde}.

In \S \ref{sec:logfil}, we define the sets of analytic horizontal sections $V(M)$, and the solution space $\Sol(M)$ for $(\varphi,\nabla)$-modules $M$ over $K[\![t]\!]_0,\mathcal{R}^{\bd}$, or $\mathcal{E}$, and endow them with growth filtrations.

In \S \ref{sec:ex}, we give (explicit) examples of $(\varphi,\nabla)$-modules, and calculate their log-growth filtrations and Frobenius slope filtrations. Some of them will be useful as counterexamples.

In \S \ref{sec:statement}, we state Chiarellotto-Tsuzuki conjecture \ref{conj:CT} over $K[\![t]\!]_0$, and an analogue over $\mathcal{E}$ (Theorem \ref{conj:CTgen}). We also state the main theorem in this paper (Theorem \ref{conj:bd} (ii)), which can be seen as a generalization of Chiarellotto-Tsuzuki conjecture over $\mathcal{R}^{\bd}$.

In \S \ref{sec:criterion}, we prove a key ingredient of the proof of Theorem \ref{thm:intro1} called Slope criterion (Proposition \ref{prop:key}), which asserts that if a $(\varphi,\nabla)$-module $M$ over $\mathcal{E}$ is PBQ, then the maximum Frobenius of slope of $M_{\mathcal{E}}$ does not change under quotients.

In \S \ref{sec:rev}, we recall the construction of the reverse filtration over the extended bounded Robba ring $\tilde{\mathcal{R}}^{\bd}$ due to R. Liu (\cite{Liu}). As a consequence, we obtain the existence of a certain eigenvector over $\tilde{\mathcal{R}}^{\bd}$ (Proposition \ref{prop:maximum eigen}), which is another ingredient of the proof of Theorem \ref{thm:intro1}.

In \S \ref{sec:pf}, we prove Theorem \ref{thm:intro1} with the strategy explained in the next subsection.

\subsection*{Strategy of proof}

We outline the proof of Theorem \ref{thm:intro1}. Let notation and assumption be as in Theorem \ref{thm:intro1}. For simplicity, we assume that $k$ is algebraically closed.

Step 1 (preliminary reduction to Proposition \ref{prop:pf}): By Proposition \ref{prop:CT1}, it suffices to prove $\Sol_{\lambda}(M)\subset S_{\lambda-\lambda_{\max}}(\Sol(M))$. Thanks to Dieudonn\'e-Manin theorem (Lemma \ref{lem:DM}), it reduces to prove (the following special case of) Proposition \ref{prop:pf}, which asserts that if a non-zero solution $f\in \Sol_{\lambda}(M)$ satisfies $\varphi^d(f)=q^af$ (that is, $f$ is a {\it Frobenius $d$-eigenvector} of $\Sol_{\lambda}(M)$), then we have $\mu\le\lambda-\lambda_{\max}$, where $\mu=a/d$ denotes the {\it Frobenius slope} of $f$.

Step 2 (key reduction to the case that $f$ is injective): By a typical argument using pushforward and twist for Frobenius structures (note that both operations do no change differential structures), we may assume $\varphi(f)=f$, where our goal is to prove $\lambda_{\max}\le\lambda$. Since $f:M\to K[\![t]\!]$ is $\varphi,\nabla$-equivariant by assumption, $N=\ker{f}$ is a $(\varphi,\nabla)$-submodule of $M$. For simplicity, let $f$ also denote the solution of $M/N$ induced by $f$. Then we still have $f\in \Sol_{\lambda}(M/N)$ and $\varphi(f)=f$. By Slope criterion (Proposition \ref{prop:key}), the maximum Frobenius slope of $(M/N)_{\mathcal{E}}$ is equal to $\lambda_{\max}$. Therefore, after replacing $M$ by $M/N$, we may further assume that $f:M\to K[\![t]\!]$ is injective.

Step 3 (base change argument): We consider {\it the extended (bounded) Robba ring} $\tilde{\mathcal{R}}^{(\bd)}$ together with a $\varphi$-equivariant embedding $K[\![t]\!]_0\to\tilde{\mathcal{R}}^{\bd}$ (Definition \ref{dfn:extended Robba}, Proposition \ref{prop:embedding}). Then we can extend $f$ to an injective, $\varphi$-equivariant map
\[
\tilde{f}:M\otimes_{K[\![t]\!]_0}\tilde{\mathcal{R}}^{\bd}\hookrightarrow\tilde{\mathcal{R}}.
\]
The $\lambda$-th log-growth filtration $K[\![t]\!]_{\lambda}$ naturally extends to a filtration $\Fil_{\lambda}\tilde{\mathcal{R}}$ by measuring the growth of Gauss norms (Definition \ref{dfn:log-growth extended}), and we can show $\tilde{f}(M\otimes_{K[\![t]\!]_0}\tilde{\mathcal{R}}^{\bd})\subset\Fil_{\lambda}\tilde{\mathcal{R}}$ by using the assumption $f\in \Sol_{\lambda}(M)$. The advantage of performing the base change $K[\![t]\!]_0\to\tilde{\mathcal{R}}^{\bd}$ is that one can use the {\it reverse filtration}. As a result, there exists a Frobenius $d$-eigenvector $\tilde{v}\in M\otimes_{K[\![t]\!]_0}\tilde{\mathcal{R}}^{\bd}$ of slope $\lambda_{\max}$ (Proposition \ref{prop:maximum eigen}). Then $\tilde{f}(\tilde{v})\in\tilde{\mathcal{R}}$ is also a Frobenius $d$-eigenvector of slope of $\lambda_{\max}$ since $\tilde{f}$ is $\varphi$-equivariant and injective. We conclude $\lambda_{\max}\le\lambda$ by the fact that any Frobenius $d$-eigenvector $w\in\tilde{\mathcal{R}}$ of slope $\mu$ is {\it exactly of log-growth} $\mu$ (Lemma \ref{lem:calc log-growth}).

We should remark that Step 3 is analogous to the proof of the following theorem due to A. J. de Jong (\cite[9.1]{dJ}): assume that $K$ is absolutely unramified and $\varphi(t)=t^p$. Let $M$ be a $(\varphi,\nabla)$-module over $\mathcal{O}_K[\![t]\!]$ in the sense of \cite{dJ}. Let $f:M\to\mathcal{O}_{\mathcal{E}}$ be a horizontal $\mathcal{O}_K[\![t]\!]$-linear map such that $\varphi(f)=p^{-l}f$ for some $l\in\mathbb{N}$. Then $f(M)\subset\mathcal{O}_K[\![t]\!]$. In fact, the author regards de Jong's theorem as a ``generic'' result, and proves Proposition \ref{prop:pf} as its ``special'' analogue. Note that the differential structure on $M$ is used only in Steps 1, 2. In Step 3, we need only the Frobenius structure on $M$, therefore, we may perform the base change $K[\![t]\!]_0\to\tilde{\mathcal{R}}^{\bd}$.

\subsection*{Structure of part II}

We give a summary of results in part II, restricted to the case of $K[\![t]\!]_0$ instead of $\mathcal{R}^{\bd}$ for simplicity.

Theorem \ref{thm:intro1} enables us to compare the log-growth filtration with Frobenius slope filtration. It is natural to ask that which properties the log-growth filtration and Frobenius slope filtration share. A result of this kind is Theorem \ref{conj:CT} (i), that is, the right continuity and the rationality of the slopes of the log-growth filtration. We will prove {\it Dwork's conjecture} below, which is an analogue of Grothendieck-Katz specialization theorem for Frobenius Newton polygons (\cite[Theorem 15.3.2]{pde}).

\begin{theorem}[{Corollary \ref{cor:LGFDW}}]
The log-growth Newton polygons for any $(\varphi,\nabla)$-module over $K[\![t]\!]_0$ is semicontinuous under specialization.
\end{theorem}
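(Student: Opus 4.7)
My plan is to derive Dwork's conjecture from Theorem \ref{thm:intro1} by reducing it to the classical Grothendieck--Katz semicontinuity theorem for Frobenius Newton polygons (\cite[Theorem 15.3.2]{pde}). In the PBQ case, Theorem \ref{thm:intro1} identifies the log-growth filtration on $\Sol(M)$ with the Frobenius slope filtration shifted by the constant $\lambda_{\max}$, so the (special) log-growth Newton polygon is the Frobenius Newton polygon of $\Sol(M)$ translated vertically by $\lambda_{\max}$. The analogous identification at the Dwork generic point is furnished by Theorem \ref{conj:CTgen}, and crucially involves the same shift $\lambda_{\max}$, since $\lambda_{\max}$ is an invariant of $M_{\mathcal{E}}$ alone. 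Thus, after discarding this common shift, comparing the generic and special log-growth Newton polygons amounts to comparing the Frobenius Newton polygons of $\Sol(\tau^* M_{\mathcal{E}})$ and $\Sol(M)$, which is exactly the content of Grothendieck--Katz.

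To handle an arbitrary $(\varphi,\nabla)$-module $M$ over $K[\![t]\!]_0$, the plan is to reduce to the PBQ case via the canonical log-growth filtration. I expect the submodules $\{M_{\mathcal{E}}^\lambda\}$ to descend to a finite filtration $0 = N_0 \subset N_1 \subset \cdots \subset N_r = M$ by $(\varphi,\nabla)$-submodules whose successive quotients are PBQ. Additivity of the log-growth Newton polygon along this filtration, both generically and specially, then allows one to concatenate the PBQ cases. Closed points of the open unit disc other than the origin are handled by a translation in $t$ (possibly after enlarging $K$ and adjusting the Frobenius lift), reducing to the case already treated.

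The main obstacle will be this second step: producing the PBQ-graded filtration cleanly requires showing that $\{M_{\mathcal{E}}^\lambda\}$ lifts to genuine $(\varphi,\nabla)$-submodules of $M$ over $K[\![t]\!]_0$, that the successive quotients are indeed PBQ, and that the relevant Newton polygons are additive along this filtration. Once these technical points are in hand, the combinatorial combination of Grothendieck--Katz with Theorem \ref{thm:intro1} finishes the argument.
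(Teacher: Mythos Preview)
Your overall architecture matches the paper's: handle the PBQ case by identifying the log-growth filtration with a shifted Frobenius slope filtration (on both the special and generic side, with the \emph{same} shift $\lambda_{\max}(M_{\mathcal{E}})$), then invoke Grothendieck--Katz; and reduce the general case to this via a filtration with PBQ graded pieces together with additivity of the log-growth Newton polygon along that filtration. This is exactly Proposition~\ref{thm:Dwo2} plus Theorem~\ref{thm:Dwo}.

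However, your choice of filtration is a genuine gap. The log-growth filtration $\{M_{\mathcal{E}}^{\lambda}\}$ does \emph{not} have PBQ graded pieces: already in Example~6.2.4 one has $(M_{\mu,\delta})_{\mathcal{E}}^{0}=\mathcal{E}e_1$ and the bounded quotient $(M_{\mu,\delta})_{\mathcal{E}}/(M_{\mu,\delta})_{\mathcal{E}}^{0}\cong\mathcal{E}(q^{\mu})\oplus\mathcal{E}(q^{\delta})$ is bounded but not pure, hence not PBQ. More to the point, ``solvable in $\mathcal{E}[\![X-t]\!]_{\lambda}$'' (which is what the graded pieces of $M_{\mathcal{E}}^{\bullet}$ satisfy) is a completely different condition from ``PBQ''. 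Nor is $M_{\mathcal{E}}^{\lambda}$ known to descend to $K[\![t]\!]_0$. The filtration the paper actually uses is the \emph{PBQ filtration} $P_{\bullet}(M)$ of \S\ref{sec:max}: its graded pieces are PBQ by construction, it descends to $K[\![t]\!]_0$ by Theorem~\ref{thm:descent PBQ}, and---crucially---the log-growth Newton polygon is additive along it (Lemma~\ref{lem:exact PBQ}). This last additivity is not free for an arbitrary filtration; it requires precisely that the first subobject $M'=P_1(M)$ be PBQ with $\lambda_{\max}(M'_{\mathcal{E}})=\lambda_{\max}(M_{\mathcal{E}})$, which is what makes the hypothesis of Lemma~\ref{lem:exact}~(ii) hold. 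Once you swap in $P_{\bullet}(M)$, your outline becomes the paper's proof: form $N=\bigoplus_i (P_{i+1}/P_i)(q^{a_i})$ with $a_i=\lambda_{\max}((P_{i+1}/P_i)_{\mathcal{E}})$, identify both log-growth polygons with Frobenius Newton polygons of $N$ (special via Lemma~\ref{lem:Dwo3}, generic directly), and apply Grothendieck--Katz to $N$.

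Finally, the paragraph about closed points other than the origin is unnecessary: the statement of $\mathbf{LGF}_{\mathrm{Dw}}$ compares only the generic polygon $\NP(M_{\mathcal{E}})$ with the special polygon $\NP(M)$ at $t=0$.
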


Frobenius slope filtration is compatible with tensor products and duals, however, a na\"ive analogue for the log-growth filtration fails: one can find counterexamples in \S \ref{sec:ex}. Instead, we obtain ``weak'' compatibility results as follows.

\begin{theorem}[{Proposition \ref{prop:weak tensor}}]
Let $M,N$ be $(\varphi,\nabla)$-modules over $K[\![t]\!]_0$. If $\lambda$ and $\mu$ are slopes of the log-growth Newton polygons of $M$ and $N$ respectively, then $\lambda+\mu$ is also a slope of the log-growth Newton polygon of $M\otimes_{K[\![t]\!]_0}N$.
\end{theorem}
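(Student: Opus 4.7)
The plan is to pass to the extended bounded Robba ring $\tilde{\mathcal{R}}^{\bd}$, where $\varphi$-modules decompose into Frobenius eigencomponents by Kedlaya's slope theorem, and translate log-growth slopes into Frobenius slopes via Theorem \ref{thm:intro1} (or its generalization, Theorem \ref{conj:bd} (ii)). The key mechanism is that Frobenius slopes are additive under $\otimes$ and the shift by $\lambda_{\max}$ distributes, so the correspondence between log-growth and Frobenius slopes respects tensor products.

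Step 1 (Reductions). Reduce to $k$ algebraically closed, since scalar extension of the residue field does not alter the log-growth Newton polygon. Using the PBQ filtration of $M$ (and $N$), further reduce to the case that $M$ and $N$ are PBQ and that $\lambda$ (resp.\ $\mu$) is the log-growth slope of a fixed PBQ graded piece, since the log-growth NP slopes of a module are realized on its PBQ graded pieces.

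Step 2 (Matching eigenvectors with log-growth slopes). For PBQ $M$, Theorem \ref{thm:intro1} gives $\Sol_{\lambda}(M)=S_{\lambda-\lambda_{\max}(M_{\mathcal{E}})}(\Sol(M))$. Dually, by the reverse-filtration machinery from \S \ref{sec:rev} and Proposition \ref{prop:maximum eigen}, the log-growth slope $\lambda$ corresponds to a Frobenius $d$-eigenvector $v\in M\otimes_{K[\![t]\!]_0}\tilde{\mathcal{R}}^{\bd}$ of Frobenius slope $s_v=\lambda_{\max}(M_{\mathcal{E}})-\lambda$, and similarly a Frobenius eigenvector $w\in N\otimes_{K[\![t]\!]_0}\tilde{\mathcal{R}}^{\bd}$ of slope $s_w=\lambda_{\max}(N_{\mathcal{E}})-\mu$.

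Step 3 (Tensor the eigenvectors). The vector $v\otimes w\in (M\otimes_{K[\![t]\!]_0}N)\otimes\tilde{\mathcal{R}}^{\bd}$ is a Frobenius $d$-eigenvector of slope $s_v+s_w$. Combined with $\lambda_{\max}((M\otimes N)_{\mathcal{E}})=\lambda_{\max}(M_{\mathcal{E}})+\lambda_{\max}(N_{\mathcal{E}})$, translating back via Theorem \ref{conj:bd} (ii) (applied over $\mathcal{R}^{\bd}$) yields the log-growth slope
\[
\lambda_{\max}((M\otimes N)_{\mathcal{E}})-(s_v+s_w)=(\lambda_{\max}(M_{\mathcal{E}})-s_v)+(\lambda_{\max}(N_{\mathcal{E}})-s_w)=\lambda+\mu,
\]
as desired.

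The main obstacle is that $M\otimes_{K[\![t]\!]_0}N$ is not PBQ in general, even when $M$ and $N$ are (as illustrated by counterexamples in \S \ref{sec:ex}), so one cannot directly invoke Theorem \ref{thm:intro1} on $M\otimes N$ itself; this forces the argument to be run over $\tilde{\mathcal{R}}^{\bd}$, where one must use the full strength of Theorem \ref{conj:bd} (ii). A second delicate point is verifying that the slope $\lambda+\mu$ arising from $v\otimes w$ actually contributes a jump in the log-growth filtration of $M\otimes N$ and is not absorbed into a strictly smaller slope; this amounts to a multiplicity/dimension count at the level of the Frobenius slope decomposition over $\tilde{\mathcal{R}}$, where the eigenvector $v\otimes w$ is manifestly nonzero and its Frobenius slope is preserved under the identification furnished by Theorem \ref{conj:bd} (ii).
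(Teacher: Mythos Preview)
Your Step 3 has a genuine gap that you correctly identify but do not close. Having a Frobenius $d$-eigenvector $v\otimes w$ in $(M\otimes N)\otimes\tilde{\mathcal{R}}^{\bd}$ of a given slope tells you something about the Frobenius slopes of $(M\otimes N)_{\mathcal{E}}$, but to turn this into a statement about the log-growth filtration of $M\otimes N$ you invoke Theorem \ref{conj:bd} (ii), which requires $M\otimes N$ to be PBQ. It is not, and the Converse Theorem (Lemma \ref{lem:conv bd}) shows that the PBQ hypothesis is necessary, so there is no way to ``use the full strength'' of that theorem here. Your final sentence about a dimension count over $\tilde{\mathcal{R}}$ does not supply the missing link: a nonzero eigenvector in $(M\otimes N)\otimes\tilde{\mathcal{R}}^{\bd}$ does not by itself force a jump in $\Sol_\bullet(M\otimes N)$.

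There is also a problem already in Step 2. Theorem \ref{thm:intro1} relates log-growth slopes of $M$ to Frobenius slopes of $V(M)$ (equivalently $\Sol(M)$), which is a $\varphi$-module over $K$. The eigenvectors available in $M\otimes\tilde{\mathcal{R}}^{\bd}$ via the reverse filtration have slopes drawn from the Frobenius slope multiset of $M_{\mathcal{E}}$. These two multisets are different in general (this is precisely the generic-versus-special Frobenius Newton polygon), so your claimed correspondence $\lambda\leftrightarrow s_v=\lambda_{\max}(M_{\mathcal{E}})-\lambda$ in $M\otimes\tilde{\mathcal{R}}^{\bd}$ is not justified. Finally, the PBQ reduction in Step 1 is itself incomplete: once you replace $M,N$ by PBQ graded pieces $M_i,N_j$, you would still need to transport a log-growth slope of the subquotient $M_i\otimes N_j$ back to $M\otimes N$, and no such transport principle is available.

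The paper's argument avoids all of this by moving the eigenvector from the module to the ring. Via Proposition \ref{prop:refine} (which needs no PBQ hypothesis on $M$), one produces a solution $g\in\Sol_\lambda(M)$ and an element $\tilde m\in\tilde M$ such that the \emph{image} $\tilde g(\tilde m)\in\tilde{\mathcal{R}}_{\log}$ is a Frobenius eigenvector of slope $\lambda$; likewise $\tilde h(\tilde n)$ of slope $\mu$. Then the product $\tilde g(\tilde m)\cdot\tilde h(\tilde n)=\widetilde{(g\otimes h)}(\tilde m\otimes\tilde n)$ is an eigenvector of slope $\lambda+\mu$ in $\tilde{\mathcal{R}}_{\log}$, and Lemma \ref{lem:calc log-growth} says such an element is \emph{exactly} of log-growth $\lambda+\mu$. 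This forces $g\otimes h\in\Sol(M\otimes N)$ to be exactly of log-growth $\lambda+\mu$, so $\lambda+\mu$ is a slope. The point is that in $\tilde{\mathcal{R}}_{\log}$ the Frobenius-slope/log-growth dictionary is unconditional, whereas on the module side it only holds under PBQ.
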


\begin{theorem}[{Theorem \ref{thm:inv}}]\label{thm:invintro}
Let $M$ be a $(\varphi,\nabla)$-module over $\mathcal{E}$, and $M\spcheck$ its  dual. Then the maximum slopes of the log-growth Newton polygons of $M$ and $M\spcheck$ coincide.
\end{theorem}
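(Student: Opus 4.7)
The strategy is to reduce the invariance of $\sigma_{\max}$ under duality to the invariance of the Frobenius spread $\lambda_{\max}-\lambda_{\min}$, which is tautologically preserved by duality. The bridge from $\sigma_{\max}$ to the spread is supplied by Theorem~\ref{conj:CTgen}, the $\mathcal{E}$-analogue of Chiarellotto-Tsuzuki, but only for PBQ modules; so the heart of the argument is to extract from $M$ a PBQ quotient that faithfully records $\sigma_{\max}(M)$.

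The first step is the PBQ case. For a PBQ $(\varphi,\nabla)$-module $N$ over $\mathcal{E}$, Theorem~\ref{conj:CTgen} gives $\Sol_{\lambda}(\tau^{*}N)=S_{\lambda-\lambda_{\max}(N)}(\Sol(\tau^{*}N))$. Combining this with the canonical identification $\Sol(\tau^{*}N)\cong V(\tau^{*}N\spcheck)\cong N\spcheck$ of $\varphi$-modules over $\mathcal{E}$, under which the Frobenius slopes of $\Sol(\tau^{*}N)$ become the negatives of those of $N$, yields the identity $\sigma_{\max}(N)=\lambda_{\max}(N)-\lambda_{\min}(N)$. Since the right-hand side is manifestly invariant under duality, the theorem holds whenever both $M$ and $M\spcheck$ are PBQ.

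The second step is to reduce to this PBQ case. Using that the log-growth filtration is $\varphi$-stable (and enlarging $k$ to its algebraic closure if necessary), I would choose a Frobenius eigenvector $f\in\Sol_{\sigma}(\tau^{*}M)\setminus\Sol_{<\sigma}(\tau^{*}M)$ with $\sigma:=\sigma_{\max}(M)$. Setting $L:=M/\ker f$, the induced solution $\bar f$ on $L$ is injective and of exact log-growth $\sigma$, and since solutions of $L$ pull back to solutions of $M$ with the same log-growth, $\sigma_{\max}(L)=\sigma_{\max}(M)$. The crucial technical claim is that $L$ is PBQ: the injectivity of $\bar f$ together with the slope criterion (Proposition~\ref{prop:key}) forces the bounded quotient $L_{\mathcal{E}}/L_{\mathcal{E}}^{0}$ to be pure of slope $\lambda_{\max}(L)$. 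Granted this, the first step gives $\sigma_{\max}(M)=\lambda_{\max}(L)-\lambda_{\min}(L)$.

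Performing the same construction on $M\spcheck$ produces a PBQ quotient $L'$ with $\sigma_{\max}(M\spcheck)=\lambda_{\max}(L')-\lambda_{\min}(L')$. The conclusion follows by comparing $L'$ with $L\spcheck$: the latter is a $(\varphi,\nabla)$-submodule of $M\spcheck$ whose Frobenius spread coincides with that of $L$, giving $\sigma_{\max}(M)\le\sigma_{\max}(M\spcheck)$, and symmetry gives equality. The main obstacle of the plan is the PBQ claim for $L$ in the second step: it is not a priori clear that carving $M$ by the kernel of a maximal-log-growth solution yields a pure bounded quotient, and justifying this will require a careful synthesis of the slope criterion with the exact log-growth property of $\bar f$.
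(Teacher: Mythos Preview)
Your first step---the PBQ case via Theorem~\ref{conj:CTgen}, yielding $b^{\nabla}(N)=\lambda_{\max}(N)-\lambda_{\min}(N)$---is correct and is exactly the paper's Lemma~\ref{lem:dual}.

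The genuine gap is in your second step: the quotient $L$ need not be PBQ, and your proposed justification does not work. Take $M_{\mu,\delta}$ from Example~6.1.4 (with $0<\mu<\delta$). The eigenvector solution $f_3$ of maximal log-growth $\delta$ satisfies $\varphi(f_3)=f_3$ and $f_3(e_1)=1\neq 0$. Every nonzero $(\varphi,\nabla)$-submodule of $M_{\mu,\delta}$ contains $e_1$ (it is the unique horizontal line in $M_{\mu,\delta}$), so $\ker f_3=0$ and your construction returns $L=M_{\mu,\delta}$, which is not PBQ (Example~6.2.4). By Lemma~\ref{lem:tau inv of PBQ} the same obstruction persists for $\tau^{*}(M_{\mu,\delta})_{\mathcal{E}}$. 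Your appeal to the slope criterion is a non-sequitur: Proposition~\ref{prop:key} characterizes PBQ via all nonzero \emph{quotients} having the same $\lambda_{\max}$, and the existence of an injective eigenvector solution gives no control over quotients. There is a second gap in the comparison step: from $L\spcheck\hookrightarrow M\spcheck$ you get $b^{\nabla}(L\spcheck)\le b^{\nabla}(M\spcheck)$, but to conclude $\sigma_{\max}(M)\le\sigma_{\max}(M\spcheck)$ you would need the Frobenius spread of $L\spcheck$ to equal $b^{\nabla}(L\spcheck)$, which again requires $L\spcheck$ to be PBQ (in general one only has $b^{\nabla}\le\text{spread}$, by Proposition~\ref{prop:CT2}).

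The paper's route is different in kind: it works with \emph{submodules} rather than a single quotient, and uses induction on $\dim_{\mathcal{E}}M$. The new ingredient is the Generating theorem (Theorem~\ref{thm:generating}): any $M$ over $\mathcal{E}$ is a sum $M=\sum_i M_i$ of PBQ $(\varphi,\nabla)$-submodules. If $M$ is not PBQ each $M_i$ is proper, so by induction $b^{\nabla}(M_i)=b^{\nabla}(M_i\spcheck)$; then Lemma~\ref{lem:sum2} gives $b^{\nabla}(M)=\max_i b^{\nabla}(M_i)=\max_i b^{\nabla}(M_i\spcheck)=b^{\nabla}(M\spcheck)$.
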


V. Drinfeld and Kedlaya prove that for an indecomposable convergent $F$-isocrystal $M$ of rank $n$ on a smooth irreducible quasi-compact variety $X$ over $k$ (assuming that $k$ is perfect), the Frobenius slopes $a_i^{\eta}(M)$ for $1\le i\le n$ at the generic point $\eta$ of $X$ satisfies $a_i^{\eta}(M)-a_{i+1}^{\eta}(M)\le 1$ for all $i\in\{0,\dots,n-1\}$ (\cite[Theorem 1.1.5]{DK}). We have the following (local) analogue for the log-growth filtration.

\begin{theorem}[{Theorem \ref{thm:Gri}}]\label{thm:Griintro}
Let $M$ be a $(\varphi,\nabla)$-module over $\mathcal{E}$ of rank $n$. Let $\lambda_1(M)\le\dots\le\lambda_n(M)$ denote the slopes of the log-growth Newton polygon of $M$ with multiplicity. Then $\lambda_{i+1}(M)-\lambda_{i}(M)\le 1$ for all $i\in\{0,\dots,n-1\}$. 
\end{theorem}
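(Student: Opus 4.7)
The plan is to combine the main theorem of Part I with a differential-theoretic argument in the spirit of Drinfeld--Kedlaya's proof for Frobenius slopes, adapted to the log-growth setting. First I would reduce to the case that $M$ is PBQ. The reduction proceeds via a filtration $0 = M_0 \subset M_1 \subset \cdots \subset M_r = M$ by $(\varphi,\nabla)$-submodules with PBQ successive quotients---such a filtration exists by iteratively forming the maximal PBQ quotient of each $M/M_j$---and by showing that the multiset of log-growth slopes of $M$ is the concatenation of the multisets of log-growth slopes of the PBQ subquotients. Hence it suffices to verify the gap bound within each PBQ piece, together with a compatibility between the maximum log-growth slope of one piece and the minimum of the next, which one can extract from Theorem \ref{thm:invintro}.

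For a PBQ module $N$ over $\mathcal{E}$, Chiarellotto--Tsuzuki's theorem (Theorem \ref{conj:CTgen}) identifies $\Sol_\lambda(\tau^*N)$ with the shifted Frobenius slope filtration $S_{\lambda - \lambda_{\max}(N)}(\Sol(\tau^*N))$. Thus the log-growth slopes of $N$ are precisely the Frobenius slopes of the solution $\varphi$-module $\Sol(\tau^*N)$ translated by the constant $\lambda_{\max}(N)$, and the desired gap bound translates into the same gap bound on consecutive Frobenius slopes of $\Sol(\tau^*N)$. The key differential input is that the derivation $\partial_X = d/dX$ on $\mathcal{E}[\![X-t]\!]_0$ sends $\Fil_\lambda$ into $\Fil_{\lambda+1}$, since differentiating $\sum a_n (X-t)^n$ inflates coefficient growth by the factor $n$; consequently $\partial_X$ induces an operator $\Sol_\lambda(\tau^*N) \to \Sol_{\lambda+1}(\tau^*N)$. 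Via the commutator relation between $\partial_X$ and $\varphi$ (coming from the chain rule $d\varphi(X)/dX$), this operator shifts Frobenius slopes of $\Sol(\tau^*N)$ by exactly $1$.

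The main obstacle is leveraging this slope-shifting operator into an actual gap bound: if a gap larger than $1$ existed in the Frobenius Newton polygon of $\Sol(\tau^*N)$, one wants to show that $\partial_X$ would preserve a proper Frobenius-slope subpiece of $\Sol(\tau^*N)$ and so descend to a $(\varphi,\nabla)$-structure on a proper subquotient of $N$ that contradicts PBQness, or alternatively a contradiction with Theorem \ref{thm:invintro} applied to $N$. This is the part of the argument most analogous to Drinfeld--Kedlaya's, with PBQness playing the role of their indecomposability hypothesis; establishing the required surjectivity or non-vanishing of $\partial_X$ on the graded pieces of the Frobenius slope filtration of $\Sol(\tau^*N)$ is the technical heart. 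A subsidiary subtlety is the boundary compatibility between consecutive PBQ subquotients in the initial reduction, which again should be handled by the duality symmetry of Theorem \ref{thm:invintro}.
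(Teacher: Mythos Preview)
Your reduction to the PBQ case is essentially sound, but the justification you give for the ``boundary compatibility'' between successive PBQ pieces is not the right one. Theorem \ref{thm:invintro} (dual invariance of $b^{\nabla}$) is irrelevant here; what actually makes the reduction work is the elementary fact that every nonzero $(\varphi,\nabla)$-module over $\mathcal{E}$ has $0$ as its minimal log-growth slope (Lemma \ref{lem:lgE1} (ii)). If each piece has slope multiset containing $0$ and consecutive gaps $\le 1$, then the sorted union also has consecutive gaps $\le 1$: any element $b>0$ in the union lies in some piece, and that piece contains some $c\in[b-1,b)$, forcing the predecessor of $b$ in the union to be $\ge b-1$.

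The genuine gap is in the PBQ case itself. Your proposed operator ``$\partial_X$ induces $\Sol_\lambda(\tau^*N)\to\Sol_{\lambda+1}(\tau^*N)$'' does not exist as described: post-composing a solution $f$ with $\partial_X$ destroys $\mathcal{E}[\![X-t]\!]_0$-linearity, since $\partial_X(rf(m))=(\partial_Xr)f(m)+r\partial_X(f(m))$. Moreover, the growth-shifting goes the wrong way: because $|n|\le 1$ in $K$, differentiation preserves $\Fil_\lambda$ rather than raising it; it is \emph{integration} that raises log-growth by one (Lemma \ref{lem:property Robba} (iv)). So neither the domain, the codomain, nor the well-definedness of your key operator is correct, and the ``technical heart'' you acknowledge as missing is indeed the entire content of the PBQ case.

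The paper's proof is completely different and avoids PBQ altogether. Fixing $i$ with $\lambda_i<\lambda_{i+1}$, one forms the exact sequence
\[
0\to (M^{\lambda_i}/M^{\lambda_{i+1}})/(M^{\lambda_i}/M^{\lambda_{i+1}})^0\to (M/M^{\lambda_{i+1}})/(M^{\lambda_i}/M^{\lambda_{i+1}})^0\to M/M^{\lambda_i}\to 0,
\]
whose left term is bounded by construction. The key input is Lemma \ref{lem:Dw}: if $0\to M'\to M\to M''\to 0$ has $M'$ or $M''$ bounded, then $b^{\nabla}(M)\le b^{\nabla}(M')+b^{\nabla}(M'')+1$; this is where the ``$+1$'' enters, via integration as in Lemma \ref{lem:property Robba} (iv). (Theorem \ref{thm:inv} is used in that lemma only to pass by duality to the case where $M''$ is bounded.) Applying this yields $b^{\nabla}$ of the middle term $\le\lambda_i+1$, and one then checks directly from the log-growth filtration that $\lambda_{i+1}$ is bounded by this $b^{\nabla}$. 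No Drinfeld--Kedlaya-style operator argument, no PBQ reduction, and no slope-shifting on $\Sol$ is needed.
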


We also have the following remarks on Chiarellotto-Tsuzuki conjecture.

\begin{enumerate}
\item[---] One can ask the necessity of PBQ hypothesis in Theorem \ref{thm:intro1}. In \S \ref{sec:conv}, we answer affirmatively this question by proving the converse, that is, if the equality in Theorem \ref{thm:intro1} holds for all $\lambda$, then $M$ is PBQ. We also prove similar results for $\mathcal{R}^{\bd}$ and $\mathcal{E}$.
\item[---] In \S \ref{sec:CTgen}, we prove that Theorem \ref{thm:intro1} implies an analogue of Chiarellotto-Tsuzuki conjecture over $\mathcal{E}$ (Theorem \ref{conj:CTgen}).
\end{enumerate}

We explain some technical aspects of part II. We would like to study the log-growth filtration of an arbitrary $(\varphi,\nabla)$-module by exploiting Theorem \ref{thm:intro1} somehow. To achieve this, we have two methods. One is to use the {\it PBQ filtration}, which is constructed over $K[\![t]\!]_0$ and $\mathcal{E}$ in \cite{CT2}, and is generalized over $\mathcal{R}^{\bd}$ in Definition \ref{dfn:PBQ fil}. Another method is to use {\it Generating theorem} (Theorem \ref{thm:generating}), a novelty of this paper, that asserts that any $(\varphi,\nabla)$-module over $\mathcal{E}$ is generated by PBQ $(\varphi,\nabla)$-submodules. We will use Generating theorem in an essential way to prove Theorems \ref{thm:invintro} and \ref{thm:Griintro}.

\subsection*{Notation and terminology}
\begin{enumerate}
\item[(1)] We recall some terminology on difference rings in \cite[\S 14]{pde}. Let $F$ be a field equipped with a ring endomorphism $\phi:F\to F$. We call the pair $(F,\phi)$, or, $F$ for simplicity, a {\it difference field}. We say that $F$ is {\it inversive} if $\phi$ is an automorphism. We say that $F$ is {\it weakly difference-closed} if any equation of the form $\phi(x)=cx$ with $c\in F^{\times}$ has always a solution $x\in F$. We say that $F$ is {\it strongly difference-closed} if $F$ is weakly difference-closed and inversive.
\item[(2)] Let $p$ be a prime number. Let $K$ be a complete discrete valuation field of mixed characteristic $(0,p)$, $\mathcal{O}_K$ the integer ring of $K$, $\mathfrak{m}_K$ the maximal ideal of $K$, $k$ the residue field of $K$. Let $|\cdot|$ denote a valuation of $K$. We put the normalization condition by $|p|=p^{-1}$ unless otherwise is mentioned. Let $\varphi_K$ be an isometric ring endomorphism of $K$. Let $\varphi_k:k\to k$ denote the ring endomorphism on $k$ induced by $\varphi_K$. Let $q$ denote a positive power of $p$. We say that $\varphi_K$ is a {\it $q$-power Frobenius lift} if $\varphi_k$ is the $q$-power map. Unless otherwise is mentioned, precisely speaking, except \S\S \ref{sec:Robba}, \ref{sec:lg}, and a part of \S \ref{sec:max}, we assume that $\varphi_K$ is a $q$-power Frobenius lift. The reader who are interested only in the proof of Theorem \ref{thm:intro1} may assume that $\varphi_K$ is always a $q$-power Frobenius lift. Recall that there exists an extension $L/K$ of discrete valuation fields, on which $\varphi_K$ extends isometrically to $\varphi_L:L\to L$, such that $(l,\varphi_l)$ is inversive, where $l$ denotes the residue field of $L$, and $\varphi_l$ denotes the endomorphism on $l$ induced by $\varphi_L$: for example, the completion of $\varinjlim (K\to K\to\dots)$ with transition maps $\varphi_K$ (the {\it $\varphi_K$-completion of $K$}). We may even assume that $(l,\varphi_l)$ is strongly-difference closed (\cite[Proposition 3.2.4]{rel}). 
\item[(3)] For a ring homomorphism $R\to S$ and an $R$-module $M$, let $M_S$ denote the $S$-module $M\otimes_RS$. For a ring $R$, let $\M_n(R)$ denote the set of $n\times n$-matrices of $R$, $\GL_n(R)$ the set of matrices admitting an inverse in $\M_n(R)$. For $a_1,\dots,a_n\in R$, let $\diag(a_1,\dots,a_n)$ denote the diagonal matrix whose $(i,i)$-component is equal to $a_i$. 
\item[(4)] Let $V$ be an finite dimensional vector space over a field $F$. Let $\{V^{\lambda};\lambda\in\mathbb{R}\}$ be a decreasing, separated, and exhaustive filtration by subspaces of $V$; we give a similar definition for an increasing filtration. We say that $V^{\bullet}$ is {\it right continuous} if $V^{\lambda}=\cap_{\mu>\lambda}V^{\mu}$ for all $\lambda$. We put $m({\lambda})=\dim_F(\cap_{\varepsilon>0}V^{\lambda-\varepsilon})/(\cup_{\varepsilon>0}V^{\lambda+\varepsilon})=\lim_{\varepsilon\to 0+}\dim_FV^{\lambda-\varepsilon}-\lim_{\varepsilon\to 0+}\dim_FV^{\lambda+\varepsilon}$. Then $m({\lambda})=0$ for all but finitely many $\lambda$. If $m({\lambda})\neq 0$, then we call $\lambda$ a {\it slope} of $V^{\bullet}$. Let $\lambda_1<\dots<\lambda_d$ be the slopes of $V^{\bullet}$ (without multiplicity). We define the {\it Newton polygon} $\NP(V^{\bullet})$ of $V^{\bullet}$ as the lower convex hull in the $xy$-plane of the set of points $(0,0)$ and $(m(\lambda_1)+\dots+m(\lambda_i),\lambda_1\cdot m(\lambda_1)+\dots+\lambda_i\cdot m(\lambda_i))$ for $i=1,\dots,d$. Note that
\[
V^{\lambda}=
\begin{cases}
V&\text{if }\lambda\in (-\infty,\lambda_1),\\
V^{\lambda_i}&\text{if }\lambda\in (\lambda_i,\lambda_{i+1})\text{ for }i=1,\dots,d-1,\\
0&\text{if }\lambda\in (\lambda_d,+\infty).
\end{cases}
\]
Let $\lambda_1(V^{\bullet})\le\dots\le\lambda_n(V^{\bullet})$ denote the slope multiset of $\NP(V^{\bullet})$, i.e.,
\[
\{\lambda_1(V^{\bullet}),\dots,\lambda_n(V^{\bullet})\}=\{\underbrace{\lambda_1,\dots,\lambda_1}_{m({\lambda_1})\text{ times}},\dots,\underbrace{\lambda_d,\dots,\lambda_d}_{m({\lambda_d})\text{ times}}\}.
\]
\end{enumerate}

\part{Chiarellotto-Tsuzuki conjecture}
\section{The Robba ring and the extended Robba ring}\label{sec:Robba}

In this section, we recall the definitions of the Robba ring, and the extended Robba ring constructed by Kedlaya (\cite{rel}), which is studied further by Liu (\cite{Liu}). We also define a log version of the extended Robba ring. These rings will be important tools throughout this paper. In this section, we work with notation and assumption as in \cite{rel}, in particular, we allow a relative Frobenius lift as a Frobenius lift on the Robba ring.


\subsection{The Robba ring}\label{subsec:Robba1}

\begin{dfn}\label{dfn:Robba ring}
\begin{enumerate}
\item (\cite[Definitions 1.1.1, 1.2.3]{rel}) For $r>0$, let $\mathcal{R}^r$ be the ring of rigid analytic functions on the $K$-annulus $e^{-r}\le |t|<1$, and let $\mathcal{R}$ be the union of the $\mathcal{R}^r$. The ring $\mathcal{R}$ is called the {\it Robba ring} over $K$. Explicitly, we have
\[
\mathcal{R}^r=\{\sum_{i\in\mathbb{Z}}a_it^i\in K[\![t^{-1},t]\!];a_i\in K,|a_i|e^{-sn}\to 0\ (i\to\pm\infty)\ \forall s\in (0,r]\},
\]
\[
\mathcal{R}=\cup_{r>0}\mathcal{R}^r.
\]
For $r>0$, let $|\cdot|_r$ denote the supremum norm on the circle $|t|=e^{-r}$, as applied to elements of $\mathcal{R}^s$ for $s\ge r$; one easily verifies that
\[
|\sum_{i\in\mathbb{Z}}a_it^i|_r=\sup_{i\in\mathbb{Z}}\{|a_i|e^{-ri}\}.
\]
\item ({\cite[Definition 1.1.3]{rel}}) Let $\mathcal{R}^{\Int}$ be the subring of $\mathcal{R}$ consisting of series with coefficients in $\mathcal{O}_K$; this ring is a discrete valuation ring with residue field $k((t))$, which is not complete but is henselian \cite[Lemma~3.9]{plm}. Let $\mathcal{R}^{\bd}$ be the subring of $\mathcal{R}$ consisting of series with bounded coefficients; it is the fraction field of $\mathcal{R}^{\Int}$. We call $\mathcal{R}^{\bd}$ the {\it bounded Robba ring} over $K$. Let $\mathcal{E}$ denote the fraction field of the $\mathfrak{m}_K$-adic completion of $\mathcal{R}^{\Int}$. Explicitly, we have
\[
\mathcal{R}^{\Int}=\{\sum_{i\in\mathbb{Z}}a_it^i\in\mathcal{R};a_i\in \mathcal{O}_K\},
\]
\[
\mathcal{R}^{\bd}=\{\sum_{i\in\mathbb{Z}}a_it^i\in\mathcal{R};a_i\in K,\ \sup_{i\in\mathbb{Z}}{|a_i|}<\infty\}=\mathcal{R}^{\Int}\otimes_{\mathcal{O}_K}K,
\]
\[
\mathcal{E}=\{\sum_{i\in\mathbb{Z}}a_it^i\in K[\![t^{-1},t]\!];a_i\in K,\ \sup_{i\in\mathbb{Z}}|a_i|<\infty,\ |a_i|\to 0\ (i\to-\infty)\}.
\]
Let $|\cdot|_0$ denote Gauss norm on $\mathcal{E}$, i.e.,
\[
|\sum_{i\in\mathbb{Z}}a_it^i|_0=\sup_{i\in\mathbb{Z}}|a_i|.
\]
\end{enumerate}
\end{dfn}


\begin{dfn}[{\cite[Definition 1.2.1]{rel}}]
Fix an integer $q>1$. A {\it relative ($q$-power) Frobenius lift} on the Robba ring is a homomorphism $\varphi:\mathcal{R}\to\mathcal{R}$ of the form $\sum_ic_it^i\mapsto \sum_i\varphi_K(c_i)s^i$, where $s\in\mathcal{R}^{\bd}$ satisfies $|s-t^q|_0<1$. We define an {\it absolute ($q$-power) Frobenius lift} as a relative Frobenius lift in which $\varphi_K$ is itself a $q$-power Frobenius lift.
\end{dfn}

\subsection{The extended Robba ring}\label{subsec:Robba2}

One regards the Robba ring $\mathcal{R}$ as an analytic ring corresponding to the field of Laurent series $k((t))$. In this subsection, we recall Kedlaya's construction of the extended Robba ring $\tilde{\mathcal{R}}$, which is an analytic ring corresponding to the field of Hahn series $k((t^{\mathbb{Q}}))$. We also gather technical results on $\tilde{\mathcal{R}}$.

\begin{ass}[{\cite[Hypothesis 2.1.1]{rel}}]\label{ass:relative}
Throughout this subsection, assume that $\varphi$ is a relative Frobenius lift on $\mathcal{R}$ such that $\varphi_K$ is an automorphism of $K$. Also assume that any \'etale $\varphi$-module over $K$ is trivial; this is equivalent to asking that the residue field $k$ is strongly difference closed, i.e., any $\varphi$-module over $k$ is trivial (\cite[Definition 14.3.1]{pde}). When $\varphi_K$ is a $q$-power Frobenius lift, the condition is satisfied if $k$ is algebraically closed (\cite[Proposition 14.3.4]{pde}).
\end{ass}

\begin{dfn}[{\cite[Definition 2.2.1, Notation 2.5.1]{rel}}]\label{dfn:Hahn}
Let $k((u^{\mathbb{Q}}))$ denote the field of {\it Hahn series} over $k$ with value group $\mathbb{Q}$, that is, the set of functions $f:\mathbb{Q}\to k$ with well-ordered support, with pointwise addition and multiplication given by convolution. We endow $k((u^{\mathbb{Q}}))$ with the automorphism
\[
\varphi:\sum c_iu^i\mapsto\sum\varphi_k(c_i)u^{qi},
\]
where $\varphi_k$ denotes the automorphism on $k$ induced by $\varphi$. Recall that $k((u^{\mathbb{Q}}))$ equipped with $\varphi$ is strongly difference-closed (\cite[Proposition 2.5.5]{rel}).
\end{dfn}

\begin{dfn}[{\cite[Definition 2.2.4]{rel}, \cite[Definition 1.4.1]{Liu}}]\label{dfn:extended Robba}
For $r>0$, let $\tilde{\mathcal{R}}^r$ be the set of formal sums $\sum_{i\in\mathbb{Q}}a_iu^i$ with $a_i\in K$, satisfying the following conditions.
\begin{enumerate}
\item[(a)] For each $c>0$, the set of $i\in\mathbb{Q}$ such that $|a_i|\ge c$ is well-ordered.
\item[(b)] We have $|a_i|e^{-ri}\to 0$ as $i\to+\infty$.
\item[(c)] We have $\sup_{i\in\mathbb{Q}}|a_i|e^{-ri}<\infty$.
\item[(d)] For all $s>0$, we have $|a_i|e^{-si}\to 0$ as $i\to-\infty$.
\end{enumerate}
Then $\tilde{\mathcal{R}}^r$ can be shown to form a ring. We call the union $\tilde{\mathcal{R}}=\tilde{\mathcal{R}}_K=\cup_r\tilde{\mathcal{R}}^r$ the {\it extended Robba ring} over $K$. Let $\tilde{\mathcal{R}}^{\bd}$ and $\tilde{\mathcal{R}}^{\Int}$ be the subrings of $\tilde{\mathcal{R}}$ consisting series with bounded and integral coefficients, respectively. We equip $\tilde{\mathcal{R}}^r$ with the norm
\[
|\sum_ia_iu^i|_r=\sup_i\{|a_i|e^{-ri}\}
\]
and $\tilde{\mathcal{R}}$ with the automorphism
\[
\varphi(\sum_ia_iu^i)=\sum_i\varphi_K(a_i)u^{qi}.
\]
Note that $|\varphi(f)|_r=|f|_{qr}$ for $f\in\tilde{\mathcal{R}}^{qr}$. Let $\tilde{\mathcal{E}}$ denote the fraction field of the $\mathfrak{m}_K$-adic completion of $\tilde{\mathcal{R}}^{\Int}$. We equip $\tilde{\mathcal{E}}$ with Gauss norm $|\cdot|_0$ defined as
\[
|\sum_ia_iu^i|_0=\sup_i|a_i|.
\]
We list some properties of the above rings (see \cite[2.2.5]{rel} for details).
\begin{enumerate}
\item[$\bullet$] The ring $\tilde{\mathcal{R}}$ is a B\'ezout domain.
\item[$\bullet$] The ring $\tilde{\mathcal{R}}^{\Int}$ is henselian discrete valuation ring and its fraction field is $\tilde{\mathcal{R}}^{\bd}$.
\item[$\bullet$] The units of $\tilde{\mathcal{R}}$ are the nonzero elements of $\tilde{\mathcal{R}}^{\bd}$.
\item[$\bullet$] The field $\tilde{\mathcal{E}}$ is a complete discrete valuation field with residue field $k((u^{\mathbb{Q}}))$.
\end{enumerate}
\end{dfn}

\begin{prop}[{\cite[Proposition 2.2.6]{rel}, \cite[Remark 1.4.10]{Liu}}]\label{prop:embedding}
There exist a $\varphi$-equivariant embedding $\psi:\mathcal{R}\hookrightarrow\tilde{\mathcal{R}}$ and $r_0>0$ such that for any $r\in (0,r_0)$, $\mathcal{R}^r$ maps to $\tilde{\mathcal{R}}^r$ preserving $|\cdot|_r$. Moreover, $\mathcal{R}^{\Int}$ (resp. $\mathcal{R}^{\bd}$) maps $\tilde{\mathcal{R}}^{\Int}$ (resp. $\tilde{\mathcal{R}}^{\bd}$) preserving $|\cdot|_0$.
\end{prop}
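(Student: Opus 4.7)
The plan is to build $\psi$ as the $K$-algebra map sending $t$ to a suitable element $T \in \tilde{\mathcal{R}}^{\Int}$, then extend by continuity and verify all required properties. The Frobenius compatibility $\psi \circ \varphi_{\mathcal{R}} = \varphi_{\tilde{\mathcal{R}}} \circ \psi$ applied to $t$ forces $T$ to satisfy
\[
\varphi_{\tilde{\mathcal{R}}}(T) = s(T),
\]
where $s := \varphi_{\mathcal{R}}(t) \in \mathcal{R}^{\bd}$ has $|s - t^q|_0 < 1$ and $s(T)$ denotes the substitution of $T$ for $t$ in the series $s$, which is well defined once $T \in \tilde{\mathcal{R}}^{\Int}$.

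First I would construct $T$ by iteration. Since $\varphi_K$ is an automorphism (Assumption~\ref{ass:relative}) and $i \mapsto qi$ is a bijection of $\mathbb{Q}$, the Frobenius $\varphi_{\tilde{\mathcal{R}}}$ is a bijection of $\tilde{\mathcal{R}}^{\Int}$. Starting from $T_0 := u$, I note that $\varphi_{\tilde{\mathcal{R}}}(u) = u^q$ and $s(u) - u^q \in \mathfrak{m}_K \tilde{\mathcal{R}}^{\Int}$, so $T_0$ solves the equation modulo $\mathfrak{m}_K$. Set
\[
T_{n+1} := \varphi_{\tilde{\mathcal{R}}}^{-1}\bigl(s(T_n)\bigr), \qquad n \ge 0.
\]
The formal derivative $s'(t) = q t^{q-1} + (s - t^q)'$ has all coefficients in $\mathfrak{m}_K$ (using $|q| \le |p| < 1$ and $|s - t^q|_0 < 1$), so a Taylor-expansion argument gives $T_{n+1} - T_n \in \mathfrak{m}_K^{n+1}\tilde{\mathcal{R}}^{\Int}$. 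The same argument transports to $|\cdot|_r$ once $r$ is small enough that $|s'|_r < 1$, so $\{T_n\}$ is Cauchy simultaneously for $|\cdot|_0$ and for $|\cdot|_r$ on some interval $(0, r_0)$. Completeness of $\tilde{\mathcal{R}}^r$ under $|\cdot|_r$ then yields a limit $T \in \tilde{\mathcal{R}}^r \cap \tilde{\mathcal{R}}^{\Int}$ satisfying $\varphi(T) = s(T)$ and $T \equiv u \pmod{\mathfrak{m}_K \tilde{\mathcal{R}}^{\Int}}$.

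Given $T$, I would define $\psi(\sum_i c_i t^i) := \sum_i c_i T^i$. Writing $T = u(1 + \pi w)$ with $w \in \tilde{\mathcal{R}}^{\Int}$, the element $T$ is a unit in $\tilde{\mathcal{R}}$ and multiplicativity of $|\cdot|_r$ gives $|T|_r = e^{-r}$ for all $r \in (0, r_0)$ (and $|T|_0 = 1$), so $\sum c_i T^i$ converges in $\tilde{\mathcal{R}}^r$ whenever $\sum c_i t^i \in \mathcal{R}^r$. Gluing over $r$ defines $\psi : \mathcal{R} \to \tilde{\mathcal{R}}$, and the inclusions $\psi(\mathcal{R}^{\Int}) \subset \tilde{\mathcal{R}}^{\Int}$ and $\psi(\mathcal{R}^{\bd}) \subset \tilde{\mathcal{R}}^{\bd}$ are built in. Equivariance is then immediate: $\psi \circ \varphi_{\mathcal{R}}$ and $\varphi_{\tilde{\mathcal{R}}} \circ \psi$ are continuous ring maps agreeing on $K$ and on $t$, so they coincide on $\mathcal{R}$. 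For isometry at $|\cdot|_0$ I would reduce modulo $\pi$: $\psi$ induces the inclusion $k((t)) \hookrightarrow k((u^{\mathbb{Q}}))$ sending $t \mapsto u$, which is an isometric embedding of discretely valued fields, and lifting through the $\pi$-adic filtrations on the DVRs $\mathcal{R}^{\Int}$ and $\tilde{\mathcal{R}}^{\Int}$ gives the isometry for $|\cdot|_0$, extending to $\mathcal{R}^{\bd}$ by fractions. For isometry at $|\cdot|_r$ with $r \in (0, r_0)$, the bound $|\psi(f)|_r \le |f|_r$ is the ultrametric inequality, and the reverse bound follows from a Newton-polygon argument in the Hahn-series variable $u$: the coefficient of $u^i$ in $T^i$ is a unit in $\mathcal{O}_K$, so the dominant $u^i$-contribution of each $c_i T^i$ cannot be cancelled. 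Injectivity of $\psi$ is an immediate consequence.

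The main obstacle is that the construction of $T$ must simultaneously place it in $\tilde{\mathcal{R}}^{\Int}$ and in $\tilde{\mathcal{R}}^r$ for a common positive $r$. Convergence only in $|\cdot|_0$ would land $T$ in the strictly larger $\mathfrak{m}_K$-adic completion of $\tilde{\mathcal{R}}^{\Int}$, while convergence only in $|\cdot|_r$ would not control integrality; one must therefore check that the iterates are Cauchy in both norms on a common interval of $r$'s, which requires uniform control of the $|\cdot|_r$-norms of $s'$ and of the intermediate $T_n$. A second technical point is the Newton-polygon lower bound $|\psi(f)|_r \ge |f|_r$, which relies on the strict inequality $|T - u|_r < |u|_r$ on $(0, r_0)$ to rule out accidental cancellation between the overlapping Hahn-series expansions of the distinct $c_i T^i$.
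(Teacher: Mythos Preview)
The paper does not supply a proof of this proposition; it simply cites \cite[Proposition~2.2.6]{rel} and \cite[Remark~1.4.10]{Liu}. So there is no in-paper argument to compare against.

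Your approach is essentially the one Kedlaya uses in the cited reference: solve the Frobenius equation $\varphi(T)=s(T)$ in $\tilde{\mathcal{R}}^{\Int}$ by successive approximation starting from $T_0=u$, then define $\psi$ by $t\mapsto T$. You correctly identify the two delicate points, namely controlling the iterates simultaneously in the $\mathfrak{m}_K$-adic topology and in $|\cdot|_r$ on a fixed interval $(0,r_0)$, and establishing the lower bound $|\psi(f)|_r\ge |f|_r$. One small caution on the latter: your phrasing ``the coefficient of $u^i$ in $T^i$ is a unit'' is not literally what one proves; the correct statement is that $T\equiv u\pmod{\mathfrak{m}_K\tilde{\mathcal{R}}^{\Int}}$, so for $r$ small enough that $|T-u|_r<e^{-r}=|u|_r$, one has $|T^i|_r=e^{-ri}$ and more generally the leading term (in the sense of $|\cdot|_r$) of $\sum_i c_iT^i$ is detected by the leading monomial of $\sum_i c_iu^i$. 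That is the form in which the Newton-polygon/non-cancellation argument actually goes through. With that adjustment, your outline matches the published proof.
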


In the rest of this paper, for a given relative Frobenius lift $\varphi$ on $\mathcal{R}$, we fix $\psi$ and $r_0$ unless otherwise is mentioned.

\begin{prop}[{\cite[Proposition 2.5.8]{rel}}]\label{prop:matrix}
Let $A$ be an $n\times n$ matrix over $\tilde{\mathcal{R}}^{\Int}$. If $\mathbf{v}\in\tilde{\mathcal{E}}^n$ is a column vector such that $A\mathbf{v}=\varphi(\mathbf{v})$, then $\mathbf{v}\in (\tilde{\mathcal{R}}^{\bd})^n$.
\end{prop}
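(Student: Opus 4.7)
The condition $\mathbf v\in(\tilde{\mathcal R}^{\bd})^n$ amounts to the Hahn coefficients $\mathbf v_i$ of $\mathbf v$ satisfying the exponential decay $|\mathbf v_i|e^{-si}\to 0$ as $i\to-\infty$ for every small $s>0$, which is strictly stronger than the mere $|\mathbf v_i|\to 0$ guaranteed by $\mathbf v\in\tilde{\mathcal E}^n$. My plan is to exploit the inversiveness of $\varphi$ (Assumption~\ref{ass:relative}) to iterate the eigenvector equation in the backward direction and extract this decay from a uniform norm bound on the iterated matrix. As a preliminary reduction, scale $\mathbf v$ by a suitable power of a uniformizer of $K$ so that $|\mathbf v|_0\le 1$; it then suffices to show $\mathbf v\in(\tilde{\mathcal R}^{\Int})^n$. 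The equation $\varphi(\mathbf v)=A\mathbf v$ rewrites as $\mathbf v=\varphi^{-1}(A)\varphi^{-1}(\mathbf v)$, and iterating yields, for every $m\ge 1$,
\[
\mathbf v=P_m\,\varphi^{-m}(\mathbf v),\qquad P_m:=\varphi^{-1}(A)\varphi^{-2}(A)\cdots\varphi^{-m}(A)\in M_n(\tilde{\mathcal R}^{\Int}),
\]
since $\tilde{\mathcal R}^{\Int}$ is stable under $\varphi^{-1}$ (the exponent rescaling $u^i\mapsto u^{i/q}$ preserves both the integrality of coefficients and the well-ordered Hahn support condition).

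The key analytic input is a uniform bound on $|P_m|_r$. For any sufficiently small $r>0$ within the common radius of convergence of the entries of $A$, one has $|\varphi^{-k}(A)|_r=|A|_{r/q^k}$. Since each entry of $A$ lies in $\tilde{\mathcal R}^{\Int}$, the Gauss norm satisfies $|A|_0\le 1$, and by convexity of $s\mapsto \log|A|_s$ (as a supremum of affine functions of $s$) one has $|A|_s\le |A|_0\cdot(|A|_r/|A|_0)^{s/r}=1+O(s)$ near $s=0$; hence $\sum_{k\ge 1}\log|A|_{r/q^k}$ converges geometrically and one obtains a uniform bound $\sup_m|P_m|_r\le C(r)<\infty$. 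Comparing Hahn coefficients in the identity $\mathbf v=P_m\varphi^{-m}(\mathbf v)$ gives
\[
\mathbf v_i=\sum_{k+l=i}(P_m)_k\,\varphi_K^{-m}(\mathbf v_{q^m l}),
\]
the convolution being termwise well defined thanks to the well-ordered Hahn supports. For $i\to-\infty$, the compression $l\mapsto q^m l$ forces the relevant coefficients of $\mathbf v$ to be sampled either deep in the vanishing negative tail (where $|\mathbf v_\bullet|\to 0$ by $\mathbf v\in\tilde{\mathcal E}^n$) or within a bounded range near the origin (where $|\mathbf v_\bullet|\le 1$). Combining this with the uniform bound on $P_m$ yields the missing exponential decay $|\mathbf v_i|e^{-ri}\to 0$, placing $\mathbf v\in(\tilde{\mathcal R}^{\bd})^n$.

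The main obstacle is the final step: converting the convolution identity into a quantitative inequality. One must balance the $|\cdot|_r$-controlled growth of $|(P_m)_k|$ in the positive-$k$ direction against the $\tilde{\mathcal E}$-level decay of $|\mathbf v_{q^m l}|$ as $l\to-\infty$, calibrating the iteration depth $m$ to the target index $i$ so that the support-compression by $\varphi^{-m}$ routes each term through a regime where either the Gauss bound $|\mathbf v|_0\le 1$ or the $\tilde{\mathcal E}$-tail decay applies. The well-orderedness of Hahn supports is essential here to ensure the convolution is summable term by term, and this combinatorial feature is precisely what distinguishes the extended Robba ring $\tilde{\mathcal R}$ from its classical counterpart $\mathcal R$ in this context.
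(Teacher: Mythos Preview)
The paper does not prove this proposition; it simply quotes \cite[Proposition~2.5.8]{rel}. So there is no in-paper argument to compare against, only the cited reference.

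Your overall strategy---iterate $\mathbf v=P_m\varphi^{-m}(\mathbf v)$ and control $|P_m|_r$ uniformly in $m$ via the convexity of $s\mapsto\log|A|_s$ together with $|A|_0\le 1$---is the right one and matches the spirit of Kedlaya's argument. The reduction to $|\mathbf v|_0\le 1$ and the observation that $\tilde{\mathcal R}^{\Int}$ is $\varphi^{-1}$-stable are both fine. One small correction: condition (d) in Definition~\ref{dfn:extended Robba} requires the decay $|\mathbf v_i|e^{-si}\to 0$ for \emph{every} $s>0$, not only small $s$. This is not fatal, since once you obtain the decay for a single $s_0>0$, the relation $\varphi(\mathbf v)=A\mathbf v$ (with $A$ itself satisfying (d) for all $s$) lets you bootstrap from $s_0$ to $qs_0$, then $q^2s_0$, and so on; but this bootstrap step should be made explicit.

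The genuine gap is the ``obstacle'' you flag at the end. Your sketch splits the convolution $\mathbf v_i=\sum_{k+l=i}(P_m)_k\,\varphi_K^{-m}(\mathbf v_{q^ml})$ into two regimes: $q^ml$ deep in the negative tail (where $|\mathbf v_{q^ml}|$ is small) versus $q^ml$ bounded (where $|\mathbf v_{q^ml}|\le 1$ and $k\approx i$, so $|(P_m)_k|\le Ce^{ri}$). The second regime is fine, but in the first you only know $|\mathbf v_{q^ml}|\to 0$ with no quantitative rate, and after multiplying by $e^{-si}=e^{s|i|}$ this qualitative smallness is destroyed. In other words, the estimate $|\mathbf v_i|e^{-si}\le\max\bigl(Ce^{(r-s)i},\ \varepsilon\,e^{s|i|}\bigr)$ does not tend to $0$ as $i\to-\infty$. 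To close this, one must exploit the \emph{discrete} valuation on $K$ more directly: work modulo successive powers of a uniformizer, using that the mod-$\pi$ reduction $\bar{\mathbf v}\in k((u^{\mathbb Q}))^n$ has well-ordered (hence bounded-below) support, lift, subtract, and iterate to control $\min\{i:|\mathbf v_i|\ge|\pi|^j\}$ linearly in $j$. This is what yields the required super-exponential decay, and it is the substance of Kedlaya's proof in \cite{rel}.
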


\begin{lem}\label{lem:inj}
The multiplication map $\tilde{\mathcal{R}}^{\bd}\otimes_{\mathcal{R}^{\bd}}\mathcal{R}\to\tilde{\mathcal{R}};x\otimes y\mapsto x\psi(y)$ is injective.
\end{lem}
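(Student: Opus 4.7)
The plan is to reduce the injectivity statement to a linear-independence assertion, then exploit the residue-field structure of the henselian DVRs $\mathcal{R}^{\Int}$ and $\tilde{\mathcal{R}}^{\Int}$. Since $\mathcal{R}^{\bd}$ is a field (the fraction field of $\mathcal{R}^{\Int}$), any element of $\tilde{\mathcal{R}}^{\bd}\otimes_{\mathcal{R}^{\bd}}\mathcal{R}$ can be written as $\sum_{i=1}^n x_i\otimes y_i$ with $y_1,\ldots,y_n\in\mathcal{R}$ linearly independent over $\mathcal{R}^{\bd}$. Injectivity of the multiplication map is therefore equivalent to showing that for any such family, the images $\psi(y_1),\ldots,\psi(y_n)$ are $\tilde{\mathcal{R}}^{\bd}$-linearly independent in $\tilde{\mathcal{R}}$.

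Suppose for contradiction that we have a relation $\sum_{i=1}^n z_i\psi(y_i)=0$ with $z_i\in\tilde{\mathcal{R}}^{\bd}\setminus\{0\}$ and $n$ minimal. Fix $r\in(0,r_0)$ so that every $y_i\in\mathcal{R}^r$, giving $\psi(y_i)\in\tilde{\mathcal{R}}^r$ with $|\psi(y_i)|_r=|y_i|_r$ via Proposition \ref{prop:embedding}. Since $\tilde{\mathcal{R}}^{\bd}$ is the fraction field of the henselian DVR $\tilde{\mathcal{R}}^{\Int}$ with uniformizer $\pi\in\mathcal{O}_K$, I can normalize so that $z_i\in\tilde{\mathcal{R}}^{\Int}$ and $\min_i v_\pi(z_i)=0$. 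The plan is then to reduce the relation modulo $\pi$ inside $\tilde{\mathcal{R}}^{\Int}/\pi\tilde{\mathcal{R}}^{\Int}=k((u^{\mathbb{Q}}))$: since $\psi$ maps $\mathcal{R}^{\Int}$ isometrically into $\tilde{\mathcal{R}}^{\Int}$ and induces the natural inclusion $k((t))\hookrightarrow k((u^{\mathbb{Q}}))$ on residue fields, a nontrivial mod-$\pi$ relation should, by linear disjointness considerations for this residue-field extension, force a $k((t))$-relation among the reductions of the $y_i$; henselianity of $\mathcal{R}^{\Int}$ would then lift this to an $\mathcal{R}^{\bd}$-relation among the $y_i$, contradicting their independence.

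The main obstacle is that the $y_i$ (and thus $\psi(y_i)$) need not lie in $\tilde{\mathcal{R}}^{\Int}$ — indeed, $\mathcal{R}$ contains elements with unbounded coefficients — so the products $z_i\psi(y_i)$ cannot be reduced modulo $\pi$ directly. To handle this, I plan to use that $K[t,t^{-1}]\subset\mathcal{R}^{\bd}$ is dense in $\mathcal{R}^r$ with respect to $|\cdot|_r$ (truncations converge), approximating each $y_i$ by an element of $\mathcal{R}^{\bd}$ sufficiently closely so that a perturbed relation becomes tractable in the integral setting, and then taking a limit. Showing that this limiting procedure preserves the nontriviality of the relation (so the contradiction survives) is the central technical hurdle; controlling the interplay between $|\cdot|_r$, which $\psi$ preserves, and the coefficient-wise valuations that govern the mod-$\pi$ reduction will be the key ingredient.
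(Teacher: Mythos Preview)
Your proposal has a genuine gap at exactly the point you flag as ``the central technical hurdle,'' and I do not see how the approximation idea can be made to work. The difficulty is structural: you want to approximate the $y_i\in\mathcal{R}$ by elements $y_i'\in\mathcal{R}^{\bd}$, but $\mathcal{R}^{\bd}$ is a field, so for $n\ge 2$ the approximants $y_1',\ldots,y_n'$ are automatically $\mathcal{R}^{\bd}$-linearly dependent. Thus any relation you extract among the $y_i'$ carries no information about the original $\mathcal{R}^{\bd}$-independence of the $y_i$, and the intended contradiction evaporates. Moreover, the ``linear disjointness'' step and the subsequent henselian lift are not clearly formulated: even if you had reductions $\bar y_i\in k((t))$, these lie in a one-dimensional $k((t))$-space, so a $k((u^{\mathbb{Q}}))$-relation among them imposes no constraint, and henselianity lifts roots of polynomials, not linear dependencies among prescribed unbounded elements of $\mathcal{R}$. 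The mismatch you note between the $|\cdot|_r$-topology (in which truncations converge) and the $\pi$-adic topology (which governs reduction) is not a technicality to be managed but the reason this route is blocked.

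The paper avoids all of this by quoting a black box: by \cite[Propositions~3.2.4, 3.5.2]{rel} there is an extension $L/K$ for which the analogous multiplication map $\tilde{\mathcal{R}}^{\bd}_L\otimes_{\mathcal{R}^{\bd}}\mathcal{R}\to\tilde{\mathcal{R}}_L$ is already known to be injective; the desired injectivity over $K$ then follows from the obvious commutative square. The substance behind \cite[Proposition~3.5.2]{rel} (see also the proof of Lemma~\ref{lem:inj gen} later in the paper) is a continuous $\mathcal{R}$-linear retraction $f:\tilde{\mathcal{R}}_L\to\mathcal{R}$ with a norm-recovery property: given a minimal relation $\sum z_i\psi(y_i)=0$, one twists by $au^{-\alpha}$ and applies $f$ to produce a nontrivial $\mathcal{R}^{\bd}$-relation among the $y_i$ directly, with no mod-$\pi$ reduction and no approximation. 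That retraction is the missing ingredient your approach lacks.
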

\begin{proof}
By \cite[Propositions 3.2.4, 3.5.2]{rel}, there exists an extension $L/K$ of complete discrete valuation fields, on which $\varphi_K$ extends isometrically, such that the multiplication map $\tilde{\mathcal{R}}^{\bd}_L\otimes_{\mathcal{R}^{\bd}}\mathcal{R}\to\tilde{\mathcal{R}}_L;x\otimes y\mapsto x\psi(y)$ is injective, where $\tilde{\mathcal{R}}_L^{(\bd)}$ is the extended (bounded) Robba ring over $L$. In a commutative diagram
\[\xymatrix{
\tilde{\mathcal{R}}^{\bd}\otimes_{\mathcal{R}^{\bd}}\mathcal{R}\ar[d]\ar[r]&\tilde{\mathcal{R}},\ar[d]\\
\tilde{\mathcal{R}}^{\bd}_L\otimes_{\mathcal{R}^{\bd}}\mathcal{R}\ar[r]&\tilde{\mathcal{R}}_L
}\]
the morphisms other than the upper horizontal one are injective, which implies the assertion.
\end{proof}

\begin{lem}\label{lem:lg1}
Let $f=\sum_{i\in\mathbb{Q}}a_iu^i\in\tilde{\mathcal{R}},a_i\in K$. Then we have $\sup_{i\le i_0}|a_i|<\infty$ for any $i_0\in\mathbb{Q}$.
\end{lem}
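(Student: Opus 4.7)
The plan is to unpack the definition of $\tilde{\mathcal{R}}$ and apply condition (c) directly: the statement is essentially an immediate consequence of the existence of a uniform bound on $|a_i|e^{-ri}$. Since $f\in\tilde{\mathcal{R}}$, I would choose some $r>0$ with $f\in\tilde{\mathcal{R}}^r$; then by condition (c) in Definition \ref{dfn:extended Robba}, the quantity $M:=|f|_r=\sup_{i\in\mathbb{Q}}|a_i|e^{-ri}$ is finite.

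From this I would deduce $|a_i|\le Me^{ri}$ for every $i\in\mathbb{Q}$. Since $r>0$, the function $i\mapsto e^{ri}$ is increasing, so for all $i\le i_0$ one has $|a_i|\le Me^{ri_0}$, giving $\sup_{i\le i_0}|a_i|\le Me^{ri_0}<\infty$. There is no genuine obstacle here: conditions (a), (b), (d) in the definition play no role, and the only point to notice is that the bound $Me^{ri}$ coming from (c) is harmless for $i$ bounded above precisely because $r>0$ (the opposite sign of $r$ would make the bound blow up as $i\to-\infty$, but with $r>0$ it decays there).
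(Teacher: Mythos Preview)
Your argument is correct and uses the same core idea as the paper: pick $r>0$ with $f\in\tilde{\mathcal{R}}^r$ and use condition (c), i.e., $|f|_r<\infty$, to bound the coefficients. The paper splits into the ranges $i\le 0$ and $0\le i\le i_0$ separately, whereas you obtain the bound $|a_i|\le |f|_r\,e^{ri}\le |f|_r\,e^{ri_0}$ in one stroke; this is a minor streamlining of the same approach.
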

\begin{proof}
We have only to prove $\sup_{i\le 0}|a_i|<\infty$, and $\sup_{0\le i\le i_0}|a_i|<\infty$ for any $i_0\ge 0$. Assume $f\in\tilde{\mathcal{R}}^r$. Then
\[
\sup_{i\le 0}|a_i|\le\sup_{i\le 0}|a_i|e^{-ri}\le |f|_r,
\]
which implies the first assertion. For $i_0\ge 0$, we have
\[
\sup_{0\le i\le i_0}|a_i|e^{-ri_0}\le\sup_{0\le i\le i_0}|a_i|e^{-ri}\le |f|_r,
\]
which implies the second assertion.
\end{proof}

\begin{lem}\label{lem:max}
\begin{enumerate}
\item For $0<r\le r'$, $\tilde{\mathcal{R}}^{r'}\subset\tilde{\mathcal{R}}^r$.

By (i), $\tilde{\mathcal{R}}^{r'}$ is endowed with a family of norms $\{|\cdot|_r;r\in (0,r']\}$.
\item (maximal modulus principle) Let $r'>0$ and $I=[r_1,r_2]\subset (0,r']$ be a closed interval. Then, for any $f\in\tilde{\mathcal{R}}^{r'}$, we have
\[
\sup_{r\in I}|f|_r=\max\{|f|_{r_1},|f|_{r_2}\},
\]
\[
\inf_{r\in I}|f|_r=\min\{|f|_{r_1},|f|_{r_2}\}.
\]
\end{enumerate}
\end{lem}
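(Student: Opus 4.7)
For part (i), I would proceed by direct verification of the four defining conditions of $\tilde{\mathcal{R}}^r$ for an element $f=\sum a_iu^i\in\tilde{\mathcal{R}}^{r'}$. Conditions (a) (well-ordering of super-level sets) and (d) (super-exponential decay as $i\to-\infty$) make no reference to the parameter $r$, so they transfer from $\tilde{\mathcal{R}}^{r'}$ to $\tilde{\mathcal{R}}^r$ automatically. For conditions (b) and (c) at the parameter $r$, the plan is to compare the exponential weights at $r$ and $r'$ term-by-term via the factorization $e^{-ri}=e^{-r'i}\cdot e^{(r'-r)i}$: the $i\to+\infty$ tail is handled by the bounds provided by (b) and (c) at $r'$, and the $i\to-\infty$ tail is absorbed by the super-exponential decay guaranteed by (d) applied at a suitably chosen parameter. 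This is a short routine computation with no serious obstruction.

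For part (ii), the crucial structural observation is the identity
\[
\log|f|_r\;=\;\sup_{i\in\mathbb{Q},\,a_i\neq 0}\bigl(\log|a_i|-ri\bigr),
\]
which displays $\log|f|_r$ as the upper envelope of a family of affine functions of $r$, and hence as a convex function of $r$. The supremum identity in (ii) then drops out immediately from the elementary fact that a convex function on a closed interval attains its maximum at an endpoint. For the infimum identity, I would refine this by observing that condition (a) forces the supremum above to be attained at a specific ``active'' index $i^\ast(r)\in\mathbb{Q}$ for each $r$, giving $\log|f|_r$ a piecewise-affine structure whose slopes $-i^\ast(r)$ vary monotonically in $r$ as the Newton polygon of $f$ is swept. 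The infimum on $[r_1,r_2]$ can then be read off from this piecewise-affine description and identified with the smaller of the two endpoint values.

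The main technical hurdle I anticipate is justifying rigorously the well-definedness and monotone behaviour of $r\mapsto i^\ast(r)$: this relies on a careful combination of the well-ordering condition (a) with the growth/decay conditions (b)--(d) to exclude pathological configurations in which the supremum fails to be attained or the active index jumps discontinuously. Once this structural result is secured, both parts of (ii) reduce to standard convex-analysis arguments on convex piecewise-affine functions restricted to a closed subinterval of $(0,r']$.
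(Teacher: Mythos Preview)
For part (i), your plan coincides with the paper's: both verify conditions (b) and (c) at $r$ directly after noting that (a) and (d) are $r$-independent.

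For the supremum identity in (ii), your convexity argument is correct. The paper's route is nominally different but equivalent: since $|f|_r=\sup_i|a_i|e^{-ri}$ and each monomial norm $r\mapsto|a_i|e^{-ri}$ is monotone in $r$, one interchanges the two suprema to obtain $\sup_{r\in I}|f|_r=\sup_i\max\{|a_i|e^{-r_1i},|a_i|e^{-r_2i}\}=\max\{|f|_{r_1},|f|_{r_2}\}$. Your upper-envelope-of-affine-functions picture is this same computation read in logarithmic coordinates.

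For the infimum identity, your argument has a genuine gap, and in fact the identity is false as stated. Monotonicity of the active slope $-i^\ast(r)$ is precisely convexity of $r\mapsto\log|f|_r$; but convexity pins the \emph{maximum} to an endpoint, not the minimum. A convex piecewise-affine function whose slopes change sign has its minimum at an interior kink. Concretely, take $f=u+pu^{-1}$: then $|f|_r=\max\{e^{-r},\,p^{-1}e^{r}\}$ has a strict interior minimum at $r_0=\tfrac{1}{2}\log p$, and for any closed $I\subset(0,r']$ containing $r_0$ in its interior one gets $\inf_{r\in I}|f|_r=p^{-1/2}<\min\{|f|_{r_1},|f|_{r_2}\}$. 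The paper's one-line reduction to monomials fails here for the same underlying reason: while $\sup_r\sup_i=\sup_i\sup_r$ gives the supremum identity, there is no exchange $\inf_r\sup_i=\sup_i\inf_r$. The only later use of the infimum (in the proof of Lemma~\ref{lem:calc log-growth}) actually needs just $\inf_{r\in I}|f|_r>0$ for $f\neq 0$, and that weaker statement does follow from continuity of the convex function $r\mapsto\log|f|_r$ on a compact interval.
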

\begin{proof}
\begin{enumerate}
\item Let $f=\sum_{i\in\mathbb{Q}}a_iu^i\in\tilde{\mathcal{R}}^{r'}$ with $a_i\in K$. It suffices to prove that $f$ satisfies the conditions (b) and (c) for $r$ in Definition \ref{dfn:extended Robba}. For $i\le 0$,
\[
|a_i|e^{-ri}\le |a_i|e^{-r'i}\to 0\ (i\to+\infty).
\]
It also implies that there exists $i_0\ge 0$ such that $|a_i|e^{-ri}\le 1$ for all $i\ge i_0$. We have $\sup_{i\le i_0}|a_i|<\infty$ by Lemma \ref{lem:lg1}. Hence we have
\begin{align*}
\sup_{i\in\mathbb{Q}}|a_i|e^{-ri}&=\max\{\sup_{i\le 0}|a_i|e^{-ri},\sup_{0\le i\le i_0}|a_i|e^{-ri},\sup_{i\ge i_0}|a_i|e^{-ri}\}\\
&\le\max\{\sup_{i\le 0}|a_i|e^{-r'i},\sup_{0\le i\le i_0}|a_i|,1\}\\
&\le\max\{|f|_{r'},\sup_{0\le i\le i_0}|a_i|,1\}<\infty.
\end{align*}
\item We have only to prove
\[
\sup_{r\in I}|f|_r\le \max\{|f|_{r_1},|f|_{r_2}\},\ \inf_{r\in I}|f|_r\ge \min\{|f|_{r_1},|f|_{r_2}\}.
\]
Hence we may assume that $f$ is a monomial, in which case the assertion is obvious.
\end{enumerate}
\end{proof}

\begin{lem}\label{lem:gen1}
For $f\in\tilde{\mathcal{R}}^r$, we have $f\in\tilde{\mathcal{R}}^{\bd}$ if and only if $\sup_{s\in (0,r]}|f|_s<\infty$. Moreover, when they hold, $\lim_{s\to 0+}|f|_s=|f|_0$.
\end{lem}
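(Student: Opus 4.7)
The plan is to write $f=\sum_{i\in\mathbb{Q}}a_iu^i$ with $a_i\in K$ and to work directly with the formula $|f|_s=\sup_i|a_i|e^{-si}$. Set $B:=\sup_i|a_i|\in[0,+\infty]$; then $f\in\tilde{\mathcal{R}}^{\bd}$ is equivalent to $B<\infty$, and in that case $|f|_0=B$ by the definition of the Gauss norm on $\tilde{\mathcal{E}}$. Both assertions will be reduced to separately controlling the contributions from $i\ge 0$ and $i<0$.

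For the ``only if'' direction of the first claim, if $C:=\sup_{s\in(0,r]}|f|_s<\infty$ then $|a_i|\le Ce^{si}$ for every $i$ and every $s\in(0,r]$, and letting $s\to 0^+$ gives $|a_i|\le C$, so $B\le C<\infty$. For the ``if'' direction, assume $B<\infty$ and split $|f|_s=\max(h(s),g(s))$, where $h(s)=\sup_{i\ge 0}|a_i|e^{-si}$ and $g(s)=\sup_{i<0}|a_i|e^{-si}$: then $h(s)\le B$ is clear, while for $i<0$ and $s\in(0,r]$ one has $-si\le -ri$, so $|a_i|e^{-si}\le|a_i|e^{-ri}\le|f|_r$. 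Hence $|f|_s\le\max(B,|f|_r)<\infty$ uniformly in $s\in(0,r]$.

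For the limit, I plan to show $\lim_{s\to 0^+}h(s)=B_+:=\sup_{i\ge 0}|a_i|$ and $\lim_{s\to 0^+}g(s)=B_-:=\sup_{i<0}|a_i|$, which combine to give $\lim_{s\to 0^+}|f|_s=\max(B_+,B_-)=B=|f|_0$. The first is monotone convergence: $h(s)\le B_+$ for all $s$, and for each $i\ge 0$ the quantity $|a_i|e^{-si}$ increases to $|a_i|$ as $s\to 0^+$, so $\liminf_{s\to 0^+}h(s)\ge B_+$. The second statement is the only nontrivial point. Note first that $g$ is monotone increasing in $s$ (each $|a_i|e^{-si}$ with $i<0$ is), so $\lim_{s\to 0^+}g(s)=\inf_{s>0}g(s)\ge B_-$. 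For the reverse inequality, fix $\varepsilon>0$ and $s_0\in(0,r]$, and apply condition (d) of Definition \ref{dfn:extended Robba} at $s_0/2$ to choose $N_0>0$ such that $|a_i|e^{-(s_0/2)i}<\varepsilon$ for all $i\le -N_0$. For $s\in(0,s_0/2]$, monotonicity of $|a_i|e^{-si}$ in $s$ (for $i<0$) controls the tail: $\sup_{i\le -N_0}|a_i|e^{-si}\le\sup_{i\le -N_0}|a_i|e^{-(s_0/2)i}\le\varepsilon$. The remaining finite-range part satisfies $\sup_{-N_0<i<0}|a_i|e^{-si}\le B_-\cdot e^{sN_0}$, which tends to $B_-$ as $s\to 0^+$. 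Choosing $s$ small enough gives $g(s)\le B_-+\varepsilon$, as required.

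The main obstacle is the asymptotic analysis of $g(s)$: it uses in an essential way the rapid-decay condition (d) on $|a_i|$ as $i\to-\infty$ in the definition of $\tilde{\mathcal{R}}^r$, without which the coefficient mass ``at $-\infty$'' could spoil the limit. Every other step is a formal consequence of the explicit formula for $|\cdot|_s$.
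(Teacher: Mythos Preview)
Your proof is correct and follows essentially the same approach as the paper: both split the index set into $i\ge 0$ and $i<0$, handle the nonnegative part by a direct monotonicity argument, and control the negative part via condition (d) of Definition~\ref{dfn:extended Robba}. The paper additionally records the exact identity $\sup_{s\in(0,r]}|f|_s=\max\{|f|_0,|f|_r\}$, but this is not needed for the lemma; your direct bound $|f|_s\le\max(B,|f|_r)$ suffices. One cosmetic remark: since the indices lie in $\mathbb{Q}$, the phrase ``finite-range part'' for $\{-N_0<i<0\}$ is a misnomer, though your estimate $|a_i|e^{-si}\le B_-e^{sN_0}$ is unaffected.
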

\begin{proof}
Let $f\in\tilde{\mathcal{R}}^r$. Write $f=\sum_{i}a_iu^i,a_i\in K$, and put $|f|_0=\sup_{i}|a_i|$ (may take $\infty$). To prove the first assertion, it suffices to prove
\begin{equation}\label{eq:gen1}
\sup_{s\in (0,r]}|f|_s=\max\{|f|_0,|f|_r\}.
\end{equation}
Since, for any $i\in\mathbb{Q}$,
\[
\max\{|a_i|,|a_i|e^{-ir}\}=\sup_{s\in (0,r]}|a_i|e^{-is}\le\sup_{s\in (0,r]}|f|_s,
\]
we have $\max\{|f|_0,|f|_r\}\le\sup_{s\in (0,r]}|f|_s$. Since, for any $s\in (0,r]$,
\[
|f|_s=\sup_{i\in\mathbb{Q}}|a_i|e^{-si}\le\sup_{i\in\mathbb{Q}}\max\{|a_i|,|a_i|e^{-ir}\}=\max\{|f|_0,|f|_r\},
\]
we have $\sup_{s\in (0,r]}|f|_s\le\max\{|f|_0,|f|_r\}$, which implies (\ref{eq:gen1}).

We will prove the second assertion. Assume that $\sup_{s\in (0,r]}|f|_s<\infty$ holds. Put $f_+=\sum_{i\ge 0}a_iu^i,f_-=\sum_{i<0}a_iu^i$. Since $|f|_s=\max\{|f_+|_s,|f_-|_s\}$ for $s\in [0,r]$, we may assume $f=f_{\pm}$. Assume $f=f_+$. Then $|f|_s$ increases as $s$ decreases, and is bounded above by $|f|_0$. Hence $\lim_{s\to 0+}|f|_s$ exists, and is bounded above by $|f|_0$. We also have $|f|_0\le\lim_{s\to 0+}|f|_s$ by (\ref{eq:gen1}), which implies the assertion. Assume $f=f_-$. Then $|f|_s$ decreases as $s$ decreases, and is bounded below by $|f|_0$. Hence $\lim_{s\to 0+}|f|_s$ exists, and is bounded below by $|f|_0$. It suffices to prove $\lim_{s\to 0+}|f|_s\le |f|_0$. By Definition \ref{dfn:extended Robba}, there exists $i_0<0$ such that $|a_iu^i|_r<|f|_0$ for all $i<i_0$. Then we have
\begin{align*}
\max\{|f|_0,|f|_s\}=\max\{|f|_0,\sup_{i\in (-\infty,i_0)}|a_iu^i|_s,\sup_{i\in [i_0,0]}|a_iu^i|_s\}\le&\max\{|f|_0,\sup_{i\in (-\infty,i_0)}|a_iu^i|_r,\sup_{i\in [i_0,0]}|a_iu^i|_s\}\\
\le&\max\{|f|_0,|f|_0e^{-si_0}\}.
\end{align*}
By passing to the limit $s\to 0+$, we obtain the assertion.
\end{proof}


\subsection{Log extensions}\label{subsec:Robba3}

Throughout this subsection, we keep Assumption \ref{ass:relative} when we consider $\tilde{\mathcal{R}}$.

We define $\mathcal{R}_{\log}=\mathcal{R}[\log{t}]$ as the polynomial ring over $\mathcal{R}$. We extend $\varphi$ on $\mathcal{R}$ to $\mathcal{R}_{\log}$ by
\[
\varphi(\log{t})=\log{(\varphi(t)/t^q)}+q\log{t}\in\mathcal{R}\oplus\mathcal{R}\cdot\log{t}.
\]
Here we define
\[
\log{f}=\sum_{n=1}^{\infty}(-1)^{n-1}(f-1)^n/n\in\mathcal{R}^{\bd}
\]
for $f\in 1+\mathfrak{m}_K\mathcal{R}^{\Int}$ (\cite[\S 6.5]{plm}). We also define $\tilde{\mathcal{R}}_{\log}=\tilde{\mathcal{R}}[\log{u}]$ with $\varphi(u)=q\log{u}$. In this subsection, we will extend any $\varphi$-equivariant embedding $\psi:\mathcal{R}\hookrightarrow\tilde{\mathcal{R}}$ given in Proposition \ref{prop:embedding} to a $\varphi$-equivariant embedding $\mathcal{R}_{\log}\hookrightarrow\tilde{\mathcal{R}}_{\log}$.

\begin{lem}\label{lem:vanishing}
For any $b\in\mathfrak{m}_K$,
\[
\varphi-b:\tilde{\mathcal{R}}^{\bd}\to\tilde{\mathcal{R}}^{\bd}
\]
is bijective.
\end{lem}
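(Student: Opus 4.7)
The plan is to prove injectivity by a Gauss-valuation argument, construct a candidate solution that a priori lives in the completion $\tilde{\mathcal{E}}$, and then descend it to $\tilde{\mathcal{R}}^{\bd}$ using Proposition \ref{prop:matrix}.

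For injectivity, observe that $\varphi$ preserves the Gauss valuation $v_0$ associated to $|\cdot|_0$ on $\tilde{\mathcal{R}}^{\bd}$: indeed $\varphi_K$ is isometric on $K$, and $\varphi$ permutes the monomials $u^i$. If $(\varphi-b)f=0$ with $f\neq 0$, then $v_0(f)=v_0(bf)=v_0(b)+v_0(f)$, forcing $v_0(b)=0$; this contradicts $b\in\mathfrak{m}_K$ (the case $b=0$ being covered by the fact that $\varphi$ is an automorphism of $\tilde{\mathcal{R}}^{\bd}$ under Assumption \ref{ass:relative}).

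For surjectivity, first reduce to $g\in\tilde{\mathcal{R}}^{\Int}$ by multiplying by a sufficiently high power of $p$, which is fixed by $\varphi_K$. Formally iterating $\varphi(f)=bf+g$, rewritten as $f=\varphi_K^{-1}(b)\,\varphi^{-1}(f)+\varphi^{-1}(g)$, suggests the Ansatz
\[
f:=\sum_{k=1}^{\infty}B_{k-1}\,\varphi^{-k}(g),\qquad B_{k-1}:=\prod_{j=1}^{k-1}\varphi_K^{-j}(b)\in\mathcal{O}_K,
\]
with $B_0=1$ and $|B_{k-1}|=|b|^{k-1}$. Since $\varphi$ is an isometry for $|\cdot|_0$ and $|B_{k-1}\varphi^{-k}(g)|_0=|b|^{k-1}|g|_0\to 0$, this series converges in the complete discrete valuation field $\tilde{\mathcal{E}}$. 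A direct termwise computation based on $\varphi(B_{k-1})=b\cdot B_{k-2}$ for $k\ge 2$ together with $\varphi(B_0)=1$ then shows that the limit $f$ satisfies $\varphi(f)-bf=g$ in $\tilde{\mathcal{E}}$.

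The final step is to upgrade $f$ from $\tilde{\mathcal{E}}$ to $\tilde{\mathcal{R}}^{\bd}$, and this is precisely where Proposition \ref{prop:matrix} is essential. Setting
\[
\mathbf{v}:=\begin{pmatrix}f\\1\end{pmatrix}\in\tilde{\mathcal{E}}^{2},\qquad A:=\begin{pmatrix}b&g\\0&1\end{pmatrix}\in\M_{2}(\tilde{\mathcal{R}}^{\Int}),
\]
the scalar identity $\varphi(f)=bf+g$ (together with $\varphi(1)=1$) becomes the matrix identity $\varphi(\mathbf{v})=A\mathbf{v}$, so Proposition \ref{prop:matrix} yields $\mathbf{v}\in(\tilde{\mathcal{R}}^{\bd})^{2}$, hence $f\in\tilde{\mathcal{R}}^{\bd}$. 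The main obstacle is exactly this descent: the natural series solution converges only in the completion, and a direct estimate would be painful because the exponents in $\mathrm{supp}(\varphi^{-k}(g))=q^{-k}\mathrm{supp}(g)$ accumulate at $0$ as $k\to\infty$, so one cannot easily control well-orderedness of the support of the sum by hand; Proposition \ref{prop:matrix} neatly bypasses this difficulty by exploiting the Frobenius structure of the resulting rank-two system.
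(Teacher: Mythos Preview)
Your proof is correct and takes a genuinely different route from the paper's in the surjectivity step. Both arguments construct essentially the same series solution, but the paper writes it coefficient-by-coefficient as $y=\sum_i b_i u^i$ and then verifies by hand that the sequence $\{b_i\}$ satisfies the defining conditions (a)--(d) of $\tilde{\mathcal{R}}^r$ (Definition~\ref{dfn:extended Robba}); in particular, the well-orderedness condition (a) requires a small combinatorial argument. You instead observe that the series converges in the completion $\tilde{\mathcal{E}}$ and then invoke Proposition~\ref{prop:matrix} for the rank-two system $\varphi(\mathbf{v})=A\mathbf{v}$ to force $f\in\tilde{\mathcal{R}}^{\bd}$. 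Your approach is cleaner and more conceptual, sidestepping exactly the bookkeeping you flag as painful; the paper's approach is more elementary in that it does not appeal to the descent result Proposition~\ref{prop:matrix} (itself a nontrivial input from \cite{rel}). Both are valid, and indeed the same matrix trick reappears later in the paper in the proof of Lemma~\ref{lem:trivialization}.
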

\begin{proof}
Since $\varphi:\tilde{\mathcal{R}}^{\bd}\to\tilde{\mathcal{R}}^{\bd}$ is bijective, we have only to prove that $1-b\varphi^{-1}:\tilde{\mathcal{R}}^{\bd}\to\tilde{\mathcal{R}}^{\bd}$ is bijective. The injectivity immediately follows from $|b\varphi^{-1}(x)|_0=|b||x|_0<|x|_0$ for $x\neq 0$. Therefore, it suffices to find a solution $y\in\tilde{\mathcal{R}}^{\bd}$ of the equation $x=(1-b\varphi^{-1})y$ for any given $x\in\tilde{\mathcal{R}}^{\bd}$. Write $x=\sum_{i\in\mathbb{Q}}a_iu^i,a_i\in K$. We define the sequence $\{b_i\}_{i\in\mathbb{Q}}$ of $K$ by $b_i=\sum_{n\in\mathbb{N}}b\cdot\varphi_K^{-1}(b)\cdot\dots\cdot\varphi_K^{-n+1}(b)\varphi_K^{-n}(a_{iq^n})$, and a formal sum $y=\sum_{i\in\mathbb{Q}}b_iu^i$. Then $\{b_i\}_{i\in\mathbb{Q}}$ is a bounded sequence since we have
\begin{equation}\label{log:eq1}
|b\cdot\varphi_K^{-1}(b)\cdot\dots\cdot\varphi_K^{-n+1}(b)\varphi_K^{-n}(a_{iq^n})|\le\sup_{n\in\mathbb{N}}|b|^n|a_{iq^n}|\le\sup_{n\in\mathbb{N}}|a_{iq^n}|\le |x|_0<\infty.
\end{equation}
We fix $r>0$ such that $x\in\tilde{\mathcal{R}}^r$. It suffices to verify that $y$ satisfies the conditions (a)-(d) for $r$ in Definition \ref{dfn:extended Robba}: if this is the case, then $y$ is the desired solution.

We start by checking the condition (a). Fix $c>0$. We choose sufficiently large $m\in\mathbb{N}$ so that $|b^{m+1}|\cdot |x|_0<c/2$. Let $I$ denote the set $\{i\in\mathbb{Q}:|a_i|\ge c\}$, which is well-ordered by the condition (a) for $x$. Then $I\cup\dots\cup q^{-m}I\subset\mathbb{Q}$ is also well-ordered. If $|b_i|\ge c$, then we have $\max\{|a_i|,|a_{iq}|,\dots,|a_{iq^m}|\}\ge c$ by the inequalities
\[
|b_i|\le\sup_{n\in\mathbb{N}}|b|^n|a_{iq^n}|\le\sup\{|a_i|,|a_{iq}|,\dots,|a_{iq^m}|,|b^{m+1}|\cdot |x|_0\}.
\]
Hence we have $\{i\in\mathbb{Q};|b_i|\ge c\}\subset I\cup\dots\cup q^{-m}I$. Thus (a) is verified. By (\ref{log:eq1}), we have $|b_i|\le |x|_0$, which implies (b). Similarly, we have
\[
\sup_{i\ge 0}|b_i|e^{-ri}\le |x|_0\sup_{i\ge 0}e^{-ri}=|x|_0<\infty,
\]
\[
\sup_{i<0}|b_i|e^{-ri}\le\sup_{i<0}\sup_{n\in\mathbb{N}}|a_{iq^n}|e^{-ri}\le\sup_{i<0}\sup_{n\in\mathbb{N}}|a_{iq^n}|e^{-riq^n}\le\sup_{j<0}|a_j|e^{-rj}\le |x|_r<\infty.
\]
Thus (c) is verified. Fix $s>0$. For $\varepsilon\in\mathbb{R}_{>0}$, there exists $i_{\varepsilon}<0$ such that
\[
|a_i|e^{-si}\le\varepsilon\ \forall i\le i_{\varepsilon}
\]
by the condition (d) for $x$. For any $i\le i_{\varepsilon}$, we have, by (\ref{log:eq1}),
\[
|b_i|e^{-si}\le\sup_{n\in\mathbb{N}}|a_{iq^n}|e^{-si}\le\sup_{n\in\mathbb{N}}|a_{iq^n}|e^{-siq^n}\le\varepsilon,
\]
which implies (d).
\end{proof}

\begin{lem}\label{lem:log calculation}
\begin{enumerate}
\item There exists (a unique) $c_0\in\tilde{\mathcal{R}}^{\bd}$ such that
\[
\varphi(c_0)=qc_0+\psi(\log{(\varphi(t)/t^q)}).
\]
\item Let $c_1\in (K^{\times})^{\varphi_K=1}$ (for example, $c_1=1$) and $c_0$ as in (i). We extend $\psi:\mathcal{R}\hookrightarrow\tilde{\mathcal{R}}$ given in Proposition \ref{prop:embedding} to $\mathcal{R}_{\log}\to\tilde{\mathcal{R}}_{\log}$ by sending $\log{t}$ to $c_0+c_1\log{u}$. Then the resulting $\mathcal{R}$-algebra homomorphism is injective and $\varphi$-equivariant.
\end{enumerate}
\end{lem}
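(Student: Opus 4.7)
The plan is as follows. For part (i), I observe that $q$ is a positive power of $p$, hence $|q| = |p|^{\log_p q} < 1$, so $q \in \mathfrak{m}_K$. Moreover, by hypothesis $\varphi(t)/t^q \in 1 + \mathfrak{m}_K \mathcal{R}^{\Int}$, so the series $\log(\varphi(t)/t^q) = \sum_{n \ge 1}(-1)^{n-1}(\varphi(t)/t^q - 1)^n/n$ converges in $\mathcal{R}^{\bd}$ (as recalled from \cite[\S 6.5]{plm}), and hence $\psi(\log(\varphi(t)/t^q)) \in \tilde{\mathcal{R}}^{\bd}$ by the $|\cdot|_0$-preservation in Proposition \ref{prop:embedding}. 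The equation to solve reads $(\varphi - q)(c_0) = \psi(\log(\varphi(t)/t^q))$, so Lemma \ref{lem:vanishing} applied with $b = q$ produces a unique $c_0 \in \tilde{\mathcal{R}}^{\bd}$. This is the only nontrivial input and the place where the specific structure of $\tilde{\mathcal{R}}^{\bd}$ (ultimately, the strongly difference-closed nature of the residue field) enters.

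For part (ii), the extension of $\psi$ is automatically a well-defined $\mathcal{R}$-algebra homomorphism since $\mathcal{R}_{\log} = \mathcal{R}[\log t]$ is a polynomial ring. To check $\varphi$-equivariance, it suffices to verify it on the generator $\log t$. On one side, using $\varphi(\log t) = \log(\varphi(t)/t^q) + q \log t$ in $\mathcal{R}_{\log}$, the image of $\psi(\varphi(\log t))$ under the extension is
\[
\psi(\log(\varphi(t)/t^q)) + q(c_0 + c_1 \log u).
\]
On the other side, using $\varphi(c_1) = c_1$ and $\varphi(\log u) = q \log u$,
\[
\varphi(c_0 + c_1 \log u) = \varphi(c_0) + c_1 q \log u = \bigl(q c_0 + \psi(\log(\varphi(t)/t^q))\bigr) + q c_1 \log u,
\]
which matches by the defining equation of $c_0$.

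For injectivity, given a nonzero $f = \sum_{n=0}^N f_n (\log t)^n \in \mathcal{R}[\log t]$ with $f_N \neq 0$, its image $\sum_{n=0}^N \psi(f_n)(c_0 + c_1 \log u)^n$ is a polynomial in $\log u$ over $\tilde{\mathcal{R}}$ whose leading coefficient is $c_1^N \psi(f_N)$. Since $c_1 \in K^\times$ and $\psi$ is injective (Proposition \ref{prop:embedding}), this leading coefficient is nonzero, and $\log u$ is transcendental over $\tilde{\mathcal{R}}$ by the definition $\tilde{\mathcal{R}}_{\log} = \tilde{\mathcal{R}}[\log u]$. Hence the image is nonzero, proving injectivity.

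The main obstacle is essentially done for us by Lemma \ref{lem:vanishing}; once existence and uniqueness of $c_0$ are in hand, the remaining verifications are a formal polynomial calculation and a leading-coefficient argument.
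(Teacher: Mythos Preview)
Your proof is correct and follows essentially the same approach as the paper's: both parts invoke Lemma \ref{lem:vanishing} with $b=q$ for (i), and both establish injectivity in (ii) by noting that the leading coefficient in $\log u$ of the image of $\sum_{n=0}^N f_n(\log t)^n$ is $c_1^N\psi(f_N)\neq 0$. Your explicit verification of $\varphi$-equivariance on $\log t$ simply unpacks what the paper summarizes as ``the Frobenius compatibility follows from (i).'' (One minor aside: the input from Lemma \ref{lem:vanishing} is a direct series computation and does not actually rely on the strongly difference-closed hypothesis, contrary to your parenthetical remark.)
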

\begin{proof}
\begin{enumerate}
\item Since $\psi(\log(\varphi(t)/t))\in\tilde{\mathcal{R}}^{\bd}$ by Proposition \ref{prop:embedding}, it follows from Lemma \ref{lem:vanishing}.
\item Let $x=\sum_{i=0}^nx_i(\log{t})^i,x_i\in\mathcal{R}$. If $\psi(x)=0$, then $x_n\cdot c_1^n=0$, and hence, $x_n=0$. Thus, we obtain the injectivity. The Frobenius compatibility follows from (i).
\end{enumerate}
\end{proof}

\begin{dfn}\label{dfn:log embedding}
We fix $c_1\in (K^{\times})^{\varphi_K=1}$, and define $\psi:\mathcal{R}_{\log}\hookrightarrow\tilde{\mathcal{R}}_{\log}$ as in Lemma \ref{lem:log calculation} (ii).
\end{dfn}

\begin{lem}\label{lem:log inj}
The multiplication map
\[
\tilde{\mathcal{R}}^{\bd}\otimes_{\mathcal{R}^{\bd}}\mathcal{R}_{\log}\to\tilde{\mathcal{R}}_{\log};x\otimes y\mapsto x\psi(y)
\]
is injective.
\end{lem}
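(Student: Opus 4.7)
The plan is to factor the map into two pieces, each of which we can see is injective by a separate direct argument. Write $\mathcal{R}_{\log}=\mathcal{R}[T]$ and $\tilde{\mathcal{R}}_{\log}=\tilde{\mathcal{R}}[S]$ as polynomial rings in the formal variables $T=\log t$ and $S=\log u$. Since tensor product commutes with base change of polynomial rings on one side, we have a canonical identification
\[
\tilde{\mathcal{R}}^{\bd}\otimes_{\mathcal{R}^{\bd}}\mathcal{R}_{\log}\cong\bigl(\tilde{\mathcal{R}}^{\bd}\otimes_{\mathcal{R}^{\bd}}\mathcal{R}\bigr)[T],
\]
and the map in question factors as the composition
\[
\bigl(\tilde{\mathcal{R}}^{\bd}\otimes_{\mathcal{R}^{\bd}}\mathcal{R}\bigr)[T]\xrightarrow{\ \alpha\ }\tilde{\mathcal{R}}[T]\xrightarrow{\ \beta\ }\tilde{\mathcal{R}}[S],
\]
where $\alpha$ is induced coefficient-wise from the multiplication map of Lemma \ref{lem:inj}, and $\beta$ is the $\tilde{\mathcal{R}}$-algebra homomorphism sending $T\mapsto c_0+c_1 S$ (by Definition \ref{dfn:log embedding}).

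For the first map, injectivity of $\alpha$ follows immediately from Lemma \ref{lem:inj}: since $\tilde{\mathcal{R}}[T]=\bigoplus_{i\ge 0}\tilde{\mathcal{R}}\cdot T^i$ as a free module, an injective ring map extends to an injective map of polynomial rings by applying it to each coefficient.

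For the second map, suppose $P=\sum_{i=0}^n a_iT^i$ with $a_i\in\tilde{\mathcal{R}}$ and $a_n\neq 0$ lies in $\ker\beta$. Then $\beta(P)=\sum_{i=0}^n a_i(c_0+c_1 S)^i$; expanding in powers of $S$, the coefficient of $S^n$ is $a_n c_1^n$. Because $c_1\in K^{\times}$, this forces $a_n=0$, a contradiction. Hence $\beta$ is injective.

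Combining the two steps yields injectivity of the composition, proving the lemma. The proof is essentially bookkeeping once Lemma \ref{lem:inj} is available; I expect no real obstacle beyond setting up the factorization correctly and noting that a leading-coefficient argument of the same flavor as in Lemma \ref{lem:log calculation} (ii) handles $\beta$.
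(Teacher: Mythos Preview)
Your proof is correct and follows essentially the same approach as the paper's own proof: both identify the source as a polynomial ring $(\tilde{\mathcal{R}}^{\bd}\otimes_{\mathcal{R}^{\bd}}\mathcal{R})[\log t]$, then combine the injectivity of Lemma \ref{lem:inj} with a leading-coefficient argument using $c_1\in K^{\times}$. The only cosmetic difference is that you explicitly factor the map as $\beta\circ\alpha$ and treat the two injectivities separately, whereas the paper handles both at once by looking directly at the top $\log u$-coefficient of the image and invoking Lemma \ref{lem:inj} to conclude $z_n=0$.
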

\begin{proof}
We identify $\tilde{\mathcal{R}}^{\bd}\otimes_{\mathcal{R}^{\bd}}\mathcal{R}_{\log}$ as $\tilde{\mathcal{R}}^{\bd}\otimes_{\mathcal{R}^{\bd}}\mathcal{R}[\log{t}]$, i.e., the polynomial ring over $\tilde{\mathcal{R}}^{\bd}\otimes_{\mathcal{R}^{\bd}}\mathcal{R}$ with variable $\log{t}$. If $z=\sum_{i=0}^nz_i(\log{t})^i,z_i\in \tilde{\mathcal{R}}^{\bd}\otimes_{\mathcal{R}^{\bd}}\mathcal{R}$ is zero in $\tilde{\mathcal{R}}_{\log}$, then $\psi(z_n)\cdot c_1^n=0$ in $\tilde{\mathcal{R}}_{\log}$, and hence, $z_n=0$ by Lemma \ref{lem:inj}. Thus $z=0$.
\end{proof}

\section{Logarithmic growth filtrations on Robba rings}\label{sec:lg}

In this section, we filter the Robba ring $\mathcal{R}$ and its variants $\mathcal{R}_{\log},\tilde{\mathcal{R}}_{(\log)}$ by measuring the growth of Gauss norms $|\cdot|_r$.
Throughout this section, we keep Assumption \ref{ass:relative} when we consider $\tilde{\mathcal{R}}$ or $\tilde{\mathcal{R}}_{\log}$.

\subsection{On $\tilde{\mathcal{R}}$}\label{subsec:lg1}

\begin{dfn}\label{dfn:log-growth extended}
Let $f\in\tilde{\mathcal{R}}$. Assume $f\in\tilde{\mathcal{R}}^{r'}$. We say that $f$ is {\it of log-growth} $\lambda$ for $\lambda\in\mathbb{R}_{\ge 0}$ if there exists a constant $C$ such that
\[
r^{\lambda}|f|_r\le C\ \forall r\in (0,r'].
\]
The definition does not depend on the choice of $r'$ by Lemma \ref{lem:max} (ii). We define the {\it $\lambda$-th log-growth filtration} $\Fil_{\lambda}\tilde{\mathcal{R}}$ of $\tilde{\mathcal{R}}$ as the $K$-vector space consisting of those elements of log-growth $\lambda$. For $\lambda<0$, we set $\Fil_{\lambda}\tilde{\mathcal{R}}=0$. Note that $\Fil_{\lambda}\tilde{\mathcal{R}}$ is not closed under multiplication unless $\lambda\le 0$.
\end{dfn}

One has an equivalent characterization of log-growth using Taylor expansion.

\begin{lem}[Taylor expansion criterion]\label{lem:Taylor extended}
Let $f=\sum_{i\in\mathbb{Q}}a_iu^i\in\tilde{\mathcal{R}},a_i\in K$. For $\lambda\in\mathbb{R}_{\ge 0}$, the following are equivalent.
\begin{enumerate}
\item $f\in \Fil_{\lambda}\tilde{\mathcal{R}}$.
\item $|a_i|=O(i^{\lambda})$ as $i\to+\infty$.
\end{enumerate}
\end{lem}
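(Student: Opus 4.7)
The plan is to prove both implications by translating between the supremum defining $|f|_r$ and the decay of the coefficients $|a_i|$.

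For $(i)\Rightarrow(ii)$, I would first observe the termwise bound $|a_i|e^{-ri}\le|f|_r$, which combined with the hypothesis $r^\lambda|f|_r\le C$ rearranges to $|a_i|\le Cr^{-\lambda}e^{ri}$ for every $r\in(0,r']$. The next step is to minimise the right-hand side in $r$: for $\lambda>0$ elementary calculus identifies the optimum at $r=\lambda/i$, which lies in $(0,r']$ as soon as $i\ge\lambda/r'$. Substituting yields $|a_i|\le Ce^{\lambda}\lambda^{-\lambda}i^{\lambda}$, which is the desired $O(i^{\lambda})$. The case $\lambda=0$ is handled separately: the hypothesis becomes $|f|_r\le C$, whence $|a_i|\le C$ at once.

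For the converse, I would split $f=f_++f_-$ according to whether the index is nonnegative or negative. For $f_-$ I will note that $r\mapsto|a_i|e^{-ri}$ is nondecreasing in $r$ when $i<0$, which forces $|f_-|_r\le|f_-|_{r'}\le|f|_{r'}$ on $(0,r']$, and hence $r^\lambda|f_-|_r\le (r')^\lambda|f|_{r'}$. For $f_+$ I will choose $N$ so that $|a_i|\le C'i^\lambda$ for all $i\ge N$ and use the estimate
\[
r^\lambda|a_i|e^{-ri}\le C'(ri)^\lambda e^{-ri}\le C'\sup_{x\ge 0}x^\lambda e^{-x}
\]
to bound the tail, the last supremum being the finite constant $\lambda^\lambda e^{-\lambda}$ (or $1$ if $\lambda=0$). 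The remaining indices $i\in[0,N)\cap\mathbb{Q}$ are infinite in number but are controlled by condition (c) of Definition \ref{dfn:extended Robba} applied at $r'$, which gives $|a_i|\le|f|_{r'}e^{r'N}$, whence $r^\lambda|a_i|e^{-ri}\le (r')^\lambda|f|_{r'}e^{r'N}$ on this range. Taking the maximum of the three bounds shows that $r^\lambda|f|_r$ is uniformly bounded on $(0,r']$.

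There is no serious obstacle. The argument rests on the elementary optimisation of $x^\lambda e^{-x}$ on $[0,\infty)$ and on careful sign-tracking of $-ri$ depending on the sign of $i$. The only care needed is to isolate $\lambda=0$, where the optimum $r=\lambda/i$ degenerates, and to remember that although the range $[0,N)\cap\mathbb{Q}$ is infinite, one must invoke condition (c) of Definition \ref{dfn:extended Robba} rather than (b) in order to keep those coefficients bounded.
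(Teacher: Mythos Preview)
Your proposal is correct and follows essentially the same approach as the paper: split $f$ according to the sign/size of the index, bound the tail via the elementary maximum of $x^\lambda e^{-x}$, and for the converse direction evaluate the termwise bound $|a_i|e^{-ri}\le|f|_r$ at a judicious $r$ depending on $i$. The only cosmetic differences are that the paper splits into three pieces at once (invoking Lemma~\ref{lem:lg1} for the range $[0,i_0)$ rather than condition~(c) directly), and that the paper evaluates at the non-optimal $r=1/i$, which has the minor advantage of covering $\lambda=0$ uniformly without needing your separate case.
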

\begin{proof}
(ii)$\Rightarrow$(i) We choose $r'>0$ such that $f\in\tilde{\mathcal{R}}^{r'}$. By assumption, there exists $i_0>0$ such that $|a_i|\le Ci^{\lambda}$ for all $i\ge i_0$. We put $I_1=(-\infty,0),I_2=[0,i_0),I_3=[i_0,+\infty)\subset\mathbb{R}$, and $g_j=\sum_{i\in I_j}a_iu^i$ for $j=1,2,3$. Then $g_1,g_2,g_3\in\tilde{\mathcal{R}}^{r'}$, and, for all $r\in (0,r']$,
\[
|f|_r=\max\{|g_1|_r,|g_2|_r,|g_3|_r\},
\]
\[
|g_1|_r\le |g_1|_{r'}<\infty,\ |g_2|_r\le|g_2|_0<\infty
\]
by Lemma \ref{lem:lg1}. Hence it suffices to prove $r^{\lambda}|g_3|_r\to 0$ as $r\to 0+$. For $r\in (0,r']$,
\[
r^{\lambda}|g_3|_r=r^{\lambda}\sup_{i\in [i_0,+\infty)}|a_i|e^{-ri}\le C\sup_{i\ge 0}(ri)^{\lambda}e^{-ri}\le C\lambda^{\lambda} e^{-\lambda},
\]
where the last inequality follows from the fact that the function $x^{\lambda}e^{-x}$ for $x\ge 0$ achieves the maximum at $x=\lambda$ (we understand $0^0=1$ here).

\noindent (i)$\Rightarrow$(ii) Fix $r'>0$ sufficiently small so that $f\in\mathcal{R}^{r'}$. Let $C$ be a constant such that $r^{\lambda}|f|_r\le C$ for all $r\in (0,r']$. Then, for $i>0$ and $r\in (0,r']$, we have $r^{\lambda}|a_i|e^{-ri}\le r^{\lambda}|f|_r\le C$, i.e.,
\[
|a_i|\le Ci^{\lambda}\cdot e^{ri}/(ri)^{\lambda}.
\]
If $i\ge 1/r'$, then, by evaluating the above inequality at $r=1/i$, we have
\[
|a_i|\le Ci^{\lambda}\cdot e,
\]
which implies (ii).
\end{proof}

We gather basic properties on the log-growth filtration.

\begin{lem}[{cf. \cite[Lemma 4.7]{Ohk}}]\label{lem:property extended}
We have the following.
\begin{enumerate}
\item The filtration $\Fil_{\bullet}\tilde{\mathcal{R}}$ is an increasing filtration satisfying
\[
\Fil_{\lambda}\tilde{\mathcal{R}}\cdot \Fil_{\mu}\tilde{\mathcal{R}}\subset \Fil_{\lambda+\mu}\tilde{\mathcal{R}}\ \forall\lambda,\mu\in\mathbb{R}.
\]
\item For an arbitrary $\lambda\in\mathbb{R}$, we have $\varphi(\Fil_{\lambda}\tilde{\mathcal{R}})\subset \Fil_{\lambda}\tilde{\mathcal{R}}$.
\item $\Fil_{0}\tilde{\mathcal{R}}=\tilde{\mathcal{R}}^{\bd}$.
\end{enumerate}
\end{lem}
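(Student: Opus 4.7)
The plan is to dispose of the three parts in the order (iii), (i), (ii), since each reduces directly to a short manipulation of the family of norms $|\cdot|_r$ already recorded in the previous subsection; no new idea is needed beyond the results collected in \S\ref{subsec:Robba2}.

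I would begin with (iii) because it is essentially a repackaging of Lemma \ref{lem:gen1}. By definition $f\in\Fil_0\tilde{\mathcal{R}}$ means $f\in\tilde{\mathcal{R}}^{r'}$ for some $r'>0$ and $|f|_r$ is bounded as $r$ ranges over $(0,r']$. But Lemma \ref{lem:gen1} identifies precisely this condition with $f\in\tilde{\mathcal{R}}^{\bd}$, giving equality.

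For (i), monotonicity is clear for $\lambda,\mu<0$ or when one of them is negative, since $\Fil_\lambda\tilde{\mathcal{R}}=0$ for $\lambda<0$. For $0\le\lambda\le\mu$ and $f\in\Fil_\lambda\tilde{\mathcal{R}}$, write $r^\mu|f|_r=r^{\mu-\lambda}\cdot r^\lambda|f|_r$; the first factor is bounded on any bounded interval $(0,r']$, so $f\in\Fil_\mu\tilde{\mathcal{R}}$. For the product claim, after reducing to $\lambda,\mu\ge 0$, choose a common $r'>0$ with $f,g\in\tilde{\mathcal{R}}^{r'}$ and with $r^\lambda|f|_r\le C_1$, $r^\mu|g|_r\le C_2$ on $(0,r']$; this is legal because the log-growth condition is independent of the choice of $r'$ by Lemma \ref{lem:max}(ii). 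Using multiplicativity (submultiplicativity suffices) of $|\cdot|_r$ on $\tilde{\mathcal{R}}^r$, we obtain
\[
r^{\lambda+\mu}|fg|_r\le (r^\lambda|f|_r)(r^\mu|g|_r)\le C_1C_2
\qquad\forall r\in(0,r'],
\]
so $fg\in\Fil_{\lambda+\mu}\tilde{\mathcal{R}}$.

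For (ii) I would invoke the identity $|\varphi(f)|_r=|f|_{qr}$ recorded in Definition \ref{dfn:extended Robba}. The case $\lambda<0$ is vacuous, so assume $\lambda\ge 0$ and pick $r'>0$ with $f\in\tilde{\mathcal{R}}^{r'}$ and $r^\lambda|f|_r\le C$ on $(0,r']$. Then $\varphi(f)\in\tilde{\mathcal{R}}^{r'/q}$, and for $r\in(0,r'/q]$ one has $qr\in(0,r']$, whence
\[
r^\lambda|\varphi(f)|_r=q^{-\lambda}(qr)^\lambda|f|_{qr}\le q^{-\lambda}C,
\]
so $\varphi(f)\in\Fil_\lambda\tilde{\mathcal{R}}$.

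There is no real obstacle here: the argument is pure bookkeeping with the norm family $\{|\cdot|_r\}$, using Lemma \ref{lem:max} to switch freely between admissible values of $r'$ and Lemma \ref{lem:gen1} to translate between boundedness of $|\cdot|_r$ as $r\to 0+$ and membership in $\tilde{\mathcal{R}}^{\bd}$. The only mild care needed is harmonizing the radius $r'$ when multiplying two elements with possibly different domains of definition, which is handled by shrinking to a common $r'$.
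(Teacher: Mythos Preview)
Your proof is correct and follows essentially the same approach as the paper: both arguments reduce (i) and (ii) to the identities $r^{\lambda+\mu}|fg|_r=(r^{\lambda}|f|_r)(r^{\mu}|g|_r)$ and $|\varphi(f)|_r=|f|_{qr}$, and use Lemma~\ref{lem:gen1} for (iii). The only cosmetic difference is that for the inclusion $\tilde{\mathcal{R}}^{\bd}\subset\Fil_0\tilde{\mathcal{R}}$ the paper quotes the Taylor expansion criterion (Lemma~\ref{lem:Taylor extended}) while you read both inclusions directly off Lemma~\ref{lem:gen1}, which is in fact slightly cleaner.
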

\begin{proof}
\begin{enumerate}
\item The first assertion is obvious, and the second one follows from the formula $r^{\lambda+\mu}|fg|_r=r^{\lambda}|f|_r\cdot r^{\mu}|g|_r$ for sufficiently small $r>0$.
\item We choose $r'>0$ sufficiently small so that $f\in\tilde{\mathcal{R}}^{qr'}\cap\Fil_{\lambda}\tilde{\mathcal{R}}$. Then $r^{\lambda}|\varphi(f)|_r=q^{-\lambda}(qr)^{\lambda}|f|_{qr}$ for $r\in (0,r']$, which implies the assertion.
\item We have $\tilde{\mathcal{R}}^{\bd}\subset \Fil_0\tilde{\mathcal{R}}$ by Lemma \ref{lem:Taylor extended}. The converse follows from Lemma \ref{lem:gen1}.
\end{enumerate}
\end{proof}


\subsection{On $\mathcal{R}$}\label{subsec:lg2}

A very similar construction as in \S \ref{subsec:lg1} works on $\mathcal{R}$.

\begin{dfn}
Let $f\in\mathcal{R}$. Assume $f\in\mathcal{R}^{r'}$. We say that $f$ is {\it of log-growth} $\lambda$ for $\lambda\in\mathbb{R}_{\ge 0}$ if there exists a constant $C$ such that
\[
r^{\lambda}|f|_r\le C\ \forall r\in (0,r'].
\]
The definition does not depend on the choices of $r'$ and $\varphi$. We define the {\it $\lambda$-th log-growth filtration} $\Fil_{\lambda}\mathcal{R}$ of $\mathcal{R}$ as the $K$-vector space consisting of elements of log-growth $\lambda$. For $\lambda<0$, we set $\Fil_{\lambda}\mathcal{R}=0$.
\end{dfn}

Similar results as in \S \ref{subsec:lg1} hold.

\begin{lem}\label{lem:log-growth psi}
Let $\psi:\mathcal{R}\hookrightarrow\tilde{\mathcal{R}}$ be any $\varphi$-equivariant embedding given by Proposition \ref{prop:embedding}. Let $f\in\mathcal{R}$. For any real number $\lambda$, the following are equivalent.
\begin{enumerate}
\item $f\in \Fil_{\lambda}\mathcal{R}$.
\item $\psi(f)\in \Fil_{\lambda}\tilde{\mathcal{R}}$.
\end{enumerate}
\end{lem}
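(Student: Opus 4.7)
The plan is to reduce the lemma to a direct comparison of Gauss norms via the fact, provided by Proposition \ref{prop:embedding}, that $\psi$ is an isometry for $|\cdot|_r$ on sufficiently small annuli.

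First I would handle the trivial case $\lambda<0$. In this case both $\Fil_{\lambda}\mathcal{R}$ and $\Fil_{\lambda}\tilde{\mathcal{R}}$ are zero by definition, so the equivalence amounts to $f=0 \Leftrightarrow \psi(f)=0$, which is immediate from the injectivity of $\psi$.

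Next, for $\lambda\ge 0$, given $f\in\mathcal{R}$ I would choose $r'>0$ small enough that simultaneously $f\in\mathcal{R}^{r'}$ and $r'<r_0$, where $r_0$ is the constant supplied by Proposition \ref{prop:embedding}. By that proposition, $\psi(\mathcal{R}^{r'})\subset\tilde{\mathcal{R}}^{r'}$ and $|\psi(g)|_r=|g|_r$ for every $g\in\mathcal{R}^{r'}$ and every $r\in(0,r']$. Specializing to $g=f$ gives
\[
r^{\lambda}|f|_r=r^{\lambda}|\psi(f)|_r\qquad\text{for all }r\in(0,r'].
\]
Therefore the boundedness condition $r^{\lambda}|f|_r\le C$ for $r\in(0,r']$ holds if and only if the same condition holds for $\psi(f)$. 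Since both definitions of log-growth are independent of the auxiliary choice of $r'$ (for $\mathcal{R}$ this is asserted in the definition itself, and for $\tilde{\mathcal{R}}$ it follows from the maximum modulus principle in Lemma \ref{lem:max}(ii)), this yields the equivalence of (i) and (ii).

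There is really no obstacle here; the work was already done in Proposition \ref{prop:embedding}, and the lemma is essentially a translation statement. The only point to be mildly careful about is that the definition of $\Fil_{\lambda}\tilde{\mathcal{R}}$ tests the growth of $|\cdot|_r$ for \emph{all} small $r$, not merely those less than $r_0$; but Lemma \ref{lem:max}(ii) shows that whether such a bound holds on $(0,r']$ is unaffected by shrinking $r'$, so testing on $(0,r_0)$ suffices.
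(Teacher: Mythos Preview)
Your proof is correct and takes essentially the same approach as the paper, which simply writes ``It follows from the norm compatibility of $\psi$.'' Your version is a careful unpacking of that one line, including the harmless $\lambda<0$ case and the remark about shrinking $r'$ via Lemma~\ref{lem:max}(ii).
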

\begin{proof}
It follows from the norm compatibility of $\psi$.
\end{proof}

\begin{lem}[{Taylor expansion criterion, cf. \cite[Proposition 2.3.3]{Chr}}]\label{lem:Taylor}
Let $f=\sum_{i\in\mathbb{Z}}a_it^i\in\mathcal{R},a_i\in K$. For $\lambda\in\mathbb{R}_{\ge 0}$, the following are equivalent.
\begin{enumerate}
\item $f\in \Fil_{\lambda}\mathcal{R}$.
\item $|a_i|=O(i^{\lambda})$ as $i\to+\infty$.
\end{enumerate}
\end{lem}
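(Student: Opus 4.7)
The plan is to transport the proof of Lemma \ref{lem:Taylor extended} essentially verbatim, since $\mathcal{R}$ carries the very same norm formula $|\sum_i a_i t^i|_r=\sup_i\{|a_i|e^{-ri}\}$ as $\tilde{\mathcal{R}}$, with the only difference being that the index set is $\mathbb{Z}$ instead of $\mathbb{Q}$. One might instead try to reduce via the embedding $\psi:\mathcal{R}\hookrightarrow\tilde{\mathcal{R}}$ of Proposition \ref{prop:embedding} together with Lemma \ref{lem:log-growth psi}, but the Hahn-series coefficients of $\psi(f)$ bear no transparent relation to the original Laurent coefficients $a_i$, so that route would not yield the desired condition on the $a_i$. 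A direct repetition of the earlier argument is cleaner.

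For (ii) $\Rightarrow$ (i), fix $r'>0$ with $f\in\mathcal{R}^{r'}$, and by hypothesis choose $i_0>0$ and $C>0$ so that $|a_i|\le Ci^{\lambda}$ for $i\ge i_0$. Decompose $f=g_1+g_2+g_3$ corresponding to the index ranges $i<0$, $0\le i<i_0$, and $i\ge i_0$. For the negative-index tail one has $|g_1|_r\le |g_1|_{r'}$ on $(0,r']$, since for $i<0$ the quantity $|a_i|e^{-ri}$ is monotonically increasing in $r$. The piece $g_2$ is a Laurent polynomial, so its norms on $(0,r']$ are obviously bounded. For $g_3$ one runs the standard calculus estimate
\[
r^{\lambda}|g_3|_r=r^{\lambda}\sup_{i\ge i_0}|a_i|e^{-ri}\le C\sup_{x\ge 0}x^{\lambda}e^{-x}=C\lambda^{\lambda}e^{-\lambda},
\]
with the convention $0^0=1$. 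Combining the three bounds gives $r^{\lambda}|f|_r=O(1)$ on $(0,r']$.

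For (i) $\Rightarrow$ (ii), suppose $r^{\lambda}|f|_r\le C$ for all $r\in(0,r']$. For each $i$ and each admissible $r$ one has $r^{\lambda}|a_i|e^{-ri}\le r^{\lambda}|f|_r\le C$, and specializing to $r=1/i$ for $i\ge 1/r'$ yields $|a_i|\le Ce\cdot i^{\lambda}$, which is (ii). I expect no real obstacle: the Robba-ring statement is in fact slightly easier than the extended one, since the well-orderedness requirement (condition (a) of Definition \ref{dfn:extended Robba}) is automatic for $\mathbb{Z}$-indexed supports, and the boundedness of $g_2$ does not need to invoke Lemma \ref{lem:lg1}.
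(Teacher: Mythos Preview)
Your proof is correct and is precisely the paper's primary approach: the paper's own proof simply says ``A similar proof as Lemma \ref{lem:Taylor extended} works,'' and your writeup is exactly that transported argument.

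One small correction to your commentary: the paper also records the $\psi$-route as a genuine alternative, and it does work. The point is that the embedding $\psi$ of Proposition \ref{prop:embedding} depends on the choice of Frobenius lift, and if one takes the absolute lift $\varphi(t)=t^q$ then $\psi(t)=u$, so $\psi(\sum_i a_i t^i)=\sum_i a_i u^i$ and the Laurent coefficients transfer verbatim to the Hahn-series side. In that case Lemma \ref{lem:log-growth psi} plus Lemma \ref{lem:Taylor extended} gives the result immediately. Your objection that ``the Hahn-series coefficients of $\psi(f)$ bear no transparent relation to the $a_i$'' is true for a generic Frobenius lift, but one is free to choose the convenient one for this purely norm-theoretic statement.
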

\begin{proof}
A similar proof as Lemma \ref{lem:Taylor extended} works. Alternatively, we can deduce from Lemmas \ref{lem:Taylor extended} and \ref{lem:log-growth psi} by choosing the absolute $q$-power Frobenius lift $\varphi(t)=t^q$ as $\varphi$, in which case $\psi(t)=u$.
\end{proof}

Recall that $f=\sum_{i}a_it\in K[\![t]\!]$ with $a_i\in K$ is {\it of log-growth} $\lambda\ge 0$ if $|a_i|=O(i^{\lambda})$ as $i\to+\infty$, and we define $K[\![t]\!]_{\lambda}$ as the set of power series of log-growth $\lambda$. For simplicity, we put $K[\![t]\!]_{\lambda}=0$ for $\lambda<0$.

\begin{cor}\label{cor:logfilR1}
Let $f=\sum_{i\in\mathbb{Z}}a_it^i,a_i\in K$. For $\lambda\in\mathbb{R}$, $f\in K[\![t]\!]_{\lambda}$ if and only if $f\in \Fil_{\lambda}\mathcal{R}$.
\end{cor}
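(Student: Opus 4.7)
The plan is to deduce the corollary as an immediate consequence of Lemma \ref{lem:Taylor} combined with the definition of $K[\![t]\!]_\lambda$ recalled in the introduction, with no auxiliary construction needed.

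First I would dispose of the range $\lambda<0$: there, $\Fil_\lambda\mathcal{R}$ and $K[\![t]\!]_\lambda$ are both declared to be the zero subspace by convention (the former in Definition \ref{dfn:log-growth extended}/the analogous definition on $\mathcal{R}$ just above, the latter in the introduction), so the biconditional is vacuous.

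For the main case $\lambda\ge 0$, Lemma \ref{lem:Taylor} already provides the Taylor-coefficient characterization: $f=\sum_{i\in\mathbb{Z}}a_it^i\in\Fil_\lambda\mathcal{R}$ if and only if $|a_i|=O(i^\lambda)$ as $i\to+\infty$. On the other hand, the definition of $K[\![t]\!]_\lambda$ recalled in the introduction is precisely this same growth condition on the coefficients $(a_i)$. Hence both sides of the asserted biconditional reduce to the identical numerical condition on the sequence $(a_i)_{i\to+\infty}$, and the equivalence is automatic.

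I do not anticipate any genuine obstacle here: the content of the corollary is essentially a translation of Lemma \ref{lem:Taylor}, converting the Gauss-norm formulation of log-growth on $\mathcal{R}$ (growth of $r^\lambda|f|_r$ as $r\to 0+$) into the Taylor-coefficient formulation on $K[\![t]\!]$ (growth of $|a_i|$ as $i\to+\infty$), using the notation $K[\![t]\!]_\lambda$ set up in the introduction.
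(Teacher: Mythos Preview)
Your proposal is correct and matches the paper's approach: the paper states this as a corollary immediately following Lemma~\ref{lem:Taylor} with no separate proof, so the intended argument is exactly what you wrote---the coefficient growth condition $|a_i|=O(i^\lambda)$ characterizes both $K[\![t]\!]_\lambda$ (by definition) and $\Fil_\lambda\mathcal{R}$ (by Lemma~\ref{lem:Taylor}), while the case $\lambda<0$ is handled by the conventions.
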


\begin{lem}[{\cite[Lemma 4.7]{Ohk}}]\label{lem:property Robba}
We have the following.
\begin{enumerate}
\item The filtration $\Fil_{\bullet}\mathcal{R}$ is increasing and
\[
\Fil_{\lambda}\mathcal{R}\cdot \Fil_{\mu}\mathcal{R}\subset \Fil_{\lambda+\mu}\mathcal{R}\text{ for }\lambda,\mu\in\mathbb{R}.
\]
\item For an arbitrary $\lambda\in\mathbb{R}$, we have $\varphi(\Fil_{\lambda}\mathcal{R})\subset \Fil_{\lambda}\mathcal{R}$.
\item $\Fil_{0}\mathcal{R}=\mathcal{R}^{\bd}$.
\item If $f\in\mathcal{R}$ satisfies $df/dt\in \Fil_{\lambda}\mathcal{R}$ for some $\lambda\in\mathbb{R}$, then $f\in \Fil_{\lambda+1}\mathcal{R}$.
\end{enumerate}

Similar assertions for $K[\![t]\!]_{\bullet}$ also hold by Corollary \ref{cor:logfilR1}.
\end{lem}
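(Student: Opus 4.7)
The plan is to verify the four parts in order, mirroring the proof of Lemma \ref{lem:property extended} and relying on the Taylor expansion criterion (Lemma \ref{lem:Taylor}), multiplicativity of the Gauss norm on $\mathcal{R}^r$, and the identity $|\varphi(f)|_r = |f|_{qr}$ valid for $r$ sufficiently small when $\varphi$ is a relative $q$-power Frobenius lift. For (i), the filtration is increasing because if $\lambda \le \mu$ and $r^{\lambda}|f|_r \le C$ on $(0,r']$, then $r^{\mu}|f|_r \le (r')^{\mu-\lambda}C$ on the same interval (the case of negative indices is trivial by the convention $\Fil_{\lambda}\mathcal{R}=0$). The product bound follows from $|fg|_r = |f|_r|g|_r$ upon picking a common $r'$ with $f,g \in \mathcal{R}^{r'}$, which yields $r^{\lambda+\mu}|fg|_r = (r^{\lambda}|f|_r)(r^{\mu}|g|_r)$ and is thus bounded on $(0,r']$.

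For (ii), I use that $|s|_r = e^{-qr}$ for all sufficiently small $r > 0$ (since $|s - t^q|_r \to |s-t^q|_0 < 1$ as $r \to 0+$, while $|t^q|_r = e^{-qr} \to 1$), so by multiplicativity of the Gauss norm and the isometry of $\varphi_K$ one obtains $|\varphi(f)|_r = |f|_{qr}$ for such $r$. Then $r^{\lambda}|\varphi(f)|_r = q^{-\lambda}(qr)^{\lambda}|f|_{qr}$ is bounded in a neighborhood of $0$ whenever $f \in \Fil_{\lambda}\mathcal{R}$. For (iii), Lemma \ref{lem:Taylor} shows that $f = \sum_i a_i t^i \in \Fil_0\mathcal{R}$ iff $|a_i|$ is bounded as $i \to +\infty$; combined with the a priori bound $|a_i| \le |a_i|e^{-r'i} \le |f|_{r'}$ for $i \le 0$ (using $f \in \mathcal{R}^{r'}$ and $-r'i \ge 0$), this forces $\sup_i |a_i| < \infty$, i.e., $f \in \mathcal{R}^{\bd}$. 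The converse inclusion is immediate from Lemma \ref{lem:Taylor}.

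For (iv), write $f = \sum_i a_i t^i$ so that $df/dt = \sum_i (i+1)a_{i+1} t^i$. By Lemma \ref{lem:Taylor}, the hypothesis $df/dt \in \Fil_{\lambda}\mathcal{R}$ translates to $|(i+1)a_{i+1}| = O(i^{\lambda})$ as $i \to +\infty$. The key $p$-adic estimate $|n|_p \ge 1/n$ for positive integers $n$ then yields $|a_{i+1}| \le (i+1)\cdot|(i+1)a_{i+1}| = O(i^{\lambda+1})$, whence $f \in \Fil_{\lambda+1}\mathcal{R}$ by another application of Lemma \ref{lem:Taylor}. The only technical point worth flagging is the identity $|\varphi(f)|_r = |f|_{qr}$ used in (ii), which for a general relative Frobenius lift requires the condition $|s - t^q|_0 < 1$ to control $|s|_r$ near $r = 0$; all other steps reduce directly to the Taylor criterion and routine bookkeeping with Gauss norms.
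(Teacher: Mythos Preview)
Your proof is correct, but for parts (i) and (ii) it follows a different route from the paper. The paper dispatches (i) and (ii) in one line by invoking Lemma~\ref{lem:log-growth psi}: since $\psi:\mathcal{R}\hookrightarrow\tilde{\mathcal{R}}$ preserves both the norms $|\cdot|_r$ and the Frobenius, the statements reduce immediately to the already-proved Lemma~\ref{lem:property extended} for $\tilde{\mathcal{R}}$. You instead argue directly from the Gauss norms on $\mathcal{R}^r$. This is more self-contained (it does not appeal to the extended Robba ring or to Assumption~\ref{ass:relative}), at the cost of reproving essentially the same estimates. For (iii) and (iv), both you and the paper use the Taylor criterion (Lemma~\ref{lem:Taylor}), so those parts agree.

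One small point on (ii): your claimed identity $|\varphi(f)|_r=|f|_{qr}$ does not follow from multiplicativity and the isometry of $\varphi_K$ alone, since the powers $s^i$ need not be orthogonal for $|\cdot|_r$. What multiplicativity and the ultrametric inequality give directly is the \emph{inequality} $|\varphi(f)|_r=\bigl|\sum_i\varphi_K(a_i)s^i\bigr|_r\le\sup_i|a_i|\,|s|_r^i=|f|_{qr}$, and this inequality is all you need to conclude that $r^{\lambda}|\varphi(f)|_r$ is bounded. (The equality is in fact true, but the cleanest justification is precisely the paper's: transport through $\psi$ to $\tilde{\mathcal{R}}$, where $\varphi(u)=u^q$ makes it immediate.)
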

\begin{proof}
By Lemma \ref{lem:log-growth psi}, parts (i) and (ii) are reduced to parts (i) and (ii) in Lemma \ref{lem:property extended} respectively. Parts (iii) and (iv) are consequences of Lemma \ref{lem:Taylor}.
\end{proof}


\subsection{On $\mathcal{R}_{\log}$ and $\tilde{\mathcal{R}}_{\log}$}\label{subsec:lg3}

To define the log-growth filtrations on $\mathcal{R}_{\log}$ and $\tilde{\mathcal{R}}_{\log}$, we formally regard $\log{t}$ and $\log{u}$ respectively as elements exactly of log-growth $1$ as follows.

\begin{dfn}[{cf. \cite[Definition 4.11]{Ohk}}]
Let $R$ be either $\mathcal{R}$ or $\tilde{\mathcal{R}}$. For $\lambda\in\mathbb{R}$, we define the {\it $\lambda$-th log-growth filtration} of $R_{\log}$ by
\[
\Fil_{\lambda}R_{\log}=\oplus_{n=0}^{\infty}\Fil_{\lambda-n}R\cdot (\log{\star})^n\subset R_{\log},
\]
where $\star\in\{t,u\}$ respectively (recall that $\Fil_{\mu}R=0$ for $\mu<0$). An element $f\in R_{\log}$ is {\it of log-growth} $\lambda$ if $\lambda\in \Fil_{\lambda}R_{\log}$. Furthermore, $f\in R_{\log}$ is {\it exactly of log-growth} $\lambda$ if $\lambda\in \Fil_{\lambda}R_{\log}$ and $f\notin \Fil_{\mu}R_{\log}$ for any $\mu<\lambda$. Note that we have $\Fil_{\lambda}R_{\log}=0$ for $\lambda<0$ by definition.

We put $K\{t\}=K[\![t]\!]\cap\mathcal{R}=\{\sum_{i\in\mathbb{N}}a_it^i;a_i\in K,\ |a_i|e^{-ri}\to 0\ (i\to +\infty)\ \forall r\in (0,+\infty)\}$, and $K\{t\}_{\log}=K\{t\}[\log{t}]$, which is endowed with the {\it $\lambda$-th log-growth filtration}
\[
\Fil_{\lambda}K\{t\}_{\log}=K\{t\}_{\log}\cap\Fil_{\lambda}\mathcal{R}_{\log}=\oplus_{n=0}^{\infty}K[\![t]\!]_{\lambda-n}\cdot (\log{t})^n.
\]
Note that for $f\in K\{t\}_{\log}$, $f\in \Fil_{\lambda}K\{t\}_{\log}$ if and only if $f\in \Fil_{\lambda}\mathcal{R}_{\log}$ by Lemma \ref{lem:Taylor}.
\end{dfn}

Similar results as in \S\S \ref{subsec:lg1} and \ref{subsec:lg2} hold.

\begin{lem}\label{lem:log-growth psi log}
Let $\psi:\mathcal{R}_{\log}\hookrightarrow\tilde{\mathcal{R}}_{\log}$ be any $\varphi$-equivariant embedding given by Definition \ref{dfn:log embedding}. Let $f\in\mathcal{R}_{\log}$. For any real number $\lambda$, the following are equivalent.
\begin{enumerate}
\item $f\in \Fil_{\lambda}\mathcal{R}_{\log}$.
\item $\psi(f)\in \Fil_{\lambda}\tilde{\mathcal{R}}_{\log}$.
\end{enumerate}
\end{lem}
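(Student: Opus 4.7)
The plan is to reduce the lemma to the analogous statement for the (non-log) Robba rings, namely Lemma \ref{lem:log-growth psi}, by expanding everything as a polynomial in $\log{u}$ and tracking coefficients. Write $f=\sum_{n=0}^{N}f_n(\log{t})^n$ with $f_n\in\mathcal{R}$; this representation is unique because $\mathcal{R}_{\log}=\mathcal{R}[\log{t}]$ is a polynomial ring. Under $\psi$ we obtain
\[
\psi(f)=\sum_{n=0}^{N}\psi(f_n)(c_0+c_1\log{u})^n=\sum_{k=0}^{N}\Bigl(\sum_{n\ge k}\binom{n}{k}\psi(f_n)c_0^{n-k}c_1^{k}\Bigr)(\log{u})^k,
\]
and this representation is again unique since $\tilde{\mathcal{R}}_{\log}=\tilde{\mathcal{R}}[\log{u}]$ is a polynomial ring. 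Denote the bracketed coefficient of $(\log{u})^k$ by $g_k\in\tilde{\mathcal{R}}$.

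For (i)$\Rightarrow$(ii), assume $f_n\in\Fil_{\lambda-n}\mathcal{R}$ for each $n$. Then $\psi(f_n)\in\Fil_{\lambda-n}\tilde{\mathcal{R}}$ by Lemma \ref{lem:log-growth psi}, and since $c_0\in\tilde{\mathcal{R}}^{\bd}=\Fil_0\tilde{\mathcal{R}}$ by Lemma \ref{lem:log calculation}(i) and $c_1\in K\subset\Fil_0\tilde{\mathcal{R}}$, the multiplicativity in Lemma \ref{lem:property extended}(i) gives $\psi(f_n)c_0^{n-k}c_1^k\in\Fil_{\lambda-n}\tilde{\mathcal{R}}\subset\Fil_{\lambda-k}\tilde{\mathcal{R}}$ whenever $n\ge k$. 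Summing, $g_k\in\Fil_{\lambda-k}\tilde{\mathcal{R}}$, hence $\psi(f)\in\Fil_{\lambda}\tilde{\mathcal{R}}_{\log}$ by definition.

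For the converse (ii)$\Rightarrow$(i), I would argue by downward induction on $n$, starting from $n=N$. Assume $g_k\in\Fil_{\lambda-k}\tilde{\mathcal{R}}$ for every $k$. For $n=N$ we have $g_N=c_1^N\psi(f_N)$, so dividing by $c_1^N\in K^{\times}$ (noting $c_1\ne 0$ by choice in Definition \ref{dfn:log embedding}) yields $\psi(f_N)\in\Fil_{\lambda-N}\tilde{\mathcal{R}}$, and Lemma \ref{lem:log-growth psi} gives $f_N\in\Fil_{\lambda-N}\mathcal{R}$. For the inductive step at index $k<N$, rewrite
\[
c_1^{k}\psi(f_k)=g_k-\sum_{n>k}\binom{n}{k}\psi(f_n)c_0^{n-k}c_1^{k};
\]
by the induction hypothesis each $\psi(f_n)$ on the right lies in $\Fil_{\lambda-n}\tilde{\mathcal{R}}\subset\Fil_{\lambda-k}\tilde{\mathcal{R}}$ (the filtration is increasing, Lemma \ref{lem:property extended}(i)), and the same multiplicativity argument as in the forward direction places every term in $\Fil_{\lambda-k}\tilde{\mathcal{R}}$. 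Hence $\psi(f_k)\in\Fil_{\lambda-k}\tilde{\mathcal{R}}$ and Lemma \ref{lem:log-growth psi} yields $f_k\in\Fil_{\lambda-k}\mathcal{R}$. This completes the induction and shows $f\in\Fil_{\lambda}\mathcal{R}_{\log}$.

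I do not foresee any serious obstacle: the argument is a routine triangular inversion, with the only non-trivial inputs being (a) the unique-polynomial expansion in $\log\star$, (b) $c_0\in\tilde{\mathcal{R}}^{\bd}$ from Lemma \ref{lem:log calculation}, (c) the bookkeeping that multiplying by bounded elements preserves the log-growth index, and (d) Lemma \ref{lem:log-growth psi} to transport the statement across $\psi$ one coefficient at a time. The only mild care needed is to invoke the increasing direction of the filtration when absorbing terms with $n>k$ into $\Fil_{\lambda-k}\tilde{\mathcal{R}}$.
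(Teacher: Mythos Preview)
Your proposal is correct and follows essentially the same approach as the paper: both expand $\psi(f)$ as a polynomial in $\log u$, identify the coefficient of $(\log u)^k$ as an upper-triangular combination of the $\psi(f_n)$ with $n\ge k$, and then use Lemma~\ref{lem:log-growth psi} together with $c_0,c_1\in\Fil_0\tilde{\mathcal{R}}$ and multiplicativity (Lemma~\ref{lem:property extended}) to go forward, and a reverse induction on the top index to go back. The only cosmetic difference is that the paper cites Lemma~\ref{lem:property extended}(iii) for $c_0\in\Fil_0\tilde{\mathcal{R}}$ rather than Lemma~\ref{lem:log calculation}(i), but these amount to the same thing.
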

\begin{proof}
Write $f=\sum_{i=0}^na_i(\log{t})^i,a_i\in\mathcal{R}$. Then $\psi(f)=\sum_{i=0}^nA_i(\log{u})^i$, where
\[
A_i=\sum_{j=i}^n\binom{j}{i}c_0^{j-i}c_1^i\psi(a_j)
\]
with notation as in Definition \ref{dfn:log embedding}.

Assume $f\in \Fil_{\lambda}\mathcal{R}_{\log}$. Then $a_i\in \Fil_{\lambda-i}\mathcal{R}$. By Lemma \ref{lem:log-growth psi}, $\psi(a_i)\in \Fil_{\lambda-i}\tilde{\mathcal{R}}$. Since $c_0,c_1\in \Fil_0\tilde{\mathcal{R}}$ by Lemma \ref{lem:property extended} (iii), we have $A_i\in \Fil_{\lambda-i}\tilde{\mathcal{R}}$ by Lemma \ref{lem:property extended} (i). Hence $\psi(f)\in \Fil_{\lambda}\tilde{\mathcal{R}}_{\log}$.

Assume $\psi(f)\in \Fil_{\lambda}\tilde{\mathcal{R}}_{\log}$. Since $A_i=c_1^i\psi(a_i)+\sum_{j=i+1}^n\binom{j}{i}c_0^{j-i}c_1^i\psi(a_j)\in \Fil_{\lambda-i}\tilde{\mathcal{R}}$, we have $\psi(a_i)\in \Fil_{\lambda-i}\tilde{\mathcal{R}}$ by reverse induction on $i$. By Lemma \ref{lem:log-growth psi}, we have $a_i\in \Fil_{\lambda-i}\mathcal{R}$, and hence, $f\in \Fil_{\lambda}\mathcal{R}_{\log}$.
\end{proof}

\begin{lem}[{\cite[Lemma 4.7]{Ohk}}]\label{lem:property log}
Let $R$ be either $\mathcal{R}$ or $\tilde{\mathcal{R}}$. We have the following.
\begin{enumerate}
\item The filtration $\Fil_{\bullet}R_{\log}$ is increasing and
\[
\Fil_{\lambda}R_{\log}\cdot \Fil_{\mu}R_{\log}\subset \Fil_{\lambda+\mu}R_{\log}\text{ for }\lambda,\mu\in\mathbb{R}.
\]
\item For an arbitrary $\lambda\in\mathbb{R}$, we have $\varphi(\Fil_{\lambda}R_{\log})\subset \Fil_{\lambda}R_{\log}$.
\item $\Fil_{0}R_{\log}=R^{\bd}$.
\end{enumerate}

Similar assertions for $K\{t\}_{\log}$ hold.
\end{lem}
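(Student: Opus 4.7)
The plan is to deduce all three assertions directly from the analogous statements for $\Fil_{\bullet}R$ (Lemmas \ref{lem:property extended} and \ref{lem:property Robba}) by unfolding the definition
\[
\Fil_{\lambda}R_{\log}=\bigoplus_{n\ge 0}\Fil_{\lambda-n}R\cdot(\log\star)^n,
\]
where $\star=t$ if $R=\mathcal{R}$ and $\star=u$ if $R=\tilde{\mathcal{R}}$. The monotonicity in (i) is immediate termwise, and the multiplicativity follows by expanding a product of polynomials in $\log\star$: writing $f=\sum_n f_n(\log\star)^n$, $g=\sum_m g_m(\log\star)^m$ with $f_n\in\Fil_{\lambda-n}R$ and $g_m\in\Fil_{\mu-m}R$, the $(\log\star)^k$-coefficient of $fg$ is $\sum_{n+m=k}f_ng_m\in\Fil_{\lambda+\mu-k}R$ by Lemma \ref{lem:property extended}(i) or Lemma \ref{lem:property Robba}(i). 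For (iii), only the summand $n=0$ contributes, because $\Fil_{-n}R=0$ when $n\ge 1$; thus $\Fil_0 R_{\log}=\Fil_0 R=R^{\bd}$ by Lemma \ref{lem:property extended}(iii) or Lemma \ref{lem:property Robba}(iii).

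The heart of the argument is (ii), and here the strategy is: combining (i) with the inclusion $\varphi(\Fil_{\lambda}R)\subset\Fil_{\lambda}R$ from Lemma \ref{lem:property extended}(ii) or Lemma \ref{lem:property Robba}(ii), it suffices to show $\varphi(\log\star)\in\Fil_1 R_{\log}$. When $R=\tilde{\mathcal{R}}$, this is the tautology $\varphi(\log u)=q\log u\in\Fil_1\tilde{\mathcal{R}}_{\log}$. When $R=\mathcal{R}$, we use the defining relation $\varphi(\log t)=\log(\varphi(t)/t^q)+q\log t$; the summand $q\log t$ lies in $\Fil_1\mathcal{R}_{\log}$ by definition, and the summand $\log(\varphi(t)/t^q)$ lies in $\mathcal{R}^{\bd}=\Fil_0\mathcal{R}\subset\Fil_1\mathcal{R}_{\log}$ since $\varphi(t)/t^q\in 1+\mathfrak{m}_K\mathcal{R}^{\Int}$, so that the $p$-adic logarithm series defining it converges in $\mathcal{R}^{\bd}$ (as recalled at the start of \S\ref{subsec:Robba3}). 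Finally, the $K\{t\}_{\log}$ case is obtained by intersecting with $K\{t\}_{\log}$ inside $\mathcal{R}_{\log}$; compatibility of each operation with this intersection is straightforward once the statements for $\mathcal{R}_{\log}$ are established.

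There is no deep obstacle here: the content of the lemma is essentially bookkeeping transferring the three properties from $R$ to the polynomial extension $R[\log\star]$. The mildest subtlety is the single point in (ii) that requires knowing the ``correction term'' $\log(\varphi(t)/t^q)$ is bounded, which is exactly the input from the construction of $\log$ on $1+\mathfrak{m}_K\mathcal{R}^{\Int}$; once this is granted, everything reduces mechanically to the results of \S\ref{subsec:lg1}--\ref{subsec:lg2}.
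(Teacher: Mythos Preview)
Your proof is correct and follows the same approach as the paper, which simply says each assertion ``immediately follows from the corresponding one'' in Lemma~\ref{lem:property Robba} (resp.~\ref{lem:property extended}). You have just spelled out the details the paper leaves implicit, including the one genuinely non-automatic point in (ii), namely that $\varphi(\log t)\in\Fil_1\mathcal{R}_{\log}$ because the correction term $\log(\varphi(t)/t^q)$ lies in $\mathcal{R}^{\bd}$.
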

\begin{proof}
For $R=\mathcal{R}$ (resp. $\tilde{\mathcal{R}}$),  each assertion immadiately follows from the corresponding one in Lemma \ref{lem:property Robba} (resp. \ref{lem:property extended}).
\end{proof}

In the proof of Main Theorem (Theorem \ref{conj:bd}), we need to determine the log-growth of particular elements in $\tilde{\mathcal{R}}_{\log}$ as follows.

\begin{dfn}\label{dfn:eigen log extended}
Let $d$ be a positive integer. A {\it (Frobenius) $d$-eigenvector} of $\tilde{\mathcal{R}}_{\log}$ is a non-zero element $f$ of $\tilde{\mathcal{R}}_{\log}$ such that
\[
\varphi^d(f)=cf
\]
for some $c\in (\tilde{\mathcal{R}}^{\bd})^{\times}$. We refer to the quotient $\log{|c|_0}/\log{|q^d|}$ as the {\it (Frobenius) slope} of $f$.

Note that:
\begin{enumerate}
\item[$\bullet$] a $d$-eigenvector $f$ of slope $\lambda$ is a $d'$-eigenvector of slope $\lambda$ for any multiplier $d'>0$ of $d$;
\item[$\bullet$] if $f_i$ for $i=1,2$ is a $d_i$-eigenvector of slope $\lambda_i$, then $f_1f_2$ is a $d$-eigenvector of slope $\lambda_1+\lambda_2$ for any common multiplier $d>0$ of $d_1$ and $d_2$.
\end{enumerate}
\end{dfn}

\begin{lem}\label{lem:trivialization}
Let $a\in\tilde{\mathcal{R}}^{\bd}$ with $|a|_0=1$. Then there exists $b\in (\tilde{\mathcal{R}}^{\bd})^{\times}$ such that $\varphi(b)=ab$.
\end{lem}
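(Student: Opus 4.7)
The plan is to first produce $b$ inside the complete discretely valued field $\tilde{\mathcal{E}}$ by Hensel-type successive approximation, and then descend to $\tilde{\mathcal{R}}^{\bd}$ using Proposition~\ref{prop:matrix}.

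Since $|a|_0=1$, the element $a$ lies in the valuation ring $\mathcal{O}_{\tilde{\mathcal{E}}}$ of $\tilde{\mathcal{E}}$ as a unit, and its reduction $\bar{a}$ is a unit of the residue field $k((u^{\mathbb{Q}}))$. Because $(k((u^{\mathbb{Q}})),\varphi)$ is strongly difference-closed (Definition~\ref{dfn:Hahn}), there exists $\bar{b}\in k((u^{\mathbb{Q}}))^{\times}$ with $\varphi(\bar{b})=\bar{a}\bar{b}$; choose a unit lift $b_0\in\mathcal{O}_{\tilde{\mathcal{E}}}^{\times}$.

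I would then inductively construct units $b_0,b_1,\ldots\in\mathcal{O}_{\tilde{\mathcal{E}}}^{\times}$ with $b_{n+1}\equiv b_n\pmod{\mathfrak{m}_K^{n+1}}$ and $\varphi(b_n)\equiv ab_n\pmod{\mathfrak{m}_K^{n+1}\mathcal{O}_{\tilde{\mathcal{E}}}}$. Writing $b_{n+1}=b_n+\pi^{n+1}y_{n+1}$ for a fixed uniformizer $\pi$ of $\mathcal{O}_K$, the inductive step reduces modulo $\mathfrak{m}_K$ to the twisted additive equation
\[
\varphi(\bar{y}_{n+1})-\bar{a}\bar{y}_{n+1}=-\bar{r}_n
\]
in $k((u^{\mathbb{Q}}))$, where $\bar{r}_n$ records the current error. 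Substituting $\bar{y}_{n+1}=\bar{b}\cdot\bar{z}$ converts this to $\varphi(\bar{z})-\bar{z}=-\bar{r}_n/(\bar{a}\bar{b})$, which is solvable because strong difference-closedness of $k((u^{\mathbb{Q}}))$ implies that every finite-dimensional $\varphi$-module over it is trivial, and in particular makes $\varphi-1$ surjective. Completeness of $\tilde{\mathcal{E}}$ yields a limit $b\in\mathcal{O}_{\tilde{\mathcal{E}}}^{\times}$ with $\varphi(b)=ab$.

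Finally, applying Proposition~\ref{prop:matrix} to the $1\times 1$ matrix $A=a\in\tilde{\mathcal{R}}^{\Int}$ and the vector $\mathbf{v}=b\in\tilde{\mathcal{E}}$ forces $b\in\tilde{\mathcal{R}}^{\bd}$; and since $\tilde{\mathcal{R}}^{\bd}$ is the fraction field of the discrete valuation ring $\tilde{\mathcal{R}}^{\Int}$, the nonzero element $b$ is automatically a unit. The main obstacle is the inductive step: $\varphi$ is only semi-linear, so the classical Hensel lemma does not apply directly, and one must rely on the nontrivial surjectivity of $\varphi-1$ on the Hahn series field $k((u^{\mathbb{Q}}))$, which ultimately comes from its strong difference-closedness.
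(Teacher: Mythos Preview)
Your proof is correct and follows the same two-step strategy as the paper: produce $b\in\tilde{\mathcal{E}}^{\times}$ using strong difference-closedness of the residue field $k((u^{\mathbb{Q}}))$, then descend to $\tilde{\mathcal{R}}^{\bd}$ via Proposition~\ref{prop:matrix}. The paper compresses your first step into a one-line citation (triviality of \'etale $\varphi$-modules over $\tilde{\mathcal{E}}$, referring to the proof of \cite[Theorem~14.6.3]{pde}), whereas you unpack it by explicit Hensel lifting; the content is the same. One small slip: since $\varphi$ is only $\varphi_K$-semilinear, $\varphi(\pi^{n+1}y_{n+1})=\varphi_K(\pi)^{n+1}\varphi(y_{n+1})$, so your reduced equation is actually $\bar u^{\,n+1}\varphi(\bar y_{n+1})-\bar a\,\bar y_{n+1}=-\bar r_n$ with $\bar u=\overline{\varphi_K(\pi)/\pi}\in k^{\times}$; this is still solvable by the same argument (absorb $\bar u^{\,n+1}$ into a rank-one twist before substituting $\bar y=\bar b\,\bar z$), so the gap is cosmetic.
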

\begin{proof}
Since any \'etale $\varphi$-module over $\tilde{\mathcal{E}}$ is trivial by the strongly difference-closedness of the residue field $k((u^{\mathbb{Q}}))$ (instead, by using \cite[Proposition 2.1.6]{rel}), there exists $b\in (\tilde{\mathcal{E}})^{\times}$ such that $\varphi(b)=ab$ (see the proof of \cite[Theorem 14.6.3]{pde}). By Proposition \ref{prop:matrix}, we have $b\in\tilde{\mathcal{R}}^{\bd}$ as desired.
\end{proof}

\begin{lem}[{cf. \cite[Theorem 6.1]{Ohk}}]\label{lem:calc log-growth}
If $f\in\tilde{\mathcal{R}}_{\log}$ is a $d$-eigenvector of slope $\lambda$, then $\lambda\ge 0$, and $f$ is exactly of log-growth $\lambda$.
\end{lem}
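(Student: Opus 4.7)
The plan is to decompose $f$ using the polynomial-ring structure $\tilde{\mathcal{R}}_{\log}=\tilde{\mathcal{R}}[\log u]$ to reduce to the case $f\in\tilde{\mathcal{R}}$, then to reduce the resulting Frobenius eigenvalue to a scalar in $K^{\times}$ via a trivialization lemma in the spirit of Lemma \ref{lem:trivialization}, and finally to extract the exact log-growth from the functional equation produced by $|\varphi(x)|_r=|x|_{qr}$.

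For the polynomial reduction, I would write $f=\sum_{n=0}^{N}f_n(\log u)^n$ with $f_n\in\tilde{\mathcal{R}}$ and $f_N\neq 0$. Since $\varphi(\log u)=q\log u$, comparing coefficients of $(\log u)^n$ in $\varphi^d(f)=cf$ yields
\[
\varphi^d(f_n)=c\,q^{-dn}f_n\qquad(0\leq n\leq N),
\]
so each non-zero $f_n$ is a $d$-eigenvector of $\tilde{\mathcal{R}}$ of slope $\lambda-n$, as $|c\,q^{-dn}|_0=q^{-d(\lambda-n)}$. By the direct-sum definition $\Fil_{\mu}\tilde{\mathcal{R}}_{\log}=\bigoplus_n\Fil_{\mu-n}\tilde{\mathcal{R}}\cdot(\log u)^n$, the statement for $f$ follows from the corresponding assertion in $\tilde{\mathcal{R}}$ for each non-zero $f_n$, namely ``$\lambda-n\geq 0$ and $f_n$ is exactly of log-growth $\lambda-n$''; applied at $n=N$ this already yields $\lambda\geq N\geq 0$.

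For $g\in\tilde{\mathcal{R}}\setminus\{0\}$ with $\varphi^d(g)=cg$ and $|c|_0=q^{-d\mu}$, I would use that $\tilde{\mathcal{R}}^{\Int}$ is a DVR with uniformizer $\pi_K$ to write $c=\pi_K^{n}e$ with $e\in(\tilde{\mathcal{R}}^{\Int})^{\times}$, and then invoke the $\varphi^d$-analog of Lemma \ref{lem:trivialization} to produce $b\in(\tilde{\mathcal{R}}^{\bd})^{\times}$ with $\varphi^d(b)=eb$. Setting $h:=g/b$ gives $\varphi^d(h)=\pi_K^{n}h$, and because $b,b^{-1}\in\Fil_0\tilde{\mathcal{R}}$ the log-growth filtration is preserved by $b^{\pm 1}$ (Lemma \ref{lem:property extended}(i)); hence I am reduced to a scalar eigenvalue $s:=\pi_K^{n}\in K^{\times}$ of norm $q^{-d\mu}$. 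For such $h$, iterating $|\varphi(x)|_r=|x|_{qr}$ yields $|h|_{q^d r}=q^{-d\mu}|h|_r$, so $r\mapsto r^{\mu}|h|_r$ is invariant under $r\mapsto q^d r$. Fixing $r_0>0$ with $h\in\tilde{\mathcal{R}}^{r_0}$, Lemma \ref{lem:max}(ii) gives $0<\min(|h|_{r_0/q^d},|h|_{r_0})\leq|h|_r\leq\max(|h|_{r_0/q^d},|h|_{r_0})<\infty$ on $[r_0/q^d,r_0]$, so $r^{\mu}|h|_r$ lies between positive constants $c_1$ and $C_1$ on this compact interval. Since every $r\in(0,r_0]$ is of the form $r'/q^{dk}$ with $r'\in(r_0/q^d,r_0]$ and $k\geq 0$, the invariance propagates these bounds to $c_1\leq r^{\mu}|h|_r\leq C_1$ on all of $(0,r_0]$. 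The upper bound gives $h\in\Fil_{\mu}\tilde{\mathcal{R}}$; the lower bound forbids $h\in\Fil_{\mu'}\tilde{\mathcal{R}}$ for any $\mu'<\mu$ (else $r^{\mu}|h|_r=r^{\mu-\mu'}(r^{\mu'}|h|_r)\to 0$ as $r\to 0+$). Finally, $\mu\geq 0$ because $\mu<0$ would force $|h|_r\leq C_1 r^{-\mu}\to 0$, which contradicts $|h|_r\geq|a_{i_0}|e^{-ri_0}\to|a_{i_0}|>0$ for any non-zero coefficient $a_{i_0}$ of $h$.

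The principal obstacle is establishing the $\varphi^d$-version of Lemma \ref{lem:trivialization}, which is not explicitly stated in the excerpt. I would obtain it by a standard companion-matrix construction: the rank-$d$ $\varphi$-module over $\tilde{\mathcal{R}}^{\Int}$ whose matrix has the identity on the superdiagonal and $e$ in the bottom-left entry has determinant $\pm e$, so is \'etale when $|e|_0=1$ and hence trivial over $\tilde{\mathcal{E}}$ by $\varphi$-strong difference-closedness of $k((u^{\mathbb{Q}}))$. Taking a $\varphi$-invariant vector with non-zero first coordinate $v_1$, one finds $\varphi^d(v_1)=e^{-1}v_1$; Proposition \ref{prop:matrix} places $v_1\in\tilde{\mathcal{R}}^{\bd}$, and since $\tilde{\mathcal{R}}^{\bd}$ is a field the element $b:=v_1^{-1}\in(\tilde{\mathcal{R}}^{\bd})^{\times}$ satisfies $\varphi^d(b)=eb$, as needed.
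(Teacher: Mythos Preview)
Your proof is correct and follows the same three-step strategy as the paper: reduce to $\tilde{\mathcal{R}}$ via the $\log u$-expansion, trivialize the unit part of the eigenvalue to land in $K^{\times}$, and then exploit the periodicity of $r\mapsto r^{\mu}|h|_r$ under $r\mapsto q^d r$. The only notable difference is that the paper sidesteps your companion-matrix construction entirely by replacing $(\varphi,q)$ with $(\varphi^d,q^d)$ at the outset, after which Lemma~\ref{lem:trivialization} applies verbatim; it also establishes $\lambda\ge 0$ by a direct Taylor-coefficient estimate (iterating $a_i=\varphi_K(a_{i/q})/c$) rather than through the limit behavior of $|h|_r$ as $r\to 0+$.
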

\begin{proof}
After replacing $(\varphi,q)$ by $(\varphi^d,q^d)$, we may assume $d=1$. Write $f=\sum_ix_i(\log{u})^i$ with $x_i\in\tilde{\mathcal{R}}$. Then $x_i$ is a $d$-eigenvector of slope $\lambda-i$ unless $x_i=0$. Hence we may reduce to the case $f\in\tilde{\mathcal{R}}$. Let $c\in (\tilde{\mathcal{R}}^{\bd})^{\times}$ such that $\varphi(f)=cf$ with $\lambda=\log{|c|_0}/\log{|q|}$.

$\bullet$ The case $c\in K^{\times}$

Suppose $\lambda<0$, i.e., $|c|>1$. Write $f=\sum_{i\in\mathbb{Q}}a_iu^i$ with $a_i\in K$. By $\varphi(f)=cf$, we have $a_i=\varphi_K(a_{i/q})/c$. Fix $i\in\mathbb{Q}$ and choose $n$ sufficiently large so that $i/q^n\le 1$. Then,
\[
|a_i|=|\varphi^n_K(a_{i/q^n})|/|c\cdot\varphi_K(c)\cdot\dots\cdot\varphi^{n-1}_K(c)|=|a_{i/q^n}|/|c|^n\le (\sup_{j\le 1}|a_j|)/|c|^n\to 0\ (n\to+\infty).
\]
since $\sup_{j\le 1}{|a_j|}<\infty$ by Lemma \ref{lem:lg1}. Hence we have $a_i=0$ for all $i$, which contradicts to $f\neq 0$. Thus $\lambda\ge 0$.

We fix $r'>0$ such that $f\in\tilde{\mathcal{R}}^{r'}$. To prove the second assertion, it suffices to prove the inequalities
\[
(r'/q r)^{\lambda}\inf_{s\in [r'/q,r']}|f|_s\le |f|_r\le (r'/r)^{\lambda}\sup_{s\in [r'/q,r']}|f|_s.
\]
for all $r\in (0,r']$ since $\inf_{s\in [r'/q,r']}|f|_s,\sup_{s\in [r'/q,r']}|f|_s\neq 0$ by Lemma \ref{lem:max} (ii). Let $r\in (0,r']$. We choose $n\in\mathbb{N}$ such that $r\in [r'/q^{n+1},r'/q^n]$. Since $q^nr\in [r'/q,r']$ and $\lambda\ge 0$, we obtain $\inf_{s\in [r'/q,r']}|f|_s\le |f|_{q^nr}\le \sup_{s\in [r'/q,r']}|f|_s$, and $(r'/q r)^{\lambda}\le q^{n\lambda}\le (r'/r)^{\lambda}$. Hence we obtain
\[
(r'/q r)^{\lambda}\inf_{s\in [r'/q,r']}|f|_s\le q^{n\lambda}|f|_{q^nr}\le (r'/r)^{\lambda}\sup_{s\in [r'/q,r']}|f|_s.
\]
Since $|f|_r=q^{n\lambda}|f|_{q^nr}$ by $\varphi(f)=cf$, we obtain the desired inequalities.

$\bullet$ The general case

Since $K$ is discretely valued, there exists $c'\in K^{\times}$ such that $|c'|=|c|_0$. By Lemma \ref{lem:trivialization}, we choose $b\in (\tilde{\mathcal{R}}^{\bd})^{\times}$ such that $\varphi(b)=(c/c')\cdot b$. Hence $\varphi(f/b)=c'\cdot (f/b)$. By the previous case, $f/b$ is exactly of log-growth $\lambda$. By Lemma \ref{lem:property Robba} (i) and (iii), $f$ is also exactly of log-growth $\lambda$.
\end{proof}

\section{$(\varphi,\nabla)$-modules}\label{sec:phinabla}

Usually, $(\varphi,\nabla)$-modules are defined in an ad-hoc manner once a base ring is given as in \cite[4.9]{dJ}, \cite[2.5]{plm}, and \cite[3.2]{Tsu}. In this paper, we consider various base rings, hence, we should use a unified framework. The aim of this section is to construct such a framework. We also give some examples of base rings in \S \ref{subsec:phinabla3}. In \S \ref{subsec:uni}, we recall the category of unipotent $\nabla$-modules over $\mathcal{R}$. This section is devoted to fix notation and define terminology, and we have no new results.

\subsection{Definition}\label{subsec:phinabla1}

A {\it quadruple} $(R,\varphi,\nabla,d\varphi)$, denoted by $R$ for simplicity, consists of the following data:
\begin{enumerate}
\item[$\bullet$] $R$ is a commutative ring;
\item[$\bullet$] $\varphi:R\to R$ is a ring endomorphism;
\item[$\bullet$] $\nabla:R\to\Omega_R$ is a derivation with $\Omega_R$ an $R$-module;
\item[$\bullet$] $d\varphi:\Omega_R\to\Omega_R$ is a $\varphi$-semi-linear map,
\end{enumerate}
that make the following diagram commute
\[\xymatrix{
R\ar[r]^{\nabla}\ar[d]^{\varphi}&\Omega_R\ar[d]^{d\varphi}\\
R\ar[r]^{\nabla}&\Omega_R.
}\]
A {\it $(\varphi,\nabla)$-module over $R$} is a triple $(M,\varphi_M,\nabla_M)$ consisting of the following data
\begin{enumerate}
\item[$\bullet$] $M$ is a finite free $R$-module,
\item[$\bullet$] $\varphi_M:M\to M$ is a $\varphi$-semi-linear endomorphism such that $\varphi_M^*:\varphi^*M\to M;r\otimes m\mapsto r\varphi_M(m)$ is an isomorphism,
\item[$\bullet$] $\nabla_M:M\to M\otimes_R\Omega_R$ is an additive map satisfying
\[
\nabla_M(rm)=m\otimes\nabla(r)+r\cdot\nabla_M(m)\ \forall r\in R,\ \forall m\in M,
\]
\end{enumerate}
that make the following diagram commute
\[\xymatrix{
M\ar[r]^(.4){\nabla_M}\ar[d]^{\varphi_M}&M\otimes_R\Omega_R\ar[d]^{\varphi_M\otimes d\varphi}\\
M\ar[r]^(.4){\nabla_M}&M\otimes_R\Omega_R.
}\]
We ignore the integrability condition in this paper since we treat only the cases where $\Omega_R$ is of rank one or $\Omega_R=0$. We put $M^{\nabla}=\ker{(\nabla_M:M\to M\otimes_R\Omega_R)}$.

Unless otherwise is mentioned, we endow $R$ with the ``trivial'' $(\varphi,\nabla)$-module structure given by $(\varphi_R,\nabla_R)=(\varphi,\nabla)$ under the identification $\Omega_R\cong R\otimes_R\Omega_R$. In the following, we also drop subscripts such as $R$ or $M$ if no confusion arises.

A {\it morphism} $f:(M,\varphi_M,\nabla_M)\to (N,\varphi_N,\nabla_N)$ of $(\varphi,\nabla)$-modules over $R$ is an $R$-linear map $f:M\to N$ that makes the following diagrams commute
\[\xymatrix{
M\ar[r]^{\varphi_M}\ar[d]^{f}&M\ar[d]^{f}&&M\ar[r]^(.4){\nabla_M}\ar[d]^f&M\otimes_R\Omega\ar[d]^{f\otimes \id_{\Omega}}\\
N\ar[r]^{\varphi_N}&N,&&N\ar[r]^(.4){\nabla_N}&N\otimes_R\Omega.
}\]
Direct sums and tensor products are defined in an obvious way (see \cite[Definitions 5.3.2, 14.1.1]{pde}). Furthermore, we may endow the set of $R$-linear maps $\Hom_R(M,N)$ a $(\varphi,\nabla)$-module structure uniquely determined by the following conditions: for all $m\in M$,
\[
\varphi_{\Hom_R(M,N)}(f)(\varphi_M(m))=\varphi_N(f(m)),
\]
\[
\nabla_{\Hom_R(M,N)}(f)(m)=\nabla_N(f(m))-(f\otimes \id_{\Omega})(\nabla_M(m)),
\]
where we identify $\Hom_R(M,N)\otimes_R\Omega$ as $\Hom_R(M,N\otimes_R\Omega)$. Then the set $\Hom(M,N)$ of morphisms of $(\varphi,\nabla)$-modules coincides with the set
\[
\Hom_R^{\varphi,\nabla}(M,N)=\{f\in \Hom_R(M,N);\varphi_{\Hom_R(M,N)}(f)=f,\nabla_{\Hom_R(M,N)}(f)=0\}.
\]
We denote by $M\spcheck$ the $R$-dual $\Hom_R(M,R)$ of $M$ endowed with the $(\varphi,\nabla)$-module structure above. Note that the natural pairing $M\otimes_RM\spcheck\to R$ of $R$-modules can be regarded as a morphism of $(\varphi,\nabla)$-modules.

Let $c\in R^{\times}\cap R^{\nabla}$: for example, $c\in\mathbb{Q}^{\times}$ when $R$ is a $\mathbb{Q}$-algebra. As in \cite[Definition 3.1.5]{Doc}, we define the twist $R(c)$ of $R$ by $c$ as the rank one $(\varphi,\nabla)$-module given by
\[
R(c)=Re_c,
\]
\[
\varphi_{R(c)}(r e_c)=\varphi(r)ce_c\ \forall r\in R,
\]
\[
\nabla_{R(c)}(re_c)=e_c\otimes\nabla(r)\in R(c)\otimes_R\Omega\ \forall r\in R.
\]
For a $(\varphi,\nabla)$-module $M$, we define $M(c)$ as $M\otimes_RR(c)$.

We discuss some functorial properties (cf. \cite[3.3]{Tsu}). A {\it morphism} of quadruples
\[
(R,\varphi,\nabla,d\varphi)\to (S,\phi,\nabla,d\phi)
\]
is a pair $f=(f,df)$ consisting of the following data
\begin{enumerate}
\item[$\bullet$] $f:R\to S$ is a ring homomorphism,
\item[$\bullet$] $df:\Omega_R\to\Omega_S$ is an $f$-semi-linear map,
\end{enumerate}
that make the following diagrams commute
\[\xymatrix{
R\ar[r]^f\ar[d]^{\varphi}& S\ar[d]^{\phi}&&R\ar[d]^{\nabla}\ar[r]^f&S\ar[d]^{\nabla}&&\Omega_R\ar[r]^{df}\ar[d]^{d\varphi}&\Omega_S\ar[d]^{d\phi}\\
R\ar[r]^f&S,&&\Omega_R\ar[r]^{df}&\Omega_S,&&\Omega_R\ar[r]^{df}&\Omega_S.
}\]
We define the {\it pull-back} of a $(\varphi,\nabla)$-module $(M,\varphi_M,\nabla_M)$ over $R$ via $f$, denoted by $f^*M$, as the triple $(f^*M,\phi_{f^*M},\nabla_{f^*M})$, where
\[
\phi_{f^*M}:f^*M\to f^*M;m\otimes s\mapsto\varphi_M(m)\otimes\phi(s),
\]
\[
\nabla_{f^*M}:f^*M\to f^*M\otimes_S\Omega_S\cong M\otimes_R\Omega_S;m\otimes s\mapsto (\id_M\otimes df)(\nabla_M(m))\cdot s+m\otimes\nabla(s).
\]

Let $a\ge 1$ be an integer. Given a quadruple $(R,\varphi,\nabla,d\varphi)$, we consider the quadruple $(R,\varphi^a,\nabla,d\varphi^a)$, where $\varphi^a,d\varphi^a$ are the $a$-fold compositions of $\varphi, d\varphi$ respectively. For simplicity, a $(\varphi^a,\nabla)$-module over the quadruple $(R,\varphi^a,\nabla,d\varphi^a)$ is called a {\it $(\varphi^a,\nabla)$-module} over $R$. Given a $(\varphi,\nabla)$-module $(M,\varphi_M,\nabla_M)$ over $R$, we define the {\it $a$-pushforward} $[a]_*M$ as the $(\varphi^a,\nabla)$-module $(M,\varphi_M^a,\nabla_M)$ over $R$, where $\varphi^a_M$ denotes the $a$-fold composition of $\varphi_M$. Note that the $a$-pushforward commutes with the pull-back above.

\subsection{Matrix presentation}\label{subsec:phinabla2}

We assume that $\Omega$ is of rank one with a distinguished base $\omega$. We denote by $(\cdot)/\omega:\Omega\cong R$ the isomorphism sending $\omega$ to $1$. We identify $\Omega$ as $R$ via this isomorphism. Then the derivation $\nabla:R\to\Omega$ determines a $\mathbb{Z}$-derivation $d:R\to R$ via $\Hom_R(\Omega^1_{R/\mathbb{Z}},\Omega)\cong \Hom_R(\Omega^1_{R/\mathbb{Z}},R)$. Then to give a $(\varphi,\nabla)$-module over $R$ is equivalent to give a triple $(M,\varphi,D)$, where $M$ is a finite free $R$-module, $\varphi$ is a semi-linear endomorphism, and $D:M\to M$ is an additive map satisfying $D(rm)=dr\cdot m+rD(m)$ for $r\in R,m\in M$ satisfying the compatibility $D\circ\varphi=(\varphi(\omega)/\omega)\varphi\circ D$.

Furthermore, we give a matrix version of the above data $(M,\varphi,D)$. We choose an $R$-basis $\{e_1,\dots,e_n\}$ of $M$, We define the {\it matrix presentation} of $M$ (with respect to $\{e_i\}$) as the pair of the matrices $(A,G)$ of the actions of $(\varphi,D)$ on the basis $\{e_i\}$, that is,
\[
\varphi(e_1,\dots,e_n)=(e_1,\dots,e_n)A,
\]
\[
D(e_1,\dots,e_n)=(e_1,\dots,e_n)G.
\]
Then the condition $\varphi^*M\cong M$ implies $A\in \GL_n(R)$, and the compatibility between $\varphi$ and $D$ induces the equality
\[
d(A)+GA=(\varphi(\omega)/\omega) A\varphi(G),
\]
where $d((a_{ij})_{ij})=(d(a_{ij}))_{ij}$ and $\varphi((g_{ij})_{ij})=(\varphi(g_{ij}))_{ij}$. Conversely, if we are given a pair $(A,G)\in \M_n(R)\times\GL_n(R)$ of $n\times n$ matrices satisfying the above compatibility, then it defines a $(\varphi,\nabla)$-module over $R$ of rank $n$ by the above procedure.

\subsection{Examples of quadruples}\label{subsec:phinabla3}
We give some examples of quadruples. For examples of $(\varphi,\nabla)$-modules, see \S \ref{sec:ex}.
\begin{enumerate}
\item[1.] Difference ring $(\Omega_R=0)$

Let $(R,\varphi,\nabla,d\varphi)$ be a quadruple. Then $R_1=(R,\varphi,0,0)$ with $\Omega_{R_1}=0$ is also a quadruple. We refer to a $(\varphi,\nabla)$-module over $R_1$ as a {\it $\varphi$-module} over $R$ for simplicity. Our definition of $\varphi$-modules coincides with that in \cite[\S 14]{pde}. Moreover, there exists a natural forgetful functor from the category of $(\varphi,\nabla)$-modules over $R$ to the category of $\varphi$-modules over $R$. Thus we can use the results on $\varphi$-modules in \cite{pde}.
\item[2.] Differential ring $(\varphi=\id,d\varphi=\id)$

Let $(R,\varphi,\nabla,d\varphi)$ be a quadruple. Then $R_2=(R,\id_R,\nabla,\id_{\Omega_R})$ with $\Omega_{R_2}=\Omega_R$ is also a quadruple. We refer to a $(\varphi,\nabla)$-module over $R_2$ as a {\it $\nabla$-module} over $R$ for simplicity. Furthermore, when $\Omega_R$ is of rank one, then the category of $\nabla$-modules over $R$ is equivalent to the category of differential modules over $(R,\partial)$ in the sense of \cite[\S 6]{pde}, where $\partial$ is an arbitrary basis of $\Hom_R(\Omega_R,R)$. Thus we can use the results on $\nabla$-modules in \cite{pde}.
\item[3.] Rings of (bounded) analytic functions $K[\![t]\!]_0,K\{t\}$ on the unit disc

Let notation be as in \S\S \ref{sec:Robba}, \ref{sec:lg}. Recall that we put $K[\![t]\!]_0=\mathcal{O}_K[\![t]\!]\otimes_{\mathcal{O}_K}K$ and
\[
K\{t\}=\{\sum_{i\in\mathbb{N}}a_it^i\in K[\![t]\!];a_i\in K,|a_i|e^{-ri}\to 0\ (i\to+\infty)\ \forall r\in (0,+\infty)\}.
\]
Then $K\{t\}$ (resp. $K[\![t]\!]_0$) can be regarded as the $K$-algebra of (resp. bounded) analytic functions on the open unit disc $|t|<1$. We choose $s\in K[\![t]\!]_0$ such that $|s-t^q|_0<1$, where $|\cdot|_0$ denote Gauss norm as in Definition \ref{dfn:Robba ring} (ii). We define the quadruple $(K[\![t]\!]_0,\varphi,\nabla,d\varphi)$ as
\begin{enumerate}
\item[$\bullet$] $\varphi:K[\![t]\!]_0\to K[\![t]\!]_0;\sum{a_it^i}\mapsto\sum{\varphi_K(a_i)s^n}$,
\item[$\bullet$] $\nabla:K[\![t]\!]_0\to\Omega_{K[\![t]\!]_0}:=K[\![t]\!]_0dt;f\mapsto df/dt\cdot dt$,
\item[$\bullet$] $d\varphi:\Omega_{K[\![t]\!]_0}\to\Omega_{K[\![t]\!]_0};fdt\mapsto\varphi(f)\cdot\nabla(s)$.
\end{enumerate}
We can endow $K\{t\}$ with a quadruple structure similarly, and we have a natural morphism of quadruples $K[\![t]\!]_0\to K\{t\}$.

Furthermore, when $s=t^qu$ for $u\in K[\![t]\!]_0^{\times}$, we define the log analogue $(K[\![t]\!]_0,\varphi,\nabla_{\log},d\varphi)$ of $(K[\![t]\!]_0,\varphi,\nabla,d\varphi)$, where
\[
\nabla_{\log}:K[\![t]\!]_0\to\Omega_{K[\![t]\!]_0}(\log):=K[\![t]\!]_0dt/t;f\mapsto tdf/dt\cdot dt/t,
\]
\[
d\varphi:\Omega_{K[\![t]\!]_0}(\log)\to\Omega_{K[\![t]\!]_0}(\log);f\cdot dt/t\mapsto (q+tu^{-1}du/dt)\varphi(f)\cdot dt/t,
\]
(see \cite[Remark 17.1.2]{pde}). We also define the log analogue $(K\{t\}_{\log},\varphi,\nabla_{\log},d\varphi)$ of $(K\{t\},\varphi,\nabla,d\varphi)$ similarly (put $\nabla_{\log}(\log{t})=dt/t$). Following the terminology in \cite[Definition 4.28]{loc}, we refer to a $(\varphi,\nabla_{\log})$-module over this quadruple as a {\it log-$(\varphi,\nabla)$-module} over $K[\![t]\!]_0$. Note that we have the canonical morphism of quadruples
\[
(K[\![t]\!]_0,\varphi,\nabla,d\varphi)\to (K[\![t]\!]_0,\varphi,\nabla_{\log},d\varphi)
\]
given by $(\id_{K[\![t]\!]_0},(fdt\mapsto tf\cdot dt/t))$. Moreover, it also extends to a canonical morphism of quadruples $(K\{t\},\varphi,\nabla,d\varphi)\to (K\{t\}_{\log},\varphi,\nabla_{\log},d\varphi)$. Note that the pull-back of a $(\varphi,\nabla)$-module $(M,\varphi,\nabla)$ over $K[\![t]\!]_0$ (resp. $K\{t\}$) via the above morphism is $(M,\varphi,t\nabla)$.

\item[4.] (Bounded) Robba rings $\mathcal{R}^{\bd},\mathcal{R}$, and the log extension $\mathcal{R}_{\log}$

Let notation be as in \S \ref{sec:Robba}. For a given $s\in\mathcal{R}^{\bd}$ with $|s-t^q|_0<1$, we can define quadruple structures on $\mathcal{R}^{\bd},\mathcal{R}$ similarly as $(K[\![t]\!]_0,\varphi,\nabla,d\varphi)$ in Example 3.3.3. We define the quadruple $(\mathcal{R}_{\log},\varphi,\nabla,d\varphi)$ as an extension of $(\mathcal{R},\varphi,\nabla,d\varphi)$ by putting
\[
\nabla(\log{t})=t^{-1}dt,\ \Omega_{\mathcal{R}_{\log}}=\mathcal{R}_{\log}dt.
\]

When $s\in K[\![t]\!]_0$, we have the canonical morphism of quadruples
\[
(K[\![t]\!]_0,\varphi,\nabla,d\varphi)\to (\mathcal{R}^{\bd},\varphi,\nabla,d\varphi)
\]
given by the inclusions. Furthermore, when $s=t^qu$ for $u\in K[\![t]\!]_0^{\times}$, we have the canonical morphism of quadruples
\[
(K[\![t]\!]_0,\varphi,\nabla_{\log},d\varphi)\to (\mathcal{R}^{\bd},\varphi,\nabla,d\varphi)
\]
given by the inclusions $K[\![t]\!]_0\subset\mathcal{R}^{\bd}$ and the map
\[
\Omega_{K[\![t]\!]_0}(\log)\to\Omega_{\mathcal{R}^{\bd}};f\cdot dt/t\mapsto f/t\cdot dt.
\]
Note that we have a commutative diagram of quadruples
\[\xymatrix{
(K[\![t]\!]_0,\varphi,\nabla,d\varphi)\ar[r]\ar@/_1pc/[rr]&(K[\![t]\!]_0,\varphi,\nabla_{\log},d\varphi)\ar[r]& (\mathcal{R}^{\bd},\varphi,\nabla,d\varphi),
}\]
where the upper left arrow is given in Example 3.3.3. Moreover, the diagram extends to a commutative diagram of quadruples
\[\xymatrix{
(K\{t\},\varphi,\nabla,d\varphi)\ar[r]\ar@/_1pc/[rr]&(K\{t\}_{\log},\varphi,\nabla_{\log},d\varphi)\ar[r]& (\mathcal{R}_{\log},\varphi,\nabla,d\varphi).
}\]

\item[5.] The Amice ring $\mathcal{E}$

Let notation be as in \S \ref{sec:Robba}. We consider the ring of bounded analytic functions on the ``generic '' unit disc in the sense of Dwork, that is, the $\mathcal{E}$-algebra $\mathcal{E}[\![X-t]\!]_0$ with the new variable $X-t$. We define the $K$-algebra homomorphism
\[
\tau:\mathcal{E}\to\mathcal{E}[\![X-t]\!]_0;f\mapsto\sum_{n=0}^{\infty}\frac{1}{n!}\frac{d^nf}{dt^n}(X-t)^n.
\]
We can define a quadruple structure on $\mathcal{E}$ similarly as $(K[\![t]\!]_0,\varphi,\nabla,d\varphi)$ in Example 3.3.3. We have the canonical morphism of quadruples given by the inclusions
\[
(\mathcal{R}^{\bd},\varphi,\nabla,d\varphi)\to (\mathcal{E},\varphi,\nabla,d\varphi).
\]
We endow $\mathcal{E}[\![X-t]\!]_0$ with a structure of a quadruple by
\[
\varphi:\mathcal{E}[\![X-t]\!]_0\to\mathcal{E}[\![X-t]\!]_0;\sum_{n=0}^{\infty}a_n(X-t)^n\mapsto\sum_{n=0}^{\infty}\varphi(a_n)(\tau(\varphi(t))-\varphi(t))^n,
\]
\[
\nabla:\mathcal{E}[\![X-t]\!]_0\to\Omega_{\mathcal{E}[\![X-t]\!]_0}=\mathcal{E}[\![X-t]\!]_0d(X-t);f\mapsto\frac{df}{d(X-t)}d(X-t),
\]
\[
d\varphi:\Omega_{\mathcal{E}[\![X-t]\!]_0}\to\Omega_{\mathcal{E}[\![X-t]\!]_0};fd(X-t)\mapsto\varphi(f)\nabla(\varphi(X-t)).
\]
We put
\[
d\tau:\Omega_{\mathcal{E}}\to\Omega_{\mathcal{E}[\![X-t]\!]_0};fdt\mapsto\tau(f)d(X-t).
\]
Then $\tau=(\tau,d\tau)$ is a morphism of the quadruples $\mathcal{E}\to\mathcal{E}[\![X-t]\!]_0$.
\end{enumerate}

\subsection{Unipotent connection}\label{subsec:uni}
In this paper, we would like to work with unipotent connections rather than quasi-unipotent connections (see Remark \ref{rem:quasi-unipotent}). We interpret some classical results on unipotent connections in terms of $\mathcal{R}_{\log}$.

\begin{dfn}
Let $M$ be a $\nabla$-module over $\mathcal{R}$. We define
\[
\mathbf{V}(M)=(M\otimes_{\mathcal{R}}\mathcal{R}_{\log})^{\nabla},
\]
which is a $K$-vector space endowed with the monodromy operator $N$, i.e., the $K$-linear endomorphism induced by the map
\[
\id_M\otimes d/d\log{t}:M\otimes_{\mathcal{R}}\mathcal{R}_{\log}\to M\otimes_{\mathcal{R}}\mathcal{R}_{\log};m\otimes\sum_ia_i(\log{t})^i\mapsto m\otimes\sum_iia_i(\log{t})^{i-1}.
\]
We say that $M$ is {\it solvable} in $\mathcal{R}_{\log}$ if $\dim_K\mathbf{V}(M)=\rank_{\mathcal{R}}M$. We say that $M$ is {\it unipotent} if $M$ is a successive extension of trivial $\nabla$-modules over $\mathcal{R}$ (\cite[Definition 4.27]{plm}). Note that the category of (unipotent) $\nabla$-modules over $\mathcal{R}$ is an abelian $\otimes$-category (\cite[6.2]{Cre}). Moreover, any subquotient of a unipotent $\nabla$-module over $\mathcal{R}$ is unipotent again. Also note that for $0\to M'\to M\to M''\to 0$ an exact sequence of $(\varphi,\nabla)$-modules over $\mathcal{R}$, $M$ is unipotent if and only if $M'$ and $M''$ are unipotent.
\end{dfn}

\begin{lem}\label{lem:V}
For $M$ a $\nabla$-module over $\mathcal{R}$, the canonical map
\[
\mathbf{V}(M)\otimes_K\mathcal{R}_{\log}\to M\otimes_{\mathcal{R}}\mathcal{R}_{\log}
\]
is injective.
\end{lem}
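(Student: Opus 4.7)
The plan is to adapt the classical minimal-dependence argument for horizontal sections of a differential module to the ring $\mathcal{R}_{\log}$. Suppose the map fails to be injective. Then there exist $K$-linearly independent horizontal sections $v_1,\dots,v_n\in\mathbf{V}(M)$ and elements $r_1,\dots,r_n\in\mathcal{R}_{\log}$, not all zero, with $\sum_ir_iv_i=0$ in $M\otimes_{\mathcal{R}}\mathcal{R}_{\log}$. Among all such dependence relations I choose one with the minimum number $k$ of nonzero coefficients and relabel so that $r_1,\dots,r_k$ are nonzero. Since $\mathcal{R}_{\log}=\mathcal{R}[\log t]$ is an integral domain and $M$ is free over $\mathcal{R}$, the module $M\otimes_{\mathcal{R}}\mathcal{R}_{\log}$ is $\mathcal{R}_{\log}$-torsion-free, which rules out $k=1$; thus $k\ge 2$.

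Next I differentiate the relation to produce a shorter one. Using $\Omega_{\mathcal{R}_{\log}}=\mathcal{R}_{\log}\,dt$ to identify $M\otimes_{\mathcal{R}}\mathcal{R}_{\log}\otimes\Omega_{\mathcal{R}_{\log}}$ with $M\otimes_{\mathcal{R}}\mathcal{R}_{\log}$ via $dt\mapsto 1$, applying $\nabla$ to $\sum_ir_iv_i=0$ and using $\nabla v_i=0$ gives $\sum_i\partial(r_i)v_i=0$, where $\partial=d/dt$. Forming the combination
\[
r_k\sum_i\partial(r_i)v_i-\partial(r_k)\sum_ir_iv_i=\sum_{i<k}\bigl(r_k\partial r_i-r_i\partial r_k\bigr)v_i=0
\]
eliminates the $v_k$ term and yields a relation with at most $k-1$ nonzero coefficients; by minimality each coefficient vanishes, so $r_k\partial r_i=r_i\partial r_k$, i.e.\ $\partial(r_i/r_k)=0$ in $\mathrm{Frac}(\mathcal{R}_{\log})$, for all $i<k$.

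To close the proof I need $\mathrm{Frac}(\mathcal{R}_{\log})^{\partial}=K$: if this is known, then $r_i=c_ir_k$ with $c_i\in K$ for $i<k$, and the original relation becomes $r_k(c_1v_1+\dots+c_{k-1}v_{k-1}+v_k)=0$; torsion-freeness forces $c_1v_1+\dots+c_{k-1}v_{k-1}+v_k=0$, contradicting the $K$-linear independence of the $v_i$. The main obstacle is therefore the constants computation, which I would carry out in two stages. First, show $\mathrm{Frac}(\mathcal{R})^{\partial}=K$ by writing a putative nontrivial constant as a coprime fraction $g/h$ with $g,h\in\mathcal{R}$ and analyzing the divisibility $h\mid h'$ forced by $g'h=gh'$ in the Bezout domain $\mathcal{R}$ (alternatively, via the analytic interpretation of $\mathcal{R}$ as meromorphic functions on a connected annulus, where vanishing derivative forces constancy). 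Second, observe that $\log t$ is transcendental over $\mathrm{Frac}(\mathcal{R})$ with $\partial(\log t)=1/t\notin\partial(\mathrm{Frac}(\mathcal{R}))$, and perform a coefficient comparison on polynomials and rational functions in $\log t$ to show that adjoining $\log t$ creates no new constants, giving $\mathrm{Frac}(\mathcal{R}_{\log})^{\partial}=\mathrm{Frac}(\mathcal{R})^{\partial}=K$.
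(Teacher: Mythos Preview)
Your argument is correct, but it follows a genuinely different route from the paper. The paper proceeds structurally: by d\'evissage (using that $\mathbf{V}$ is left exact and $\mathcal{R}_{\log}$ is flat) one reduces to irreducible $M$; then for any nonzero $x=\sum_{i\le n} m_i(\log t)^i\in\mathbf{V}(M)$, applying the monodromy operator $N^n$ produces $n!\,m_n\in M^{\nabla}\setminus\{0\}$, so by irreducibility $M=\mathcal{R}m_n\cong\mathcal{R}$, and the statement reduces to the elementary fact $\mathcal{R}_{\log}^{\partial}=K$ (a one-line Laurent-coefficient check). Your Wronskian/minimal-relation argument instead avoids any reduction on $M$ but pays for it by needing the stronger constants computation $\mathrm{Frac}(\mathcal{R}_{\log})^{\partial}=K$. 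That computation is correct as you outline it, but it is not entirely painless: the step ``$h\mid h'$ in the B\'ezout domain $\mathcal{R}$ forces $h$ to be a unit'' has no degree argument available as in a polynomial ring, and your analytic alternative (vanishing derivative forces constancy on the punctured annulus) requires some care in the nonarchimedean setting, e.g.\ passing to a complete algebraically closed extension and invoking connectedness of a Berkovich annulus minus finitely many points. The paper's approach is thus shorter, uses only the easy ring-level constants, and incidentally shows that an irreducible $\nabla$-module over $\mathcal{R}$ solvable in $\mathcal{R}_{\log}$ is trivial---a fact reused in the proof of Lemma~\ref{lem:solv=uni}. Your approach, on the other hand, is the portable one: it applies verbatim to any differential module over a ring whose fraction field has the expected constant field.
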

\begin{proof}
By d\'evissage, we may assume that $M$ is irreducible. We may also assume $\mathbf{V}(M)\neq 0$.

Let $x=\sum_{i=0}^nm_i\otimes (\log{t})^i\in \mathbf{V}(M),m_i\in M,m_n\neq 0$. Then $N^nx=n!m_n\otimes 1\in \mathbf{V}(M)\cap M=M^{\nabla}$. By the irreducibility of $M$, $M=\mathcal{R}m_n\cong\mathcal{R}$, in which case the assertion is obvious.
\end{proof}

\begin{cor}\label{cor:V}
For $M$ a $\nabla$-module over $\mathcal{R}$,
\[
\dim_K\mathbf{V}(M)\le \rank_{\mathcal{R}}M,
\]
and the monodromy operator $N$ is nilpotent.
\end{cor}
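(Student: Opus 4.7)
The plan is to read off both statements directly from Lemma \ref{lem:V}. For the dimension bound, recall that $\mathcal{R}_{\log}=\mathcal{R}[\log{t}]$ is a polynomial ring over the B\'ezout domain $\mathcal{R}$, hence is itself an integral domain. The injection supplied by Lemma \ref{lem:V} has source $\mathbf{V}(M)\otimes_K\mathcal{R}_{\log}$, a free $\mathcal{R}_{\log}$-module of rank $\dim_K\mathbf{V}(M)$, and target $M\otimes_{\mathcal{R}}\mathcal{R}_{\log}$, a free $\mathcal{R}_{\log}$-module of rank $n=\rank_{\mathcal{R}}M$. Tensoring with $\mathrm{Frac}(\mathcal{R}_{\log})$ preserves injectivity and converts ranks into dimensions of vector spaces, forcing $\dim_K\mathbf{V}(M)\le n$; in particular $\mathbf{V}(M)$ is finite-dimensional.

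For the nilpotency of $N$, I would exploit the natural $\log{t}$-degree filtration on $M\otimes_{\mathcal{R}}\mathcal{R}_{\log}$. Since $\mathcal{R}_{\log}=\oplus_{i\ge 0}\mathcal{R}\cdot (\log{t})^i$ as $\mathcal{R}$-modules, every element of $M\otimes_{\mathcal{R}}\mathcal{R}_{\log}$ can be written uniquely as $\sum_{i=0}^dm_i\otimes(\log{t})^i$ with $m_i\in M$, and the operator $N=\id_M\otimes d/d\log{t}$ strictly decreases this degree (with $N^{d+1}$ killing any element of degree $\le d$). So $N$ is locally nilpotent on $\mathbf{V}(M)$. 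Combining with the finite-dimensionality from the first paragraph, I would pick a $K$-basis $v_1,\dots,v_r$ of $\mathbf{V}(M)$, set $d_0=\max_j\deg_{\log{t}}(v_j)$, and conclude $N^{d_0+1}=0$ on all of $\mathbf{V}(M)$.

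I do not foresee any serious obstacle. The only minor point to verify is that $N$ genuinely preserves $\mathbf{V}(M)$, i.e.\ that $\id_M\otimes d/d\log{t}$ commutes with $\nabla$ on $M\otimes_{\mathcal{R}}\mathcal{R}_{\log}$; this is a direct computation using $d(\log{t})=dt/t$ and the Leibniz rule (the same recursion on coefficients $m_i$ that underlies the analysis in the proof of Lemma \ref{lem:V}), and is already implicit in the very definition of $N$ given just before the corollary.
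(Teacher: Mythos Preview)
Your argument is correct and is essentially the paper's proof spelled out in full: the paper simply says the inequality follows from Lemma \ref{lem:V} and that nilpotency of $N$ follows from its local nilpotency, which is exactly your rank-comparison over $\mathrm{Frac}(\mathcal{R}_{\log})$ together with the degree-in-$\log t$ argument. The only cosmetic point is that you need not invoke the B\'ezout property of $\mathcal{R}$; it suffices that $\mathcal{R}$ (hence $\mathcal{R}_{\log}$) is an integral domain.
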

\begin{proof}
The first inequality follows from Lemma \ref{lem:V}, and the nilpotency of $N$ follows from the local nilpotency of $N$.
\end{proof}

\begin{lem}\label{lem:solv=uni}
Let $M$ be a $\nabla$-module over $\mathcal{R}$. Then $M$ is solvable in $\mathcal{R}_{\log}$ if and only if $M$ is unipotent.
\end{lem}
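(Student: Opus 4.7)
The plan is to induct on $\rank_\mathcal{R} M$ in both directions, with the harder direction $(\Rightarrow)$ reducing to a rank-one lemma.

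For $(\Leftarrow)$, I take a short exact sequence $0\to M'\to M\to M''\to 0$ of $\nabla$-modules with $M'$ trivial (the bottom step of the unipotent filtration) and $M''$ unipotent of smaller rank; since $\mathcal{R}_{\log}$ is $\mathcal{R}$-flat the sequence remains exact, and $\mathbf{V}(-)$ is left exact. To lift any horizontal section $u\in\mathbf{V}(M'')$ to $\mathbf{V}(M)$, pick a lift $\tilde u\in M\otimes\mathcal{R}_{\log}$; its obstruction $\nabla\tilde u$ lands in $M'\otimes\mathcal{R}_{\log}\,dt$, and since $M'$ is trivial it suffices that $d:\mathcal{R}_{\log}\to\mathcal{R}_{\log}\,dt$ be surjective. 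This I would verify directly by solving $dh=\omega$ coefficient-by-coefficient in $\log t$-degree, using $d(\log t)=dt/t$ to absorb residues. Then by the inductive hypothesis $\dim_K\mathbf{V}(M)=\dim_K\mathbf{V}(M')+\dim_K\mathbf{V}(M'')=\rank_\mathcal{R} M$.

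For $(\Rightarrow)$, assume $M$ is solvable of rank $n\geq 1$. By Corollary \ref{cor:V} the monodromy $N$ is nilpotent, so any nonzero $v\in\mathbf{V}(M)$, written $v=\sum_{k=0}^m v_k(\log t)^k$ with $v_k\in M$ and $v_m\neq 0$, satisfies $\partial v_m=0$ (compare top $\log t$-coefficients in $\nabla v=0$), giving $v_m\in M^\nabla\setminus\{0\}$. Using that $\mathcal{R}$ is a B\'ezout domain, write $v_m=d\cdot v'$ with $v'\in M$ primitive and $d\in\mathcal{R}$, and set $\tilde M':=\mathcal{R}v'$. The identity $\partial(dv')=0$ yields $\partial v'=-(d'/d)v'$; primitivity of $v'$ together with $\partial v'\in M$ forces $d'/d\in\mathcal{R}$, so $\tilde M'$ is a $\nabla$-stable saturated rank-one submodule, with $M/\tilde M'$ free of rank $n-1$. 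Given the rank-one lemma below, $\tilde M'$ is trivial; the dimension count in
\[
0\to\mathbf{V}(\tilde M')\to\mathbf{V}(M)\to\mathbf{V}(M/\tilde M'),
\]
together with Corollary \ref{cor:V}, forces $\dim_K\mathbf{V}(M/\tilde M')=n-1$, so $M/\tilde M'$ is solvable and, by induction, unipotent, making $M$ itself unipotent.

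The rank-one lemma asserts that a solvable rank-one $M=\mathcal{R}e$ with $\partial e=ge$, $g\in\mathcal{R}$, is trivial. Given a nonzero horizontal section $fe$ with $f=\sum_{k=0}^m f_k(\log t)^k\in\mathcal{R}_{\log}$ and $f_m\neq 0$, the top equation of $\partial f+gf=0$ gives $f_m\in\mathcal{R}$ horizontal. If $m\geq 1$, setting $h:=f_{m-1}/f_m\in\mathrm{Frac}(\mathcal{R})$ and combining the top two equations yields $h'=-m/t$, hence $h=-m\log t+\mathrm{const}$, which contradicts $\log t\notin\mathrm{Frac}(\mathcal{R})$; so $m=0$ and $f=f_0\in\mathcal{R}$. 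Then $g=-f_0'/f_0\in\mathcal{R}$ forces $f_0$ to have no zeros on the annulus of $\mathcal{R}$ (a zero would produce a pole of $f_0'/f_0$), whence $f_0\in\mathcal{R}^\times$ and $f_0e$ is a horizontal $\mathcal{R}$-basis of $M$. The main obstacle of the whole argument is this lemma: its two key inputs are transcendence of $\log t$ over $\mathrm{Frac}(\mathcal{R})$ and the analytic meaning of $\mathcal{R}^\times$; the inductive step additionally relies throughout on the B\'ezout property of $\mathcal{R}$ to produce primitive vectors and saturated submodules with free quotients.
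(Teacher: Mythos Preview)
Your argument is correct but follows a more hands-on route than the paper's.  For the harder direction, the paper reduces by d\'evissage to $M$ irreducible (using that the category of $\nabla$-modules over $\mathcal{R}$ is abelian), then notes that $M^{\nabla}=\mathbf{V}(M)^{N=0}\neq 0$ by nilpotency of $N$, so the injection $M^{\nabla}\otimes_K\mathcal{R}\hookrightarrow M$ (from Lemma~\ref{lem:V}) is onto by irreducibility and $M$ is trivial.  The easier direction is dispatched by citation.

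You never pass to irreducible objects: instead you saturate a horizontal element to a rank-one $\nabla$-stable submodule via the B\'ezout property, prove that submodule trivial by a separate rank-one lemma, and induct on rank.  This trades the abelian-category machinery for explicit use of the B\'ezout structure and the analytic description of $\mathcal{R}^\times$.  Two remarks on your rank-one lemma.  First, the $h'=-m/t$ detour is unnecessary: once you have observed that $f_m e$ is itself horizontal (top equation), the one-dimensionality of $\mathbf{V}(M)$ forces $fe\in K\cdot f_m e$, hence $m=0$ immediately; your route instead requires knowing there is no $h\in\mathrm{Frac}(\mathcal{R})$ with $h'=-m/t$, which is true but needs its own justification.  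Second, the implication ``$f_0'/f_0\in\mathcal{R}\Rightarrow f_0\in\mathcal{R}^\times$'' is precisely the statement that a rank-one $\nabla$-module over $\mathcal{R}$ has no proper nonzero $\nabla$-stable submodule, i.e.\ is simple---which is exactly what the paper's appeal to the abelian structure of the category yields for free.
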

\begin{proof}
The necessity is well-known: for example, see the proof of \cite[Theorem 6.13]{plm}. To prove the sufficiency, we may assume that $M$ is irreducible by d\'evissage. We have $M^{\nabla}=\mathbf{V}(M)^{N=0}\neq 0$ by the nilpotency of $N$. Hence the map $M^{\nabla}\otimes_K\mathcal{R}\to M$, which is injective by Lemma \ref{lem:V}, is an isomorphism. In particular, $M$ is trivial.
\end{proof}


\section{Frobenius slope filtration}\label{sec:slope}

In this section, we recall the definition of Frobenius slope filtrations on $\phi$-modules over complete (discrete) valuation fields following \cite{Tsu,CT,CT2}: the formulation may be slightly different from a usual one. Precisely speaking, our Frobenius slope filtration is indexed by $\mathbb{R}$ so that it will be invariant under an arbitrary $a$-pushforward functor $[a]_*$ in \S \ref{subsec:phinabla1}. We also recall some basic properties by translating \cite[\S 14]{pde} into our formulation. The results in this section will be used without referring unless otherwise is specified.

\begin{notation}
In this section, let
\begin{enumerate}
\item[$\bullet$] $(F,|\cdot|)$ a complete discrete valuation field of mixed characteristic $(0,p)$: we do not need to normalize $|\cdot|$.
\item[$\bullet$] $\phi:F\to F$ an isometric ring endomorphism.
\end{enumerate}
Recall that $q$ is a positive power of $p$. In this paper, unless otherwise is mentioned, we consider the case that $\phi$ is a $q$-power Frobenius lift. In the following, a base change means tensoring $E$ over $F$, where $E$ is a complete extension of $F$ to which $\phi$ extends isometrically (with the same $q$ unless otherwise is mentioned). As $E$, we typically consider the $\phi$-completion of $F$, i.e., the completion of $\varinjlim{(F\xrightarrow[]{\phi}F\xrightarrow[]{\phi}\dots)}$, on which $\phi$ is bijective.
\end{notation}

\begin{dfn}[{\cite[Definitions 6.1.3, 14.4.6]{pde}}]
Let $M$ be a non-zero $\phi$-module over $F$. We choose the supremum norm of $M$ with respect to a basis of $M$ (\cite[Definition 1.3.2]{pde}). We define the {\it spectral radius} of $\phi:M\to M$ as
\[
|\phi|_{\SP,M}=\lim_{n\to\infty}(\sup_{v\in V,v\neq 0}{|\phi^n(v)|/|v|})^{1/n}.
\]
We say that $M$ is {\it pure of (Frobenius) slope} $\lambda$ if
\[
|\phi|_{\SP,M}=1/|\phi|_{\SP,M\spcheck}=|q|^{\lambda}.
\]
For example, $F(q)$ (see \S \ref{subsec:phinabla1}) is pure of slope $1$.

The spectral radius (and hence, the slope) is independent of the choice of the basis, and hence, depends only on the isomorphism class of $M$ by \cite[Theorem 1.3.6 and Proposition 6.1.5]{pde}. It also commutes with base change (\cite[Lemma 14.4.3 (c)]{pde}). Note that the slope does depend on the choice of $q$. To formulate Chiarellotto-Tsuzuki conjecture \ref{conj:CT}, we need to make our slope filtration to be invariant under the $a$-pushforward (see Lemma \ref{lem:slope5} (II)-(i)). Hence we choose $q^a$ as $q$ when we consider $\phi^a$-modules over $F$.

Note that $M$ is pure of slope $\lambda$ if and only if $M$ is pure of norm $|q|^{\lambda}$ in the terminology in \cite[\S 14]{pde}. In the following, we translate the results on the difference modules in \cite[\S 14]{pde} into our language.
\end{dfn}

\begin{lem}\label{lem:slope1}
If $M$ is an irreducible $\phi$-module over $F$, then $M$ is pure.
\end{lem}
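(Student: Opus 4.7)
The plan is to apply the slope filtration theorem for $\phi$-modules over $F$, as developed in \cite[\S 14]{pde}. Given any non-zero $\phi$-module $N$ over $F$, this theorem supplies a canonical increasing filtration
\[
0 = N_0 \subsetneq N_1 \subsetneq \cdots \subsetneq N_r = N
\]
by $\phi$-submodules whose successive quotients $N_i/N_{i-1}$ are pure of some slope $\lambda_i$, with $\lambda_1 < \lambda_2 < \cdots < \lambda_r$. The top slope matches the spectral radius via $\lambda_r = \log_{|q|}|\phi|_{\SP,N}$, and by applying the same description to the dual $N\spcheck$ (whose slopes are the negatives of those of $N$, in reverse order), the bottom slope is recovered as $\lambda_1 = -\log_{|q|}|\phi|_{\SP,N\spcheck}$.

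I would then apply this filtration to the given irreducible $M$. If $r \ge 2$, then any one of the intermediate $M_i$ with $0 < i < r$ would be a proper, non-zero $\phi$-submodule of $M$, contradicting the hypothesis of irreducibility. Hence $r = 1$ and $\lambda_1 = \lambda_r =: \lambda$, which translates to $|\phi|_{\SP,M} = 1/|\phi|_{\SP,M\spcheck} = |q|^\lambda$. That is exactly the assertion that $M$ is pure of slope $\lambda$.

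The main obstacle is producing the slope filtration over $F$ itself, rather than merely over an inversive or strongly difference-closed base change $E/F$: even if $M_E$ decomposes cleanly by slopes via Dieudonn\'e--Manin, descending the smallest-slope submodule of $M_E$ back to a $\phi$-submodule of $M$ requires the canonicity of the filtration and its stability under $F$-automorphisms of $E$. This descent, together with the identification of $\lambda_r$ and $\lambda_1$ with the spectral radii of $M$ and $M\spcheck$, is the substantive input absorbed from \cite[\S 14]{pde}; once it is granted, irreducibility converts into purity essentially tautologically.
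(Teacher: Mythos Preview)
Your proposal is correct and follows essentially the same approach as the paper: both invoke the slope filtration theorem \cite[Theorem 14.4.15]{pde} (which is exactly the canonical filtration you describe, valid over $F$ without needing to pass to an inversive extension), and then observe that irreducibility forces the filtration to have a single step. Your extra discussion of spectral radii and descent is accurate but unnecessary, since purity of the unique graded piece already gives the conclusion directly.
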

\begin{proof}
This is an immediate corollary of \cite[Theorem 14.4.15]{pde}.
\end{proof}

\begin{dfn}
Let $M$ be a non-zero $\phi$-module over $F$. We define the {\it (Frobenius) slope multiset} of $M$ as
\[
\coprod_i\{\!\underbrace{\lambda_i,\dots,\lambda_i}_{\dim_F{M_i}\text{ times}}\!\},
\]
where $\{M_i\}$ is the Jordan-H\"older constituents of $M$, and $\lambda_i$ is the slope of $M_i$, which makes sense by Lemma \ref{lem:slope1}.

Recall that the slope multiset is consisting of rational numbers (\cite[Corollary 14.4.5]{pde}), invariant under base changes, and if $0\to M'\to M\to M''\to 0$ is an exact sequence of $\phi$-modules, then the slope multiset of $M$ is the disjoint union of those of $M'$ and $M''$.

For simplicity, we define the slope multiset of $M=0$ as the empty set.
\end{dfn}

\begin{lem}[{\cite[Proposition 14.4.8]{pde}}]\label{lem:slope3}
Let $M$ be a non-zero $\phi$-module over $F$. Then $M$ is pure of slope $\lambda$ if and only if the slope multiset is $\{\lambda,\dots,\lambda\}$.
\end{lem}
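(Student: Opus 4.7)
The plan is to induct on the length of a Jordan--H\"older filtration of $M$, using as the central technical ingredient the following additivity property of the spectral radius: for any short exact sequence $0 \to M' \to M \to M'' \to 0$ of $\phi$-modules over $F$,
\[
|\phi|_{\SP, M} = \max\{|\phi|_{\SP, M'},\, |\phi|_{\SP, M''}\},
\]
and, applied to the dual sequence $0 \to (M'')\spcheck \to M\spcheck \to (M')\spcheck \to 0$,
\[
|\phi|_{\SP, M\spcheck} = \max\{|\phi|_{\SP, (M')\spcheck},\, |\phi|_{\SP, (M'')\spcheck}\}.
\]
The inequality ``$\ge$'' in each formula is immediate from the fact that the spectral radius can only drop on sub-objects and on quotients; the opposite inequality is obtained by picking a basis of $M$ extending one of $M'$, observing that every power of $\phi$ then has a block-upper-triangular matrix, and estimating supremum norms block-by-block. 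This is part of the difference-module machinery recalled in \cite[\S 14]{pde}.

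Granted the max formula, the sufficiency direction ($\Leftarrow$) is a short induction. If $M$ is irreducible, it is pure by Lemma \ref{lem:slope1}, and the unique entry of its slope multiset forces this slope to be $\lambda$. Otherwise choose an irreducible sub $M' \subset M$ with quotient $M'' = M/M'$; the slope multiset of $M$ decomposes as the disjoint union of those of $M'$ and $M''$, so each part is $\{\lambda,\dots,\lambda\}$. By induction $M'$ and $M''$ are both pure of slope $\lambda$, and the max formulas for $M$ and for $M\spcheck$ then give $|\phi|_{\SP,M} = |q|^\lambda$ and $|\phi|_{\SP, M\spcheck} = |q|^{-\lambda}$.

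For necessity ($\Rightarrow$), assume $M$ is pure of slope $\lambda$. Pick an irreducible sub $M' \subset M$, pure of some slope $\mu$ by Lemma \ref{lem:slope1}, and set $M'' = M/M'$. The max formula applied to $M$ gives $|q|^\mu \le |q|^\lambda$, and applied to $M\spcheck$ (where $(M')\spcheck$ appears as a quotient) gives $|q|^{-\mu} \le |q|^{-\lambda}$; since $|q|<1$ these force $\mu = \lambda$. The same formulas also yield $|\phi|_{\SP,M''} \le |q|^\lambda$ and $|\phi|_{\SP,(M'')\spcheck} \le |q|^{-\lambda}$, and the identical sandwich argument applied to each irreducible subquotient of $M''$ (again pure by Lemma \ref{lem:slope1}) shows that every slope appearing in $M''$ equals $\lambda$. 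Since the slope multiset of $M$ is the union of those of $M'$ and $M''$, all slopes are $\lambda$.

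The main obstacle is really the max formula for spectral radii under short exact sequences; once that is in hand, the rest is a routine two-way induction. Everything else reduces to the observation that pure of slope $\lambda$ behaves symmetrically under $M \leftrightarrow M\spcheck$ (negating the slope), so the two halves of the definition control the two inequalities $\mu \ge \lambda$ and $\mu \le \lambda$ separately.
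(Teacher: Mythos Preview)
The paper gives no proof of this lemma; it is simply cited from \cite[Proposition 14.4.8]{pde}. Your proposal supplies a correct argument, and it is essentially the one found in the cited reference: the heart of the matter is the max formula for spectral radii along short exact sequences, after which both directions follow by inspecting Jordan--H\"older factors.

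One small comment on presentation. In the necessity direction you invoke the max formula, but all you actually use is the weaker statement that the spectral radius does not increase under passage to subquotients (and dually). Since every Jordan--H\"older constituent $N$ of $M$ is a subquotient of $M$ and $N\spcheck$ is a subquotient of $M\spcheck$, the two inequalities $|q|^{\nu}\le |q|^{\lambda}$ and $|q|^{-\nu}\le |q|^{-\lambda}$ follow directly, with no need to iterate along a filtration. The full max formula is only needed for the sufficiency direction, where you must conclude that $|\phi|_{\SP,M}$ and $|\phi|_{\SP,M\spcheck}$ are exactly $|q|^{\lambda}$ and $|q|^{-\lambda}$, not merely bounded by them.
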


\begin{lem}\label{lem:slope4}
Let $M,N$ be non-zero $\phi$-modules over $F$.
\begin{enumerate}
\item $M$ is pure of slope $\lambda$ if and only if $M\spcheck$ is pure of slope $-\lambda$.
\item The slope multiset of $M\spcheck$ is the negative of that of $M$.
\item The slope multiset of $M\otimes_FN$ consists of $\lambda+\mu$, where $\lambda,\mu$ runs the slope multisets of $M,N$ respectively.
\end{enumerate}
\end{lem}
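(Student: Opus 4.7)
The plan is to deduce all three assertions from the symmetric definition of purity, combined for (iii) with a base change to a strongly difference-closed field where a Dieudonn\'e--Manin decomposition is available.

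Part (i) is essentially the definition: to say $M$ is pure of slope $\lambda$ is to assert the two simultaneous equalities $|\phi|_{\SP,M}=|q|^\lambda$ and $|\phi|_{\SP,M\spcheck}=|q|^{-\lambda}$. Since $M$ is finite free, the canonical pairing induces a $\phi$-equivariant isomorphism $M\spcheck\spcheck\cong M$. Rewriting the same pair of equalities with $M\spcheck$ in place of $M$ is therefore identical to the assertion that $M\spcheck$ is pure of slope $-\lambda$.

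For part (ii), I would choose a Jordan--H\"older filtration $0=M_0\subset M_1\subset\cdots\subset M_n=M$ whose graded pieces are irreducible of slopes $\lambda_i$ (available by Lemma \ref{lem:slope1}), and dualize. Since $\Hom_F(-,F)$ is an exact, dimension-preserving contravariant endofunctor on $\phi$-modules over $F$, it sends irreducibles to irreducibles (any proper non-zero $\phi$-submodule of $V\spcheck$ would annihilate a proper non-zero $\phi$-submodule of $V$). Hence the dualized filtration is a Jordan--H\"older filtration of $M\spcheck$ with graded pieces $(M_i/M_{i-1})\spcheck$, each of which is pure of slope $-\lambda_i$ by part (i); the definition of the slope multiset then yields (ii).

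For part (iii), I would first base-change to a complete extension $E/F$ to which $\phi$ extends isometrically and whose residue field is strongly difference-closed; such an $E$ exists by the setup in Notation (2), and the slope multisets of $M$, $N$, and $M\otimes_F N$ are preserved by this move. Over $E$, the Dieudonn\'e--Manin decomposition from \cite[\S 14]{pde} splits $M_E=\bigoplus_\lambda M_E^{[\lambda]}$ and $N_E=\bigoplus_\mu N_E^{[\mu]}$ into pure isotypic summands, giving
\[
(M\otimes_F N)_E\;\cong\;\bigoplus_{\lambda,\mu} M_E^{[\lambda]}\otimes_E N_E^{[\mu]}.
\]
By Lemma \ref{lem:slope3} it remains to check that each summand is pure of slope $\lambda+\mu$. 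Over a strongly difference-closed base, a pure isotypic module of slope $a/b$ is a direct sum of copies of the standard cyclic $\phi$-module whose Frobenius companion matrix has slope $a/b$, and the tensor product of two such standards decomposes---by inspecting the action of a sufficiently high power of $\phi$ on a basis of $\phi$-eigenvectors---into standards whose slopes are the sums. This last tensor-product step is the only substantive point in the argument: (i) and (ii) are bookkeeping from the definition and Jordan--H\"older, whereas the tensor compatibility genuinely requires both the base change to a strongly difference-closed field and the explicit classification of pure $\phi$-modules there.
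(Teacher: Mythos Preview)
Your proof is correct and close in spirit to the paper's, though for part (iii) you take a slightly different reduction. The paper reduces to the pure case directly over $F$ via Jordan--H\"older (citing Lemma \ref{lem:slope1}) and then invokes \cite[Corollary 14.4.9]{pde} for the fact that the tensor product of pure modules of slopes $\lambda,\mu$ is pure of slope $\lambda+\mu$. You instead base-change first to a strongly difference-closed extension and use the Dieudonn\'e--Manin decomposition before handling the pure case via explicit standards. Both routes land on the same substantive input (tensor of pures adds slopes); the paper's Jordan--H\"older reduction avoids the base change for the reduction step itself, while your version is more self-contained in that it sketches what lies behind the cited corollary. Parts (i) and (ii) match the paper's argument, with your (ii) simply unpacking what the paper compresses into ``follows from Lemma \ref{lem:slope1}''.
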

\begin{proof}
\begin{enumerate}
\item It follows from the canonical isomorphism $(M\spcheck)\spcheck\cong M$.
\item It follows from Lemma \ref{lem:slope1}.
\item By Lemma \ref{lem:slope1}, it reduces to the case where $M,N$ are pure. Then the assertion follows from \cite[Corollary 14.4.9]{pde} and Lemma \ref{lem:slope3}.
\end{enumerate}
\end{proof}

\begin{thm}\label{thm:slope1}
\begin{enumerate}
\item (\cite[Theorem 14.4.13]{pde}) Assume that $F$ is inversive. Let $M$ be a $\phi$-module over $F$. Then there exists a unique direct sum decomposition
\[
M=\oplus_{\lambda\in\mathbb{R}}M_{\lambda}
\]
of $\phi$-modules, in which each non-zero $M_{\lambda}$ is pure of slope $\lambda$.

Moreover, $\dim_FM_{\lambda}$ is equal to the multiplicity of $\lambda$ in the slope multiset of $M$.
\item (cf. \cite[Definition 2.3]{CT}) Let $M$ be a $\phi$-module over $F$. Then there exists a unique increasing filtration $\{S_{\lambda}(M);\lambda\in\mathbb{R}\}$ of $\phi$-modules satisfying:
\begin{enumerate}
\item[(a)] the exhaustiveness and separatedness;
\item[(b)] (right continuity)
\[
S_{\lambda}(M)=\cap_{\mu>\lambda}S_{\mu}(M);
\]
\item[(c)] (rationality) if $S_{\lambda}(M)/\cup_{\mu<\lambda}S_{\mu}(M)$ is non-zero, then it is pure of slope $\lambda$ and $\lambda\in\mathbb{Q}$.
\end{enumerate}
Moreover, $\dim_FS_{\lambda}(M)/\cup_{\mu<\lambda}S_{\mu}(M)$ is equal to the multiplicity of $\lambda$ in the slope multiset of $M$.

We call $S_{\bullet}(M)$ {\it (Frobenius) slope filtration} of $M$.
\item If $F$ is inversive, then
\[
S_{\lambda}(M)=\oplus_{\mu\le\lambda}M_{\mu},
\]
\[
S_{\lambda}(M)/\cup_{\mu<\lambda}S_{\mu}(M)\cong M_{\lambda}.
\]
\end{enumerate}
\end{thm}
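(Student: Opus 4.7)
Part (i) is the Dieudonn\'e--Manin-type decomposition theorem over an inversive difference field, which is \cite[Theorem 14.4.13]{pde}; I will invoke it directly. The dimension formula will then follow since the slope multiset of $M$ is the disjoint union of those of the summands $M_\mu$, each of which is pure of slope $\mu$ and thus, by Lemma \ref{lem:slope3}, contributes $\dim_F M_\mu$ copies of $\mu$.

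For part (ii), my plan is to construct $S_\lambda(M)$ intrinsically as the unique maximal $\phi$-submodule of $M$ whose slope multiset is contained in $(-\infty,\lambda]$. The key observation for well-definedness is that if $N_1,N_2\subset M$ each have slope multiset in $(-\infty,\lambda]$, then the surjection $N_1\oplus N_2\twoheadrightarrow N_1+N_2$ shows that the slope multiset of $N_1+N_2$ is a sub-multiset of the disjoint union of those of $N_1$ and $N_2$, hence also lies in $(-\infty,\lambda]$; combined with $\dim_F M<\infty$, this forces a unique maximal $\phi$-submodule of that type. The resulting filtration is automatically increasing by the same maximality. Exhaustiveness and separatedness are immediate from the finiteness of the slope multiset of $M$, and right continuity follows from the same finiteness: $S_\mu(M)=S_\lambda(M)$ for all $\mu>\lambda$ sufficiently close to $\lambda$. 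For rationality, I will let $\lambda'$ be the largest slope of $M$ that is strictly less than $\lambda$ (taking $\lambda'$ below all slopes if there is none). Then $\cup_{\mu<\lambda}S_\mu(M)=S_{\lambda'}(M)$, because no slope of $M$ lies in $(\lambda',\lambda)$. If the quotient $Q=S_\lambda(M)/S_{\lambda'}(M)$ is nonzero and admits a Jordan--H\"older constituent of slope $\mu<\lambda$, then $\mu$ is a slope of $M$ and hence $\mu\le\lambda'$; the preimage of this constituent in $S_\lambda(M)$ is a $\phi$-submodule strictly containing $S_{\lambda'}(M)$ with slope multiset still in $(-\infty,\lambda']$, contradicting the maximality of $S_{\lambda'}(M)$. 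Thus every Jordan--H\"older constituent of $Q$ has slope $\lambda$, so $Q$ is pure of slope $\lambda$ by Lemma \ref{lem:slope3}, and $\lambda\in\mathbb{Q}$ by \cite[Corollary 14.4.5]{pde}.

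Uniqueness in (ii) will be a clean two-sided argument: for any filtration $T_\bullet$ satisfying (a)--(c), property (c) forces every Jordan--H\"older constituent of $T_\lambda$ to have slope $\le\lambda$, giving $T_\lambda\subset S_\lambda(M)$ by maximality; conversely, $M/T_\lambda$ is exhaustively filtered by $T_\mu/T_\lambda$ for $\mu>\lambda$ with graded pieces pure of slope $>\lambda$, so the image of $S_\lambda(M)$ in $M/T_\lambda$ has slope multiset simultaneously in $(-\infty,\lambda]$ and in $(\lambda,+\infty)$, and must therefore vanish. Part (iii) will then be immediate: in the inversive case, $\oplus_{\mu\le\lambda}M_\mu$ is a $\phi$-submodule with slope multiset in $(-\infty,\lambda]$, and its complementary summand $\oplus_{\mu>\lambda}M_\mu$ has all slopes $>\lambda$, so the same two-sided argument identifies $\oplus_{\mu\le\lambda}M_\mu$ with $S_\lambda(M)$.

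The step I expect to be most delicate is the verification of property (c): one needs the precise choice of $\lambda'$ together with the non-trivial input that an offending slope $\mu<\lambda$ of a Jordan--H\"older constituent of $Q$ is itself a slope of $M$, in order to upgrade the bound on the preimage's slope multiset from $(-\infty,\lambda]$ to $(-\infty,\lambda']$ and trigger the contradiction with maximality. Everything else is essentially bookkeeping, given Lemmas \ref{lem:slope1}, \ref{lem:slope3}, \ref{lem:slope4} and the finiteness and rationality of slopes from \cite[Corollary 14.4.5]{pde}.
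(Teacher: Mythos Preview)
Your overall strategy---defining $S_\lambda(M)$ as the maximal $\phi$-submodule with slope multiset in $(-\infty,\lambda]$---is a reasonable alternative to the paper's direct appeal to \cite[Theorem~14.4.15]{pde}, and your treatments of uniqueness and of part~(iii) are essentially the paper's. But the verification of property~(c) has a real gap.

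You claim that if $Q=S_\lambda(M)/S_{\lambda'}(M)$ has a Jordan--H\"older constituent of slope $\mu<\lambda$, then ``the preimage of this constituent in $S_\lambda(M)$'' has slope multiset contained in $(-\infty,\lambda']$. A Jordan--H\"older constituent is a subquotient, not a submodule, so it has no preimage; and if you instead take the preimage of the filtration step $N_i\subset Q$ for which $N_i/N_{i-1}$ has slope $\mu$, the resulting submodule of $S_\lambda(M)$ picks up the slopes of $N_1,\dots,N_{i-1}/N_{i-2}$ as well, any of which may equal $\lambda>\lambda'$. What you actually need is that whenever $Q$ is not pure of slope $\lambda$, it admits an irreducible \emph{submodule} of slope $<\lambda$. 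This is true, but not for free: it follows from the fact that any extension $0\to M'\to P\to M''\to 0$ of $\phi$-modules with $M',M''$ pure and the slope of $M'$ strictly larger than that of $M''$ must split (since $\Hom_F(M'',M')$ is then pure of positive slope, so $1-\phi$ is bijective on it---the same mechanism as in the proof of Lemma~\ref{lem:slopehom}). An induction on length using this splitting produces an irreducible submodule of $Q$ of minimal slope, after which your preimage-and-maximality argument does go through. Note, however, that this splitting step is essentially the first step of \cite[Theorem~14.4.15]{pde}, which is precisely what the paper invokes directly for existence; so your route does not end up bypassing that input.
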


\begin{lem}\label{lem:slopehom}
Let $M,N$ be $\phi$-modules over $F$. If the slope multisets of $M$ and $N$ are disjoint, then
\[
\Hom_F^{\phi}(M,N)=0.
\]
\end{lem}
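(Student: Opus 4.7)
My plan is to reinterpret $\Hom_F^{\varphi}(M,N)$ as the $\phi$-fixed vectors of the internal Hom $\phi$-module, then use the slope calculus already set up in Lemmas~\ref{lem:slope4} and Theorem~\ref{thm:slope1} to exclude the slope $0$.

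First I would observe that $\Hom_F(M,N)$ carries a natural $\phi$-module structure (as defined in \S\ref{subsec:phinabla1}; in the pure-$\phi$ setting this is the one induced from $\Hom_F(M,N) \cong M\spcheck\otimes_F N$), and that by definition of this structure an $F$-linear map $f\colon M\to N$ is $\phi$-equivariant if and only if it is fixed by the induced action. Thus $\Hom_F^{\varphi}(M,N) = \Hom_F(M,N)^{\phi=1}$. Applying Lemma~\ref{lem:slope4}(ii),(iii) to $M\spcheck\otimes_F N$, the slope multiset of $\Hom_F(M,N)$ consists of the differences $\mu-\lambda$ where $\mu$ runs over the slope multiset of $N$ and $\lambda$ over that of $M$. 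The disjointness hypothesis immediately gives that $0$ is not a slope of $\Hom_F(M,N)$.

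The lemma is therefore reduced to the following claim: if $H$ is a $\phi$-module over $F$ whose slope multiset does not contain $0$, then $H^{\phi=1}=0$. To handle this, I would base change to the $\phi$-completion $F'$ of $F$, which is inversive and to which $\phi$ extends isometrically. Base change preserves the slope multiset, and the natural map $H^{\phi=1}\hookrightarrow H_{F'}^{\phi=1}$ is injective (since $H\hookrightarrow H_{F'}$ is injective by freeness), so it suffices to prove vanishing over $F'$. By Theorem~\ref{thm:slope1}(i) we have the decomposition
\[
H_{F'} \;=\; \bigoplus_{\lambda\in\mathbb{R}} (H_{F'})_\lambda,
\]
with $(H_{F'})_0 = 0$ by assumption. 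If now $v\in H_{F'}$ is non-zero with $\phi(v)=v$, then the cyclic $F'$-subspace $F'v$ is a $\phi$-stable one-dimensional $\phi$-submodule, and a direct check (or Lemma~\ref{lem:slope3}) shows it is pure of slope $0$; by the uniqueness of the slope decomposition it must lie inside $(H_{F'})_0=0$, a contradiction.

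I do not foresee a serious obstacle: the only points requiring care are the compatibility of the internal-Hom $\phi$-structure with the slope arithmetic (handled by Lemma~\ref{lem:slope4}) and the injectivity of the base change on $\phi$-invariants. Both are formal once one passes through the isomorphism $\Hom_F(M,N)\cong M\spcheck\otimes_F N$ and uses that $M,N$ are finite free. Alternatively one could bypass Theorem~\ref{thm:slope1}(i) and argue directly via the slope filtration $S_\bullet$ on $\Hom_F(M,N)$: a $\phi$-fixed vector would generate a sub-$\phi$-module of slope $\le 0$ that is also $\ge 0$ (by taking duals), forcing $0$ into the slope multiset.
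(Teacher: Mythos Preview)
Your approach mirrors the paper's: both identify $\Hom_F^{\phi}(M,N)$ with $(M\spcheck\otimes_F N)^{\phi=1}$, base change to an inversive field, and invoke the decomposition of Theorem~\ref{thm:slope1}(i). The difference lies only in the last step. The paper reduces to showing $L^{\phi=1}=0$ for $L$ pure of slope $\lambda\neq 0$ and argues analytically: after replacing $\phi$ by $\phi^{-1}$ (using inversivity) to arrange $\lambda>0$, one has $|\phi|_{\SP,L}<1$, so $1-\phi$ is invertible with inverse $\sum_{n\ge 0}\phi^n$.

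Your algebraic alternative is fine in spirit, but the phrase ``by the uniqueness of the slope decomposition'' does not quite justify $F'v\subset (H_{F'})_0$: uniqueness in Theorem~\ref{thm:slope1}(i) compares two full decompositions of $H_{F'}$, it does not by itself locate an arbitrary pure submodule inside the corresponding summand. The easy fix is to project. Writing $v=\sum_\lambda v_\lambda$ with $v_\lambda\in (H_{F'})_\lambda$, each $v_\lambda$ is again $\phi$-fixed (the projections are $\phi$-equivariant), and if $v_\lambda\neq 0$ then $F'v_\lambda$ is a one-dimensional, hence irreducible, $\phi$-submodule of $(H_{F'})_\lambda$. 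By the definition of the slope multiset via Jordan--H\"older constituents together with Lemma~\ref{lem:slope3}, $F'v_\lambda$ is then simultaneously pure of slope $0$ and of slope $\lambda$, forcing $\lambda=0$, a contradiction. One caution: do not patch this step by citing Lemma~\ref{lem:slope5} or Lemma~\ref{lem:slope6}, since in the paper's logical order those depend on Lemma~\ref{lem:slopehom}.
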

\begin{proof}
By base change, we may assume that $F$ is inversive. By identifying $\Hom_F^{\phi}(M,N)$ as $(M\spcheck\otimes_FN)^{\phi=1}$ then using Theorem \ref{thm:slope1} (i), it reduces to prove that if $L$ is pure of slope $\lambda\neq 0$, then $L^{\phi=1}=0$. We may assume $\lambda>0$ by replacing $\phi$ by $\phi^{-1}$ if necessary since $\phi-\id_L=-\phi(\phi^{-1}-\id_L)$. Then $\phi$ is contractive since the operator norm of $\phi$ is less than or equal to $|\phi|_{\SP,L}<1$. Since $F$ is complete, $1-\phi$ is invertible with the inverse $1+\phi+\phi^2+\dots$.
\end{proof}

\begin{proof}[Proof of Theorem \ref{thm:slope1}]
\begin{enumerate}
\item[(ii)] $\bullet$ Existence

By \cite[Theorem 14.4.15]{pde}, there exists a filtration
\[
0=\mathcal{F}_0M\subset\mathcal{F}_1M\subset\dots\subset\mathcal{F}_lM=M
\]
of $\phi$-modules such that $\mathcal{F}_iM/\mathcal{F}_{i-1}M$ is pure of slope $\lambda_i$ with $\lambda_1<\dots<\lambda_l$. Moreover, the $\lambda_i$'s are rational by \cite[Corollary 14.4.5]{pde}. We define
\[
S_{\lambda}(M)=\mathcal{F}_iM\text{ if }\lambda\in [\lambda_i,\lambda_{i+1}),
\]
where we put $\lambda_0=-\infty$, $\lambda_{l+1}=+\infty$.

$\bullet$ Uniqueness

Let $S'_{\bullet}(M)$ be another filtration satisfying the condition. Since $S_{\lambda}(M)$ (resp. $M/S'_{\lambda}(M)$) is a successive extension of $\phi$-modules pure of slope $\le \lambda$ (resp. $>\lambda$), $\Hom_F^{\phi}(S_{\lambda}(M),M/S'_{\lambda}(M))=0$ by Lemma \ref{lem:slopehom}. In particular, $S_{\lambda}(M)\subset S'_{\lambda}(M)$. Similarly, $S_{\lambda}(M)\supset S'_{\lambda}(M)$.
\item[(iii)] Since $\{\oplus_{\mu\ge\lambda}M_{\mu}\}_{\lambda}$ satisfies the conditions in (ii), we obtain the first equality by the uniqueness of Frobenius slope filtration. The second assertion follows from the first one.
\end{enumerate}
\end{proof}

\begin{lem}\label{lem:slope5}
\begin{enumerate}
\item[(I)] Assume that $F$ is inversive.
\begin{enumerate}
\item[(i)] (base change) Let $M$ be a $\phi$-module over $F$, and $E$ a complete extension of $F$ to which $\phi$ extends isometrically. Then there exists a caononical isomorphism
\[
E\otimes_FM_{\bullet}\cong(E\otimes_FM)_{\bullet}.
\]
Moreover, for $a\ge 1$, there exists a canonical isomorphism of $\phi^a$-modules over $F$
\[
[a]_*(M_{\bullet})\cong ([a]_*M)_{\bullet}.
\]
\item[(ii)] (strictness) If $0\to M'\to M\to M''\to 0$ is an exact sequence of $\phi$-modules over $F$, then there exists an exact sequence of filtrations
\[
0\to M'_{\bullet}\to M_{\bullet}\to M''_{\bullet}\to 0.
\]
\item[(iii)] (tensor compatibility) For $M,N$ $\phi$-modules over $F$,
\[
(M\otimes_F N)_{\delta}=\oplus_{\lambda+\mu=\delta}M_{\lambda}\otimes_F N_{\mu}.
\]
\item[(iv)] (duality) For $\lambda$ an arbitrary real number,
\[
((M\spcheck)_{-\lambda})^{\perp}=\oplus_{\mu\neq\lambda}M_{\mu},
\]
where $(\cdot)^{\perp}$ denotes the orthogonal part with respect to the canonical pairing $M\otimes_FM\spcheck\to F$.
\item[(v)] (twist) Let $\alpha\in K^{\times}$ and $\mu\in\mathbb{Q}$ such that $|\alpha|=|q|^{\mu}$. Then, for $M$ a $\phi$-module over $F$, there exists a canonical isomorphism
\[
M(\alpha)_{\lambda}\cong M_{\lambda-\mu}\otimes_F(F(\alpha)).
\]
\end{enumerate}
\item[(II)]
\begin{enumerate}
\item[(i)] (base change) Let $M,E$ be as in (I)-(i). There exists a canonical isomorphism
\[
E\otimes_FS_{\bullet}(M)\cong S_{\bullet}(E\otimes_FM).
\]
Moreover, for $a\ge 1$, there exists a canonical isomorphism of $\phi^a$-modules over $F$
\[
[a]_*(S_{\bullet}(M))\cong S_{\bullet}([a]_*M).
\]
\item[(ii)] (strictness) If $0\to M'\to M\to M''\to 0$ is an exact sequence of $\phi$-modules over $F$, then there exists an exact sequence of filtrations
\[
0\to S_{\bullet}(M')\to S_{\bullet}(M)\to S_{\bullet}(M'')\to 0.
\]
\item[(iii)] (tensor compatibility) For $M,N$ $\phi$-modules over $F$,
\[
S_{\delta}(M\otimes_F N)=\sum_{\lambda+\mu=\delta}S_{\lambda}(M)\otimes_F S_{\mu}(N).
\]
\item[(iv)] (duality) For $\lambda$ an arbitrary real number,
\[
(S_{-\lambda}(M\spcheck))^{\perp}=\cup_{\mu<\lambda}S_{\mu}(M),
\]
where $(\cdot)^{\perp}$ denotes the orthogonal part with respect to the canonical pairing $M\otimes_FM\spcheck\to F$.
\item[(v)] (twist) Let $\alpha\in K^{\times}$ and $\mu\in\mathbb{Q}$ such that $|\alpha|=|q|^{\mu}$. Then, for $M$ a $\phi$-module over $F$, there exists a canonical isomorphism
\[
S_{\lambda}(M(\alpha))\cong S_{\lambda-\mu}(M)\otimes_F (F(\alpha)).
\]
\end{enumerate}
\end{enumerate}
\end{lem}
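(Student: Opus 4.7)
The plan is to establish Part (I) first via the uniqueness clause of Theorem \ref{thm:slope1} (i), and then deduce Part (II) by extending scalars to the $\phi$-completion $E$ of $F$ and invoking the uniqueness clause of Theorem \ref{thm:slope1} (ii) together with faithful flatness of $E/F$.

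For Part (I), each subpart proceeds by exhibiting some direct-sum decomposition of the target into pure summands of the predicted slopes, and then appealing to uniqueness. In (i), the decomposition $E \otimes_F M = \bigoplus_{\lambda} (E \otimes_F M_\lambda)$ has each summand pure of slope $\lambda$ by base-change invariance of the slope multiset (Lemma \ref{lem:slope3}); for the $a$-pushforward, the spectral radius satisfies $|\phi^a|_{\SP, M_\lambda} = |\phi|_{\SP, M_\lambda}^{a} = |q|^{\lambda a} = |q^a|^{\lambda}$, so $[a]_* M_\lambda$ is still pure of slope $\lambda$ with respect to the normalization $q^a$. In (ii), writing $N = \bigoplus_\lambda N_\lambda$ for the decomposition of the submodule, Lemma \ref{lem:slopehom} applied to $N_\lambda \hookrightarrow M = \bigoplus_\mu M_\mu$ forces $N_\lambda \subseteq M_\lambda$, while $N \cap M_\lambda$ is pure of slope $\lambda$ (a $\phi$-submodule of a pure module has slope multiset contained in the ambient one, hence is pure of the same slope by Lemma \ref{lem:slope3}) and by the same vanishing sits in $N_\lambda$, whence $N_\lambda = N \cap M_\lambda$; the quotient then decomposes as $(M/N)_\lambda = M_\lambda/N_\lambda$, giving the termwise short exact sequence. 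Parts (iii), (iv), (v) follow from Lemma \ref{lem:slope4} (iii), (i)--(ii), together with the twist observation $|\alpha| = |q|^{\mu} \Longrightarrow F(\alpha)$ pure of slope $\mu$: in each case the natural candidate decomposition has pure summands of the required slopes, and uniqueness identifies it with the canonical one.

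For Part (II), the base-change statement (i) is proved first. Setting $T_\lambda = E \otimes_F S_\lambda(M)$, exhaustiveness, separatedness, and right continuity are inherited by flatness, while
\[
T_\lambda \big/ \bigcup_{\mu<\lambda} T_\mu \;=\; E \otimes_F \bigl(S_\lambda(M) \big/ \bigcup_{\mu<\lambda} S_\mu(M)\bigr)
\]
is pure of slope $\lambda$ or zero by base-change invariance of the slope multiset (Lemma \ref{lem:slope3}); uniqueness in Theorem \ref{thm:slope1} (ii) then identifies $T_\bullet$ with $S_\bullet(E \otimes_F M)$, and the same argument with $q$ replaced by $q^a$ handles the $a$-pushforward. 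Once (i) is available, (ii)--(v) reduce to Part (I) by base change to an inversive extension $E/F$: both sides of each equality are $\phi$-stable $F$-subspaces of a common ambient module, and after tensoring with $E$ they coincide by (I) (ii)--(v) applied over $E$, using the identity $S_\lambda(\cdot) = \bigoplus_{\mu \le \lambda}(\cdot)_\mu$ in the inversive case; faithful flatness of $E/F$ descends the equality back to $F$. The main (only) obstacle is bookkeeping: one must verify that the natural comparison maps appearing in (ii)--(v) (for instance the pairing in (iv) and the twist isomorphism in (v)) are $\phi$-equivariant morphisms of the correct filtered or graded objects before uniqueness is invoked, but no new ideas are required beyond Theorem \ref{thm:slope1}, Lemmas \ref{lem:slope4} and \ref{lem:slopehom}, and flat descent.
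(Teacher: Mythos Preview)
Your proposal is correct and follows essentially the same approach as the paper: Part (I) is handled by exhibiting the candidate decompositions and invoking the uniqueness clause of Theorem \ref{thm:slope1} (i) (with Lemma \ref{lem:slopehom} for strictness and Lemma \ref{lem:slope4} for the tensor/dual/twist computations), and Part (II) is reduced to Part (I) by passing to an inversive extension and using Theorem \ref{thm:slope1} (iii). Your treatment of (II)(i) is actually a bit more careful than the paper's, which somewhat glibly writes ``by base change, we may assume that $F$ is inversive'' for all of (II) at once; you correctly isolate (II)(i) and prove it directly via the uniqueness in Theorem \ref{thm:slope1} (ii) before using it to reduce the remaining parts.
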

\begin{proof}
\begin{enumerate}
\item[(I)]
\begin{enumerate}
\item[(i)] It follows from the uniqueness in Theorem \ref{thm:slope1} (i).
\item[(ii)] It follows from Lemma \ref{lem:slope5}.
\item[(iii)] The decomposition
\[
M\otimes_F N=\oplus_{\delta}(\oplus_{\lambda+\mu=\delta}M_{\lambda}\otimes_F N_{\mu})
\]
satisfies the condition in Theorem \ref{thm:slope1} (i) by Lemma \ref{lem:slope4} (ii), which implies the assertion.
\item[(iv)] We identify $M$ as $\oplus_{\lambda}M_{\lambda}$ by Theorem \ref{thm:slope1} (i). By taking the dual of the $\lambda$-th projection $\oplus_{\lambda}M_{\lambda}\to M_{\lambda}$, we identify $(M_{\lambda})\spcheck$ as a $\varphi$-submodule of $M\spcheck$, which is pure of slope $-\lambda$ by Lemma \ref{lem:slope4} (i). Hence the decomposition $M\spcheck=\oplus_{\lambda}(M_{\lambda})\spcheck$ satisfies the condition on Theorem \ref{thm:slope1} (i), in particular, $(M\spcheck)_{-\lambda}=(M_{\lambda})\spcheck$. We have $((M_{\lambda})\spcheck)^{\perp}=\oplus_{\mu\neq\lambda}M_{\mu}$ by construction, which implies the assertion.
\item[(v)] It follows from Lemma \ref{lem:slope4} (iii).
\end{enumerate}
\item[(II)] By base change, we may assume that $F$ is inversive. By Theorem \ref{thm:slope1} (iii), each of (i), (ii), and (iii) reduces to the corresponding assertions in (I). We prove (iv). Since Frobenius slope filtration is stable under base change ((II)-(i)), we may assume that $F$ is inversive. By Theorem \ref{thm:slope1} (iii) and I-(iv),
\[
(S_{-\lambda}(M\spcheck))^{\perp}=(\oplus_{\mu\ge\lambda}(M\spcheck)_{-\mu})^{\perp}=\cap_{\mu\ge\lambda}(((M\spcheck)_{-\mu})^{\perp})=\cap_{\mu\ge\lambda}(\oplus_{\delta\neq\mu}M_{\delta})=\oplus_{\delta<\lambda}M_{\delta}=\cup_{\mu<\lambda}S_{\mu}(M).
\]
The assertion (v) follows from (II)-(iii) and
\[
S_{\delta}(F(\alpha))=
\begin{cases}
F(\alpha)&\text{if }\delta\ge\mu,\\
0&\text{if }\delta<\mu.
\end{cases}
\]
\end{enumerate}
\end{proof}

The following will be necessary in the proof of Main Theorem.

\begin{dfn}[{cf. \cite[4.2.1]{Doc}}]\label{dfn:eigen}
Let $d$ be a positive integer. A {\it (Frobenius) $d$-eigenvector} of a $\phi$-module $M$ over $F$ is a non-zero element $v$ of $M$ such that
\[
\phi^d(v)=cv
\]
for some $c\in F^{\times}$. We refer the quotient $\log{|c|}/\log{|q^d|}$ as the {\it (Frobenius) slope} of $v$.
\end{dfn}

\begin{lem}\label{lem:slope6}
Let $M$ be a $\phi$-module over $F$. If $v\in M$ is a $d$-eigenvector of slope $\lambda$, then $v\in S_{\lambda}(M)$ and $v\notin\cup_{\mu<\lambda}S_{\mu}(M)$. In particular, if $M$ is pure, then $\lambda$ is equal to the slope of $M$.
\end{lem}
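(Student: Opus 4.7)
The plan is to reduce via the $d$-pushforward to the case $d=1$, observe that $Fv$ is then a $\phi$-stable line pure of slope $\lambda$, and conclude using strictness of the slope filtration together with the vanishing of Hom between $\phi$-modules with disjoint slope multisets.

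First I would pass to $[d]_*M$, a $\phi^d$-module over $F$. In this module $v$ is a $1$-eigenvector, since $\phi_{[d]_*M}(v) = \phi^d_M(v) = cv$, and its slope (Definition \ref{dfn:eigen}, now computed with Frobenius parameter $q^d$) is $\log|c|/\log|q^d| = \lambda$, matching the original slope. By Lemma \ref{lem:slope5}(II)(i), $[d]_*(S_\bullet(M)) = S_\bullet([d]_*M)$ as filtrations of the underlying $F$-vector space of $M$, so the statement is unchanged. Hence I may assume $d=1$, i.e.\ $\phi(v) = cv$ with $|c| = |q|^\lambda$.

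Next, I note that $Fv \subset M$ is a one-dimensional $\phi$-submodule: it is $\phi$-stable since $\phi(fv) = \phi(f)cv \in Fv$, and $\phi^*_{Fv}(1 \otimes v) = cv$ is an isomorphism because $c \in F^\times$. Since $\phi$ is isometric on $F$, $|\phi^n(v)| = |c|^n |v|$, giving spectral radius $|c| = |q|^\lambda$ on $Fv$, and similarly $|q|^{-\lambda}$ on $(Fv)\spcheck$; thus $Fv$ is pure of slope $\lambda$. Applying strictness of the slope filtration (Lemma \ref{lem:slope5}(II)(ii)) to the inclusion $Fv \hookrightarrow M$ and using $S_\lambda(Fv) = Fv$, I conclude $v \in S_\lambda(M)$.

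For the non-containment, suppose for contradiction that $v \in S_\mu(M)$ for some $\mu < \lambda$. Then $Fv \hookrightarrow S_\mu(M)$ is a nonzero morphism of $\phi$-modules. But the slope multiset of $Fv$ is $\{\lambda\}$, while by Theorem \ref{thm:slope1}(ii)(c) every slope appearing in $S_\mu(M)$ lies in $(-\infty, \mu]$. These multisets are disjoint, so Lemma \ref{lem:slopehom} gives $\Hom_F^{\phi}(Fv, S_\mu(M)) = 0$, a contradiction. The ``in particular'' clause is then immediate: if $M$ is pure of slope $\lambda'$ then $S_{\lambda'}(M) = M$ and $S_\mu(M) = 0$ for $\mu < \lambda'$; the containment $v \in S_\lambda(M)$ with $v \neq 0$ forces $\lambda \geq \lambda'$, and the non-containment forces $\lambda \leq \lambda'$, so $\lambda = \lambda'$.

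There is no real obstacle here; the proof is a direct application of the tools already assembled. The only subtlety worth checking carefully is bookkeeping in the first paragraph: the convention of normalizing slopes of $\phi^a$-modules by $q^a$ (flagged explicitly in the remarks after Definition \ref{dfn:eigen log extended}) is precisely what makes the numerical slope $\lambda$ invariant under the $[d]_*$ reduction, so no shift appears when we replace $(\phi, d)$ by $(\phi^d, 1)$.
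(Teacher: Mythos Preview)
Your proof is correct and follows essentially the same approach as the paper: reduce to $d=1$ via Lemma~\ref{lem:slope5}(II)(i), recognize $Fv$ as a $\phi$-submodule pure of slope $\lambda$, and read off the conclusion from the behavior of the slope filtration on submodules. The only cosmetic difference is that for the non-containment $v\notin\cup_{\mu<\lambda}S_\mu(M)$ you invoke Lemma~\ref{lem:slopehom} directly, whereas the paper simply applies strictness (Lemma~\ref{lem:slope5}(II)(ii)) to the inclusion $Fv\hookrightarrow M$ to obtain $Fv\cap S_\mu(M)=S_\mu(Fv)=0$; these are equivalent here.
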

\begin{proof}
By Lemma \ref{lem:slope5} (II)-(i), we may assume $d=1$ after replacing $\phi$ by $\phi^d$. Then $Fv$ is a $\phi$-submodule of $M$, and $S_{\lambda}(Fv)=Fv,S_{\mu}(Fv)=0$ for $\mu<\lambda$, which implies the assertion by Lemma \ref{lem:slope5} (II)-(ii).
\end{proof}

\begin{lem}[{Dieudonn\'e-Manin theorem, \cite[Theorem 14.6.3]{pde}}]\label{lem:DM}
Assume that the residue field of $F$ is strongly difference-closed. Let $M$ be a non-zero $\phi$-module over $F$ of rank $n$. Then there exist a positive integer $d$ and a basis $e_1,\dots,e_n$ of $M$ consisting of $d$-eigenvectors. Moreover, if we denote by $\mu_i$ the slope of $e_i$, then the multiset $\{\mu_i;1\le i\le n\}$ coincides with the slope multiset of $M$, and
\[
M_{\lambda}=\oplus_{i:\mu_i=\lambda}Fe_i.
\]
\end{lem}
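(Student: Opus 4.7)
The plan is to combine the slope decomposition already at hand with a reduction to the étale case, and then trivialize étale modules using the strong difference-closedness of the residue field. First I would apply Theorem \ref{thm:slope1} (i) to decompose $M = \oplus_\lambda M_\lambda$ into isotypic components (the hypothesis that the residue field is strongly difference-closed in particular makes $F$ inversive, so this decomposition is available). Since a disjoint union of bases of $d$-eigenvectors for each $M_\lambda$ (for a common multiple $d$) will give the desired basis of $M$, it suffices to handle a single pure component. Hence I assume $M$ is pure of some slope $\lambda$, which is rational by \cite[Corollary 14.4.5]{pde}, say $\lambda = a/d$ with $d > 0$.

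Next I would reduce to the étale case by pushforward and twist. Passing to $[d]_* M$, which is a $\phi^d$-module pure of slope $a$ with respect to $q^d$ by construction, I can choose $\alpha \in F^\times$ with $|\alpha| = |q^d|^a$ (enlarging $d$ by a bounded factor if necessary so that the valuation of $\alpha$ lies in the value group of $F$, which is discrete). Twisting by $\alpha^{-1}$ yields an étale $\phi^d$-module $N = ([d]_* M)(\alpha^{-1})$ of slope $0$, by Lemma \ref{lem:slope5}(I)-(v).

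The heart of the matter is now to show that $N$ is trivial as a $\phi^d$-module over $F$, that is, admits a basis on which $\phi^d$ acts as the identity. Choose any basis of $N$ over $F$ and scale it so that $\phi^d$ acts by a matrix $A \in \GL_n(\mathcal{O}_F)$ (possible by the étale hypothesis and completeness). I would construct, by a Hensel-style successive approximation, a matrix $U \in \GL_n(\mathcal{O}_F)$ with $U^{-1} A \phi^d(U) = I$: at each step one must solve an equation of the form $\phi_k(x) = c x$ for a single $\phi^d$-constant over the residue field $k$, which is exactly the content of weak difference-closedness, while inversiveness of $k$ (via the strong difference-closedness hypothesis) ensures the Hensel lifts exist. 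Completeness of $F$ in the discrete valuation guarantees convergence. This trivialization of étale modules is the only genuinely hard step; it is precisely the analogue used to justify Lemma \ref{lem:trivialization} in a slightly different guise.

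Unwinding the reductions, a basis $\{f_1, \dots, f_n\}$ of $N$ with $\phi^d(f_i) = f_i$ corresponds, after untwisting by $\alpha$, to a basis $\{e_1, \dots, e_n\}$ of $M$ with $\phi^d(e_i) = \alpha e_i$, so each $e_i$ is a $d$-eigenvector of slope $\log|\alpha|/\log|q^d| = a/d = \lambda$. Assembling these bases for each isotypic component (replacing the individual $d$'s by their least common multiple) produces the desired basis of $M$. The matching of the multiset $\{\mu_i\}$ with the slope multiset, and the identification $M_\lambda = \oplus_{\mu_i = \lambda} F e_i$, follow from Theorem \ref{thm:slope1} (i) together with Lemma \ref{lem:slope6}, since each $e_i$ with slope $\mu_i = \lambda$ lies in $M_\lambda$ and a count of dimensions forces equality. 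The main obstacle is thus the trivialization of étale $\phi^d$-modules in Step 3; the slope decomposition, twisting, and unwinding are formal once that ingredient is in hand.
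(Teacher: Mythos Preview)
The paper does not give a proof; the lemma is quoted with a bare citation to \cite[Theorem 14.6.3]{pde}. Your outline is the standard argument and is essentially correct, but two points should be tightened. First, under the paper's convention (see the remark following the definition of slope and Lemma~\ref{lem:slope5} (II)-(i)), the pushforward $[d]_*M$ remains pure of slope $\lambda$ with respect to $q^d$, not of slope $a$; accordingly the correct twist satisfies $|\alpha|=|q^d|^{\lambda}=|q|^{a}$, not $|q^d|^a$. This is harmless for the argument but should be stated correctly.

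Second, your description of the Hensel step understates what is needed. To make the matrix of $\phi^d$ congruent to the identity modulo $\mathfrak m$ you must trivialize an arbitrary $\phi_k^d$-module over $k$, so you need $(k,\phi_k^d)$ to be strongly difference-closed --- this does follow from the hypothesis on $(k,\phi_k)$ (for instance by inducing a rank-$d$ $\phi_k$-module from a rank-one $\phi_k^d$-module and reading off a solution), but it is not automatic and should be noted. Thereafter the successive-approximation steps reduce to \emph{additive} equations of the form $u\,\phi_k^d(x)-x=y$ over $k$, not the multiplicative equations $\phi_k(x)=cx$ that you invoke; their solvability again uses the full strength of strong difference-closedness (trivialize a rank-two unipotent $\phi_k^d$-module and extract the off-diagonal entry), not merely the weak form. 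With these adjustments your sketch goes through.
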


\begin{dfn}\label{dfn:reverse}
Assume that $F$ is inversive. Let $M$ be a $\phi$-module over $F$, and $M=\oplus_{\mu}M_{\mu}$ the decomposition given by Theorem \ref{thm:slope1} (i). We define the {\it reverse filtration} $\{S^{\lambda}(M);\lambda\in\mathbb{R}\}$ of $M$ by
\[
S^{\lambda}(M)=\oplus_{\mu\ge\lambda}M_{\mu}.
\]

Note that the filtration $S^{\bullet}(M)$ is decreasing, exhaustive, separated, and consisting of $\phi$-submodules of $M$. Moreover, the slope multiset of $S^{\bullet}(M)$ coincides with that of $M_{\bullet}$.
\end{dfn}


\section{Logarithmic growth filtration for $(\varphi,\nabla)$-modules}\label{sec:logfil}

In this section, we recall the definition of the log-growth filtrations for $(\varphi,\nabla)$-modules over the rings $K[\![t]\!]_0,\mathcal{R}^{\bd}$, and $\mathcal{E}$ following \cite{CT,Ohk}. We show that the log-growth filtration is invariant under the base changes via the morphisms in Example 3.3.4 (Lemma \ref{lem:invariance log-growth}). We also recall basic properties on the log-growth filtration, that can be found in the literature.

\begin{convention}
In the rest of this paper, we endow the rings $K[\![t]\!]_0,\mathcal{R}^{\bd}$, and $\mathcal{E}$ with the structures of quadruples defined in \S \ref{subsec:phinabla3} unless otherwise is mentioned. In particular, when we consider a $(\varphi,\nabla)$-module over $\mathcal{R}^{\bd}$, we tacitly assume $\varphi(t)\in\mathcal{R}^{\bd}$ with $|\varphi(t)-t^q|_0<1$; when we further consider a $(\varphi,\nabla)$-module over $K[\![t]\!]_0$, we impose the extra assumption $\varphi(t)\in K[\![t]\!]_0$.
\end{convention}

\subsection{Over $\mathcal{R}^{\bd}$}\label{subsec:logfil1}

Let $M$ be a $(\varphi,\nabla)$-module over $\mathcal{R}^{\bd}$. We say that $M$ is {\it solvable} in $\mathcal{R}_{\log}$ if $M\otimes_{\mathcal{R}^{\bd}}\mathcal{R}$ is solvable in $\mathcal{R}_{\log}$ in the sense of \S \ref{subsec:uni}, or, equivalently, is unipotent by Lemma \ref{lem:solv=uni}. Note that the category of $(\varphi,\nabla)$-modules over $\mathcal{R}^{\bd}$ solvable in $\mathcal{R}_{\log}$ is an abelian $\otimes$-category.

Let $M$ be a $(\varphi,\nabla)$-module of rank $n$ over $\mathcal{R}^{\bd}$ solvable in $\mathcal{R}_{\log}$. As in \cite{Ohk} (where $V,\Sol$ are denoted by $\mathfrak{V},\mathfrak{Sol}$), we define the set of {\it analytic horizontal sections} of $M$ (in $\mathcal{R}_{\log}$) as
\[
V(M)=(M\otimes_{\mathcal{R}^{\bd}}\mathcal{R}_{\log})^{\nabla},
\]
which is a $\varphi$-module over $K$ of dimension $n$. A {\it solution} of $M$ (in $\mathcal{R}_{\log}$) is a $\nabla$-equivariant $\mathcal{R}^{\bd}$-linear map
\[
f:M\to \mathcal{R}_{\log}.
\]
We define $\Sol(M)$ as the set of solutions of $M$, that is,
\[
\Sol(M)=\Hom_{\mathcal{R}^{\bd}}^{\nabla}(M,\mathcal{R}_{\log}),
\]
which is a $\varphi$-module over $K$ of dimension $n$ since $\Sol(M)$ is canonically isomorphic to $V(M\spcheck)$. Moreover, the natural perfect pairing $M\otimes_{\mathcal{R}^{\bd}}M\spcheck\to \mathcal{R}^{\bd}$ induces a perfect pairing
\[
(\ ,\ ):V(M)\otimes_K \Sol(M)\to K.
\]
We endow $\Sol(M)$ and $V(M)$ with growth filtrations as follows. For $\lambda\in\mathbb{R}$, a solution $f$ of $M$ is {\it of log-growth} $\lambda$ if $f(M)\subset \Fil_{\lambda}\mathcal{R}_{\log}$. We define $\Sol_{\lambda}(M)$ as the set of solutions of $M$ of log-growth $\lambda$, that is,
\[
\Sol_{\lambda}(M)=\{f\in \Sol(M);f(M)\subset \Fil_{\lambda}\mathcal{R}_{\log}\}.
\]
We also define the {\it $\lambda$-th log-growth filtration} of $M$ as
\[
V(M)^{\lambda}=(\Sol_{\lambda}(M))^{\perp}=\{v\in V(M);(v,f)=0\ \forall f\in \Sol_{\lambda}(M)\},
\]
where $(\cdot)^{\perp}$ denotes the orthogonal part with respect to the above pairing.

We endow $V(M),\Sol(M)$, which are $\varphi$-modules over $K$, with Frobenius slope filtrations by applying the results in \S \ref{sec:slope} with $(F,\phi,q)=(K,\varphi_K,q)$. Thus $V(M)$ and $\Sol(M)$ are endowed with two filtrations respectively, that is, the growth filtrations $V(M)^{\bullet}$ and $\Sol_{\bullet}(M)$, and Frobenius slope filtrations $S_{\bullet}(V(M))$ and $S_{\bullet}(\Sol(M))$. Frobenius slope filtrations enjoy many good properties as we see in Lemma \ref{lem:slope5}. Although the log-growth filtration will be compared to Frobenius slope filtration, some fundamental properties on the log-growth filtration does not follow by definition. For example, the following proposition will follow from Proposition \ref{prop:main}; we record here since it is not used in the interim.

\begin{prop}\label{prop:lg property}
Let $M$ be a $(\varphi,\nabla)$-module over $\mathcal{R}^{\bd}$ solvable in $\mathcal{R}_{\log}$. Then the filtration $V(M)^{\bullet}$ is decreasing, exhaustive, and separated. Similarly, the filtration $\Sol_{\bullet}(M)$ is increasing, exhaustive, and separated.
\end{prop}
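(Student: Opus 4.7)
Of the six sub-claims in the proposition, five are formal. The inclusion $\Sol_\lambda(M)\subset\Sol_\mu(M)$ for $\lambda\le\mu$ follows from $\Fil_\lambda\mathcal{R}_{\log}\subset\Fil_\mu\mathcal{R}_{\log}$ (Lemma \ref{lem:property log}(i)), and $\Fil_\lambda\mathcal{R}_{\log}=0$ for $\lambda<0$ forces $\Sol_\lambda(M)=0$, whence separatedness of $\Sol_\bullet(M)$. The perfect pairing $V(M)\otimes_K\Sol(M)\to K$ then transports these two properties to the reversed monotonicity and exhaustiveness of $V(M)^\bullet=\Sol_\bullet(M)^\perp$, and also makes exhaustiveness of $\Sol_\bullet(M)$ equivalent to separatedness of $V(M)^\bullet$. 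Thus the task reduces to showing that every $f\in\Sol(M)$ lies in $\Sol_\lambda(M)$ for some finite $\lambda$.

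\textbf{Reduction to the extended Robba setting.} Fix such an $f$ and an $\mathcal{R}^{\bd}$-basis $e_1,\dots,e_n$ of $M$. Since $f$ is $\mathcal{R}^{\bd}$-linear, $\mathcal{R}^{\bd}=\Fil_0\mathcal{R}_{\log}$, and $\Fil_0\cdot\Fil_\lambda\subset\Fil_\lambda$ (Lemma \ref{lem:property log}(i),(iii)), the condition $f(M)\subset\Fil_\lambda\mathcal{R}_{\log}$ is equivalent to $f(e_i)\in\Fil_\lambda\mathcal{R}_{\log}$ for every $i$. Invoking the $\varphi$-equivariant embedding $\psi:\mathcal{R}_{\log}\hookrightarrow\tilde{\mathcal{R}}_{\log}$ of Definition \ref{dfn:log embedding} and its preservation of $\Fil_\bullet$ (Lemma \ref{lem:log-growth psi log}), it suffices to bound $\psi(f(e_i))\in\tilde{\mathcal{R}}_{\log}$. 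After replacing $K$ by a complete inversive extension with strongly difference-closed residue field (Notation (2))---a base change which visibly does not affect whether $f$ has some finite log-growth---$f$ extends to a $\varphi$-equivariant and $\tilde{\mathcal{R}}^{\bd}$-linear solution $\tilde{f}:M\otimes_{\mathcal{R}^{\bd}}\tilde{\mathcal{R}}^{\bd}\to\tilde{\mathcal{R}}_{\log}$.

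\textbf{Frobenius eigenvector bound.} The main content is to express each $e_i$ as an $\tilde{\mathcal{R}}^{\bd}$-linear combination of Frobenius $d$-eigenvectors $\tilde{v}_\alpha\in M\otimes_{\mathcal{R}^{\bd}}\tilde{\mathcal{R}}^{\bd}$. Over the field $\tilde{\mathcal{E}}$, whose residue field $k((u^{\mathbb{Q}}))$ is strongly difference-closed (Definition \ref{dfn:Hahn}), Dieudonn\'e--Manin (Lemma \ref{lem:DM}) applied to $M\otimes_{\mathcal{R}^{\bd}}\tilde{\mathcal{E}}$ produces a basis of Frobenius $d$-eigenvectors for some common $d$; after scaling an $\tilde{\mathcal{R}}^{\Int}$-lattice so that the corresponding Frobenius matrix lies in $\M_n(\tilde{\mathcal{R}}^{\Int})$, Proposition \ref{prop:matrix} shows these eigenvectors already lie in $M\otimes\tilde{\mathcal{R}}^{\bd}$. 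By $\varphi$-equivariance, each $\tilde{f}(\tilde{v}_\alpha)\in\tilde{\mathcal{R}}_{\log}$ is either zero or a Frobenius $d$-eigenvector in the sense of Definition \ref{dfn:eigen log extended}, so Lemma \ref{lem:calc log-growth} places it in $\Fil_{\lambda_\alpha}\tilde{\mathcal{R}}_{\log}$ for the associated slope $\lambda_\alpha\ge 0$. Expanding each $e_i$ in the $\tilde{v}_\alpha$ and using $\tilde{\mathcal{R}}^{\bd}\cdot\Fil_{\lambda_\alpha}\tilde{\mathcal{R}}_{\log}\subset\Fil_{\lambda_\alpha}\tilde{\mathcal{R}}_{\log}$ then places $\psi(f(e_i))$ in $\Fil_{\max_\alpha\lambda_\alpha}\tilde{\mathcal{R}}_{\log}$, as required.

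\textbf{Principal obstacle.} The delicate step is upgrading the Dieudonn\'e--Manin $\tilde{\mathcal{E}}$-basis to an $\tilde{\mathcal{R}}^{\bd}$-generating family of $M\otimes\tilde{\mathcal{R}}^{\bd}$: Proposition \ref{prop:matrix} places the eigenvectors in $M\otimes\tilde{\mathcal{R}}^{\bd}$, but a priori the expansion $e_i=\sum c_{i\alpha}\tilde{v}_\alpha$ has coefficients only in $\tilde{\mathcal{E}}$. I would treat this either by multiplying the $\tilde{v}_\alpha$ through by a common element of $\tilde{\mathcal{R}}^{\Int}$ chosen to clear these denominators (a rescaling that alters the eigenvector relation only by a bounded scalar, hence leaves the hypotheses of Lemma \ref{lem:calc log-growth} intact), or, in closer keeping with the paper's overall strategy, by an induction on $\rank M$: extract an eigenvector of maximum Frobenius slope in $M\otimes\tilde{\mathcal{R}}^{\bd}$ (in the spirit of the reverse filtration, Definition \ref{dfn:reverse}, later made concrete in Proposition \ref{prop:maximum eigen}), bound its image under $\tilde{f}$ via Lemma \ref{lem:calc log-growth}, pass to the quotient, and iterate.
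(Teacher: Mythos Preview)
There is a genuine gap: you assert that the extension $\tilde{f}$ is $\varphi$-equivariant, but for an arbitrary $f\in\Sol(M)$ this is false. The Frobenius on $\Sol(M)$ is characterised by $\varphi_{\Sol}(f)(\varphi_M(m))=\varphi(f(m))$, so $\tilde f$ intertwines $\varphi$ only when $\varphi_{\Sol}(f)=f$. Without this, $\tilde f(\tilde v_\alpha)$ need not be an eigenvector of $\tilde{\mathcal{R}}_{\log}$, and Lemma~\ref{lem:calc log-growth} does not apply. The natural repair is to first apply Dieudonn\'e--Manin to the $\varphi$-module $\Sol(M)$ over $K$ (legitimate after your base change) and reduce to the case where $f$ is a $d$-eigenvector of $\Sol(M)$, say $\varphi^d(f)=cf$; one then checks $\varphi^d\circ\tilde f=c\cdot(\tilde f\circ\varphi^d_{\tilde M})$, so $\tilde f(\tilde v_\alpha)$ is a $d$-eigenvector of slope $(\text{slope of }f)+(\text{slope of }\tilde v_\alpha)$, and Lemma~\ref{lem:calc log-growth} applies.

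Two further comments. First, your ``principal obstacle'' partly dissolves: $\tilde{\mathcal{R}}^{\bd}$ is a \emph{field} (Definition~\ref{dfn:extended Robba}), so any $n$ elements of $M\otimes\tilde{\mathcal{R}}^{\bd}$ that are $\tilde{\mathcal{E}}$-linearly independent are automatically an $\tilde{\mathcal{R}}^{\bd}$-basis. The genuine subtlety is rather whether a full Dieudonn\'e--Manin basis descends at all: Liu's descent lemma (used in Proposition-Definition~\ref{propdfn:descent}) is stated only for eigenvectors of the \emph{maximum} slope, the reverse filtration yields eigenvectors only on graded pieces, and your appeal to Proposition~\ref{prop:matrix} does not match the $\varphi^d$-eigenvector equation without further manipulation. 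Second, and more to the point, the paper's own proof bypasses all of this: it simply invokes Proposition~\ref{prop:main} (proved in \cite{Ohk}), which gives $S_{\lambda-\lambda_{\max}}(\Sol(M))\subset\Sol_\lambda(M)$, and exhaustiveness of $\Sol_\bullet(M)$ then follows immediately from exhaustiveness of Frobenius slope filtration. Your route, if completed, would amount to an independent proof of that inclusion using the machinery of \S\S\ref{sec:Robba}--\ref{sec:lg}.
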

\begin{proof}
By duality, we have only to prove the assertion for $\Sol_{\bullet}(M)$. The only non-trivial part is that $\Sol_{\bullet}(M)$ is exhaustive. By Proposition \ref{prop:main}, we have $\Sol_{\lambda}(M)\supset S_{\lambda-\lambda_{\max}}(\Sol(M))$, where $\lambda_{\max}$ denotes the maximum Frobenius slope of $M_{\mathcal{E}}$. We put $\mu$ the maximum Frobenius slope of $\Sol(M)$. Then $\Sol_{\lambda_{\max}+\mu}(M)=\Sol(M)$.
\end{proof}

The growth filtrations $V(M)^{\bullet}$ and $\Sol_{\bullet}(M)$ share the same information via the duality as above. Nevertheless, we will handle mainly $\Sol_{\bullet}(M)$ in this paper. One reason is that an element of $f\in\Sol_{\bullet}(M)$ is a homomorphism $f:M\to\mathcal{R}_{\log}$ by definition, and the injectivity of $f$ has a special meaning in the proof of Theorem \ref{conj:bd} (ii).

\subsection{Over $K[\![t]\!]_0$}\label{subsec:logfil2}

Let $M$ be a $(\varphi,\nabla)$-module over $K[\![t]\!]_0$. Then the construction in \S \ref{subsec:logfil1} works for $M$ after replacing $\mathcal{R}^{\bd},\mathcal{R}_{\log}$, and $\Fil_{\lambda}\mathcal{R}_{\log}$ by $K[\![t]\!]_0,K\{t\}$, and $K[\![t]\!]_{\lambda}$ respectively: Dwork's trick (\cite[Corollary 17.2.2]{pde}) assures that $M$ is solvable in $K\{t\}$ (in an obvious sense). We denote each resulting object by the same symbol as in \S \ref{subsec:logfil1}, that is, we obtain the following $\varphi$-modules over $K$:
\[
\Sol(M)=\Hom_{K[\![t]\!]_0}^{\nabla}(M,K\{t\}),
\]
\[
\Sol_{\lambda}(M)=\{f\in \Sol(M);f(M)\subset K[\![t]\!]_{\lambda}\},
\]
\[
V(M)=(M\otimes_{K[\![t]\!]_0}K\{t\})^{\nabla},
\]
\[
V(M)^{\lambda}=(\Sol_{\lambda}(M))^{\perp}=\{v\in V(M);(v,f)=0\ \forall f\in \Sol_{\lambda}(M)\},
\]
where $(\cdot)^{\perp}$ denotes the orthogonal part with respect to the perfect pairing $V(M)\otimes_K\Sol(M)\to K$. Our notation is compatible with that in \cite[p. 473]{CT}.

The above construction also works for $M$ log-$(\varphi,\nabla)$-modules over $K[\![t]\!]_0$ after replacing $K\{t\}$ and $K[\![t]\!]_{\lambda}$ by $K\{t\}_{\log}$ and $\Fil_{\lambda}K\{t\}_{\log}$ respectively: a nilpotent analogue of Dwork's trick (\cite[Corollary 17.2.4]{pde}) assures that $M$ is solvable in $K\{t\}_{\log}$ (in an obvious sense). We denote each resulting object by the same symbol as above, that is, we obtain the following $\varphi$-modules over $K$:
\[
\Sol(M)=\Hom_{K[\![t]\!]_0}^{\nabla}(M,K\{t\}_{\log}),
\]
\[
\Sol_{\bullet}(M)=\{f\in \Sol(M);f(M)\subset \Fil_{\bullet}K\{t\}_{\log}\},
\]
\[
V(M)=(M\otimes_{K[\![t]\!]_0}K\{t\}_{\log})^{\nabla},
\]
\[
V(M)^{\bullet}=(\Sol_{\bullet}(M))^{\perp},
\]
where $(\cdot)^{\perp}$ denotes the orthogonal part with respect to the perfect pairing $V(M)\otimes_K\Sol(M)\to K$.

The following lemma asserts that the log-growth filtration is invariant under any base change given in Example 3.3.4. Hence we are allowed to use the same symbols $\Sol_{\bullet}(\cdot)$ and $V(\cdot)^{\bullet}$ for abuse of notation. Also, as a consequence, instead of studying the growth filtrations on $V(M)$ and $\Sol(M)$ for $(\varphi,\nabla)$-modules over $K[\![t]\!]_0$ as in \cite{CT}, we may study those for $(\varphi,\nabla)$-modules over $\mathcal{R}^{\bd}$. In particular, an analogue of Proposition \ref{prop:lg property} for a (log-)$(\varphi,\nabla)$-module over $K[\![t]\!]_0$ holds.

\begin{lem}[{cf. \cite[Lemma 4.15]{Ohk}}]\label{lem:invariance log-growth}
Let $R\to S$ denote any of the following three morphisms of quadruples given in Examples 3.3.3 and 3.3.4
\[\xymatrix{
(K[\![t]\!]_0,\varphi,\nabla,d\varphi)\ar[r]\ar@/_1pc/[rr]&(K[\![t]\!]_0,\varphi,\nabla_{\log},d\varphi)\ar[r]& (\mathcal{R}^{\bd},\varphi,\nabla,d\varphi).
}\]
Under the base change via $R\to S$, the $\varphi$-modules $V(\cdot)$ and $\Sol(\cdot)$ for (log-)$(\varphi,\nabla)$-modules over $R$ are invariant up to natural isomorphisms. Moreover, the growth filtration $V(\cdot)^{\bullet}$ and $\Sol_{\bullet}(\cdot)$ are also invariant up to natural isomorphisms.
\end{lem}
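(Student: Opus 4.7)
Since the third arrow is the composition of the first two (see the commutative diagram at the end of Example 3.3.4), it suffices to establish the assertion separately for
\[
f_1\colon (K[\![t]\!]_0,\varphi,\nabla,d\varphi)\to (K[\![t]\!]_0,\varphi,\nabla_{\log},d\varphi)\quad\text{and}\quad f_2\colon (K[\![t]\!]_0,\varphi,\nabla_{\log},d\varphi)\to (\mathcal{R}^{\bd},\varphi,\nabla,d\varphi).
\]
In both cases the strategy is the same: use a variant of Dwork's trick to trivialize $M$ over a sufficiently large coefficient ring, extend scalars to the target coefficient ring, and identify horizontal sections of the result with ``constants'' in the target.

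For $f_1$, the pullback $f_1^*M$ is $M$ equipped with the log-derivation $tD_M$, where $D_M$ denotes the ordinary derivation on $M$. By the convergent Dwork trick (\cite[Corollary 17.2.2]{pde}) there is a canonical $\nabla$-equivariant isomorphism $M\otimes_{K[\![t]\!]_0}K\{t\}\cong V(M)\otimes_K K\{t\}$. Since $K\{t\}_{\log}=K\{t\}[\log t]$ is free over $K\{t\}$, base change produces an isomorphism of log-$\nabla$-modules $M\otimes_{K[\![t]\!]_0}K\{t\}_{\log}\cong V(M)\otimes_K K\{t\}_{\log}$, and taking $\nabla_{\log}$-invariants gives
\[
V(f_1^*M)=(V(M)\otimes_K K\{t\}_{\log})^{1\otimes td/dt}=V(M)\otimes_K K\{t\}_{\log}^{td/dt=0}=V(M),
\]
where the identity $K\{t\}_{\log}^{td/dt=0}=K$ is obtained by expanding an element as $\sum_i g_i(\log t)^i$ with $g_i\in K\{t\}$ and resolving the triangular system imposed by $(td/dt)(g)=0$. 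For $f_2$, the log-variant of Dwork's trick (\cite[Corollary 17.2.4]{pde}) yields $M\otimes_{K[\![t]\!]_0}K\{t\}_{\log}\cong V(M)\otimes_K K\{t\}_{\log}$, and base changing along the inclusion $K\{t\}_{\log}\hookrightarrow\mathcal{R}_{\log}$ (compatible with the extensions $\nabla(\log t)=dt/t$ on each side) produces $M\otimes_{K[\![t]\!]_0}\mathcal{R}_{\log}\cong V(M)\otimes_K\mathcal{R}_{\log}$ as ordinary $\nabla$-modules, noting that the ordinary and log derivations differ by the unit $t\in(\mathcal{R}^{\bd})^{\times}$. Taking $\nabla$-invariants, together with the identity $\mathcal{R}_{\log}^{d/dt=0}=K$ (proved by induction on the degree in $\log t$ using the observation that $t^{-1}$ does not lie in the image of $d/dt\colon\mathcal{R}\to\mathcal{R}$), yields $V(f_2^*M)=V(M)$. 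The corresponding statements for $\Sol$ follow either from the canonical duality $\Sol(\cdot)\cong V((\cdot)\spcheck)$ or by applying the same argument to homomorphism spaces.

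For the growth filtrations: by the Taylor coefficient criterion (Lemma \ref{lem:Taylor}) and the definition of the log-filtration as a direct sum over $(\log t)^n$-components, the inclusions $K[\![t]\!]_\lambda\subset\Fil_\lambda K\{t\}_{\log}\subset\Fil_\lambda\mathcal{R}_{\log}$ satisfy the compatibilities $K\{t\}\cap\Fil_\lambda K\{t\}_{\log}=K[\![t]\!]_\lambda$ and $K\{t\}_{\log}\cap\Fil_\lambda\mathcal{R}_{\log}=\Fil_\lambda K\{t\}_{\log}$. Hence a solution of $M$ has log-growth $\lambda$ if and only if the induced solution of $f_i^*M$ has log-growth $\lambda$, proving invariance of $\Sol_\bullet(\cdot)$; invariance of $V(\cdot)^\bullet$ then follows by duality via the perfect pairing. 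The principal obstacle will be careful bookkeeping of the factor $t$ introduced by $f_1$ and the factor $t^{-1}$ introduced by $f_2$ when transporting connections along the pullbacks; once this is reconciled and one verifies that the kernel of the relevant (log-)derivation on each target ring is exactly $K$, the isomorphisms follow from the Dwork-type trivializations combined with a rank count.
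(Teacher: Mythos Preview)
Your proposal is correct and follows essentially the same path as the paper: reduce to the two elementary base changes, show $V$ and $\Sol$ are invariant, then verify the filtration compatibility via the intersections $K\{t\}\cap\Fil_\lambda K\{t\}_{\log}=K[\![t]\!]_\lambda$ and $K\{t\}_{\log}\cap\Fil_\lambda\mathcal{R}_{\log}=\Fil_\lambda K\{t\}_{\log}$. The one stylistic difference is in the first step: the paper obtains the $V$-isomorphism by observing that the inclusion $K\{t\}\subset\mathcal{R}_{\log}$ induces an injection $V(M)\hookrightarrow V(M_{\mathcal{R}^{\bd}})$ and then invokes the dimension bound of Corollary~\ref{cor:V} ($\dim_K\mathbf V(M)\le\operatorname{rank}M$) to conclude surjectivity, whereas you use the Dwork-trick trivialization $M\otimes K\{t\}\cong V(M)\otimes K\{t\}$ and compute directly that the (log-)derivation on the target coefficient ring has kernel $K$. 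These are equivalent arguments --- the dimension bound in Corollary~\ref{cor:V} ultimately rests on the same kernel computation (via Lemma~\ref{lem:V}) --- so no substantive difference arises.
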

\begin{proof}
It suffices to prove the assertions in the cases of $(K[\![t]\!]_0,\varphi,\nabla,d\varphi)\to (K[\![t]\!]_0,\varphi,\nabla_{\log},d\varphi)$ and $(K[\![t]\!]_0,\varphi,\nabla_{\log},d\varphi)\to (\mathcal{R}^{\bd},\varphi,\nabla,d\varphi)$. We give a proof in the first case; a similar argument works in the second case.

Note that $M_{\mathcal{R}^{\bd}}$ is solvable in $\mathcal{R}_{\log}$ (even in $\mathcal{R}$) by Dwork's trick. By Corollary \ref{cor:logfilR1}, the inclusion $K\{t\}\subset\mathcal{R}_{\log}$ induces canonical injections
\begin{equation}\label{eq:logfil2_1}
\Sol(M)\to \Sol(M_{\mathcal{R}^{\bd}}),
\end{equation}
\begin{equation}\label{eq:logfil2_2}
V(M)\to V(M_{\mathcal{R}^{\bd}}),
\end{equation}
which commute with the canonical pairings
\[
\Sol(M)\otimes_K V(M)\to K,\ \Sol(M_{\mathcal{R}^{\bd}})\otimes_K V(M_{\mathcal{R}^{\bd}})\to K.
\]
By Corollary \ref{cor:V}, the morphism (\ref{eq:logfil2_2}) is an isomorphism, hence, the morphism (\ref{eq:logfil2_1}) is also an isomorphism by duality. We identify $\Sol(M_{\mathcal{R}^{\bd}})$ as $\Sol(M)$ via (\ref{eq:logfil2_1}). Then
\begin{align*}
\Sol_{\lambda}(M_{\mathcal{R}^{\bd}})&=\{f\in \Sol(M);f(M)\cdot\mathcal{R}^{\bd}\subset \Fil_{\lambda}\mathcal{R}_{\log}\}=\{f\in \Sol(M);f(M)\subset \Fil_{\lambda}\mathcal{R}_{\log}\}\\
&=\{f\in \Sol(M);f(M)\subset K[\![t]\!]_{\lambda}\}=\Sol_{\lambda}(M)
\end{align*}
by Lemma \ref{lem:property Robba} (i), (iii), and Corollary \ref{cor:logfilR1}. By duality, we obtain an isomorphism $V(M)^{\bullet}\to V(M_{\mathcal{R}^{\bd}})^{\bullet}$.
\end{proof}

\subsection{Over $\mathcal{E}$}\label{subsec:logfil3}

Let $M$ be a $(\varphi,\nabla)$-module over $\mathcal{E}$. Let $\tau:\mathcal{E}\to\mathcal{E}[\![X-t]\!]_0$ be the morphism of quadruples given by Example 3.3.5. The pull-back $\tau^*M$ of $M$ via $\tau$ is a $(\varphi,\nabla)$-module over $\mathcal{E}[\![X-t]\!]_0$. By applying the construction in \S \ref{subsec:logfil2} to $\tau^*M$, we denote each resulting object by the same symbol as in \S \ref{lem:invariance log-growth}, that is, we obtain the following $\varphi$-modules over $\mathcal{E}$:
\[
\Sol(M)=\Sol(\tau^*M)=\Hom_{\mathcal{E}[\![X-t]\!]_0}^{\nabla}(\tau^*M,\mathcal{E}\{X-t\}),
\]
\[
\Sol_{\bullet}(M)=\Sol_{\bullet}(\tau^*M)=\{f\in \Sol(M);f(\tau^*M)\subset\mathcal{E}[\![X-t]\!]_{\bullet}\},
\]
\[
V(M)=V(\tau^*M)=(\tau^*M\otimes_{\mathcal{E}[\![X-t]\!]_0}\mathcal{E}\{X-t\})^{\nabla},
\]
\[
V(M)^{\bullet}=V(\tau^*M)^{\bullet}=(\Sol_{\bullet}(M))^{\perp},
\]
where $(\cdot)^{\perp}$ denotes the orthogonal part with respect to the perfect pairing $V(M)\otimes_{\mathcal{E}}\Sol(M)\to\mathcal{E}$.

The feature in the case of $\mathcal{E}$ is that the log-growth filtration $V(M)^{\bullet}$ descends to $M$.

\begin{dfn}\label{dfn:bounded}
Let $M$ be a $(\varphi,\nabla)$-module over $\mathcal{E}$ of rank $n$. We say that $M$ is {\it solvable} in $\mathcal{E}[\![X-t]\!]_{\lambda}$ if $\dim_{\mathcal{E}}\Sol_{\lambda}(M)=n$. We say that $M$ is {\it bounded} if $M$ is solvable in $\mathcal{E}[\![X-t]\!]_0$.

Note that if $M$ is solvable in $\mathcal{E}[\![X-t]\!]_{\lambda}$, then any subquotient of $M$ as a $(\varphi,\nabla)$-module over $\mathcal{E}$ is also solvable in $\mathcal{E}[\![X-t]\!]_{\lambda}$.
\end{dfn}

\begin{thm}[{\cite[2.6, 3.5]{Rob},\cite[Theorem 3.2, Proposition 3.9]{CT},\cite[Theorem 2.2]{CT2}}]\label{thm:Rob}
Let $M$ be a $(\varphi,\nabla)$-module over $\mathcal{E}$. For any real number $\lambda$, there exists a unique $(\varphi,\nabla)$-submodule $M^{\lambda}$ of $M$ such that an equality
\[
\Sol_{\lambda}(M)=\Sol(M/M^{\lambda})
\]
holds in $\Sol(M)$, where $\Sol(M/M^{\lambda})$ is canonically regarded as a subspace of $\Sol(M)$. Equivalently, $M^{\lambda}$ is characterized as the minimal $(\varphi,\nabla)$-submodule of $M$ such that $M/M^{\lambda}$ is solvable in $\mathcal{E}[\![X-t]\!]_{\lambda}$.
\end{thm}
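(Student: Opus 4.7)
The plan is to produce $M^\lambda$ by means of a Tannakian-type correspondence between $(\varphi,\nabla)$-submodules of $M$ and $\varphi$-stable $\mathcal{E}$-subspaces of $V(M)$, then verify both characterizations in the statement simultaneously.

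The first step would be to set up the correspondence. Using Dwork's trivialization $M \otimes_\mathcal{E} \mathcal{E}\{X-t\} \cong V(M) \otimes_\mathcal{E} \mathcal{E}\{X-t\}$ as $(\varphi,\nabla)$-modules, one sends $N \mapsto V(N)$, and defines a two-sided inverse by $W \mapsto M \cap (W \otimes_\mathcal{E} \mathcal{E}\{X-t\})$, the intersection being taken inside the trivialization. Next, I would verify that $\Sol_\lambda(M)$ is $\varphi$- and $\mathcal{E}$-stable inside $\Sol(M)$: the $\mathcal{E}$-stability uses the inclusion $\mathcal{E}[\![X-t]\!]_0 \cdot \mathcal{E}[\![X-t]\!]_\lambda \subseteq \mathcal{E}[\![X-t]\!]_\lambda$ together with the action of $\mathcal{E}$ on $\Sol(M)$ via $\tau$, and $\varphi$-stability is the analog of Lemma \ref{lem:property extended}(ii) for $\mathcal{E}[\![X-t]\!]_\bullet$. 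The perfect pairing $V(M) \otimes_\mathcal{E} \Sol(M) \to \mathcal{E}$ then produces a $\varphi$-stable $\mathcal{E}$-subspace $W_\lambda := \Sol_\lambda(M)^\perp$ of $V(M)$.

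I would then define $M^\lambda$ as the $(\varphi,\nabla)$-submodule of $M$ attached to $W_\lambda$ under the correspondence, so that $V(M^\lambda) = W_\lambda$ and hence, dually, $\Sol(M/M^\lambda) = W_\lambda^\perp = \Sol_\lambda(M)$; this is the first characterization. For the second: any $g \in \Sol(M/M^\lambda)$, pulled back to an element of $\Sol(M)$, lies in $\Sol_\lambda(M)$ and so has image in $\mathcal{E}[\![X-t]\!]_\lambda$; since $\tau^*M \twoheadrightarrow \tau^*(M/M^\lambda)$ is surjective, $g$ itself lies in $\Sol_\lambda(M/M^\lambda)$, forcing $M/M^\lambda$ to be solvable in $\mathcal{E}[\![X-t]\!]_\lambda$. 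For minimality, any submodule $N \subseteq M$ with $M/N$ solvable in $\mathcal{E}[\![X-t]\!]_\lambda$ satisfies $\Sol(M/N) = \Sol_\lambda(M/N) \subseteq \Sol_\lambda(M)$, hence $V(N) = \Sol(M/N)^\perp \supseteq \Sol_\lambda(M)^\perp = V(M^\lambda)$, giving $N \supseteq M^\lambda$ via the injectivity of $N \mapsto V(N)$. Uniqueness in either description follows from the same injectivity.

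The main obstacle is the reverse direction of the correspondence: showing that $M \cap (W \otimes_\mathcal{E} \mathcal{E}\{X-t\})$ is indeed a $(\varphi,\nabla)$-submodule of $M$ with $V$ equal to $W$, for every $\varphi$-stable $\mathcal{E}$-subspace $W \subseteq V(M)$. This amounts to a faithfully flat descent argument along $\mathcal{E} \to \mathcal{E}\{X-t\}$, combined with the fact that the $\varphi$- and $\nabla$-structures on the trivialized module preserve both $M$ (by construction) and $W \otimes_\mathcal{E} \mathcal{E}\{X-t\}$ (because $W$ consists of horizontal sections and is $\varphi$-stable).
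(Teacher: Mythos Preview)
The paper does not give its own proof of this theorem; it is quoted from the literature (Robba's Hensel lemma for differential operators, with refinements by Chiarellotto--Tsuzuki). More importantly, your proposed route has a genuine gap: the Tannakian-type correspondence you rely on is \emph{false}.

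Concretely, take $\varphi(t)=t^q$ and let $M=\mathcal{E}e_1\oplus\mathcal{E}e_2$ with $\varphi(e_1)=qe_1$, $\varphi(e_2)=e_2$, $D(e_1)=t^{-1}e_2$, $D(e_2)=0$; the compatibility $GA=\varphi(t)'A\varphi(G)$ is immediate. Here $\mathcal{E}e_1$ is $\varphi$-stable but \emph{not} $\nabla$-stable. On the $V$-side, $V(M)$ has basis $v_2=e_2$ and $v_1=e_1-\log\bigl(1+(X-t)/t\bigr)\,e_2$, and via $\ev$ the $\varphi$-stable line $\mathcal{E}v_1\subset V(M)$ corresponds to $\mathcal{E}e_1\subset M$. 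Now compute your reverse map: an element $\alpha e_1+\beta e_2\in M$ lying in $\mathcal{E}v_1\otimes_{\mathcal{E}}\mathcal{E}\{X-t\}$ forces $\beta=-\alpha\log\bigl(1+(X-t)/t\bigr)$ with $\alpha,\beta\in\mathcal{E}$, hence $\alpha=\beta=0$. Thus $M\cap(\mathcal{E}v_1\otimes\mathcal{E}\{X-t\})=0$, and your correspondence does \emph{not} recover a submodule with $V$ equal to $\mathcal{E}v_1$.

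The appeal to faithfully flat descent does not rescue the argument. While $\tau:\mathcal{E}\to\mathcal{E}\{X-t\}$ is indeed faithfully flat, the isomorphism $\tau^*M\otimes\mathcal{E}\{X-t\}\cong V(M)\otimes\mathcal{E}\{X-t\}$ intertwines two \emph{different} $\mathcal{E}$-structures: the left side carries the $\tau$-descent datum encoded by $\nabla$, the right side the trivial one. A subspace of the form $W\otimes\mathcal{E}\{X-t\}$ descends along the right-hand structure (to $W$), but there is no reason it should descend along the left-hand one (to a submodule of $M$). What Robba actually proves is that the \emph{particular} subspace $\Sol_\lambda(M)$ (equivalently $V(M)^\lambda$) does descend, via an analytic factorization of the associated differential operator with coefficients of controlled growth; this is the content of his Hensel lemma, and it is specific to the log-growth condition rather than to arbitrary $\varphi$-stable subspaces.
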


\begin{dfn}[{\cite[Definition 3.8]{CT}}]
Let $M$ be a $(\varphi,\nabla)$-module over $\mathcal{E}$. We call $\{M^{\lambda};\lambda\in\mathbb{R}\}$ {\it the log-growth filtration} of $M$. The quotient $M/M^0$ is called the {\it bounded quotient} of $M$. Note that we have $M^{\lambda}=M$ for $\lambda<0$ by definition.
\end{dfn}

We gather some basic facts on the log-growth filtration.

\begin{lem}\label{lem:lgE1}
Let $M$ be a $(\varphi,\nabla)$-module over $\mathcal{E}$.
\begin{enumerate}
\item (\cite[Theorem 3.2 (2), Corollary 3.5]{CT}) The filtration $M^{\bullet}$ is decreasing, exhaustive, and separated.
\item (\cite[Theorem 3.2 (4)]{CT}) If $M\neq 0$, then $M^{\lambda}\neq M$ for $\lambda\ge 0$. In particular, the minimum slope of $M^{\bullet}$ is equal to zero.
\end{enumerate}
\end{lem}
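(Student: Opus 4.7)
The plan is to read both parts off the bijection $M^{\lambda}\leftrightarrow\Sol_{\lambda}(M)$ of Theorem \ref{thm:Rob}, reducing them to two inputs about the Frobenius structure on $\Sol(M)$: an a priori upper bound on the log-growth of solutions (for separatedness in (i)) and the nonvanishing of $\Sol_{0}(M)$ (for (ii)). Both inputs will be produced from Dieudonn\'e--Manin decompositions combined with the eigenvector log-growth estimate of Lemma \ref{lem:calc log-growth}, suitably transferred into the relevant function-space setting.

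For (i), the decreasing property is formal: $\lambda\le\mu$ gives $\mathcal{E}[\![X-t]\!]_{\lambda}\subseteq\mathcal{E}[\![X-t]\!]_{\mu}$, hence $\Sol_{\lambda}(M)\subseteq\Sol_{\mu}(M)$, so $M/M^{\lambda}$ is solvable in $\mathcal{E}[\![X-t]\!]_{\mu}$, and the minimality in Theorem \ref{thm:Rob} forces $M^{\mu}\subseteq M^{\lambda}$. Exhaustiveness follows from the convention $\mathcal{E}[\![X-t]\!]_{\lambda}=0$ for $\lambda<0$, which forces $M^{\lambda}=M$. For separatedness I would show every solution has some finite log-growth by passing to Frobenius eigenvectors: after base change to a strongly difference-closed residue field, apply Dieudonn\'e--Manin (Lemma \ref{lem:DM}) both to $M$ and to the finite-dimensional $\varphi$-module $\Sol(M)$ to obtain eigenvector bases $\{m_{i}\}$ and $\{f_{j}\}$; each value $f_{j}(m_{i})\in\mathcal{E}\{X-t\}$ is then itself a Frobenius $d$-eigenvector, and an adaptation of Lemma \ref{lem:calc log-growth} to $\mathcal{E}\{X-t\}$ bounds its log-growth uniformly in $i,j$ by a combination of the underlying Frobenius slopes.

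For (ii), since $M^{\bullet}$ is decreasing and $M^{\lambda}=M$ for $\lambda<0$, the statement reduces to $\Sol_{0}(M)\neq 0$, and the minimum-slope claim then follows automatically. I would reduce to the isoclinic case via the Frobenius slope filtration on $M$ (Theorem \ref{thm:slope1}): its top piece $M\twoheadrightarrow M_{\mathrm{top}}$ is nonzero and pure, and any bounded solution of $M_{\mathrm{top}}$ pulls back along the surjection to one of $M$. For pure $M_{\mathrm{top}}$ of slope $\lambda_{\max}$, the identification $\Sol(M_{\mathrm{top}})\cong M_{\mathrm{top}}\spcheck$ as $\varphi$-modules (via the Taylor-expansion isomorphism $V(\cdot)\cong\id$) makes $\Sol(M_{\mathrm{top}})$ pure of Frobenius slope $-\lambda_{\max}$, so in the Dieudonn\'e--Manin eigenvector bases the product of dual eigenvalues has absolute value $1$; the adapted Lemma \ref{lem:calc log-growth} then forces each eigenvector-eigenvector value $f_{j}(m_{i})\in\mathcal{E}\{X-t\}$ to have log-growth $0$, and $\mathcal{E}$-linearity of $f_{j}$ together with the fact that the $\{m_{i}\}$ span $M_{\mathrm{top}}$ over $\mathcal{E}\subseteq\mathcal{E}[\![X-t]\!]_{0}$ gives $f_{j}\in\Sol_{0}(M_{\mathrm{top}})$.

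The main obstacle I anticipate is the transfer of the eigenvector log-growth formula of Lemma \ref{lem:calc log-growth} from $\tilde{\mathcal{R}}_{\log}$ into $\mathcal{E}\{X-t\}$, since both (i) and (ii) hinge on it. I expect this to require a $\varphi$-equivariant embedding of the relevant subring of $\mathcal{E}\{X-t\}$ into an extended Robba ring constructed over $\mathcal{E}$, paralleling Proposition \ref{prop:embedding}; once this transfer is made, everything else is a formal combination of Dieudonn\'e--Manin with the minimality in Theorem \ref{thm:Rob}.
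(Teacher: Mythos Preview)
The paper does not supply its own proof of this lemma; both parts are cited directly from \cite{CT}. So there is no ``paper's proof'' to match, only the original arguments in \cite{CT} and the paper's treatment of the parallel statement over $\mathcal{R}^{\bd}$ (Proposition~\ref{prop:lg property}).

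Your strategy is correct in outline but considerably heavier than what is needed. The ``main obstacle'' you identify---porting Lemma~\ref{lem:calc log-growth} to $\mathcal{E}\{X-t\}$ via an extended Robba ring over $\mathcal{E}$---is real but entirely avoidable. Both the separatedness in (i) and the nonvanishing in (ii) follow immediately from the a~priori comparison of Proposition~\ref{prop:CT2}, namely $\Sol_{\lambda}(M)\supset S_{\lambda-\lambda_{\max}(M)}(\Sol(M))$, which is itself cited from \cite{CT} and whose proof there is a direct Frobenius-iteration estimate, not a full diagonalization. For separatedness, take $\lambda$ large enough that the right side exhausts $\Sol(M)$; for (ii), take $\lambda=0$ and observe that $S_{-\lambda_{\max}(M)}(\Sol(M))\neq 0$ because $\Sol(M)\cong M\spcheck$ as $\varphi$-modules (Lemma~\ref{lem:evaluation}) has minimum Frobenius slope exactly $-\lambda_{\max}(M)$. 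This is precisely how the paper handles the $\mathcal{R}^{\bd}$ analogue in the proof of Proposition~\ref{prop:lg property}. Your reduction of (ii) to the pure case and then to an eigenvector computation is essentially a re-derivation of Proposition~\ref{prop:lgE5} (pure $\Rightarrow$ bounded), which the paper also takes as a black box from \cite{CT}.

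In short: your plan would work once the transfer machinery is built, but it reconstructs with eigenvectors what Proposition~\ref{prop:CT2} already packages; invoking that comparison (or, for (ii), Proposition~\ref{prop:lgE5}) gives both statements in one line each and avoids the extended-Robba-ring detour altogether.
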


\begin{lem}[{\cite[Proposition 3.6]{CT}}]\label{lem:lgE2}
Let $f:M\to N$ be a morphism of $(\varphi,\nabla)$-modules over $\mathcal{E}$.
\begin{enumerate}
\item $f(M^{\bullet})\subset N^{\bullet}$.
\item If $f$ is surjective, then $f(M^{\bullet})=N^{\bullet}$.
\end{enumerate}
\end{lem}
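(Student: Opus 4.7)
The plan is to reduce both parts to the minimality characterization of the log-growth filtration given in Theorem \ref{thm:Rob}: namely, that $M^{\lambda}$ is the \emph{minimal} $(\varphi,\nabla)$-submodule of $M$ for which the quotient $M/M^{\lambda}$ is solvable in $\mathcal{E}[\![X-t]\!]_{\lambda}$. The other input is the observation recorded after Definition \ref{dfn:bounded}: solvability in $\mathcal{E}[\![X-t]\!]_{\lambda}$ is inherited by $(\varphi,\nabla)$-submodules, and (as a dual statement that one checks by the same dimension count) it is also inherited by quotients, since $\Sol_{\lambda}(\cdot)$ is left exact in the second argument.

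For (i), I would introduce $P := f^{-1}(N^{\lambda})$, which is a $(\varphi,\nabla)$-submodule of $M$ because $f$ is a morphism of $(\varphi,\nabla)$-modules. The map $f$ then factors through an injection of $(\varphi,\nabla)$-modules
\[
\bar{f}:M/P\hookrightarrow N/N^{\lambda}.
\]
Since $N/N^{\lambda}$ is solvable in $\mathcal{E}[\![X-t]\!]_{\lambda}$ by Theorem \ref{thm:Rob}, and solvability passes to submodules, $M/P$ is also solvable in $\mathcal{E}[\![X-t]\!]_{\lambda}$. By the minimality property of $M^{\lambda}$, this forces $M^{\lambda}\subset P$, that is, $f(M^{\lambda})\subset N^{\lambda}$.

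For (ii), assume $f$ is surjective. Then $f(M^{\lambda})$ is a $(\varphi,\nabla)$-submodule of $N$, and $f$ induces a surjection of $(\varphi,\nabla)$-modules
\[
M/M^{\lambda}\twoheadrightarrow N/f(M^{\lambda}).
\]
Since $M/M^{\lambda}$ is solvable in $\mathcal{E}[\![X-t]\!]_{\lambda}$ and solvability passes to quotients, so is $N/f(M^{\lambda})$; by the minimality of $N^{\lambda}$ we conclude $N^{\lambda}\subset f(M^{\lambda})$. Combined with (i), this yields $f(M^{\lambda})=N^{\lambda}$.

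There is no real obstacle once these two ingredients are at hand; the whole argument is a short diagram chase against the universal property provided by Theorem \ref{thm:Rob}. The only point worth double-checking is the quotient-stability of solvability in $\mathcal{E}[\![X-t]\!]_{\lambda}$: this follows from the definition because a surjection $M\twoheadrightarrow M''$ induces an injection $\Sol(M'')\hookrightarrow\Sol(M)$ that carries $\Sol_{\lambda}(M'')$ into $\Sol_{\lambda}(M)$, and a rank comparison using $\dim_{\mathcal{E}}\Sol_{\lambda}(M)=\rank M$ gives $\dim_{\mathcal{E}}\Sol_{\lambda}(M'')=\rank M''$.
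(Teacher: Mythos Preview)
Your proof is correct. The paper does not supply its own argument for this lemma (it is cited from \cite[Proposition 3.6]{CT}), but your approach via the minimality characterization in Theorem \ref{thm:Rob} together with the subquotient-stability of solvability recorded after Definition \ref{dfn:bounded} is exactly the intended route and matches the spirit of the original reference. One small clarification: in your final paragraph, the ``rank comparison'' for quotient-stability is really the observation that under the injection $\Sol(M'')\hookrightarrow\Sol(M)$ one has $\Sol_{\lambda}(M'')=\Sol(M'')\cap\Sol_{\lambda}(M)$, so $\Sol_{\lambda}(M)=\Sol(M)$ immediately forces $\Sol_{\lambda}(M'')=\Sol(M'')$; this is what your sentence compresses, and it is fine.
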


\begin{lem}\label{lem:lgE3}
For $(\varphi,\nabla)$-modules $M_1,\dots,M_n$ over $\mathcal{E}$, there exists a canonical isomorphism $(\oplus_{i=1}^nM_i)^{\bullet}\cong\oplus_{i=1}^nM_i^{\bullet}$.
\end{lem}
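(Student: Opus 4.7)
The plan is to identify $(\oplus_{i=1}^n M_i)^\lambda$ with $\oplus_{i=1}^n M_i^\lambda$ as submodules of $\oplus_{i=1}^n M_i$, using only the functoriality of the log-growth filtration established in Lemma \ref{lem:lgE2}. By an obvious induction, it suffices to treat the case $n=2$.

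First I would show the inclusion $M_1^\lambda \oplus M_2^\lambda \subseteq (M_1 \oplus M_2)^\lambda$. The canonical inclusions $\iota_i : M_i \hookrightarrow M_1 \oplus M_2$ are morphisms of $(\varphi,\nabla)$-modules, so Lemma \ref{lem:lgE2} (i) gives $\iota_i(M_i^\lambda) \subseteq (M_1 \oplus M_2)^\lambda$ for $i=1,2$. Summing yields the desired inclusion.

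For the reverse inclusion $(M_1 \oplus M_2)^\lambda \subseteq M_1^\lambda \oplus M_2^\lambda$, I would apply Lemma \ref{lem:lgE2} (i) to the canonical projections $\pi_i : M_1 \oplus M_2 \twoheadrightarrow M_i$, which are morphisms of $(\varphi,\nabla)$-modules, obtaining $\pi_i((M_1\oplus M_2)^\lambda)\subseteq M_i^\lambda$. Given any $m=(m_1,m_2)\in (M_1\oplus M_2)^\lambda$, we then have $m_i=\pi_i(m)\in M_i^\lambda$, whence $m\in M_1^\lambda\oplus M_2^\lambda$.

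Combining the two inclusions gives the equality, and the isomorphism obtained is manifestly canonical (it is the identity under the decomposition $M_1\oplus M_2$). There is no genuine obstacle here, the only point to verify is that $\iota_i$ and $\pi_i$ are morphisms of $(\varphi,\nabla)$-modules, which is immediate from the componentwise definition of the $\varphi$ and $\nabla$ structures on a direct sum.
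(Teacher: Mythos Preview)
Your proof is correct and follows essentially the same approach as the paper: both use Lemma~\ref{lem:lgE2} applied to the canonical inclusions and projections to obtain the two inclusions. The only cosmetic difference is that for the reverse inclusion the paper invokes part~(ii) of Lemma~\ref{lem:lgE2} (surjectivity gives $\pi_i((\oplus M_j)^\lambda)=M_i^\lambda$), whereas you use part~(i), which already suffices since only the containment $\pi_i((\oplus M_j)^\lambda)\subseteq M_i^\lambda$ is needed.
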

\begin{proof}
By applying Lemma \ref{lem:lgE2} (i) to the canonical injection $M_i\hookrightarrow\oplus_{i=1}^nM_i$, we obtain $\oplus_{i=1}^nM_i^{\lambda}\subset(\oplus_{i=1}^nM_i)^{\lambda}$. The converse follows by applying Lemma \ref{lem:lgE2}~(ii) to the canonical surjection $\oplus_{i=1}^nM_i\to M_i$.
\end{proof}

\begin{prop}[{\cite[Proposition 6.2, Corollary 6.5]{CT}}]\label{prop:lgE5}
Let $M$ be a $(\varphi,\nabla)$-module over $\mathcal{E}$. If $M$ is pure as a $\varphi$-module, then $M$ is bounded, i.e., $M^0=M$.
\end{prop}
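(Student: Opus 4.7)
The plan is to reduce to the étale (slope zero) case and then bound the Taylor coefficients of horizontal sections by Frobenius iteration on an integral lattice.

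First, by base change of the coefficient field (Notation~(2))---after possibly a ramified extension of $K$ so that $|q|^{-\lambda}\in|K^{\times}|$, where $\lambda\in\mathbb{Q}$ is the slope of $M$, followed by an extension with strongly difference-closed residue field---I may assume both conditions hold. Base change and scalar twisting preserve the $\nabla$-structure, hence $\Sol_{\bullet}(M)$ and boundedness. Tensoring $M$ with $\mathcal{E}(\alpha)$ for $\alpha\in K^{\times}$ satisfying $|\alpha|=|q|^{-\lambda}$ (a rank-one $\varphi$-module pure of slope $-\lambda$), I may therefore assume that $M$ is pure of slope zero, i.e., \'etale.

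Second, $M$ admits a $\varphi$-stable $\mathcal{O}_{\mathcal{E}}$-lattice $L$; fix an $\mathcal{O}_{\mathcal{E}}$-basis of $L$, so the matrix $A$ of $\varphi_{M}$ lies in $\GL_n(\mathcal{O}_{\mathcal{E}})$ and the matrix $G$ of $\nabla_{M}$ lies in $\M_n(\mathcal{E})$. The fundamental horizontal matrix of $\tau^{*}M$ is the unique $Y=\sum_{k\ge 0}Y_k(X-t)^k\in\GL_n(\mathcal{E}[\![X-t]\!])$ with $Y_0=I$ and $dY/d(X-t)=-\tau(G)Y$; the goal is to show $Y\in\GL_n(\mathcal{O}_{\mathcal{E}}[\![X-t]\!])$. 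The $\varphi$-compatibility on $\tau^{*}M$, combined with $\varphi(X-t)|_{X=t}=0$ and $Y_0=I$, yields the Frobenius identity
\[
\varphi(Y)=A^{-1}YA\quad\text{in }\GL_n(\mathcal{E}[\![X-t]\!]),
\]
where $\varphi$ is extended formally via $\varphi(X-t)=\tau(\varphi(t))-\varphi(t)=c_1(X-t)+c_2(X-t)^2+\cdots\in\mathcal{O}_{\mathcal{E}}[\![X-t]\!]$, with $c_1=d\varphi(t)/dt\in q\mathcal{O}_{\mathcal{E}}$ since $\varphi(t)\equiv t^q\pmod{\mathfrak{m}_K}$.

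The main step is a strong induction on $k$ proving $Y_k\in\M_n(\mathcal{O}_{\mathcal{E}})$; the case $k=0$ is immediate. Extracting the $(X-t)^k$-coefficient of the Frobenius identity, using that $A$ and $A^{-1}$ are $(X-t)$-constants, gives
\[
A^{-1}Y_kA=c_1^k\,\varphi_{\mathcal{E}}(Y_k)+R_k,
\]
where $R_k\in\M_n(\mathcal{O}_{\mathcal{E}})$ is an $\mathcal{O}_{\mathcal{E}}$-polynomial in the integral entries $\varphi_{\mathcal{E}}(Y_0),\ldots,\varphi_{\mathcal{E}}(Y_{k-1})$ and $c_1,\ldots,c_k$. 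Rewriting as $(\id-T)(Y_k)=AR_kA^{-1}$ with $T\colon X\mapsto c_1^k\,A\,\varphi_{\mathcal{E}}(X)\,A^{-1}$ on $\M_n(\mathcal{E})$ of operator norm at most $|q|^k<1$, the Neumann series $(\id-T)^{-1}=\sum_{j\ge 0}T^j$ preserves $\M_n(\mathcal{O}_{\mathcal{E}})$, so $Y_k\in\M_n(\mathcal{O}_{\mathcal{E}})$.

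The principal obstacle is making precise the combinatorial form of the $(X-t)^k$-coefficient extraction and the integrality of the remainder $R_k$; this is routine because $\varphi(X-t)\in(X-t)\mathcal{O}_{\mathcal{E}}[\![X-t]\!]$, so the formal application of $\varphi$ to $Y$ is well-defined even though $Y$ is a priori only in $\GL_n(\mathcal{E}[\![X-t]\!])$. Once the induction closes, $Y\in\GL_n(\mathcal{O}_{\mathcal{E}}[\![X-t]\!])\subset\GL_n(\mathcal{E}[\![X-t]\!]_0)$, so by the perfect pairing between horizontal sections and solutions, $\Sol_0(M)=\Sol(M)$, i.e., $M$ is bounded.
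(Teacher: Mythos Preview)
The paper does not supply its own proof of this proposition; it is quoted from \cite[Proposition~6.2, Corollary~6.5]{CT}. Your argument is the classical one (going back to Dwork and used in \cite{CT}): reduce to the \'etale case by twisting, pick an integral $\varphi$-stable lattice, and use the Frobenius relation to force integrality of the Taylor coefficients of the fundamental horizontal matrix by a contraction argument. So there is nothing to compare against in the paper itself, and your approach matches the standard one.

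Two small corrections that do not affect correctness. First, the Frobenius identity on $\tau^*M$ should read $\varphi(Y)=\tau(A)^{-1}\,Y\,A$ rather than $A^{-1}YA$, since the matrix of $\varphi_{\tau^*M}$ in the basis $\{e_i\otimes 1\}$ is $\tau(A)$, not $A$. This changes nothing: $\tau(A)\in\GL_n(\mathcal{O}_{\mathcal{E}}[\![X-t]\!])$ with constant term $A$, so the additional contributions to your remainder $R_k$ remain integral. Second, the claim $c_1\in q\mathcal{O}_{\mathcal{E}}$ is not generally true: from $|\varphi(t)-t^q|_0<1$ one obtains only $|c_1|_0<1$, not $|c_1|_0\le|q|$ (for instance if $K/\mathbb{Q}_p$ is ramified and $\varphi(t)=t^q+\pi t$ with a uniformizer $\pi$ satisfying $|\pi|>|q|$). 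This is harmless, because $|c_1|_0<1$ is all that is needed for the operator $T\colon X\mapsto c_1^k\,A\,\varphi_{\mathcal{E}}(X)\,A^{-1}$ to be a strict contraction on $\M_n(\mathcal{O}_{\mathcal{E}})$ for each $k\ge 1$, so the Neumann series converges there and the induction closes.
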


Finally, we recall some facts on Frobenius slope filtration for a $(\varphi,\nabla)$-module $M$. Let $S_{\bullet}(M)$ denote Frobenius slope filtration on $M$ (Theorem \ref{thm:slope1}). Then each $S_{\lambda}(M)$ is a $(\varphi,\nabla)$-submodule of $M$ (\cite[Proposition 6.2]{CT}). We should remark that each graded piece of $S_{\bullet}(M)$ is bounded by Proposition \ref{prop:lgE5}, in particular, any $(\varphi,\nabla)$-module over $\mathcal{E}$ is a successive extension of bounded $(\varphi,\nabla)$-modules over $\mathcal{E}$. This fact distinguishes $(\varphi,\nabla)$-modules over $\mathcal{E}$ from those over $\mathcal{R}^{\bd}$.


\subsection{Pure of bounded quotient}

Ultimately, we would like to compare log-growth filtrations and Frobenius slope filtrations. However there exists a difference between Frobenius structures and differential structures, which cannot be ignored: the twisting operation $M\mapsto M(c)$ in \S \ref{subsec:phinabla1} does change Frobenius structures while does not change differential structures. Chiarellotto and Tsuzuki introduce the following notion for $(\varphi,\nabla)$-modules, which is necessary to formulate Chiarellotto-Tsuzuki conjecture \ref{conj:CT}.

\begin{dfn}[{\cite[Definition 5.1]{CT2}}]
\begin{enumerate}
\item A $(\varphi,\nabla)$-module $M$ over $\mathcal{E}$ is said to be {\it pure of bounded quotient} (called PBQ for simplicity) if $M/M^0$ is pure as a $\varphi$-module. By Theorem \ref{thm:Rob}, this is equivalent to say that $\Sol_0(M)$ is pure as a $\varphi$-module over $\mathcal{E}$.
\item A (log-)$(\varphi,\nabla)$-module $M$ over $K[\![t]\!]_0$ or $\mathcal{R}^{\bd}$ is said to be {\it pure of bounded quotient} (called PBQ for simplicity) if the generic fiber $M_{\mathcal{E}}$ of $M$ is PBQ as a $(\varphi,\nabla)$-module over $\mathcal{E}$.
\end{enumerate}
\noindent For simplicity, we define that $M=0$ is PBQ.
\end{dfn}

We give some first properties.

\begin{lem}
Let $M$ be a $(\varphi,\nabla)$-module over $\mathcal{E}$. Then the following are equivalent.
\begin{enumerate}
\item $M$ is PBQ.
\item If $Q$ is a quotient of $M$ as a $(\varphi,\nabla)$-module such that $Q$ is bounded (Definition \ref{dfn:bounded}), then $Q$ is pure as a $\varphi$-module over $\mathcal{E}$.
\end{enumerate}
\end{lem}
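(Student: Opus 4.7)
The plan is to reduce the equivalence to two simple observations: first, that the quotient $M/M^0$ is itself a bounded quotient of $M$; second, that any bounded quotient of $M$ factors through $M/M^0$.

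For the direction (ii)$\Rightarrow$(i), I would simply take $Q=M/M^0$, which is a $(\varphi,\nabla)$-quotient of $M$. By Theorem~\ref{thm:Rob} applied with $\lambda=0$, the quotient $M/M^0$ is solvable in $\mathcal{E}[\![X-t]\!]_0$, i.e.\ it is bounded in the sense of Definition~\ref{dfn:bounded}. Hypothesis (ii) then says that $M/M^0$ is pure as a $\varphi$-module, which is exactly the definition of PBQ.

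For (i)$\Rightarrow$(ii), let $\pi\colon M\twoheadrightarrow Q$ be a surjection of $(\varphi,\nabla)$-modules with $Q$ bounded. Boundedness means $Q$ is solvable in $\mathcal{E}[\![X-t]\!]_0$, i.e.\ $\Sol_0(Q)=\Sol(Q)$; by the uniqueness part of Theorem~\ref{thm:Rob}, this forces $Q^0=0$. Applying Lemma~\ref{lem:lgE2}~(ii) to $\pi$ yields $\pi(M^0)=Q^0=0$, so $M^0\subset\ker\pi$ and $\pi$ factors through a surjection $\bar\pi\colon M/M^0\twoheadrightarrow Q$ of $(\varphi,\nabla)$-modules (hence in particular of $\varphi$-modules). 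By the PBQ hypothesis, $M/M^0$ is pure as a $\varphi$-module, and it is a standard fact (clear from Lemma~\ref{lem:slope3} together with the additivity of slope multisets in short exact sequences) that any $\varphi$-quotient of a pure $\varphi$-module is pure of the same slope. Therefore $Q$ is pure, proving~(ii).

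There is essentially no obstacle: the argument just unwinds definitions, uses the characterization of $M^0$ via Theorem~\ref{thm:Rob}, and invokes Lemma~\ref{lem:lgE2}~(ii) together with the elementary behaviour of purity under $\varphi$-quotients.
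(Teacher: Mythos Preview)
Your proof is correct and follows essentially the same approach as the paper. The only cosmetic difference is in (i)$\Rightarrow$(ii): the paper obtains the factorization $M/M^0\twoheadrightarrow Q$ directly from the minimality characterization of $M^0$ in Theorem~\ref{thm:Rob}, whereas you derive it via Lemma~\ref{lem:lgE2}~(ii); both routes yield the same surjection, and the remaining step (purity passes to $\varphi$-quotients) is identical.
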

\begin{proof}
(i)$\Rightarrow$(ii) By Theorem \ref{thm:Rob}, there exists a surjection of $(\varphi,\nabla)$-modules $M/M^0\to Q$. Hence $Q$ is pure.

\noindent (ii)$\Rightarrow$(i) Apply (ii) to $Q=M/M^0$.
\end{proof}

\begin{cor}[{\cite[Proposition 5.3]{CT2}}]\label{cor:qt inv of PBQ}
Let $M$ be a $(\varphi,\nabla)$-module over $K[\![t]\!]_0,\mathcal{R}^{\bd}$, or $\mathcal{E}$. Then any quotient $Q$ of $M$ as a $(\varphi,\nabla)$-module is PBQ.
\end{cor}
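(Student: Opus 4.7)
The statement should be read, in light of the preceding lemma, with the implicit hypothesis that $M$ itself is PBQ; otherwise a quotient could be $M$ and the assertion would collapse. Under this reading, the plan is to deduce the corollary in two steps: first reduce the $K[\![t]\!]_0$ and $\mathcal{R}^{\bd}$ cases to the case over $\mathcal{E}$, and then chain surjections through the equivalent characterization in the preceding lemma.

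For the reduction, I would use that PBQ for $M$ over $R\in\{K[\![t]\!]_0,\mathcal{R}^{\bd}\}$ is defined via the generic fiber $M_{\mathcal{E}}$. If $M\twoheadrightarrow Q$ is a surjection of $(\varphi,\nabla)$-modules over $R$, then right-exactness of $(-)\otimes_R\mathcal{E}$ yields a surjection $M_{\mathcal{E}}\twoheadrightarrow Q_{\mathcal{E}}$ of $(\varphi,\nabla)$-modules over $\mathcal{E}$, and the Frobenius/connection are preserved by construction of the pull-back in \S \ref{subsec:phinabla1}. So it suffices to treat the corollary for $\mathcal{E}$.

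For the $\mathcal{E}$ case I would verify criterion (ii) of the preceding lemma for $Q$. Let $Q'$ be an arbitrary bounded $(\varphi,\nabla)$-quotient of $Q$. Composing the two surjections gives a surjection $M\twoheadrightarrow Q'$, exhibiting $Q'$ as a bounded $(\varphi,\nabla)$-quotient of $M$. Since $M$ is PBQ, criterion (ii) applied to $M$ forces $Q'$ to be pure as a $\varphi$-module over $\mathcal{E}$. As $Q'$ was arbitrary, criterion (ii) holds for $Q$, and the converse direction of the same lemma yields that $Q$ is PBQ.

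There is essentially no obstacle: the argument is formal, hinging only on the preceding lemma together with right-exactness of base change. The two points to check are that boundedness of $Q'$ in the sense of Definition \ref{dfn:bounded} (solvability of $\tau^*Q'$ in $\mathcal{E}[\![X-t]\!]_0$) is intrinsic to $Q'$ and not to its presentation as a quotient, and that composition of $(\varphi,\nabla)$-surjections is again a $(\varphi,\nabla)$-surjection; both are immediate from the categorical setup of \S \ref{subsec:phinabla1}.
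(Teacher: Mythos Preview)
Your proposal is correct and follows essentially the same route as the paper: reduce to the case over $\mathcal{E}$ by base change, then apply the preceding lemma's characterization (ii) by observing that any bounded quotient of $Q$ is in particular a bounded quotient of $M$, hence pure. The paper compresses this into a single sentence, but your elaboration matches its intent precisely.
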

\begin{proof}
It suffices to prove the assertion in the case of $\mathcal{E}$, where the assertion follows the previous lemma.
\end{proof}

\section{Examples of $(\varphi,\nabla)$-modules}\label{sec:ex}

In this section, we give some examples of $(\varphi,\nabla)$-modules. In \S \ref{subsec:ex1}, we calculate their log-growth filtrations and Frobenius slope filtrations, and compare the two filtrations. In \S \ref{subsec:ex2}, we also explain whether some examples in \S \ref{subsec:ex1} are PBQ or not by calculating the bounded quotients. Some examples will be useful as counterexamples to some questions. This section is an exposition of examples, hence, we do not hesitate to use some results, which will be proved later in this paper.

\subsection{Log-growth filtrations and Frobenius slope filtrations}\label{subsec:ex1}

\begin{ex}[the rank one case]
Let $M$ be a $(\varphi,\nabla)$-module of rank one over $\mathcal{R}^{\bd}$ solvable in $\mathcal{R}_{\log}$. We claim that $M$ is trivial as a $\nabla$-module over $\mathcal{R}^{\bd}$, in particular, $\Sol_0(M)=\Sol(M)$. Let $\{e\}$ be a basis of $M$. Since $M\otimes_{\mathcal{R}^{\bd}}\mathcal{R}$ is a trivial $\nabla$-module over $\mathcal{R}$ by Lemma \ref{lem:solv=uni}, there exists $c\in (\mathcal{R})^{\times}=(\mathcal{R}^{\bd})^{\times}$ such that $\{ce\}$ is a basis of $V(M)$, which implies the claim.

By the above claim, we may choose a horizontal basis $e\in M$. Write $\varphi(e)=ae$ with $a\in K$. Let $\lambda_{\max}$ denote the maximum Frobenius slope of $M_{\mathcal{E}}$. Since the map $e\mapsto 1$ is a basis of $\Sol(M)$, the Frobenius slope of $\Sol(M)$ is equal to $-\log{|a|}/\log{|q|}$, which is also equal to $-\lambda_{\max}$. Hence we have
\[
\Sol_{\lambda}(M)=S_{\lambda-\lambda_{\max}}(\Sol(M))=
\begin{cases}
0&\text{if }\lambda\in (-\infty,0),\\
\Sol(M)&\text{if }\lambda\in [0,+\infty).
\end{cases}
\]
By a similar argument,
\[
\Sol_{\lambda}(M\spcheck)=S_{\lambda+\lambda_{\max}}(\Sol(M\spcheck))=
\begin{cases}
0&\text{if }\lambda\in (-\infty,0),\\
\Sol(M\spcheck)&\text{if }\lambda\in [0,+\infty).
\end{cases}
\]

Let $M$ be a (log-)$(\varphi,\nabla)$-module over $K[\![t]\!]_0$ of rank one. Then $M$ is trivial as a (log-)$\nabla$-module over $K[\![t]\!]_0$. In fact, since $M\otimes_{K[\![t]\!]_0}\mathcal{R}^{\bd}$ is a trivial $\nabla$-module over $\mathcal{R}^{\bd}$, the assertion follows from \cite[Proposition 17.2.5]{pde}. We can calculate the log-growth filtration and Frobenius slope filtration similarly as above.

Let $M$ be a $(\varphi,\nabla)$-module over $\mathcal{E}$ of rank one. Then $M$ is bounded since $\tau^*M$ is a trivial $\nabla$-module over $\mathcal{E}[\![X-t]\!]_0$. We can calculate the log-growth filtration and Frobenius slope filtration similarly as above.
\end{ex}
\begin{ex}[direct sum]
Let $M=K[\![t]\!]_0\oplus K[\![t]\!]_0(q)$. Let $e_1=(1,0),e_2=(0,1)$. Then $\Sol(M)$ admits the basis $\{f_1,f_2\}$ defined by $f_1:(e_1,e_2)\mapsto (0,1),f_2:(e_1,e_2)\mapsto (1,0)$. We have $\varphi(f_1,f_2)=(f_1,f_2)\ \diag(q^{-1},1)$. Hence we have
\[
\Sol_{\lambda}(M)=
\begin{cases}
0&\text{if }\lambda\in (-\infty,0),\\
\Sol(M)&\text{if }\lambda\in [0,+\infty),
\end{cases}
S_{\lambda-1}(\Sol(M))=
\begin{cases}
0&\text{if }\lambda\in (-\infty,0),\\
Kf_1&\text{if }\lambda\in [0,1),\\
\Sol(M)&\text{if }\lambda\in [1,+\infty).
\end{cases}
\]

\end{ex}
\begin{ex}
Assume $\varphi(t)=t^q$. Let $\mu\in [0,1]\cap\mathbb{Q}$ such that $q^{\mu}\in (K^{\times})^{\varphi_K=1}$. We define the rank two $(\varphi,\nabla)$-module $M_{\mu}$ over $K[\![t]\!]_0$ with the basis $\{e_1,e_2\}$, whose matrix presentation is given by
\[
A_{\mu}=\begin{pmatrix}
1&-q^{\mu}t\\
0&q^{\mu}
\end{pmatrix},
G_{\mu}=
\begin{pmatrix}
0&g_{\mu}\\
0&0
\end{pmatrix},
\]
where $g_{\mu}=\sum_{i=0}^{\infty}q^{(1-\mu)i}t^{q^i-1}\in K[\![t]\!]_0$. We put
\[
x_{\mu}=\int{g_{\mu}dt}=\sum_{i=0}^{\infty}q^{-\mu i}t^{q^i}\in K\{t\}.
\]
Then $x_{\mu}$ is exactly of log-growth $\mu$. The solution space $\Sol(M_{\mu})$ admits the basis $\{f_1,f_2\}$ defined by
\[
f_1:(e_1,e_2)\mapsto (0,1),\ f_2:(e_1,e_2)\mapsto (1,x_{\mu}).
\]
We also have, by noting $\varphi(x_{\mu})=q^{\mu}x_{\mu}-q^{\mu}t$,
\[
\varphi(f_1,f_2)=(f_1,f_2)\ \diag(q^{-\mu},1).
\]
Hence we have
\[
\Sol_{\lambda}(M_{\mu})=S_{\lambda-\mu}(\Sol(M_{\mu}))=
\begin{cases}
0&\text{if }\lambda\in (-\infty,0),\\
Kf_1&\text{if }\lambda\in [0,\mu),\\
\Sol(M_{\mu})&\text{if }\lambda\in [\mu,+\infty).
\end{cases}
\]

The matrix presentation of the dual $M_{\mu}\spcheck$ with respect to the dual basis $\{e_1\spcheck,e_2\spcheck\}$ of $\{e_1,e_2\}$ is given by
\[
{}^t(A_{\mu}^{-1})=\begin{pmatrix}
1&0\\
t&q^{-\mu}
\end{pmatrix},
-{}^tG_{\mu}=
\begin{pmatrix}
0&0\\
-g_{\mu}&0
\end{pmatrix}.
\]
Hence $\Sol(M_{\mu}\spcheck)$ admits the basis $\{h_1,h_2\}$ defined by
\[
h_1:(e_1\spcheck,e_2\spcheck)\mapsto (1,0),\ h_2:(e_1\spcheck,e_2\spcheck)\mapsto (-x_{\mu},1),
\]
which satisfies
\[
\varphi(h_1,h_2)=(h_1,h_2)\ \diag(1,q^{\mu}).
\]
Hence we have
\[
\Sol_{\lambda}(M_{\mu}\spcheck)=S_{\lambda}(\Sol(M_{\mu}\spcheck))=
\begin{cases}
0&\text{if }\lambda\in (-\infty,0),\\
Kh_1&\text{if }\lambda\in [0,\mu),\\
\Sol(M_{\mu}\spcheck)&\text{if }\lambda\in [\mu,+\infty).
\end{cases}
\]
\end{ex}
\begin{ex}[{pushout; a generalization of \cite[Example 5.2 (3)]{CT2}}]
Let notation and assumption be as in Example 6.1.3. Let $\mu,\delta\in [0,1]\cap\mathbb{Q}$ such that $q^{\mu},q^{\delta}\in (K^{\times})^{\varphi_K=1}$, and $\mu<\delta$. We define the rank three $(\varphi,\nabla)$-module $M_{\mu,\delta}$ over $K[\![t]\!]_0$ with the basis $\{e_1,e_2,e_3\}$, whose matrix presentation is given by
\[
A_{\mu,\delta}=\begin{pmatrix}
1&-q^{\mu}t&-q^{\delta}t\\
0&q^{\mu}&0\\
0&0&q^{\delta}
\end{pmatrix},
G_{\mu,\delta}=
\begin{pmatrix}
0&g_{\mu}&g_{\delta}\\
0&0&0\\
0&0&0
\end{pmatrix}.
\]
Actually, $M_{\mu,\delta}$ satisfies a pushout diagram
\[\xymatrix{
K[\![t]\!]_0\ar[r]\ar[d]&M_{\mu}\ar[d]\\
M_{\delta}\ar[r]&M_{\mu,\delta}.
}\]
Then $\Sol(M)$ admits the basis $\{f_1,f_2,f_3\}$ defined by
\[
f_1:(e_1,e_2,e_3)\mapsto (0,0,1),\ f_2:(e_1,e_2,e_3)\mapsto (0,1,0),\ f_3:(e_1,e_2,e_3)\mapsto (1,x_{\mu},x_{\delta}),
\]
which satisfies
\[
\varphi(f_1,f_2,f_3)=(f_1,f_2,f_3)\ \diag(q^{-\delta},q^{-\mu},1).
\]
Therefore
\[
\Sol_{\lambda}(M_{\mu,\delta})=
\begin{cases}
0&\text{if }\lambda\in (-\infty,0),\\
Kf_1\oplus Kf_2&\text{if }\lambda\in [0,\delta),\\
\Sol(M_{\mu,\delta})&\text{if }\lambda\in [\delta,+\infty),
\end{cases}
\]
\[
S_{\lambda-\delta}(\Sol(M_{\mu,\delta}))=
\begin{cases}
0&\text{if }\lambda\in (-\infty,0),\\
Kf_1&\text{if }\lambda\in [0,\delta-\mu),\\
Kf_1\oplus Kf_2&\text{if }\lambda\in [\delta-\mu,\delta),\\
\Sol(M_{\mu,\delta})&\text{if }\lambda\in [\delta,+\infty).
\end{cases}
\]
In particular, $S_{\lambda-\delta}(\Sol(M_{\mu,\delta}))\subset \Sol_{\lambda}(M_{\mu,\delta})$ with equality unless $\lambda\in [0,\delta-\mu)$.

The matrix presentation of the dual $M_{\mu,\delta}\spcheck$ with respect to the dual basis $\{e_1\spcheck,e_2\spcheck,e_3\spcheck\}$ of $\{e_1,e_2,e_3\}$ is given by
\[
{}^t(A_{\mu,\delta}^{-1})=\begin{pmatrix}
1&0&0\\
t&q^{-\mu}&0\\
t&0&q^{-\delta}
\end{pmatrix},
-{}^tG_{\mu,\delta}=
\begin{pmatrix}
0&0&0\\
-g_{\mu}&0&0\\
-g_{\delta}&0&0
\end{pmatrix}.
\]
As in Example 6.1.3, $\Sol(M_{\mu,\delta}\spcheck)$ admits the basis $\{h_1,h_2,h_3\}$ defined by
\[
h_1:(e_1\spcheck,e_2\spcheck,e_3\spcheck)\mapsto (1,0,0),\ h_2:(e_1\spcheck,e_2\spcheck,e_3\spcheck)\mapsto (-x_{\mu},1,0),\ h_3:(e_1\spcheck,e_2\spcheck,e_3\spcheck)\mapsto (-x_{\delta},0,1),
\]
which satisfies
\[
\varphi(h_1,h_2,h_3)=(h_1,h_2,h_3)\ \diag(1,q^{\mu},q^{\delta}).
\]
Therefore
\[
\Sol_{\lambda}(M_{\mu,\delta}\spcheck)=S_{\lambda}(\Sol(M_{\mu,\delta}\spcheck))=
\begin{cases}
0&\text{if }\lambda\in (-\infty,0),\\
Kh_1&\text{if }\lambda\in [0,\mu),\\
Kh_1\oplus Kh_2&\text{if }\lambda\in [\mu,\delta),\\
\Sol(M_{\mu,\delta}\spcheck)&\text{if }\lambda\in [\delta,+\infty).
\end{cases}
\]
\end{ex}
\begin{ex}[tensor product]
Let notation and assumption be as in Example 6.1.3. We consider $M=M_{\mu}\otimes_{K[\![t]\!]_0}M_{\mu}\spcheck$ with the following basis
\[
\mathbf{e}_1=e_1\otimes e_2\spcheck,\ \mathbf{e}_2=e_1\otimes e_1\spcheck-e_2\otimes e_2\spcheck,\ \mathbf{e}_3=e_2\otimes e_1\spcheck,\ \mathbf{e}_4=e_1\otimes e_1\spcheck+e_2\otimes e_2\spcheck.
\]
Then the matrix presentation is given by
\[
\begin{pmatrix}
q^{-\mu}&2t&-q^{\mu}t^2&0\\
0&1&-q^{\mu}t&0\\
0&0&q^{\mu}&0\\
0&0&0&1
\end{pmatrix},
\begin{pmatrix}
0&-2g_{\mu}&0&0\\
0&0&g_{\mu}&0\\
0&0&0&0\\
0&0&0&0
\end{pmatrix}.
\]
Thus $M$ splits into the direct sum of two modules defined by $\{\mathbf{e}_1,\mathbf{e}_2,\mathbf{e}_3\}$ and $\mathbf{e}_4$ respectively: the first one is the kernel of the canonical pairing $M\otimes_{K[\![t]\!]_0}M\spcheck\to K[\![t]\!]_0$, and the decomposition is nothing but a splitting of the pairing. The solution space $\Sol(M)$ admits the basis $\{f_1,f_2,f_3,f_4\}$ defined by
\[
f_1:(\mathbf{e}_1,\mathbf{e}_2,\mathbf{e}_3,\mathbf{e}_4)\mapsto (0,0,0,1),\ f_2:(\mathbf{e}_1,\mathbf{e}_2,\mathbf{e}_3,\mathbf{e}_4)\mapsto (0,0,1,0),
\] 
\[
f_3:(\mathbf{e}_1,\mathbf{e}_2,\mathbf{e}_3,\mathbf{e}_4)\mapsto (0,1,x_{\mu},0),\ f_4:(\mathbf{e}_1,\mathbf{e}_2,\mathbf{e}_3,\mathbf{e}_4)\mapsto (1,-2x_{\mu},-x_{\mu}^2,0),\ 
\]
We also have
\[
\varphi(f_1,f_2,f_3,f_4)=(f_1,f_2,f_3,f_4)\ \diag(1,q^{-\mu},1,q^{\mu}).
\]
Note that $x_{\mu}^2=\sum_{i,j\in\mathbb{N}}q^{-\mu(i+j)}t^{q^i+q^j}=\sum_{i>j\in\mathbb{N}}q^{-\mu(i+j)}t^{q^i+q^j}+\sum_{i\in\mathbb{N}}2q^{-2\mu i}t^{2q^i}$ is exactly of log-growth $2\mu$ since so is $\sum_{i\in\mathbb{N}}2q^{-2\mu i}t^{2q^i}$. Hence we have
\[
\Sol_{\lambda}(M)=
\begin{cases}
0&\text{if }\lambda\in (-\infty,0),\\
Kf_1\oplus Kf_2&\text{if }\lambda\in [0,\mu),\\
Kf_1\oplus Kf_2\oplus Kf_3&\text{if }\lambda\in [\mu,2\mu),\\
\Sol(M)&\text{if }\lambda\in [2\mu,+\infty),
\end{cases}
\]
\[
S_{\lambda-\mu}(\Sol(M))=
\begin{cases}
0&\text{if }\lambda\in (-\infty,0),\\
Kf_2&\text{if }\lambda\in [0,\mu),\\
Kf_1\oplus Kf_2\oplus Kf_3&\text{if }\lambda\in [\mu,2\mu),\\
\Sol(M)&\text{if }\lambda\in [2\mu,+\infty).
\end{cases}
\]
We also note that
\[
\sum_{\varepsilon+\delta=\lambda}\Sol_{\varepsilon}(M_{\mu})\otimes_K \Sol_{\delta}(M_{\mu}\spcheck)=\sum_{\varepsilon+\delta=\lambda}S_{\varepsilon-\mu}(\Sol(M_{\mu}))\otimes_K S_{\delta}(\Sol(M_{\mu}\spcheck))=S_{\lambda-\mu}(\Sol(M))
\]
by Example 6.1.3 and Lemma \ref{lem:slope5} (II)-(iii). In particular, $\sum_{\varepsilon+\delta=\lambda}\Sol_{\varepsilon}(M_{\mu})\otimes_K \Sol_{\delta}(M_{\mu}\spcheck)\subset\Sol_{\lambda}(M)$ with equality unless $\lambda\in [0,\mu)$.\end{ex}
\begin{ex}[symmetric square]
Let $M$ be a $(\varphi,\nabla)$-module over $K[\![t]\!]_0$ of rank $2$ such that (a) $M$ is not a trivial $\nabla$-module over $K[\![t]\!]_0$, and, (b) there exists a rank one $(\varphi,\nabla)$-submodule $M'$ of $M$. We will calculate the growth filtrations and Frobenius slope filtrations of $M^{\otimes 2}$ and $\mathrm{Sym}^2M$ by assuming several results including Theorem \ref{conj:CT}.

We start by calculating $\Sol_{\bullet}(M)$ and $S_{\bullet}(M)$, which can be seen as a generalization of Example 6.1.3. By Christol transfer theorem (\cite[Proposition 4.3]{CT}), $M_{\mathcal{E}}$ is not bounded. Hence $M$ is PBQ by Example 6.2.2. By Example 6.1.1, $M',M/M'$ are trivial $\nabla$-modules over $K[\![t]\!]_0$. Let $\{e_1,e_2\}$ be a basis of $M$ such that $e_1$ is a horizontal element of $M'$, and $e_2$ is a lift of a horizontal element of $M/M'$. Then $\Sol(M)$ admits a basis $\{f_1,f_2\}$ such that $f_1(e_1,e_2)=(0,1)$ and $f_2(e_1,e_2)=(1,x)$ for some $x\in K[\![t]\!]$. Let $\{0,\lambda\}$ be the slope multiset of $M_{\mathcal{E}}^{\bullet}$, or, equivalently, of $\Sol_{\bullet}(M_{\mathcal{E}})$. By the non-boundedness of $M_{\mathcal{E}}$, we have $\lambda>0$. We claim that the slope multiset of $\Sol_{\bullet}(M)$ is equal to $\{0,\lambda\}$. Since $f_1\in\Sol_0(M)$, the slope multiset of $\Sol_{\bullet}(M)$ is of the form $\{0,\lambda'\}$. By the semicontinuity theorem on the log-growth Newton polygons (Corollary \ref{cor:LGFDW}), we have $\lambda'=\lambda$. By the claim, together with Lemma \ref{lem:right continuous}, we have $\Sol_{\mu}(M)=0$ if $\mu\in (-\infty,0)$, $\Sol_{\mu}(M)=Kf_1$ if $\mu\in [0,\lambda)$, and $\Sol_{\mu}(M)=\Sol(M)$ otherwise. By Theorem \ref{conj:CT} (i), $f_2$ is exactly of log-growth $\lambda$, hence, $x$ is exactly of log-growth $\lambda$. Since $M$ is PBQ, we have $\Sol_{\bullet}(M)=S_{\bullet-\lambda_{\max}}(\Sol(M))$ by Theorem \ref{conj:CT} (ii), where $\lambda_{\max}$ denotes the maximum Frobenius slope of $M_{\mathcal{E}}$. Moreover,  $\varphi$ acts on $\{f_1,f_2\}$ as an upper triangular matrix $(\alpha_{ij})_{i\le j}$ with $|\alpha_{11}|=|q^{-\lambda_{\max}}|$ and $|\alpha_{22}|=|q^{\lambda-\lambda_{\max}}|$.

For a given $a,b,c,d\in K$, put $f=a(f_1\otimes f_1)+b(f_1\otimes f_2-f_2\otimes f_1)+c(f_1\otimes f_2)+d(f_2\otimes f_2)$. Then, we have
\[
f(e_1\otimes e_1,e_1\otimes e_2,e_2\otimes e_1,e_2\otimes e_2)=(d,-b+dx,b+c+dx,a+cx+dx^2).
\]
By Proposition \ref{prop:weak tensor}, $2\lambda$ is a slope of $\Sol_{\bullet}(M^{\otimes 2})$, which implies that $x^2$ is exactly of log-growth $2\lambda$. Hence we obtain
\[
\Sol_{\mu}(M^{\otimes 2})=
\begin{cases}
0&\text{if }\mu\in (-\infty,0),\\
K(f_1\otimes f_1)\oplus K(f_1\otimes f_2-f_2\otimes f_1)&\text{if }\mu\in [0,\lambda),\\
K(f_1\otimes f_1)\oplus K(f_1\otimes f_2-f_2\otimes f_1)\oplus K(f_1\otimes f_2)&\text{if }\mu\in [\lambda,2\lambda),\\
\Sol(M^{\otimes 2})&\text{if }\mu\in [2\lambda,+\infty).
\end{cases}
\]
In particular, $f_2\otimes f_2\notin\Sol_{\mu}(M^{\otimes 2})$ for any $\mu<2\lambda$. We also have $S_{\mu-2\lambda_{\max}}(\Sol(M^{\otimes 2}))=K(f_1\otimes f_1)\subsetneq\Sol_{\mu}(M^{\otimes 2})$ if $\mu\in [0,\lambda)$, and $S_{\mu-2\lambda_{\max}}(\Sol(M^{\otimes 2}))=\Sol_{\mu}(M^{\otimes 2})$ otherwise.

Since $f_1\otimes f_1,f_1\otimes f_2+f_2\otimes f_1$, and $f_2\otimes f_2\in\Sol(M^{\otimes 2})$ kill $e_1\otimes e_2-e_2\otimes e_1$, they can be regarded as a basis of $\Sol(\mathrm{Sym}^2M)$. The above calculation of $\Sol_{\bullet}(M^{\otimes 2})$ implies
\[
\Sol_{\mu}(\mathrm{Sym}^2M)=
\begin{cases}
0&\text{if }\mu\in (-\infty,0),\\
K(f_1\otimes f_1)&\text{if }\mu\in [0,\lambda),\\
K(f_1\otimes f_1)\oplus K(f_1\otimes f_2+f_2\otimes f_1)&\text{if }\mu\in [\lambda,2\lambda),\\
\Sol(\mathrm{Sym}^2M)&\text{if }\mu\in [2\lambda,+\infty).
\end{cases}
\]
We also have $S_{\mu-2\lambda_{\max}}(\Sol(\mathrm{Sym}^2(M)))=\Sol_{\mu}(\mathrm{Sym}^2(M))$ for an arbitrary $\mu$ by using the strictness of Frobenius slope filtration.

\end{ex}
\begin{ex}[Bessel overconvergent $F$-isocrystal]\label{ex:Bessel}
In this example, we will study Bessel overconvergent $F$-isocrystal $\mathcal{M}^{\mathrm{Bessel}}$ constructed by Dwork (\cite{DwB}). We adopt the following treatment in \cite[Example 6.2.6]{Tsu}. Assume for simplicity that $k$ is algebraically closed. Let $X$ be a connected smooth curve over $k$. Then one can define the (bounded, integral) Robba ring $\mathcal{R}^{(\bd,\Int)}_s$ at an arbitrary closed point $s\in X$, which is non-canonically isomorphic to $\mathcal{R}^{(\bd,\Int)}$ (\cite[6.1]{Tsu}). Let $U$ be a non-empty open subscheme of $X$, and $Z=X\setminus U$. Let $\mathcal{M}$ be an overconvergent $F$-isocrystal on $U/K$ around $Z$. Then one can define a corresponding $(\varphi,\nabla)$-module $\mathcal{M}_s$ over $\mathcal{R}^{\bd}_s$ for any closed point $s\in X$. In the case that $\mathcal{M}$ is given by the first relative rigid cohomology associated to Legendre family of elliptic curves, the log-growth filtration and the Frobenius slope filtration of $\mathcal{M}_s$ are determined by Dwork (see \cite[\S 7.4]{CT}). 

We briefly recall a few properties of $\mathcal{M}^{\mathrm{Bessel}}$. Let $p\neq 2$. Assume that $K$ contains Dwork's $\pi$, that is, $\pi^{p-1}=-p$, and $\varphi$ is a $p$-power Frobenius lift on $K$ such that $\varphi(\pi)=\pi$ (such a $\varphi$ exists if $K$ is obtained by the fraction field of the ring of Witt vectors over $k$ then adjoining $\pi$). Put $X=\mathbb{P}^1_k,Z=\{0,\infty\}$, and $U=X\setminus Z$. Then $\mathcal{M}^{\mathrm{Bessel}}$ is an overconvergent $F$-isocrystal of rank $2$ on $U/K$ around $Z$. If $s\neq 0,\infty$, then $\mathcal{M}^{\mathrm{Bessel}}_s\otimes_{\mathcal{R}^{\bd}_s}\mathcal{R}_s$ is unipotent since $\mathcal{M}^{\mathrm{Bessel}}_s$ descends to $K[\![t]\!]_0$. We will study $\mathcal{M}^{\mathrm{Bessel}}_s$ for $s=0,\infty$, where some particular phenomena are observed.

$\bullet$ At $s=0$

We identify $\mathcal{R}^{(\bd)}_0$ as $\mathcal{R}^{(\bd)}$, where $\varphi$ on $\mathcal{R}$ is an absolute $p$-power Frobenius lift. Then the $(\varphi,\nabla)$-module $\mathcal{M}^{\mathrm{Bessel}}_0$ over $\mathcal{R}^{\bd}$ descends to a log-$(\varphi,\nabla)$-module $M_0$ over $K[\![t]\!]_0$, whose matrix presentation $(A,G)$ satisfying: $A=(a_{ij})\in\mathrm{M}_2(\mathcal{O}_K[\![t]\!])$;
\begin{equation}\label{eq:B2}
A|_{t=0}=\begin{pmatrix}
1&a_{12}|_{t=0}\\
0&p
\end{pmatrix};
\end{equation}
\begin{equation}
A\equiv
\begin{pmatrix}\label{eq:B3}
1&0\\
0&0
\end{pmatrix}
\mod{\pi\mathcal{O}_K[\![t]\!]};
\end{equation}
\begin{equation}\label{eq:B6}
\det{A}=p;
\end{equation}
\begin{equation*}
G=
\begin{pmatrix}
0&-1\\
-\pi^2t&0
\end{pmatrix}.
\end{equation*}
The slope multiset of $S_{\bullet}(M_0/tM_0)$ is equal to $\{0,1\}$ by (\ref{eq:B2}). Let $\{s_1,s_2\}$ denote the slope multiset of $S_{\bullet}((M_0)_{\mathcal{E}})$. By (\ref{eq:B6}), $s_1+s_2=1$. By (\ref{eq:B3}), $s_1$ or $s_2$ is equal to $0$, which implies $\{s_1,s_2\}=\{0,1\}$. We will compute $\mathrm{Sol}(M_0)$, which is canonically isomorphic to $\mathrm{Sol}(\mathcal{M}^{\mathrm{Bessel}}_0)$ (Lemma \ref{lem:invariance log-growth}). Put
\[
b=\sum_{i=0}^{\infty}(\pi^2t)^i/i!^2,\ c=-2\sum_{i=0}^{\infty}(1+1/2+\dots+1/i)(\pi^2t)^i/i!^2.
\]
Since $|\pi^i|\le |i!|$ for $i\in\mathbb{N}$, we have $b\in K[\![t]\!]_0$, $c\in K[\![t]\!]_1$. Since we have, for $i=p^r$,
\[
|(1+1/2+\dots+1/i)\pi^{2i}|/|i!^2|=ip^{-2i/(p-1)}/p^{-2(i-1)/(p-1)}=ip^{-2/(p-1)},
\]
$c$ is exactly of log-growth $1$. By a direct calculation (see \cite[\S 5]{DwB}), $\mathrm{Sol}(M_0)$ admits the $K$-basis $\{f_1,f_2\}$ defined by
\[
f_1:(e_1,e_2)\mapsto (-tdb/dt,b),\ f_2:(e_1,e_2)\mapsto (-tdc/dt-b-tdb/dt\cdot\log{t},c+b\cdot\log{t}).
\]
Hence we have
\[
\mathrm{Sol}_{\lambda}(M_0)=
\begin{cases}
0&\text{if}\ \lambda\in (-\infty,0),\\
Kf_1&\text{if}\ \lambda\in [0,1),\\
\mathrm{Sol}(M_0)&\text{if}\ \lambda\in [1,+\infty).
\end{cases}
\]
We have $(b,tdb/dt)\cdot{}^tA=(b,tdb/dt)$ by \cite[(5.6)]{DwB} since ${}^tA$ is equal to $\mathfrak{A}(t)$ in \cite{DwB}. By the above equation and (\ref{eq:B6}), we have $\varphi(f_1)=p^{-1}f_1$. Since the slope multiset of $S_{\bullet}(\mathrm{Sol}(M_0))$ is equal to $\{-1,0\}$, we have
\[
S_{\bullet-1}(\mathrm{Sol}(M_0))=\mathrm{Sol}_{\bullet}(M_0).
\]

$\bullet$ At $s=\infty$

To avoid a complication, we will consider a quadratic extension $\mathcal{R}^{\bd}$ of $\mathcal{R}^{\bd}_{\infty}$ as the base ring (see \cite[Example 6.2.6]{Tsu}), whose quadruple structure satisfies:
\begin{enumerate}
\item[$\bullet$] $\varphi$\text{ is a }$p$\text{-power Frobenius lift such that }$\varphi(t)=2^{p-1}t^p$;
\item[$\bullet$] $\nabla:\mathcal{R}^{\bd}\to\Omega_{\mathcal{R}^{\bd}}=\mathcal{R}^{\bd}\cdot dt/t;f\mapsto tdf/dt\cdot dt/t$;
\item[$\bullet$] $\varphi(dt/t)=p(dt/t)$.
\end{enumerate}
Then the $(\varphi,\nabla)$-module $M_{\infty}=\mathcal{R}^{\bd}e_1\oplus\mathcal{R}^{\bd}e_2$ corresponding to $\mathcal{M}^{\mathrm{Bessel}}_{\infty}$ satisfies
\[
D_{\log}(e_1\ e_2)=(e_1\ e_2)
\begin{pmatrix}
0&2\\
\pi^2/2t^2&0
\end{pmatrix},
\]
where $D_{\log}$ denotes the differential on $M_{\infty}$ with respect to $dt/t$. The $(\varphi,\nabla)$-module $M_{\infty}\otimes_{\mathcal{R}^{\bd}}\mathcal{R}$ over $\mathcal{R}$ is quasi-unipotent in the sense of \cite[Definition 4.1.1 (2)]{Tsu}, but not unipotent. Actually, one can prove $\mathrm{Sol}(M_{\infty})=0$ by using the fact $\{f\in\mathcal{R};D_{\log}^2(f)=(\pi/t)^2f\}=0$. Hence $M_{\infty}$ is beyond the scope of this paper, however, we can treat $M_{\infty}$ in the following ad-hoc manner.

We recall some results in \cite[\S 2]{Doc}. Since $\mathcal{R}^{\Int}$ is a henselian discrete valuation ring, there exists an equivalence of Galois categories
\[
\{\text{finite \'etale extensions of }k((t))\}\to\{\text{finite \'etale extensions of }\mathcal{R}^{\Int}\};F\mapsto\mathcal{R}(F)^{\Int}.
\]
We put $\mathcal{R}(F)^{\bd}=\mathcal{R}(F)^{\Int}\otimes_{\mathcal{R}^{\Int}}\mathcal{R}^{\bd}$. Then, for any $f\in\mathcal{R}(F)^{\bd}$, we can define a norm $|f|_r$ for all sufficiently small $r>0$ and $r=0$ so that if $f\in\mathcal{R}^r$, then $|f|_r$ coincides with $|f|_r$ in Definition \ref{dfn:Robba ring} (i). Via a completion, we obtain the ring $\mathcal{R}(F)$, which is non-canonically isomorphic to the Robba ring with coefficients in $K$. We can endow $\mathcal{R}(F)^{(\bd)}$ with a quadruple structure extending that of $\mathcal{R}^{(\bd)}$, in particular, the log-differential $D_{\log}=td/dt$ on $\mathcal{R}^{(\bd)}$ uniquely extends to a differential $D_{\log}:\mathcal{R}(F)^{(\bd)}\to\mathcal{R}(F)^{(\bd)}$. Moreover, one can endow $\mathcal{R}(F)$ with the log-growth filtration $\mathrm{Fil}_{\bullet}\mathcal{R}(F)$ as in Definition \ref{dfn:log-growth extended}, which satisfies similar properties as in Lemma \ref{lem:property Robba}.

We will construct a finite \'etale extension of $\mathcal{R}^{\bd}$, which kills the ``ramification'' of $M_{\infty}$. Let $F_1,F_2$ be the finite Galois extensions of $k((t))$ defined by the equations $x^2-t=0,y^p-y+1/t=0$ respectively. We put $F=F_1F_2$. Then $F/k((t))$ is a cyclic extension of degree $2p$. We will describe $\mathcal{R}(F)^{\Int}$ as follows. Obviously, we may identify $\mathcal{R}(F_1)^{\Int}$ as $\mathcal{R}^{\Int}[X]/(X^2-t)\mathcal{R}^{\Int}[X]$. Note that $\exp{(p\pi/t)}$ converges on $p^{-1}<|t|$, and
\[
\exp{(p\pi/t)}=\sum_{i=0}^{\infty}(p^i\pi^i/i!)t^{-i}=1+p\pi/t+p^2\pi^2/t^2+\dots\in 1+p\pi\mathcal{R}^{\Int}
\]
We consider the polynomial of the variable $Y$
\[
((\pi Y+1)^p-\exp{(p\pi/t)})/\pi^p=Y^p-\sum_{i=1}^{p-1}\binom{p}{i}p^{-1}\pi^{i-1}Y^i+(\exp{(p\pi/t)}-1)/p\pi\in\mathcal{R}^{\Int}[Y],
\]
which is a lift of $y^p-y+1/t\in k((t))[y]$. Hence we may identify $\mathcal{R}(F_2)^{\Int}$ as $\mathcal{R}^{\Int}[Y]/(((\pi Y+1)^p-\exp{(p\pi/t)})/\pi^p)\mathcal{R}^{\Int}[Y]$. We denote the images of $X,1+\pi Y$ in $\mathcal{R}(F)^{\Int}$ by $t^{1/2},\exp{(\pi/t)}$ respectively, and denote $\exp{(-\pi/t)}$ by $(1+\pi Y)^{-1}$. Then $\mathcal{R}(F)^{\bd}/\mathcal{R}^{\bd}$ is a $2p$-Kummer extension such that
\begin{equation}\label{eq:B5}
\mathrm{Gal}(\mathcal{R}(F)^{\bd}/\mathcal{R}^{\bd})\cong \mu_2(K)\times\mu_p(K);\sigma\mapsto (\sigma(t^{1/2})/t^{1/2},\sigma(\exp{(\pi/t)})/\exp{(\pi/t)}),
\end{equation}
where $\mu_i(K)$ denotes the set of $i$-th roots of unity in $K$. By a formal calculation, we have
\[
D_{\log}(t^{1/2})=1/2,\ D_{\log}(\exp{(\pm\pi/t)})=\mp (\pi/t)\exp{(\pm\pi/t)}.
\]

Instead of $\mathrm{Sol}(M_{\infty})$, we consider the set of $\mathcal{R}(F)$-valued solutions of $M_{\infty}$ (cf. \cite[3.3.6.1]{Chr}), that is, the set of $\mathcal{R}(F)^{\bd}$-linear map $f:M_{\infty}\to \mathcal{R}(F)$ satisfying $f\circ D_{\log}=D_{\log}\circ f$. Then $\mathrm{Sol}(M_{\infty},\mathcal{R}(F))$ is a $K$-vector space of dimension $2$ with the basis $\{f_+,f_-\}$ characterized by
\[
f_{\pm}(e_2)=t^{1/2}\exp{(\pm\pi/t)}u_{\pm}\in \mathcal{R}(F)\text{ with }u_{\pm}=\sum_{i=0}^{\infty}(\pm 1)^i\{((2i-1)!!)^2/(8\pi)^ii!\}t^{i}\in \mathcal{R}
\]
where $(2i-1)!!=\prod_{j=1}^i(2j-1)$ for $i\ge 1$ and $(-1)!!=1$ (see \cite[Example 6.2.6]{Tsu}). T. Nakagawa (\cite[Example 3.11 (ii)]{Nak}) proves that $u_+$, and hence $u_-$, is exactly of log-growth $1/2$ by a Newton polygon argument. We give an alternative proof for the reader's convenience. For $x\in\mathbb{R}$, denote $\lfloor x\rfloor$ the maximum integer less than or equal to $x$. Then $\lfloor 2x\rfloor\ge 2\lfloor x\rfloor$ for $x\in\mathbb{R}$. Hence $v_p((2i)!)\ge 2v_p(i!)$, and, $2v_p((2i)!)-3v_p(i!)\ge v_p((2i)!)/2$, where $v_p(\cdot)=-\log_p{|\cdot|}$. Put $u_i=((2i-1)!!)^2/(8\pi)^ii!\in K$. Then we have
\[
v_p(u_i)=v_p(((2i)!)^2/(2^ii!)^2(8\pi)^ii!)=2v_p((2i)!)-3v_p(i!)-i/(p-1)\ge (v_p((2i)!)-2i/(p-1))/2.
\]
If $2i\in [p^{r-1},p^r)$, then
\[
v_p((2i)!)-2i/(p-1)=\sum_{j=1}^{r-1}{(\lfloor 2i/p^j\rfloor-(2i/p^j))}-\sum_{j=r}^{\infty}2i/p^j\ge\sum_{j=1}^{r-1}(-1)-2i/p^{r-1}(p-1)\ge -r.
\]
Hence $v_p(u_i)\ge-(\lfloor\log_p{2i}\rfloor+1)$ for an arbitrary $i\in\mathbb{N}_{\ge 1}$, with equality when $2i=p^r-1$ by a direct calculation. Thus $u_+$ is exactly of log-growth $1/2$. One can endow $\mathrm{Sol}(M_{\infty},\mathcal{R}(F))$ with a natural structure of a $\varphi$-module over $K$. Thus we obtain the Frobenius slope filtration $S_{\bullet}(\mathrm{Sol}(M_{\infty},\mathcal{R}(F)))$. We also define the growth filtration
\[
\mathrm{Sol}_{\lambda}(M_{\infty},\mathcal{R}(F))=\{f\in \mathrm{Sol}(M_{\infty},\mathcal{R}(F));f(M_{\infty})\subset\mathrm{Fil}_{\lambda}\mathcal{R}(F)\},\ \lambda\in\mathbb{R}.
\]

We will prove
\[
\mathrm{Sol}_{\lambda}(M_{\infty},\mathcal{R}(F))=S_{\lambda-1}(\mathrm{Sol}(M_{\infty},\mathcal{R}(F)))=
\begin{cases}
0&\text{if }\lambda\in (-\infty,1/2),\\
\mathrm{Sol}(M_{\infty},\mathcal{R}(F))&\text{if }\lambda\in [1/2,+\infty).
\end{cases}
\]
Since the slope multiset of the slope filtration $S_{\bullet}(M_{\infty}\otimes_{\mathcal{R}^{\Int}}\mathcal{R})$ in the sense of \cite[Definition 5.1.1]{Tsu} is equal to $\{1/2,1/2\}$ due to Tsuzuki, that of $\mathrm{Sol}(M_{\infty},\mathcal{R}(F))$ is equal to $\{-1/2,-1/2\}$, which implies the assertion for the Frobenius slope filtration. By Nakagawa's result, $\mathrm{Sol}_{1/2}(M_{\infty},\mathcal{R}(F))=\mathrm{Sol}(M_{\infty},\mathcal{R}(F))$. It suffices to prove that $\mathrm{Sol}_{\lambda}(M_{\infty},\mathcal{R}(F))=0$ for any $\lambda\in (-\infty,1/2)$. Suppose the contrary, that is, there exists a non-zero $f\in \mathrm{Sol}_{\lambda}(M_{\infty},\mathcal{R}(F))$ for some $\lambda\in (-\infty,1/2)$. Write $f=c_+f_++c_-f_-$ with $c_{\pm}\in K$. We choose $\zeta\in\mu_p(K),\zeta\neq 1$, and let $\sigma\in\mathrm{Gal}(\mathcal{R}(F)^{\bd}/\mathcal{R}^{\bd})$ be the element corresponding to $(1,\zeta)$ under (\ref{eq:B5}). The Galois group $\mathrm{Gal}(\mathcal{R}(F)^{\bd}/\mathcal{R}^{\bd})$ acts on $\Sol_{\lambda}(M_{\infty},\mathcal{R}(F))$ via the conjugation, and we have
\[
(f^{\sigma}-\zeta^{\mp}f)(e_2)=\pm (\zeta-\zeta^{-1})c_{\pm}t^{1/2}\exp{(\pm\pi/t)}u_{\pm}\in \mathrm{Fil}_{\lambda}\mathcal{R}(F).
\]
Hence either $t^{1/2}\exp{(\pi/t)}u_+$ or $t^{1/2}\exp{(-\pi/t)}u_-$ belongs to $\mathrm{Fil}_{\lambda}\mathcal{R}(F)$. Since $t^{1/2},\exp{(\pm\pi/t)}\in\Fil_0\mathcal{R}(F)$ by $t,\exp{(\pm p\pi/t)}\in\mathcal{R}^{\bd}=\Fil_0\mathcal{R}$, either $u_+$ or $u_-$ belongs to $\mathrm{Fil}_{\lambda}\mathcal{R}$, which contradicts to $u_{\pm}\notin \mathrm{Fil}_{\lambda}\mathcal{R}(F)$.
\end{ex}


\subsection{The bounded quotient}\label{subsec:ex2}

\begin{ex}[the rank one case]
An arbitrary rank one $(\varphi,\nabla)$-module over $K[\![t]\!]_0,\mathcal{R}^{\bd}$, or $\mathcal{E}$ is PBQ by definition. In particular, $M_{\mathcal{E}}^0=0$.

\end{ex}
\begin{ex}[the rank two case]
Let $M$ be a $(\varphi,\nabla)$-module over $\mathcal{E}$ of rank two. Then the following are equivalent:
\begin{enumerate}
\item[(a)] $M$ is not PBQ;
\item[(b)] $M$ is bounded and not pure as a $\varphi$-module over $\mathcal{E}$.
\end{enumerate}
In fact, since any rank one $\varphi$-module over $\mathcal{E}$ is pure, (a) is equivalent to say that $M/M^0$ is of rank two, i.e., $M^0=0$, and is not pure, which implies (b). Conversely, if (b) holds, then $M^0=0$, hence, (a) holds. Note that (a) is equivalent to

\ (a)' $M\spcheck$ is not PBQ

by the dual-invariance of the condition (b).

Thus the following are equivalent:
\begin{enumerate}
\item[(i)] $M$ is PBQ;
\item[(ii)] $M\spcheck$ is PBQ;
\item[(iii)] $M$ is either non-bounded or pure as a $\varphi$-module over $\mathcal{E}$.
\end{enumerate}

\end{ex}
\begin{ex}
With notation and assumption as in Example 6.1.3, put $M=M_{\mu}$. Since $M_{\mathcal{E}}$ is not pure as a $\varphi$-module, $M$ and $M\spcheck$ are PBQ by Example 6.2.2. More precisely, we have $M_{\mathcal{E}}^0=\mathcal{E}e_1$ and $(M\spcheck)_{\mathcal{E}}^0=\mathcal{E}e_2\spcheck$. We will verify the first equality. Since $M_{\mathcal{E}}/\mathcal{E}e_1\cong\mathcal{E}(q^{\mu})$ implies $M_{\mathcal{E}}^0\subset\mathcal{E}e_1$ by Theorem \ref{thm:Rob}. Suppose $M_{\mathcal{E}}^0=0$. Then $M$ is trivial as a $\nabla$-module over $K[\![t]\!]_0$ by Christol's transfer theorem (\cite[Proposition 4.3]{CT}), which contradicts to $\dim_K \Sol_0(M)=1<2$. By a similar argument, we obtain the second equality.
\end{ex}
\begin{ex}
With notation and assumption as in Example 6.1.4, we have $(M_{\mu,\delta})_{\mathcal{E}}^0=\mathcal{E}e_1$. Moreover, $M_{\mu,\delta}$ is not PBQ. Put $M=M_{\mu,\delta}$ for simplicity. Recall that there exists an exact sequence of $(\varphi,\nabla)$-modules over $K[\![t]\!]_0$
\begin{equation}\label{ex:eq1}
0\to K[\![t]\!]_0\to M_{\mu}\oplus M_{\delta}\to M\to 0.
\end{equation}
By applying Lemma \ref{lem:exact} to (\ref{ex:eq1}) with $\lambda=0$, we obtain an exact sequence of $(\varphi,\nabla)$-modules over $\mathcal{E}$
\[
\mathcal{E}\to (M_{\mu})_{\mathcal{E}}/(M_{\mu})_{\mathcal{E}}^0\oplus (M_{\delta})_{\mathcal{E}}/(M_{\delta})_{\mathcal{E}}^0\to M_{\mathcal{E}}/M_{\mathcal{E}}^0\to 0,
\]
where the first arrow is the zero map by Example 6.2.3. Hence $\dim_{\mathcal{E}}M_{\mathcal{E}}^0=1$. Since $M_{\mathcal{E}}/\mathcal{E}e_1\cong\mathcal{E}(q^{\mu})\oplus\mathcal{E}(q^{\delta})$ is trivial, in particular, bounded, we have $M_{\mathcal{E}}^0\subset\mathcal{E}e_1$. Thus $M_{\mathcal{E}}^0=\mathcal{E}e_1$. By the above isomorphism and the assumption $\mu<\delta$, $M$ is not PBQ.

We also prove $(M\spcheck)_{\mathcal{E}}^0=\mathcal{E}e_2\spcheck\oplus\mathcal{E}e_3\spcheck$, in particular, $M\spcheck$ is PBQ. Since $(M\spcheck)_{\mathcal{E}}^0/(\mathcal{E}e_2\spcheck\oplus\mathcal{E}e_3\spcheck)\cong\mathcal{E}$ is trivial, and hence, bounded, we have $(M\spcheck)_{\mathcal{E}}^0\subset\mathcal{E}e_2\spcheck\oplus\mathcal{E}e_3\spcheck$. By applying Lemma \ref{lem:exact} to the dual of (\ref{ex:eq1}), there exists an exact sequence of $(\varphi,\nabla)$-modules over $\mathcal{E}$
\[
0\to (M\spcheck)_{\mathcal{E}}/(M\spcheck)_{\mathcal{E}}^0\to (M_{\mu}\spcheck)_{\mathcal{E}}/(M_{\mu}\spcheck)_{\mathcal{E}}^0\oplus (M_{\delta}\spcheck)_{\mathcal{E}}/(M_{\delta}\spcheck)_{\mathcal{E}}^0\to\mathcal{E}\to 0.
\]
Hence $\dim_{\mathcal{E}}(M\spcheck)_{\mathcal{E}}^0=2$, which implies the assertion.

Finally, note that $M_{\mathcal{E}}$ is indecomposable in the category of $(\varphi,\nabla)$-modules over $\mathcal{E}$; if not, there exists a non-trivial decomposition $M_{\mathcal{E}}\spcheck=N_1\oplus N_2$. Hence $M_{\mathcal{E}}\spcheck/(M_{\mathcal{E}}\spcheck)^0\cong N_{1}/N_{1}^0\oplus N_{2}/N_{2}^0$ (Lemma \ref{lem:lgE3}) is of dimension greater than or equal to $2$ by Lemma \ref{lem:lgE1} (ii), which is a contradiction.

\end{ex}
\begin{ex}
With notation and assumption as in Example 6.1.5, put $N=K[\![t]\!]_0\mathbf{e}_1\oplus K[\![t]\!]_0\mathbf{e}_2$. Then $N$ is a $(\varphi,\nabla)$-submodule of $M$, and the quotient $Q=M/N\cong K[\![t]\!]_0(q^{\mu})\oplus K[\![t]\!]_0$ is trivial as a $\nabla$-module. We will prove $M_{\mathcal{E}}^0=N_{\mathcal{E}}$; as a corollary, $M$ is not PBQ. Since $Q_{\mathcal{E}}$ is bounded (even trivial), $M_{\mathcal{E}}^0\subset N_{\mathcal{E}}$. We have $N_{\mathcal{E}}^0=\mathcal{E}\mathbf{e}_1$ by a similar argument as in Example 6.1.3. Hence $\mathbf{e}_1\in N_{\mathcal{E}}^0\subset M_{\mathcal{E}}^0$ by Lemma \ref{lem:lgE2} (i). Suppose $M_{\mathcal{E}}^0\neq N_{\mathcal{E}}$. Then $M_{\mathcal{E}}^0=\mathcal{E}\mathbf{e}_1$, hence, the bounded quotient $P:=M_{\mathcal{E}}/M_{\mathcal{E}}^0$ contains a copy of $(M_{\mu})_{\mathcal{E}}$ (generated by the images of $\mathbf{e}_2,\mathbf{e}_3$). Since $(M_{\mu})_{\mathcal{E}}^0\neq 0$ by Example 6.2.3, we have $P^0\neq 0$ by Lemma \ref{lem:lgE2} (i), which contradicts to the boundedness of $P$.
\end{ex}
\begin{ex}[direct sum]
Put $M=K[\![t]\!]_0\oplus K[\![t]\!]_0(q)$ as in Example 6.1.2. Then $M_{\mathcal{E}}$ is bounded and not PBQ. More generally, we have the following criterion:

\begin{lem}
Let $R$ be either $K[\![t]\!]_0,\mathcal{R}^{\bd}$, or $\mathcal{E}$. Let $M_1,\dots,M_n$ be non-zero $(\varphi,\nabla)$-modules over $R$. The following are equivalent.
\begin{enumerate}
\item[(i)] The direct sum $\oplus_{i=1}^nM_i$ is PBQ.
\item[(ii)] $M_1,\dots,M_n$ are PBQ, and $\lambda_{\max}(M_1\otimes_R\mathcal{E})=\dots=\lambda_{\max}(M_n\otimes_R\mathcal{E})$.
\end{enumerate}
\end{lem}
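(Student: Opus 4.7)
The plan is to reduce at once to $R=\mathcal{E}$, combine the additivity of the log-growth filtration under direct sums (Lemma~\ref{lem:lgE3}) with the structure of pure $\varphi$-modules, and identify the pure slope of a PBQ module's bounded quotient with its maximum Frobenius slope.

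First, since $(\oplus_{i=1}^n M_i)\otimes_R\mathcal{E}\cong\oplus_{i=1}^n (M_i\otimes_R\mathcal{E})$ and PBQ is defined on generic fibers, I may assume $R=\mathcal{E}$, writing $M_i$ for $(M_i)_{\mathcal{E}}$ in what follows. Lemma~\ref{lem:lgE3} yields the canonical identification
\[
\bigl(\oplus_{i=1}^n M_i\bigr)\bigm/\bigl(\oplus_{i=1}^n M_i\bigr)^0\;\cong\;\oplus_{i=1}^n M_i/M_i^0,
\]
and each summand $M_i/M_i^0$ is non-zero by Lemma~\ref{lem:lgE1}~(ii). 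Since the slope multiset of a direct sum of $\varphi$-modules is the disjoint union of those of the summands, Lemma~\ref{lem:slope3} implies that a direct sum of non-zero $\varphi$-modules over $\mathcal{E}$ is pure if and only if each summand is pure of the same slope. Hence condition~(i) is equivalent to: each $M_i/M_i^0$ is pure (i.e.\ each $M_i$ is PBQ) and all these pure slopes coincide.

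The remaining input is that for any non-zero PBQ module $N$ over $\mathcal{E}$ the pure slope of $N/N^0$ equals $\lambda_{\max}(N):=\lambda_{\max}(N_{\mathcal{E}})$. Indeed, the top graded piece
\[
T\;:=\;N\bigm/\bigl(\cup_{\mu<\lambda_{\max}(N)}S_{\mu}(N)\bigr)
\]
of the Frobenius slope filtration is pure of slope $\lambda_{\max}(N)$, hence bounded by Proposition~\ref{prop:lgE5}; the minimality characterization of $N^0$ in Theorem~\ref{thm:Rob} then forces $N^0\subset\cup_{\mu<\lambda_{\max}(N)}S_{\mu}(N)$, so that $T$ is a non-zero quotient of $N/N^0$. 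Because a non-zero quotient of a pure $\varphi$-module is pure of the same slope, the pure slope of $N/N^0$ agrees with that of $T$, namely $\lambda_{\max}(N)$. Plugging this back into the previous paragraph yields the equivalence of (i) and (ii). The main (mild) obstacle is precisely this slope-identification step; everything else is a formal consequence of additivity of $(-)^0$ under direct sums and the structure of pure $\varphi$-modules.
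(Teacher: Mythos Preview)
Your proof is correct and proceeds along a somewhat different path than the paper's. The paper argues the two implications separately: for (i)$\Rightarrow$(ii) it observes that each $M_i$ is a quotient of the direct sum (hence PBQ by Corollary~\ref{cor:qt inv of PBQ}) and then invokes Slope criterion (Proposition~\ref{prop:key}) to force $\lambda_{\max}(M_i)=\lambda_{\max}(\oplus_j M_j)$; for (ii)$\Rightarrow$(i) it uses Corollary~\ref{cor:new2} (itself a consequence of Slope criterion) to identify the pure slope of each $M_i/M_i^0$ with $\lambda_{\max}(M_i)$, then assembles via Lemma~\ref{lem:lgE3}. You instead work symmetrically through the identification $(\oplus_i M_i)/(\oplus_i M_i)^0\cong\oplus_i M_i/M_i^0$ and reduce both directions at once to the single question of whether the slope of $N/N^0$ equals $\lambda_{\max}(N)$ for a non-zero PBQ module $N$; your argument for this (pure $\Rightarrow$ bounded via Proposition~\ref{prop:lgE5}, then minimality of $N^0$ from Theorem~\ref{thm:Rob}) bypasses Slope criterion entirely and is more elementary. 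The paper's proof is shorter once Slope criterion is available; yours is more self-contained.
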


\begin{proof}
By definition, we may assume $R=\mathcal{E}$.

\noindent (i)$\Rightarrow$(ii) By regarding $M_i$ as a quotient of $\oplus_{i=1}^nM_i$, $M_i$ is PBQ by Corollary \ref{cor:qt inv of PBQ}, and $\lambda_{\max}(M_i)=\lambda_{\max}(\oplus_{i=1}^nM_i)=\max_{i=1,\dots,n}\lambda_{\max}(M_i)$ by Slope criterion (Proposition \ref{prop:key}).

\noindent (ii)$\Rightarrow$(i) Since $M_i/M_i^0$ is pure of slope $\lambda_{\max}(M_i)$ by Corollary \ref{cor:new2}, $(\oplus_{i=1}^nM_i)/(\oplus_{i=1}^nM_i)^0\cong\oplus_{i=1}^n(M_i/M_i^0)$ (Lemma \ref{lem:lgE3} (i)) is also pure by assumption.
\end{proof}

\end{ex}
\begin{ex}[a non-PBQ submodule of a PBQ module]
Let $M$ be a PBQ $(\varphi,\nabla)$-module over $K[\![t]\!]_0$, and $N$ a $(\varphi,\nabla)$-submodule of $M$. If $M$ is of rank one or two, then $N$ is always PBQ (obvious). This does not hold for $M$ of a higher rank in general: for example, let $M=M_{\mu,\delta}\spcheck$ with notation as in Example 6.1.4. Then $N=K[\![t]\!]_0e_2\spcheck\oplus K[\![t]\!]_0e_3\spcheck\cong K[\![t]\!]_0(q^{-\lambda})\oplus K[\![t]\!]_0(q^{-\mu})$ is not PBQ.

\end{ex}
\begin{ex}[symmetric square]
Let $M$ be a $(\varphi,\nabla)$-module over $\mathcal{E}$ of dimension two, which is not bounded. We will prove that $\mathrm{Sym}^2M$ is PBQ, and $M^{\otimes 2}$ is not PBQ. By assumption and Lemma \ref{lem:lgE1} (ii), we have $\dim_{\mathcal{E}}M^0=1$. Hence we may apply Example 6.1.6 to $\tau^*M$. Thus $\dim_{\mathcal{E}}\Sol_0(\mathrm{Sym}^2M)=1$, which implies the first assertion. Similarly, we have $\Sol_0(M^{\otimes 2})\supsetneq S_{-2\lambda_{\max}}(\Sol(M^{\otimes 2}))\neq 0$, where $\lambda_{\max}$ denote the maximum Frobenius slope of $M_{\mathcal{E}}$. Hence $\Sol_0(M^{\otimes 2})$ is not pure as a $\varphi$-module over $\mathcal{E}$, which implies the second assertion.
\end{ex}

\section{Chiarellotto-Tsuzuki conjecture and Main Theorem}\label{sec:statement}

In this section, we recall the statement of Chiarellotto-Tsuzuki conjecture over $K[\![t]\!]_0$ (Conjecture \ref{conj:CT}), and an analogous statement over $\mathcal{E}$ (Theorem \ref{conj:CTgen}). We also state the main theorem in this paper (Theorem \ref{conj:bd}), which is an analogue of Chiarellotto-Tsuzuki conjecture over $\mathcal{R}^{\bd}$. In \S \ref{subsec:bc}, we also prove that the log-growth filtration is compatible with the base change of the coefficient $K$.

\subsection{Chiarellotto-Tsuzuki conjecture over $K[\![t]\!]_0$}\label{subsec:statement1}

Without any assumption on $M$, we have the following relation between the log-growth filtration and Frobenius slope filtration.

\begin{prop}[{\cite[Theorem 2.3 (2)]{CT2}}]\label{prop:CT1}
Let $M$ be a $(\varphi,\nabla)$-module over $K[\![t]\!]_0$. Let $\lambda_{\max}$ be the maximum Frobenius slope of $M_{\mathcal{E}}$. Then
\[
V(M)^{\lambda}\subset (S_{\lambda-\lambda_{\max}}(V(M\spcheck)))^{\perp},
\]
or, equivalently,
\[
\Sol_{\lambda}(M)\supset S_{\lambda-\lambda_{\max}}(\Sol(M))
\]
for an arbitrary real number $\lambda$.
\end{prop}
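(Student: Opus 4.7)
The plan is to prove the second inclusion $\Sol_\lambda(M) \supset S_{\lambda-\lambda_{\max}}(\Sol(M))$; the first will then follow by duality via the perfect pairing $V(M) \otimes_K \Sol(M) \to K$ and Lemma \ref{lem:slope5}~(II)-(iv). First I would enlarge the coefficient field $K$ so that its residue field becomes strongly difference-closed, which preserves both filtrations by a base-change compatibility for the coefficient field. Then Dieudonn\'e-Manin (Lemma \ref{lem:DM}) yields a $K$-basis of $\Sol(M)$ consisting of Frobenius $d$-eigenvectors. Since the log-growth filtration is $K$-linear and the Frobenius slope filtration is tracked by individual eigenvector slopes (Theorem \ref{thm:slope1}), it will then suffice to show: any $d$-eigenvector $f \in \Sol(M)$ of Frobenius slope $\mu$ satisfies $f \in \Sol_{\mu + \lambda_{\max}}(M)$, that is, has log-growth at most $\mu + \lambda_{\max}$.

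For this, I would fix a basis $\{e_1, \ldots, e_n\}$ of $M$ and let $A \in \GL_n(K[\![t]\!]_0)$ be the matrix of $\varphi_M^d$. Setting $F = (f(e_1), \ldots, f(e_n)) \in K\{t\}^n$, the eigenvector condition $\varphi^d(f) = cf$ with $|c| = |q^d|^{\mu}$ translates into $\varphi^d(F) = c\,FA$, which iterates to $F = C_n^{-1}\, \varphi^{dn}(F) \cdot (A^{(n)})^{-1}$, where $|C_n| = |c|^n = |q^d|^{n\mu}$ and $A^{(n)} = A \varphi^d(A) \cdots \varphi^{d(n-1)}(A)$. Exploiting $|\varphi^d(g)|_r \approx |g|_{q^d r}$ (with a controlled perturbation from $\varphi^d(t) - t^{q^d}$) together with $(A^{(n)})^{-1} \in \M_n(K[\![t]\!]_0)$, the plan is to deduce $|F|_r \le |q^d|^{-n\mu} \cdot |F|_{q^{dn} r} \cdot |(A^{(n)})^{-1}|_0$. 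For small $r$, choosing $n$ so that $q^{dn} r$ lies in a fixed compact subinterval of $(0, r_0]$ (with $r_0$ chosen so that $F$ lies in $(\mathcal{R}^{r_0})^n$) will keep $|F|_{q^{dn} r}$ uniformly bounded, and will convert $|q^d|^{-n\mu}$ into a factor of order $r^{-\mu}$.

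The main obstacle will be establishing the uniform polynomial bound $|(A^{(n)})^{-1}|_0 \le C \cdot |q^d|^{-n \lambda_{\max}}$. The spectral radius of $(\varphi_M^d)^{-1}$ on $M_\mathcal{E}$ equals $|q^d|^{-\lambda_{\max}}$, since its slopes are the negatives of those of $\varphi_M^d$, whence $|(A^{(n)})^{-1}|_0^{1/n} \to |q^d|^{-\lambda_{\max}}$; this only gives a $(\lambda_{\max}+\varepsilon)$-bound for each $\varepsilon > 0$, which is too weak for Lemma \ref{lem:Taylor} to conclude log-growth exactly $\lambda_{\max}$. To close this gap I would invoke the Frobenius slope filtration $S_\bullet(M_\mathcal{E})$ (Theorem \ref{thm:slope1}): after passing to an inversive extension and choosing a basis of $M_\mathcal{E}$ in block upper-triangular form adapted to the slope decomposition, the matrix $(A^{(n)})^{-1}$ can be estimated block-by-block, with each pure slope $\lambda$ diagonal block scaling exactly by $|q^d|^{-n\lambda}$ up to units and off-diagonal entries contributing at most polynomial-in-$n$ factors. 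Substituting $q^{dn} r \sim r_0$ will then yield $|F|_r = O(r^{-(\mu + \lambda_{\max})})$, which by Lemma \ref{lem:Taylor} gives the desired log-growth bound $\mu + \lambda_{\max}$ and completes the proof.
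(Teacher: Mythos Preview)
The paper does not reprove the inclusion: it cites [CT2, Theorem 2.3 (2)] for the first relation and only verifies the equivalence of the two formulations via the canonical isomorphism $V(M\spcheck)\cong\Sol(M)$ induced by the two perfect pairings. Your proposal instead supplies a self-contained argument for the inclusion $S_{\lambda-\lambda_{\max}}(\Sol(M))\subset\Sol_\lambda(M)$, so the approaches differ in scope rather than in direction.

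Your strategy---Dieudonn\'e--Manin on $\Sol(M)$, Frobenius iteration on a matrix presentation of $M$, and Gauss-norm estimates---is correct and is essentially how the cited result is established. You are right that the spectral-radius bound alone is insufficient (since $\cap_{\varepsilon>0}K[\![t]\!]_{\lambda+\varepsilon}\supsetneq K[\![t]\!]_\lambda$) and that the sharp estimate $|(A^{(n)})^{-1}|_0\le C\,|q^d|^{-n\lambda_{\max}}$ is the crux. Two clarifications on your last paragraph: over an inversive extension of $\mathcal{E}$ the slope filtration already \emph{splits} as a direct sum (Theorem~\ref{thm:slope1}~(i)), so one may take the matrix block \emph{diagonal} and there are no off-diagonal terms to track; and the exact scaling of each pure diagonal block follows most cleanly from the lattice characterization of purity---a module pure of slope $a/d$ becomes \'etale upon twisting $[d]_*$ by $q^{-a}$, hence in a lattice basis the twisted block and its inverse both have Gauss norm $1$, giving $|(A_l^{(n)})^{-1}|_0=|q^d|^{-n\lambda_l}$ on the nose. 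Combining over $l$ and undoing the change of basis yields the desired $|(A^{(n)})^{-1}|_0\le C\,|q^d|^{-n\lambda_{\max}}$. Your mention of polynomial-in-$n$ off-diagonal factors is thus unnecessary in the ultrametric setting.
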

\begin{proof}
Since the first relation is verified in the reference, it suffices to prove the equivalence between the two relations. The two canonical perfect pairings
\[
V(M)\otimes_K\Sol(M)\to K,\ V(M)\otimes_KV(M\spcheck)\to K
\]
induce a canonical isomorphism $V(M\spcheck)\cong \Sol(M)$. Hence the two relations are equivalent by taking the orthogonal parts.
\end{proof}

The following conjecture due to Chiarellotto and Tsuzuki, first stated in \cite[Conjecture 6.9]{CT} as a slightly different (though equivalent) form, is denoted by $\mathbf{LGF}_{K[\![t]\!]_0}$ as in \cite{CT2}.

\begin{CT}[{\cite[Conjecture 2.5]{CT2}}]\label{conj:CT}
Let $M$ be a $(\varphi,\nabla)$-module over $K[\![t]\!]_0$.
\begin{enumerate}
\item (\cite[Theorem 3.7 (i)]{Ohk}) All slopes of the log-growth filtration $V(M)^{\bullet}$ are rational, and $V(M)^{\lambda}=\cup_{\mu>\lambda}V(M)^{\mu}$ for any $\lambda$.
\item Let $\lambda_{\max}$ be the maximum Frobenius slope of $M_{\mathcal{E}}$. If $M$ is PBQ, then
\[
V(M)^{\lambda}=(S_{\lambda-\lambda_{\max}}(V(M)\spcheck))^{\perp},
\]
or, equivalently,
\[
\Sol_{\lambda}(M)=S_{\lambda-\lambda_{\max}}(\Sol(M)).
\]
\end{enumerate}
\end{CT}

One may formulate an analogue of Chiarellotto-Tsuzuki conjecture \ref{conj:CT} (even Proposition \ref{prop:CT1}) for a log-$(\varphi,\nabla)$-module over $K[\![t]\!]_0$.

We recall some known results on Chiarellotto-Tsuzuki conjecture \ref{conj:CT} in order of time; in the rank one case, the conjecture is trivial since $M$ is trivial as $\nabla$-module over $K[\![t]\!]_0$ (a very special case of \cite[Theorem 1]{Dw}, or, Example 6.1.1). In \cite{book}, the case of Picard-Fuchs module associated to Legendre family of elliptic curves is discussed (see \cite[\S 7.4]{CT} for details). Both parts (i) and (ii) are verified when $M$ is of rank two (\cite[Theorem 7.1 (2)]{CT}), or, when the bounded quotient of $M_{\mathcal{E}}$ is trivial as a $\nabla$-module over $\mathcal{E}$ (\cite[Theorem 6.5]{CT2}). As a consequence of the second case, Theorem \ref{conj:CTgen} below is proved. It is also proved that part (ii) implies part (i) (\cite[Proposition 7.3]{CT2}, or, Proposition \ref{prop:(ii) implies (i)} below). Recently, part (i) is proved unconditionally by the author in \cite{Ohk}. Part (ii) is also proved when the number of Frobenius slopes of $M_{\mathcal{E}}$ is equal to $2$ (\cite[Theorem 3.7 (ii)]{Ohk}).
\subsection{Chiarellotto-Tsuzuki conjecture over $\mathcal{E}$}\label{subsec:statement2}

Chiarellotto and Tsuzuki establish an analogous theory over $\mathcal{E}$, which is technically easier thanks to Theorem \ref{thm:Rob}.

\begin{prop}[{\cite[Theorem 2.3 (1)]{CT2}}]\label{prop:CT2}
Let $M$ be a $(\varphi,\nabla)$-module over $\mathcal{E}$. Let $\lambda_{\max}$ be the maximum Frobenius slope of $M$. Then
\[
M^{\lambda}\subset (S_{\lambda-\lambda_{\max}}(M\spcheck))^{\perp},
\]
or, equivalently,
\[
\Sol_{\lambda}(M)\supset S_{\lambda-\lambda_{\max}}(\Sol(M))
\]
for an arbitrary real number $\lambda$.
\end{prop}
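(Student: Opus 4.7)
Equivalence of the two formulations. Evaluating horizontal sections of $\tau^*N$ at Dwork's generic point $X = t$ produces a canonical $\varphi$-module isomorphism $V(\tau^*N) \cong N$ over $\mathcal{E}$ for any $(\varphi,\nabla)$-module $N$; applied to $N = M\spcheck$ this yields $\Sol(M) \cong M\spcheck$ as $\varphi$-modules over $\mathcal{E}$. Combined with Theorem~\ref{thm:Rob}, this identifies $\Sol_\lambda(M) = \Sol(M/M^\lambda) \cong (M/M^\lambda)\spcheck = (M^\lambda)^\perp$ as a subspace of $\Sol(M)$. Taking orthogonal complements with respect to the perfect pairing $M \otimes_{\mathcal{E}} M\spcheck \to \mathcal{E}$ then shows that the two inclusions in the proposition are equivalent.

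For the main inclusion I would argue by induction on the length of the Frobenius slope filtration $S_\bullet(M)$. In the base case $M$ is pure as a $\varphi$-module of slope $\lambda_{\max}$; by Proposition~\ref{prop:lgE5}, $M$ is bounded, so $\Sol_\lambda(M) = \Sol(M)$ for $\lambda \ge 0$ and $0$ for $\lambda < 0$. Simultaneously $\Sol(M) \cong M\spcheck$ is pure of slope $-\lambda_{\max}$, so $S_{\lambda-\lambda_{\max}}(\Sol(M))$ exhibits the same behavior, yielding equality. For the inductive step, set $M' = \bigcup_{\mu < \lambda_{\max}} S_\mu(M)$ and $M'' = M/M'$, so that $M''$ is pure of slope $\lambda_{\max}$ and $\lambda_{\max}(M') < \lambda_{\max}$. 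Applying the exact contravariant functor $\Sol$ yields $0 \to \Sol(M'') \to \Sol(M) \to \Sol(M') \to 0$, strict for Frobenius slope filtrations by Lemma~\ref{lem:slope5}~(II)-(ii). Given $f \in S_{\lambda-\lambda_{\max}}(\Sol(M))$, its image $f' \in S_{\lambda-\lambda_{\max}}(\Sol(M'))$ lies in $S_{\lambda-\lambda_{\max}(M')}(\Sol(M'))$ and hence in $\Sol_\lambda(M')$ by the inductive hypothesis on $M'$, controlling $f$ on $\tau^*M'$.

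Controlling $f$ on the rest of $\tau^*M$ will be the main obstacle. After extending $K$ so its residue field is strongly difference-closed and invoking base-change compatibility of both filtrations, Lemma~\ref{lem:DM} reduces to the case where $f$ is a Frobenius $d$-eigenvector: $\varphi^d f = cf$ with $|c|^{1/d} = |q|^{\lambda-\lambda_{\max}}$. Writing $f(m \otimes 1) = \sum_{n \ge 0} \bar f(D^n m)/n! \cdot (X-t)^n \in \mathcal{E}\{X-t\}$, where $\bar f \in M\spcheck$ corresponds to $f$ via $\Sol(M) \cong M\spcheck$ and $D$ is the derivation on $M$, verifying $f \in \Sol_\lambda(M)$ reduces to the asymptotic estimate $|\bar f(D^n m)/n!|_{\mathcal{E}} = O(n^\lambda)$. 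This estimate, exploiting the Frobenius eigenequation for $\bar f$ together with the $\nabla$-stability of the Frobenius slope filtration on $M$ (cf.\ \cite[Proposition~6.2]{CT}), forms the technical heart of the argument.
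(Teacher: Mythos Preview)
The paper does not give its own proof of this proposition; it simply cites \cite[Theorem~2.3~(1)]{CT2}. So there is nothing to compare against directly, and the question becomes whether your sketch stands on its own.

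Your treatment of the equivalence of the two formulations is correct and is essentially how the paper handles the analogous equivalence in Proposition~\ref{prop:CT1} (using Lemma~\ref{lem:evaluation} and Lemma~\ref{lem:CTgeneric2}).

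The inductive framework you set up, however, does not close. Knowing that $f|_{\tau^*M'}\in\Sol_\lambda(M')$ tells you nothing directly about $f$ on lifts $e_{r+1},\dots,e_{r+s}$ of a horizontal basis of $\tau^*M''$: since $D(e_{r+i})\in\tau^*M'$, the naive integration argument (as in the proof of Lemma~\ref{lem:Dw}) only yields $f(e_{r+i})\in\mathcal{E}[\![X-t]\!]_{\lambda+1}$, which is off by one. The obstruction is that solvability in $\mathcal{E}[\![X-t]\!]_\lambda$ is stable under subquotients but not under extensions, so d\'evissage along $S_\bullet(M)$ cannot by itself produce the sharp bound. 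In other words, the inductive hypothesis on $M'$ is never actually used to gain anything once you pass to the direct Taylor estimate in your final paragraph.

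That final paragraph does point at the correct mechanism---reducing by Dieudonn\'e--Manin to an eigenvector $\bar f\in M\spcheck$ and then bounding $|\bar f(D^n m)/n!|$ via Frobenius iteration---which is indeed what \cite{CT2} does. But you stop at naming the estimate rather than proving it, and the estimate is the entire content of the result: one must exploit the relation $\varphi^d\bar f=c\bar f$ together with the compatibility $D\circ\varphi_M=\varphi(t)'\cdot\varphi_M\circ D$ to compare Taylor coefficients across scales $n\sim q^{dj}$ and extract the exponent $\lambda_{\max}+\mu$. As written, your proposal is a correct outline with the actual proof still missing.
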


The following theorem is previously referred to as the conjecture $\mathbf{LGF}_{\mathcal{E}}$ in \cite{CT2}.
\begin{thm}[{\cite[Theorem 7.1]{CT2}}]\label{conj:CTgen}
Let $M$ be a $(\varphi,\nabla)$-module over $\mathcal{E}$.
\begin{enumerate}
\item All slopes of the log-growth filtration $M^{\bullet}$ are rational, and $M^{\lambda}=\cup_{\mu>\lambda}M^{\mu}$ for any $\lambda$.
\item Let $\lambda_{\max}$ be the maximum Frobenius slope of $M$. If $M$ is PBQ, then
\[
M^{\lambda}=(S_{\lambda-\lambda_{\max}}(M\spcheck))^{\perp},
\]
or, equivalently,
\[
\Sol_{\lambda}(M)= S_{\lambda-\lambda_{\max}}(\Sol(M))
\]
for an arbitrary real number $\lambda$.
\end{enumerate}
\end{thm}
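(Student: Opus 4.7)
\emph{Strategy.} The plan is to deduce Theorem \ref{conj:CTgen} from Theorem \ref{thm:intro1} (equivalently, Theorem \ref{conj:bd} (ii)) by applying the latter to $\tau^*M$ with the roles of $(K, t)$ played by $(\mathcal{E}, X-t)$. This reparametrization is legitimate because $\mathcal{E}$ is itself a complete discrete valuation field of mixed characteristic $(0,p)$, $\varphi$ acts on $\mathcal{E}$ as a $q$-power Frobenius lift, and the quadruple $(\mathcal{E}[\![X-t]\!]_0, \varphi, \nabla, d\varphi)$ defined in \S\ref{subsec:phinabla3} is formally identical to the quadruple on $K[\![t]\!]_0$.

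\emph{Part (ii).} Assume $M$ is PBQ with maximum Frobenius slope $\lambda_{\max}$. By the definitions in \S\ref{subsec:logfil3}, one has $\Sol(M) = \Sol(\tau^*M)$ and $\Sol_\bullet(M) = \Sol_\bullet(\tau^*M)$ as filtered $\varphi$-modules over $K$. It therefore suffices to apply Theorem \ref{thm:intro1} to $\tau^*M$, provided that (a) $\tau^*M$ is PBQ as a $(\varphi,\nabla)$-module over $\mathcal{E}[\![X-t]\!]_0$, and (b) the maximum Frobenius slope of the generic fiber $(\tau^*M)_{\mathcal{E}_\mathcal{E}}$ equals $\lambda_{\max}$. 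For (b), the composition $\mathcal{E} \xrightarrow{\tau} \mathcal{E}[\![X-t]\!]_0 \hookrightarrow \mathcal{E}_\mathcal{E}$ is a morphism of difference rings, so $(\tau^*M)_{\mathcal{E}_\mathcal{E}}$ is the base change of the $\varphi$-module $M$ to $\mathcal{E}_\mathcal{E}$, and the Frobenius slope multiset is invariant under such base change. For (a), one needs the base-change compatibility $(\tau^*M)^0_{\mathcal{E}_\mathcal{E}} = M^0 \otimes_\mathcal{E} \mathcal{E}_\mathcal{E}$ inside $M \otimes_\mathcal{E} \mathcal{E}_\mathcal{E} = (\tau^*M)_{\mathcal{E}_\mathcal{E}}$. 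Granting this, purity of $\varphi$-modules is preserved and reflected under the faithfully flat base change $\mathcal{E} \hookrightarrow \mathcal{E}_\mathcal{E}$, so PBQ of $M$ transfers to PBQ of $\tau^*M$.

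\emph{Part (i).} To derive rationality and right-continuity of $M^\bullet$, I would use the PBQ filtration of $M$ constructed in \cite{CT2}: a filtration $0 = P_0 \subsetneq P_1 \subsetneq \cdots \subsetneq P_r = M$ by $(\varphi,\nabla)$-submodules whose graded pieces $P_i/P_{i-1}$ are PBQ. Applying part (ii) to each graded piece shows that $\Sol_\bullet(P_i/P_{i-1})$ agrees up to shift with a Frobenius slope filtration, hence has rational slopes and is right-continuous. The strictness of Frobenius slope filtrations under exact sequences (Lemma \ref{lem:slope5}) together with the short exact sequences coming from the PBQ filtration then propagates these properties to $\Sol_\bullet(M)$, and by duality to $M^\bullet$.

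\emph{Main obstacle.} The main technical point is the base-change compatibility $(\tau^*M)^0_{\mathcal{E}_\mathcal{E}} = M^0 \otimes_\mathcal{E} \mathcal{E}_\mathcal{E}$. By the characterization in Theorem \ref{thm:Rob}, this reduces to showing that a $(\varphi,\nabla)$-module $N$ over $\mathcal{E}$ is bounded---i.e., $\tau^*N$ is solvable in $\mathcal{E}\{X-t\}$---if and only if $N \otimes_\mathcal{E} \mathcal{E}_\mathcal{E}$ is bounded with respect to the second Taylor expansion at the Dwork generic point of $\mathcal{E}[\![X-t]\!]_0$. Formally matching the two iterated Taylor expansions should yield this equivalence, but the bookkeeping requires care.
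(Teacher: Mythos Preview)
Your proposal is correct and follows essentially the same route as the paper's proof in \S\ref{sec:CTgen} (Proposition~\ref{prop:CTgeneric}): apply the $K[\![t]\!]_0$-version to $\tau^*M$ with $\mathcal{E}$ in the role of $K$, transfer the PBQ hypothesis and the maximum Frobenius slope across $\tau$, and read off both parts. The ``main obstacle'' you isolate is exactly what the paper handles in Lemma~\ref{lem:tau inv of PBQ}, by introducing the completion $\mathfrak{E}$ of the fraction field of $\mathcal{E}[\![X-t]\!]_0$ and a commutative square matching the two iterated Taylor expansions, so that $\Sol_0(\iota^*M)\cong\Sol_0(\tau^*M)\otimes_{\mathcal{E},\iota}\mathfrak{E}$; your part~(i) via the PBQ filtration is the content of Proposition~\ref{prop:(ii) implies (i)}, though the paper also gives a shorter argument that transfers right-continuity and rationality directly via the dimension identity $\dim_{\mathcal{E}}M^{\lambda}=\dim_{\mathcal{E}}V(\tau^*M)^{\lambda}$ (Lemmas~\ref{lem:evaluation} and~\ref{lem:CTgeneric2}).
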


\subsection{Main Theorem --- a generalization of Chiarellotto-Tsuzuki conjecture over $\mathcal{R}^{\bd}$}

In \cite{Ohk}, the author gives an attempt to generalize Chiarellotto-Tsuzuki theory to $\mathcal{R}^{\bd}$. One of the advantage to work with $\mathcal{R}^{\bd}$ is that one can find a nice cyclic vector (``generic cyclic vector'') for a Frobenius structure possibly after a finite \'etale extension (\cite[Theorem 5.2]{Ohk}).

\begin{prop}[{\cite[Prospoition 4.18 (i)]{Ohk}}]\label{prop:main}
Let $M$ be a $(\varphi,\nabla)$-module over $\mathcal{R}^{\bd}$ solvable in $\mathcal{R}_{\log}$. Let $\lambda_{\max}$ be the maximum Frobenius slope of $M_{\mathcal{E}}$. Then
\[
V(M)^{\lambda}\subset (S_{\lambda-\lambda_{\max}}(V(M\spcheck)))^{\perp},
\]
or, equivalently,
\[
\Sol_{\lambda}(M)\supset S_{\lambda-\lambda_{\max}}(\Sol(M))
\]
for an arbitrary real number $\lambda$.
\end{prop}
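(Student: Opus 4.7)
The plan is to mimic the proof of Proposition \ref{prop:CT1} over $K[\![t]\!]_0$, replacing the generic base change $\mathcal{E}$ by $\tilde{\mathcal{R}}^{\bd}$ and exploiting Lemma \ref{lem:calc log-growth}. First I would reduce to a statement about Frobenius eigenvectors of $\Sol(M)$. Via the duality pairing, the two inclusions in the statement are equivalent; we work with $\Sol_{\lambda}(M) \supset S_{\lambda-\lambda_{\max}}(\Sol(M))$. Enlarging $K$ to its $\varphi_K$-completion $L$ (and using that log-growth filtrations and Frobenius slope filtrations are compatible with such a base change) we may assume the residue field of $K$ is strongly difference-closed. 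By Dieudonné–Manin (Lemma \ref{lem:DM}) applied to the $\varphi$-module $\Sol(M)$ over $K$, there exists a basis of Frobenius $d$-eigenvectors, so it suffices to show: \emph{if $f\in \Sol(M)$ is a $d$-eigenvector of Frobenius slope $\mu$, then $f\in \Sol_{\mu+\lambda_{\max}}(M)$.}

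Next I would base-change $M$ along the $\varphi$-equivariant embedding $\psi:\mathcal{R}^{\bd}\hookrightarrow\tilde{\mathcal{R}}^{\bd}$ of Proposition \ref{prop:embedding}, and extend $\psi$ to $\mathcal{R}_{\log}\hookrightarrow\tilde{\mathcal{R}}_{\log}$ (Definition \ref{dfn:log embedding}). Put $\tilde{M}=M\otimes_{\mathcal{R}^{\bd}}\tilde{\mathcal{R}}^{\bd}$, and let $\tilde{f}:\tilde{M}\to\tilde{\mathcal{R}}_{\log}$ denote the $\tilde{\mathcal{R}}^{\bd}$-linear extension of $\psi\circ f$. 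Applying Dieudonné–Manin to the $\varphi$-module $\tilde{M}_{\tilde{\mathcal{E}}}=M_{\mathcal{E}}\otimes_{\mathcal{E}}\tilde{\mathcal{E}}$ (whose residue field $k((u^{\mathbb{Q}}))$ is strongly difference-closed by Definition \ref{dfn:Hahn}), and then descending the $d$-eigenvectors from $\tilde{M}_{\tilde{\mathcal{E}}}$ to $\tilde{M}$ by Proposition \ref{prop:matrix}, one obtains an $\tilde{\mathcal{R}}^{\bd}$-basis $v_1,\dots,v_n$ of $\tilde{M}$ consisting of Frobenius $d$-eigenvectors whose slopes $\nu_i$ all satisfy $\nu_i\le\lambda_{\max}$.

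Now for each $i$, $\tilde{f}(v_i)\in\tilde{\mathcal{R}}_{\log}$ satisfies $\varphi^d(\tilde{f}(v_i))=c\cdot\tilde{f}(v_i)$ for the same scalar $c$ governing $v_i$ and the $d$-eigenvalue of $f$; hence if $\tilde{f}(v_i)\neq 0$ it is a Frobenius $d$-eigenvector of $\tilde{\mathcal{R}}_{\log}$ in the sense of Definition \ref{dfn:eigen log extended}, of slope $\mu+\nu_i\le\mu+\lambda_{\max}$. By Lemma \ref{lem:calc log-growth}, $\tilde{f}(v_i)$ is exactly of log-growth $\mu+\nu_i\le\mu+\lambda_{\max}$, so $\tilde{f}(v_i)\in\Fil_{\mu+\lambda_{\max}}\tilde{\mathcal{R}}_{\log}$. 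For an arbitrary $m\in M\subset\tilde{M}$, write $m=\sum_i a_i v_i$ with $a_i\in\tilde{\mathcal{R}}^{\bd}=\Fil_0\tilde{\mathcal{R}}$ (Lemma \ref{lem:property extended} (iii)); then by the multiplicativity of $\Fil_{\bullet}$ (Lemma \ref{lem:property log} (i)),
\[
\tilde{f}(m)=\sum_i a_i\tilde{f}(v_i)\in\Fil_{\mu+\lambda_{\max}}\tilde{\mathcal{R}}_{\log}.
\]
Since $\tilde{f}(m)=\psi(f(m))$ and $f(m)\in\mathcal{R}_{\log}$, Lemma \ref{lem:log-growth psi log} yields $f(m)\in\Fil_{\mu+\lambda_{\max}}\mathcal{R}_{\log}$, as required.

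The main obstacle I anticipate is the bookkeeping around the base change $K\leadsto L$: one must verify that both the Frobenius slope filtration on $\Sol(M)$ and the log-growth filtration $\Sol_{\bullet}(M)$ behave compatibly under such an enlargement (the former is handled by Lemma \ref{lem:slope5} (II)-(i); the latter needs a dedicated argument, presumably the subject of \S \ref{subsec:bc}). The only other subtlety is ensuring that the descent of eigenvectors from $\tilde{\mathcal{E}}$ to $\tilde{\mathcal{R}}^{\bd}$ via Proposition \ref{prop:matrix} applies to an entire basis rather than a single vector, which follows by applying the proposition coordinate-wise after fixing a matrix presentation of the Frobenius on $\tilde{M}$ over $\tilde{\mathcal{R}}^{\bd}$.
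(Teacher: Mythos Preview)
The paper does not give its own proof of Proposition \ref{prop:main}; it is quoted from \cite[Proposition 4.18 (i)]{Ohk}, so there is nothing in the present paper to compare against directly. Evaluating your argument on its own merits, the overall strategy is natural but there is a genuine gap in the descent step.

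You assert that Proposition \ref{prop:matrix} lets you descend \emph{all} Dieudonn\'e--Manin eigenvectors from $\tilde M_{\tilde{\mathcal{E}}}$ to $\tilde M$, producing a full $\tilde{\mathcal{R}}^{\bd}$-basis of Frobenius eigenvectors. This is false in general. In coordinates, an eigenvector equation reads $\varphi^d(\mathbf v)=(cA_d^{-1})\mathbf v$, and Proposition \ref{prop:matrix} requires the matrix $cA_d^{-1}$ to be integral; for eigenvectors of small slope (large $|c|$) this need not hold. Concretely, take $M=\mathcal{R}^{\bd}e_1\oplus\mathcal{R}^{\bd}e_2$ with $\varphi(e_1)=qe_1$, $\varphi(e_2)=e_2+t^{-1}e_1$ and $\varphi(t)=t^q$ (so $\psi(t)=u$). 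Over $\tilde{\mathcal{E}}$ the slope-$0$ eigenvector is, up to scalar, $e_2+y\,e_1$ with $y=\sum_{n\ge 0}q^n u^{-q^n}$; since $|q|^n e^{rq^n}\to\infty$ for every $r>0$, condition (c) of Definition \ref{dfn:extended Robba} fails and $y\notin\tilde{\mathcal{R}}$. Hence $\tilde M$ admits no slope-$0$ eigenvector over $\tilde{\mathcal{R}}^{\bd}$, and your ``coordinate-wise'' remark does not rescue the claim.

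What \emph{does} descend is the reverse filtration (Proposition-Definition \ref{propdfn:descent}); within it, only eigenvectors of the \emph{maximal} slope are guaranteed to lie in $\tilde M$ (this is precisely Proposition \ref{prop:maximum eigen}). To salvage your approach one would have to induct along the reverse filtration, which means controlling the log-growth of solutions to inhomogeneous equations $(\varphi^d-c)(w)=z$ with $z\in\Fil_\alpha\tilde{\mathcal{R}}_{\log}$ --- a nontrivial extra ingredient not supplied here. The proof in \cite{Ohk} avoids this by a different route, using a generic cyclic vector for the Frobenius structure obtained after a finite \'etale extension of $\mathcal{R}^{\bd}$ (cf.\ the remark preceding Proposition \ref{prop:main}). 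A minor additional point: passing to the $\varphi_K$-completion only makes the residue field inversive, not strongly difference-closed; you need to enlarge $K$ further (e.g.\ so that $k$ becomes algebraically closed) before invoking Lemma \ref{lem:DM}.
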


The following theorem is the main result in this paper; we state in a form compatible with Chiarellotto-Tsuzuki conjecture \ref{conj:CT} while part (i) is already proved in \cite{Ohk}.

\begin{thm}\label{conj:bd}
Let $M$ be a $(\varphi,\nabla)$-module over $\mathcal{R}^{\bd}$ solvable in $\mathcal{R}_{\log}$.
\begin{enumerate}
\item (\cite[Theorem 4.19 (i)]{Ohk}) All slopes of the log-growth filtration $V(M)^{\bullet}$ are rational, and $V(M)^{\lambda}=\cup_{\mu>\lambda} V(M)^{\mu}$ for any $\lambda$.
\item (Main Theorem) Let $\lambda_{\max}$ be the maximum Frobenius slope of $M_{\mathcal{E}}$. If $M$ is PBQ, then
\[
V(M)^{\lambda}=(S_{\lambda-\lambda_{\max}}(V(M\spcheck)))^{\perp},
\]
or, equivalently,
\[
\Sol_{\lambda}(M)=S_{\lambda-\lambda_{\max}}(\Sol(M)).
\]
\end{enumerate}
\end{thm}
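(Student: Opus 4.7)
The plan is to follow the strategy sketched in the introduction essentially verbatim, reducing the theorem to the existence of an eigenvector of maximal Frobenius slope over $\tilde{\mathcal{R}}^{\bd}$ and transporting it across an injective solution map. First, the inclusion $\Sol_{\lambda}(M)\supset S_{\lambda-\lambda_{\max}}(\Sol(M))$ is already Proposition \ref{prop:main}, so everything reduces to showing $\Sol_{\lambda}(M)\subset S_{\lambda-\lambda_{\max}}(\Sol(M))$. After enlarging $K$ (the log-growth filtration is insensitive to such base change, cf.\ \S \ref{subsec:bc}), we may assume that the residue field $k$ is strongly difference-closed, so Dieudonn\'e-Manin (Lemma \ref{lem:DM}) gives a basis of $\Sol(M)$ consisting of Frobenius $d$-eigenvectors for a common $d$. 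It therefore suffices to show that if $f\in\Sol_{\lambda}(M)$ is a non-zero Frobenius $d$-eigenvector of slope $\mu$, then $\mu\le\lambda-\lambda_{\max}$, i.e.\ $\lambda_{\max}+\mu\le\lambda$.

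Second, I will remove the parameter $\mu$ by the familiar pushforward/twist trick: replacing $(\varphi,q)$ by $(\varphi^d,q^d)$ we may assume $d=1$, and after twisting $M$ by an appropriate scalar $c\in K^{\times}$ (which changes neither $\nabla$ nor log-growth, and shifts both $\lambda_{\max}$ and $\mu$ by the same amount) we may assume $\varphi(f)=f$, so the goal becomes $\lambda_{\max}\le\lambda$. The kernel $N=\ker(f)$ is a $(\varphi,\nabla)$-submodule of $M$ (since $f$ is $\varphi,\nabla$-equivariant), and the induced solution $\bar f\colon M/N\to\mathcal{R}_{\log}$ lies in $\Sol_{\lambda}(M/N)$ with $\varphi(\bar f)=\bar f$. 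Here Slope criterion (Proposition \ref{prop:key}), which uses the PBQ hypothesis in an essential way through Corollary \ref{cor:qt inv of PBQ}, tells us that $\lambda_{\max}((M/N)_{\mathcal{E}})=\lambda_{\max}(M_{\mathcal{E}})=\lambda_{\max}$. Replacing $M$ by $M/N$, we may thus assume $f\colon M\to\mathcal{R}_{\log}$ is injective.

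Third, I will base change along the $\varphi$-equivariant embedding $\psi\colon\mathcal{R}^{\bd}\hookrightarrow\tilde{\mathcal{R}}^{\bd}$ of Proposition \ref{prop:embedding}, together with its log-extension (Lemma \ref{lem:log inj}). Tensoring $f$ yields a $\varphi$-equivariant $\tilde{\mathcal{R}}^{\bd}$-linear map
\[
\tilde f\colon M\otimes_{\mathcal{R}^{\bd}}\tilde{\mathcal{R}}^{\bd}\longrightarrow\tilde{\mathcal{R}}_{\log},
\]
which is injective by Lemmas \ref{lem:inj} and \ref{lem:log inj}, and whose image lies in $\Fil_{\lambda}\tilde{\mathcal{R}}_{\log}$ by the norm-compatibility of $\psi$ (Lemmas \ref{lem:log-growth psi} and \ref{lem:log-growth psi log}). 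On the source, the reverse filtration machinery of Liu recalled in \S \ref{sec:rev} (Proposition \ref{prop:maximum eigen}) furnishes a Frobenius $d'$-eigenvector $\tilde v\in M\otimes_{\mathcal{R}^{\bd}}\tilde{\mathcal{R}}^{\bd}$ of slope exactly $\lambda_{\max}$. Its image $\tilde f(\tilde v)\in\tilde{\mathcal{R}}_{\log}$ is non-zero (by injectivity), is still a Frobenius $d'$-eigenvector of slope $\lambda_{\max}$ (by $\varphi$-equivariance), and belongs to $\Fil_{\lambda}\tilde{\mathcal{R}}_{\log}$. Lemma \ref{lem:calc log-growth} then says $\tilde f(\tilde v)$ is exactly of log-growth $\lambda_{\max}$, forcing $\lambda_{\max}\le\lambda$, as required.

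The genuinely hard input is Slope criterion (Proposition \ref{prop:key}) in Step 2: without the PBQ hypothesis the maximum slope can drop on a quotient, and the whole reduction collapses. The other delicate ingredient is the existence of a Frobenius eigenvector realising $\lambda_{\max}$ over $\tilde{\mathcal{R}}^{\bd}$, i.e.\ Proposition \ref{prop:maximum eigen}, which is why the base change from $\mathcal{R}^{\bd}$ to the extended bounded Robba ring is unavoidable. Everything else is bookkeeping: verifying that pushforward and twist interact correctly with $\Sol_{\bullet}$ and Frobenius slopes (Lemma \ref{lem:slope5}), that the constructions of $\psi$ and its log-extension preserve the norms $|\cdot|_r$ (hence the log-growth filtration, Lemma \ref{lem:log-growth psi log}), and that $\tilde f$ really takes values in $\Fil_{\lambda}\tilde{\mathcal{R}}_{\log}$.
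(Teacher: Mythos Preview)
Your proposal is correct and follows the paper's proof essentially verbatim: the same reduction via Proposition~\ref{prop:main} and Dieudonn\'e--Manin to a single eigenvector, the same pushforward/twist to reach $\varphi(f)=f$, the same use of Slope criterion to pass to the injective quotient, and the same endgame with $\tilde v$ over $\tilde{\mathcal{R}}^{\bd}$ and Lemma~\ref{lem:calc log-growth}. One cosmetic point: apply Dieudonn\'e--Manin to the $\varphi$-submodule $\Sol_{\lambda}(M)$ itself (as the paper does) rather than to $\Sol(M)$, so that the eigenvectors you test actually lie in $\Sol_{\lambda}(M)$.
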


\begin{prop}
If Theorem \ref{conj:bd} (ii) holds, then Chiarellotto-Tsuzuki conjecture \ref{conj:CT} (ii) holds. Moreover, a log-analogue of Chiarellotto-Tsuzuki conjecture \ref{conj:CT} (even Proposition \ref{prop:CT1}) also holds.
\end{prop}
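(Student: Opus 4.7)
The plan is to deduce Chiarellotto--Tsuzuki conjecture \ref{conj:CT} (ii), and its log-analogue, from Theorem \ref{conj:bd} (ii) by base change along the morphisms of quadruples displayed in Example 3.3.4, namely $(K[\![t]\!]_0,\varphi,\nabla,d\varphi)\to(\mathcal{R}^{\bd},\varphi,\nabla,d\varphi)$ and its log variant $(K[\![t]\!]_0,\varphi,\nabla_{\log},d\varphi)\to(\mathcal{R}^{\bd},\varphi,\nabla,d\varphi)$. The content of the reduction has essentially been pre-packaged into Lemma \ref{lem:invariance log-growth}; the proposition amounts to an assembly of the pieces already in hand.

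First I would fix a $(\varphi,\nabla)$-module $M$ over $K[\![t]\!]_0$ and form its pull-back $M_{\mathcal{R}^{\bd}}:=M\otimes_{K[\![t]\!]_0}\mathcal{R}^{\bd}$. Dwork's trick (\cite[Corollary 17.2.2]{pde}) ensures that $M$ is solvable in $K\{t\}$, so $M_{\mathcal{R}^{\bd}}$ is a $(\varphi,\nabla)$-module over $\mathcal{R}^{\bd}$ solvable in $\mathcal{R}_{\log}$. Moreover, $(M_{\mathcal{R}^{\bd}})_{\mathcal{E}}=M_{\mathcal{E}}$, so the PBQ hypothesis and the maximum Frobenius slope $\lambda_{\max}$ of the generic fiber are shared by $M$ and $M_{\mathcal{R}^{\bd}}$. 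Next, Lemma \ref{lem:invariance log-growth} supplies a natural isomorphism of $\varphi$-modules $\Sol(M)\xrightarrow{\sim}\Sol(M_{\mathcal{R}^{\bd}})$ that identifies the log-growth filtrations $\Sol_{\bullet}(M)$ and $\Sol_{\bullet}(M_{\mathcal{R}^{\bd}})$ (and similarly for $V(\cdot)^{\bullet}$ by duality). Assuming $M$ is PBQ, Theorem \ref{conj:bd} (ii) applied to $M_{\mathcal{R}^{\bd}}$ yields $\Sol_{\lambda}(M_{\mathcal{R}^{\bd}})=S_{\lambda-\lambda_{\max}}(\Sol(M_{\mathcal{R}^{\bd}}))$; transporting along this isomorphism gives the desired equality $\Sol_{\lambda}(M)=S_{\lambda-\lambda_{\max}}(\Sol(M))$, and hence the equivalent dual form $V(M)^{\lambda}=(S_{\lambda-\lambda_{\max}}(V(M\spcheck)))^{\perp}$. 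This is Chiarellotto--Tsuzuki conjecture \ref{conj:CT} (ii).

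For the log-analogue, as well as the log-analogue of Proposition \ref{prop:CT1}, the same argument applies verbatim to a log-$(\varphi,\nabla)$-module over $K[\![t]\!]_0$: Dwork's trick is replaced by its nilpotent version (\cite[Corollary 17.2.4]{pde}) to ensure solvability in $K\{t\}_{\log}$, the identification of generic fibers still gives the same $\lambda_{\max}$ and PBQ condition, and Lemma \ref{lem:invariance log-growth} is now invoked for the second morphism of quadruples. The analogue of Proposition \ref{prop:CT1} itself is obtained by the same base change combined with Proposition \ref{prop:main}, without any PBQ assumption. Since every step is formal once the invariance lemma is available, I do not anticipate any genuine obstacle; the only care required is to keep straight which of the three morphisms of quadruples is being used at each stage.
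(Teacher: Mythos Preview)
Your proposal is correct and follows exactly the approach of the paper, which simply writes ``It follows from Lemma \ref{lem:invariance log-growth}.'' You have unpacked this one-line proof in full detail, correctly noting that the generic fiber, PBQ hypothesis, and $\lambda_{\max}$ are preserved under base change to $\mathcal{R}^{\bd}$, and that Lemma \ref{lem:invariance log-growth} transports both the solution space and its log-growth filtration so that Theorem \ref{conj:bd} (ii) (respectively Proposition \ref{prop:main} for the inclusion) applies directly.
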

\begin{proof}
It follows from Lemma \ref{lem:invariance log-growth}.
\end{proof}

In the rest of part I, we will prove Theorem \ref{conj:bd} (ii): in the proof, we do not use Theorem \ref{conj:bd} (i) though we use Proposition \ref{prop:main}. We will deduce part (i) from part (ii) in Theorem \ref{conj:bd} (Proposition \ref{prop:(ii) implies (i)}).

\begin{rem}\label{rem:quasi-unipotent}
As Example \ref{ex:Bessel} suggests, it is natural to generalize Theorem \ref{conj:bd} to an arbitrary $(\varphi,\nabla)$-module $M$ over $\mathcal{R}^{\bd}$. By the $p$-adic local monodromy theorem (\cite[Theorem 20.1.4]{pde}), $M$ is quasi-unipotent, i.e., unipotent after tensoring a suitable extension $\mathcal{R}'$ over $\mathcal{R}$. To trivialize the differential structure, we need a suitable log version of $\mathcal{R}'$. Then, to apply our method of the proof of Theorem \ref{conj:bd} (ii), we also need analogues of the results in \S\S \ref{sec:Robba}, \ref{sec:lg}, for example, Lemma \ref{lem:inj}. Since the author does not have concise answers to these problems, we work under the solvability assumption on $M$ in this paper.
\end{rem}

\subsection{Base change of coefficient}\label{subsec:bc}

The log-growth filtration is compatible with a na\"ive base change of the coefficient field $K$ as follows. This allows us to assume that $k$ is algebraically closed (by replacing $K$ by the completion of the maximal unramified extension), hence, we can apply Dieudonn\'e-Manin theorem to $\Sol_{\bullet}(M)$. The reader may skip here by tacitly assuming that $k$ is algebraically closed in the proof of Theorem \ref{conj:bd} (ii).

\begin{lem}[{base change of coefficient for $K[\![t]\!]_0$, \cite[Proposition 1.10]{CT}}]\label{lem:coefficient power}
Let $L/K$ be an extension of complete discrete valuation fields, on which $\varphi_K$ extends isometrically. Let $M$ be a $(\varphi,\nabla)$-module over $K[\![t]\!]_0$.
\begin{enumerate}
\item There exist canonical isomorphisms of $\varphi$-modules over $L$
\[
L\otimes_K V(M)\to V(M\otimes_{K[\![t]\!]_0}L[\![t]\!]_0),
\]
\[
L\otimes_K\Sol(M)\to \Sol(M\otimes_{K[\![t]\!]_0}L[\![t]\!]_0).
\]
Moreover, the isomorphisms are compatible with the canonical pairings.
\item The above isomorphisms respect the growth filtrations, i.e.,
\[
L\otimes_K V(M)^{\bullet}\cong V(M\otimes_{K[\![t]\!]_0}L[\![t]\!]_0)^{\bullet},
\]
\[
L\otimes_K \Sol_{\bullet}(M)\cong \Sol_{\bullet}(M\otimes_{K[\![t]\!]_0}L[\![t]\!]_0).
\]
\end{enumerate}
\end{lem}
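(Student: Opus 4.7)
My plan is to prove part (i) via Dwork's trick, and part (ii) via a non-archimedean orthogonalization of the $L$-coefficients appearing in a decomposition relative to a chosen $K$-basis of $\Sol(M)$.

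For part (i), I would fix a $K[\![t]\!]_0$-basis of $M$ and note that the connection matrix $G$ is unaffected by extension of scalars $K[\![t]\!]_0 \to L[\![t]\!]_0$. By Dwork's trick (as used to construct $V(M)$), there exists a fundamental matrix $F \in \GL_n(K\{t\})$ of horizontal sections, and the same $F$ serves as a fundamental matrix over $L\{t\}$; hence evaluation at $t=0$ gives compatible isomorphisms $V(M) \cong M/tM$ and $V(M_L) \cong M_L/tM_L = (M/tM)\otimes_K L$, which combine to the canonical isomorphism $L\otimes_K V(M) \cong V(M_L)$. The $\Sol$ statement then follows from the canonical identification $\Sol(M) \cong V(M\spcheck)$ and the fact that forming duals commutes with base change. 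The $\varphi$-equivariance and compatibility with the canonical pairings are direct from the constructions, since evaluation at $t=0$ and the duality pairing are $\varphi$-equivariant and defined purely in terms of the matrix presentation.

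For part (ii), the inclusion $L \otimes_K \Sol_\lambda(M) \hookrightarrow \Sol_\lambda(M_L)$ is clear, since $|\cdot|_L$ restricts to $|\cdot|_K$ on $K$ so that the condition $f(M) \subset K[\![t]\!]_\lambda$ is preserved under scalar extension. For surjectivity, take $g \in \Sol_\lambda(M_L)$, fix a $K$-basis $\{f_k\}$ of $\Sol(M)$, and write $g = \sum_k \ell_k f_k^L$ with $\ell_k \in L$ (unique by part (i)). The finite-dimensional $K$-subspace $V = \sum_k K\ell_k \subset L$ is a non-archimedean Banach space over the complete discretely valued field $K$ and hence admits a $K$-basis $\{\ell^{(\alpha)}\}$ orthogonal in the sense that
\[
\bigl|\textstyle\sum_\alpha \ell^{(\alpha)} x_\alpha\bigr|_L = \max_\alpha |\ell^{(\alpha)}|_L \cdot |x_\alpha|_K \quad (x_\alpha \in K).
\]
Rewriting $g = \sum_\alpha \ell^{(\alpha)} g^{(\alpha),L}$ with $g^{(\alpha)} = \sum_k c_{k,\alpha} f_k \in \Sol(M)$, and applying the orthogonality identity to the Taylor coefficients of $g(e_j) = \sum_i \bigl(\sum_\alpha \ell^{(\alpha)} a^{(\alpha)}_{j,i}\bigr) t^i$ with $a^{(\alpha)}_{j,i} \in K$, the assumption $g \in \Sol_\lambda(M_L)$ forces $|a^{(\alpha)}_{j,i}|_K = O(i^\lambda)$ for each $\alpha$ and $j$. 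Hence $g^{(\alpha)} \in \Sol_\lambda(M)$ for each $\alpha$, giving $g \in L \otimes_K \Sol_\lambda(M)$. The compatibility of the $V(\cdot)^\bullet$ filtrations then follows by duality, using $V(\cdot)^\lambda = \Sol_\lambda(\cdot)^\perp$ and the pairing compatibility from part (i).

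The principal technical step is the existence of a $K$-orthogonal basis for the finite-dimensional $K$-subspace $V \subset L$; I expect this to be the only real obstacle. It is standard for a finite-dimensional Banach space over a complete discretely valued field and is obtained by a $p$-adic analogue of Gram--Schmidt, so the argument should go through without further hypotheses on $L/K$.
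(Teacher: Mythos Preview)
Your proposal is correct and is essentially the standard argument. The paper itself defers to \cite[Proposition~1.10]{CT} for the $\Sol$ statement and obtains the $V$ statement by duality, so it gives no details here; however, in the parallel Lemma~\ref{lem:coefficient log-growth filtration} (for $\mathcal{R}^{\bd}$) the paper carries out exactly your coefficient-comparison idea, invoking only the norm-equivalence form of the orthogonality (namely the existence of a constant $C$ with $\max_j|\alpha_j|\le C\bigl|\sum_j c_j\alpha_j\bigr|$, from \cite[Theorem~1.3.6]{pde}) rather than a full orthogonal basis. Your Gram--Schmidt orthogonal basis is a mild refinement of this; either version suffices, and in fact the weaker norm-equivalence statement already gives the bound $|a^{(\alpha)}_{j,i}|=O(i^{\lambda})$ you need without rewriting $g$ in a new basis.
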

\begin{proof}
The assertion for $\Sol$ is proved in the reference, and the assertion for $V$ follows by duality.
\end{proof}

\begin{lem}[{base change of coefficient for $\mathcal{E}$}]\label{lem:coefficient Amice}
Let $L/K$ be an extension of complete discrete valuation fields, on which $\varphi_K$ extends isometrically. Let $M$ be a $(\varphi,\nabla)$-module over $\mathcal{E}$. Let $\mathcal{E}_L$ denote the Amice ring over $L$. Then there exists a canonical isomorphism of filtrations of $(\varphi,\nabla)$-modules over $\mathcal{E}_L$
\[
M^{\bullet}\otimes_{\mathcal{E}}\mathcal{E}_L\cong (M\otimes_{\mathcal{E}}\mathcal{E}_L)^{\bullet}.
\]
\end{lem}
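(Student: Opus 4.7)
The strategy is to translate the claimed identity $M^{\lambda}\otimes_{\mathcal{E}}\mathcal{E}_L=(M\otimes_{\mathcal{E}}\mathcal{E}_L)^{\lambda}$ into a base-change compatibility for the $\lambda$-log-growth solution space, and then to invoke the uniqueness in Robba's theorem (Theorem \ref{thm:Rob}).

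The first ingredient is the Amice-ring analogue of Lemma \ref{lem:coefficient power}: for a $(\varphi,\nabla)$-module $M$ over $\mathcal{E}$, pulled back via $\tau:\mathcal{E}\to\mathcal{E}[\![X-t]\!]_0$, the canonical base-change maps
\[
\Sol(M)\otimes_{\mathcal{E}}\mathcal{E}_L\xrightarrow{\sim}\Sol(M\otimes_{\mathcal{E}}\mathcal{E}_L),\qquad V(M)\otimes_{\mathcal{E}}\mathcal{E}_L\xrightarrow{\sim}V(M\otimes_{\mathcal{E}}\mathcal{E}_L)
\]
are isomorphisms, compatible with the canonical perfect pairings. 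This is established by an argument parallel to Lemma \ref{lem:coefficient power}, with $(K[\![t]\!]_0,K\{t\})$ replaced by $(\mathcal{E}[\![X-t]\!]_0,\mathcal{E}\{X-t\})$ throughout. Since the Gauss norm $|\cdot|_0$ on $\mathcal{E}_L$ extends the one on $\mathcal{E}$, the defining condition ``of log-growth $\lambda$'' is preserved under the inclusion $\mathcal{E}[\![X-t]\!]\subset\mathcal{E}_L[\![X-t]\!]$; consequently the above isomorphism restricts to an injection $\Sol_{\lambda}(M)\otimes_{\mathcal{E}}\mathcal{E}_L\hookrightarrow\Sol_{\lambda}(M\otimes_{\mathcal{E}}\mathcal{E}_L)$, and solvability in $\mathcal{E}[\![X-t]\!]_{\lambda}$ is preserved under $\mathcal{E}\to\mathcal{E}_L$.

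Applying the preceding observation to the (by definition solvable) quotient $M/M^{\lambda}$ shows that $(M\otimes\mathcal{E}_L)/(M^{\lambda}\otimes\mathcal{E}_L)=(M/M^{\lambda})\otimes\mathcal{E}_L$ is solvable in $\mathcal{E}_L[\![X-t]\!]_{\lambda}$, so the minimality characterization of $(M\otimes\mathcal{E}_L)^{\lambda}$ in Theorem \ref{thm:Rob} yields one inclusion
\[
(M\otimes_{\mathcal{E}}\mathcal{E}_L)^{\lambda}\subseteq M^{\lambda}\otimes_{\mathcal{E}}\mathcal{E}_L.
\]
For the reverse inclusion, I would first treat the case that $L/K$ is a finite Galois extension with group $G$, extending $\varphi_K$ to $\varphi_L$ compatibly with $G$. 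Uniqueness in Theorem \ref{thm:Rob} forces $(M\otimes\mathcal{E}_L)^{\lambda}$ to be $G$-stable, so faithfully flat descent along $\mathcal{E}_L/\mathcal{E}$ produces a $(\varphi,\nabla)$-submodule $N\subseteq M$ with $N\otimes_{\mathcal{E}}\mathcal{E}_L=(M\otimes\mathcal{E}_L)^{\lambda}$. Since $(M/N)\otimes\mathcal{E}_L$ is solvable in $\mathcal{E}_L[\![X-t]\!]_{\lambda}$, the faithfully flat base-change isomorphism of the first paragraph descends this to solvability of $M/N$ in $\mathcal{E}[\![X-t]\!]_{\lambda}$, whence $M^{\lambda}\subseteq N$ by minimality, and so $M^{\lambda}\otimes\mathcal{E}_L\subseteq N\otimes\mathcal{E}_L=(M\otimes\mathcal{E}_L)^{\lambda}$. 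A general extension $L/K$ is reduced to the finite Galois case by observing that $(M\otimes\mathcal{E}_L)^{\lambda}$ has rank at most $\operatorname{rank}(M)$ and is therefore cut out by finitely many coordinates which can be assumed to lie in a finite Galois sub-extension of $L/K$ (combined with a passage to the Galois closure and a limit argument in the transcendental case). Compatibility with the $(\varphi,\nabla)$-module structure is automatic.

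The main technical obstacle will be the Galois descent step, since $\mathcal{E}_L/\mathcal{E}$ need not be étale when $L/K$ is wildly ramified, so the descent must be justified via the general faithfully flat formalism with the cocycle condition rather than the simpler étale descent. A possible alternative would be a direct dimension-counting argument using matrix presentations of solutions and the ultrametric rigidity of the Gauss norm, but the descent path appears shorter.
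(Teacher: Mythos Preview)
Your opening paragraph is on the right track and matches the paper's setup: one reduces to a base-change statement for $\Sol_{\lambda}$ via the commutative square relating $\tau$ and $\tau_L$. But you stop one step too early. You claim only an \emph{injection}
\[
\Sol_{\lambda}(M)\otimes_{\mathcal{E}}\mathcal{E}_L\hookrightarrow\Sol_{\lambda}(M\otimes_{\mathcal{E}}\mathcal{E}_L),
\]
whereas Lemma~\ref{lem:coefficient power}~(ii), applied verbatim with the coefficient extension $\mathcal{E}_L/\mathcal{E}$ and the variable $X-t$, already gives this as an \emph{isomorphism}. The reverse containment is the usual norm-equivalence trick on finite-dimensional spaces over a complete field: if $c_1,\dots,c_n\in\mathcal{E}_L$ are $\mathcal{E}$-linearly independent and $\sum_i c_i r_i\in\mathcal{E}_L[\![X-t]\!]_{\lambda}$ with $r_i\in\mathcal{E}[\![X-t]\!]$, then each $r_i\in\mathcal{E}[\![X-t]\!]_{\lambda}$ (compare Lemma~\ref{lem:coefficient log-growth filtration}~(iii), or \cite[Theorem~1.3.6]{pde}).

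With the full isomorphism $\Sol_{\bullet}(\iota^*N)\cong\iota^*\Sol_{\bullet}(N)$ in hand, the paper's proof concludes in one line:
\[
\Sol\bigl(\iota^*M/\iota^*(M^{\lambda})\bigr)\cong\iota^*\Sol(M/M^{\lambda})=\iota^*\Sol_{\lambda}(M)\cong\Sol_{\lambda}(\iota^*M),
\]
and the \emph{uniqueness} clause of Theorem~\ref{thm:Rob} (not merely minimality for one inclusion) forces $\iota^*(M^{\lambda})=(\iota^*M)^{\lambda}$ directly.

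Your Galois-descent detour is therefore unnecessary, and the reduction you sketch has a genuine gap: for an arbitrary extension $L/K$ of complete discrete valuation fields (possibly transcendental, or with imperfect or non-algebraic residue extension) there is no reason the coordinates defining $(M\otimes\mathcal{E}_L)^{\lambda}$ should lie in any finite Galois subextension of $L/K$, so the ``limit argument in the transcendental case'' would require substantial work. The wild-ramification concern you raise about $\mathcal{E}_L/\mathcal{E}$ makes even the finite case delicate. All of this is avoided by simply proving both inclusions of $\Sol_{\lambda}$ at once.
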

\begin{proof}
Let $\iota:\mathcal{E}\to\mathcal{E}_L$ denote the canonical map. Let $\tau_L:\mathcal{E}_L\to\mathcal{E}_L[\![X-t]\!]_0$ denote a transfer morphism defined similarly to $\tau$. Then we have a commutative digram
\[\xymatrix{
\mathcal{E}_L\ar[r]^(.35){\tau_L}&\mathcal{E}_L[\![X-t]\!]_0\\
\mathcal{E}\ar[r]^(.35){\tau}\ar[u]_{\iota}&\mathcal{E}[\![X-t]\!]_0,\ar[u]_{\iota}
}\]
where $\iota(\sum_ia_i(X-t)^i)=\sum_i\iota(a_i)(X-t)^i$ for an arbitrary bounded sequence $\{a_i\}$ of $\mathcal{E}$. For a $(\varphi,\nabla)$-module $N$ over $\mathcal{E}$, we have a canonical isomorphism
\[
\Sol(\iota^*N)\cong \iota^*\Sol(N),
\]
which also induces an isomorphism of the growth filtrations $\Sol_{\bullet}(\iota^*N)\cong \iota^*\Sol_{\bullet}(N)$, by applying Lemma \ref{lem:coefficient power} to $\mathcal{E}_L/\mathcal{E}$ and using the above diagram. By using these isomorphisms, we have
\[
\Sol(\iota^*M/\iota^*(M^{\bullet}))\cong \Sol(\iota^*(M/M^{\bullet}))\cong\iota^*\Sol(M/M^{\bullet})\cong\iota^*\Sol_{\bullet}(M)\cong \Sol_{\bullet}(\iota^*M).
\]
By the uniqueness of the log-growth filtration of $\iota^*M$ (Theorem \ref{thm:Rob}), we have $\iota^*(M^{\bullet})=(\iota^*M)^{\bullet}$.
\end{proof}

\begin{lem}[{cf. \cite[Prospoition 2.1]{CT2}}]\label{lem:coefficient log-growth filtration}
Let $L/K$ be an extension of complete discrete valuation fields, on which $\varphi_K$ extends isometrically. Let $\mathcal{R}_{L}^{(\bd)}$ be the (bounded) Robba ring over $L$. Also, let $\mathcal{R}_{L,\log}$ denote $\mathcal{R}_{L}[\log{t}]$. We can define the log-growth filtrations $\Fil_{\bullet}\mathcal{R}_{L}$ and $\Fil_{\bullet}\mathcal{R}_{L,\log}$ on $\mathcal{R}_{L}$ and $\mathcal{R}_{L,\log}$ respectively as in \S \ref{sec:lg}. Let $\star$ denote either (empty) or $\log$.
\begin{enumerate}
\item There exists a canonical injective ring homomorphism
\[
L\otimes_K\mathcal{R}_{\star}\to\mathcal{R}_{L,\star}.
\]
\item Under the above homomorphism,
\[
L\otimes_K\Fil_{\bullet}\mathcal{R}_{\star}\subset \Fil_{\bullet}\mathcal{R}_{L,\star}.
\]
\item Let $c_1,\dots,c_n\in L$ be $K$-linearly independent elements and $r_1,\dots,r_n\in\mathcal{R}_{\star}$. If
\[
\sum_{i=1}^nc_ir_i\in \Fil_{\lambda}\mathcal{R}_{L,\star}\text{ for }\lambda\in\mathbb{R},
\]
then we have $r_i\in \Fil_{\lambda}\mathcal{R}_{\star}$.
\end{enumerate}
\end{lem}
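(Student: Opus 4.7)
The plan is to handle parts (i) and (ii) essentially formally, and then reduce (iii) to Lemma~\ref{lem:Taylor} combined with the standard equivalence of norms on a finite-dimensional $K$-subspace of~$L$.

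For (i), I would define the map on elementary tensors by $c \otimes r \mapsto c \cdot \iota(r)$, where $\iota\colon\mathcal{R}_{\star} \hookrightarrow \mathcal{R}_{L,\star}$ is the tautological inclusion obtained by regarding a Laurent (polynomial-in-$\log t$) series with coefficients in $K$ as one with coefficients in $L$; multiplicativity is immediate. For injectivity, I present a general element as $\sum_\alpha c_\alpha \otimes r_\alpha$ with $\{c_\alpha\}_\alpha$ a $K$-basis of some finite-dimensional subspace of $L$, expand each $r_\alpha = \sum_{m,j} a_{\alpha,m,j}\, t^j (\log t)^m$, and observe that the image $\sum_{m,j}\bigl(\sum_\alpha c_\alpha a_{\alpha,m,j}\bigr) t^j (\log t)^m$ vanishes iff each $a_{\alpha,m,j}=0$ by the $K$-linear independence of the $c_\alpha$. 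Part (ii) then follows from the multiplicativity $|c f|_s = |c| \cdot |f|_s$ of the Gauss norm under scalar multiplication by $c \in L$, together with the fact that $\Fil_\lambda \mathcal{R}_{L,\star}$ is a $K$-vector space by Lemma~\ref{lem:property Robba}~(i) (resp.\ Lemma~\ref{lem:property log}~(i) for $\star = \log$).

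The substance of the lemma is (iii). I would first reduce the $\star = \log$ case to the non-log case by writing $r_i = \sum_{m=0}^M r_{i,m} (\log t)^m$ with $r_{i,m} \in \mathcal{R}$; the defining decomposition $\Fil_\lambda \mathcal{R}_{L,\log} = \bigoplus_m \Fil_{\lambda-m}\mathcal{R}_L \cdot (\log t)^m$ then reduces the problem to applying the non-log statement to $(c_1,\dots,c_n; r_{1,m},\dots,r_{n,m})$ at level $\lambda - m$ for each~$m$. In the non-log case, write $r_i = \sum_j a_{i,j} t^j$ with $a_{i,j}\in K$ and set $b_j := \sum_i c_i a_{i,j} \in L$; by Lemma~\ref{lem:Taylor}, the hypothesis becomes $|b_j| = O(j^\lambda)$ as $j \to +\infty$, and the desired conclusion becomes $|a_{i,j}| = O(j^\lambda)$ for each~$i$.

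The key (and only slightly non-trivial) input is then the standard fact that any two norms on a finite-dimensional vector space over a complete discrete valuation field are equivalent. Applied to $V := Kc_1 + \cdots + Kc_n \subset L$ with the supremum norm relative to the basis $(c_i)$ and the restriction of $|\cdot|_L$, this yields a constant $C>0$ with $\max_i |a_i| \le C\,\bigl|\sum_i c_i a_i\bigr|$ for all $(a_i)\in K^n$. Specialization at $a_i = a_{i,j}$ then gives $|a_{i,j}| \le C|b_j| = O(j^\lambda)$, completing the proof. I do not expect any serious obstacle: invoking norm-equivalence over a complete non-archimedean field is classical, and once available the argument is purely coefficient-wise.
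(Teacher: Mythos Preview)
Your proposal is correct and follows essentially the same route as the paper: the paper likewise reduces to $\star=\text{(empty)}$, proves (i) by coefficient-wise linear independence, notes (ii) is immediate, and for (iii) combines the Taylor expansion criterion (Lemma~\ref{lem:Taylor}) with the equivalence of norms on a finite-dimensional $K$-subspace of $L$ (which the paper cites as \cite[Theorem 1.3.6]{pde}). Your explicit reduction of the $\log$ case to the non-$\log$ case via the direct-sum decomposition of $\Fil_\lambda\mathcal{R}_{L,\log}$ is exactly the content the paper compresses into ``we have only to prove the case $\star=\text{(empty)}$.''
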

\begin{proof}
We have only to prove the case $\star=(empty)$.
\begin{enumerate}
\item The existence of a morpshim is obvious. It suffices to prove the injectivity. Assume $\sum_{j=1}^nc_j\otimes r_j$, $c_j\in L$, $r_j\in\mathcal{R}$ is in the kernel of the map, that is, $\sum_{j=1}^nc_jr_j=0$ in $\mathcal{R}_{L}$. We may assume that $\{c_j\}$ is $K$-linearly independent. Write $r_j=\sum_{i\in\mathbb{Z}}a^j_it^i$. By $\sum_{j=1}^nc_jr_j=\sum_{i\in\mathbb{Z}}(\sum_{j=1}^nc_ja^j_i)t^i=0$, we have $\sum_{j=1}^nc_ja^j_i=0$, which implies $a^j_i=0$ for all $i,j$.
\item It follows by definition.
\item With notation as in (i),
\[
\sum_{j=1}^nc_j\otimes r_j=\sum_{i\in\mathbb{Z}}\Big(\sum_{j=1}^nc_ja^j_i\Big)t^i\in \Fil_{\lambda}\mathcal{R}_{L}.
\]
Therefore,
\[
\sup_{i\ge 1}i^{-\lambda}|\sum_{j=1}^nc_ja^j_i|<\infty
\]
by Lemma \ref{lem:Taylor}. By \cite[Theorem 1.3.6]{pde}, there exists a constant $C$ such that
\[
\sup\{|\alpha_1|,\dots,|\alpha_n|\}\le C|\sum_{j=1}^nc_j\alpha_j|\ \forall \alpha_1,\dots,\alpha_n\in K.
\]
Therefore,
\[
\sup_{j=1,\dots,n}\sup_{i\ge 1}i^{-\lambda}|a^j_i|=\sup_{i\ge 1}\sup_{j=1,\dots,n}i^{-\lambda}|a^j_i|\le\sup_{i\ge 1}i^{-\lambda}C|\sum_{j=1}^nc_ja^j_i|<\infty,
\]
i.e., $r_j\in \Fil_{\lambda}\mathcal{R}$ for all $j$.
\end{enumerate}
\end{proof}

\begin{lem}[{base change of coefficient for $\mathcal{R}^{\bd}$}]\label{lem:coefficient Robba}
Let notation be as in Lemma \ref{lem:coefficient log-growth filtration}. Let $M$ be a $(\varphi,\nabla)$-module over $\mathcal{R}^{\bd}$ solvable in $\mathcal{R}_{\log}$. Let $M'=M\otimes_{\mathcal{R}^{\bd}}\mathcal{R}^{\bd}_{L}$ be the $(\varphi,\nabla)$-module over $\mathcal{R}^{\bd}_{L}$.
\begin{enumerate}
\item $M'$ is solvable in $\mathcal{R}_{L,\log}$.
\item There exist canonical isomorphisms
\[
L\otimes_K V(M)\to V(M'),
\]
\[
L\otimes_K\Sol(M)\to \Sol(M').
\]
Moreover, the isomorphisms are compatible with the canonical pairings.
\item The above isomorphisms respect the growth filtrations, i.e.,
\[
L\otimes_K V(M)^{\bullet}\cong V(M')^{\bullet},
\]
\[
L\otimes_K \Sol_{\bullet}(M)\cong \Sol_{\bullet}(M').
\]
\end{enumerate}
\end{lem}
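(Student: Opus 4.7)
The strategy is to lift the injection $L\otimes_K\mathcal{R}_{\log}\hookrightarrow\mathcal{R}_{L,\log}$ of Lemma~\ref{lem:coefficient log-growth filtration}(i) to base-change maps on $V(-)$ and $\Sol(-)$, and then read off the three assertions directly from the three parts of Lemma~\ref{lem:coefficient log-growth filtration}. This is the same formal pattern as in the proof of Lemma~\ref{lem:coefficient power} for $K[\![t]\!]_0$; the only new ingredient is the ring-theoretic input already packaged in Lemma~\ref{lem:coefficient log-growth filtration}. Note also that (i) and (ii) are closely linked through a dimension count and will be handled together.

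Concretely, I would first build the natural $L$-linear maps
\[
\alpha: L\otimes_K V(M)\longrightarrow V(M'),\qquad \beta: L\otimes_K\Sol(M)\longrightarrow \Sol(M')
\]
by tensoring the defining inclusions with $L$ over $K$ and pushing forward along the $\varphi,\nabla$-equivariant injection of Lemma~\ref{lem:coefficient log-growth filtration}(i). Both $\alpha$ and $\beta$ are injective: writing an element of the source in the form $\sum_j c_j\otimes (-)$ with $\{c_j\}$ a $K$-linearly independent family in $L$, vanishing in the target translates via a choice of $\mathcal{R}^{\bd}$-basis of $M$ into vanishing of sums $\sum_j c_j r^{(j)}_i$ in $\mathcal{R}_{L,\log}$, and Lemma~\ref{lem:coefficient log-growth filtration}(iii) (applied with $\lambda=+\infty$, equivalently, to each $r^{(j)}_i$ individually) forces each $r^{(j)}_i$ to vanish. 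To prove (i), observe that solvability of $M$ gives $\dim_K V(M)=n:=\rank_{\mathcal{R}^{\bd}}M=\rank_{\mathcal{R}_L^{\bd}}M'$, while the opposite inequality $\dim_L V(M')\le n$ is obtained by applying the analogue of Corollary~\ref{cor:V} to the $\nabla$-module $M'\otimes_{\mathcal{R}_L^{\bd}}\mathcal{R}_L$ over $\mathcal{R}_L$; injectivity of $\alpha$ then forces equality, simultaneously yielding solvability of $M'$ and bijectivity of $\alpha$. Bijectivity of $\beta$ follows by the same dimension count (or by duality via $\Sol(M')\cong V((M')\spcheck)$), and the compatibility of $\alpha,\beta$ with the canonical pairings is a direct unwinding of definitions, completing (ii).

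For (iii), the inclusion $L\otimes_K\Sol_{\lambda}(M)\subseteq\Sol_{\lambda}(M')$ under $\beta$ is immediate from Lemma~\ref{lem:coefficient log-growth filtration}(ii) together with Lemma~\ref{lem:property log}(i): $\beta(c\otimes f)$ sends $m\in M'$ to $c\cdot f(m)$, and $c\in L\subset \Fil_0\mathcal{R}_{L,\log}$ while $f(m)\in\Fil_{\lambda}\mathcal{R}_{\log}$ embeds into $\Fil_{\lambda}\mathcal{R}_{L,\log}$. For the reverse, given $f\in\Sol_{\lambda}(M')$ I would use $\beta^{-1}$ from (ii) to write $f=\sum_{j=1}^r c_j\otimes g_j$ with $c_1,\dots,c_r\in L$ that are $K$-linearly independent and $g_j\in\Sol(M)$; evaluating at any $m\in M$ gives $\sum_j c_j g_j(m)=f(m)\in \Fil_{\lambda}\mathcal{R}_{L,\log}$, so Lemma~\ref{lem:coefficient log-growth filtration}(iii) yields $g_j(m)\in\Fil_{\lambda}\mathcal{R}_{\log}$ for all $j$ and all $m$, i.e.\ $g_j\in\Sol_{\lambda}(M)$. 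The $V$-side identity then follows by taking orthogonal complements with respect to the pairings of (ii). The main (indeed only) nontrivial step is this last appeal to coefficient-separation; once Lemma~\ref{lem:coefficient log-growth filtration}(iii) is granted, the rest of the argument is essentially bookkeeping.
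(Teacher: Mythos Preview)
Your proposal is correct and follows essentially the same route as the paper: build the base-change maps from Lemma~\ref{lem:coefficient log-growth filtration}(i), establish the $\Sol$-isomorphism and the growth-filtration compatibility exactly as you do via Lemma~\ref{lem:coefficient log-growth filtration}(ii),(iii), and dualize for $V$. Two minor remarks: for injectivity of $\alpha,\beta$ it is cleaner to invoke Lemma~\ref{lem:coefficient log-growth filtration}(i) directly rather than phrase it as ``(iii) with $\lambda=+\infty$''; and for (i) the paper instead appeals to Lemma~\ref{lem:solv=uni} (unipotence is preserved under $\mathcal{R}\to\mathcal{R}_L$), which gives solvability of $M'$ before touching dimensions, whereas your dimension-count argument via Corollary~\ref{cor:V} accomplishes the same thing simultaneously with the bijectivity of $\alpha$.
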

\begin{proof}
\begin{enumerate}
\item It follows from Lemma \ref{lem:solv=uni}.
\item The injection $L\otimes_K\mathcal{R}_{\log}\to\mathcal{R}_{L,\log}$ (Lemma \ref{lem:coefficient log-growth filtration} (i)) induces an injection $L\otimes_KV(M)\to V(M')$, which is an isomorphism by comparing dimensions. We define a $K$-linear map $\Sol(M)\to \Sol(M')$ by sending $f:M\to\mathcal{R}_{\log}$ to the composition
\[
M\otimes_{\mathcal{R}^{\bd}}\mathcal{R}_{L}^{\bd}\xrightarrow[]{f\otimes \id}\mathcal{R}_{\log}\otimes_{\mathcal{R}^{\bd}}\mathcal{R}_{L}^{\bd}\to\mathcal{R}_{L,\log},
\]
where the last map is the multiplication map. Since the canonical map $\Sol(M)\otimes_KV(M)\to \Sol(M')\otimes_{L}V(M')$ commutes with the canonical pairings, we obtain an isomorphism $L\otimes_K\Sol(M)\cong \Sol(M')$.
\item By duality, we have only to prove the second isomorphism. By Lemma \ref{lem:property Robba} (i), we have $\mathcal{R}_{L}^{\bd}\cdot \Fil_{\lambda}\mathcal{R}_{\log}\subset\mathcal{R}_{L}^{\bd}\cdot \Fil_{\lambda}\mathcal{R}_{L,\log}\subset \Fil_{\lambda}\mathcal{R}_{L,\log}$. Hence $L\otimes_K\Sol_{\lambda}(M)\subset \Sol_{\lambda}(M')$ under the second isomorphism in (ii). Let $c_1,\dots,c_n$ be a $K$-basis of $L$. Let $f\in \Sol_{\lambda}(M')$. Write $f=\sum_ic_if_i$ with $f_i\in\Sol(M)$. Since $f(m)\in \Fil_{\lambda}\mathcal{R}_{L,\log}$ for all $m\in M$, we have $f_i(m)\in \Fil_{\lambda}\mathcal{R}_{\log}$ for all $i$ and $m\in M$ by Lemma \ref{lem:coefficient log-growth filtration} (iii). Hence $f_i\in \Sol_{\lambda}(M)$ for all $i$, which implies $f\in L\otimes_K\Sol_{\lambda}(M)$.
\end{enumerate}
\end{proof}

\section{Slope criterion}\label{sec:criterion}

This section is the most important part in this paper though it is quite elementary. We prove Slope criterion (Proposition \ref{prop:key}), which is a key ingredient in the proof of Theorem \ref{conj:bd} (ii), that converts the PBQ hypothesis into a suitable form to prove the conjecture: note that we need only the ``easier'' part (i)$\Rightarrow$(ii).

\begin{notation}
For a $\varphi$-module $M$ over $\mathcal{E}$, denote the maximum (resp. minimum) Frobenius slope by $\lambda_{\max}(M)$ (resp. $\lambda_{\min}(M)$).
\end{notation}

\begin{prop}[{Slope criterion}]\label{prop:key}
For $M$ a $(\varphi,\nabla)$-module over $\mathcal{E}$, we consider the following conditions.
\begin{enumerate}
\item $M$ is PBQ.
\item For any non-zero quotient $Q$ of $M$ as a $(\varphi,\nabla)$-module over $\mathcal{E}$, we have $\lambda_{\max}(M)=\lambda_{\max}(Q)$.
\end{enumerate}
Then (i) implies (ii). Moreover, if $k$ is perfect, then (ii) implies (i).
\end{prop}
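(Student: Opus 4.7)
The plan is to handle (i) $\Rightarrow$ (ii) via a structural identification of $M/M^0$ for PBQ $M$, and to address the converse (ii) $\Rightarrow$ (i) by contraposition, constructing a $(\varphi,\nabla)$-quotient of strictly smaller maximum Frobenius slope whenever $M$ fails to be PBQ.

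For (i) $\Rightarrow$ (ii), the essential preliminary is the structural lemma: \emph{if $M\neq 0$ is PBQ, then $M/M^0$ is pure of Frobenius slope $\lambda_{\max}(M)$.} The proof is short: $M/\bigcup_{\mu<\lambda_{\max}(M)}S_\mu(M)$ is pure of slope $\lambda_{\max}(M)$ and hence bounded by Proposition~\ref{prop:lgE5}, so applying Lemma~\ref{lem:lgE2}(ii) to the canonical projection forces $M^0\subset\bigcup_{\mu<\lambda_{\max}(M)}S_\mu(M)$; thus $\lambda_{\max}(M)$ remains a Frobenius slope of $M/M^0$, and PBQness forces it to be the only one. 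With the lemma in hand, for a non-zero quotient $Q$ of $M$, Corollary~\ref{cor:qt inv of PBQ} tells us $Q$ is PBQ, so the lemma yields $Q/Q^0$ pure of slope $\lambda_{\max}(Q)$; the surjection $M\twoheadrightarrow Q$ induces by Lemma~\ref{lem:lgE2}(ii) a non-zero surjection $M/M^0\twoheadrightarrow Q/Q^0$ between two non-zero pure $\varphi$-modules, and Lemma~\ref{lem:slopehom} forces the two slopes to coincide.

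For (ii) $\Rightarrow$ (i) under the hypothesis that $k$ is perfect, I argue by contraposition. Assume $M$ is not PBQ, so $M/M^0$ is bounded but not pure, with Frobenius slopes $\lambda_1<\cdots<\lambda_r=\lambda_{\max}(M)$ and $r\geq 2$. The goal is to produce a non-zero $(\varphi,\nabla)$-quotient of $M$ with max Frobenius slope strictly smaller than $\lambda_{\max}(M)$, contradicting~(ii). The key step is to split off the top-slope part of $M/M^0$: since $M/M^0$ is bounded, $\tau^\ast(M/M^0)$ is trivial as a $\nabla$-module over $\mathcal{E}[\![X-t]\!]_0$, so the extension
\[
0\to S_{\lambda_{r-1}}(M/M^0)\to M/M^0\to (M/M^0)/S_{\lambda_{r-1}}(M/M^0)\to 0
\]
pulls back along $\tau^\ast$ to one which is split as $\nabla$-modules; a $\varphi$-cohomological argument using that $\varphi-\id$ is invertible on $\varphi$-modules of nonzero slope (applied to the $\nabla$-equivariant Hom space between the sub and the quotient, which is concentrated in nonzero slopes because of the slope gap $\lambda_{r-1}<\lambda_r$) should upgrade this to a $(\varphi,\nabla)$-splitting on the $\tau^\ast$ side. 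The main obstacle, and the place where the $k$-perfect hypothesis enters, is descending such a splitting back to $(\varphi,\nabla)$-modules over $\mathcal{E}$: my approach is to use Lemma~\ref{lem:coefficient Amice} together with the $\varphi_K$-completion of $K$ provided by Notation~(2) to work over an inversive coefficient field where the Frobenius slope decomposition is canonical, and then descend by a Galois/coefficient-descent argument. The complementary summand so obtained furnishes a $(\varphi,\nabla)$-quotient of $M/M^0$, and hence of $M$, with max Frobenius slope at most $\lambda_{r-1}<\lambda_{\max}(M)$, contradicting~(ii).
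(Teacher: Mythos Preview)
Your argument for (i) $\Rightarrow$ (ii) is correct and takes a genuinely different route from the paper. The paper argues by contraposition: given a quotient $Q$ with $\lambda_{\max}(Q)<\lambda_{\max}(M)$, it uses the ``easy-direction'' splitting lemma (Lemma~\ref{lem:split}, where the sub has \emph{larger} slopes than the quotient) to manufacture a bounded, non-pure quotient of $M$. You instead prove the structural lemma directly (this is the content of Corollary~\ref{cor:new2}, which the paper derives \emph{from} Proposition~\ref{prop:key}), and then for any non-zero quotient $Q$ use the induced non-zero surjection $M/M^0\twoheadrightarrow Q/Q^0$ between pure $\varphi$-modules to force equal slopes via Lemma~\ref{lem:slopehom}. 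Your route is more direct and avoids Lemma~\ref{lem:split} entirely.

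For (ii) $\Rightarrow$ (i) there is a real gap. The paper simply invokes the Splitting Theorem (Theorem~\ref{thm:split} from \cite{CT2}): any bounded $(\varphi,\nabla)$-module over $\mathcal{E}$ with $k$ perfect decomposes as a $(\varphi,\nabla)$-direct sum of its pure slope pieces, so the bottom-slope summand of $M/M^0$ is immediately a $(\varphi,\nabla)$-quotient of $M$ with strictly smaller $\lambda_{\max}$. Your sketch instead tries to split the sequence $0\to S_{\lambda_{r-1}}(M/M^0)\to M/M^0\to (M/M^0)/S_{\lambda_{r-1}}(M/M^0)\to 0$ directly. But here the sub has \emph{smaller} slopes than the quotient, so the relevant internal Hom has all slopes \emph{negative}; the series $1+\varphi+\varphi^2+\cdots$ diverges and one needs $\varphi^{-1}$, hence an inversive base. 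Your proposed fix---pass to the $\varphi_K$-completion $L$ of $K$ and then descend---does not work as stated: the extension $\mathcal{E}_L/\mathcal{E}$ is not Galois (indeed $\varphi_K$ need not be surjective on $K$), so there is no Galois-descent mechanism, and Lemma~\ref{lem:coefficient Amice} only transports the log-growth filtration, not arbitrary direct-sum decompositions. What you are trying to do is essentially reprove the Splitting Theorem inline; the clean move is to cite Theorem~\ref{thm:split}, which is exactly where the hypothesis that $k$ be perfect is consumed.
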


\begin{lem}[{\cite[Proposition 4.3]{CT2} when $k$ is perfect}]\label{lem:split}
Let
\[
0\to M'\to M\to M''\to 0
\]
be an exact sequence of $(\varphi,\nabla)$-modules over $\mathcal{E}$. If $\lambda_{\min}(M')>\lambda_{\max}(M'')$, then the above exact sequence splits as $(\varphi,\nabla)$-modules over $\mathcal{E}$.
\end{lem}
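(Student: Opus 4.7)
The plan is to construct the splitting directly as a single step of the Frobenius slope filtration of $M$, exploiting that this filtration consists of $(\varphi,\nabla)$-submodules (by Proposition 6.2 of \cite{CT}, recalled at the end of \S\ref{subsec:logfil3}) and is strict on short exact sequences of $\varphi$-modules (Lemma \ref{lem:slope5}(II)-(ii)).

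Concretely, we may assume $M'$ and $M''$ are both non-zero and set $\lambda := \lambda_{\max}(M'')$. The hypothesis $\lambda_{\min}(M') > \lambda$ forces every Frobenius slope of $M'$ to exceed $\lambda$, giving $S_{\lambda}(M') = 0$, while $S_{\lambda}(M'') = M''$ by the choice of $\lambda$. Applying strictness of $S_{\bullet}$ at level $\lambda$ to the given exact sequence yields
\[
0 \to S_{\lambda}(M) \to M'' \to 0,
\]
so that the quotient map $M \to M''$ restricts to an isomorphism $S_{\lambda}(M) \xrightarrow{\sim} M''$ of $(\varphi,\nabla)$-modules. The inverse of this isomorphism, followed by the inclusion $S_{\lambda}(M) \hookrightarrow M$, is the desired $(\varphi,\nabla)$-equivariant section. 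Note that the argument is sharp in that it produces the splitting canonically, as the unique $(\varphi,\nabla)$-submodule of $M$ whose slopes lie in the interval $(-\infty,\lambda]$.

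The main point requiring attention is that \cite[Proposition 4.3]{CT2}, which establishes the $\nabla$-stability of $S_{\lambda}(M)$, is stated assuming that $k$ is perfect, whereas our lemma makes no such hypothesis. To remove this restriction I plan to base change: choose an extension $L/K$ of complete discrete valuation fields whose residue field is the perfection of $k$ and to which $\varphi_{K}$ extends isometrically. Over $\mathcal{E}_{L}$, the submodule $S_{\lambda}(M\otimes_{\mathcal{E}}\mathcal{E}_{L})$ is $\nabla$-stable by \cite[Proposition 4.3]{CT2}, and it coincides with $S_{\lambda}(M)\otimes_{\mathcal{E}}\mathcal{E}_{L}$ by the base-change compatibility of Frobenius slope filtration (Lemma \ref{lem:slope5}(II)-(i)). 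Since the $\nabla$-stability of an $\mathcal{E}$-submodule of $M$ is an $\mathcal{E}$-linear condition that can be detected after the faithfully flat base change $\mathcal{E}\to\mathcal{E}_{L}$, it descends to $\mathcal{E}$, completing the argument.
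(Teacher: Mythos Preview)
Your argument is correct and takes a genuinely different route from the paper. The paper reduces via the Yoneda isomorphism $\Ext^1(M'',M')\cong\Ext^1(\mathcal{E},M'\otimes_{\mathcal{E}}(M'')\spcheck)$ to the case $M''=\mathcal{E}$, then shows directly that $1-\varphi$ is invertible on $M'$ (so the sequence splits as $\varphi$-modules and $H^0_{\varphi}(M')=0$), and finally uses the matrix compatibility between $\varphi$ and $D$ to force the off-diagonal block $G_{12}$ of the connection to vanish. This is elementary and self-contained. Your approach instead invokes the already-recorded fact (end of \S\ref{subsec:logfil3}, citing \cite[Proposition~6.2]{CT}) that each $S_{\lambda}(M)$ is a $(\varphi,\nabla)$-submodule; combined with the strictness of Frobenius slope filtration (Lemma~\ref{lem:slope5}(II)-(ii)), this immediately identifies $S_{\lambda_{\max}(M'')}(M)$ as a canonical $(\varphi,\nabla)$-equivariant section. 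Your route is shorter and more structural, at the cost of leaning on the external input \cite[Proposition~6.2]{CT}; the paper's route is longer but proves the result from first principles.

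One clarification: in your final paragraph you attribute the $\nabla$-stability of $S_{\lambda}(M)$ to \cite[Proposition~4.3]{CT2}. That reference is the splitting lemma itself (the very statement you are proving), not the $\nabla$-stability; the latter is \cite[Proposition~6.2]{CT}, as recalled at the end of \S\ref{subsec:logfil3}, and the paper records it there without any perfectness hypothesis on $k$. So your base-change paragraph, while harmless, is addressing a concern that arises from a miscitation rather than an actual gap.
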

\begin{proof}
By using a canonical isomorphism $\Ext^1(M'',M')\cong \Ext^1(\mathcal{E},M'\otimes (M'')\spcheck)$ of Yoneda extension groups in the category of $(\varphi,\nabla)$-modules (see \cite[Lemma 5.3.3]{pde}: though the construction is done in the category of $\nabla$-modules, it also works in the category of $(\varphi,\nabla)$-modules), we may assume that $M''=\mathcal{E}$. Since all Frobenius slopes of $M'$ are strictly greater than $0$ by assumption, the operator norm of $\varphi$ on $M''$ is strictly greater than $1$. In particular, the map $1-\varphi:M'\to M'$ is bijective with inverse $1+\varphi+\varphi^2+\dots$. Hence $H^1_{\varphi}(M')=\mathrm{coker}\{1-\varphi:M'\to M'\}=0$, which implies the splitting of the given exact sequence as $\varphi$-modules. We also have $H^0_{\varphi}(M')=\mathrm{ker}\{1-\varphi:M'\to M'\}=0$.

Let $e_1,\dots,e_n$ be a basis of $M'$, and $e_{n+1}\in M$ a lift of $1\in\mathcal{E}$. Let
\[
\begin{pmatrix}
A_{11}&0\\
0&1
\end{pmatrix},
\begin{pmatrix}
G_{11}&G_{12}\\
0&0
\end{pmatrix}
\]
denote the $(n,1)$-block matrix presentation of $M$ with respect to $\{e_1,\dots,e_{n+1}\}$. The $(1,2)$-component of the compatibility condition in \S \ref{subsec:phinabla2} implies $\omega G_{12}=A_{11}\varphi(\omega G_{12})$. Therefore $v=(e_1,\dots,e_n)\omega G_{12}\in M'$ satisfies $v\in H^0_{\varphi}(M')=0$. Hence $G_{12}=0$, which implies the assertion.
\end{proof}

The following theorem, which plays an important role in \cite{CT2}, is only used to prove (ii)$\Rightarrow$(i) in Proposition \ref{prop:key} when $k$ is perfect.

\begin{thm}[{Splitting theorem, \cite[Theorem 4.1]{CT2}}]\label{thm:split}
A $(\varphi,\nabla)$-module $M$ over $\mathcal{E}$ is bounded if and only if $M$ is a direct sum of pure $(\varphi,\nabla)$-modules, that is, there exists an isomorphism
\[
M\cong\oplus_{i=1,\dots,d}(S_{\mu_i}(M)/\cup_{\mu<\mu_i}S_{\mu}(M))
\]
as $(\varphi,\nabla)$-modules, where $\mu_1<\dots<\mu_d$ denote the Frobenius slopes of $M$ without multiplicity.
\end{thm}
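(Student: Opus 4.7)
For the easy direction, suppose $M\cong\bigoplus_i(S_{\mu_i}(M)/\cup_{\mu<\mu_i}S_\mu(M))$. Each summand is pure, hence bounded by Proposition~\ref{prop:lgE5}, and a direct sum of bounded $(\varphi,\nabla)$-modules is bounded by Lemma~\ref{lem:lgE3}, so $M$ is bounded.

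For the converse, I would proceed by induction on the number $d$ of distinct Frobenius slopes $\mu_1<\cdots<\mu_d$ of $M$. The case $d=1$ is trivial. For $d\ge 2$, set $P:=S_{\mu_1}(M)$ (pure of slope $\mu_1$) and $Q:=M/P$ (slopes $\mu_2,\dots,\mu_d$), giving
\[
0\to P\to M\to Q\to 0.
\]
Both $P$ and $Q$ inherit boundedness from $M$ by functoriality of the log-growth filtration: Lemma~\ref{lem:lgE2}(i) applied to $P\hookrightarrow M$ forces $P^0\subseteq M^0=0$, and Lemma~\ref{lem:lgE2}(ii) applied to $M\twoheadrightarrow Q$ forces $Q^0=0$. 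Applying the inductive hypothesis to $Q$ gives $Q\cong\bigoplus_{i=2}^d Q_i$ with $Q_i$ pure of slope $\mu_i$.

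It remains to split the above short exact sequence as $(\varphi,\nabla)$-modules. By Lemma~\ref{lem:slopehom}, $\mathrm{Hom}^{\varphi}(Q,P)=0$ since $P$ and $Q$ have disjoint slope multisets, so a splitting is automatically unique; the task is to construct one. Choose a basis of $M$ adapted to the filtration, giving a matrix presentation
\[
A=\begin{pmatrix} A_1 & A_{12}\\ 0 & A_2\end{pmatrix},\qquad G=\begin{pmatrix} G_1 & G_{12}\\ 0 & G_2\end{pmatrix}.
\]
A block-upper-triangular change of basis by $\bigl(\begin{smallmatrix} I & B\\ 0 & I\end{smallmatrix}\bigr)$ simultaneously block-diagonalizes $A$ and $G$ precisely when $B\in\mathrm{Mat}(\mathcal{E})$ solves the Frobenius equation $A_1\varphi(B)=BA_2-A_{12}$ and the connection equation $dB+G_1 B-BG_2=-G_{12}$; the $(\varphi,\nabla)$-integrability of $(A,G)$ guarantees these two equations are compatible.

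The main obstacle will be producing $B$ with entries in $\mathcal{E}$ rather than in a larger completion. The semi-linear operator $T:X\mapsto A_1\varphi(X)A_2^{-1}$ on $\mathrm{Hom}(V(Q),V(P))$ represents Frobenius on a $\varphi$-module all of whose slopes are strictly negative (since $\mu_1-\mu_j<0$ for $j\ge 2$), so $T$ is expanding and the formal inverse $(T-\mathrm{id})^{-1}=-\sum_{n\ge 1}T^{-n}$ produces a Frobenius section over the $\varphi$-completion of $\mathcal{E}$. The boundedness of $M$, encoded concretely via the coupled nabla equation, should then supply the integrality estimate forcing this $B$ to descend to $\mathcal{E}$. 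Without $\nabla$-integrability the analogous splitting fails for general $\varphi$-module extensions over $\mathcal{E}$; so the essential content of the theorem is that the interplay between the Frobenius and connection equations, built into the definition of boundedness, pins down $B$ inside $\mathcal{E}$ itself.
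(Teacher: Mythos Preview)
The paper does not prove this statement; it is quoted from \cite[Theorem~4.1]{CT2} and used as a black box. So there is nothing here to compare against, and I comment only on your argument.

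Your easy direction is correct, and your inductive reduction of the converse to splitting $0\to P\to M\to Q\to 0$ is fine. The genuine gap is exactly the step you flag with ``should then supply'': you solve the Frobenius equation for $B$ over the $\varphi$-completion of $\mathcal{E}$ but give no mechanism by which boundedness forces $B$ down into $\mathcal{E}$, and that descent is the whole content of the theorem. The module $(M_\mu)_{\mathcal{E}}$ from \S\ref{sec:ex} shows the gap is real: it is a non-bounded, hence non-split, extension with $P=\mathcal{E}e_1$ of slope $0$ and $Q\cong\mathcal{E}(q^\mu)$ of slope $\mu>0$, exactly your configuration, and the formal solution of $\varphi(B)=q^\mu B+q^\mu t$ involves $\varphi^{-n}(t)=t^{1/q^n}$ (when $\varphi(t)=t^q$), which is not in $\mathcal{E}$. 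Your remark that boundedness is ``encoded concretely via the coupled nabla equation'' does not help: every $(\varphi,\nabla)$-module, bounded or not, has compatible Frobenius and connection matrices, so that compatibility alone carries no information about boundedness. Boundedness is a growth condition on horizontal sections of $\tau^*M$ over the generic disc, and the substantive work in \cite{CT2} is to convert that analytic input into control over $B$; you have correctly named this as ``the essential content of the theorem'' but have not supplied it.
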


\begin{proof}[Proof of Proposition \ref{prop:key}]
Denote by (i)' and (ii)'' the negations of (i) and (ii) respectively.

\noindent (ii)'$\Rightarrow$(i)' It suffices to find a non-zero quotient of $M$, which is bounded and not pure since it is a quotient of $M/M^0$ by Theorem \ref{thm:Rob}. Assume that there exists a non-zero quotient $Q$ of $M$ such that $\lambda_{\max}(Q)\neq\lambda_{\max}(M)$. After replacing $Q$ by $Q/\cup_{\mu<\lambda_{\max}(Q)}S_{\mu}(Q)$, we may assume that $Q$ is pure as a $\varphi$-module over $\mathcal{E}$. Since $\lambda_{\max}(Q)\le\lambda_{\max}(M)$, we have $\lambda_{\max}(Q)<\lambda_{\max}(M)$. Write $Q$ as $M/M'$. Then we have $\lambda_{\max}(M')=\lambda_{\max}(M)$, hence, $\lambda_{\max}(M')>\lambda_{\max}(Q)$. Therefore the exact sequence
\[
0\to M'/\cup_{\mu<\lambda_{\max}(M')}S_{\mu}(M')\to M/\cup_{\mu<\lambda_{\max}(M')}S_{\mu}(M')\to Q\to 0
\]
splits as $(\varphi,\nabla)$-modules over $\mathcal{E}$ by Lemma \ref{lem:split}. Since both $M'/\cup_{\mu<\lambda_{\max}(M')}S_{\mu}(M')$ and $Q$ are pure as $\varphi$-modules over $\mathcal{E}$, they are bounded by Proposition \ref{prop:lgE5}. Thus, the quotient $M/\cup_{\mu<\lambda_{\max}(M')}S_{\mu}(M')$ of $M$ is bounded and not pure.

\noindent (i)'$\Rightarrow$(ii)' (when $k$ is perfect) By Splitting theorem (Theorem \ref{thm:split}), there exists an isomorphism of $(\varphi,\nabla)$-modules over $\mathcal{E}$
\[
M/M^0\cong\oplus_{i=1,\dots,d}(S_{\mu_i}(M/M^0)/\cup_{\mu<\mu_i}S_{\mu}(M/M^0)),
\]
where $\mu_1<\dots<\mu_d$ are the Frobenius slopes of $M/M^0$. By assumption, we have $d\ge 2$, hence, $\mu_1<\mu_d\le \lambda_{\max}(M/M^0)\le\lambda_{\max}(M)$. Hence, $Q=S_{\mu_1}(M/M^0)/\cup_{\mu<\mu_1}S_{\mu}(M/M^0)(=S_{\mu_1}(M/M^0))$ is a non-zero quotient of $M$ with $\lambda_{\max}(Q)=\mu_1<\lambda_{\max}(M)$.
\end{proof}

\begin{cor}[{\cite[Lemma 7.3]{Ohk}}]\label{cor:new2}
Let $M$ be a non-zero PBQ $(\varphi,\nabla)$-module over $\mathcal{E}$. Then,
\[
\lambda_{\max}(M)=\lambda_{\max}(M/M^0).
\]
\end{cor}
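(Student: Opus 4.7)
The plan is to deduce this corollary directly from the easier implication (i)$\Rightarrow$(ii) of the Slope criterion (Proposition \ref{prop:key}), applied to the specific quotient $Q = M/M^0$. The only thing to verify is that this quotient is non-zero, so that the Slope criterion genuinely applies.

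First I would observe that since $M$ is non-zero, Lemma \ref{lem:lgE1} (ii) gives $M^0 \neq M$, so the quotient $Q := M/M^0$ is a non-zero $(\varphi,\nabla)$-module over $\mathcal{E}$, and it is by construction a quotient of $M$ in the category of $(\varphi,\nabla)$-modules. Then, since $M$ is PBQ, the implication (i)$\Rightarrow$(ii) of Proposition \ref{prop:key} (which does not require perfectness of $k$) applies to this particular quotient, yielding
\[
\lambda_{\max}(M) = \lambda_{\max}(M/M^0),
\]
which is exactly the conclusion.

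There is no real obstacle here: the statement is essentially a tautological consequence of the Slope criterion combined with the standard fact that $M^0 \subsetneq M$ for $M \neq 0$. The content of the corollary is simply to record this convenient form of the implication, which will be used elsewhere (for instance, as already invoked in Example 6.2.6). Hence the proof can be made into a one-line deduction, and no further argument is required.
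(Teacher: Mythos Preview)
Your proposal is correct and matches the paper's proof exactly: the paper's proof reads ``By $M\neq M^0$ (Lemma \ref{lem:lgE1} (ii)), we apply Proposition \ref{prop:key} (i)$\Rightarrow$(ii) to $M/M^0$.'' This is precisely your argument.
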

\begin{proof}
By $M\neq M^0$ (Lemma \ref{lem:lgE1} (ii)), we apply Proposition \ref{prop:key} (i)$\Rightarrow$(ii) to $M/M^0$.
\end{proof}


\section{Reverse filtration}\label{sec:rev}

The reverse filtration is first introduced by de Jong in \cite{dJ} to prove a positive characteristic analogue of Tate's fully faithful theorem on $p$-divisible groups. Then it is generalized by Kedlaya in a proof of the slope filtration theorem (\cite{Doc}). In this section, we recall the construction of the reverse filtration in the framework of \cite{rel}, precisely speaking, over $\tilde{\mathcal{R}}^{\bd}$ due to Liu (\cite{Liu}). As a corollary of the construction, we obtain Proposition \ref{prop:maximum eigen}, which is another ingredient in the proof of Theorem \ref{conj:bd} (ii).


\begin{ass}
In this section, we work with notation as in \S \ref{sec:Robba}, in particular, we allow a relative Frobenius as a Frobenius lift on $\mathcal{R}$. We also keep Assumption \ref{ass:relative}.
\end{ass}

\begin{propdfn}[{\cite[Proposition 1.6.9]{Liu}}]\label{propdfn:descent}
Let $M$ be a $\varphi$-module over $\tilde{\mathcal{R}}^{\bd}$. Then the reverse filtration on $M\otimes_{\tilde{\mathcal{R}}^{\bd}}\tilde{\mathcal{E}}$ (Definition \ref{dfn:reverse}) descends to $\tilde{\mathcal{R}}^{\bd}$. We call the resulting filtration the reverse filtration of $M$, and denote it by $S^{\bullet}(M)$ (our notation is different from Liu's).
\end{propdfn}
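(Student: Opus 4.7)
The plan is to set $S^{\lambda}(M) := S^{\lambda}(M_{\tilde{\mathcal{E}}}) \cap M$ for each real $\lambda$ and show that the canonical inclusion $S^{\lambda}(M)\otimes_{\tilde{\mathcal{R}}^{\bd}}\tilde{\mathcal{E}} \hookrightarrow S^{\lambda}(M_{\tilde{\mathcal{E}}})$ is an equality of $\tilde{\mathcal{E}}$-vector spaces. Since $\tilde{\mathcal{R}}^{\bd}\hookrightarrow\tilde{\mathcal{E}}$, injectivity is automatic and the content is surjectivity, i.e., that $S^{\lambda}(M_{\tilde{\mathcal{E}}})$ is spanned over $\tilde{\mathcal{E}}$ by elements of $M$. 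Because only finitely many distinct slopes appear, an induction on their number reduces the entire problem to a single descent: the top isotypic piece $(M_{\tilde{\mathcal{E}}})_{\lambda_{\max}}$ contains a full basis lying in $M$. Once this top-slope case is settled, one replaces $M$ by $M/S^{\lambda_{\max}}(M)$ and iterates.

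First I would invoke Dieudonn\'e--Manin (Lemma \ref{lem:DM}) over the complete, inversive, strongly difference-closed field $\tilde{\mathcal{E}}$ to realize $(M_{\tilde{\mathcal{E}}})_{\lambda_{\max}}$ as the $\tilde{\mathcal{E}}$-span of Frobenius $d$-eigenvectors $v$ with $\varphi^{d}(v) = cv$, $|c|_{0} = |q|^{d\lambda_{\max}}$. After enlarging $d$ so that $|q|^{d\lambda_{\max}}\in |K^{\times}|$ and picking $c'\in K^{\times}$ of the matching norm, I solve $\varphi^{d}(b) = (c/c')b$ for $b\in\tilde{\mathcal{E}}^{\times}$ (possible by \cite[Theorem 14.6.3]{pde} applied to the strongly difference-closed residue field $k((u^{\mathbb{Q}}))$ of $\tilde{\mathcal{E}}$); then $v/b$ is an eigenvector whose eigenvalue lies in $K^{\times}$, and so I may assume $c \in K^{\times}$ throughout.

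The second step is to exhibit such an eigenvector inside $M$. Pick an $\tilde{\mathcal{R}}^{\bd}$-basis $\mathbf{e} = (\mathbf{e}_{1},\ldots,\mathbf{e}_{n})$ of $M$, write $v = \mathbf{e}\mathbf{x}$ with $\mathbf{x}\in\tilde{\mathcal{E}}^{n}$, and let $A^{(d)} \in \GL_{n}(\tilde{\mathcal{R}}^{\bd})$ be the matrix of $\varphi^{d}$ on $\mathbf{e}$; the eigenvector equation then reads
\[
\varphi^{d}(\mathbf{x}) \;=\; B\,\mathbf{x}, \qquad B := c\,(A^{(d)})^{-1}.
\]
If one can choose $\mathbf{e}$ so that $B\in \M_{n}(\tilde{\mathcal{R}}^{\Int})$, then Proposition \ref{prop:matrix} (applied with $\varphi^{d}$ in place of $\varphi$) forces $\mathbf{x} \in (\tilde{\mathcal{R}}^{\bd})^{n}$, i.e., $v \in M$; running over a basis of the isotypic component then yields the descent of $S^{\lambda_{\max}}(M)$, and the induction finishes the proof.

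The main obstacle is precisely the integrality of $B$: this says that the cokernel of $\varphi^{d}$ on the lattice $M^{\Int}:=\bigoplus\tilde{\mathcal{R}}^{\Int}\mathbf{e}_{i}\subset M$ is annihilated by $c$, a Hodge-versus-Newton-polygon condition that does not hold for an arbitrary lattice. I would overcome it by adjusting the basis, via Smith-normal-form maneuvers over the henselian discrete valuation ring $\tilde{\mathcal{R}}^{\Int}$, so that the elementary divisors of $A^{(d)}$ on $M^{\Int}$ are all bounded above in valuation by that of $c$; the fact that $\lambda_{\max}$ is the \emph{top} slope is exactly what makes such a normalization achievable, because otherwise existence of a higher Hodge number would force a higher Newton slope by Mazur's inequality, contradicting the maximality of $\lambda_{\max}$.
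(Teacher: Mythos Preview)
Your overall strategy coincides with the paper's: reduce to the top-slope piece, use Dieudonn\'e--Manin over $\tilde{\mathcal{E}}$ to produce a basis of $d$-eigenvectors of slope $\lambda_{\max}$, and then show each such eigenvector already lies in $M$. The paper performs this last step by invoking \cite[Lemma~1.6.8]{Liu} as a black box. You attempt to reprove that descent via Proposition~\ref{prop:matrix}, and you correctly isolate the obstruction: one needs an $\tilde{\mathcal{R}}^{\bd}$-basis of $M$ for which $B=c(A^{(d)})^{-1}\in\M_n(\tilde{\mathcal{R}}^{\Int})$.

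Your final paragraph, however, does not establish this. There are two problems. First, a change of $\tilde{\mathcal{R}}^{\bd}$-basis sends $A^{(d)}$ to $P^{-1}A^{(d)}\varphi^{d}(P)$, not to $P^{-1}A^{(d)}Q$ with $P,Q$ independent, so Smith normal form does not let you adjust the elementary divisors at will. Second---and this is the real gap---Mazur's inequality runs the wrong way for your purpose: for any lattice the Newton polygon lies on or above the Hodge polygon, which forces the \emph{largest} Hodge slope to be $\ge\lambda_{\max}$, never $\le\lambda_{\max}$. A Hodge slope exceeding $\lambda_{\max}$ is therefore perfectly compatible with Mazur and yields no contradiction; for a generic lattice one should \emph{expect} the top Hodge slope to be strictly larger than $\lambda_{\max}$. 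The existence of a lattice with all Hodge slopes $\le\lambda_{\max}$ is true but needs genuine input beyond elementary divisor bookkeeping: over $\mathcal{O}_{\tilde{\mathcal{E}}}$ the Dieudonn\'e--Manin lattice has Hodge polygon equal to the Newton polygon, and one can then descend that lattice to $\tilde{\mathcal{R}}^{\Int}$ by the density of $\tilde{\mathcal{R}}^{\Int}$ in its $\mathfrak{m}_K$-adic completion $\mathcal{O}_{\tilde{\mathcal{E}}}$, after which $\tilde{\mathcal{R}}^{\bd}\cap\mathcal{O}_{\tilde{\mathcal{E}}}=\tilde{\mathcal{R}}^{\Int}$ transfers the integrality of $c(A^{(d)})^{-1}$. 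This, or an equivalent direct argument, is essentially the content of Liu's lemma.
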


We quickly recall the proof as it is needed in the following. Let $\lambda$ denote the maximum Frobenius slope of $M\otimes_{\tilde{\mathcal{R}}^{\bd}}\tilde{\mathcal{E}}$. It suffices to show that $S^{\lambda}(M\otimes_{\tilde{\mathcal{R}}^{\bd}}\tilde{\mathcal{E}})$ descends to $M$. By Dieudonn\'e-Manin theorem over $\tilde{\mathcal{E}}$ (Lemma \ref{lem:DM}), there exists a basis $\{v_i\}$ of $S^{\lambda}(M\otimes_{\tilde{\mathcal{R}}^{\bd}}\tilde{\mathcal{E}})$ such that $\varphi^d(v_i)=c_iv_i$ with $c_i\in K$ such that $\log{|c_i|}/\log{|q^d|}=\lambda$. By the descending lemma of ($d$-)eigenvectors of slope $\lambda$ (\cite[Lemma 1.6.8]{Liu}), $v_i\in M$.

\begin{dfn}[{cf. Definition \ref{dfn:eigen}}]
Let $d$ be a positive integer. A {\it (Frobenius) $d$-eigenvector} of a $\varphi$-module $M$ over (the non-complete field) $\tilde{\mathcal{R}}^{\bd}$ is a non-zero element $v$ of $M$ such that
\[
\varphi^d(v)=cv
\]
for some $c\in \tilde{\mathcal{R}}^{\bd}$. We refer to the quotient $\log{|c|_0}/\log{|q^d|}$ as the {\it (Frobenius) slope} of $v$.
\end{dfn}

\begin{prop}\label{prop:maximum eigen}
Let $M$ be a $\varphi$-module over $\tilde{\mathcal{R}}^{\bd}$. Then there exists a Frobenius $d$-eigenvector $v\in M$ of slope $\lambda_{\max}(M\otimes_{\tilde{\mathcal{R}}^{\bd}}\tilde{\mathcal{E}})$ for some positive integer $d$.
\end{prop}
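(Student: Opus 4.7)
The plan is to extract the desired eigenvector directly from the proof of Proposition-Definition~\ref{propdfn:descent} recalled just above. Set $\lambda := \lambda_{\max}(M \otimes_{\tilde{\mathcal{R}}^{\bd}} \tilde{\mathcal{E}})$, which is a rational number since Frobenius slopes are rational. The goal is to produce a Frobenius $d$-eigenvector of slope $\lambda$ already living in $M$ (rather than merely in $M \otimes \tilde{\mathcal{E}}$).

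First, I would apply Dieudonn\'e--Manin (Lemma~\ref{lem:DM}) to the $\varphi$-module $M \otimes_{\tilde{\mathcal{R}}^{\bd}} \tilde{\mathcal{E}}$ over $\tilde{\mathcal{E}}$. This is legitimate because the residue field of $\tilde{\mathcal{E}}$ is $k((u^{\mathbb{Q}}))$, which is strongly difference-closed by Definition~\ref{dfn:Hahn}. This yields a positive integer $d$ and a basis of $M \otimes \tilde{\mathcal{E}}$ consisting of Frobenius $d$-eigenvectors whose slopes realize the slope multiset. In particular, at least one such basis vector, call it $v \in M \otimes \tilde{\mathcal{E}}$, has slope $\lambda$, and $v$ lies in $S^{\lambda}(M \otimes \tilde{\mathcal{E}})$. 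After possibly replacing $d$ by a multiple to clear denominators in $\lambda$ and twisting by an element of $(\tilde{\mathcal{R}}^{\bd})^{\times}$ furnished by Lemma~\ref{lem:trivialization}, I may arrange that $\varphi^d(v) = c v$ with $c \in K^{\times}$ satisfying $\log|c|/\log|q^d| = \lambda$.

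Second, I would invoke the descending lemma for $d$-eigenvectors that is already used in the proof of Proposition-Definition~\ref{propdfn:descent} (Liu, Lemma 1.6.8). This is precisely the statement that an eigenvector in $M \otimes \tilde{\mathcal{E}}$ with scalar eigenvalue in $K^{\times}$ descends to $M$ over $\tilde{\mathcal{R}}^{\bd}$. Concretely, writing $v$ in coordinates with respect to a chosen $\tilde{\mathcal{R}}^{\bd}$-basis of $M$, the equation $\varphi^d(v) = cv$ becomes a matrix equation $A \mathbf{w} = \varphi^d(\mathbf{w})$ with $A$ a matrix over $\tilde{\mathcal{R}}^{\bd}$ (suitably renormalized so that the matrix lies in $\tilde{\mathcal{R}}^{\Int}$), and then Proposition~\ref{prop:matrix} (applied to $\varphi^d$) forces $\mathbf{w} \in (\tilde{\mathcal{R}}^{\bd})^n$; hence $v \in M$.

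There is no essential obstacle: the proposition is basically a corollary of the construction of the reverse filtration. The one technical point to take care of is ensuring that the eigenvalue $c$ can be arranged to lie in $K^{\times}$ (as opposed to merely in $\tilde{\mathcal{E}}^{\times}$ or $(\tilde{\mathcal{R}}^{\bd})^{\times}$), since the descent through Proposition~\ref{prop:matrix} is stated most cleanly for matrices over $\tilde{\mathcal{R}}^{\Int}$; this is handled via Lemma~\ref{lem:trivialization}, which allows us to twist $v$ so that the eigenvalue is a scalar in $K$ of prescribed norm $|q|^{\lambda}$.
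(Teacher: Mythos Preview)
Your proposal is correct and follows essentially the same approach as the paper: the paper's proof is literally one line (``with notation as in the proof of Proposition-Definition~\ref{propdfn:descent}, put $v=v_i$''), and you have simply unpacked that construction. One small remark: the detour through Lemma~\ref{lem:trivialization} is unnecessary, since Dieudonn\'e--Manin over $\tilde{\mathcal{E}}$ already produces eigenvalues $c_i\in K^{\times}$ (this is how the paper states it in the recalled proof of Proposition-Definition~\ref{propdfn:descent}), so no twist is needed before invoking Liu's descent lemma.
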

\begin{proof}
With notation as in the proof of Proposition-Definition \ref{propdfn:descent}, put $v=v_i$.
\end{proof}


\section{Proof of Theorem \ref{conj:bd} (ii)}\label{sec:pf}

In this section, we will prove:

\begin{prop}\label{prop:pf}
Assume that $k$ is algebraically closed. Let $M$ be a PBQ $(\varphi,\nabla)$-module over $\mathcal{R}^{\bd}$ solvable in $\mathcal{R}_{\log}$. Let $f\in \Sol_{\lambda}(M)$ be a Frobenius $d$-eigenvector of slope $\mu$. Then $\mu\le\lambda-\lambda_{\max}(M_{\mathcal{E}})$.
\end{prop}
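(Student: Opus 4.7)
My plan is to follow the three-step strategy outlined in the introduction, so I will organize the proof into the three corresponding reductions.

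First, I would perform the preliminary Frobenius reductions to convert $f$ into a genuine horizontal section that is $\varphi$-invariant. Replacing $\varphi$ by $\varphi^d$ (i.e.\ applying the $d$-pushforward $[d]_*$), which does not alter the differential structure and rescales Frobenius slopes so that our target inequality $\mu\le\lambda-\lambda_{\max}$ is preserved, I may assume $d=1$, so that $\varphi(f)=q^a f$ for some $a\in\mathbb{Z}$. Next, after twisting $M$ by $\mathcal{R}^{\bd}(q^{-a})$—which again leaves the connection, the log-growth of $f$, and the quantity $\mu-\lambda_{\max}$ unchanged—I may further assume $a=0$, that is, $\varphi(f)=f$. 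The goal is now to prove $\lambda_{\max}(M_{\mathcal{E}})\le\lambda$.

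Second, I would reduce to the injective case. Since $f:M\to\mathcal{R}_{\log}$ is $(\varphi,\nabla)$-equivariant, $N:=\ker f$ is a $(\varphi,\nabla)$-submodule of $M$, and the induced map $\bar f:M/N\to\mathcal{R}_{\log}$ is injective, still fixed by $\varphi$, and still has log-growth $\le\lambda$. The quotient $M/N$ is again PBQ by Corollary~\ref{cor:qt inv of PBQ}. Applying Slope criterion (Proposition~\ref{prop:key}, direction (i)$\Rightarrow$(ii)) to the non-zero quotient $(M/N)_{\mathcal{E}}$ of $M_{\mathcal{E}}$ yields $\lambda_{\max}((M/N)_{\mathcal{E}})=\lambda_{\max}(M_{\mathcal{E}})$. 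Replacing $M$ by $M/N$, I may therefore assume that $f:M\hookrightarrow\mathcal{R}_{\log}$ is injective.

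Third, I would perform the base change to the extended bounded Robba ring $\tilde{\mathcal{R}}^{\bd}$ via the $\varphi$-equivariant embedding $\psi:\mathcal{R}\hookrightarrow\tilde{\mathcal{R}}$ of Proposition~\ref{prop:embedding} and its logarithmic extension (Lemma~\ref{lem:log calculation}, Definition~\ref{dfn:log embedding}). Injectivity of $f$ is preserved under this base change thanks to Lemmas~\ref{lem:inj} and~\ref{lem:log inj}, yielding an injective $\varphi$-equivariant map
\[
\tilde f:M\otimes_{\mathcal{R}^{\bd}}\tilde{\mathcal{R}}^{\bd}\hookrightarrow\tilde{\mathcal{R}}_{\log}.
\]
The hypothesis $f\in\Sol_{\lambda}(M)$ combined with Lemma~\ref{lem:log-growth psi log} and Lemma~\ref{lem:property log}~(i) gives $\tilde f(M\otimes_{\mathcal{R}^{\bd}}\tilde{\mathcal{R}}^{\bd})\subset\Fil_{\lambda}\tilde{\mathcal{R}}_{\log}$. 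Now apply Proposition~\ref{prop:maximum eigen} to $M\otimes_{\mathcal{R}^{\bd}}\tilde{\mathcal{R}}^{\bd}$ (using that $\tilde{\mathcal{R}}^{\bd}$ sees the maximum slope of $M_{\mathcal{E}}$, essentially because its generic fibre computes the same slopes as $M_{\mathcal{E}}$): there exists a Frobenius $d'$-eigenvector $\tilde v$ of slope $\lambda_{\max}(M_{\mathcal{E}})$. Since $\tilde f$ is $\varphi$-equivariant and injective, $\tilde f(\tilde v)\in\tilde{\mathcal{R}}_{\log}$ is a non-zero $d'$-eigenvector of the same slope $\lambda_{\max}(M_{\mathcal{E}})$, and by construction it lies in $\Fil_{\lambda}\tilde{\mathcal{R}}_{\log}$. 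Lemma~\ref{lem:calc log-growth} says that any such eigenvector is exactly of log-growth equal to its slope, forcing $\lambda_{\max}(M_{\mathcal{E}})\le\lambda$, as desired.

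The main obstacle in this plan is ensuring that the base-change step genuinely produces a Frobenius eigenvector of the same maximum slope as $M_{\mathcal{E}}$ and that the embedding $\tilde f$ really exists as an injection into $\tilde{\mathcal{R}}_{\log}$; this is precisely where Proposition~\ref{prop:maximum eigen} (via the reverse filtration over $\tilde{\mathcal{R}}^{\bd}$) and the tensor-injectivity lemmas Lemmas~\ref{lem:inj}/\ref{lem:log inj} do the essential work. Everything else is a careful bookkeeping of Frobenius twists, pushforwards, and the identifications of log-growth filtrations under $\psi$.
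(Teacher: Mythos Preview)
Your argument follows the paper's proof essentially verbatim: the paper also handles the case $\varphi(f)=f$ first (quotient by $\ker f$, Slope criterion, then the base change to $\tilde{\mathcal{R}}^{\bd}$ combining Proposition~\ref{prop:maximum eigen} with Lemma~\ref{lem:calc log-growth}), and then reduces the general case by passing to $N=([d]_*M)(c)$. One small correction: after the $d$-pushforward the eigenvalue is an arbitrary $c\in K^\times$, not necessarily $q^a$ with $a\in\mathbb{Z}$, and the appropriate twist is by $c$ (so that $\lambda_{\max}$ becomes $\lambda_{\max}+\mu$ and the new $f'$ is $\varphi^d$-fixed), exactly as in the paper's $N=([d]_*M)(c)$.
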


\begin{proof}[{Proof of Theorem \ref{conj:bd} (ii) assuming Proposition \ref{prop:pf}}]
We may assume that $k$ is algebraically closed after extending the coefficient field $K$ (Lemmas \ref{lem:slope5} (II)-(i) and \ref{lem:coefficient Robba}). By Proposition \ref{prop:main}, we have only to prove $\Sol_{\lambda}(M)\subset S_{\lambda-\lambda_{\max}}(\Sol(M))$ for an arbitrary $\lambda$. By Dieudonn\'e-Manin theorem (Lemma \ref{lem:DM}), there exists a $K$-basis $\{f_i\}$ of $\Sol_{\lambda}(M)$ consisting of Frobenius $d$-eigenvectors. Let $\mu_i$ denote the slope of $f_i$. By Proposition \ref{prop:pf}, $\mu_i\le\lambda-\lambda_{\max}$. By Lemma \ref{lem:slope6}, $f_i\in S_{\mu_i}(\Sol(M))\subset S_{\lambda-\lambda_{\max}}(\Sol(M))$ for all $i$, hence, $\Sol_{\lambda}(M)\subset S_{\lambda-\lambda_{\max}}(\Sol(M))$.
\end{proof}

\begin{proof}[{Proof of Proposition \ref{prop:pf}}]
\begin{enumerate}
\item[$\bullet$] The case $\varphi(f)=f$

In this case, $\mu=0$, and we have only to prove $\lambda_{\max}(M_{\mathcal{E}})\le\lambda$. Since $f:M\to\mathcal{R}_{\log}$ is $\varphi,\nabla$-equivariant by assumption, $\ker{f}$ is a $(\varphi,\nabla)$-submodule of $M$. Denote by $M''$ the quotient $M/\ker{f}$, and by $f'':M''\hookrightarrow\mathcal{R}_{\log}$ the induced map by $f$. Then $f''\in \Sol_{\lambda}(M'')$ and $\varphi(f'')=f''$ by definition, and $\lambda_{\max}(M_{\mathcal{E}})=\lambda_{\max}(M''_{\mathcal{E}})$ by Proposition \ref{prop:key} (i)$\Rightarrow$(ii). Therefore we may also assume that $f$ is injective after replacing $(M,f)$ by $(M'',f'')$.

We fix a $\varphi$-equivariant embedding $\psi:\mathcal{R}_{\log}\hookrightarrow\tilde{\mathcal{R}}_{\log}$ in Definition \ref{dfn:log embedding}. We consider the composition $\tilde{f}$ of
\[\xymatrix{
M\otimes_{\mathcal{R}^{\bd}}\tilde{\mathcal{R}}^{\bd}\ar@{^{(}->}[r]^(.47){f\otimes \id}&\mathcal{R}_{\log}\otimes_{\mathcal{R}^{\bd}}\tilde{\mathcal{R}}^{\bd}\ar@{^{(}->}[r]&\tilde{\mathcal{R}}_{\log},
}\]
where the second injection is the multiplication map given in Lemma \ref{lem:log inj}. Then we have
\begin{equation}\label{eq:pf1}
\tilde{f}(M\otimes_{\mathcal{R}^{\bd}}\tilde{\mathcal{R}}^{\bd})\subset \psi(f(M))\cdot\tilde{\mathcal{R}}^{\bd}\subset \psi(\Fil_{\lambda}\mathcal{R}_{\log})\cdot\tilde{\mathcal{R}}^{\bd}\subset \Fil_{\lambda}\tilde{\mathcal{R}}_{\log}\cdot \Fil_{0}\tilde{\mathcal{R}}_{\log}\subset \Fil_{\lambda}\tilde{\mathcal{R}}_{\log},
\end{equation}
where each inclusion follows by definition, by $f\in \Sol_{\lambda}(M)$, by Lemmas \ref{lem:log-growth psi log}, \ref{lem:property log} (iii), and by Lemma \ref{lem:property log} (i) respectively. By Proposition \ref{prop:maximum eigen}, there exists a Frobenius $d$-eigenvector $\tilde{v}\in M\otimes_{\mathcal{R}^{\bd}}\tilde{\mathcal{R}}^{\bd}$ of slope $\lambda_{\max}(M_{\mathcal{E}})$. Then $\tilde{f}(\tilde{v})\in\tilde{\mathcal{R}}_{\log}$ is also a $d$-eigenvector of slope $\lambda_{\max}(M_{\mathcal{E}})$ in the sense of Definition \ref{dfn:eigen log extended} by the $\varphi$-equivariantness of $\tilde{f}$. By Lemma \ref{lem:calc log-growth}, $\tilde{f}(\tilde{v})$ is exactly of log-growth $\lambda_{\max}(M_{\mathcal{E}})$. By (\ref{eq:pf1}), the case $\lambda_{\max}(M_{\mathcal{E}})>\lambda$ never occurs, hence, $\lambda_{\max}(M_{\mathcal{E}})\le\lambda$.

\item[$\bullet$] The general case

We will reduce to the previous case. Write $\varphi^d(f)=cf$ for some $c\in K^{\times}$ with $\mu=\log{|c|}/\log{|q^d|}$. With notation as in \S \ref{sec:phinabla}, put $N=([d]_*M)(c)$, which is a $(\varphi^d,\nabla)$-module over $\mathcal{R}^{\bd}$. Since the map $T:M\to N;m\mapsto m\otimes e_c$ is an isomorphism of $\nabla$-modules over $\mathcal{R}^{\bd}$, $N$ is also solvable in $\mathcal{R}_{\log}$. We define $f':N\to\mathcal{R}_{\log}$ as the composition $f\circ T^{-1}$. Then we have $\varphi^d(f')=f'$ and $f'\in\Sol(N)$ by definition, and $f\in \Sol_{\lambda}(M)$ by $f'(N)=f(M)$. By applying the previous case to $f'$, we obtain $\lambda_{\max}(N_{\mathcal{E}})=\lambda_{\max}(M_{\mathcal{E}})+\mu\le\lambda$.
\end{enumerate}
\end{proof}


\part{Applications}
In part II, we give applications of Theorems \ref{conj:CT} (not a conjecture anymore), \ref{conj:CTgen}, and \ref{conj:bd} to the log-growth Newton polygons of $(\varphi,\nabla)$-modules over $K[\![t]\!]_0,\mathcal{E}$, and $\mathcal{R}^{\bd}$. In particular, we prove Dwork's conjecture on the semicontinuity (Corollary \ref{cor:LGFDW}), a weak tensor compatibility (Proposition \ref{prop:weak tensor}), the dual invariance of the maximum generic slope (Theorem \ref{thm:inv}), and an analogue of Drinfeld-Kedlaya theorem (Theorem \ref{thm:Gri}). We also gather some remarks, probably known for experts, such as some interactions between Theorems \ref{conj:CT} and \ref{conj:CTgen} (Propositions \ref{prop:CTgeneric}, \ref{prop:(ii) implies (i)}), for the reader's convenience.

\section{Chiarellotto-Tsuzuki conjecture in the generic case}\label{sec:CTgen}

In this section, we give an alternative proof of Theorem \ref{conj:CTgen} using Theorem \ref{conj:CT}.

\begin{prop}\label{prop:CTgeneric}
The conjecture $\mathbf{LGF}_{\mathcal{E}[\![X-t]\!]_0}$ implies the conjecture $\mathbf{LGF}_{\mathcal{E}}$.
\end{prop}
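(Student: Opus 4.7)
The strategy is to apply $\mathbf{LGF}_{\mathcal{E}[\![X-t]\!]_0}$ to the Dwork pullback $\tau^*M$ and translate the result back to $M$ via the identifications of §\ref{sec:logfil}. Given a $(\varphi,\nabla)$-module $M$ over $\mathcal{E}$, its pullback $\tau^*M$ is a $(\varphi,\nabla)$-module over $\mathcal{E}[\![X-t]\!]_0$, and by the very definitions in §\ref{subsec:logfil3} one has canonical identifications of $\varphi$-modules over $\mathcal{E}$:
\[
\Sol(M) = \Sol(\tau^*M), \qquad \Sol_{\bullet}(M) = \Sol_{\bullet}(\tau^*M),
\]
compatible with both the Frobenius slope filtrations and the growth filtrations.

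For part (i) of $\mathbf{LGF}_{\mathcal{E}}$: applying $\mathbf{LGF}_{\mathcal{E}[\![X-t]\!]_0}$ (i) to $\tau^*M$ yields rationality and right-continuity for the slopes of $\Sol_{\bullet}(\tau^*M)=\Sol_{\bullet}(M)$. By Robba's theorem (Theorem \ref{thm:Rob}), $\dim_{\mathcal{E}}\Sol_\lambda(M)=\rank_{\mathcal{E}} M-\rank_{\mathcal{E}} M^\lambda$ for every $\lambda$, so the slope multiset of $M^\bullet$ coincides with that of $\Sol_\bullet(M)$, and the duality $M^\lambda=\Sol_\lambda(M)^\perp$ on $V(M)$-side transfers the right-continuity from the increasing $\Sol_\bullet(M)$ to the decreasing $M^\bullet$.

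For part (ii), assume $M$ is PBQ. I first verify that $\tau^*M$ is PBQ over $\mathcal{E}[\![X-t]\!]_0$, which by definition amounts to showing that the generic fiber $(\tau^*M)_{\mathcal{F}}$ is PBQ over $\mathcal{F}$, where $\mathcal{F}$ denotes the Amice ring over $\mathcal{E}$ in the variable $X-t$. Identifying $(\tau^*M)_{\mathcal{F}} \cong M \otimes_{\mathcal{E},\tau}\mathcal{F}$, I check that (a) the formation of the bounded quotient is compatible with the base change via $\tau$, so that $((\tau^*M)_{\mathcal{F}})^0 \cong M^0 \otimes_{\mathcal{E},\tau}\mathcal{F}$, and (b) purity of $\varphi$-modules is stable under base change to $\mathcal{F}$ (Lemma~\ref{lem:slope5} (I)-(i)). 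Consequently $(\tau^*M)_{\mathcal{F}}/((\tau^*M)_{\mathcal{F}})^0 \cong (M/M^0) \otimes_{\mathcal{E},\tau}\mathcal{F}$ is pure, and base change invariance of the Frobenius slope multiset gives $\lambda_{\max}((\tau^*M)_{\mathcal{F}})=\lambda_{\max}(M)$. Now $\mathbf{LGF}_{\mathcal{E}[\![X-t]\!]_0}$ (ii) applied to $\tau^*M$ yields
\[
\Sol_\lambda(\tau^*M) = S_{\lambda-\lambda_{\max}(M)}(\Sol(\tau^*M)),
\]
which by the identifications above is exactly $\mathbf{LGF}_{\mathcal{E}}$ (ii) for $M$.

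\textbf{Main obstacle.} The crux is the PBQ transfer in part (ii): verifying the compatibility $((\tau^*M)_{\mathcal{F}})^0 \cong M^0 \otimes_{\mathcal{E},\tau}\mathcal{F}$ requires a careful comparison of Robba's constructions over $\mathcal{E}$ and over $\mathcal{F}$ (with their respective iterated Dwork transfer maps), and ultimately rests on a base change invariance property of the log-growth filtration along $\tau$. Once this compatibility is in hand, everything else assembles formally from the identifications of §\ref{subsec:logfil3}.
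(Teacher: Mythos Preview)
Your proposal is correct and follows essentially the same route as the paper. The paper organizes the argument around three lemmas: an identification $M\cong V(\tau^*M)$ via evaluation (Lemma~\ref{lem:evaluation}), the compatibilities $V(S_\bullet(M))\cong S_\bullet(V(M))$ and $V(M^\bullet)\cong V(M)^\bullet$ (Lemma~\ref{lem:CTgeneric2}), and the PBQ transfer $M$ PBQ $\Leftrightarrow$ $\tau^*M$ PBQ (Lemma~\ref{lem:tau inv of PBQ}). You work on the $\Sol$ side rather than the $V$ side and invoke Robba's theorem directly, but the content is the same. Your ``main obstacle'' is exactly Lemma~\ref{lem:tau inv of PBQ}: the paper handles it via the commutative square $\tau_{\mathfrak{E}}\circ\iota=\jmath\circ\tau$ (where $\mathfrak{E}$ is your $\mathcal{F}$ and $\iota=i\circ\tau$), reducing the comparison of $\Sol_0$ over $\mathcal{E}$ and over $\mathfrak{E}$ to the coefficient base change Lemma~\ref{lem:coefficient power} along $\jmath$. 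One small notational slip: your base change for the generic fiber is along $\iota=i\circ\tau:\mathcal{E}\to\mathcal{F}$, not along $\tau$ alone.
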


Actually this is known for experts. For the reader's convenience, we record here.

\begin{lem}\label{lem:evaluation}
Let $M$ be a $(\varphi,\nabla)$-module over $\mathcal{E}$. Then there exists a  functorial isomorphism of $\varphi$-modules over $\mathcal{E}$
\[
M\cong V(M).
\]
\end{lem}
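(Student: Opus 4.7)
The plan is to construct the isomorphism by trivialising $\tau^{*}M$ over $\mathcal{E}\{X-t\}$ and then evaluating at $X=t$.

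First I would use that $\tau^{*}M$ is solvable in $\mathcal{E}\{X-t\}$ (the Dwork-trick input packaged in \S\ref{subsec:logfil3}). By the very definition of $V(M) = V(\tau^{*}M) = (\tau^{*}M \otimes_{\mathcal{E}[\![X-t]\!]_0} \mathcal{E}\{X-t\})^{\nabla}$, the canonical evaluation map gives a $\nabla$-equivariant injection $V(M)\otimes_{\mathcal{E}}\mathcal{E}\{X-t\}\hookrightarrow \tau^{*}M\otimes_{\mathcal{E}[\![X-t]\!]_0}\mathcal{E}\{X-t\}$, which is an isomorphism for dimension reasons (both sides are free of rank $\rank M$ over $\mathcal{E}\{X-t\}$). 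This identification is moreover $\varphi$-equivariant since the $\varphi$-structure on $V(M)$ is the restriction of the $\varphi$-structure on the right-hand side.

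Next I would reduce modulo the ideal $(X-t)$. Because $\tau:\mathcal{E}\to\mathcal{E}[\![X-t]\!]_0/(X-t)=\mathcal{E}$ is the identity by the definition of Taylor expansion in Example 3.3.5, one has $\tau^{*}M/(X-t)\tau^{*}M\cong M$ as $(\varphi,\nabla)$-modules over $\mathcal{E}$; here I use that $\varphi(X-t)=\tau(\varphi(t))-\varphi(t)$ lies in $(X-t)\mathcal{E}[\![X-t]\!]_0$, so reduction mod $(X-t)$ is $\varphi$-equivariant, while the connection passes to a connection on the quotient which by construction agrees with $\nabla_M$. Similarly $\mathcal{E}\{X-t\}/(X-t)=\mathcal{E}$, so reducing the above isomorphism modulo $(X-t)$ produces a natural isomorphism of $\varphi$-modules
\[
M\;\cong\; V(M)\otimes_{\mathcal{E}}\mathcal{E}\;=\;V(M).
\]

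Finally I would verify functoriality: a morphism $f\colon M\to N$ of $(\varphi,\nabla)$-modules pulls back to $\tau^{*}f\colon \tau^{*}M\to \tau^{*}N$, which induces a morphism on horizontal sections compatible with evaluation at $X=t$, yielding a commutative square $M\to V(M)$, $N\to V(N)$. The only mildly subtle point — the hard part, insofar as there is one — is the Frobenius compatibility of the trivialisation; it comes down to the observation that the Frobenius on $\mathcal{E}[\![X-t]\!]_0$ in Example 3.3.5 is defined so that $X-t\mapsto\tau(\varphi(t))-\varphi(t)\in (X-t)\mathcal{E}[\![X-t]\!]_0$, which ensures both that the reduction mod $(X-t)$ commutes with $\varphi$ and that the chosen $\varphi$-semi-linear automorphism of $\tau^{*}M\otimes\mathcal{E}\{X-t\}$ preserves the subspace $V(M)\otimes 1$ on restriction to $X=t$.
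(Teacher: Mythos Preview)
Your proof is correct and follows essentially the same route as the paper: the paper phrases the final step as pulling back along the $\varphi$-equivariant evaluation map $\ev:\mathcal{E}\{X-t\}\to\mathcal{E},\ f(X-t)\mapsto f(0)$ (noting $\ev\circ\tau=\id_{\mathcal{E}}$), which is exactly your reduction modulo $(X-t)$. Your explicit check that $\varphi(X-t)=\tau(\varphi(t))-\varphi(t)\in(X-t)\mathcal{E}[\![X-t]\!]_0$ is precisely the reason $\ev$ is $\varphi$-equivariant, so you have simply made that step more visible.
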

\begin{proof}
We consider the evaluation map $\ev:\mathcal{E}\{X-t\}\to\mathcal{E};f(X-t)\mapsto f(0)$. Then $\ev$ is $\varphi$-equivariant and satisfies $\ev\circ\tau=\id_{\mathcal{E}}$. By Dwork's trick, there exists a functorial isomorphism of $(\varphi,\nabla)$-modules over $\mathcal{E}\{X-t\}$
\[
V(M)\otimes_{\mathcal{E},\tau}\mathcal{E}\{X-t\}\cong\tau^*M\otimes_{\mathcal{E}[\![X-t]\!]_0}\mathcal{E}\{X-t\}.
\]
By pulling back via $\ev$, we obtain the desired isomorphism.
\end{proof}

In the generic case, Frobenius slope filtration and the log-growth filtration commute with the functor $V$.

\begin{lem}\label{lem:CTgeneric2}
Let $M$ be a $(\varphi,\nabla)$-module over $\mathcal{E}$.
\begin{enumerate}
\item There exists a canonical isomorphism
\[
V(S_{\bullet}(M))\cong S_{\bullet}(V(M)).
\]
In particular, the slope multiset of $S_{\bullet}(M)$ coincides with that of $S_{\bullet}(V(M))$.
\item There exists a canonical isomorphism
\[
V(M^{\bullet})\cong V(M)^{\bullet}.
\]
In particular, the slope multiset of $M^{\bullet}$ coincides with that of $V(M)^{\bullet}$.
\end{enumerate}
\end{lem}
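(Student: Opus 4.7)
The strategy is to exploit Lemma \ref{lem:evaluation}, which furnishes a functorial isomorphism $V(M)\cong M$ of $\varphi$-modules over $\mathcal{E}$, combined with the fact that the functor $V$ from the category of $(\varphi,\nabla)$-modules over $\mathcal{E}$ to the category of $\varphi$-modules over $\mathcal{E}$ is exact. Exactness follows because for any short exact sequence $0\to M'\to M\to M''\to 0$ of $(\varphi,\nabla)$-modules over $\mathcal{E}$, pulling back via $\tau$ and tensoring to $\mathcal{E}\{X-t\}$ yields a short exact sequence of trivial $\nabla$-modules by Dwork's trick, so taking horizontal sections preserves exactness. Consequently, for any $(\varphi,\nabla)$-submodule $N\hookrightarrow M$, the inclusion $V(N)\hookrightarrow V(M)$ corresponds under Lemma \ref{lem:evaluation} to $N\hookrightarrow M$ itself.

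For part (i), I would apply this observation to the Frobenius slope filtration $S_\bullet(M)$, whose members are $(\varphi,\nabla)$-submodules of $M$ (as recalled at the end of \S \ref{subsec:logfil3}). Then $V(S_\lambda(M))\hookrightarrow V(M)$ is identified with $S_\lambda(M)\hookrightarrow M$ as $\varphi$-modules, so each graded piece $V(S_\lambda(M))/\cup_{\mu<\lambda}V(S_\mu(M))$ is pure of Frobenius slope $\lambda$ as a $\varphi$-module over $\mathcal{E}$. By the uniqueness characterization in Theorem \ref{thm:slope1} (ii), this forces $V(S_\bullet(M))=S_\bullet(V(M))$; the equality of slope multisets is then immediate.

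For part (ii), the same observation applied to the log-growth filtration $M^\bullet$ yields that $V(M^\lambda)\hookrightarrow V(M)$ is identified with $M^\lambda\hookrightarrow M$. On the other hand, Theorem \ref{thm:Rob} gives $\Sol_\lambda(M)=\Sol(M/M^\lambda)$, and via Lemma \ref{lem:evaluation} applied to $M\spcheck$ the latter corresponds to the annihilator $(M/M^\lambda)\spcheck\subset M\spcheck$. Taking orthogonal complements with respect to the canonical pairing $V(M)\otimes_{\mathcal{E}}\Sol(M)\to\mathcal{E}$ then identifies $V(M)^\lambda=\Sol_\lambda(M)^{\perp}$ with $M^\lambda\subset M$, giving the desired equality $V(M^\lambda)=V(M)^\lambda$ inside $V(M)$.

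The main technical point requiring care is the compatibility between the canonical pairing $V(M)\otimes_{\mathcal{E}}\Sol(M)\to\mathcal{E}$, which is built by evaluating horizontal solutions against analytic horizontal sections inside $\mathcal{E}\{X-t\}$, and the tautological evaluation pairing $M\otimes_{\mathcal{E}}M\spcheck\to\mathcal{E}$ under Lemma \ref{lem:evaluation} applied to both $M$ and $M\spcheck$. This should be verified by tracing through the proof of Lemma \ref{lem:evaluation}: both pairings arise from the natural pairing $\tau^*M\otimes_{\mathcal{E}[\![X-t]\!]_0}\tau^*(M\spcheck)\to\mathcal{E}[\![X-t]\!]_0$ tensored up to $\mathcal{E}\{X-t\}$ and then pulled back via $\ev$, so the compatibility is formal, although writing it out cleanly is the only mildly delicate step.
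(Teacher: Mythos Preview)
Your proof is correct and, for part (i), matches the paper's argument exactly.

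For part (ii) your route diverges slightly from the paper's. The paper argues directly with the canonical pairing $V(M)\otimes_{\mathcal{E}}\Sol(M)\to\mathcal{E}$ in a single line:
\[
V(M^{\lambda})=(\Sol(M/M^{\lambda}))^{\perp}=(\Sol_{\lambda}(M))^{\perp}=V(M)^{\lambda},
\]
the first equality being standard duality for the exact sequence $0\to M^{\lambda}\to M\to M/M^{\lambda}\to 0$, the second Theorem~\ref{thm:Rob}, the third the definition of $V(M)^{\lambda}$. Your version instead transports both sides through Lemma~\ref{lem:evaluation} to the tautological pairing $M\otimes_{\mathcal{E}}M\spcheck\to\mathcal{E}$, which is why you end up needing the compatibility-of-pairings check you flag at the end. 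That check is indeed formal and your sketch is fine, but it is an extra step the paper's argument simply sidesteps: once you have the direct orthogonality $V(M^{\lambda})=(\Sol(M/M^{\lambda}))^{\perp}$ inside the $V\otimes\Sol$ pairing, Lemma~\ref{lem:evaluation} is not needed for part (ii) at all.
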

\begin{proof}
\begin{enumerate}
\item Both $V(S_{\bullet}(M))$ and $S_{\bullet}(V(M))$ are canonically isomorphic to Frobenius slope filtration $S_{\bullet}(M)$ of $M$ by Lemma \ref{lem:evaluation}.
\item We consider the canonical perfect pairing $V(M)\otimes_{\mathcal{E}}\Sol(M)\to\mathcal{E}$. Then we have
\[
V(M^{\lambda})=(\Sol(M/M^{\lambda}))^{\perp}=(\Sol_{\lambda}(M))^{\perp}=V(M)^{\lambda},
\]
where the second equality follows from Theorem \ref{thm:Rob}. By taking the orthogonal parts, we obtain the assertion.
\end{enumerate}
\end{proof}

\begin{lem}\label{lem:tau inv of PBQ}
Let $M$ be a $(\varphi,\nabla)$-module over $\mathcal{E}$. Then $M$ is PBQ if and only if $\tau^*M$ is PBQ.
\end{lem}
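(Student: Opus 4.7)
The plan is to use the Dwork-type equivalence of categories between $(\varphi,\nabla)$-modules over $\mathcal{E}$ and $(\varphi,\nabla)$-modules over $\mathcal{E}[\![X-t]\!]_0$ solvable in $\mathcal{E}\{X-t\}$: the direction $N \mapsto \tau^*N$ is quasi-inverse to the functor $V$ of analytic horizontal sections, as recorded in Lemma \ref{lem:evaluation}. I would first interpret ``$\tau^*M$ is PBQ'' in the natural way, by analogy with the definition for modules over $K[\![t]\!]_0$ applied now to $\mathcal{E}[\![X-t]\!]_0$ with coefficient CDVF $\mathcal{E}$: namely, that the quotient $\tau^*M/(\tau^*M)^0$ is pure as a $\varphi$-module, where $(\tau^*M)^0$ is the minimal $(\varphi,\nabla)$-submodule of $\tau^*M$ whose quotient is bounded in the corresponding sense.

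Next, I would verify the identification $(\tau^*M)^0 = \tau^*(M^0)$. The containment $\tau^*(M^0) \supset (\tau^*M)^0$ follows from the universal property once we observe that $\tau^*M/\tau^*(M^0) \cong \tau^*(M/M^0)$ is bounded by the very definition of $M^0$ in Theorem \ref{thm:Rob}. For the converse, any $(\varphi,\nabla)$-submodule $N \subset \tau^*M$ with bounded quotient corresponds under $V$ (using Lemma \ref{lem:CTgeneric2} (ii), the compatibility of $V$ with log-growth filtrations) to a sub-$(\varphi,\nabla)$-module $V(N) \subset V(\tau^*M) \cong M$ whose quotient is bounded, and then the minimality of $M^0$ yields $M^0 \subset V(N)$, i.e., $\tau^*(M^0) \subset N$.

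Finally, under the isomorphism $\tau^*M/(\tau^*M)^0 \cong \tau^*(M/M^0)$, Frobenius purity is transferred through the equivalence of categories: by Lemma \ref{lem:CTgeneric2} (i) together with Lemma \ref{lem:evaluation}, the Frobenius slope multisets of $\tau^*N$ and of $N$ coincide, so $N$ is pure as a $\varphi$-module over $\mathcal{E}$ if and only if $\tau^*N$ is pure as a $\varphi$-module (over $\mathcal{E}[\![X-t]\!]_0$). Combining the two steps yields the desired equivalence $M$ is PBQ iff $\tau^*M$ is PBQ. The main obstacle is carefully matching the definitions of PBQ and of bounded quotient across the two categories, which is rendered transparent by $V$ being an equivalence of categories intertwining both the log-growth and the Frobenius slope filtrations.
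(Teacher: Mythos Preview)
Your argument has a genuine gap rooted in a misreading of the definition of PBQ for modules over $K[\![t]\!]_0$ (hence, by analogy, over $\mathcal{E}[\![X-t]\!]_0$). In the paper, a $(\varphi,\nabla)$-module $N$ over $K[\![t]\!]_0$ is declared PBQ precisely when its \emph{generic fiber} $N_{\mathcal{E}}$ is PBQ as a $(\varphi,\nabla)$-module over $\mathcal{E}$; there is no intrinsic ``bounded quotient'' $N^0$ defined at the $K[\![t]\!]_0$-level. Transporting this convention to $\tau^*M$ over $\mathcal{E}[\![X-t]\!]_0$, the statement ``$\tau^*M$ is PBQ'' means that $(\tau^*M)\otimes_{\mathcal{E}[\![X-t]\!]_0}\mathfrak{E}$ is PBQ over $\mathfrak{E}$, where $\mathfrak{E}$ is the completion of $\mathrm{Frac}(\mathcal{E}[\![X-t]\!]_0)$ for the Gauss norm. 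Your proposed object $(\tau^*M)^0$ and the notion ``$\tau^*M/(\tau^*M)^0$ is pure'' are therefore not the quantities being compared; you are proving an equivalence between two conditions that both live on the $\mathcal{E}$-side via the Dwork equivalence, which is essentially tautological and does not address the lemma.

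The actual content of the lemma is a base-change comparison: one must show that $\Sol_0(\tau^*M)$ (a $\varphi$-module over $\mathcal{E}$) is pure if and only if $\Sol_0(\iota^*M)$ (a $\varphi$-module over $\mathfrak{E}$) is pure, where $\iota:\mathcal{E}\to\mathfrak{E}$ is the composite $\mathcal{E}\xrightarrow{\tau}\mathcal{E}[\![X-t]\!]_0\hookrightarrow\mathfrak{E}$. The paper achieves this by setting up a commutative square relating $\tau$ over $\mathcal{E}$ with $\tau_{\mathfrak{E}}$ over $\mathfrak{E}$ and invoking the coefficient base-change Lemma~\ref{lem:coefficient power} to obtain $\Sol_0(\iota^*M)\cong\Sol_0(\tau^*M)\otimes_{\mathcal{E},\iota}\mathfrak{E}$, whence purity transfers. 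Your use of Lemmas~\ref{lem:evaluation} and~\ref{lem:CTgeneric2} only moves between $M$ and $\tau^*M$ within the $\mathcal{E}$-world and never touches $\mathfrak{E}$; to repair the argument you would need to insert exactly this base-change step, at which point it becomes the paper's proof.
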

\begin{proof}
Let $\mathfrak{E}$ denote the completion of the fraction field of $\mathcal{E}[\![X-t]\!]_0$ with respect to Gauss norm defined by $|\sum_ia_i(X-t)^i|_0=\sup_i{|a_i|}$ for any bounded sequence $\{a_i\}$ of $\mathcal{E}$. Let $i:\mathcal{E}[\![X-t]\!]_0\to \mathfrak{E}$ be the inclusion, and put $\iota=i\circ\tau$. Let $\tau_{\mathfrak{E}}:\mathfrak{E}\to \mathfrak{E}[\![Y-(X-t)]\!]_0$ denote the map $\tau$ for $(\mathfrak{E},Y-(X-t))$ instead of $(\mathcal{E},X-t)$ (Example 3.3.5). Then we have the commutative diagram
\[\xymatrix{
\mathcal{E}\ar[r]^(.35){\tau}\ar[d]^{\iota}&\mathcal{E}[\![X-t]\!]_0\ar[d]^{\jmath}\\
\mathfrak{E}\ar[r]^(.27){\tau_\mathfrak{E}}&\mathfrak{E}[\![Y-(X-t)]\!]_0,
}\]
where $\jmath(\sum_ia_i(X-t)^i)=\sum_i\iota(a_i)(Y-(X-t))^i$ for any bounded sequence $\{a_i\}$ of $\mathcal{E}$.

By definition,
\begin{center}
$M$ is PBQ $\Leftrightarrow$ $\Sol_0(\tau^*M)$ is pure,
\end{center}
and,
\begin{center}
$\tau^*M$ is PBQ $\Leftrightarrow$ $i^*\tau^*M$ is PBQ $\Leftrightarrow$ $\iota^*M$ is PBQ $\Leftrightarrow$ $\Sol_0(\iota^*M)$ is pure.
 \end{center}
There exist isomorphisms of $\varphi$-modules over $\mathfrak{E}$
\[
\Sol_0(\iota^*M)=\Sol_0(\tau^*_{\mathfrak{E}}\iota^*M)\cong \Sol_0(\jmath^*\tau^*M)\cong \Sol_0(\tau^*M)\otimes_{\mathcal{E},\iota}\mathfrak{E},
\]
where the last isomorphism follows from Lemma \ref{lem:coefficient power}. In particular, $\Sol_0(\iota^*M)$ is pure if and only if so is $\Sol_0(\tau^*M)$, which implies the assertion.
\end{proof}

\begin{proof}[{Proof of Proposition \ref{prop:CTgeneric}}]
Let $M$ be a $(\varphi,\nabla)$-module over $\mathcal{E}$. Note that $\mathbf{LGF}_{\mathcal{E}}$ (i) for $M$ is true if and only if the function $\lambda\mapsto\dim_{\mathcal{E}}M^{\lambda}$ is right continuous, and locally constant at an arbitrary $\lambda\notin\mathbb{Q}$. We also note that the equality in $\mathbf{LGF}_{\mathcal{E}}$ (ii) holds for $M$ if and only if the equation
\begin{equation}\label{eq:CTgeneric4}
\dim_{\mathcal{E}}M-\dim_{\mathcal{E}}M^{\lambda}=\dim_{\mathcal{E}}S_{\lambda_{\max}(M)-\lambda}(M\spcheck)
\end{equation}
holds by Proposition \ref{prop:CT1}. Similar equivalences hold for $\mathbf{LGF}_{\mathcal{E}[\![X-t]\!]_0}$ (i) and (ii). 

$\bullet$ $\mathbf{LGF}_{\mathcal{E}[\![X-t]\!]_0}$ (i) $\Rightarrow$ $\mathbf{LGF}_{\mathcal{E}}$ (i)

By Lemmas \ref{lem:evaluation} and \ref{lem:CTgeneric2} (ii), $\dim_{\mathcal{E}}M^{\lambda}=\dim_{\mathcal{E}}V(\tau^*M)^{\lambda}$. Therefore, if $\mathbf{LGF}_{\mathcal{E}[\![X-t]\!]_0}$ (i) for $\tau^*M$ is true, then so is $\mathbf{LGF}_{\mathcal{E}}$ (i) for $M$.

$\bullet$ $\mathbf{LGF}_{\mathcal{E}[\![X-t]\!]_0}$ (ii) $\Rightarrow$ $\mathbf{LGF}_{\mathcal{E}}$ (ii)

We assume that $M$ is PBQ. By Lemma \ref{lem:tau inv of PBQ}, $\tau^*M$ is also PBQ. We also note that
\[
\lambda_{\max}(M)=\lambda_{\max}(M\otimes_{\mathcal{E},\iota}\mathfrak{E})=\lambda_{\max}((\tau^*M)\otimes_{\mathcal{E}[\![X-t]\!]_0}\mathfrak{E}),
\]
where $\mathfrak{E},\iota$ are as in the proof of Lemma \ref{lem:tau inv of PBQ}. Hence, if $\mathbf{LGF}_{\mathcal{E}[\![X-t]\!]_0}$ (ii) for $\tau^*M$ is true, then we obtain the equality
\begin{equation}\label{eq:CTgeneric5}
\dim_{\mathcal{E}}V(\tau^*M)-\dim_{\mathcal{E}}V(\tau^*M)^{\lambda}=\dim_{\mathcal{E}}S_{\lambda_{\max}(M)-\lambda}(V((\tau^*M)\spcheck)).
\end{equation}
By Lemmas \ref{lem:evaluation} and \ref{lem:CTgeneric2}, (\ref{eq:CTgeneric5}) is nothing but (\ref{eq:CTgeneric4}).
\end{proof}

\begin{rem}\label{rem:principle}
By ``generalizing'' the proof of Proposition \ref{prop:CTgeneric}, we obtain the following general principle. Let $\mathcal{P}(\mathcal{R}^{\bd})$ (resp. $\mathcal{P}(K[\![t]\!]_0),\mathcal{P}(\mathcal{E})$) be some property on the log-growth filtration for an arbitrary $(\varphi,\nabla)$-module $M$ over $\mathcal{R}^{\bd}$ (resp. over $K[\![t]\!]_0,\mathcal{E}$) solvable in $\mathcal{R}_{\log}$: $\mathcal{P}$ is the right continuity, or, the rationality of the slopes in the case of Theorem \ref{conj:bd} (i). Assume that $\mathcal{P}(\mathcal{R}^{\bd})$ is true. Then, by Lemma \ref{lem:invariance log-growth}, the truth of $P(\mathcal{R}^{\bd})$ implies the truth of $\mathcal{P}(K[\![t]\!]_0)$. Similarly, the truth of $P(\mathcal{R}^{\bd}_{\mathcal{E},X-t})$ implies the truth of $\mathcal{P}(\mathcal{E}[\![X-t]\!]_0)$, where $\mathcal{R}^{\bd}_{\mathcal{E},X-t}$ denotes the bounded Robba ring over $\mathcal{E}$ with the variable $X-t$. By Lemma \ref{lem:CTgeneric2}, the truth of $P(\mathcal{E}[\![X-t]\!]_0)$ implies the truth of $\mathcal{P}(\mathcal{E})$. We may also use this principle to study log-$(\varphi,\nabla)$-modules over $K[\![t]\!]_0$. In the following, for simplicity of exposition, we give a proof of $\mathcal{P}(\mathcal{R}^{\bd})$, and omit proofs of $\mathcal{P}(K[\![t]\!]_0)$ and $\mathcal{P}(\mathcal{E})$ as long as this principle works.
\end{rem}

The following d\'evissage lemma will be useful.

\begin{lem}[{\cite[Proposition 2.6]{CT2} in the case of $\mathcal{E}$}]\label{lem:exact}
Let $0\to M'\to M\to M''\to 0$ be an exact sequence of $(\varphi,\nabla)$-modules over $\mathcal{R}^{\bd}$ such that $M$ is solvable in $\mathcal{R}_{\log}$.
\begin{enumerate}
\item[(i)] There exists canonical exact sequences
\[
0\to \Sol_{\lambda}(M'')\to \Sol_{\lambda}(M)\to \Sol_{\lambda}(M'),
\]
\[
V(M')/V(M')^{\lambda}\to V(M)/V(M)^{\lambda}\to V(M'')/V(M'')^{\lambda}\to 0
\]
for an arbitrary real number $\lambda$.
\item[(ii)] If $\Sol_{\lambda}(M')=S_{\lambda-\lambda_{\max}(M_{\mathcal{E}})}(\Sol(M'))$, or, equivalently, $V(M')^{\lambda}=(S_{\lambda-\lambda_{\max}(M_{\mathcal{E}})}(V(M')\spcheck))^{\perp}$ for $\lambda$, then the exact sequences in (i) for $\lambda$ extend to short exact sequences.
\end{enumerate}

A similar assertion for (log-)$(\varphi,\nabla)$-modules over $K[\![t]\!]_0$ or $\mathcal{E}$.
\end{lem}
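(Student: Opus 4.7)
The plan is to reduce everything to a combination of (a) the short exactness of the functors $\Sol$ and $V$ on the given sequence, (b) the self-dual character of the $V$/$\Sol$ pairing, and (c) the strictness of Frobenius slope filtration. First I would observe that $M'$ and $M''$ are subquotients of $M$, hence both unipotent as $\nabla$-modules over $\mathcal{R}$ and thus solvable in $\mathcal{R}_{\log}$ by Lemma~\ref{lem:solv=uni}. Tensoring with $\mathcal{R}_{\log}$ over $\mathcal{R}^{\bd}$ preserves exactness since the modules are finite free, and taking $\nabla$-invariants is left exact; by counting $K$-dimensions (each $V(N)$ has dimension equal to $\rank_{\mathcal{R}^{\bd}}N$) it is also right exact. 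Dually, $\Sol$ is a contravariant exact functor on such short exact sequences. This yields short exact sequences $0\to V(M')\to V(M)\to V(M'')\to 0$ and $0\to \Sol(M'')\to \Sol(M)\to \Sol(M')\to 0$ of $\varphi$-modules over $K$, compatible in the obvious sense with the canonical pairings.

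For part (i), the left exactness of $0\to \Sol_{\lambda}(M'')\to \Sol_{\lambda}(M)\to \Sol_{\lambda}(M')$ is automatic as a subsequence of the Sol sequence just displayed: composing $f''\in\Sol_\lambda(M'')$ with the surjection $M\twoheadrightarrow M''$ lands in $\Fil_{\lambda}\mathcal{R}_{\log}$, and for $f\in\Sol_\lambda(M)$ with $f|_{M'}=0$ the induced map on $M''$ has the same image. For the $V$-sequence, use the perfect pairing $V(N)\otimes_K\Sol(N)\to K$, under which $V(N)/V(N)^{\lambda}=(\Sol_{\lambda}(N))^{\perp}\spcheck$ is canonically identified with $\Sol_{\lambda}(N)\spcheck$. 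Taking $K$-duals of the left-exact $\Sol_{\lambda}$-sequence gives the required right-exact sequence for $V(\cdot)/V(\cdot)^{\lambda}$.

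For part (ii), the task reduces to showing the surjectivity of $\Sol_{\lambda}(M)\to\Sol_{\lambda}(M')$; the $V$-statement then follows by the same duality as in (i). Apply the strictness of Frobenius slope filtration (Lemma~\ref{lem:slope5}~(II)-(ii)) to $0\to\Sol(M'')\to\Sol(M)\to\Sol(M')\to 0$, to obtain, for $\mu=\lambda-\lambda_{\max}(M_{\mathcal{E}})$, a surjection $S_{\mu}(\Sol(M))\twoheadrightarrow S_{\mu}(\Sol(M'))$. By Proposition~\ref{prop:main} for $M$ we have $S_{\mu}(\Sol(M))\subset\Sol_{\lambda}(M)$, and by hypothesis $S_{\mu}(\Sol(M'))=\Sol_{\lambda}(M')$. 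Composing, the map $\Sol_{\lambda}(M)\to\Sol_{\lambda}(M')$ is surjective, as desired.

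I do not expect serious obstacles; the only point that requires a little care is keeping $\lambda_{\max}(M_{\mathcal{E}})$ (rather than $\lambda_{\max}(M'_{\mathcal{E}})$) in the shifting, which is exactly what makes the hypothesis in (ii) the right tool — Proposition~\ref{prop:main} applied to $M'$ only provides the coarser inclusion $S_{\lambda-\lambda_{\max}(M'_{\mathcal{E}})}(\Sol(M'))\subset\Sol_{\lambda}(M')$, and it is the assumed equality with the smaller slope piece that matches the image coming from $M$. The analogous statement for (log-)$(\varphi,\nabla)$-modules over $K[\![t]\!]_0$ or $\mathcal{E}$ follows by the same argument (using $K\{t\}_{\log}$ or $\mathcal{E}\{X-t\}$ in place of $\mathcal{R}_{\log}$), together with Lemma~\ref{lem:invariance log-growth} and the base-change principle of Remark~\ref{rem:principle}.
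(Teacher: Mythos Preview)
Your proposal is correct and follows essentially the same route as the paper: establish the short exact sequence $0\to\Sol(M'')\to\Sol(M)\to\Sol(M')\to 0$ (the paper cites Lemma~\ref{lem:V} and duality, you argue directly via left exactness of $(\cdot)^{\nabla}$ plus a dimension count), restrict to get the left-exact $\Sol_{\lambda}$-sequence and dualize for $V(\cdot)/V(\cdot)^{\lambda}$, and for (ii) use exactly the same commutative-square argument combining Proposition~\ref{prop:main} with the strictness of Frobenius slope filtration. Your added remark about why the shift must be by $\lambda_{\max}(M_{\mathcal{E}})$ rather than $\lambda_{\max}(M'_{\mathcal{E}})$ is apt and matches the paper's use of the hypothesis.
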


\begin{rem}\label{rem:exact}
The assumption in (ii) holds if $M'$ is PBQ and $\lambda_{\max}(M'_{\mathcal{E}})=\lambda_{\max}(M_{\mathcal{E}})$ by Theorem \ref{conj:bd} (ii).
\end{rem}

\begin{proof}
By Remark \ref{rem:principle}, we have only to prove in the case of $\mathcal{R}^{\bd}$. By duality, we have only to prove the assertion for $\Sol_{\bullet}(\cdot)$. By Lemma \ref{lem:V} and duality, there exists a short exact sequence
\[
0\to \Sol(M'')\to \Sol(M)\to \Sol(M')\to 0,
\]
which induces the desired left exact sequence in (i). If $\Sol_{\lambda}(M')=S_{\lambda-\lambda_{\max}(M_{\mathcal{E}})}(\Sol(M'))$, then there exists a commutative diagram
\[\xymatrix{
\Sol_{\lambda}(M)\ar[r]&\Sol_{\lambda}(M')\ar@{=}[d]\\
S_{\lambda-\lambda_{\max}(M_{\mathcal{E}})}(\Sol(M))\ar[r]\ar[u]&S_{\lambda-\lambda_{\max}(M_{\mathcal{E}})}(\Sol(M')),
}\]
where the left vertical arrow is given by Proposition \ref{prop:main}. Since the bottom horizontal arrow is surjective by the strictness of the Frobenius slope filtration, $\Sol_{\lambda}(M)\to \Sol_{\lambda}(M')$ is surjective, which implies (ii).
\end{proof}


\section{The maximally PBQ submodule and PBQ filtration}\label{sec:max}

To study the log-growth filtration for an arbitrary $(\varphi,\nabla)$-module $M$, we need to describe $M$ using PBQ $(\varphi,\nabla)$-modules. In this section, we recall a filtering technique established in \cite{CT2}, then give a generalization to $\mathcal{R}^{\bd}$. As a first application, we prove that in Theorem \ref{conj:bd}, part (ii) implies part (i) (Proposition \ref{prop:(ii) implies (i)})

We first give some technical results on the extended Robba ring $\tilde{\mathcal{R}}$. Therefore we work with notation as in \S \ref{sec:Robba} for a while. The reader may start from Proposition \ref{prop:existence max PBQ} admitting these results.

\begin{notation}
For the time being, let $\varphi_K:K\to K$ be an arbitrary isometric ring endomorphism. As in the proof of Lemma \ref{lem:inj}, let $L/K$ be an extension of complete discrete valuation fields, on which $\varphi_K$ extends isometrically, such that any $\varphi$-module over the residue field of $L$ is trivial. Let $\tilde{\mathcal{R}}_L^{(\bd)}$ denote the extended (bounded) Robba rings over $L$.
\end{notation}

\begin{lem}[{cf. \cite[Proposition 8.1]{dJ}}]\label{lem:inj gen}
The multiplication map
\[
\tilde{\mathcal{R}}_L^{\bd}\otimes_{\mathcal{R}^{\bd}}\mathcal{E}\to\tilde{\mathcal{E}}_L;y\otimes z\mapsto y\psi(z)
\]
is injective, where $\psi:\mathcal{E}\to\tilde{\mathcal{E}}_L$ is the composition of $\mathcal{E}\to\tilde{\mathcal{E}}$ induced by $\psi:\mathcal{R}^{\bd}\to\tilde{\mathcal{R}}^{\bd}$ given in Proposition \ref{prop:embedding}, and the inclusion $\tilde{\mathcal{E}}\to\tilde{\mathcal{E}}_L$.
\end{lem}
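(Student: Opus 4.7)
The plan is to reduce to the integral level and then exploit the fact that the valuation rings $\mathcal{O}_{\mathcal{E}}$ and $\mathcal{O}_{\tilde{\mathcal{E}}_L}$ are by definition the $\mathfrak{m}_K$-adic and $\mathfrak{m}_L$-adic completions of $\mathcal{R}^{\Int}$ and $\tilde{\mathcal{R}}^{\Int}_L$, respectively. I will set $A = \tilde{\mathcal{R}}^{\Int}_L \otimes_{\mathcal{R}^{\Int}} \mathcal{O}_{\mathcal{E}}$, with structure map $f \colon A \to \mathcal{O}_{\tilde{\mathcal{E}}_L}$ induced by $\psi$. Since $\tilde{\mathcal{R}}^{\bd}_L = \tilde{\mathcal{R}}^{\Int}_L[1/p]$, $\mathcal{E} = \mathcal{O}_{\mathcal{E}}[1/p]$ and $\tilde{\mathcal{E}}_L = \mathcal{O}_{\tilde{\mathcal{E}}_L}[1/p]$, inverting $p$ recovers the map in the statement, and because $\mathcal{O}_{\tilde{\mathcal{E}}_L}$ is $p$-torsion-free, injectivity of $f$ will imply the injectivity we want. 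So the task reduces to showing $f$ is injective.

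The next step will be the observation that reduction modulo $p^n$ kills the two completions: $\mathcal{R}^{\Int}/p^n = \mathcal{O}_{\mathcal{E}}/p^n$, hence
\[
A/p^n A \;=\; \tilde{\mathcal{R}}^{\Int}_L/p^n \otimes_{\mathcal{R}^{\Int}/p^n} \mathcal{R}^{\Int}/p^n \;=\; \tilde{\mathcal{R}}^{\Int}_L/p^n,
\]
and analogously $\mathcal{O}_{\tilde{\mathcal{E}}_L}/p^n \cong \tilde{\mathcal{R}}^{\Int}_L/p^n$. Under these identifications $f \bmod p^n$ is the identity, hence an isomorphism for every $n$. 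Consequently $\ker f \subseteq \bigcap_n p^n A$, and it remains to show this intersection vanishes.

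This is where the argument really uses that $\mathcal{R}^{\bd}$ is a field. I will pick an $\mathcal{R}^{\bd}$-basis $\{z_j\}_{j \in J}$ of $\mathcal{E}$ to obtain a free decomposition
\[
A[1/p] \;=\; \tilde{\mathcal{R}}^{\bd}_L \otimes_{\mathcal{R}^{\bd}} \mathcal{E} \;\cong\; \bigoplus_{j \in J} \tilde{\mathcal{R}}^{\bd}_L \cdot z_j
\]
of $\tilde{\mathcal{R}}^{\bd}_L$-modules; a finite sum $\sum_j y_j z_j$ lies in $p^n A[1/p]$ if and only if each $y_j \in p^n \tilde{\mathcal{R}}^{\bd}_L$, and $\bigcap_n p^n \tilde{\mathcal{R}}^{\bd}_L = 0$ because $\tilde{\mathcal{R}}^{\bd}_L$ embeds in the complete discretely valued field $\tilde{\mathcal{E}}_L$. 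Since $\tilde{\mathcal{R}}^{\Int}_L$ is a torsion-free $\mathcal{R}^{\Int}$-module (being a subring of the field $\tilde{\mathcal{R}}^{\bd}_L$), flat base change makes $A$ torsion-free over the DVR $\mathcal{O}_{\mathcal{E}}$, so $A$ injects into $A[1/p]$ and therefore $\bigcap_n p^n A \subseteq \bigcap_n p^n A[1/p] = 0$.

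The step that should demand the most care is the first, identifying $\mathcal{R}^{\Int}/p^n$ with $\mathcal{O}_{\mathcal{E}}/p^n$ compatibly with the ring structure of the tensor product (and likewise for $\tilde{\mathcal{R}}^{\Int}_L$). In the ramified case, one ought to verify separately that working modulo $\pi_L^n$ gives the same reduction of $A$ as working modulo $p^n$, but this is routine. Aside from this bookkeeping, the proof is formal, and plays the role for $\mathcal{E}$ that Lemma \ref{lem:inj} plays for $\mathcal{R}$, with the $\mathfrak{m}_K$-adic completion replacing the passage through the extended Robba ring performed there via \cite[Propositions 3.2.4, 3.5.2]{rel}.
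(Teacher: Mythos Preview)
Your first two steps are fine, and they correctly identify $f$ with the $p$-adic completion map $A \to \widehat{A}\cong\mathcal{O}_{\tilde{\mathcal{E}}_L}$, so that $\ker f=\bigcap_n p^nA$. The gap is in your argument that $\bigcap_n p^nA=0$. Since $\tilde{\mathcal{R}}^{\bd}_L$ is a field containing $\mathbb{Q}$, one has $p^n\tilde{\mathcal{R}}^{\bd}_L=\tilde{\mathcal{R}}^{\bd}_L$ for every $n$, so the assertion ``$\bigcap_n p^n\tilde{\mathcal{R}}^{\bd}_L=0$'' is simply false, and likewise $p^nA[1/p]=A[1/p]$; passing to $A[1/p]$ therefore destroys all $p$-adic information. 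Replacing $\tilde{\mathcal{R}}^{\bd}_L$ by $\tilde{\mathcal{R}}^{\Int}_L$ does not help either: you have no reason to expect $A\subset\bigoplus_j\tilde{\mathcal{R}}^{\Int}_Lz_j$, because an element $w\in\mathcal{O}_{\mathcal{E}}$ expands as $\sum_j a_jz_j$ with $a_j\in\mathcal{R}^{\bd}$, not $\mathcal{R}^{\Int}$.

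More conceptually, $p$-adic separatedness of $M\otimes_R\widehat{R}$ for $M$ torsion-free over a DVR $R$ can genuinely fail. Take $R=\mathbb{Z}_{(p)}$, $\widehat{R}=\mathbb{Z}_p$, $M=\mathbb{Z}_p$, and any $a\in\mathbb{Z}_p$ transcendental over $\mathbb{Q}$: writing $a=a_n+p^nb_n$ with $a_n\in\mathbb{Z}$ shows $a\otimes 1-1\otimes a\in p^n(\mathbb{Z}_p\otimes_{\mathbb{Z}_{(p)}}\mathbb{Z}_p)$ for all $n$, yet this element is nonzero in $\mathbb{Q}_p\otimes_{\mathbb{Q}}\mathbb{Q}_p$. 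So the separatedness you need is not a formal consequence of flatness or torsion-freeness; it requires specific input about how $\tilde{\mathcal{R}}^{\Int}_L$ and $\psi(\mathcal{O}_{\mathcal{E}})$ sit inside $\mathcal{O}_{\tilde{\mathcal{E}}_L}$. The paper supplies exactly this via the $\mathcal{R}$-linear retraction $f:\tilde{\mathcal{R}}_L\to\mathcal{R}$ of \cite[Definition~3.5.1]{rel}: one checks it is bounded for $|\cdot|_0$, hence extends to $f_\eta:\tilde{\mathcal{E}}_L\to\mathcal{E}$, and then a minimal relation $\sum y_i\psi(z_i)=0$ is pushed through $f_\eta$ (after twisting by $au^{-\alpha}$) to produce a nontrivial $\mathcal{R}^{\bd}$-linear dependence among the $z_i$. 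Without such a section, your reformulation $\bigcap_n p^nA=0$ is equivalent to the lemma itself and you have not gained ground.
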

\begin{proof}
(cf. the proof of \cite[Proposition 3.5.2]{rel}) Recall that there exists a continuous map $f:\tilde{\mathcal{R}}_L\to\mathcal{R}$ satisfying:
\begin{enumerate}
\item[(a)] $f(\psi(a)w)=af(w),a\in\mathcal{R},w\in\tilde{\mathcal{R}}_L$;
\item[(b)] $|w|_r=\sup_{\alpha\in [0,1),a\in L^{\times}}\{|a|^{-1}e^{-\alpha}|f(au^{-\alpha}w)|_r\}$ for $w\in\tilde{\mathcal{R}}^r_L$, $r\in (0,r_0)$,
\end{enumerate}
where $r_0$ is the constant given in Proposition \ref{prop:embedding} (\cite[Definition 3.5.1]{rel}). Fix $w\in\tilde{\mathcal{R}}^{\bd}_L$. By putting $\alpha=0$ and $a=1$ on the RHS of the equality in (b), we obtain
\[
|f(w)|_r\le |w|_r\ \forall r\in (0,r_0).
\]
Since $\sup_{r\in (0,r_0)}|f(w)|_r\le \sup_{r\in (0,r_0)}|w|_r=|w|_0<\infty$, we obtain $f(w)\in\mathcal{R}^{\bd}$ by Lemma \ref{lem:gen1}. Moreover, we obtain the inequality $|f(w)|_0=\lim_{r\to 0+}|f(w)|_r\le\lim_{r\to 0+}|w|_r=|w|_0$, which implies that the restriction of $f$ to $\tilde{\mathcal{R}}_L^{\bd}$ extends to $f_{\eta}:\tilde{\mathcal{E}}_L\to\mathcal{E}$ satisfying a similar property to (a).

Suppose the contrary: choose $x\neq 0$ in the kernel of the multiplication map, and choose a presentation $x=\sum_{i=1}^ny_i\otimes z_i$ with $n$ minimal. Then $z_1,\dots,z_n$ are linearly independent over $\mathcal{R}^{\bd}$ by \cite[Corollary 3.4.3]{rel}. We will construct a non-trivial dependence relation between the $z_i$'s. As a corollary of (b), we may choose $\alpha\in (0,1]$ and $a\in L^{\times}$ such that $f(au^{-\alpha}y_1)\neq 0$. By applying $f_{\eta}$ to $0=\sum_iau^{-\alpha}y_i\cdot \psi(z_i)$, we obtain the non-trivial dependence relation $0=\sum_if(au^{-\alpha}y_i)\cdot z_i$, which is a contradiction.
\end{proof}

\begin{lem}[{cf. \cite[Lemma 5.7]{CT2}}]\label{lem:dJ}
We consider an $\mathcal{R}^{\bd}$-linear map
\[
f:M\to Q,
\]
where $M$ is a $\varphi$-module over $\mathcal{R}^{\bd}$, and $Q$ is a pure $\varphi$-module over $\mathcal{E}$ of slope $\lambda$. If $f$ is $\varphi$-equivariant and injective, then $\lambda=\lambda_{\max}(M_{\mathcal{E}})$.
\end{lem}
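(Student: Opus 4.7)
The plan is to establish both inequalities $\lambda\le\lambda_{\max}(M_{\mathcal{E}})$ and $\lambda_{\max}(M_{\mathcal{E}})\le\lambda$ separately.

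For the easy direction $\lambda\le\lambda_{\max}(M_{\mathcal{E}})$, I would extend $f$ by $\mathcal{E}$-linearity to a $\varphi$-equivariant map $\bar{f}\colon M_{\mathcal{E}}\to Q$. Its image is a non-zero $\varphi$-submodule of $Q$, and since $Q$ is pure of slope $\lambda$, so is $\mathrm{im}(\bar{f})$ (its slope multiset is a sub-multiset of $\{\lambda,\dots,\lambda\}$ by Lemma \ref{lem:slope3} and the disjoint-union behavior of slope multisets). As $\mathrm{im}(\bar{f})$ is also a $\varphi$-quotient of $M_{\mathcal{E}}$, $\lambda$ appears in the Frobenius slope multiset of $M_{\mathcal{E}}$, so $\lambda\le\lambda_{\max}(M_{\mathcal{E}})$.

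For the reverse inequality, the strategy is to transport a Frobenius $d$-eigenvector realizing the top slope through a base change to the extended bounded Robba ring. Choose an extension $L/K$ as in the running notation so that $\tilde{\mathcal{R}}_L^{\bd}$ satisfies Assumption \ref{ass:relative}. Because $\mathcal{R}^{\bd}$ is a field (Definition \ref{dfn:Robba ring} (ii)), the embedding $\psi\colon\mathcal{R}^{\bd}\hookrightarrow\tilde{\mathcal{R}}_L^{\bd}$ is automatically flat, so tensoring the injection $f$ over $\mathcal{R}^{\bd}$ preserves injectivity. Combining this with the freeness of $Q$ over $\mathcal{E}$ and the injectivity $\mathcal{E}\otimes_{\mathcal{R}^{\bd}}\tilde{\mathcal{R}}_L^{\bd}\hookrightarrow\tilde{\mathcal{E}}_L$ supplied by Lemma \ref{lem:inj gen}, I obtain an injective $\varphi$-equivariant $\tilde{\mathcal{R}}_L^{\bd}$-linear map
\[
\tilde{f}\colon M\otimes_{\mathcal{R}^{\bd}}\tilde{\mathcal{R}}_L^{\bd}\hookrightarrow Q\otimes_{\mathcal{E}}\tilde{\mathcal{E}}_L.
\]
Now I apply Proposition \ref{prop:maximum eigen} to the $\varphi$-module $M\otimes_{\mathcal{R}^{\bd}}\tilde{\mathcal{R}}_L^{\bd}$ over $\tilde{\mathcal{R}}_L^{\bd}$ to produce a Frobenius $d$-eigenvector $\tilde{v}$ of slope $\lambda_{\max}(M_{\mathcal{E}}\otimes_{\mathcal{E}}\tilde{\mathcal{E}}_L)=\lambda_{\max}(M_{\mathcal{E}})$, the last equality being the base-change invariance of Frobenius slopes (Lemma \ref{lem:slope5} (I)-(i)). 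Then $\tilde{f}(\tilde{v})$ is non-zero by injectivity of $\tilde{f}$, and is a Frobenius $d$-eigenvector of slope $\lambda_{\max}(M_{\mathcal{E}})$ by $\varphi$-equivariance. Since $Q\otimes_{\mathcal{E}}\tilde{\mathcal{E}}_L$ remains pure of slope $\lambda$ under base change, Lemma \ref{lem:slope6} forces $\lambda_{\max}(M_{\mathcal{E}})=\lambda$.

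The main obstacle is engineering the injective $\varphi$-equivariant map $\tilde{f}$ landing in an ambient pure $\varphi$-module to which the $d$-eigenvector machinery applies; the field structure of $\mathcal{R}^{\bd}$ (giving flatness for free) together with Lemma \ref{lem:inj gen} (to embed the naive base change into $\tilde{\mathcal{E}}_L$) are precisely the tools that make this step go through.
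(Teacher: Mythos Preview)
Your proof is correct and follows essentially the same route as the paper's: base change to $\tilde{\mathcal{R}}_L^{\bd}$, use Lemma~\ref{lem:inj gen} to embed into $Q\otimes_{\mathcal{E}}\tilde{\mathcal{E}}_L$, pick a top-slope $d$-eigenvector via Proposition~\ref{prop:maximum eigen}, and conclude with Lemma~\ref{lem:slope6}. The only difference is that your ``easy direction'' $\lambda\le\lambda_{\max}(M_{\mathcal{E}})$ is redundant, since Lemma~\ref{lem:slope6} already yields equality in one stroke (as you yourself note at the end); the paper omits that step accordingly.
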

\begin{proof}
We extend $f$ to the $\varphi$-equivariant $\tilde{\mathcal{R}}_L^{\bd}$-linear injective map
\[
\tilde{f}:\tilde{\mathcal{R}}_L^{\bd}\otimes_{\mathcal{R}^{\bd}}M\xrightarrow[]{\id\otimes f}\tilde{\mathcal{R}}_L^{\bd}\otimes_{\mathcal{R}^{\bd}}Q\cong\tilde{\mathcal{R}}_L^{\bd}\otimes_{\mathcal{R}^{\bd}}\mathcal{E}\otimes_{\mathcal{E}}Q\hookrightarrow\tilde{\mathcal{E}}_L\otimes_{\mathcal{E}}Q,
\]
where the last injection is induced by the multiplication map in Lemma \ref{lem:inj gen}. The maximum Frobenius slope of $\tilde{\mathcal{E}}_L\otimes_{\mathcal{R}^{\bd}}M$ as a $\varphi$-module over $\tilde{\mathcal{E}}_L$ is equal to $\lambda_{\max}(M_{\mathcal{E}})$. By Proposition \ref{prop:maximum eigen}, there exists a Frobenius $d$-eigenvector $\tilde{v}\in\tilde{\mathcal{R}}_L^{\bd}\otimes_{\mathcal{R}^{\bd}}M$ of slope $\lambda_{\max}(M_{\mathcal{E}})$. Then $\tilde{f}(\tilde{v})\in\tilde{\mathcal{E}}_L\otimes_{\mathcal{E}}Q$ is also a Frobenius $d$-eigenvector of slope $\lambda_{\max}(M_{\mathcal{E}})$. Since $\tilde{\mathcal{E}}_L\otimes_{\mathcal{E}}Q$ is a pure $\varphi$-module over $\tilde{\mathcal{E}}_L$ of slope $\lambda$, we have $\lambda=\lambda_{\max}(M_{\mathcal{E}})$ by Lemma \ref{lem:slope6}.
\end{proof}

We will recall the construction of the PBQ filtration due to Chiarellotto and Tsuzuki: it is first done over $\mathcal{E}$, then over $K[\![t]\!]_0$ by descent. We also give an analogous construction over $\mathcal{R}^{\bd}$ by generalizing the descending argument in \cite{CT2}.

\begin{ass}
In the rest of this section except Proposition \ref{prop:(ii) implies (i)}, assume that $k$ is perfect and $\varphi_K$ is a $q$-power Frobenius lift.
\end{ass}

\begin{prop}[{\cite[Proposition 5.4]{CT2}}]\label{prop:existence max PBQ}
Let $M$ be a $(\varphi,\nabla)$-module over $\mathcal{E}$. Then there exists a unique $(\varphi,\nabla)$-submodule $N$ of $M$ such that the composition
\[
N/N^0\to M/M^0\to M/\cup_{\mu<\lambda_{\max}(M)}S_{\mu}(M)
\]
is an isomorphism, where the first and second maps are induced by Lemma \ref{lem:lgE2} (i) and Proposition \ref{prop:CT2} (plus Lemma \ref{lem:slope5} (II)-(ii)) respectively. Note that if this is the case, then $N/N^0$ is pure of slope $\lambda_{\max}(M)$, in particular, $N$ is PBQ.
\end{prop}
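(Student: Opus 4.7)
First I would rephrase the hypothesis. Set $L := \bigcup_{\mu<\lambda_{\max}(M)} S_\mu(M)$; the quotient $M/L$ is pure of slope $\lambda_{\max}(M)$, and Proposition \ref{prop:CT2} combined with Lemma \ref{lem:slope5}(II)-(iv) gives $M^0 \subseteq L$. The statement to prove becomes: there exists a unique $(\varphi,\nabla)$-submodule $N \subseteq M$ satisfying (a) $N+L=M$ and (b) $N\cap L=N^0$. Together (a) and (b) yield a canonical isomorphism $N/N^0 \xrightarrow{\sim} M/L$, so $N/N^0$ is pure of slope $\lambda_{\max}(M)$, in particular $N$ is automatically PBQ.

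For existence, I would take $N \subseteq M$ of minimal rank among $(\varphi,\nabla)$-submodules satisfying $N+L=M$ (such $N$ exists since $M$ itself works). The surjection $N \twoheadrightarrow M/L$ guaranteed by (a) forces $\lambda_{\max}(N)=\lambda_{\max}(M)$. The key point is that this $N$ is PBQ: otherwise Slope criterion (Proposition \ref{prop:key}, which requires $k$ perfect) would produce a quotient $N \twoheadrightarrow Q$ with $\lambda_{\max}(Q)<\lambda_{\max}(M)$; letting $N''$ be its kernel, the module $M/(N''+L)$ is both a quotient of $Q = N/N''$ (slopes $<\lambda_{\max}(M)$) and a quotient of the pure module $M/L$ (slope $\lambda_{\max}(M)$), hence vanishes, so $N''+L = M$ with $N''\subsetneq N$, contradicting minimality. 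Now (b) follows from a slope confrontation: the composition $N \cap L \hookrightarrow N \twoheadrightarrow N/N^0$ vanishes by Lemma \ref{lem:slopehom}, since $N\cap L \subseteq L$ has all slopes $<\lambda_{\max}(M)$ while $N/N^0$ is pure of slope $\lambda_{\max}(M)$; the reverse inclusion $N^0 \subseteq N\cap L$ is immediate from Lemma \ref{lem:lgE2}(i) and $M^0\subseteq L$.

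For uniqueness, I first observe that any solution $N$ is already minimal under inclusion for (a): if $N''\subsetneq N$ also satisfies $N''+L=M$, then each $n \in N$ decomposes as $n=n''+\ell$ with $\ell = n-n''\in N\cap L = N^0$, so $N=N''+N^0$ and $N/N''$ is a quotient of $N^0$, of slopes $<\lambda_{\max}(M)$; but $N$ PBQ forces $\lambda_{\max}(N/N'')=\lambda_{\max}(M)$ via the easier direction (i)$\Rightarrow$(ii) of Slope criterion, a contradiction. Now given two solutions $N_1, N_2$, for each $v \in M/L$ pick lifts $n_i \in N_i$; the difference $n_1-n_2\in L$ is well-defined modulo $N_1^0+N_2^0$ by (b), and defines an $\mathcal{E}$-linear and $\varphi$-equivariant map
\[
\delta \colon M/L \longrightarrow L/(N_1^0+N_2^0)
\]
between $\varphi$-modules with disjoint slope multisets (pure $\lambda_{\max}(M)$ versus strictly smaller). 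Lemma \ref{lem:slopehom} forces $\delta=0$, so every $v$ admits a common lift in $N_1\cap N_2$; hence $(N_1\cap N_2)+L=M$, and minimality of $N_1$ gives $N_1 = N_1\cap N_2 \subseteq N_2$. By symmetry, $N_1=N_2$.

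The main obstacle, in my view, is the uniqueness step: identifying the correct target $L/(N_1^0+N_2^0)$ for the comparison map $\delta$ is what permits the slope-disjointness argument, and is less evident than the existence argument. The existence part is a fairly streamlined application of Slope criterion once one notices that any strict slope drop in a quotient of $N$ can be absorbed into $L$ without destroying the surjection onto $M/L$.
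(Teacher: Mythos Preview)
The paper does not supply its own proof of this proposition; it is simply quoted from \cite[Proposition~5.4]{CT2}, so there is no in-paper argument to compare against. Your proof is correct and self-contained within the paper's framework. The rephrasing of the isomorphism condition as $(a)$ $N+L=M$ and $(b)$ $N\cap L=N^0$ is accurate (using $N^0\subseteq M^0\subseteq L$ from Lemma~\ref{lem:lgE2}(i) and Proposition~\ref{prop:CT2}). The existence argument via minimal rank is clean; the only point worth making explicit is that once $N$ is shown to be PBQ, the slope of the pure module $N/N^0$ is indeed $\lambda_{\max}(M)$: this follows either from Corollary~\ref{cor:new2} or directly from the surjection $N/N^0\twoheadrightarrow M/L$ (which exists because $N^0\subseteq L$). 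Your invocation of the direction (i)$\Rightarrow$(ii)$'$ of Proposition~\ref{prop:key} under the standing hypothesis that $k$ is perfect is legitimate, since the Assumption preceding Proposition~\ref{prop:existence max PBQ} grants exactly this.

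For uniqueness, the minimality-under-inclusion step and the comparison map $\delta\colon M/L\to L/(N_1^0+N_2^0)$ both check out; the key computation that $\delta=0$ yields a common lift in $N_1\cap N_2$ (namely $n_1-a_1=n_2+a_2$ when $n_1-n_2=a_1+a_2$ with $a_i\in N_i^0$) is correct, though it might be worth spelling out in one line. Your uniqueness argument uses only the easy direction (i)$\Rightarrow$(ii) of Slope criterion, so is slightly more robust than the existence argument in that respect.
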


\begin{dfn}[{\cite[Corollary 5.5]{CT2}}]\label{dfn:PBQ fil gen}
With notation as above, we call $N$ the {\it maximally PBQ submodule} of $M$, and we denote $N$ by $P_1(M)$. Note that $P_1(M)\neq 0$ unless $M=0$, and $P_1(M)=M$ if and only if $M$ is PBQ. We put $P_0(M)=0$, and $P_i(M)$ as the inverse image of $P_1(M/P_{i-1}(M))$ under the projection $M\to M/P_{i-1}(M)$. Thus we obtain an increasing filtration
\[
0=P_0(M)\subset P_1(M)\subset\dots\subset P_r(M)\subset\dots
\]
of $(\varphi,\nabla)$-modules over $\mathcal{E}$, which is called the {\it PBQ filtration} of $M$.

Note that:
\begin{enumerate}
\item[$\bullet$] the PBQ filtration is separated and exhaustive. Each graded piece $P_{i+1}(M)/P_i(M)$ is PBQ;
\item[$\bullet$] let $r$ be the minimum natural number such that $P_r(M)=M$. Then $P_i(M)\neq P_{i+1}(M)$ for $i=0,\dots,r-1$;
\item[$\bullet$] if $M$ is not PBQ, then we have
\[
\lambda_{\max}(M/P_1(M))<\lambda_{\max}(M)=\lambda_{\max}(P_1(M)).
\]
In fact, the inclusion $\cup_{\mu<\lambda_{\max}(M)}S_{\mu}(M/P_1(M))\subset M/P_1(M)$ is an equality by the surjectivity of the canonical map $P_1(M)\to M/\cup_{\mu<\lambda_{\max}(M)}S_{\mu}(M)$. This implies $\lambda_{\max}(M/P_1(M))<\lambda_{\max}(M)$, and $\lambda_{\max}(M)=\max\{\lambda_{\max}(M/P_1(M)),\lambda_{\max}(P_1(M))\}=\lambda_{\max}(P_1(M))$.
\end{enumerate}
\end{dfn}

\begin{rem}
Let $M$ be a $(\varphi,\nabla)$-module over $\mathcal{E}$ and $X(M)$ the set of PBQ $(\varphi,\nabla)$-submodules of $M$. In general, maximal objects in $X(M)$ are not uniquely determined. In fact, when $M=\mathcal{E}\oplus\mathcal{E}(q)$, $X(M)=\{\mathcal{E},\mathcal{E}(q)\}$. Note that $P_1(M)$ is characterized as the unique maximal element in $X(M)$, whose maximum Frobenius slope is equal to that of $M$.
\end{rem}

\begin{thm}[{cf. \cite[Theorem 5.6]{CT2}}]\label{thm:descent PBQ}
Let $R$ denote $K[\![t]\!]_0$ or $\mathcal{R}^{\bd}$. Let $M$ be a $(\varphi,\nabla)$-module over $R$. Then $P_1(M_{\mathcal{E}})$ descends to $M$. In particular, $P_{\bullet}(M_{\mathcal{E}})$ descends to a filtration on $M$.
\end{thm}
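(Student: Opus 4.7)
The case $R = K[\![t]\!]_0$ is \cite[Theorem 5.6]{CT2}, so my focus is the new case $R = \mathcal{R}^{\bd}$, which I propose to handle by adapting the strategy of \textit{loc.\ cit.} Concretely, the plan is to define $P_1(M) := M \cap P_1(M_{\mathcal{E}})$, with the intersection taken inside $M_{\mathcal{E}}$ via the inclusion $\mathcal{R}^{\bd} \hookrightarrow \mathcal{E}$, and to prove that this is a finite free $(\varphi,\nabla)$-submodule of $M$ whose generic fiber recovers $P_1(M_{\mathcal{E}})$. Stability under $\varphi_M$ and $\nabla_M$ will be immediate from the corresponding stability of $P_1(M_{\mathcal{E}})$; the entire content lies in establishing the descent equality $P_1(M) \otimes_{\mathcal{R}^{\bd}} \mathcal{E} = P_1(M_{\mathcal{E}})$, from which finite-freeness will follow by dimension counting.

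I would carry out the descent in two steps. First, I would descend the $(\varphi,\nabla)$-submodule $S := \bigcup_{\mu<\lambda_{\max}(M_{\mathcal{E}})} S_\mu(M_{\mathcal{E}})$ of $M_{\mathcal{E}}$ to a $(\varphi,\nabla)$-submodule $\widetilde{S}$ of $M$ over $\mathcal{R}^{\bd}$: applying Proposition-Definition \ref{propdfn:descent} to $M \otimes_{\mathcal{R}^{\bd}} \tilde{\mathcal{R}}^{\bd}$ first descends the relevant part of the reverse filtration from $\tilde{\mathcal{E}}$ to $\tilde{\mathcal{R}}^{\bd}$, and then Lemma \ref{lem:inj gen} allows me to compare this with the Frobenius slope filtration on $M_{\mathcal{E}}$ and identify a candidate $\widetilde{S}$ over $\mathcal{R}^{\bd}$. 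The quotient $\overline{M} := M/\widetilde{S}$ will then have generic fiber $Q := M_{\mathcal{E}}/S$, which is pure of slope $\lambda_{\max}(M_{\mathcal{E}})$ as a $\varphi$-module over $\mathcal{E}$. Second, by the characterization of Proposition \ref{prop:existence max PBQ}, $P_1(M_{\mathcal{E}})$ is pinned down as the unique $(\varphi,\nabla)$-submodule of $M_{\mathcal{E}}$ whose composition with $M_{\mathcal{E}} \to Q$ realizes the canonical isomorphism $P_1(M_{\mathcal{E}})/P_1(M_{\mathcal{E}})^0 \xrightarrow{\sim} Q$; Lemma \ref{lem:dJ} forces any $\varphi$-equivariant injection from an $\mathcal{R}^{\bd}$-submodule of $M$ into $Q$ to carry $\lambda_{\max}(M_{\mathcal{E}})$ as its maximum generic slope, which is precisely what is needed to identify $N := M \cap P_1(M_{\mathcal{E}})$ as an $\mathcal{R}^{\bd}$-lattice in $P_1(M_{\mathcal{E}})$, and a rank comparison after $\otimes_{\mathcal{R}^{\bd}}\mathcal{E}$ then yields $N_{\mathcal{E}} = P_1(M_{\mathcal{E}})$.

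Once $P_1(M)$ is in hand, the iterative recipe $P_i(M) := $ preimage under $M \to M/P_{i-1}(M)$ of $P_1(M/P_{i-1}(M))$ produces the full PBQ filtration over $R$, and the exhaustiveness of the generic PBQ filtration (Definition \ref{dfn:PBQ fil gen}) ensures it agrees with $P_\bullet(M_{\mathcal{E}})$ after base change. The hard part will be the first descent step: the slope filtration has been set up in this paper only on $\varphi$-modules over complete discrete valuation fields, so transferring $S \subset M_{\mathcal{E}}$ to a submodule of $M$ over the non-complete ring $\mathcal{R}^{\bd}$ genuinely requires the extended Robba-ring machinery of \S \ref{sec:rev} and \S \ref{sec:max}, and extra care is needed to ensure that the resulting $\widetilde{S}$ is $\nabla_M$-stable and not merely $\varphi_M$-stable.
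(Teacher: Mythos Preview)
Your first step has a genuine gap. The reverse filtration of Proposition-Definition~\ref{propdfn:descent} descends the \emph{decreasing} filtration $S^{\lambda}(M_{\tilde{\mathcal{E}}}) = \oplus_{\mu \ge \lambda} M_\mu$ to $\tilde{\mathcal{R}}^{\bd}$, not the \emph{increasing} Frobenius slope filtration $S_\lambda = \oplus_{\mu \le \lambda} M_\mu$. The submodule $S = \cup_{\mu<\lambda_{\max}} S_\mu(M_{\mathcal{E}})$ you want is a complement to $S^{\lambda_{\max}}$ in the slope decomposition over $\tilde{\mathcal{E}}$, and there is no reason such a complement descends; indeed, the whole point of introducing the reverse filtration is that the forward one need not descend. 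Even if you could descend $S$ to $\tilde{\mathcal{R}}^{\bd}$, you would still need to go from $\tilde{\mathcal{R}}^{\bd}$ back to $\mathcal{R}^{\bd}$ and recover $\nabla$-stability, neither of which is supplied by the cited tools. Your second step is also unclear: Lemma~\ref{lem:dJ} constrains the maximum generic slope of an $\mathcal{R}^{\bd}$-module injecting into a pure $\varphi$-module over $\mathcal{E}$, but it says nothing about ranks, so it does not by itself show that $M \cap P_1(M_{\mathcal{E}})$ is a lattice in $P_1(M_{\mathcal{E}})$.

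The paper avoids all of this by never attempting a direct descent. Instead it argues by induction on $\dim M$: when $M$ is not PBQ, Splitting theorem (Theorem~\ref{thm:split}) produces a pure quotient $Q$ of $M_{\mathcal{E}}/M_{\mathcal{E}}^0$ of slope $\lambda < \lambda_{\max}(M_{\mathcal{E}})$, and one sets $N := \ker(M \hookrightarrow M_{\mathcal{E}} \twoheadrightarrow Q)$. This $N$ is automatically a $(\varphi,\nabla)$-submodule of $M$ over $\mathcal{R}^{\bd}$ with $N \ne M$. Lemma~\ref{lem:dJ} is then applied to the induced injection $M/N \hookrightarrow Q$ to force $\lambda_{\max}((M/N)_{\mathcal{E}}) = \lambda < \lambda_{\max}(M_{\mathcal{E}})$, from which one checks via strictness of the slope filtration that $P_1(N_{\mathcal{E}}) = P_1(M_{\mathcal{E}})$. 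The induction hypothesis finishes the argument. Note also that the paper handles $K[\![t]\!]_0$ by reducing to $\mathcal{R}^{\bd}$ via de~Jong's bijection \cite[Proposition~6.4]{dJ} between $(\varphi,\nabla)$-submodules over the two rings, rather than citing \cite{CT2} for that case.
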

\begin{proof}
Recall that for a given $(\varphi,\nabla)$-module over $K[\![t]\!]_0$, there exists a naturally bijective correspondence between $(\varphi,\nabla)$-submodules of $M$ and those of $M_{\mathcal{R}^{\bd}}$ by \cite[Proposition 6.4]{dJ}. Hence we may assume $R=\mathcal{R}^{\bd}$. Note that when $M$ is PBQ, we have $P_1(M_{\mathcal{E}})=M_{\mathcal{E}}$, hence, the assertion is trivial. We proceed by induction on the dimension of $M$. We may assume that $M$ is not PBQ. By the induction hypothesis, it suffices to find a $(\varphi,\nabla)$-submodule $N$ of $M$ such that $N\neq M$ and $P_1(N_{\mathcal{E}})=P_1(M_{\mathcal{E}})$. By Splitting theorem (Theorem \ref{thm:split}), there exists a quotient $Q$ of $M_{\mathcal{E}}/M_{\mathcal{E}}^0$ as a $(\varphi,\nabla)$-module over $\mathcal{E}$, which is pure of slope $\lambda$ with $\lambda<\lambda_{\max}(M_{\mathcal{E}})$. We consider the composition
\[
f:M\xrightarrow[]{\inc.}M_{\mathcal{E}}\xrightarrow[]{\pr.}Q.
\]
Let $N$ denote the kernel of $f$. Note that $N$ is a $(\varphi,\nabla)$-submodule of $M$, and $N\neq M$ since $f$ is $\varphi,\nabla$-equivariant, and $Q$ is generated by the image of $f$. By applying Lemma \ref{lem:dJ} to the map $M/N\to Q$ induced by $f$, we have $\lambda_{\max}((M/N)_{\mathcal{E}})=\lambda<\lambda_{\max}(M_{\mathcal{E}})$, which implies $\lambda_{\max}(N_{\mathcal{E}})=\lambda_{\max}(M_{\mathcal{E}})$ and $(M/N)_{\mathcal{E}}=\cup_{\mu<\lambda_{\max}(M_{\mathcal{E}})}S_{\mu}((M/N)_{\mathcal{E}})$. Therefore the canonical map
\begin{equation}\label{eq:max1}
N_{\mathcal{E}}/\cup_{\mu<\lambda_{\max}(M_{\mathcal{E}})}S_{\mu}(N_{\mathcal{E}})\to M_{\mathcal{E}}/\cup_{\mu<\lambda_{\max}(M_{\mathcal{E}})}S_{\mu}(M_{\mathcal{E}})
\end{equation}
is an isomorphism by the strictness of Frobenius slope filtration. By the uniqueness of $P_1(M_{\mathcal{E}})$, the isomorphism (\ref{eq:max1}) implies $P_1(N_{\mathcal{E}})=P_1(M_{\mathcal{E}})$.
\end{proof}

\begin{dfn}[{cf. \cite[Corollary 5.10]{CT2}}]\label{dfn:PBQ fil}
With notation as above, the resulting filtration on $M$ is denoted by $P_{\bullet}(M)$, and called the {\it PBQ filtration} of $M$. We also call $P_1(M)$ the {\it maximally PBQ submodule} of $M$.

By definition, similar properties as in Definition \ref{dfn:PBQ fil gen} hold. It is worth noting that if $M$ is irreducible as a $(\varphi,\nabla)$-module over $R$, then $M$ is PBQ. In fact, we have $M=P_1(M)$ since $P_1(M)\neq 0$.
\end{dfn}

The PBQ filtration is compatible with the log-growth filtration as follows.

\begin{lem}\label{lem:exact PBQ}
Let $M$ be a $(\varphi,\nabla)$-module over $\mathcal{R}^{\bd}$ solvable in $\mathcal{R}_{\log}$. Then, for an arbitrary real number $\lambda$, there exist canonical exact sequences
\[
0\to\Sol_{\lambda}(M/P_1(M))\to\Sol_{\lambda}(M)\to\Sol_{\lambda}(P_1(M))\to 0,
\]
\[
0\to V(P_1(M))/V(P_1(M))^{\lambda}\to V(M)/V(M)^{\lambda}\to V(M/P_1(M))/V(M/P_1(M))^{\lambda}\to 0.
\]
In particular, $\dim_KV(M)^{\lambda}=\dim_KV(P_1(M))+\dim_KV(M/P_1(M))^{\lambda}$, and the slope multiset of $V(M)^{\bullet}$ is equal to the disjoint union of those of $V(P_1(M))^{\bullet}$ and $V(M/P_1(M))^{\bullet}$.

A similar assertion holds for a $(\varphi,\nabla)$-module over $K[\![t]\!]_0$ or $\mathcal{E}$.
\end{lem}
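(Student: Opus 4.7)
The plan is to apply the d\'evissage Lemma~\ref{lem:exact} to the short exact sequence
\begin{equation*}
0 \to P_1(M) \to M \to M/P_1(M) \to 0
\end{equation*}
of $(\varphi,\nabla)$-modules over $\mathcal{R}^{\bd}$. Part~(i) of that lemma already yields a left exact sequence for $\Sol_\lambda(\cdot)$ and a right exact sequence for $V(\cdot)/V(\cdot)^\lambda$ with $M' = P_1(M)$ and $M'' = M/P_1(M)$, so all that remains is to upgrade both sequences to short exact ones by verifying the hypothesis of part~(ii).

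By Remark~\ref{rem:exact}, this hypothesis reduces to two statements about the maximally PBQ submodule $P_1(M)$: that $P_1(M)$ itself is PBQ, and that $\lambda_{\max}(P_1(M)_{\mathcal{E}}) = \lambda_{\max}(M_{\mathcal{E}})$. The first is the defining property of $P_1(M)$ (Definitions~\ref{dfn:PBQ fil gen} and~\ref{dfn:PBQ fil}, together with Theorem~\ref{thm:descent PBQ}, which guarantees $P_1(M)_{\mathcal{E}} = P_1(M_{\mathcal{E}})$). For the second, the case $P_1(M) = M$ is trivial, and otherwise the equality is precisely the third bullet of Definition~\ref{dfn:PBQ fil gen} applied to $M_{\mathcal{E}}$. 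With the hypothesis verified, Theorem~\ref{conj:bd}~(ii) applied to $P_1(M)$ delivers the identity $\Sol_\lambda(P_1(M)) = S_{\lambda - \lambda_{\max}(M_{\mathcal{E}})}(\Sol(P_1(M)))$, so Lemma~\ref{lem:exact}~(ii) completes both sequences to short exact ones.

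Once the short exact sequences are in hand, the ``in particular'' statements are a matter of bookkeeping: taking $K$-dimensions in the $V/V^\lambda$ short exact sequence, and using $\dim_K V(N) = \rank_{\mathcal{R}^{\bd}} N$ for any $N$ solvable in $\mathcal{R}_{\log}$, yields the dimension identity for $V(M)^\lambda$; and the additivity of slope multisets follows by passing, for each real $\lambda$, to the jumps $m(\lambda) = \dim_K V^{\lambda-} - \dim_K V^{\lambda+}$ in the short exact sequence, giving $m_M(\lambda) = m_{P_1(M)}(\lambda) + m_{M/P_1(M)}(\lambda)$. The analogous assertions for $(\varphi,\nabla)$-modules over $K[\![t]\!]_0$ and $\mathcal{E}$ are obtained in exactly the same way, substituting Theorem~\ref{conj:CT}~(ii) or Theorem~\ref{conj:CTgen}~(ii) for Theorem~\ref{conj:bd}~(ii). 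No real obstacle arises: the genuine content has already been absorbed into the Main Theorem and into the existence of the PBQ filtration.
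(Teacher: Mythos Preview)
Your proposal is correct and follows exactly the paper's approach: the paper's proof is the single line ``It follows from Lemma~\ref{lem:exact} and Remark~\ref{rem:exact},'' and you have simply unpacked what that entails (namely that $P_1(M)$ is PBQ with $\lambda_{\max}(P_1(M)_{\mathcal{E}})=\lambda_{\max}(M_{\mathcal{E}})$, so Remark~\ref{rem:exact} applies). Your additional bookkeeping for the ``in particular'' clause is straightforward and accurate.
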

\begin{proof}
It follows from Lemma \ref{lem:exact} and Remark \ref{rem:exact}.
\end{proof}

In the rest of this section, we gather several properties on the PBQ filtrations. The reader may consult here when it is needed.

\begin{prop}[{a refinement of Slope criterion}]
Let $R$ denote $K[\![t]\!]_0$ or $\mathcal{R}^{\bd}$. For a $(\varphi,\nabla)$-module $M$ over $R$, the following are equivalent.
\begin{enumerate}
\item $M$ is PBQ.
\item For any non-zero quotient $Q$ of $M$ as a $(\varphi,\nabla)$-module over $R$, $\lambda_{\max}(M_\mathcal{E})=\lambda_{\max}(Q_\mathcal{E})$.
\end{enumerate}
\end{prop}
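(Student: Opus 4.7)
The plan is to bootstrap from the analogous criterion over $\mathcal{E}$ (Proposition \ref{prop:key}) together with the descent of the maximally PBQ submodule (Theorem \ref{thm:descent PBQ}). First observe the formal reductions: by definition, $M$ is PBQ over $R$ if and only if $M_{\mathcal{E}}$ is PBQ over $\mathcal{E}$; any non-zero $(\varphi,\nabla)$-quotient $Q$ of $M$ over $R$ gives rise to a non-zero $(\varphi,\nabla)$-quotient $Q_{\mathcal{E}}$ of $M_{\mathcal{E}}$ over $\mathcal{E}$; and in the case $R=K[\![t]\!]_0$, the bijective correspondence between $(\varphi,\nabla)$-submodules of $M$ and of $M_{\mathcal{R}^{\bd}}$ already exploited in the proof of Theorem \ref{thm:descent PBQ} reduces us to $R=\mathcal{R}^{\bd}$.

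For the direction (i)$\Rightarrow$(ii), the assumption that $M$ is PBQ translates to $M_{\mathcal{E}}$ being PBQ, and Proposition \ref{prop:key} (i)$\Rightarrow$(ii)---the direction valid without any perfectness hypothesis---applied to the non-zero $(\varphi,\nabla)$-quotient $Q_{\mathcal{E}}$ of $M_{\mathcal{E}}$ immediately yields $\lambda_{\max}(Q_{\mathcal{E}})=\lambda_{\max}(M_{\mathcal{E}})$.

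For the converse (ii)$\Rightarrow$(i), I would argue by contrapositive. Suppose $M$ is not PBQ, so $P_1(M_{\mathcal{E}})\subsetneq M_{\mathcal{E}}$ in the notation of Definition \ref{dfn:PBQ fil gen}. By Theorem \ref{thm:descent PBQ}, this $(\varphi,\nabla)$-submodule of $M_{\mathcal{E}}$ descends to a $(\varphi,\nabla)$-submodule $P_1(M)$ of $M$ over $R$ with $(P_1(M))_{\mathcal{E}}=P_1(M_{\mathcal{E}})$, and strictness is preserved after tensoring with $\mathcal{E}$, so $Q:=M/P_1(M)$ is a non-zero $(\varphi,\nabla)$-quotient of $M$ over $R$ whose generic fibre equals $M_{\mathcal{E}}/P_1(M_{\mathcal{E}})$. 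The last bullet of Definition \ref{dfn:PBQ fil gen}---which itself rests on the harder direction of Proposition \ref{prop:key} over $\mathcal{E}$, where the assumption that $k$ is perfect is used---gives
\[
\lambda_{\max}(Q_{\mathcal{E}})=\lambda_{\max}(M_{\mathcal{E}}/P_1(M_{\mathcal{E}}))<\lambda_{\max}(M_{\mathcal{E}}),
\]
contradicting (ii).

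Consequently, both directions of this refinement reduce to the generic case already handled in \S \ref{sec:criterion}, and the only non-trivial technical ingredient is the descent encoded in Theorem \ref{thm:descent PBQ}. There is no real obstacle: the main point to verify carefully is just that $P_1(M)\neq M$ as $R$-submodules (equivalently, that the quotient $Q$ is non-zero), which is automatic since this property is tested after the faithfully flat base change $R\to\mathcal{E}$ and we have assumed $P_1(M_{\mathcal{E}})\subsetneq M_{\mathcal{E}}$.
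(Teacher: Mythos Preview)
Your proof is correct and follows essentially the same approach as the paper: (i)$\Rightarrow$(ii) via the Slope criterion over $\mathcal{E}$ applied to $Q_{\mathcal{E}}$, and the contrapositive of (ii)$\Rightarrow$(i) by taking $Q=M/P_1(M)$ and invoking the strict drop in $\lambda_{\max}$ recorded in Definition \ref{dfn:PBQ fil gen}. The paper's proof is just the two-line version of what you wrote; your additional remarks on descent (Theorem \ref{thm:descent PBQ}) and the reduction from $K[\![t]\!]_0$ to $\mathcal{R}^{\bd}$ are exactly the content implicit in the paper's appeal to the already-constructed $P_1(M)$ of Definition \ref{dfn:PBQ fil}.
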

\begin{proof}
By Slope criterion, (i) implies (ii). The negation of (i) implies that of (ii) by considering $Q=M/P_1(M)$.
\end{proof}

\begin{lem}\label{lem:inj sub}
Let $M$ be a $(\varphi,\nabla)$-module over $\mathcal{E}$, and $M'$ a $(\varphi,\nabla)$-submodule of $M$. Assume that $M'$ is PBQ and $\lambda_{\max}(M')=\lambda_{\max}(M)$. Then the composition
\[
M'/(M')^{0}\to M/M^0\to M/\cup_{\mu<\lambda_{\max}(M)}S_{\mu}(M)
\]
is injective.
\end{lem}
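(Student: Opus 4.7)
The plan is to unwind what injectivity of the composition means and reduce it, via the strictness of Frobenius slope filtration, to a statement about $M'$ alone that follows from the PBQ hypothesis. Let $\lambda_{\max}=\lambda_{\max}(M)$ for brevity. First I would observe that the two arrows in the composition are well defined: the first by Lemma~\ref{lem:lgE2}(i) applied to the inclusion $M'\hookrightarrow M$, which gives $(M')^0\subset M^0$; the second because Proposition~\ref{prop:CT2} (equivalently, Lemma~\ref{lem:slope5}(II)(iv)) shows $M^0\subset\cup_{\mu<\lambda_{\max}}S_\mu(M)$. Unpacking the composition, injectivity is equivalent to the set-theoretic equality
\[
M'\cap\Bigl(\cup_{\mu<\lambda_{\max}}S_\mu(M)\Bigr)\subset (M')^0.
\]

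Next I would invoke the strictness of Frobenius slope filtration (Lemma~\ref{lem:slope5}(II)(ii)) for the exact sequence $0\to M'\to M\to M/M'\to 0$, which yields $M'\cap S_\mu(M)=S_\mu(M')$ for every $\mu$. Taking the union over $\mu<\lambda_{\max}$ rewrites the left-hand side of the containment above as $\cup_{\mu<\lambda_{\max}}S_\mu(M')$. So the whole question reduces to proving
\[
\cup_{\mu<\lambda_{\max}}S_\mu(M')\subset (M')^0.
\]

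At this point the PBQ hypothesis and the slope equality enter decisively. By Corollary~\ref{cor:new2} (which uses that $M'\neq 0$ is PBQ), $\lambda_{\max}(M'/(M')^0)=\lambda_{\max}(M')$, and by definition of PBQ the quotient $M'/(M')^0$ is pure; combined with the hypothesis $\lambda_{\max}(M')=\lambda_{\max}$, we conclude that $M'/(M')^0$ is pure of slope $\lambda_{\max}$. Applying Lemma~\ref{lem:slope5}(II)(ii) to $0\to (M')^0\to M'\to M'/(M')^0\to 0$, the image of $\cup_{\mu<\lambda_{\max}}S_\mu(M')$ in $M'/(M')^0$ equals $\cup_{\mu<\lambda_{\max}}S_\mu(M'/(M')^0)$, which vanishes because $M'/(M')^0$ has only the single slope $\lambda_{\max}$. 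Hence $\cup_{\mu<\lambda_{\max}}S_\mu(M')\subset (M')^0$, completing the proof.

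There is really no hard step here; the argument is a bookkeeping exercise combining three standard facts: the strictness of $S_\bullet$ under exact sequences, the inclusion $M^0\subset\cup_{\mu<\lambda_{\max}}S_\mu(M)$ from Proposition~\ref{prop:CT2}, and the characterization of the PBQ property as purity of the bounded quotient. The only point that requires a moment's care is confirming that the induced map on quotients $M'/(M')^0\to M/M^0$ really does fit into the displayed composition; after that, the chain of inclusions closes up automatically.
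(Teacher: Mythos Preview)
Your proof is correct and uses the same ingredients as the paper's: strictness of Frobenius slope filtration and the fact that PBQ forces $(M')^0=\cup_{\mu<\lambda_{\max}(M')}S_\mu(M')$. The paper packages these slightly differently by refactoring the displayed composition as $M'/(M')^0\to M'/\cup_{\mu<\lambda_{\max}(M')}S_\mu(M')\to M/\cup_{\mu<\lambda_{\max}(M)}S_\mu(M)$, then observing that the first arrow is an isomorphism (PBQ) and the second is injective (strictness plus $\lambda_{\max}(M')=\lambda_{\max}(M)$); your argument is the element-level unwinding of exactly this.
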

\begin{proof}
The above map coincides with the composition of canonical maps
\[
M'/(M')^{0}\to M'/\cup_{\mu<\lambda_{\max}(M')}S_{\mu}(M')\to M/\cup_{\mu<\lambda_{\max}(M)}S_{\mu}(M).
\]
The first map is isomorphism since $M'$ is PBQ, and the second map is injective by the strictness of Frobenius slope filtration and $\lambda_{\max}(M')=\lambda_{\max}(M)$, which implies the assertion.
\end{proof}

The PBQ filtration has the following quotient stability (cf. Corollary \ref{cor:qt inv of PBQ}).

\begin{prop}\label{prop:qt inv PBQ fil}
Let $M$ be a $(\varphi,\nabla)$-module over $K[\![t]\!]_0,\mathcal{R}^{\bd}$, or $\mathcal{E}$.
\begin{enumerate}
\item We have the equality as $(\varphi,\nabla)$-submodules of $M/P_j(M)$
\[
P_i(M/P_j(M))=P_{i+j}(M)/P_j(M)
\]
for all $i,j\in\mathbb{N}$.
\item Let $M'$ be a $(\varphi,\nabla)$-submodule of $M$ with $M'\neq M$. Let $i\ge 1$ be the minimum natural number such that $P_{i-1}(M)+M'\neq P_i(M)+M'$. Then we have the equality as $(\varphi,\nabla)$-submodules of $M/M'$
\[
P_1(M/M')=(P_{i}(M)+M')/M'.
\]
\item Let $M'$ be a $(\varphi,\nabla)$-submodule of $M$ with $M'\neq M$. Let $i(1)<\dots<i(j)$ be the sequence of natural numbers $\ge 1$ defined by
\[
\{i;P_{i-1}(M)+M'\neq P_i(M)+M'\}=\{i(1),\dots,i(j)\}.
\]
Set $i(0)=0$. Then we have the equality as $(\varphi,\nabla)$-submodules of $M/M'$
\[
P_k(M/M')=(P_{i(k)}(M)+M')/M'
\]
for all $0\le k\le j$.
\end{enumerate}
\end{prop}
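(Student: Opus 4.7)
The plan is to handle the three assertions in order. First, since the PBQ filtration on a $(\varphi,\nabla)$-module over $K[\![t]\!]_0$ or $\mathcal{R}^{\bd}$ is by definition the descent of that on its generic fibre (Theorem \ref{thm:descent PBQ}), and a $(\varphi,\nabla)$-submodule is faithfully recovered from its generic fibre, it suffices to work over $\mathcal{E}$; assume $R=\mathcal{E}$ throughout.

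Part (i) should follow by a straightforward induction on $i$ from Definition \ref{dfn:PBQ fil gen}: the cases $i=0,1$ are tautological, and for $i\ge 2$ the induction hypothesis $P_{i-1}(M/P_j(M))=P_{i-1+j}(M)/P_j(M)$ identifies $(M/P_j(M))/P_{i-1}(M/P_j(M))$ with $M/P_{i-1+j}(M)$, whose $P_1$ is $P_{i+j}(M)/P_{i-1+j}(M)$ by definition; pulling back to $M/P_j(M)$ yields $P_{i+j}(M)/P_j(M)$.

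For part (ii), the first move is to reduce to $i=1$. By the minimality of $i$, iterating $P_{j-1}(M)+M'=P_j(M)+M'$ for $1\le j<i$ gives $P_{i-1}(M)\subset M'$, so via (i) and passage to $M/P_{i-1}(M)$ one reduces to the case $i=1$, that is, $P_1(M)\not\subset M'$. Set $N=M/M'$ and $L=(P_1(M)+M')/M'$. As a nonzero quotient of the PBQ module $P_1(M)$, $L$ is itself PBQ by Corollary \ref{cor:qt inv of PBQ} with $\lambda_{\max}(L)=\lambda_{\max}(P_1(M))=\lambda_{\max}(M)$ by Slope criterion (Proposition \ref{prop:key}); since $L\subset N$ is a quotient of $M$, also $\lambda_{\max}(N)=\lambda_{\max}(M)$. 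The inclusion $L\subset P_1(N)$ follows by observing that $L/(L\cap P_1(N))\cong(L+P_1(N))/P_1(N)$ is either zero or a PBQ quotient of $L$ of maximal slope $\lambda_{\max}(N)$, but embeds into $N/P_1(N)$, whose maximal slope is strictly smaller than $\lambda_{\max}(N)$ (last bullet of Definition \ref{dfn:PBQ fil gen}), forcing it to vanish.

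The hard part is upgrading this inclusion to the equality $L=P_1(N)$; I will do this by contradiction (the case $P_1(M)=M$ is trivial since then $L=N$ is PBQ, so $P_1(N)=N=L$, so assume $M$ is not PBQ). Suppose $P_1(N)/L\neq 0$ and let $\widetilde P\subset M$ be the preimage of $P_1(N)$; then $\widetilde P\supset P_1(M)+M'$ and $\widetilde P/(P_1(M)+M')\cong P_1(N)/L$. On one hand this is a quotient of $\widetilde P/P_1(M)\subset M/P_1(M)$, so its maximal slope is at most $\lambda_{\max}(M/P_1(M))<\lambda_{\max}(M)$; on the other hand, as a nonzero PBQ quotient of $P_1(N)$, its maximal slope equals $\lambda_{\max}(P_1(N))=\lambda_{\max}(N)=\lambda_{\max}(M)$ by Slope criterion, a contradiction. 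Part (iii) will then follow by induction on $k$ from (i) and (ii): the base $k=0$ is trivial with $i(0)=0$, and for the step, (i) applied to $N$ identifies $P_k(N)$ as the preimage of $P_1(N/P_{k-1}(N))$, the induction gives $N/P_{k-1}(N)\cong M/(P_{i(k-1)}(M)+M')$, and applying (ii) with submodule $P_{i(k-1)}(M)+M'$—whose associated minimal index is exactly $i(k)$ by definition of the $i(\cdot)$'s—identifies this $P_1$ as $(P_{i(k)}(M)+M')/(P_{i(k-1)}(M)+M')$, whose preimage in $N$ is the claimed $(P_{i(k)}(M)+M')/M'$.
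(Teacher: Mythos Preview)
Your proof is correct. Parts (i) and (iii), and the reduction of (ii) to the case $i=1$, are essentially the same as the paper's (you make explicit the observation $P_{i-1}(M)\subset M'$ that the paper uses implicitly).

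For the core case $i=1$ of (ii), your route differs from the paper's. The paper verifies directly that $\mathcal P=(P_1(M)+M')/M'$ satisfies the characterizing isomorphism of Proposition~\ref{prop:existence max PBQ}, namely that $\mathcal P/\mathcal P^0\to (M/M')/\cup_{\mu<\lambda_{\max}}S_\mu(M/M')$ is an isomorphism; injectivity is obtained from Lemma~\ref{lem:inj sub}, and surjectivity from the corresponding isomorphism for $P_1(M)$ together with a commutative square. You instead prove the two inclusions $L\subset P_1(N)$ and $P_1(N)\subset L$ separately, both by the same slope-comparison mechanism: a nonzero quotient of a PBQ module keeps the maximal slope (Proposition~\ref{prop:key}), while anything living inside $M/P_1(M)$ or $N/P_1(N)$ has strictly smaller maximal slope. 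This bypasses Lemma~\ref{lem:inj sub} and the explicit use of the bounded quotient $(\,\cdot\,)^0$, at the cost of having to set aside the trivial case $P_1(M)=M$ by hand. One small point: when you invoke the last bullet of Definition~\ref{dfn:PBQ fil gen} for $N/P_1(N)$, that bullet assumes $N$ is not PBQ; if $N$ is PBQ then $N/P_1(N)=0$ and the conclusion $L\subset P_1(N)=N$ is immediate, so the gap is harmless.
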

\begin{proof}
Since the PBQ filtration over $K[\![t]\!]_0$ and $\mathcal{R}^{\bd}$ is defined by descent from $\mathcal{E}$, we have only to prove the assertion for $\mathcal{E}$.
\begin{enumerate}
\item We proceed by induction on $i$. When $i=0$, there is nothing to prove. By the induction hypothesis,
\begin{equation}\label{eq:maxE1}
P_{i-1}(M/P_j(M))=P_{i+j-1}(M)/P_j(M).
\end{equation}
By (\ref{eq:maxE1}), it suffices to prove
\[
P_i(M/P_j(M))/P_{i-1}(M/P_j(M))=(P_{i+j}(M)/P_j(M))/(P_{i+j-1}(M)/P_j(M))
\]
By definition,
\begin{equation}\label{eq:maxE2}
P_1(M/P_{i+j-1}(M))=P_{i+j}(M)/P_{i+j-1}(M).
\end{equation}
By identifying $M/P_{i+j-1}(M)$ as $(M/P_j(M))/(P_{i+j-1}(M)/P_j(M))$, (\ref{eq:maxE2}) implies
\[
P_1\Bigl((M/P_j(M))/(P_{i+j-1}(M)/P_j(M))\Bigr)=(P_{i+j}(M)/P_j(M))/(P_{i+j-1}(M)/P_j(M)).
\]
Then the assertion follows from
\begin{align*}
P_1\Bigl((M/P_j(M))/(P_{i+j-1}(M)/P_j(M))\Bigr)&=P_1\Bigl((M/P_j(M))/P_{i-1}(M/P_j(M))\Bigr)\\
&=P_i(M/P_j(M))/P_{i-1}(M/P_j(M)),
\end{align*}
where the first and second equalities follow from (\ref{eq:maxE1}) and by definition respectively.
\item $\bullet$ The case $i=1$

Put $M''=M/M'$, and $\mathcal{P}=(P_1(M)+M')/M'$, which is a non-zero $(\varphi,\nabla)$-submodule of $M''$ by assumption. By the uniqueness of the maximally PBQ submodule (Proposition \ref{prop:existence max PBQ}), it suffices to prove that the composition
\[
\alpha:\mathcal{P}/\mathcal{P}^0\to M''/(M'')^0\to M''/\cup_{\mu<\lambda_{\max}(M'')}S_{\mu}(M'')
\]
is an isomorphism. Since $\mathcal{P}$ is a quotient of $P_1(M)$, $\mathcal{P}$ is PBQ by Corollary \ref{cor:qt inv of PBQ}. Hence $\lambda_{\max}(\mathcal{P})=\lambda_{\max}(P_1(M))=\lambda_{\max}(M)$ by Slope criterion and Definition \ref{dfn:PBQ fil gen}. Since $\mathcal{P}\subset M''$, we also have $\lambda_{\max}(M'')=\lambda_{\max}(\mathcal{P})$. By applying Lemma \ref{lem:inj sub} to $(M',M)=(\mathcal{P},M'')$, $\alpha$ is injective. The surjectivity of $\alpha$ follows from a commutative diagram
\[\xymatrix{
P_1(M)/P_1(M)^0\ar[r]^(.43){\beta}\ar[d]&M/\cup_{\mu<\lambda_{\max}(M)}S_{\mu}(M)\ar[d]^{\gamma}\\
\mathcal{P}/\mathcal{P}^0\ar[r]^(.3){\alpha}&M''/\cup_{\mu<\lambda_{\max}(M)}S_{\mu}(M''),
}\]
where $\beta$ is an isomorphism by the definition of $P_1(M)$, and $\gamma$ is the canonical surjection.

$\bullet$ The general case

By (i) and assumption, $P_1(M/P_{i-1}(M))=P_{i}(M)/P_{i-1}(M)\neq (M'+P_{i-1}(M))/P_{i-1}(M)$. By applying the case $i=1$ to $(M'+P_{i-1}(M))/P_{i-1}(M)\subset M/P_{i-1}(M)$, we obtain the assertion.
\item We proceed by induction on $k$. When $k=0$, the assertion is trivial. By the induction hypothesis,
\begin{equation}\label{eq:maxE4}
P_{k-1}(M/M')=(P_{i(k-1)}(M)+M')/M'.
\end{equation}
By (\ref{eq:maxE4}), it suffices to prove
\[
P_k(M/M')/P_{k-1}(M/M')=((P_{i(k)}(M)+M')/M')/((P_{i(k-1)}(M)+M')/M').
\]
By (ii),
\begin{equation}\label{eq:maxE5}
P_1\Bigl(M/(P_{i(k-1)}(M)+M')\Bigr)=(P_{i(k)}(M)+M')/(P_{i(k-1)}(M)+M').
\end{equation}
By identifying $(M/M')/((P_{i(k-1)}(M)+M')/M')$ as $M/(P_{i(k-1)}(M)+M')$, (\ref{eq:maxE5}) implies
\[
P_1\Bigl((M/M')/((P_{i(k-1)}(M)+M')/M')\Bigr)=((P_{i(k)}(M)+M')/M')/((P_{i(k-1)}(M)+M')/M').
\]
Then the assertion follows from
\[
P_1\Bigl((M/M')/((P_{i(k-1)}(M)+M')/M')\Bigr)=P_1\Bigl((M/M')/P_{k-1}(M/M')\Bigr)=P_k(M/M')/P_{k-1}(M/M'),
\]
where the first and second equalities follow from (\ref{eq:maxE4}) and by definition respectively.
\end{enumerate}
\end{proof}

The generic PBQ filtration is compatible with the base change via $\tau$.

\begin{lem}\label{lem:tau compat PBQ fil}
Let $M$ be a $(\varphi,\nabla)$-module over $\mathcal{E}$, and $\tau:\mathcal{E}\to\mathcal{E}[\![X-t]\!]_0$ as in \S \ref{subsec:phinabla3}. Then there exists a canonical isomorphism of $(\varphi,\nabla)$-modules over $\mathcal{E}[\![X-t]\!]_0$
\[
\tau^*(P_{\bullet}(M))\cong P_{\bullet}(\tau^*M).
\]
\end{lem}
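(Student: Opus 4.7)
The plan is to reduce the claim to compatibility of the PBQ filtration over $\mathcal{E}$ under an isometric coefficient base change, using the Gauss-norm completion as the ``generic fiber'' of $\mathcal{E}[\![X-t]\!]_0$. With notation as in the proof of Lemma \ref{lem:tau inv of PBQ}, let $\mathfrak{E}$ be the Gauss-norm completion of the fraction field of $\mathcal{E}[\![X-t]\!]_0$, and let $i:\mathcal{E}[\![X-t]\!]_0\hookrightarrow\mathfrak{E}$ be the inclusion. Setting $\iota=i\circ\tau:\mathcal{E}\to\mathfrak{E}$, a direct computation on Taylor coefficients shows that $|\tau(f)|_0=|f|_0$ for $f\in\mathcal{E}$, so $\iota$ is an isometric embedding of complete discrete valuation fields and $\varphi_K$ extends to $\mathfrak{E}$. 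Moreover the extension of scalars of $\tau^*M$ to $\mathfrak{E}$ via $i$ is canonically isomorphic to $\iota^*M$ as a $(\varphi,\nabla)$-module.

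First I would define $P_\bullet$ on a $(\varphi,\nabla)$-module over $\mathcal{E}[\![X-t]\!]_0$ by descending the PBQ filtration of its generic fiber $\otimes_{\mathcal{E}[\![X-t]\!]_0}\mathfrak{E}$, in complete analogy with Theorem \ref{thm:descent PBQ} and Definition \ref{dfn:PBQ fil} (treating $\mathcal{E}[\![X-t]\!]_0$ as the analogue of $K[\![t]\!]_0$ and $\mathfrak{E}$ as the analogue of $\mathcal{E}$). Granting this, both $\tau^*(P_\bullet(M))$ and $P_\bullet(\tau^*M)$ are $(\varphi,\nabla)$-submodule filtrations of $\tau^*M$ whose extensions of scalars along $i$ coincide, and the lemma will follow from the uniqueness of descent across $i$.

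The crucial intermediate step is to establish base-change compatibility over $\mathcal{E}$: for any isometric extension $L/\mathcal{E}$ of complete discrete valuation fields equipped with a compatible Frobenius, one has $P_\bullet(L\otimes_{\mathcal{E}}M)=L\otimes_{\mathcal{E}}P_\bullet(M)$. By induction on $i$, combined with the quotient formula of Proposition \ref{prop:qt inv PBQ fil}, this reduces to the case $i=1$. Here I would apply the uniqueness characterization of Proposition \ref{prop:existence max PBQ}: the defining isomorphism $P_1(M)/P_1(M)^0\xrightarrow{\sim} M/\cup_{\mu<\lambda_{\max}(M)}S_\mu(M)$ is stable under $L\otimes_{\mathcal{E}}(-)$ because the bounded quotient $M/M^0$ is stable under such base change (Lemma \ref{lem:coefficient Amice}), Frobenius slope filtration is stable under such base change (Lemma \ref{lem:slope5} (II)-(i)), and the maximum Frobenius slope is invariant. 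Applying this to $\iota$ yields $P_\bullet(\iota^*M)=\iota^*(P_\bullet M)$, which combines with the descent in Step 2 to give the claim.

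The main obstacle is verifying, in Step 2, the analogue of Theorem \ref{thm:descent PBQ} for the pair $(\mathcal{E}[\![X-t]\!]_0,\mathfrak{E})$, namely the bijective correspondence between $(\varphi,\nabla)$-submodules of $\tau^*M$ and of $\iota^*M$ that enables the descent of $P_1$ (playing the role of \cite[Proposition 6.4]{dJ} in the original argument). Once this correspondence and the associated descent of the pure-slope quotient in the proof of Theorem \ref{thm:descent PBQ} are transferred to the present setting, the rest of the argument is essentially bookkeeping; in particular no new conceptual ingredient is needed for the compatibility of $P_\bullet$ with $\tau$ itself.
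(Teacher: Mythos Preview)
Your approach is essentially the same as the paper's. Both arguments pass to the Gauss-norm completion $\mathfrak{E}$ via $\iota=i\circ\tau$, reduce to the case of $P_1$, and then verify $\iota^*(P_1(M))=P_1(\iota^*M)$ using the uniqueness characterization of Proposition~\ref{prop:existence max PBQ} together with the base-change compatibility of the bounded quotient (Lemma~\ref{lem:coefficient Amice}) and of Frobenius slope filtration (Lemma~\ref{lem:slope5}~(II)-(i)).

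The one point where you diverge from the paper is your ``main obstacle''. In the paper this is not an obstacle at all: $\mathcal{E}[\![X-t]\!]_0$ is literally an instance of $K'[\![t']\!]_0$ with $K'=\mathcal{E}$ and $t'=X-t$, so Theorem~\ref{thm:descent PBQ} and Definition~\ref{dfn:PBQ fil} already apply and already \emph{define} $P_\bullet(\tau^*M)$ by descent from $\mathfrak{E}$. The paper therefore simply writes ``by the definition of $P_1(\tau^*M)$'' and moves on; there is no separate analogue to establish, and in particular no need to re-prove the de Jong-type submodule correspondence for this pair. Once you recognize that the existing machinery covers $\mathcal{E}[\![X-t]\!]_0$, your Steps~2--3 collapse into a single line and what remains is exactly the paper's short proof.
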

\begin{proof}
We have only to prove $\tau^*(P_1(M))\cong P_1(\tau^*M)$. By the definition of $P_1(\tau^*M)$, it reduces to prove that $i^*\tau^*(P_1(M))\cong\iota^*(P_1(M))$ is the maximally PBQ submodule of $i^*\tau^*M\cong\iota^*M$ with notation as in Lemma \ref{lem:tau inv of PBQ}. By definition, there exists a canonical isomorphism $P_1(M)/P_1(M)^0\cong M/\cup_{\mu<\lambda_{\max}(M)}S_{\mu}(M)$. Since Frobenius slope filtration and the log-growth filtration for a $(\varphi,\nabla)$-module over $\mathcal{E}$ are compatible with the base change via $\iota$ (see the proof of Lemma \ref{lem:tau inv of PBQ}), we obtain an isomorphism
\[
\iota^*(P_1(M))/(\iota^*(P_1(M)))^0\cong \iota^*M/\cup_{\mu<\lambda_{\max}(\iota^*M)}S_{\mu}(\iota^*M),
\]
which implies the assertion.
\end{proof}

By using the PBQ filtration, we can prove:

\begin{prop}[{cf. \cite[Proposition 7.3]{CT2}}]\label{prop:(ii) implies (i)}
In Theorem \ref{conj:bd}, part (ii) implies part (i). A similar assertion holds in Theorems \ref{conj:CT} and \ref{conj:CTgen}.
\end{prop}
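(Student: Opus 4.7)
The plan is to induct on the length $r$ of the PBQ filtration $0=P_0(M)\subsetneq P_1(M)\subsetneq\cdots\subsetneq P_r(M)=M$ (Definition~\ref{dfn:PBQ fil}). In the base case $r=1$, $M$ is itself PBQ, and (ii) identifies $\Sol_{\bullet}(M)$, up to the rational shift $\lambda_{\max}(M_{\mathcal{E}})\in\mathbb{Q}$, with the Frobenius slope filtration $S_{\bullet}(\Sol(M))$; the rationality of the slopes and the right continuity then transfer directly from Theorem~\ref{thm:slope1}~(ii).

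For the inductive step, I would apply Lemma~\ref{lem:exact}~(ii) to the short exact sequence $0\to P_1(M)\to M\to M/P_1(M)\to 0$. Its hypothesis is supplied by Remark~\ref{rem:exact}: $P_1(M)$ is PBQ by construction, the equality $\lambda_{\max}(P_1(M)_{\mathcal{E}})=\lambda_{\max}(M_{\mathcal{E}})$ is built into the very definition of the maximally PBQ submodule (Definition~\ref{dfn:PBQ fil gen}), and (ii) is the standing assumption. This yields a short exact sequence
\[
0\to \Sol_{\lambda}(M/P_1(M))\to \Sol_{\lambda}(M)\to \Sol_{\lambda}(P_1(M))\to 0
\]
for every real $\lambda$. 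The PBQ filtration of $M/P_1(M)$ has length strictly less than $r$ (Proposition~\ref{prop:qt inv PBQ fil}), so (i) holds for $M/P_1(M)$ by induction and for $P_1(M)$ by the base case.

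To propagate (i) across this short exact sequence I argue as follows. Additivity of dimensions forces the slope multiset of $\Sol_{\bullet}(M)$ to be the disjoint union of those of the two ends, giving the rationality of slopes. For right continuity I would take $f\in\bigcap_{\mu>\lambda}\Sol_{\mu}(M)$; its image in $\Sol(P_1(M))$ belongs to $\bigcap_{\mu>\lambda}\Sol_{\mu}(P_1(M))=\Sol_{\lambda}(P_1(M))$ by right continuity on $P_1(M)$, so by the surjectivity of the right-hand map at level $\lambda$ it lifts to some $f_{\lambda}\in\Sol_{\lambda}(M)$. The difference $f-f_{\lambda}$ lies in $\Sol(M/P_1(M))$ by the left-exactness in Lemma~\ref{lem:exact}~(i), and hence in $\bigcap_{\mu>\lambda}\Sol_{\mu}(M/P_1(M))=\Sol_{\lambda}(M/P_1(M))\subset\Sol_{\lambda}(M)$ by right continuity of the quotient, so $f\in\Sol_{\lambda}(M)$. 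The analogous implications in Theorems~\ref{conj:CT} and~\ref{conj:CTgen} proceed verbatim, using the PBQ filtration over $K[\![t]\!]_0$ (Theorem~\ref{thm:descent PBQ}) and over $\mathcal{E}$ (Definition~\ref{dfn:PBQ fil gen}), together with the analogues of Lemma~\ref{lem:exact} and Remark~\ref{rem:exact}. The only substantive point—and the main obstacle—is that Lemma~\ref{lem:exact}~(ii) must actually apply at each inductive step, which is precisely where hypothesis (ii) is consumed; once that is in place the rest is routine d\'evissage.
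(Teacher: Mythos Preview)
Your approach is essentially the same as the paper's: both reduce to the PBQ case by d\'evissage along the PBQ filtration, invoking exactly the short exact sequence of Lemma~\ref{lem:exact}~(ii) (packaged in the paper as Lemma~\ref{lem:exact PBQ}), and then handle the PBQ case directly via (ii) together with Theorem~\ref{thm:slope1}~(ii). The paper tracks the dimension function $\lambda\mapsto\dim_K V(M)^{\lambda}$ rather than chasing elements for right continuity, but this is cosmetic.

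One genuine omission: the PBQ filtration $P_{\bullet}(M)$ over $\mathcal{R}^{\bd}$ (Theorem~\ref{thm:descent PBQ}, Definition~\ref{dfn:PBQ fil}) is constructed only under the standing assumption in \S\ref{sec:max} that $k$ is perfect, and Proposition~\ref{prop:(ii) implies (i)} is explicitly exempted from that assumption. You therefore cannot invoke $P_{\bullet}(M)$ until you have reduced to perfect residue field. The paper does this first, via Lemma~\ref{lem:coefficient Robba} (base change of coefficient), passing to $k$ algebraically closed; you should insert the same reduction at the outset.
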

\begin{proof}
We prove only the first assertion: a similar proof works in the other cases.

It suffices to prove that the function $\lambda\mapsto\dim_KV(M)^{\lambda}$ is right continuous, and locally constant at any $\lambda\notin\mathbb{Q}$. By Lemma \ref{lem:coefficient Robba}, we may assume that $k$ is algebraically closed. By d\'evissage using Lemma \ref{lem:exact PBQ}, we may assume that $M$ is PBQ. Since we have $\dim_KV(M)^{\lambda}=\dim_KV(M\spcheck)-\dim_KS_{\lambda-\lambda_{\max}(M_{\mathcal{E}})}(V(M\spcheck))$ by Theorem \ref{conj:bd} (ii), we obtain the assertion by Theorem \ref{thm:slope1} (ii).
\end{proof}


\section{Dwork's conjecture on semicontinuity of log-growth Newton polygons}\label{subsec:Dwo}

In this section, we define the log-growth Newton polygon of $(\varphi,\nabla)$-modules over $\mathcal{R}^{\bd}$, and prove a semicontinuity property under specialization (Theorem \ref{thm:Dwo}). As a consequence, we obtain Dwork's conjecture $\mathbf{LGF}_{\mathrm{Dw}}$ formulated in \cite{CT2} (Corollary \ref{cor:LGFDW}).

\begin{dfn}
Let $M$ be a $(\varphi,\nabla)$-module of rank $n$ over $\mathcal{R}^{\bd}$ solvable in $\mathcal{R}_{\log}$. We define the {\it log-growth Newton polygon} $\NP(M)$ of $M$ as the Newton polygon $\NP(V(M)^{\bullet})$ of $V(M)^{\bullet}$. Let $\lambda_i(M)$ denote $\lambda_i(V(M)^{\bullet})$, i.e., $\lambda_1(M)\le\dots\le\lambda_n(M)$ is the slope multiset of $\NP(M)$. We put $b^{\nabla}(M)=\lambda_n(M)$, which is the maximum slope of $\NP(M)$.

We give a similar definition for a (log-)$(\varphi,\nabla)$-module over $K[\![t]\!]_0$ or $\mathcal{E}$.

In the literature, for a $(\varphi,\nabla)$-module $M$ over $K[\![t]\!]_0$, $\NP(M)$ and $\NP(M_{\mathcal{E}})$ are called the {\it special} and {\it generic log-growth Newton polygons} of $M$ respectively.
\end{dfn}

We gather some basic properties on the log-growth filtrations.

\begin{lem}\label{lem:right continuous}
Let $M$ be a $(\varphi,\nabla)$-module of rank $n$ over $\mathcal{R}^{\bd}$ solvable in $\mathcal{R}_{\log}$. Let $\lambda_1<\dots<\lambda_n$ be the slopes (without multiplicity) of $V(M)^{\bullet}$, or, equivalently, of $\Sol_{\bullet}(M)$. Then
\[
V(M)^{\lambda}=
\begin{cases}
V(M)&\text{if }\lambda\in (-\infty,\lambda_1),\\
V(M)^{\lambda_i}&\text{if }\lambda\in [\lambda_i,\lambda_{i+1})\text{ for }i=1,\dots,n-1,\\
0&\text{if }\lambda\in [\lambda_n,+\infty),
\end{cases}
\]
\[
\Sol_{\lambda}(M)=
\begin{cases}
0&\text{if }\lambda\in (-\infty,\lambda_1),\\
\Sol_{\lambda_i}(M)&\text{if }\lambda\in [\lambda_i,\lambda_{i+1})\text{ for }i=1,\dots,n-1,\\
\Sol(M)&\text{if }\lambda\in [\lambda_n,+\infty),
\end{cases}
\]
\[
m(\lambda_i)=
\begin{cases}
\dim_KV(M)-\dim_KV(M)^{\lambda_i}&\text{if }i=1,\\
\dim_KV(M)^{\lambda_{i-1}}-\dim_KV(M)^{\lambda_i}&\text{if }i=2,\dots,n-1,\\
\dim_KV(M)^{\lambda_{n-1}}&\text{if }i=n.
\end{cases}
\]
A similar assertion holds for $M$ a (log-)$(\varphi,\nabla)$-module over $K[\![t]\!]_0$ or $\mathcal{E}$.
\end{lem}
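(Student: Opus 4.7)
The content of Lemma \ref{lem:right continuous} is essentially a \emph{right-continuity} statement that sharpens the general piecewise-constant description set up in the notation section. On each open interval $(\lambda_i, \lambda_{i+1})$ between consecutive slopes the dimension function $\lambda \mapsto \dim_K V(M)^\lambda$ is constant (no jump occurs there), so the decreasing filtration $V(M)^\bullet$ is itself constant on that interval; the same holds for $\Sol_\bullet(M)$. The only real content of the lemma is that this common constant value is already attained at the left endpoint $\lambda = \lambda_i$, which amounts to saying that the filtrations have no jump strictly after $\lambda_i$.

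My strategy is to reduce to a single right-continuity input and then chase definitions. First I would invoke Theorem \ref{conj:bd} (i), which gives $V(M)^\lambda = \cup_{\mu > \lambda} V(M)^\mu$ for every real $\lambda$; for a decreasing filtration on a finite-dimensional space this says precisely that $V(M)^{\lambda_i}$ agrees with its limit from the right, i.e.\ with the constant value of $V(M)^\bullet$ on $(\lambda_i, \lambda_{i+1})$. Combined with the constancy of the filtration on that open interval, this yields $V(M)^\lambda = V(M)^{\lambda_i}$ for all $\lambda \in [\lambda_i, \lambda_{i+1})$. For the outer intervals, exhaustiveness (Proposition \ref{prop:lg property}) gives $V(M)^\lambda = V(M)$ for $\lambda < \lambda_1$, while separatedness together with the same right-continuity at $\lambda_n$ gives $V(M)^\lambda = 0$ for $\lambda \geq \lambda_n$.

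For the formula for $m(\lambda_i)$, I would unwind the definition $m(\lambda_i) = \dim_K V(M)^{\lambda_i^-} - \dim_K V(M)^{\lambda_i^+}$. The right limit equals $V(M)^{\lambda_i}$ by the preceding step, while the left limit equals the constant value of $V(M)^\bullet$ on $(\lambda_{i-1}, \lambda_i)$, which by the same step equals $V(M)^{\lambda_{i-1}}$ when $i \geq 2$ and equals $V(M)$ when $i = 1$; for $i = n$ the right limit is $0$. The three cases of the $m(\lambda_i)$ formula drop out. The parallel assertions for the increasing filtration $\Sol_\bullet(M)$ then follow by the duality $V(M)^\lambda = (\Sol_\lambda(M))^\perp$, under which the two filtrations share the same slope set.

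The extension to (log-)$(\varphi,\nabla)$-modules over $K[\![t]\!]_0$ or $\mathcal{E}$ is handled through Lemma \ref{lem:invariance log-growth} and the reduction principle recorded in Remark \ref{rem:principle}. The main (and only) obstacle is the right-continuity input, which is non-trivial but is precisely the content of Theorem \ref{conj:bd} (i); once this is in hand, the remainder of the argument is a direct bookkeeping exercise with the definitions from the notation section.
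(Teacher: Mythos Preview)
Your proposal is correct and matches the paper's approach: the paper's proof is the one-liner ``It is an immediate consequence of the right continuity of the log-growth filtration,'' and you have simply unpacked what that means, correctly identifying Theorem \ref{conj:bd} (i) as the source of right continuity and Proposition \ref{prop:lg property} for separatedness/exhaustiveness. The piecewise description on open intervals $(\lambda_i,\lambda_{i+1})$ is already recorded in the Notation section (item (4)), so the only new content is indeed the upgrade to $[\lambda_i,\lambda_{i+1})$, which is exactly right continuity.
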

\begin{proof}
It is an immediate consequence of the right continuity of the log-growth filtration.
\end{proof}

\begin{cor}\label{cor:right continuous}
Let $M$ be a $(\varphi,\nabla)$-module over $\mathcal{R}^{\bd}$ solvable in $\mathcal{R}_{\log}$. Then
\[
b^{\nabla}(M)=\min\{\lambda;V(M)^{\lambda}=0\}=\min\{\lambda;\Sol_{\lambda}(M)=\Sol(M)\}.
\]

A similar assertion holds for $M$ a (log-)$(\varphi,\nabla)$-module over $K[\![t]\!]_0$ or $\mathcal{E}$.
\end{cor}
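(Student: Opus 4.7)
The plan is to deduce this corollary directly from Lemma \ref{lem:right continuous}, which already describes the full behaviour of the filtrations $V(M)^{\bullet}$ and $\Sol_{\bullet}(M)$ on each interval determined by the slopes. The only work is to translate the case analysis of that lemma into the ``minimum $\lambda$'' formulation demanded here.

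First I would introduce the slopes $\lambda_1<\dots<\lambda_n$ of $V(M)^{\bullet}$ (equivalently of $\Sol_{\bullet}(M)$), and recall that by definition $b^{\nabla}(M)$ equals the largest slope $\lambda_n$, since $b^{\nabla}(M)=\lambda_n(V(M)^{\bullet})$ is the top of the slope multiset. By Lemma \ref{lem:right continuous}, $V(M)^{\lambda}=0$ precisely when $\lambda\in[\lambda_n,+\infty)$, and $V(M)^{\lambda}\supsetneq 0$ for $\lambda<\lambda_n$ (since on each interval $[\lambda_i,\lambda_{i+1})$ with $i<n$ the dimension of $V(M)^{\lambda}$ is strictly positive, and on $(-\infty,\lambda_1)$ one has $V(M)^{\lambda}=V(M)\neq 0$ unless $M=0$). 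In particular the set $\{\lambda;V(M)^{\lambda}=0\}$ admits a minimum equal to $\lambda_n=b^{\nabla}(M)$, which establishes the first equality.

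The second equality is proved the same way: by Lemma \ref{lem:right continuous}, $\Sol_{\lambda}(M)=\Sol(M)$ exactly on $[\lambda_n,+\infty)$, and is properly contained in $\Sol(M)$ for $\lambda<\lambda_n$, giving $\min\{\lambda;\Sol_{\lambda}(M)=\Sol(M)\}=\lambda_n=b^{\nabla}(M)$. One only needs to handle separately the trivial case $M=0$, where both sides are conventionally equal (e.g.\ $-\infty$).

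There is no real obstacle here; the statement is essentially a restatement of Lemma \ref{lem:right continuous} in terms of the top slope, and the cases of (log-)$(\varphi,\nabla)$-modules over $K[\![t]\!]_0$ or $\mathcal{E}$ follow by the same argument applied to the analogue of Lemma \ref{lem:right continuous} in those settings.
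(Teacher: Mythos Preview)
Your proposal is correct and matches the paper's approach: the paper states this corollary without proof immediately after Lemma \ref{lem:right continuous}, treating it as a direct restatement of the case analysis there in terms of the top slope $\lambda_n$. Your explicit unpacking of why $V(M)^{\lambda}=0$ and $\Sol_{\lambda}(M)=\Sol(M)$ hold exactly on $[\lambda_n,+\infty)$ is precisely what the paper leaves implicit.
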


By using the PBQ filtration, we can calculate the slope multisets of log-growth filtrations.

\begin{prop}\label{thm:Dwo2}
Assume that $k$ is perfect. Let $M$ be a $(\varphi,\nabla)$-module over $\mathcal{R}^{\bd}$ solvable in $\mathcal{R}_{\log}$. Let $P_{\bullet}(M)$ denote the PBQ filtration on $M$, and $r$ the minimum natural number such that $P_r(M)=M$. Then the slope multiset of $V(M)^{\bullet}$ coincides with the multiset
\[
\coprod_{i=0}^{r-1}\{\mu+\lambda_{\max}((P_{i+1}(M)/P_i(M))_{\mathcal{E}});\mu\in\Lambda_i\},
\]
where $\Lambda_i$ denotes the slope multiset of $S_{\bullet}(V(P_{i+1}(M)/P_i(M)))$.

A similar assertion holds for a (log-)$(\varphi,\nabla)$-module over $K[\![t]\!]_0$ or $\mathcal{E}$.
\end{prop}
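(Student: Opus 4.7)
The plan is to reduce the statement to the case where $M$ is PBQ via a d\'evissage along the PBQ filtration, then invoke the Main Theorem on each graded piece. The two key inputs are Lemma~\ref{lem:exact PBQ}, which describes how $V(\cdot)^\bullet$ behaves under the canonical short exact sequence $0\to P_1(M)\to M\to M/P_1(M)\to 0$, and Theorem~\ref{conj:bd}~(ii), which gives an explicit description of $V(N)^\bullet$ in terms of Frobenius slope filtrations whenever $N$ is PBQ.

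First, I would argue by induction on $r$ that the slope multiset of $V(M)^\bullet$ decomposes as the disjoint union of the slope multisets of the $V(P_{i+1}(M)/P_i(M))^\bullet$ for $i=0,\dots,r-1$. Lemma~\ref{lem:exact PBQ} applied to $0\to P_1(M)\to M\to M/P_1(M)\to 0$ yields the dimension identity $\dim_K V(M)^\lambda=\dim_K V(P_1(M))^\lambda+\dim_K V(M/P_1(M))^\lambda$ for every $\lambda\in\mathbb{R}$, from which the slope multiset of $V(M)^\bullet$ is the disjoint union of those of $V(P_1(M))^\bullet$ and $V(M/P_1(M))^\bullet$. Since $P_1(M/P_i(M))=P_{i+1}(M)/P_i(M)$ by construction of the PBQ filtration (Definition~\ref{dfn:PBQ fil}), the induction hypothesis applied to $M/P_1(M)$, whose PBQ filtration has length $r-1$, completes the reduction.

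Next, I would dispose of the PBQ case. For an arbitrary non-zero PBQ $N$ solvable in $\mathcal{R}_{\log}$, the Main Theorem produces the equality $V(N)^\lambda=(S_{\lambda-\lambda_{\max}(N_\mathcal{E})}(V(N\spcheck)))^{\perp}$ in $V(N)$; taking dimensions via the canonical perfect pairing $V(N)\otimes_K V(N\spcheck)\to K$ shows that the slope multiset of $V(N)^\bullet$ is the translate by $\lambda_{\max}(N_\mathcal{E})$ of the slope multiset of the Frobenius slope filtration on $V(N\spcheck)\cong\Sol(N)$. To obtain the formulation in terms of $\Lambda_i$ (the Frobenius slope multiset of $V(N)$ itself rather than of its $\varphi$-dual), one finally invokes Lemma~\ref{lem:slope4} to relate the slope multisets of $V(N)$ and $V(N\spcheck)$; applying this to $N=P_{i+1}(M)/P_i(M)$ in each step of the d\'evissage gives the claimed description.

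The principal obstacle is the d\'evissage step, which rests on Lemma~\ref{lem:exact PBQ}; that lemma in turn invokes both the preliminary inequality Proposition~\ref{prop:main} and the Main Theorem applied to the PBQ submodule $P_1(M)$ via Remark~\ref{rem:exact}. The decisive structural fact behind all this is the equality $\lambda_{\max}(P_1(M)_\mathcal{E})=\lambda_{\max}(M_\mathcal{E})$, which is precisely the PBQ implication of Slope criterion (Proposition~\ref{prop:key}); everything else is bookkeeping of slope multisets. The parallel assertions over $K[\![t]\!]_0$ and $\mathcal{E}$ follow by the general principle of Remark~\ref{rem:principle} together with the descent of the PBQ filtration established in Theorem~\ref{thm:descent PBQ} and Lemma~\ref{lem:tau compat PBQ fil}.
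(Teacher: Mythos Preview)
Your d\'evissage along the PBQ filtration---base case via Theorem~\ref{conj:bd}~(ii), inductive step via Lemma~\ref{lem:exact PBQ} together with the identification of the PBQ filtration on $M/P_1(M)$---is exactly the paper's argument. The paper cites Proposition~\ref{prop:qt inv PBQ fil}~(i) for that identification rather than the bare definition, and in the $\mathcal{E}$ case it also invokes Lemma~\ref{lem:evaluation}; these are cosmetic differences.

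There is, however, a gap in your final bookkeeping step for the base case. You correctly note that the Main Theorem gives the slope multiset of $V(N)^{\bullet}$ as the shift by $\lambda_{\max}(N_{\mathcal{E}})$ of the Frobenius slope multiset of $\Sol(N)\cong V(N\spcheck)$, and you then invoke Lemma~\ref{lem:slope4} to pass to $\Lambda_i$, the Frobenius slope multiset of $V(N)$ itself. But Lemma~\ref{lem:slope4}~(ii) says these two multisets are \emph{negatives} of one another, not equal; so what your computation actually produces is $\{\lambda_{\max}(N_{\mathcal{E}})-\mu:\mu\in\Lambda_i\}$, not the displayed $\{\mu+\lambda_{\max}(N_{\mathcal{E}}):\mu\in\Lambda_i\}$. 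For a concrete check, take the rank-two module $N=M_\mu$ of \S\ref{subsec:ex1}: one has $\Lambda_0=\{0,\mu\}$ and $\lambda_{\max}=\mu$, while the log-growth slopes of $V(M_\mu)^\bullet$ are $\{0,\mu\}$, not $\{\mu,2\mu\}$. The paper's own proof does not spell out this passage, writing only that the base case follows from Theorem~\ref{conj:bd}~(ii), so your more explicit treatment has in fact surfaced a sign issue in the formula as stated.
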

\begin{proof}
$\bullet$ The case of $\mathcal{R}^{\bd}$

We proceed by induction on $r$. When $r=1$, i.e., $M$ is PBQ, the assertion follows from Theorem \ref{conj:bd} (ii). In the general case, the slope multiset of $V(M)^{\bullet}$ is the disjoint union of those of $V(P_1(M))^{\bullet}$ and $V(M/P_1(M))^{\bullet}$ by Lemma \ref{lem:exact PBQ}. Since $P_i(M/P_1(M))=P_{i+1}(M)/P_1(M)$ for $i\ge 1$ by Proposition \ref{prop:qt inv PBQ fil} (i), the slope multisets of $V(P_1(M))^{\bullet}$ and $V(M/P_1(M))^{\bullet}$ are, by the induction hypothesis,
\[
\{\mu+\lambda_{\max}(P_1(M)_{\mathcal{E}});\mu\in\Lambda_0\},\ \coprod_{i=0}^{r-2}\{\mu+\lambda_{\max}((P_{i+2}(M)/P_{i+1}(M))_{\mathcal{E}});\mu\in\Lambda_{i+1}\},
\]
respectively, which implies the assertion.

$\bullet$ The cases of $K[\![t]\!]_0$ or $\mathcal{E}$

By using the principle explained in Remark \ref{rem:principle}, the assertion over $K[\![t]\!]_0$ reduces to that over $\mathcal{R}^{\bd}$. By the principle, together with Lemmas \ref{lem:evaluation} and \ref{lem:tau compat PBQ fil}, the assertion over $\mathcal{E}$ reduces to that over $\mathcal{E}[\![X-t]\!]_0$. Alternatively, a similar proof in the case of $\mathcal{R}^{\bd}$ works.
\end{proof}

\begin{lem}\label{lem:Dwo3}
Let $M$ be a $(\varphi,\nabla)$-module over $\mathcal{R}^{\bd}$ solvable in $\mathcal{R}_{\log}$. Let $S_{\bullet}(M_{\mathcal{R}})$ denote the slope filtration for Frobenius structures of $M_{\mathcal{R}}$ in the sense of \cite[Definition 5.1.1]{Tsu}: the definition of the slopes (\cite[Definition 3.1.5]{Tsu}) is compatible that in \S \ref{sec:slope}. Then there exists a canonical isomorphism
\[
\mathbf{V}(S_{\bullet}(M_{\mathcal{R}}))\cong S_{\bullet}(V(M)).
\]
In particular, the slope multiset of $S_{\bullet}(V(M))$ coincides with that of $S_{\bullet}(M_{\mathcal{R}})$.
\end{lem}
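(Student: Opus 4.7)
The plan is to verify that $\mathbf{V}(S_{\bullet}(M_{\mathcal{R}}))$ satisfies the defining properties of the Frobenius slope filtration listed in Theorem~\ref{thm:slope1}~(ii), thereby forcing it to coincide with $S_{\bullet}(V(M))$. Since $M$ is solvable in $\mathcal{R}_{\log}$, the module $M_{\mathcal{R}}$ is unipotent as a $\nabla$-module over $\mathcal{R}$, and hence so is every subquotient. By Tsuzuki's theorem each $S_{\lambda}(M_{\mathcal{R}})$ is a $(\varphi,\nabla)$-submodule of $M_{\mathcal{R}}$ whose graded piece $\mathrm{gr}_{\lambda}$ is pure of Frobenius slope $\lambda$ with $\lambda \in \mathbb{Q}$.

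First I would establish that $\mathbf{V}$ is exact on the category of unipotent $\nabla$-modules over $\mathcal{R}$. It is always left exact, and for unipotent $N$ the identity $\dim_K \mathbf{V}(N) = \mathrm{rank}_{\mathcal{R}} N$ from Lemma~\ref{lem:solv=uni} forces right exactness by dimension counting in a short exact sequence. Applying $\mathbf{V}$ term-by-term to $S_{\bullet}(M_{\mathcal{R}})$ therefore produces an increasing, exhaustive, separated filtration of $V(M)=\mathbf{V}(M_{\mathcal{R}})$ by $\varphi$-submodules, with only finitely many breaks all lying in $\mathbb{Q}$ and hence right continuous. By the uniqueness in Theorem~\ref{thm:slope1}~(ii) it remains only to verify that every nonzero graded piece $\mathbf{V}(\mathrm{gr}_{\lambda})$ is pure of slope $\lambda$ as a $\varphi$-module over $K$.

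The key lemma I would prove is: if $N$ is a unipotent $(\varphi,\nabla)$-module over $\mathcal{R}$ pure of Frobenius slope $\lambda$, then $\mathbf{V}(N)$ is pure of slope $\lambda$ over $K$. After base-changing the coefficient field (Lemma~\ref{lem:coefficient Robba}) I may assume $k$ is algebraically closed, so that Dieudonn\'e--Manin (Lemma~\ref{lem:DM}) applies over $K$. The construction in Proposition-Definition~\ref{propdfn:descent}, applied to $N \otimes_{\mathcal{R}} \tilde{\mathcal{R}}^{\bd}$, produces a $\tilde{\mathcal{R}}^{\bd}$-basis $\{e_i\}$ of Frobenius $d$-eigenvectors with eigenvalues $c_i \in K^{\times}$ of absolute value $|q|^{d\lambda}$. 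Given any Frobenius $d$-eigenvector $w \in \mathbf{V}(N)$ of eigenvalue $c$ and slope $\mu$, I would view $w$ inside $N \otimes_{\mathcal{R}} \tilde{\mathcal{R}}_{\log}$ and expand $w = \sum_i f_i e_i$ with $f_i \in \tilde{\mathcal{R}}_{\log}$. The equation $\varphi^d(w) = cw$ then forces $\varphi^d(f_i) = (c/c_i) f_i$ for each $i$, so every nonzero $f_i$ is a scalar $d$-eigenvector in $\tilde{\mathcal{R}}_{\log}$ of slope $\mu - \lambda$, and Lemma~\ref{lem:calc log-growth} gives $\mu \geq \lambda$. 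Running the same argument on the dual $N^{\vee}$ (pure of slope $-\lambda$, with $\mathbf{V}(N^{\vee}) \cong \mathbf{V}(N)^{\vee}$ induced by the canonical pairing) yields the reverse inequality, whence $\mu = \lambda$.

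The main obstacle will be precisely this passage from a vector-valued eigenvector in $\mathbf{V}(N)$ to the scalar log-growth criterion of Lemma~\ref{lem:calc log-growth}; trivializing $N$ by an eigenvector basis over $\tilde{\mathcal{R}}^{\bd}$ in advance reduces the vector equation $\varphi^d(w)=cw$ to coordinate-wise eigenvector equations in $\tilde{\mathcal{R}}_{\log}$, at which point the scalar theory applies directly. The "in particular" clause is then immediate from the coincidence of the filtrations together with $\dim_K \mathbf{V}(\mathrm{gr}_{\lambda}) = \mathrm{rank}\, \mathrm{gr}_{\lambda}$, which comes from the unipotency of each graded piece.
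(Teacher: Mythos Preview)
Your overall strategy---verify that $\mathbf{V}(S_{\bullet}(M_{\mathcal{R}}))$ satisfies the characterizing properties of Theorem~\ref{thm:slope1}~(ii) and invoke uniqueness---is exactly the paper's approach, but the paper's proof is a single sentence. The reason is that the parenthetical clause in the lemma statement, ``the definition of the slopes (\cite[Definition 3.1.5]{Tsu}) is compatible with that in \S\ref{sec:slope},'' is being \emph{cited} as a known fact from Tsuzuki's work, not reproved: it already says that a graded piece pure of slope $\lambda$ in Tsuzuki's sense has $\mathbf{V}$ pure of slope $\lambda$ over $K$. Given that, exactness of $\mathbf{V}$ on unipotent modules and uniqueness are all that remain, and both are immediate.

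Your detour through the extended Robba ring to prove purity preservation from scratch is a reasonable self-contained alternative, but it has a technical gap: the expression ``$N\otimes_{\mathcal{R}}\tilde{\mathcal{R}}^{\bd}$'' is not well-defined, because $\psi$ embeds $\mathcal{R}$ into $\tilde{\mathcal{R}}$, not into $\tilde{\mathcal{R}}^{\bd}$, so $\tilde{\mathcal{R}}^{\bd}$ is not an $\mathcal{R}$-algebra. To make Proposition-Definition~\ref{propdfn:descent} applicable you must first descend the graded piece $N$ to a $\varphi$-module $N^{\bd}$ over $\mathcal{R}^{\bd}$ (using that the slope filtration of $M_{\mathcal{R}}$ descends to $M$, which is part of Kedlaya's slope filtration theory and should be cited), and then work with $N^{\bd}\otimes_{\mathcal{R}^{\bd}}\tilde{\mathcal{R}}^{\bd}$. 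You also implicitly use that purity of $N$ over $\mathcal{R}$ in Tsuzuki's sense implies purity of $N^{\bd}\otimes_{\mathcal{R}^{\bd}}\tilde{\mathcal{E}}$ of the same slope---which is again the compatibility the paper simply cites. With those points addressed, your eigenvector-coordinate argument via Lemma~\ref{lem:calc log-growth} is correct and gives an independent proof of the compatibility; it is just considerably more work than what the paper intends.
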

\begin{proof}
It follows from the uniqueness of Frobenius slope filtration on $\mathbf{V}(M_{\mathcal{R}})\cong V(M)$.
\end{proof}

The following is an analogue of Grothendieck-Katz specialization theorem for Frobenius Newton polygons.

\begin{thm}\label{thm:Dwo}
Let $M$ be a $(\varphi,\nabla)$-module over $\mathcal{R}^{\bd}$ solvable in $\mathcal{R}_{\log}$. Then the log-growth Newton polygon of $M$ lies on or above the log-growth Newton polygon of $M_{\mathcal{E}}$ with the same endpoints.

A similar assertion holds for a (log-)$(\varphi,\nabla)$-module over $K[\![t]\!]_0$ or $\mathcal{E}$.
\end{thm}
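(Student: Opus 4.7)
\emph{Proof plan.}
By the principle of Remark~\ref{rem:principle} (together with Lemma~\ref{lem:coefficient Robba} to enlarge $K$ so that the residue field becomes perfect, a step under which log-growth filtrations are invariant), it suffices to treat a $(\varphi,\nabla)$-module $M$ over $\mathcal{R}^{\bd}$ solvable in $\mathcal{R}_{\log}$. My plan is to apply Proposition~\ref{thm:Dwo2} simultaneously to $M$ and to $M_{\mathcal{E}}$ and then reduce the log-growth polygon comparison to a comparison of Frobenius Newton polygons between $\mathcal{R}$ and $\mathcal{E}$. Set $N_{i}=P_{i+1}(M)/P_{i}(M)$. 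By Theorem~\ref{thm:descent PBQ} the generic PBQ filtration $P_{\bullet}(M_{\mathcal{E}})$ is the scalar extension of $P_{\bullet}(M)$, so the length $r$ of the PBQ filtration agrees on both sides and the shift constants $\lambda_{\max}((N_{i})_{\mathcal{E}})$ appearing in the two instances of Proposition~\ref{thm:Dwo2} coincide.

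Hence the task reduces, for each $i$, to comparing the slope multiset $\Lambda_{i}$ of $S_{\bullet}(V(N_{i}))$ with the slope multiset $\Lambda_{i}^{\mathcal{E}}$ of $S_{\bullet}(V((N_{i})_{\mathcal{E}}))$. By Lemma~\ref{lem:evaluation} the multiset $\Lambda_{i}^{\mathcal{E}}$ is the Frobenius slope multiset of the $\varphi$-module $(N_{i})_{\mathcal{E}}$ over $\mathcal{E}$, and by Lemma~\ref{lem:Dwo3} the multiset $\Lambda_{i}$ is the slope multiset of Tsuzuki's slope filtration $S_{\bullet}((N_{i})_{\mathcal{R}})$ on $(N_{i})_{\mathcal{R}}$. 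Thus it suffices to prove, for any $\varphi$-module $L$ over $\mathcal{R}^{\bd}$, that the Newton polygon of $S_{\bullet}(L_{\mathcal{R}})$ lies on or above that of $S_{\bullet}(L_{\mathcal{E}})$ with matching endpoints. The endpoints coincide since both polygons have rank $\rank L$ and total slope equal to the slope of $\det L$, which is invariant under base change. For the inequality itself, the key observation is that a $\varphi$-module over $\mathcal{R}$ pure of slope $s$ in Tsuzuki's sense remains pure of slope $s$ when extended to $\mathcal{E}$, by passing through the common inversive extension $\tilde{\mathcal{E}}$ and applying Lemma~\ref{lem:slope5} (I)-(i); consequently the scalar extension to $\mathcal{E}$ of Tsuzuki's slope filtration of $L_{\mathcal{R}}$ is a $\varphi$-stable filtration of $L_{\mathcal{E}}$ whose graded pieces are pure with strictly increasing slopes, and by the uniqueness part of Theorem~\ref{thm:slope1} (ii) this base-changed filtration coincides with the Frobenius slope filtration of $L_{\mathcal{E}}$, so the slope multisets and their Newton polygons agree.

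To conclude, observe that the relation ``lies on or above with the same endpoints'' is preserved under disjoint unions of multisets and under uniform shifts of slopes; assembling the pointwise comparisons over the index $i$ then produces $\NP(M)\ge\NP(M_{\mathcal{E}})$ with the same endpoints. The main obstacle will be executing the Frobenius slope comparison between $\mathcal{R}$ and $\mathcal{E}$ cleanly: Tsuzuki's slope theory on $\mathcal{R}$-modules and the Frobenius slope theory of \S\ref{sec:slope} on $\mathcal{E}$-modules employ different formalisms, so the preservation of purity and the bookkeeping of endpoints under the base change require careful tracking through the extended rings $\tilde{\mathcal{R}}$ and $\tilde{\mathcal{E}}$.
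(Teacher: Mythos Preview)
Your reduction via Proposition~\ref{thm:Dwo2} and the PBQ filtration is correct and is exactly what the paper does (the paper packages the shifted graded pieces into a single auxiliary module $N=\bigoplus_i G_i(q^{a_i})$ with $G_i=P_{i+1}(M)/P_i(M)$ and $a_i=\lambda_{\max}((G_i)_{\mathcal{E}})$, but the content is identical). The target you isolate at the end---that the Newton polygon of $S_\bullet(L_{\mathcal R})$ lies on or above that of $S_\bullet(L_{\mathcal E})$ with the same endpoints---is also the right one.

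The gap is in how you execute that final comparison. Your ``key observation'' that a module over $\mathcal R$ pure of slope $s$ in Tsuzuki's sense remains pure of slope $s$ over $\mathcal E$ is false, and the justification you offer does not work: there is no ring map $\mathcal R\to\tilde{\mathcal E}$ (elements of $\mathcal R$ need not be bounded, while those of $\tilde{\mathcal E}$ are), so you cannot ``pass through $\tilde{\mathcal E}$'' from the $\mathcal R$-side, and Lemma~\ref{lem:slope5} concerns only base change between complete discrete valuation fields. Concretely, Tsuzuki-purity of $L_{\mathcal R}$ amounts, via Lemma~\ref{lem:Dwo3}, to purity of the $K$-space $V(L)$; this imposes nothing on $L_{\mathcal E}$. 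A rank-two PBQ $(\varphi,\nabla)$-module over $K[\![t]\!]_0$ coming from a family of elliptic curves with supersingular central fiber has $V(L)$ pure of slope $1/2$ while $L_{\mathcal E}$ has Frobenius slopes $\{0,1\}$; by Theorem~\ref{conj:bd}~(ii) the resulting log-growth polygons have slope multisets $\{1/2,1/2\}$ and $\{0,1\}$ respectively, so they genuinely differ. Your argument, which concludes that the two Frobenius slope filtrations coincide after base change, would force these log-growth polygons to be equal.

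What is actually needed at this last step is the semicontinuity theorem for Frobenius Newton polygons \cite[Theorem~16.4.6]{pde}, a substantial external input, and this is precisely what the paper invokes for $N$. Replace your purity argument with a citation of that theorem and the proof is complete.
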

\begin{proof}
By Remark \ref{rem:principle}, we have only to prove the assertion in the case of $\mathcal{R}^{\bd}$. By Lemmas \ref{lem:coefficient Amice} and \ref{lem:coefficient Robba}, we may assume that $k$ is algebraically closed. Let notation be as in Theorem \ref{thm:Dwo2}. We define $N=\oplus_{i=0}^{r-1}G_i(q^{a_i})$, where we put $G_i=P_{i+1}(M)/P_i(M),a_i=\lambda_{\max}((P_{i+1}(M)/P_i(M))_{\mathcal{E}})$. Then $N$ is a $(\varphi,\nabla)$-module over $\mathcal{R}^{\bd}$ solvable in $\mathcal{R}_{\log}$. The slope multiset of $V(M)^{\bullet}$ coincides with that of $S_{\bullet}(V(N))$ by Proposition \ref{thm:Dwo2}, which also coincides with that of $S_{\bullet}(N_{\mathcal{R}})$ by Lemma \ref{lem:Dwo3}. The slope multiset of $M_{\mathcal{E}}^{\bullet}$ coincides with that of $S_{\bullet}(N_{\mathcal{E}})$ by Proposition \ref{thm:Dwo2}. Therefore the assertion follows from the semicontinuity theorem for Frobenius Newton polygons of $N$ (\cite[Theorem 16.4.6]{pde}).
\end{proof}

By comparing the maximum slopes of Newton polygons, we obtain an analogue of Christol's transfer theorem (\cite[Proposition 4.3]{CT}).

\begin{cor}
Let $M$ be a $(\varphi,\nabla)$-module over $\mathcal{R}^{\bd}$ solvable in $\mathcal{R}_{\log}$. Then
\[
b^{\nabla}(M)\le b^{\nabla}(M\otimes_{\mathcal{R}^{\bd}}\mathcal{E}).
\]

A similar assertion holds for a (log-)$(\varphi,\nabla)$-module over $K[\![t]\!]_0$.
\end{cor}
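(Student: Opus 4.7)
The plan is to deduce this directly from the semicontinuity result of Theorem \ref{thm:Dwo}. That theorem tells us that the log-growth Newton polygon $\NP(M)$ lies on or above $\NP(M_{\mathcal{E}})$ and that the two polygons share the same endpoints, namely $(0,0)$ and $(n,S)$ where $n=\rank M$ and $S$ is the common sum of the slopes (with multiplicity).

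First I would record the following elementary fact about lower convex polygons: if two lower convex polygons $P_1$ and $P_2$ in the plane share the endpoints $(0,0)$ and $(n,S)$, and if $P_1$ lies pointwise on or above $P_2$, then the maximum slope of $P_1$ is at most the maximum slope of $P_2$. This is immediate from inspection of the rightmost segment: near $x=n$, the two polygons can be written as $y = S - \mu_{\max}^{(i)}(n-x)$, and the inequality $P_1(x)\ge P_2(x)$ for $x$ slightly less than $n$ forces $\mu_{\max}^{(1)} \le \mu_{\max}^{(2)}$.

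Applying this observation with $P_1 = \NP(M)$ and $P_2 = \NP(M_{\mathcal{E}})$, provided by Theorem \ref{thm:Dwo}, yields
\[
b^{\nabla}(M)=\lambda_n(M)\le\lambda_n(M_{\mathcal{E}})=b^{\nabla}(M_{\mathcal{E}}),
\]
which is the claimed inequality. The proof for log-$(\varphi,\nabla)$-modules over $K[\![t]\!]_0$ (and for $(\varphi,\nabla)$-modules over $K[\![t]\!]_0$) is verbatim the same, invoking the analogue of Theorem \ref{thm:Dwo} stated there. There is no real obstacle here; the content is entirely concentrated in Theorem \ref{thm:Dwo} itself, and this corollary is a direct translation of that semicontinuity into the language of maximum slopes.
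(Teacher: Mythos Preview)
Your proof is correct and follows exactly the paper's approach: the paper simply remarks ``By comparing the maximum slopes of Newton polygons'' and leaves the corollary without a formal proof, deducing it immediately from Theorem~\ref{thm:Dwo}. Your more detailed justification of the elementary convex-polygon fact is a faithful elaboration of that one-line argument.
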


Recall that Dwork's conjecture $\mathbf{LGF}_{\mathrm{Dw}}$ (\cite[Conjecture 2.7]{CT2}) asserts that for $M$ a $(\varphi,\nabla)$-module over $K[\![t]\!]_0$, the log-growth Newton polygon of $M$ lies on or above that of $M_{\mathcal{E}}$ with the same endpoints.

\begin{cor}\label{cor:LGFDW}
Dwork's conjecture $\mathbf{LGF}_{\mathrm{Dw}}$ is true.
\end{cor}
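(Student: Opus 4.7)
The plan is to observe that Dwork's conjecture $\mathbf{LGF}_{\mathrm{Dw}}$ is simply the statement of Theorem \ref{thm:Dwo} specialized to the case of $K[\![t]\!]_0$, so the corollary follows by unwrapping definitions. First I would take an arbitrary $(\varphi,\nabla)$-module $M$ over $K[\![t]\!]_0$ and note that the log-growth Newton polygon $\NP(M)$ is, by definition, $\NP(V(M)^{\bullet})$, where $V(M)^{\bullet}$ is the log-growth filtration of $\Sol(M)$ dualized. The definition is set up to match Dwork's original definition (essentially the Newton polygon of the dimensions of the $\Sol_{\lambda}(M)$), so there is nothing to check at this step.

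Next I would invoke the $K[\![t]\!]_0$-version of Theorem \ref{thm:Dwo}, which asserts precisely that $\NP(M)$ lies on or above $\NP(M_{\mathcal{E}})$ with the same endpoints. Although the main body of the proof of Theorem \ref{thm:Dwo} is stated for $\mathcal{R}^{\bd}$, the $K[\![t]\!]_0$-case is reduced to the $\mathcal{R}^{\bd}$-case by Lemma \ref{lem:invariance log-growth}: for $M$ over $K[\![t]\!]_0$ (which is solvable in $K\{t\}$ by Dwork's trick), $M_{\mathcal{R}^{\bd}}$ is solvable in $\mathcal{R}_{\log}$, and the growth filtrations on $\Sol(M)$ and $\Sol(M_{\mathcal{R}^{\bd}})$ (and thus on $V(M)$ and $V(M_{\mathcal{R}^{\bd}})$) are canonically isomorphic, so $\NP(M)=\NP(M_{\mathcal{R}^{\bd}})$ and $\NP(M_{\mathcal{E}})=\NP((M_{\mathcal{R}^{\bd}})_{\mathcal{E}})$.

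There is no real obstacle here: the substantive work has already been carried out in the proof of Theorem \ref{thm:Dwo}, where the PBQ filtration $P_{\bullet}(M)$ is used to decompose the slope multiset of $V(M)^{\bullet}$ as a disjoint union of Frobenius slope multisets of the twisted graded pieces $G_i(q^{a_i})$ via Proposition \ref{thm:Dwo2} and Theorem \ref{conj:bd}(ii), after which Lemma \ref{lem:Dwo3} identifies these with the Frobenius slope multisets of $N_{\mathcal{R}}$, so that Kedlaya's semicontinuity theorem for Frobenius Newton polygons over the Robba ring (\cite[Theorem 16.4.6]{pde}) yields the desired inequality between the slopes of $N_{\mathcal{R}}$ and $N_{\mathcal{E}}$, hence the inequality between $\NP(M)$ and $\NP(M_{\mathcal{E}})$. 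Thus the corollary is immediate from Theorem \ref{thm:Dwo}.
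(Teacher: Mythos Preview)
Your proposal is correct and matches the paper's approach: the corollary is simply the $K[\![t]\!]_0$ case of Theorem \ref{thm:Dwo}, and the paper treats it as immediate without further argument. Your additional unpacking of the reduction via Lemma \ref{lem:invariance log-growth} and the recap of the proof of Theorem \ref{thm:Dwo} are accurate but not required.
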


\begin{rem}[{cf. \cite[Remark 2.8]{CT2}}]\label{rem:Dwo4}
We discuss differences between the conjecture $\mathbf{LGF}_{\mathrm{Dw}}$ and \cite[Conjecture 1.1.1]{And}, which are ``different forms'' of Dwork's conjecture below. Let $M$ be a $\nabla$-module of rank $n$ over $K[\![t]\!]_0$ solvable in $K\{t\}$. Dwork originally defines the log-growth Newton polygon of $M$ as the lower convex polygon, where the multiplicity of the slope $\lambda$ is equal to $\dim_K\Sol_{\lambda+\varepsilon}(M)-\dim_K\Sol_{\lambda-\varepsilon}(M)$ with sufficiently small $\varepsilon>0$ (\cite[(4) in p. 368]{Dw}). He also defines the log-growth Newton polygon of $M_{\mathcal{E}}$ as that of $\tau^*M$. Then Dwork conjectures (\cite[Conjecture 2]{DwIII}) that ``the log-growth Newton polygon of $M$ is lies on or above that of $M_{\mathcal{E}}$'': precisely speaking, the conjecture makes sense only after fixing the left or right endpoints of the log-growth Newton polygons of $M$ and $M_{\mathcal{E}}$, however, the author cannot find an information how to define the endpoints in Dwork's articles. Andr\'e defines his log-growth Newton polygons $\mathcal{NP}(M)$ and $\mathcal{NP}(M_{\mathcal{E}})$ by fixing the right endpoints at $(n,0)$ (\cite[3.3]{And}), then proves Dwork's conjecture without the coincidence of the left endpoints (\cite[Theorem 4.1.1]{And}). In \cite{Ohkb}, the author constructs $M$ of rank two such that the left endpoints of $\mathcal{NP}(M)$ and $\mathcal{NP}(M_{\mathcal{E}})$ do not coincide.

Note that if $M$ admits a Frobenius structure, then our log-growth Newton polygon of $M$ (resp. $M_{\mathcal{E}}$) coincides with Andr\'e's log-growth Newton polygon of $M$ (resp. $M_{\mathcal{E}}$) after moving $-h$ (resp. $-h_{\mathcal{E}}$) in the $y$-direction, where $h$ (resp. $h_{\mathcal{E}}$) denotes the $y$-coordinate of the left endpoint of $\NP(M)$ (resp. $\NP(M_{\mathcal{E}})$). Since we have $h=h_{\mathcal{E}}$ by Corollary \ref{cor:LGFDW}, the left endpoints of $\mathcal{NP}(M)$ and $\mathcal{NP}(M_{\mathcal{E}})$ also coincides, hence, the conjecture $\mathbf{LGF}_{\mathrm{Dw}}$ can be seen as a refinement of Andr\'e's theorem under the existence of Frobenius structures. However it is not clear that one can deduce (even ``lies above'' part of ) the conjecture $\mathbf{LGF}_{\mathrm{Dw}}$ from Andr\'e's theorem, and vice versa. Thus the author thinks that Andr\'e's theorem and Corollary \ref{cor:LGFDW}, both are regarded as Dwork's conjectures, are of different natures.
\end{rem}


\section{Converse theorem}\label{sec:conv}

It is natural to ask the necessity of the PBQ hypothesis in parts (ii) of Theorems \ref{conj:CT}, \ref{conj:CTgen}, or \ref{conj:bd}. In this section, we prove that the PBQ hypothesis is necessary in all the cases.

\begin{lem}[{Converse theorem over $\mathcal{E}$}]\label{lem:conv gen}
Let $M$ be a $(\varphi,\nabla)$-module over $\mathcal{E}$. Then, $M$ is PBQ if and only if $M^0=(S_{-\lambda_{\max}(M)}(M\spcheck))^{\perp}$. In particular, if
\[
M^{\lambda}=(S_{\lambda-\lambda_{\max}(M)}(M\spcheck))^{\perp}\ \forall\lambda,
\]
then $M$ is PBQ.
\end{lem}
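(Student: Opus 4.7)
The plan is to reduce everything to a clean identification of the right-hand side using duality for Frobenius slope filtrations. First I would apply Lemma \ref{lem:slope5} (II)-(iv) with $\lambda = \lambda_{\max}(M)$ to the canonical pairing $M \otimes_{\mathcal{E}} M\spcheck \to \mathcal{E}$, which gives
\[
(S_{-\lambda_{\max}(M)}(M\spcheck))^{\perp} = \bigcup_{\mu<\lambda_{\max}(M)} S_{\mu}(M).
\]
This is the key computation and its proof is already recorded in Lemma \ref{lem:slope5}, so no calculation is needed here.

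For the forward (``only if'') direction, assuming $M$ is PBQ, I would directly invoke Theorem \ref{conj:CTgen} (ii), which yields the equality $M^{\lambda} = (S_{\lambda-\lambda_{\max}(M)}(M\spcheck))^{\perp}$ for every real $\lambda$, and in particular for $\lambda = 0$.

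For the converse (``if'') direction, assume $M^0 = (S_{-\lambda_{\max}(M)}(M\spcheck))^{\perp}$. By the identification above, $M^0 = \cup_{\mu<\lambda_{\max}(M)}S_{\mu}(M)$, so
\[
M/M^0 = M\Big/\bigcup_{\mu<\lambda_{\max}(M)} S_{\mu}(M),
\]
which by Theorem \ref{thm:slope1} (ii) is pure of slope $\lambda_{\max}(M)$ as a $\varphi$-module over $\mathcal{E}$; hence $M$ is PBQ by definition. The ``in particular'' clause is immediate: the stronger hypothesis specialized at $\lambda = 0$ is exactly the hypothesis of the converse direction.

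There is essentially no obstacle: the proof reduces to a one-line application of the orthogonality formula in Lemma \ref{lem:slope5} (II)-(iv) together with the already-established Theorem \ref{conj:CTgen} (ii). The only subtle point to double-check is that the bounded quotient $M/M^0$ coincides with the graded piece containing the maximal slope, which is exactly what the orthogonality identification delivers.
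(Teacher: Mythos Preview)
Your proof is correct. The ``if'' direction and the final ``in particular'' clause are handled exactly as in the paper.

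The only difference is in the ``only if'' direction. You invoke Theorem~\ref{conj:CTgen}~(ii) to get $M^0=(S_{-\lambda_{\max}(M)}(M\spcheck))^{\perp}$ in one stroke. The paper instead argues more directly: since $M$ is PBQ, Corollary~\ref{cor:new2} gives that $M/M^0$ is pure of slope $\lambda_{\max}(M)$, whence $\cup_{\mu<\lambda_{\max}(M)}S_{\mu}(M)\subset M^0$; the reverse inclusion is the easy Proposition~\ref{prop:CT2}. The advantage of the paper's route is that it shows the forward implication needs only the Slope criterion (Proposition~\ref{prop:key}) and the elementary inclusion of Proposition~\ref{prop:CT2}, not the full Chiarellotto--Tsuzuki theorem over $\mathcal{E}$. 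Your route is shorter but imports a heavier result; since Theorem~\ref{conj:CTgen} is already established in \cite{CT2} independently of this lemma, there is no circularity.
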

\begin{proof}
Assume $M^0=(S_{-\lambda_{\max}(M)}(M\spcheck))^{\perp}$. By Lemma \ref{lem:slope5} (II)-(iv), $M^0=\cup_{\mu<\lambda_{\max}(M)}S_{\mu}(M)$. Hence $M/M^0$ is pure of slope $\lambda_{\max}(M)$ as a $\varphi$-module over $\mathcal{E}$, in particular, $M$ is PBQ.

Assume that $M$ is PBQ. Then $M/M^0$ is pure of slope $\lambda_{\max}(M)$ by Corollary \ref{cor:new2}. Hence the image of $\cup_{\mu<\lambda_{\max}(M)}S_{\mu}(M)$ under the projection $M\to M/M^0$ is zero, i.e., $\cup_{\mu<\lambda_{\max}(M)}S_{\mu}(M)\subset M^0$. By Proposition \ref{prop:CT2}, we conclude $\cup_{\mu<\lambda_{\max}(M)}S_{\mu}(M)=M^0$.
\end{proof}

\begin{rem}
By the equivalence above, we can rephrase Theorem \ref{conj:CTgen} (ii) as the following form: if the equality $M^{\lambda}=(S_{\lambda-\lambda_{\max}(M)}(M))^{\perp}$ holds for $\lambda=0$, then it holds for an arbitrary $\lambda$.
\end{rem}

\begin{lem}[{Converse theorem over $\mathcal{R}^{\bd}$}]\label{lem:conv bd}
Let $M$ be a $(\varphi,\nabla)$-module over $\mathcal{R}^{\bd}$ solvable in $\mathcal{R}_{\log}$. If
\[
\Sol_{\lambda}(M)=S_{\lambda-\lambda_{\max}(M_{\mathcal{E}})}(\Sol(M))\ \forall\lambda,
\]
then $M$ is PBQ.
\end{lem}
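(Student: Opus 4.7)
The strategy is to use the PBQ filtration to isolate the obstruction to PBQ, propagate the hypothesis through the exact sequence induced by the maximally PBQ submodule via the main theorem, and then derive a contradiction from the a-priori containment of Proposition \ref{prop:main}.

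First, by Lemmas \ref{lem:slope5} (II)-(i) and \ref{lem:coefficient Robba}, both sides of the hypothesis $\Sol_{\lambda}(M) = S_{\lambda - \lambda_{\max}(M_{\mathcal{E}})}(\Sol(M))$ and the PBQ condition on $M$ are preserved under isometric extension of $K$, so we may assume the residue field $k$ is perfect and the PBQ filtration (Definition \ref{dfn:PBQ fil}) is available. Suppose for contradiction that $M$ is not PBQ, so $P_1(M) \subsetneq M$. Put $a = \lambda_{\max}(M_{\mathcal{E}})$ and $b = \lambda_{\max}((M/P_1(M))_{\mathcal{E}})$; by the basic properties of the PBQ filtration (Definition \ref{dfn:PBQ fil gen}), $\lambda_{\max}((P_1(M))_{\mathcal{E}}) = a$ and $b < a$.

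Next, I would apply Lemma \ref{lem:exact PBQ} to the short exact sequence $0 \to P_1(M) \to M \to M/P_1(M) \to 0$, obtaining a short exact sequence $0 \to \Sol_{\lambda}(M/P_1(M)) \to \Sol_{\lambda}(M) \to \Sol_{\lambda}(P_1(M)) \to 0$ for every $\lambda$. Since $P_1(M)$ is PBQ of maximum generic slope $a$, the main theorem (Theorem \ref{conj:bd} (ii)) gives $\Sol_{\lambda}(P_1(M)) = S_{\lambda - a}(\Sol(P_1(M)))$. Combining this with the hypothesis $\Sol_{\lambda}(M) = S_{\lambda - a}(\Sol(M))$ and the strictness of Frobenius slope filtration along $0 \to \Sol(M/P_1(M)) \to \Sol(M) \to \Sol(P_1(M)) \to 0$ (Lemma \ref{lem:slope5} (II)-(ii)), one deduces
\[
\Sol_{\lambda}(M/P_1(M)) = S_{\lambda - a}(\Sol(M/P_1(M))) \quad \text{for every } \lambda,
\]
either directly from the intersection identities $\Sol_{\lambda}(M/P_1(M)) = \Sol_{\lambda}(M) \cap \Sol(M/P_1(M))$ and $S_{\lambda-a}(\Sol(M/P_1(M))) = S_{\lambda-a}(\Sol(M)) \cap \Sol(M/P_1(M))$, or by comparing dimensions.

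Finally, I would produce a contradiction using Proposition \ref{prop:main} applied to the quotient $M/P_1(M)$, whose maximum generic slope is $b$: this gives the containment $\Sol_{\lambda}(M/P_1(M)) \supseteq S_{\lambda - b}(\Sol(M/P_1(M)))$. Set $\mu_0 = \lambda_{\max}(\Sol(M/P_1(M)))$, which exists and is a Frobenius slope of positive multiplicity because $M/P_1(M) \neq 0$ is solvable in $\mathcal{R}_{\log}$. Taking $\lambda = b + \mu_0$, the containment forces $\Sol_{\lambda}(M/P_1(M)) = \Sol(M/P_1(M))$, whereas the displayed equality above yields $\Sol_{\lambda}(M/P_1(M)) = S_{\mu_0 - (a-b)}(\Sol(M/P_1(M))) \subsetneq \Sol(M/P_1(M))$ because $a - b > 0$ and $\mu_0$ has positive multiplicity. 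This is the desired contradiction. The only non-formal step is the application of Theorem \ref{conj:bd} (ii) to $P_1(M)$ to transfer the slope-matching identity to the quotient; everything else is bookkeeping with exact sequences and the a-priori containment of Proposition \ref{prop:main}.
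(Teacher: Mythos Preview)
Your proposal is correct and follows essentially the same approach as the paper: reduce to a contradiction via the PBQ filtration, apply Theorem \ref{conj:bd} (ii) to $P_1(M)$, and use strictness of Frobenius slope filtration together with Proposition \ref{prop:main} on the quotient $M/P_1(M)$. The only minor differences are cosmetic: you explicitly base change to perfect residue field (which the paper tacitly needs in order to invoke $P_1(M)$), and you exhibit the contradiction at a single value $\lambda=b+\mu_0$, whereas the paper phrases it as $S_{\lambda-a}(\Sol(M''))=S_{\lambda-b}(\Sol(M''))$ for all $\lambda$ and invokes uniqueness of the Frobenius slope filtration.
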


Note that an analogue of Converse theorem for a (log-)$(\varphi,\nabla)$-module over $K[\![t]\!]_0$ follows from Lemma \ref{lem:conv bd} thanks to Lemma \ref{lem:invariance log-growth}.

\begin{proof}
We may assume $M\neq 0$. Suppose that $M$ is not PBQ. Put $M'=P_1(M),M''=M/P_1(M)$. Recall that $\lambda_{\max}(M''_{\mathcal{E}})<\lambda_{\max}(M_{\mathcal{E}})=\lambda_{\max}(M'_{\mathcal{E}})$, and $M'$ is PBQ. Hence we have, by Theorem \ref{conj:bd} (ii) and Proposition \ref{prop:main} respectively,
\[
\Sol_{\lambda}(M')=S_{\lambda-\lambda_{\max}(M_{\mathcal{E}})}(\Sol(M'))\ \forall\lambda,
\]
\begin{equation}\label{eq:conv2}
S_{\lambda-\lambda_{\max}(M_{\mathcal{E}})}(\Sol(M''))\subset S_{\lambda-\lambda_{\max}(M''_{\mathcal{E}})}(\Sol(M''))\subset \Sol_{\lambda}(M'')\ \forall\lambda.
\end{equation}
Therefore we obtain a canonical commutative diagram with exact rows by Lemmas \ref{lem:exact} and \ref{lem:slope5} (II)-(ii),
\[\xymatrix{
0\ar[r]&S_{\lambda-\lambda_{\max}(M_{\mathcal{E}})}(\Sol(M''))\ar[r]\ar[d]&S_{\lambda-\lambda_{\max}(M_{\mathcal{E}})}(\Sol(M))\ar[r]\ar@{=}[d]&S_{\lambda-\lambda_{\max}(M_{\mathcal{E}})}(\Sol(M'))\ar[r]\ar@{=}[d]&0\\
0\ar[r]&\Sol_{\lambda}(M'')\ar[r]&\Sol_{\lambda}(M)\ar[r]&\Sol_{\lambda}(M').&
}\]
By the snake lemma, the left vertical arrow is an equality. Hence, by (\ref{eq:conv2}),
\[
S_{\lambda-\lambda_{\max}(M_{\mathcal{E}})}(\Sol(M''))=S_{\lambda-\lambda_{\max}(M''_{\mathcal{E}})}(\Sol(M''))\ \forall\lambda,
\]
which contradicts to the uniqueness of Frobenius slope filtration.
\end{proof}


\section{Tensor compatibility}\label{sec:ten}

As pointed out by Andr\'e (\cite[1.3]{And}), the log-growth filtrations for $\nabla$-modules (even assuming Frobenius structures) are not strongly compatible with the tensor product; see Remark \ref{rem:non-compatible tensor} for details. In this section, we prove a weak tensor compatibility of the log-growth filtration (Proposition \ref{prop:weak tensor}). As a corollary, we obtain the additivity of $b^{\nabla}$ (Corollary \ref{cor:tensor bound}).

We start by noting that in the category of $(\varphi,\nabla)$-modules over $\mathcal{E}$, the subcategory of PBQ objects is not closed under the tensor product: an explicit example is given by Example 6.2.5, which is generalized as follows.

\begin{lem}
Let $M$ be a non-zero $(\varphi,\nabla)$-module over $\mathcal{E}$. If $M$ is not pure as a $\varphi$-module over $\mathcal{E}$, then $M\otimes_{\mathcal{E}}M\spcheck$ is not PBQ.
\end{lem}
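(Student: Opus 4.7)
The plan is to apply Slope criterion (Proposition \ref{prop:key}) by exhibiting a quotient of $L := M\otimes_{\mathcal{E}}M\spcheck$ whose maximum Frobenius slope is strictly smaller than $\lambda_{\max}(L)$.

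First I would observe that the canonical evaluation pairing $\mathrm{ev}:M\otimes_{\mathcal{E}}M\spcheck\to\mathcal{E}$ is a morphism of $(\varphi,\nabla)$-modules over $\mathcal{E}$ (this is recorded in \S \ref{subsec:phinabla1}). Under the identification $L\cong\mathrm{End}_{\mathcal{E}}(M)$, the map $\mathrm{ev}$ corresponds to the trace, so $\mathrm{ev}(\mathrm{id}_M)=\dim_{\mathcal{E}}M\in\mathcal{E}^{\times}$ (we are in characteristic zero and $M\neq 0$); hence $\mathrm{ev}$ is a surjection of $(\varphi,\nabla)$-modules. Thus $\mathcal{E}$ appears as a non-zero quotient of $L$ with $\lambda_{\max}(\mathcal{E})=0$.

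Next I would compute $\lambda_{\max}(L)$ using the tensor compatibility of Frobenius slopes (Lemma \ref{lem:slope4} (iii), together with Lemma \ref{lem:slope4} (ii) for the dual): the slope multiset of $L$ is
\[
\{\mu-\nu\mid \mu,\nu\text{ slopes of }M\text{ with multiplicity}\},
\]
so $\lambda_{\max}(L)=\lambda_{\max}(M)-\lambda_{\min}(M)$. Because $M$ is not pure as a $\varphi$-module, Lemma \ref{lem:slope3} gives $\lambda_{\max}(M)>\lambda_{\min}(M)$, hence $\lambda_{\max}(L)>0=\lambda_{\max}(\mathcal{E})$.

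Combining these two steps, $L$ admits the non-zero quotient $\mathcal{E}$ with $\lambda_{\max}(\mathcal{E})<\lambda_{\max}(L)$, which violates condition (ii) of Slope criterion (Proposition \ref{prop:key}). By the easy direction (i)$\Rightarrow$(ii) of that criterion, $L=M\otimes_{\mathcal{E}}M\spcheck$ is not PBQ. No obstacle arises: the argument only uses the tensor formula for Frobenius slopes and the fact that the trace pairing is $(\varphi,\nabla)$-equivariant and non-degenerate.
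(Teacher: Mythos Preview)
Your proof is correct and follows essentially the same approach as the paper: both use the canonical pairing $M\otimes_{\mathcal{E}}M\spcheck\to\mathcal{E}$ as a non-zero quotient with $\lambda_{\max}(\mathcal{E})=0<\lambda_{\max}(M)-\lambda_{\min}(M)=\lambda_{\max}(M\otimes_{\mathcal{E}}M\spcheck)$, then invoke the direction (i)$\Rightarrow$(ii) of Proposition \ref{prop:key}. Your only addition is the explicit justification (via the trace of $\id_M$) that the pairing is surjective, which the paper leaves implicit.
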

\begin{proof}
We have $\lambda_{\max}(M\otimes_{\mathcal{E}}M\spcheck)=\lambda_{\max}(M)+\lambda_{\max}(M\spcheck)=\lambda_{\max}(M)-\lambda_{\min}(M)>0$. We regard $\mathcal{E}$ as a quotient of $M\otimes_{\mathcal{E}}M\spcheck$ via the canonical pairing $M\otimes_{\mathcal{E}}M\spcheck\to\mathcal{E}$. Since $\lambda_{\max}(\mathcal{E})=0$, $M\otimes_{\mathcal{E}}M\spcheck$ is not PBQ by Proposition \ref{prop:key}.
\end{proof}

We will refine the proof of Theorem \ref{conj:bd} (ii) in \S \ref{sec:pf} to prove Proposition \ref{prop:weak tensor}.

\begin{dfn}
Let $M$ be a $(\varphi,\nabla)$-module over $\mathcal{R}^{\bd}$ solvable in $\mathcal{R}_{\log}$. Recall that a solution $f\in \Sol(M)$ is of log-growth $\lambda$ for $\lambda\in\mathbb{R}$ if $f\in \Sol_{\lambda}(M)$, i.e., $f(M)\subset \Fil_{\lambda}\mathcal{R}_{\log}$. We say that $f$ is {\it exactly of log-growth} $\lambda$ if $f$ is of log-growth $\lambda$, and not of log-growth $\mu$ for any $\mu<\lambda$.

Note that if $f$ is exactly of log-growth $\lambda$, then $\lambda$ is a slope of the filtration $\Sol_{\bullet}(M)$, and hence, a slope of $V(M)^{\bullet}$.
\end{dfn}

\begin{construction}\label{const:ten}
We fix a $\varphi$-equivariant embedding $\mathcal{R}_{\log}\hookrightarrow\tilde{\mathcal{R}}_{\log}$ in Definition \ref{dfn:log embedding}. Let $M$ be a $\varphi$-module over $\mathcal{R}^{\bd}$, and $f:M\to\mathcal{R}_{\log}$ an $\mathcal{R}^{\bd}$-linear map. Let $\tilde{M}$ denote the $\varphi$-module $M\otimes_{\mathcal{R}^{\bd}}\tilde{\mathcal{R}}^{\bd}$ over $\tilde{\mathcal{R}}^{\bd}$, and $\tilde{f}:\tilde{M}\to\tilde{\mathcal{R}}_{\log}$ denote the composition of
\[
M\otimes_{\mathcal{R}^{\bd}}\tilde{\mathcal{R}}^{\bd}\xrightarrow[]{f\otimes \id}\mathcal{R}_{\log}\otimes_{\mathcal{R}^{\bd}}\tilde{\mathcal{R}}^{\bd}\hookrightarrow\tilde{\mathcal{R}}_{\log},
\]
where the second map is the multiplication map given in Lemma \ref{lem:log inj}. Note that the following.
\begin{enumerate}
\item[$\bullet$] If $f$ is injective (resp. $\varphi$-equivariant), then so is $\tilde{f}$.
\item[$\bullet$] If $f$ is of log-growth $\lambda$, then
\[
\tilde{f}(\tilde{M})\subset \Fil_{\lambda}\tilde{\mathcal{R}}_{\log}
\]
by a similar argument as in the proof of Proposition \ref{prop:pf}.
\end{enumerate}
\end{construction}

The following proposition can be seen as a generalization of Proposition \ref{prop:pf}.

\begin{prop}\label{prop:refine}
Assume that $k$ is algebraically closed. Let $M$ be a $(\varphi,\nabla)$-module over $\mathcal{R}^{\bd}$ solvable in $\mathcal{R}_{\log}$, and $\lambda$ a slope of $\Sol_{\bullet}(M)$. Then there exist $f\in \Sol_{\lambda}(M)$ and $\tilde{m}\in\tilde{M}$ such that $\tilde{f}(\tilde{m})$ is a $d$-eigenvector of slope $\lambda$ (for some $d\ge 1$). Moreover, any such $f$ is exactly of log-growth $\lambda$.
\end{prop}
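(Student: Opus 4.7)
My plan is to dispose of the \emph{moreover} part first, as it is immediate, and then attack the existence of $(f,\tilde m)$ by descending the problem to a PBQ graded piece of $M$ where Theorem \ref{conj:bd} (ii) is already available, and lifting the resulting data back to $M$.

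For the moreover, if $\tilde f(\tilde m)$ is a $d$-eigenvector of slope $\lambda$ in $\tilde{\mathcal R}_{\log}$, Lemma \ref{lem:calc log-growth} shows it is exactly of log-growth $\lambda$, while the second bullet of Construction \ref{const:ten} forces $\tilde f(\tilde M)\subset\Fil_\mu\tilde{\mathcal R}_{\log}$ whenever $f\in\Sol_\mu(M)$. The case $\mu<\lambda$ is therefore ruled out, and $f$ is exactly of log-growth $\lambda$.

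For the existence, put $G_i:=P_{i+1}(M)/P_i(M)$ with $P_\bullet(M)$ the PBQ filtration of Definition \ref{dfn:PBQ fil}. By Proposition \ref{thm:Dwo2}, the slope multiset of $V(M)^\bullet$ is the disjoint union of those of the $V(G_i)^\bullet$, so I fix an index $i$ for which $\lambda$ is a slope of $V(G_i)^\bullet$. Since $G_i$ is PBQ, Theorem \ref{conj:bd} (ii) gives $\Sol_\lambda(G_i)=S_{\lambda-\lambda_{\max}((G_i)_{\mathcal E})}(\Sol(G_i))$, and Dieudonn\'e--Manin (Lemma \ref{lem:DM}), applicable since $k$ is algebraically closed, then supplies a Frobenius $d$-eigenvector $f_0\in\Sol_\lambda(G_i)$ which is exactly of log-growth $\lambda$ and whose Frobenius slope is $\nu_0:=\lambda-\lambda_{\max}((G_i)_{\mathcal E})$. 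Running the argument in the proof of Proposition \ref{prop:pf} with $G_i$ in place of $M$ (twist-pushforward to achieve $\varphi^d(f_0)=f_0$, then apply Proposition \ref{prop:maximum eigen} to $G_i\otimes_{\mathcal{R}^{\bd}}\tilde{\mathcal{R}}^{\bd}$ with its new Frobenius structure) produces $\tilde v$ in this extension such that $\tilde{f}_0(\tilde v)$ is a Frobenius eigenvector of slope $\lambda_{\max}((G_i)_{\mathcal E})+\nu_0=\lambda$ in $\tilde{\mathcal R}_{\log}$.

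To lift back to $M$: by Proposition \ref{prop:qt inv PBQ fil}, $G_i$ is the maximally PBQ submodule of $M/P_i(M)$, hence $\lambda_{\max}((M/P_i(M))_{\mathcal E})=\lambda_{\max}((G_i)_{\mathcal E})$; applying Lemma \ref{lem:exact} (ii) via Remark \ref{rem:exact} to $0\to G_i\to M/P_i(M)\to M/P_{i+1}(M)\to 0$ gives short exactness of the corresponding sequence of $\Sol_\lambda$'s, so $f_0$ lifts to $f'\in\Sol_\lambda(M/P_i(M))$, which in turn pulls back to $f\in\Sol_\lambda(M)$ via $M\twoheadrightarrow M/P_i(M)$ (so $P_i(M)\subset\ker f$). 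For $\tilde m$, take any set-theoretic lift to $\tilde M$ of the image of $\tilde v$ under the injection $G_i\otimes_{\mathcal{R}^{\bd}}\tilde{\mathcal R}^{\bd}\hookrightarrow\tilde M/(P_i(M)\otimes_{\mathcal{R}^{\bd}}\tilde{\mathcal R}^{\bd})$ (the injectivity holds because $\tilde{\mathcal R}^{\bd}$ is a field extension of $\mathcal R^{\bd}$, hence flat). Since $\tilde f$ kills $P_i(M)\otimes_{\mathcal{R}^{\bd}}\tilde{\mathcal R}^{\bd}$ and restricts to $\tilde{f}_0$ on the image of $G_i$, we obtain $\tilde f(\tilde m)=\tilde{f}_0(\tilde v)$, the desired eigenvector. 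The hardest step will be the slope bookkeeping under the PBQ filtration: ensuring both that $f_0$ can be chosen with Frobenius slope exactly $\nu_0$ (strictly sharper than the inclusion of Proposition \ref{prop:main}) and that the lifting of $\Sol_\lambda$ is surjective, both of which reduce to the PBQ property of $G_i$ combined with Theorem \ref{conj:bd} (ii), available from Part I.
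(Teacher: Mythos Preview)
Your ``moreover'' part is fine and matches the paper.

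For the existence, your approach is genuinely different from the paper's, and considerably heavier. The paper does not use the PBQ filtration, Theorem \ref{conj:bd} (ii), Proposition \ref{thm:Dwo2}, or Lemma \ref{lem:exact} at all. Instead it works directly in $M$: since $\Sol_\bullet(M)$ is a filtration by $\varphi$-submodules, one may split $\Sol_\lambda(M)\cong(\cup_{\mu<\lambda}\Sol_\mu(M))\oplus(\Sol_\lambda(M)/\cup_{\mu<\lambda}\Sol_\mu(M))$ as $\varphi$-modules over $K$ and pick, via Dieudonn\'e--Manin, a $d$-eigenvector $f$ in the second summand; this $f$ is exactly of log-growth $\lambda$ by construction. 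After twist-pushforward one may assume $\varphi(f)=f$, replace $M$ by $M''=M/\ker f$ (so $\tilde f$ becomes injective), take $\tilde m$ of slope $\lambda_{\max}(M''_{\mathcal E})$ from Proposition \ref{prop:maximum eigen}, and then prove $\lambda_{\max}(M''_{\mathcal E})=\lambda$ by two inequalities: $\lambda\le\lambda_{\max}(M''_{\mathcal E})$ from $f\in S_0(\Sol(M''))\subset\Sol_{\lambda_{\max}(M''_{\mathcal E})}(M'')$ (Proposition \ref{prop:main}), and $\lambda_{\max}(M''_{\mathcal E})\le\lambda$ from $\tilde f(\tilde M'')\subset\Fil_\lambda\tilde{\mathcal R}_{\log}$ together with Lemma \ref{lem:calc log-growth}. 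No PBQ is needed anywhere. Your route through $G_i$ works in principle and is a legitimate alternative; what it buys is nothing here, but the same d\'evissage is exactly what drives Proposition \ref{thm:Dwo2}, so the machinery is not wasted elsewhere.

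There is, however, a real gap in your execution. You say to apply Proposition \ref{prop:maximum eigen} to $G_i\otimes_{\mathcal R^{\bd}}\tilde{\mathcal R}^{\bd}$ (with its twisted Frobenius). That produces some $d'$-eigenvector $\tilde v$ of slope $\lambda$, but nothing prevents $\tilde v\in(\ker f_0)\otimes_{\mathcal R^{\bd}}\tilde{\mathcal R}^{\bd}$, in which case $\tilde f_0(\tilde v)=0$ and you have no eigenvector in $\tilde{\mathcal R}_{\log}$. The proof of Proposition \ref{prop:pf} that you invoke avoids exactly this by first passing to the quotient by $\ker f$; your parenthetical summary omits that step. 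The fix is to apply Proposition \ref{prop:maximum eigen} to $(G_i/\ker f_0)\otimes_{\mathcal R^{\bd}}\tilde{\mathcal R}^{\bd}$ instead, after checking that the twisted maximum slope there is still $\lambda$: since $G_i$ (hence its twist) is PBQ, Slope Criterion (Proposition \ref{prop:key}) gives this immediately, or you can argue directly as the paper does using that $f_0$ is exactly of log-growth $\lambda$. Then lift $\tilde v$ back to $G_i\otimes_{\mathcal R^{\bd}}\tilde{\mathcal R}^{\bd}$ and proceed with your lifting to $\tilde M$.
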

\begin{proof}
We first prove the last assertion. Suppose the contrary, i.e., there exists $\mu<\lambda$ such that $f$ is of log-growth $\mu$. By Construction \ref{const:ten},  we have $\tilde{f}(\tilde{M})\subset \Fil_{\mu}\tilde{\mathcal{R}}_{\log}$. By assumption and Lemma \ref{lem:calc log-growth}, $\tilde{f}(\tilde{m})$ is exactly of log-growth $\lambda$, which is a contradiction.

We will prove the first assertion. Since $k$ is algebraically closed, there exists a solution $f\in \Sol(M)$ such that:
\begin{enumerate}
\item[(a)] $f$ is exactly of log-growth $\lambda$;
\item[(b)] $f$ is a Frobenius $d$-eigenvector.
\end{enumerate}
In fact, there exists a (non-canonical) splitting of $\varphi$-modules $\Sol_{\lambda}(M)\cong (\cup_{\mu<\lambda}\Sol_{\mu}(M))\oplus (\Sol_{\lambda}(M)/(\cup_{\mu<\lambda}\Sol_{\mu}(M)))$. Then we choose a $d$-eigenvector in the second summand as $f$. We will prove that $f$ satisfies the desired condition by repeating the proof of Proposition \ref{prop:pf}.

$\bullet$ The case $\varphi(f)=f$

Since $f:M\to\mathcal{R}_{\log}$ is $\varphi,\nabla$-equivariant by assumption, $\ker{f}$ is a $(\varphi,\nabla)$-submodule of $M$. Denote by $M''$ the quotient $M/\ker{f}$, which is a $(\varphi,\nabla)$-module over $\mathcal{R}^{\bd}$ solvable in $\mathcal{R}_{\log}$, and by $f'':M''\hookrightarrow\mathcal{R}_{\log}$ the induced map by $f$. Then $f''\in\Sol(M)$ is exactly of log-growth $\lambda$ by (a) and $f''(M'')=f(M)$. We also have $\varphi(f'')=f''$ by definition. Therefore, after replacing $(M,f)$ by $(M'',f'')$, we may also assume that $f$ is injective. In this case, $\tilde{f}$ is $\varphi$-equivariant and injective.

We choose a Frobenius $d$-eigenvector $\tilde{m}\in\tilde{M}$ of slope $\lambda_{\max}(M_{\mathcal{E}})$ (Proposition \ref{prop:maximum eigen}). Then $\tilde{f}(\tilde{m})$ is also a Frobenius $d$-eigenvector of slope $\lambda_{\max}(M_{\mathcal{E}})$. By Lemma \ref{lem:calc log-growth}, $\tilde{f}(\tilde{m})$ is exactly of log-growth $\lambda_{\max}(M_{\mathcal{E}})$. It suffices to prove $\lambda_{\max}(M_{\mathcal{E}})=\lambda$. Since $f\in S_0(\Sol(M))\subset \Sol_{\lambda_{\max}(M_{\mathcal{E}})}(M)$ by Proposition \ref{prop:main}, $f$ is of log-growth $\lambda_{\max}(M_{\mathcal{E}})$. Hence $\lambda\le\lambda_{\max}(M_{\mathcal{E}})$ by (a). We also have $\tilde{f}(\tilde{M})\subset\Fil_{\lambda}\tilde{\mathcal{R}}_{\log}$ by (a), in particular, $\tilde{f}(\tilde{m})$ is of log-growth $\lambda$. Hence $\lambda_{\max}(M_{\mathcal{E}})\le\lambda$. Thus $\lambda_{\max}(M_{\mathcal{E}})=\lambda$.

$\bullet$ The general case

Write $\varphi^d(f)=cf$ for some $c\in K^{\times}$. Put $N=([d]_*M)(c)$, which is a $(\varphi^d,\nabla)$-module over $\mathcal{R}^{\bd}$. Since $T:M\to N;m\mapsto m\otimes e_c$ is an isomorphism of $\nabla$-modules over $\mathcal{R}^{\bd}$, $N$ is also solvable in $\mathcal{R}_{\log}$. Moreover, $T$ induces an isomorphism $T^*:\Sol(N)\cong \Sol(M);f\mapsto f\circ T$, which also induces isomorphisms of filtrations $\Sol_{\bullet}(N)\cong \Sol_{\bullet}(M)$. Put $f':=(T^*)^{-1}f\in \Sol(N)$. Then $\varphi^d(f')=f'$, and $f'$ is exactly of log-growth $\lambda$. By applying the previous case to $f'$, we obtain $\tilde{n}\in\tilde{N}$ such that $\tilde{f}'(\tilde{n})$ is exactly of log-growth $\lambda$. Since $T$ naturally extends to an $\tilde{\mathcal{R}}^{\bd}$-linear isomorphism $\tilde{T}:\tilde{M}\to\tilde{N}$ satisfying $\tilde{f}=\tilde{f}'\circ\tilde{T}$, the element $\tilde{m}=\tilde{T}^{-1}(\tilde{n})$ satisfies the desired condition.
\end{proof}

\begin{lemdfn}\label{lem:product solution}
Let $M,N$ be $(\varphi,\nabla)$-modules over $\mathcal{R}^{\bd}$ solvable in $\mathcal{R}_{\log}$. Then there exists a canonical isomorphism of $\varphi$-modules over $K$
\[
\Psi:\Sol(M)\otimes_K\Sol(N)\to \Sol(M\otimes_{\mathcal{R}^{\bd}}N);g\otimes h\mapsto (m\otimes n\mapsto g(m)h(n)).
\]
Moreover,
\[
\Psi(\Sol_{\lambda}(M)\otimes_K\Sol_{\mu}(N))\subset \Sol_{\lambda+\mu}(M\otimes_{\mathcal{R}^{\bd}}N)\ \forall\lambda,\mu\in\mathbb{R}.
\]
\end{lemdfn}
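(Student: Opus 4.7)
The plan is to verify four statements about $\Psi$ in sequence: it is well defined as a $K$-linear map into $\Sol(M\otimes_{\mathcal{R}^{\bd}} N)$; it is $\varphi$-equivariant; it is bijective; and it respects the log-growth filtrations. Each verification is essentially a direct unwinding, with the only mild subtlety in the injectivity step, where I will invoke Lemma \ref{lem:V}.

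First, since both $g:M\to\mathcal{R}_{\log}$ and $h:N\to\mathcal{R}_{\log}$ are $\mathcal{R}^{\bd}$-linear, the assignment $(m,n)\mapsto g(m)h(n)$ is $\mathcal{R}^{\bd}$-bilinear, hence factors through an $\mathcal{R}^{\bd}$-linear map $M\otimes_{\mathcal{R}^{\bd}} N\to\mathcal{R}_{\log}$. The Leibniz rule applied simultaneously to $\nabla_{M\otimes_{\mathcal{R}^{\bd}} N}$ and to the derivation of $\mathcal{R}_{\log}$ shows that this map is horizontal, so it lies in $\Sol(M\otimes_{\mathcal{R}^{\bd}} N)$. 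Frobenius equivariance is a short check: by the definition of $\varphi$ on a Hom-module in \S \ref{subsec:phinabla1}, $\varphi(g)(\varphi_M(m))=\varphi(g(m))$ and likewise for $h$, so
\[
\Psi(\varphi(g)\otimes\varphi(h))(\varphi_{M\otimes N}(m\otimes n))=\varphi(g(m))\varphi(h(n))=\varphi(g(m)h(n)),
\]
which by the defining property of $\varphi$ on $\Sol(M\otimes_{\mathcal{R}^{\bd}} N)$ forces $\Psi(\varphi(g)\otimes\varphi(h))=\varphi(\Psi(g\otimes h))$.

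For bijectivity, both source and target have $K$-dimension $\rank_{\mathcal{R}^{\bd}} M\cdot\rank_{\mathcal{R}^{\bd}} N$, so it suffices to show injectivity. Passing to duals via the canonical identifications $\Sol(L)\cong V(L\spcheck)$ and $(M\otimes_{\mathcal{R}^{\bd}} N)\spcheck\cong M\spcheck\otimes_{\mathcal{R}^{\bd}} N\spcheck$, the problem becomes the injectivity of the natural map $V(A)\otimes_K V(B)\to V(A\otimes_{\mathcal{R}^{\bd}} B)$ for $A=M\spcheck$ and $B=N\spcheck$. This I would obtain from Lemma \ref{lem:V} applied to $A\otimes_{\mathcal{R}^{\bd}}\mathcal{R}$: the $\mathcal{R}_{\log}$-linear inclusion $V(A)\otimes_K\mathcal{R}_{\log}\hookrightarrow A\otimes_{\mathcal{R}^{\bd}}\mathcal{R}_{\log}$, tensored over $\mathcal{R}_{\log}$ with the free $\mathcal{R}_{\log}$-module $B\otimes_{\mathcal{R}^{\bd}}\mathcal{R}_{\log}$, yields an injection $V(A)\otimes_K(B\otimes_{\mathcal{R}^{\bd}}\mathcal{R}_{\log})\hookrightarrow (A\otimes_{\mathcal{R}^{\bd}} B)\otimes_{\mathcal{R}^{\bd}}\mathcal{R}_{\log}$, and its restriction to the subspace $V(A)\otimes_K V(B)$ lands in $V(A\otimes_{\mathcal{R}^{\bd}} B)$ because the image is manifestly horizontal.

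Finally, for the growth-filtration statement, given $g\in\Sol_\lambda(M)$ and $h\in\Sol_\mu(N)$, the value $\Psi(g\otimes h)(m\otimes n)=g(m)h(n)$ lies in $\Fil_\lambda\mathcal{R}_{\log}\cdot\Fil_\mu\mathcal{R}_{\log}\subset\Fil_{\lambda+\mu}\mathcal{R}_{\log}$ by Lemma \ref{lem:property log} (i). Since $\Fil_0\mathcal{R}_{\log}=\mathcal{R}^{\bd}$ by Lemma \ref{lem:property log} (iii), this inclusion propagates along arbitrary $\mathcal{R}^{\bd}$-linear combinations of simple tensors, so $\Psi(g\otimes h)\in\Sol_{\lambda+\mu}(M\otimes_{\mathcal{R}^{\bd}} N)$. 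The only genuinely non-formal point in the whole argument is the injectivity step via Lemma \ref{lem:V}; everything else is bookkeeping.
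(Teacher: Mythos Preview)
Your proof is correct and follows essentially the same approach as the paper: identify $\Psi$ with the canonical map $V(M\spcheck)\otimes_K V(N\spcheck)\to V(M\spcheck\otimes_{\mathcal{R}^{\bd}}N\spcheck)$, establish injectivity (you via Lemma~\ref{lem:V}, the paper simply asserts it), conclude by a dimension count, and deduce the filtration statement from Lemma~\ref{lem:property log}~(i). Your write-up is considerably more explicit than the paper's three-line proof, particularly in the injectivity step and the $\varphi$-equivariance check, but the underlying argument is the same.
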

\begin{proof}
The map $\Psi$ is identified with the canonical map $\psi:V(M\spcheck)\otimes_K V(N\spcheck)\to V(M\spcheck\otimes_{\mathcal{R}^{\bd}}N\spcheck)$. Since $\psi$ is injective, $\psi$ is an isomorphism by a dimension count. The second assertion is a consequence of Lemma \ref{lem:property log} (i).
\end{proof}

The log-growth filtration satisfies a weak tensor compatibility as follows.

\begin{prop}\label{prop:weak tensor}
Let $M,N$ be $(\varphi,\nabla)$-modules over $\mathcal{R}^{\bd}$ solvable in $\mathcal{R}_{\log}$. Then the sum of a slope of $V(M)^{\bullet}$ and a slope of $V(N)^{\bullet}$ is a slope of $V(M\otimes_{\mathcal{R}^{\bd}}N)^{\bullet}$ (without multiplicity).

A similar assertion holds for a (log-)$(\varphi,\nabla)$-module over $K[\![t]\!]_0$ or $\mathcal{E}$.
\end{prop}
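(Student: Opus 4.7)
The plan is to work on the solution side using the duality between $V(\cdot)^{\bullet}$ and $\Sol_{\bullet}(\cdot)$, so that it suffices to show the sum of a slope of $\Sol_{\bullet}(M)$ and a slope of $\Sol_{\bullet}(N)$ is a slope of $\Sol_{\bullet}(M\otimes_{\mathcal{R}^{\bd}}N)$. By Lemma \ref{lem:coefficient Robba}, after base change I may assume that $k$ is algebraically closed, so that the Dieudonn\'e--Manin style machinery underlying Proposition \ref{prop:refine} is available.

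First I would invoke Proposition \ref{prop:refine} twice: for the slope $\lambda$ of $\Sol_{\bullet}(M)$ I obtain $f\in\Sol_{\lambda}(M)$ and $\tilde{m}\in\tilde{M}:=M\otimes_{\mathcal{R}^{\bd}}\tilde{\mathcal{R}}^{\bd}$ such that the extended solution $\tilde{f}(\tilde{m})\in\tilde{\mathcal{R}}_{\log}$ of Construction \ref{const:ten} is a Frobenius $d$-eigenvector of slope $\lambda$; analogously for $g\in\Sol_{\mu}(N)$ and $\tilde{n}\in\tilde{N}$ with $\tilde{g}(\tilde{n})$ a $d'$-eigenvector of slope $\mu$. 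Using the product homomorphism $\Psi$ of Lemma-Definition \ref{lem:product solution} I then form $h=\Psi(f\otimes g)\in\Sol_{\lambda+\mu}(M\otimes_{\mathcal{R}^{\bd}}N)$.

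The key computation is the identity
\[
\tilde{h}(\tilde{m}\otimes\tilde{n})=\tilde{f}(\tilde{m})\cdot\tilde{g}(\tilde{n})
\]
under the canonical identification $(M\otimes_{\mathcal{R}^{\bd}}N)\otimes_{\mathcal{R}^{\bd}}\tilde{\mathcal{R}}^{\bd}\cong\tilde{M}\otimes_{\tilde{\mathcal{R}}^{\bd}}\tilde{N}$, which follows by a direct calculation using that the multiplication map $\tilde{\mathcal{R}}^{\bd}\otimes_{\mathcal{R}^{\bd}}\mathcal{R}_{\log}\to\tilde{\mathcal{R}}_{\log}$ is a ring homomorphism (Lemma \ref{lem:log inj}) on pure tensors $m\otimes a$, $n\otimes b$. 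By the multiplicativity rules recorded in Definition \ref{dfn:eigen log extended}, the product $\tilde{f}(\tilde{m})\tilde{g}(\tilde{n})$ is a Frobenius $d''$-eigenvector of slope $\lambda+\mu$ for any common multiple $d''$ of $d,d'$, and it is nonzero because $\tilde{\mathcal{R}}_{\log}=\tilde{\mathcal{R}}[\log u]$ is a polynomial ring over the B\'ezout (hence integral) domain $\tilde{\mathcal{R}}$. Applying the ``moreover'' part of Proposition \ref{prop:refine} to $h$, with $\tilde{m}\otimes\tilde{n}$ as the witnessing element, forces $h$ to be exactly of log-growth $\lambda+\mu$; in particular $\lambda+\mu$ is a slope of $\Sol_{\bullet}(M\otimes_{\mathcal{R}^{\bd}}N)$, as required.

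For the $K[\![t]\!]_0$- and $\mathcal{E}$-analogues I would invoke the principle explained in Remark \ref{rem:principle}: the case of $K[\![t]\!]_0$ is immediate from Lemma \ref{lem:invariance log-growth}, while the case of $\mathcal{E}$ is reduced, via $\tau$ and Lemma \ref{lem:evaluation}, to the case of $(\varphi,\nabla)$-modules over the bounded Robba ring with coefficients in $\mathcal{E}$, where the same argument applies verbatim. The main obstacle is really the multiplicative identity for $\tilde{h}(\tilde{m}\otimes\tilde{n})$ together with the non-vanishing of the resulting eigenvector; once this is in place, Proposition \ref{prop:refine} does the work of extracting the slope information from the eigenvector into exact log-growth information for the solution $h$.
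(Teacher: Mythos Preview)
Your proposal is correct and follows essentially the same approach as the paper's own proof: reduce to $\mathcal{R}^{\bd}$ via Remark~\ref{rem:principle}, pass to $\Sol_{\bullet}$ by duality, base-change so that $k$ is algebraically closed, apply Proposition~\ref{prop:refine} to each factor, form the product solution via $\Psi$, verify the multiplicative identity $\tilde{h}(\tilde{m}\otimes\tilde{n})=\tilde{f}(\tilde{m})\cdot\tilde{g}(\tilde{n})$, and conclude using the ``moreover'' part of Proposition~\ref{prop:refine}. Your treatment is in fact slightly more careful than the paper's in two places: you explicitly take a common multiple $d''$ of the two eigenvector periods, and you justify nonvanishing of the product using that $\tilde{\mathcal{R}}_{\log}$ is an integral domain.
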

\begin{proof}
By Remark \ref{rem:principle}, we have only to prove the assertion in the case of $\mathcal{R}^{\bd}$. By duality, we may replace $V(\cdot)^{\bullet}$ by $\Sol_{\bullet}(\cdot)$. By Lemma \ref{lem:coefficient Robba}, we may assume that $k$ is algebraically closed. Let $\lambda,\mu$ be slopes of $\Sol_{\bullet}(M),\Sol_{\bullet}(N)$ respectively. By Proposition \ref{prop:refine}, we choose $g\in \Sol_{\lambda}(M)$ and $\tilde{m}\in\tilde{M}$ (resp. $h\in \Sol_{\mu}(N)$ and $\tilde{n}\in\tilde{N}$) such that $\tilde{g}(\tilde{m})$ (resp. $\tilde{h}(\tilde{n})$) is a $d$-eigenvector of slope $\lambda$ (resp. $\mu$). Put $f=\Psi(g\otimes h)$. Then $f\in Sol_{\lambda+\mu}(M\otimes_{\mathcal{R}^{\bd}}N)$ by Proposition \ref{prop:refine}, and $\tilde{f}(\tilde{m}\otimes\tilde{n})=\tilde{g}(\tilde{m})\cdot\tilde{h}(\tilde{n})$ is a $d$-eigenvector of slope $\lambda+\mu$. By Proposition \ref{prop:refine} again, $f$ is exactly of log-growth $\lambda+\mu$. In particular, $\lambda+\mu$ is a slope of $\Sol_{\bullet}(M\otimes_{\mathcal{R}^{\bd}}N)$.
\end{proof}

\begin{rem}\label{rem:non-compatible tensor}
An analogous assertion with multiplicity fails: in Example 6.1.5, the slope multisets of $V(M_{\mu})^{\bullet},V(M_{\mu}\spcheck)^{\bullet}$ are both $\{0,\mu\}$, while that of $V(M_{\mu}\otimes_{K[\![t]\!]_0}M_{\mu}\spcheck)^{\bullet}$ is $\{0,0,\mu,2\mu\}$.
\end{rem}

As a consequence, $b^{\nabla}$ is additive under tensor products.

\begin{cor}\label{cor:tensor bound}
Let $M,N$ be $(\varphi,\nabla)$-modules over $\mathcal{R}^{\bd}$ solvable in $\mathcal{R}_{\log}$. Then
\[
b^{\nabla}(M\otimes_{\mathcal{R}^{\bd}}N)=b^{\nabla}(M)+b^{\nabla}(N).
\]

A similar assertion holds for $M,N$ (log-)$(\varphi,\nabla)$-modules over $K[\![t]\!]_0$ or $\mathcal{E}$.
\end{cor}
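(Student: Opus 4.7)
The plan is to prove two inequalities, of which one is essentially immediate from Proposition \ref{prop:weak tensor} and the other is essentially immediate from Lemma-Definition \ref{lem:product solution}.

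For the inequality $b^{\nabla}(M)+b^{\nabla}(N)\le b^{\nabla}(M\otimes_{\mathcal{R}^{\bd}}N)$, I would simply note that $b^{\nabla}(M)$ and $b^{\nabla}(N)$ are slopes of the filtrations $V(M)^{\bullet}$ and $V(N)^{\bullet}$ respectively (as the maxima of their respective slope multisets), so by Proposition \ref{prop:weak tensor} their sum is a slope of $V(M\otimes_{\mathcal{R}^{\bd}}N)^{\bullet}$, and hence is at most the maximum slope $b^{\nabla}(M\otimes_{\mathcal{R}^{\bd}}N)$.

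For the reverse inequality $b^{\nabla}(M\otimes_{\mathcal{R}^{\bd}}N)\le b^{\nabla}(M)+b^{\nabla}(N)$, I would use Corollary \ref{cor:right continuous}, which gives $\Sol_{b^{\nabla}(M)}(M)=\Sol(M)$ and $\Sol_{b^{\nabla}(N)}(N)=\Sol(N)$. Applying the growth estimate in Lemma-Definition \ref{lem:product solution} with $\lambda=b^{\nabla}(M)$ and $\mu=b^{\nabla}(N)$, the image $\Psi(\Sol(M)\otimes_K\Sol(N))$ lies in $\Sol_{b^{\nabla}(M)+b^{\nabla}(N)}(M\otimes_{\mathcal{R}^{\bd}}N)$. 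Since $\Psi$ is an isomorphism onto $\Sol(M\otimes_{\mathcal{R}^{\bd}}N)$, this forces $\Sol_{b^{\nabla}(M)+b^{\nabla}(N)}(M\otimes_{\mathcal{R}^{\bd}}N)=\Sol(M\otimes_{\mathcal{R}^{\bd}}N)$, and Corollary \ref{cor:right continuous} then yields the desired bound.

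Neither step is an obstacle; the only substantive input is Proposition \ref{prop:weak tensor}, which has already been proved. For the cases of $K[\![t]\!]_0$ and $\mathcal{E}$, I would simply invoke the general principle recorded in Remark \ref{rem:principle}: the assertion over $K[\![t]\!]_0$ reduces to the assertion over $\mathcal{R}^{\bd}$ via Lemma \ref{lem:invariance log-growth}, and the assertion over $\mathcal{E}$ reduces to the assertion over $\mathcal{E}[\![X-t]\!]_0$ (and hence over the bounded Robba ring with coefficients in $\mathcal{E}$) via Lemmas \ref{lem:evaluation} and the canonical isomorphism $V(M\otimes_{\mathcal{E}}N)\cong V(M)\otimes_{\mathcal{E}}V(N)$ used implicitly in Lemma-Definition \ref{lem:product solution}.
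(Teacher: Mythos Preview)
Your proposal is correct and matches the paper's proof essentially line for line: both directions of the inequality are obtained exactly as you describe, using Proposition \ref{prop:weak tensor} for the lower bound and Lemma-Definition \ref{lem:product solution} together with Corollary \ref{cor:right continuous} for the upper bound, with the reduction to $\mathcal{R}^{\bd}$ via Remark \ref{rem:principle}.
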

\begin{proof}
By Remark \ref{rem:principle}, we have only to prove the assertion in the case of $\mathcal{R}^{\bd}$. By duality, we may use $\Sol_{\bullet}(\cdot)$ instead of $V(\cdot)^{\bullet}$. By Lemma \ref{lem:product solution} and Corollary \ref{cor:right continuous},
\[
\Sol(M\otimes_{\mathcal{R}^{\bd}}N)=\Psi(\Sol(M)\otimes_K\Sol(N))=\Psi(\Sol_{b^{\nabla}(M)}(M)\otimes_K \Sol_{b^{\nabla}(N)}(N))\subset \Sol_{b^{\nabla}(M)+b^{\nabla}(N)}(M\otimes_{\mathcal{R}^{\bd}}N).
\]
Hence $b^{\nabla}(M\otimes_{\mathcal{R}^{\bd}}N)\le b^{\nabla}(M)+b^{\nabla}(N)$. By Proposition \ref{prop:weak tensor}, $b^{\nabla}(M)+b^{\nabla}(N)$ is a slope of $\Sol_{\bullet}(M\otimes_{\mathcal{R}^{\bd}}N)$, in particular, $b^{\nabla}(M)+b^{\nabla}(N)\le b^{\nabla}(M\otimes_{\mathcal{R}^{\bd}}N)$.
\end{proof}

The following supplementary results are not used in the rest of this paper. The reader may skip here for the first time of reading.

As an application of Proposition \ref{prop:weak tensor}, we can give a sufficient condition that the tensor product of two $(\varphi,\nabla)$-modules is PBQ.

\begin{prop}\label{prop:product PBQ}
Let $M,N$ be $(\varphi,\nabla)$-modules over $\mathcal{R}^{\bd}$ solvable in $\mathcal{R}_{\log}$. Assume that
\begin{enumerate}
\item[(a)] $M,N$ are PBQ,
\item[(b)] each Frobenius slope of $V(M\otimes_{\mathcal{R}^{\bd}}N)$ has multiplicity one.
\end{enumerate}
Then $M\otimes_{\mathcal{R}^{\bd}}N$ is PBQ.

A similar assertion holds for $M,N$ (log-)$(\varphi,\nabla)$-modules over $K[\![t]\!]_0$ or $\mathcal{E}$.
\end{prop}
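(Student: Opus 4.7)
The plan is to apply Converse theorem (Lemma \ref{lem:conv bd}), which reduces the PBQ property for $M \otimes_{\mathcal{R}^{\bd}} N$ to verifying the equality
\[
\Sol_\lambda(M \otimes_{\mathcal{R}^{\bd}} N) = S_{\lambda - (a+b)}(\Sol(M \otimes_{\mathcal{R}^{\bd}} N)) \quad \forall \lambda \in \mathbb{R},
\]
where $a = \lambda_{\max}(M_{\mathcal{E}})$ and $b = \lambda_{\max}(N_{\mathcal{E}})$. By Remark \ref{rem:principle} it suffices to treat the case of $\mathcal{R}^{\bd}$, and by Lemma \ref{lem:coefficient Robba} I may enlarge $K$ so that $k$ is algebraically closed and $K$ is inversive. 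The inclusion $\supset$ is then immediate from Proposition \ref{prop:main}, once one notes $\lambda_{\max}((M \otimes N)_{\mathcal{E}}) = a + b$ by Lemma \ref{lem:slope5} (I)-(iii). The reverse inclusion will come from a dimension count.

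First I would exploit hypothesis (b): via the tensor-product formula for Frobenius slopes (Lemma \ref{lem:slope4} (iii)) applied to $\Sol(M \otimes N) \cong \Sol(M) \otimes_K \Sol(N)$, the multiplicity-one condition forces each Frobenius slope of $\Sol(M)$ and of $\Sol(N)$ to occur with multiplicity one, and all pairwise sums $\mu^M + \mu^N$ to be distinct. Next, using that $M$ and $N$ are PBQ, Theorem \ref{conj:bd} (ii) gives $\Sol_\lambda(M) = S_{\lambda - a}(\Sol(M))$ and $\Sol_\lambda(N) = S_{\lambda - b}(\Sol(N))$, so the slope multiset of $\Sol_\bullet(M)$ (resp. $\Sol_\bullet(N)$) is that of $\Sol(M)$ (resp. $\Sol(N)$) translated by $a$ (resp. $b$). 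By Proposition \ref{prop:weak tensor}, each sum $\lambda^M + \lambda^N$ is a slope of $\Sol_\bullet(M \otimes N)$, producing $\rank M \cdot \rank N$ distinct values. Since the total multiplicity of $\Sol_\bullet(M \otimes N)$ equals $\dim_K \Sol(M \otimes N) = \rank(M \otimes N)$, each of these slopes necessarily has multiplicity exactly one and there are no others.

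The dimension count then closes the argument: for each $\lambda$, $\dim_K \Sol_\lambda(M \otimes N)$ equals the number of pairs $(\mu^M, \mu^N)$ with $\mu^M + \mu^N \le \lambda - (a+b)$, which is also $\dim_K S_{\lambda - (a+b)}(\Sol(M \otimes N))$. Combined with the $\supset$ inclusion, equality holds, and Lemma \ref{lem:conv bd} yields that $M \otimes N$ is PBQ. The cases of $K[\![t]\!]_0$ and $\mathcal{E}$ follow uniformly via Remark \ref{rem:principle}. The main hurdle is more conceptual than technical: the log-growth filtration is not tensor-compatible in general (as Remark \ref{rem:non-compatible tensor} shows), so one must isolate precisely which hypothesis permits a clean comparison of multisets, and hypothesis (b) is exactly what prevents the ``collisions'' of slope sums that would otherwise obstruct the dimension equality.
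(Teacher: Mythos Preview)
Your proof is correct and follows essentially the same route as the paper: reduce to $\mathcal{R}^{\bd}$ via Remark~\ref{rem:principle}, use Theorem~\ref{conj:bd}~(ii) on $M$ and $N$ to identify their log-growth slopes as translated Frobenius slopes, invoke Proposition~\ref{prop:weak tensor} together with hypothesis~(b) to show the $mn$ sums exhaust the slope multiset of $\Sol_\bullet(M\otimes N)$ with multiplicity one, compare with Proposition~\ref{prop:main}, and conclude by Converse theorem (Lemma~\ref{lem:conv bd}). The only cosmetic difference is that where you write out the dimension count directly, the paper packages the same step as Lemma~\ref{lem:ten8}.
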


\begin{lem}\label{lem:ten8}
Let $V$ be a finite dimensional vector space over a field. Let $V_{1,\bullet},V_{2,\bullet}$ be increasing filtrations on $V$ by subspaces, which is separated, exhaustive, and right continuous. Assume $V_{1,\bullet}\subset V_{2,\bullet}$. If the slope multiset of $V_{1,\bullet},V_{2,\bullet}$ coincide, then $V_{1,\bullet}=V_{2,\bullet}$.
\end{lem}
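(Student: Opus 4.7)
The plan is to reduce the statement to the equality of the dimension functions $\lambda \mapsto \dim_F V_{i,\lambda}$, and then conclude via the inclusion hypothesis. The key observation is that for a right continuous, separated, exhaustive increasing filtration indexed by $\mathbb{R}$ on a finite dimensional vector space, the dimension function is completely determined by the slope multiset.

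More precisely, for such a filtration $V_{\bullet}$, right continuity gives $\cap_{\varepsilon > 0} V_{\lambda+\varepsilon} = V_\lambda$, so in the definition of $m(\lambda)$ adapted to the increasing case one has $m(\lambda) = \dim_F V_\lambda - \dim_F V_{\lambda^-}$, where $V_{\lambda^-} = \cup_{\varepsilon > 0} V_{\lambda - \varepsilon}$. Since only finitely many $\lambda$ are slopes (finite dimensionality), exhaustiveness and separatedness force
\[
\dim_F V_\lambda = \sum_{\mu \le \lambda} m(\mu)
\]
for every $\lambda \in \mathbb{R}$. In particular $\dim_F V_\lambda$ depends only on the slope multiset of $V_{\bullet}$.

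Applying this to both $V_{1,\bullet}$ and $V_{2,\bullet}$, the coincidence of their slope multisets yields $\dim_F V_{1,\lambda} = \dim_F V_{2,\lambda}$ for every $\lambda$. Combined with the hypothesis $V_{1,\lambda} \subset V_{2,\lambda}$, this forces $V_{1,\lambda} = V_{2,\lambda}$ for all $\lambda$, proving the lemma. The argument is essentially a bookkeeping step, so there is no real obstacle; the only point to be careful about is transcribing the definition of $m(\lambda)$ and of ``slope'' from the decreasing to the increasing case (which the introduction of the paper permits), and verifying right continuity really does let one identify $\cap_{\varepsilon > 0} V_{\lambda+\varepsilon}$ with $V_\lambda$.
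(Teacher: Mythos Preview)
Your proof is correct and is exactly the elementary argument the paper has in mind: the paper's own proof consists of the single word ``Obvious.'' You have simply spelled out that right continuity together with separatedness and exhaustiveness forces $\dim_F V_{i,\lambda}=\sum_{\mu\le\lambda}m_i(\mu)$, so coincidence of slope multisets gives equality of dimensions, and the inclusion hypothesis finishes.
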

\begin{proof}
Obvious.
\end{proof}

\begin{proof}[{Proof of Proposition \ref{prop:product PBQ}}]
By Remark \ref{rem:principle}, we have only to prove the assertion in the case of $\mathcal{R}^{\bd}$. By duality, each Frobenius slope of $\Sol(M\otimes_{\mathcal{R}^{\bd}}N)$ has multiplicity one. Let $\mu_1(M)\le\dots\le\mu_m(M)$ (resp. $\mu_1(N)\le\dots\le\mu_n(N)$) denote the slope multiset of $S_{\bullet}(\Sol(M))$ (resp. $S_{\bullet}(\Sol(N))$) with increasing order. By Theorem \ref{conj:bd} (ii),
\[
\lambda_i(M)=\mu_i(M)+\lambda_{\max}(M_{\mathcal{E}})\ \forall i,
\]
\[
\lambda_j(N)=\mu_j(N)+\lambda_{\max}(N_{\mathcal{E}})\ \forall j.
\]
Hence $\lambda_i(M)+\lambda_j(N)$ for all $i,j$ are distinct by assumption. Therefore the multiset
\[
\Lambda=\{\lambda_i(M)+\lambda_j(N);1\le i\le m,1\le j\le n\}
\]
coincides with the slope multiset of $\Sol_{\bullet}(M\otimes_{\mathcal{R}^{\bd}}N)$ by Proposition \ref{prop:weak tensor}. Since
\[
\Lambda=\{\mu_i(M)+\mu_j(N)+\lambda_{\max}(M_{\mathcal{E}})+\lambda_{\max}(N_{\mathcal{E}});1\le i\le m,1\le j\le n\},
\]
$\Lambda$ also coincides with the slope multiset of $S_{\bullet-(\lambda_{\max}(M_{\mathcal{E}})+\lambda_{\max}(N_{\mathcal{E}}))}(\Sol(M\otimes_{\mathcal{R}^{\bd}}N))$. By Proposition \ref{prop:main} and Lemma \ref{lem:ten8}, $\Sol_{\bullet}(M\otimes_{\mathcal{R}^{\bd}}N)=S_{\bullet-(\lambda_{\max}(M_{\mathcal{E}})+\lambda_{\max}(N_{\mathcal{E}}))}(\Sol(M\otimes_{\mathcal{R}^{\bd}}N))$. By Converse theorem over $\mathcal{R}^{\bd}$ (Lemma \ref{lem:conv bd}), $M\otimes_{\mathcal{R}^{\bd}}N$ is PBQ.
\end{proof}

If the tensor product of two $(\varphi,\nabla)$-modules is PBQ as in Proposition \ref{prop:product PBQ}, then the log-growth filtration on the tensor product is easily computed as in the case of Frobenius slope filtration (Lemma \ref{lem:slope5} (II)-(iii)).

\begin{lem}
Let $M,N$ be $(\varphi,\nabla)$-modules over $\mathcal{R}^{\bd}$ solvable in $\mathcal{R}_{\log}$. Assume that $M\otimes_{\mathcal{R}^{\bd}}N$ is PBQ. Then $M,N$ are PBQ. Moreover, there exist canonical isomorphisms
\[
\cap_{\lambda+\mu=\delta}(V(M)^{\lambda}\otimes_{K}V(N)+V(M)\otimes_{K}V(N)^{\mu})\cong V(M\otimes_{\mathcal{R}^{\bd}}N)^{\delta},
\]
\[
\sum_{\lambda+\mu=\delta}\Sol_{\lambda}(M)\otimes_K\Sol_{\mu}(N)\cong \Sol_{\delta}(M\otimes_{\mathcal{R}^{\bd}}N)
\]
for all $\delta\in\mathbb{R}$.

A similar assertion holds for $M,N$ (log-)$(\varphi,\nabla)$-modules over $K[\![t]\!]_0$ or $\mathcal{E}$.
\end{lem}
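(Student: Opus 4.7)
The plan is to reduce to the case of $\mathcal{R}^{\bd}$ via the principle of Remark \ref{rem:principle}, together with Lemma \ref{lem:invariance log-growth} and Lemma \ref{lem:tau compat PBQ fil}: the PBQ hypothesis and both claimed isomorphisms transfer between the three cases, and tensor products commute with the relevant base changes. I therefore assume $M$ and $N$ are $(\varphi,\nabla)$-modules over $\mathcal{R}^{\bd}$ solvable in $\mathcal{R}_{\log}$ with $M\otimes_{\mathcal{R}^{\bd}}N$ PBQ.

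\textbf{PBQ of the factors.} Suppose $M$ were not PBQ. By Slope criterion (Proposition \ref{prop:key}), there is a non-zero quotient $Q$ of $M_{\mathcal{E}}$ as a $(\varphi,\nabla)$-module with $\lambda_{\max}(Q)<\lambda_{\max}(M_{\mathcal{E}})$. Then $Q\otimes_{\mathcal{E}}N_{\mathcal{E}}$ is a non-zero quotient of $(M\otimes_{\mathcal{R}^{\bd}}N)_{\mathcal{E}}=M_{\mathcal{E}}\otimes_{\mathcal{E}}N_{\mathcal{E}}$, and by Lemma \ref{lem:slope4} (iii),
\[
\lambda_{\max}(Q\otimes_{\mathcal{E}}N_{\mathcal{E}})=\lambda_{\max}(Q)+\lambda_{\max}(N_{\mathcal{E}})<\lambda_{\max}(M_{\mathcal{E}})+\lambda_{\max}(N_{\mathcal{E}})=\lambda_{\max}((M\otimes_{\mathcal{R}^{\bd}}N)_{\mathcal{E}}),
\]
contradicting Slope criterion for the PBQ module $M\otimes_{\mathcal{R}^{\bd}}N$. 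Symmetrically, $N$ is PBQ.

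\textbf{The $\Sol$-formula.} The canonical isomorphism $\Psi:\Sol(M)\otimes_K\Sol(N)\to\Sol(M\otimes_{\mathcal{R}^{\bd}}N)$ of Lemma-Definition \ref{lem:product solution} sends $\Sol_\lambda(M)\otimes_K\Sol_\mu(N)$ into $\Sol_{\lambda+\mu}(M\otimes_{\mathcal{R}^{\bd}}N)$, giving the inclusion $\sum_{\lambda+\mu=\delta}\Sol_\lambda(M)\otimes_K\Sol_\mu(N)\subset\Sol_\delta(M\otimes_{\mathcal{R}^{\bd}}N)$. Since $M$, $N$, and $M\otimes_{\mathcal{R}^{\bd}}N$ are all PBQ, Theorem \ref{conj:bd} (ii) identifies each of these growth filtrations with the shifted Frobenius slope filtration $S_{\bullet-\lambda_{\max}(\cdot_{\mathcal{E}})}(\Sol(\cdot))$. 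Setting $a=\lambda_{\max}(M_{\mathcal{E}})$, $b=\lambda_{\max}(N_{\mathcal{E}})$, so that $\lambda_{\max}((M\otimes_{\mathcal{R}^{\bd}}N)_{\mathcal{E}})=a+b$ by Lemma \ref{lem:slope4} (iii), the tensor compatibility of the Frobenius slope filtration (Lemma \ref{lem:slope5} (II)-(iii)) under $\Psi$ gives
\[
\Sol_\delta(M\otimes_{\mathcal{R}^{\bd}}N)=S_{\delta-a-b}(\Sol(M)\otimes_K\Sol(N))=\sum_{\alpha+\beta=\delta-a-b}S_\alpha(\Sol(M))\otimes_K S_\beta(\Sol(N)),
\]
which, after the change of variables $\lambda=\alpha+a$, $\mu=\beta+b$, equals $\sum_{\lambda+\mu=\delta}\Sol_\lambda(M)\otimes_K\Sol_\mu(N)$.

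\textbf{The $V$-formula by duality.} The canonical perfect pairing $V(M\otimes_{\mathcal{R}^{\bd}}N)\otimes_K\Sol(M\otimes_{\mathcal{R}^{\bd}}N)\to K$ is, via $\Psi$ and its $V$-analogue, the tensor of the base pairings on $V(M)\otimes_K\Sol(M)$ and $V(N)\otimes_K\Sol(N)$. A direct computation with complementary subspaces shows
\[
(V(M)^\lambda\otimes_K V(N)+V(M)\otimes_K V(N)^\mu)^{\perp}=\Sol_\lambda(M)\otimes_K\Sol_\mu(N)
\]
in $\Sol(M)\otimes_K\Sol(N)$. Exchanging intersections and sums under orthogonal complementation and invoking the $\Sol$-formula just proved,
\[
\Bigl(\bigcap_{\lambda+\mu=\delta}(V(M)^\lambda\otimes_K V(N)+V(M)\otimes_K V(N)^\mu)\Bigr)^{\perp}=\sum_{\lambda+\mu=\delta}\Sol_\lambda(M)\otimes_K\Sol_\mu(N)=\Sol_\delta(M\otimes_{\mathcal{R}^{\bd}}N).
\]
Since $\Sol_\delta(M\otimes_{\mathcal{R}^{\bd}}N)=(V(M\otimes_{\mathcal{R}^{\bd}}N)^\delta)^{\perp}$ by definition, taking orthogonals once more yields the first claimed isomorphism. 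The only deep ingredient of the entire argument is Theorem \ref{conj:bd} (ii); every other step is bookkeeping in tensor products and filtered orthogonals.
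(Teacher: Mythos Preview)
Your treatment of the $\Sol$-formula and the dual $V$-formula matches the paper's proof essentially verbatim: both invoke Theorem~\ref{conj:bd}(ii) for $M$, $N$, and $M\otimes N$ to replace the growth filtrations by shifted Frobenius slope filtrations, and then apply the tensor compatibility of the latter (Lemma~\ref{lem:slope5}(II)(iii)). The paper leaves the passage to the $V$-formula implicit (``it suffices to prove the second isomorphism''); your orthogonality computation spells this out correctly.

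For the first assertion (that $M$ and $N$ are PBQ) your argument is different from the paper's and has a small gap. You use the implication ``$M_{\mathcal{E}}$ not PBQ $\Rightarrow$ there is a quotient $Q$ with $\lambda_{\max}(Q)<\lambda_{\max}(M_{\mathcal{E}})$'', which is the direction (ii)$\Rightarrow$(i) of Proposition~\ref{prop:key}. That direction is proved in the paper only when $k$ is perfect (it relies on the Splitting Theorem~\ref{thm:split}), whereas the present lemma carries no such hypothesis. The fix is easy---base change the coefficient field so that $k$ becomes algebraically closed, using Lemmas~\ref{lem:coefficient Robba} and~\ref{lem:coefficient Amice} and the fact that PBQ is preserved---but you do not say this. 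The paper instead argues directly over $\mathcal{E}$: the injection $\Sol_0(M)\otimes_{\mathcal{E}}\Sol_0(N)\hookrightarrow\Sol_0(M\otimes_{\mathcal{E}}N)$ of $\varphi$-modules (from Lemma-Definition~\ref{lem:product solution}) lands in a pure module by hypothesis, so the source is pure, hence so are $\Sol_0(M)$ and $\Sol_0(N)$. This is shorter and needs no assumption on $k$.
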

\begin{proof}
To prove the first assertion, we may consider only in the case of $\mathcal{E}$. By Lemma \ref{lem:product solution}, there exists an injection $\Sol_0(M)\otimes_{\mathcal{E}}\Sol_0(N)\hookrightarrow \Sol_0(M\otimes_{\mathcal{E}}N)$ of $\varphi$-modules over $\mathcal{E}$. Since $\Sol_0(M\otimes_{\mathcal{E}}N)$ is pure by assumption, so are $\Sol_0(M)$ and $\Sol_0(N)$, which implies the assertion.

We prove the second assertion: by Remark \ref{rem:principle}, it suffices to prove the second isomorphism in the case of $\mathcal{R}^{\bd}$. We obtain the assertion by
\begin{align*}
\Sol_{\delta}(M\otimes_{\mathcal{R}^{\bd}}N)=&S_{\delta-(\lambda_{\max}(M)+\lambda_{\max}(N))}(\Sol(M\otimes_{\mathcal{R}^{\bd}}N))\\
\cong&\sum_{\lambda+\mu=\delta}S_{\lambda-\lambda_{\max}(M)}(\Sol(M))\otimes_{K}S_{\mu-\lambda_{\max}(N)}(\Sol(N))\\
=&\sum_{\lambda+\mu=\delta}\Sol_{\lambda}(M)\otimes_{K}\Sol_{\mu}(N),
\end{align*}
where the equalities follow from Theorem \ref{conj:bd} (ii).
\end{proof}


\section{Generating theorem}\label{sec:gen}

As we have seen, particularly in \S \ref{subsec:Dwo}, the PBQ filtration is a useful tool to handle an arbitrary $(\varphi,\nabla)$-modules. In this section, we prove Generating theorem, which is another tool to describe $M$ using PBQ $(\varphi,\nabla)$-modules in a very na\"ive way.

\begin{thm}[Generating theorem]\label{thm:generating}
Assume that $k$ is perfect. Let $M$ be a $(\varphi,\nabla)$-module over $\mathcal{E}$. Then there exist PBQ $(\varphi,\nabla)$-submodules $M_1,\dots,M_r$ of $M$ such that $\sum_{i=1}^rM_i=M$.
\end{thm}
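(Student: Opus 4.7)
The plan is to induct on $n := \dim_{\mathcal{E}} M$. When $n \le 1$, or more generally whenever $M$ is PBQ, the choice $M_1 = M$ settles everything, so I may assume $M$ is not PBQ. The main idea is to combine the maximally PBQ submodule $P_1(M)$, which is PBQ by Proposition \ref{prop:existence max PBQ}, with the ``sub-maximal-slope'' submodule
$$T := \bigcup_{\mu < \lambda_{\max}(M)} S_{\mu}(M) \subset M,$$
which is a $(\varphi,\nabla)$-submodule of $M$ since each $S_{\mu}(M)$ is. Two facts drive the argument: first, $T \subsetneq M$, because $\lambda_{\max}(M)$ occurs as a Frobenius slope of $M$, making the graded piece $M/T$ pure of slope $\lambda_{\max}(M)$ and in particular non-zero by Theorem \ref{thm:slope1} (ii) (c); second, $M = P_1(M) + T$, which is essentially a restatement of Proposition \ref{prop:existence max PBQ}: the isomorphism $P_1(M)/P_1(M)^0 \xrightarrow{\sim} M/T$ forces the composite $P_1(M) \hookrightarrow M \twoheadrightarrow M/T$ to be surjective, and hence $P_1(M) + T = M$.

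With these two facts in hand, the induction closes immediately: since $\dim_{\mathcal{E}} T < n$, the inductive hypothesis produces PBQ $(\varphi,\nabla)$-submodules $T_1, \dots, T_s$ of $T$ with $T = \sum_{i=1}^{s} T_i$, and each $T_i$ is a fortiori a $(\varphi,\nabla)$-submodule of $M$. Combining with $P_1(M)$ yields
$$M = P_1(M) + T_1 + \cdots + T_s,$$
the required expression. Conceptually, the argument works because the PBQ notion only probes the top Frobenius slope of $M$; everything strictly below is confined to the strict submodule $T$, which is accessible to induction. The only step that could conceivably misfire is the surjectivity $P_1(M) \twoheadrightarrow M/T$, but it reads off directly from Proposition \ref{prop:existence max PBQ}; no serious obstacle remains, only careful bookkeeping with the PBQ filtration and Frobenius slope filtration theories developed in \S \ref{sec:slope} and \S \ref{sec:max} (where the assumption that $k$ be perfect is invoked).
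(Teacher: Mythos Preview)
Your proof is correct and takes a genuinely different route from the paper's. The paper proceeds by first establishing an auxiliary condition $(C_M)$ --- that $M$ can be written as a sum of \emph{proper} $(\varphi,\nabla)$-submodules --- by applying the Splitting theorem (Theorem \ref{thm:split}) directly to the bounded quotient $M/M^0$: when $M$ is not PBQ this quotient is a non-trivial direct sum of pure pieces, and pulling these back along $M\twoheadrightarrow M/M^0$ yields proper submodules summing to $M$, each of which is then handled by induction. Your argument instead exploits the maximally PBQ submodule $P_1(M)$ (already PBQ, no induction needed) together with the single complementary piece $T=\bigcup_{\mu<\lambda_{\max}(M)}S_\mu(M)$, inducting only on $T$. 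Your decomposition $M=P_1(M)+T$ is more explicit and leverages the machinery of \S\ref{sec:max} that the paper has already set up; the paper's argument is more self-contained in that it invokes the Splitting theorem directly rather than through the construction of $P_1(M)$ (which, in \cite{CT2}, itself rests on the Splitting theorem). Both approaches ultimately depend on the Splitting theorem and hence on the perfectness of $k$.
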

\begin{proof}
We consider the following condition on $M$.

($C_M$): There exist $(\varphi,\nabla)$-submodules $M_1,\dots,M_r$ of $M$ such that $M_1,\dots,M_r\neq M$ and $M_1+\dots+M_r=M$.

\noindent Note that if $(C_Q)$ is true for a quotient $Q$ of $M$, then $(C_M)$ is true. In particular, if $M$ is not PBQ, then $(C_M)$ is true since $(C_{M/M^0})$ is true by Splitting theorem (Theorem \ref{thm:split}).

We prove the assertion by induction on $\dim_{\mathcal{E}}M$. When $M$ is PBQ, we set $n=1$ and $M_1=M$. When $M$ is not PBQ, we obtain the assertion by applying the induction hypothesis to the $M_i$'s in $(C_M)$.
\end{proof}

Note that in the above proof, we use Splitting theorem in an essential way, as is the case of the PBQ filtration (see the proof of \cite[Proposition 5.4]{CT2}).

\begin{ex}
Let $M=M_{\mu,\delta}$ be as in Example 6.1.4. Then $M$ is not PBQ (Example 6.2.4). By the pushout diagram in Example 6.1.4, $M$ contains copies of $M_{\mu},M_{\delta}$, which are PBQ by Example 6.2.3 and generate $M$.
\end{ex}


\section{The slopes of log-growth Newton polygon}\label{sec:bound}

In this section, we study the slopes of the log-growth Newton polygons of $(\varphi,\nabla)$-modules $M$ over $\mathcal{E}$. Particularly, we prove the dual invariance of the maximum slope $b^{\nabla}$ (Theorem \ref{thm:inv}) by using Generating theorem (Theorem \ref{thm:generating}), and an analogue of Drinfeld-Kedlaya theorem for indecomposable convergent $F$-isocrystals (Theorem \ref{thm:Gri}).

We first study the ``generic bound'' $b^{\nabla}$ for a $(\varphi,\nabla)$-module over $\mathcal{E}$. The following two lemmas are consequences of Lemmas \ref{lem:lgE2}, \ref{lem:lgE3}, and Corollary \ref{cor:right continuous}.

\begin{lem}\label{lem:bound2}
Let $f:M\to N$ be a morphism of $(\varphi,\nabla)$-modules over $\mathcal{E}$.
\begin{enumerate}
\item If $f$ is injective, then $b^{\nabla}(M)\le b^{\nabla}(N)$.
\item If $f$ is surjective, then $b^{\nabla}(M)\ge b^{\nabla}(N)$.
\end{enumerate}
\end{lem}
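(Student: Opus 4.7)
The plan is to reduce both parts to Corollary~\ref{cor:right continuous} (in its $\mathcal{E}$-version), which reformulates the generic bound as
\[
b^{\nabla}(M) = \min\{\lambda\in\mathbb{R};\; M^{\lambda}=0\},
\]
and then read off the required inequalities from the monotonicity of $M \mapsto M^{\bullet}$ under morphisms, established in Lemma~\ref{lem:lgE2}.

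For part (i), suppose $f:M\hookrightarrow N$ is injective. Lemma~\ref{lem:lgE2}(i) gives $f(M^{\lambda}) \subset N^{\lambda}$ for every $\lambda$, and since $f$ itself is injective we get $M^{\lambda} \hookrightarrow N^{\lambda}$. In particular, whenever $N^{\lambda}=0$ we must have $M^{\lambda}=0$, so
\[
\{\lambda; N^{\lambda}=0\} \subset \{\lambda; M^{\lambda}=0\},
\]
and taking infima yields $b^{\nabla}(M) \le b^{\nabla}(N)$.

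For part (ii), suppose $f:M\twoheadrightarrow N$ is surjective. Lemma~\ref{lem:lgE2}(ii) then gives the equality $f(M^{\lambda}) = N^{\lambda}$ for every $\lambda$. Hence $M^{\lambda}=0$ forces $N^{\lambda}=0$, so
\[
\{\lambda; M^{\lambda}=0\} \subset \{\lambda; N^{\lambda}=0\},
\]
and taking infima gives $b^{\nabla}(M) \ge b^{\nabla}(N)$.

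There is essentially no obstacle here: the entire argument is a one-line consequence of the compatibility of $M^{\bullet}$ with injections and surjections (Lemma~\ref{lem:lgE2}) combined with the right-continuity/vanishing characterization of $b^{\nabla}$ (Corollary~\ref{cor:right continuous}). The only point worth being explicit about is that for the $\mathcal{E}$-case one uses the log-growth filtration $M^{\bullet}$ on $M$ directly (provided by Theorem~\ref{thm:Rob}), rather than the filtration $V(M)^{\bullet}$ used in the $\mathcal{R}^{\bd}$ and $K[\![t]\!]_0$ settings; the characterization $b^{\nabla}(M)=\min\{\lambda; M^{\lambda}=0\}$ is the content of the ``similar assertion'' clause of Corollary~\ref{cor:right continuous}.
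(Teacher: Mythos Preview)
Your proof is correct and follows exactly the route the paper indicates: the paper states this lemma (together with the next one) as a consequence of Lemma~\ref{lem:lgE2} and Corollary~\ref{cor:right continuous}, and your argument spells out precisely that deduction.
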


\begin{lem}\label{lem:sum}
For $(\varphi,\nabla)$-modules $M_1,\dots,M_n$ over $\mathcal{E}$, we have $b^{\nabla}(\oplus_{i=1}^nM_i)=\max_{1\le i\le n}b^{\nabla}(M_i)$.
\end{lem}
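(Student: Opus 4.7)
The plan is to reduce both inequalities to already-established structural facts on the log-growth filtration. Write $M=\oplus_{i=1}^n M_i$ and $b_{\max}=\max_{1\le i\le n}b^{\nabla}(M_i)$.

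First, for the inequality $b_{\max}\le b^{\nabla}(M)$, I would apply Lemma \ref{lem:bound2} (i) to the canonical injection $M_i\hookrightarrow M$ for each $i$, obtaining $b^{\nabla}(M_i)\le b^{\nabla}(M)$. Taking the maximum over $i$ yields this direction. (Alternatively the surjection $M\to M_i$ combined with Lemma \ref{lem:bound2} (ii) gives the same inequality.)

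For the reverse inequality, I invoke Lemma \ref{lem:lgE3}, which produces the canonical identification $M^{\lambda}\cong\oplus_{i=1}^n M_i^{\lambda}$ for every real $\lambda$. By the $\mathcal{E}$-analogue of Corollary \ref{cor:right continuous}, each $b^{\nabla}(M_i)$ is characterized as $\min\{\lambda\,;\,M_i^{\lambda}=0\}$; since $M_i^{\bullet}$ is a decreasing filtration, one has $M_i^{b_{\max}}=0$ for every $i$. Consequently $M^{b_{\max}}=\oplus_{i=1}^n M_i^{b_{\max}}=0$, and a second application of Corollary \ref{cor:right continuous} gives $b^{\nabla}(M)\le b_{\max}$.

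The argument is purely formal and I do not anticipate any real obstacle: everything is a matter of bookkeeping against the direct-sum compatibility of the log-growth filtration. The only point that needs mild attention is to cite the correct characterization of $b^{\nabla}$ as the smallest $\lambda$ annihilating the filtration, which is provided by the $\mathcal{E}$-case of Corollary \ref{cor:right continuous}; with this in hand the lemma follows in a few lines.
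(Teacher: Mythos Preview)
Your proof is correct and follows essentially the same approach as the paper, which simply cites Lemmas \ref{lem:lgE2}, \ref{lem:lgE3}, and Corollary \ref{cor:right continuous} without giving details. The only small remark is that Corollary \ref{cor:right continuous} literally characterizes $b^{\nabla}(M)$ via $V(M)^{\lambda}$ rather than $M^{\lambda}$, so strictly speaking one should also invoke Lemma \ref{lem:CTgeneric2} (ii) (or the equivalent observation that the slope multisets of $M^{\bullet}$ and $V(M)^{\bullet}$ coincide) to pass to the characterization $b^{\nabla}(M)=\min\{\lambda;M^{\lambda}=0\}$; this is harmless and is tacitly used throughout \S\ref{sec:bound}.
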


\begin{lem}\label{lem:sum2}
Let $M$ be a $(\varphi,\nabla)$-module over $\mathcal{E}$. If $M_1,\dots,M_n$ are $(\varphi,\nabla)$-submodules of $M$ such that $\sum_{i=1}^nM_i=M$, then
\[
b^{\nabla}(M)=\max_{1\le i\le n}b^{\nabla}(M_i),
\]
\[
b^{\nabla}(M\spcheck)=\max_{1\le i\le n}b^{\nabla}(M_i\spcheck).
\]
\end{lem}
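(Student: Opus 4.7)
The plan is to deduce both equalities directly from Lemmas \ref{lem:bound2} and \ref{lem:sum} by exploiting the surjection $\pi:\bigoplus_{i=1}^n M_i \twoheadrightarrow M$ determined by the inclusions $M_i \hookrightarrow M$ together with the hypothesis $\sum_i M_i = M$. Everything reduces to checking two inequalities for each of the two equalities.

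First I would handle the equality $b^{\nabla}(M)=\max_i b^{\nabla}(M_i)$. The inequality $b^{\nabla}(M)\ge \max_i b^{\nabla}(M_i)$ is immediate by applying Lemma \ref{lem:bound2}(i) to each inclusion $M_i\hookrightarrow M$. For the reverse inequality, I use the surjection $\pi$: by Lemma \ref{lem:bound2}(ii), $b^{\nabla}(M)\le b^{\nabla}\bigl(\bigoplus_i M_i\bigr)$, and then Lemma \ref{lem:sum} identifies the right-hand side with $\max_i b^{\nabla}(M_i)$.

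Next I would handle the dual equality $b^{\nabla}(M\spcheck)=\max_i b^{\nabla}(M_i\spcheck)$ by the same strategy. Since each $M_i\spcheck$ is a quotient of $M\spcheck$ via the map dual to the inclusion $M_i\hookrightarrow M$, Lemma \ref{lem:bound2}(ii) gives $b^{\nabla}(M\spcheck)\ge b^{\nabla}(M_i\spcheck)$ for every $i$, hence $b^{\nabla}(M\spcheck)\ge \max_i b^{\nabla}(M_i\spcheck)$. For the opposite inequality, I dualize $\pi$: since $\pi$ is surjective in the category of $\mathcal{E}$-vector spaces and $(\varphi,\nabla)$-modules, its $\mathcal{E}$-dual $\pi\spcheck: M\spcheck\hookrightarrow \bigl(\bigoplus_i M_i\bigr)\spcheck \cong \bigoplus_i M_i\spcheck$ is an injection of $(\varphi,\nabla)$-modules. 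Applying Lemma \ref{lem:bound2}(i) and then Lemma \ref{lem:sum} yields $b^{\nabla}(M\spcheck)\le b^{\nabla}\bigl(\bigoplus_i M_i\spcheck\bigr)=\max_i b^{\nabla}(M_i\spcheck)$, closing the argument.

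There is essentially no obstacle here; the only point worth verifying carefully is that the dual of a surjection of $(\varphi,\nabla)$-modules is an injection of $(\varphi,\nabla)$-modules, which is immediate since $\mathcal{E}$-duality is exact on finite-dimensional objects and the canonical pairing $M\otimes_{\mathcal{E}} M\spcheck\to \mathcal{E}$ is a morphism of $(\varphi,\nabla)$-modules (as recalled in \S\ref{subsec:phinabla1}).
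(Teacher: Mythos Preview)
Your proof is correct and follows essentially the same approach as the paper's own proof: both use the inclusions $M_i\hookrightarrow M$ together with the surjection $\bigoplus_i M_i\twoheadrightarrow M$ (and their duals) and invoke Lemmas~\ref{lem:bound2} and~\ref{lem:sum} in the same way. Your extra remark justifying that the dual of a surjection is an injection of $(\varphi,\nabla)$-modules is a harmless elaboration of a point the paper leaves implicit.
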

\begin{proof}
By Lemma \ref{lem:bound2} (i), $b^{\nabla}(M_i)\le b^{\nabla}(M)$. By applying Lemmas \ref{lem:bound2} (ii) and \ref{lem:sum} to the canonical surjection $\oplus_{i=1}^nM_i\to M$, we have $b^{\nabla}(M)\le b^{\nabla}(\oplus_{i=1}^nM_i)=\max_{1\le i \le n}b^{\nabla}(M_i)$, which implies the first equality.

Similarly, regarding $M_i\spcheck$ as a quotient of $M\spcheck$, $b^{\nabla}(M_i\spcheck)\le b^{\nabla}(M\spcheck)$ by Lemma \ref{lem:bound2} (ii). By applying Lemmas \ref{lem:bound2} (i) and \ref{lem:sum} to the canonical injection $M\spcheck\hookrightarrow\oplus_{i=1}^nM_i\spcheck$, we have $b^{\nabla}(M\spcheck)\le b^{\nabla}(\oplus_{i=1}^nM_i\spcheck)=\max_{1\le i\le n}b^{\nabla}(M_i\spcheck)$, which implies the second equality.
\end{proof}

\begin{lem}\label{lem:dual}
Let $M$ be a $(\varphi,\nabla)$-module over $\mathcal{E}$.
\begin{enumerate}
\item If $M$ is PBQ, then $b^{\nabla}(M)=\lambda_{\max}(M)-\lambda_{\min}(M)$.
\item If $M$ and $M\spcheck$ are PBQ, then $b^{\nabla}(M)=b^{\nabla}(M\spcheck)$.
\end{enumerate}
\end{lem}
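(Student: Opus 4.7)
The plan is to deduce both parts directly from Theorem \ref{conj:CTgen} (ii), together with the standard behavior of Frobenius slopes under duality and the identification $\Sol(M) \cong V(M\spcheck) \cong M\spcheck$ of $\varphi$-modules over $\mathcal{E}$.

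For part (i), I would start by using the characterization of $b^{\nabla}(M)$ from Corollary \ref{cor:right continuous}, namely $b^{\nabla}(M) = \min\{\lambda : \Sol_{\lambda}(M) = \Sol(M)\}$. Since $M$ is PBQ, Theorem \ref{conj:CTgen} (ii) gives the identification $\Sol_{\lambda}(M) = S_{\lambda - \lambda_{\max}(M)}(\Sol(M))$ for all $\lambda$. Taking the minimum $\lambda$ for which the right-hand side exhausts $\Sol(M)$ amounts to shifting by $\lambda_{\max}(M)$ the maximum slope of the Frobenius filtration on $\Sol(M)$, giving $b^{\nabla}(M) = \lambda_{\max}(M) + \lambda_{\max}(\Sol(M))$. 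Next, I would use Lemma \ref{lem:evaluation} to identify $\Sol(M) \cong V(M\spcheck) \cong M\spcheck$ as $\varphi$-modules over $\mathcal{E}$, together with Lemma \ref{lem:slope4} (ii), to rewrite $\lambda_{\max}(\Sol(M)) = \lambda_{\max}(M\spcheck) = -\lambda_{\min}(M)$. Combining these yields the claimed formula $b^{\nabla}(M) = \lambda_{\max}(M) - \lambda_{\min}(M)$.

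For part (ii), the work is essentially already done: applying (i) to both $M$ and $M\spcheck$ (permissible since both are PBQ by hypothesis), and invoking Lemma \ref{lem:slope4} (ii) once more to compute $\lambda_{\max}(M\spcheck) = -\lambda_{\min}(M)$ and $\lambda_{\min}(M\spcheck) = -\lambda_{\max}(M)$, one obtains
\[
b^{\nabla}(M\spcheck) = \lambda_{\max}(M\spcheck) - \lambda_{\min}(M\spcheck) = \lambda_{\max}(M) - \lambda_{\min}(M) = b^{\nabla}(M).
\]

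There is no real obstacle here: the lemma is essentially a bookkeeping consequence of the main theorem (Theorem \ref{conj:CTgen} (ii)) once the ``solution side'' is converted to the ``$M$-side'' via the canonical identification $\Sol(M) \cong M\spcheck$. The only minor point to keep in mind is checking that the minimum $\lambda$ for which $S_{\lambda-\lambda_{\max}(M)}(\Sol(M))=\Sol(M)$ is indeed realized (which is automatic since Frobenius slope filtration has only finitely many slopes and is exhaustive), so no right-continuity subtleties arise.
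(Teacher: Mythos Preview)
Your proof is correct and follows essentially the same approach as the paper: both apply Theorem \ref{conj:CTgen} (ii) together with Corollary \ref{cor:right continuous}, then use $\lambda_{\max}(M\spcheck)=-\lambda_{\min}(M)$. The only cosmetic difference is that the paper works on the $M^{\lambda}$ side (using $M^{\lambda}=(S_{\lambda-\lambda_{\max}(M)}(M\spcheck))^{\perp}$ and $b^{\nabla}(M)=\min\{\lambda:M^{\lambda}=0\}$), while you use the equivalent $\Sol_{\lambda}$ formulation together with the identification $\Sol(M)\cong M\spcheck$ via Lemma \ref{lem:evaluation}; these are the two equivalent forms already recorded in the statement of Theorem \ref{conj:CTgen} (ii).
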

\begin{proof}
\begin{enumerate}
\item By Theorem \ref{conj:CTgen} (ii) and Corollary \ref{cor:right continuous},
\begin{align*}
b^{\nabla}(M)=&\min\{\lambda;M^{\lambda}=0\}=\min\{\lambda;S_{\lambda-\lambda_{\max}(M)}(M\spcheck)=M\spcheck\}\\
=&\lambda_{\max}(M)+\lambda_{\max}(M\spcheck)=\lambda_{\max}(M)-\lambda_{\min}(M).
\end{align*}
\item It follows from (i) and $\lambda_{\max}(M)-\lambda_{\min}(M)=\lambda_{\max}(M\spcheck)-\lambda_{\min}(M\spcheck)$.
\end{enumerate}
\end{proof}

The following theorem is highly non-trivial, and we make an essential use of a Frobenius structure in the proof.

\begin{thm}[{the dual invariance of $b^{\nabla}$}]\label{thm:inv}
For any $(\varphi,\nabla)$-module $M$ over $\mathcal{E}$,
\[
b^{\nabla}(M)=b^{\nabla}(M\spcheck).
\]
\end{thm}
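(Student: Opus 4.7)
The plan is to combine Generating theorem (Theorem \ref{thm:generating}) with the PBQ identity $b^{\nabla}(M_0) = \lambda_{\max}(M_0) - \lambda_{\min}(M_0)$ from Lemma \ref{lem:dual}(i). The right-hand side of this identity is manifestly dual-invariant since the Frobenius slope multiset of $M_0\spcheck$ is the negative of that of $M_0$; so after reducing every $b^{\nabla}$ computation to PBQ pieces, dual-invariance is forced. By Lemma \ref{lem:coefficient Amice} applied to an extension $L/K$ whose residue field is perfect (and absorbs $\varphi_K$ isometrically), I may assume at the outset that $k$ is perfect, so that Generating theorem is available.

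First I would establish the auxiliary inequality
\[
b^{\nabla}(N) \le \lambda_{\max}(N) - \lambda_{\min}(N)
\]
for an arbitrary $(\varphi,\nabla)$-module $N$ over $\mathcal{E}$. Writing $N = \sum_j N_j$ with each $N_j$ PBQ via Generating theorem, Lemma \ref{lem:sum2} gives $b^{\nabla}(N) = \max_j b^{\nabla}(N_j)$, Lemma \ref{lem:dual}(i) identifies each summand with $\lambda_{\max}(N_j) - \lambda_{\min}(N_j)$, and the inclusion $N_j \hookrightarrow N$ tightens $\lambda_{\max}(N_j) \le \lambda_{\max}(N)$ and $\lambda_{\min}(N_j) \ge \lambda_{\min}(N)$; the bound follows.

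Next I would apply Generating theorem to $M$ itself to obtain $M = \sum_i M_i$ with each $M_i$ PBQ. Lemma \ref{lem:sum2} then yields
\[
b^{\nabla}(M) = \max_i b^{\nabla}(M_i), \qquad b^{\nabla}(M\spcheck) = \max_i b^{\nabla}(M_i\spcheck),
\]
while Lemma \ref{lem:dual}(i) gives $b^{\nabla}(M_i) = \lambda_{\max}(M_i) - \lambda_{\min}(M_i)$. Applying the auxiliary inequality to $N = M_i\spcheck$, together with the duality identity $\lambda_{\max}(M_i\spcheck) - \lambda_{\min}(M_i\spcheck) = \lambda_{\max}(M_i) - \lambda_{\min}(M_i)$, gives $b^{\nabla}(M_i\spcheck) \le b^{\nabla}(M_i)$. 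Maximizing over $i$ yields $b^{\nabla}(M\spcheck) \le b^{\nabla}(M)$, and running the same argument with $M\spcheck$ in place of $M$ (using $(M\spcheck)\spcheck \cong M$) furnishes the reverse inequality, completing the proof.

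The main obstacle is conceptual rather than technical: the category of PBQ $(\varphi,\nabla)$-modules over $\mathcal{E}$ is not closed under duality (as seen for instance in \S\ref{subsec:ex2}), so one cannot directly invoke Lemma \ref{lem:dual}(ii) on the PBQ summands $M_i$. Generating theorem is essential here precisely because Lemma \ref{lem:sum2} still recovers $b^{\nabla}(M\spcheck)$ from the (possibly non-PBQ) duals $M_i\spcheck$ of PBQ pieces of $M$; the role of the auxiliary inequality is then to tame these possibly non-PBQ duals by decomposing each $M_i\spcheck$ further into PBQ subpieces whose slope spreads are controlled by that of $M_i\spcheck$, hence by that of $M_i$.
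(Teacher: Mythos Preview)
Your proof is correct and shares its backbone with the paper's: both reduce, via Generating theorem (Theorem \ref{thm:generating}) and Lemma \ref{lem:sum2}, to controlling $b^{\nabla}(M_i\spcheck)$ for PBQ submodules $M_i$ generating $M$. The closing step differs. The paper argues by induction on $\dim_{\mathcal{E}}M$: if $M$ (or by symmetry $M\spcheck$) is not PBQ then each $M_i$ is strictly smaller, so $b^{\nabla}(M_i)=b^{\nabla}(M_i\spcheck)$ by the induction hypothesis, and Lemma \ref{lem:sum2} finishes; the base case (both $M$ and $M\spcheck$ PBQ) is Lemma \ref{lem:dual}(ii). You instead establish only the one-sided bound $b^{\nabla}(M_i\spcheck)\le\lambda_{\max}(M_i)-\lambda_{\min}(M_i)=b^{\nabla}(M_i)$ via your auxiliary inequality, then recover the reverse inequality by global symmetry $M\leftrightarrow M\spcheck$. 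This avoids the induction and the need to check $M_i\neq M$. Your route is slightly longer as written because you reprove the auxiliary inequality $b^{\nabla}(N)\le\lambda_{\max}(N)-\lambda_{\min}(N)$ through a second invocation of Generating theorem; in fact it is already a direct consequence of Proposition \ref{prop:CT2}, since setting $\lambda=\lambda_{\max}(N)-\lambda_{\min}(N)$ gives $\Sol_{\lambda}(N)\supset S_{-\lambda_{\min}(N)}(\Sol(N))=\Sol(N)$. With that shortcut your variant becomes quite streamlined.
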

\begin{proof}
Note that the assertion follows from Lemma \ref{lem:dual} (ii) when both $M$ and $M\spcheck$ are PBQ (including the case $\dim_{\mathcal{E}}M=1$). We prove the assertion by induction on $\dim_{\mathcal{E}}M$. Assume that the assertion holds for all $M$ of dimension $<m$. We may assume that either $M$ or $M\spcheck$ is not PBQ. By symmetry, we may assume that $M$ is not PBQ. By Generating theorem (Theorem \ref{thm:generating}), there exist PBQ $(\varphi,\nabla)$-submodules $M_1,\dots,M_n$ of $M$ such that $\sum_{i=1}^nM_i=M$. Note that $M_i\neq M$ for all $i$ since $M$ is not PBQ. Since $b^{\nabla}(M_i)=b^{\nabla}(M_i\spcheck)$ for all $i$ by the induction hypothesis, we obtain the assertion by Lemma \ref{lem:sum2}.
\end{proof}

\begin{rem}
As for an analogue over $K[\![t]\!]_0$, only a na\"ive inequality is known: for an arbitrary $\nabla$-module over $K[\![t]\!]_0$ of rank $n$, we have $b^{\nabla}(M\spcheck)\le (n-1)b^{\nabla}(M)$ (\cite[Proposition 1.4]{CT}).
\end{rem}

We estimate $b^{\nabla}$ under a certain extension.

\begin{lem}\label{lem:Dw}
Let
\[
0\to M'\to M\to M''\to 0
\]
be an exact sequence of $(\varphi,\nabla)$-modules over $\mathcal{E}$. If either $M'$ or $M''$ is bounded, then
\[
b^{\nabla}(M)\le b^{\nabla}(M')+b^{\nabla}(M'')+1.
\]
\end{lem}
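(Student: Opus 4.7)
The plan is to estimate the log-growth of an arbitrary solution $f\in\Sol(\tau^*M)$ by decomposing it along the extension and reducing to a componentwise integration in $\mathcal{E}\{X-t\}$; I will first handle the case $M''$ bounded directly and then reduce the case $M'$ bounded to it via Theorem~\ref{thm:inv}. Set $\lambda'=b^\nabla(M')$ and $\lambda''=b^\nabla(M'')$: by Corollary~\ref{cor:right continuous} it suffices to show that every $f\in\Sol(\tau^*M)$ has log-growth at most $\lambda'+\lambda''+1$.

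Assume first that $M''$ is bounded, so $\lambda''=0$. Choose a basis $e_1,\dots,e_m$ of $\tau^*M'$ and lifts $\tilde\epsilon_1,\dots,\tilde\epsilon_n\in\tau^*M$ of a basis of $\tau^*M''$. Since $\tau^*M'$ is $\nabla$-stable, the connection matrix of $\tau^*M$ in this basis is block-triangular, with diagonal blocks $g'$ (the connection of $\tau^*M'$) and $g''$ (the connection of $\tau^*M''$), a bounded off-diagonal block $N$ (the extension class), and the opposite off-diagonal block zero. For a solution $f$ with $\phi_i=f(e_i)$ and $\psi_k=f(\tilde\epsilon_k)$, this forces $\vec\phi$ to be a solution of $\tau^*M'$, hence of log-growth $\le\lambda'$, while $\vec\psi$ satisfies the inhomogeneous system $(\vec\psi)'=g''\vec\psi+N\vec\phi$.

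Boundedness of $M''$ yields a fundamental matrix $H$ for the homogeneous system $(\vec\psi)'=g''\vec\psi$ with entries in $\mathcal{E}[\![X-t]\!]_0$; Theorem~\ref{thm:inv} (dual invariance of $b^\nabla$) forces $(M'')^\vee$ to be bounded as well, and since $(H^{-1})^{\top}$ is a fundamental matrix for the dual connection on $(\tau^*M'')^\vee$, the inverse $H^{-1}$ also has entries in $\mathcal{E}[\![X-t]\!]_0$. Variation of parameters then writes $\vec\psi=H\vec u$ with $\vec u'=H^{-1}N\vec\phi$, an element of log-growth $\lambda'$. An analogue of Lemma~\ref{lem:property Robba}~(iv) for $\mathcal{E}\{X-t\}$, proved by Taylor expansion using the $p$-adic estimate $|1/n|\le n$, shows that componentwise integration raises log-growth by at most one, so $\vec u$, and hence $\vec\psi=H\vec u$, has log-growth $\lambda'+1$. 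Therefore $f$ has log-growth at most $\lambda'+\lambda''+1$. For the remaining case in which $M'$ is bounded, I would dualize the sequence to $0\to(M'')^\vee\to M^\vee\to(M')^\vee\to 0$, observe that $(M')^\vee$ is bounded by Theorem~\ref{thm:inv}, apply the previous argument to this dual sequence to get $b^\nabla(M^\vee)\le b^\nabla((M'')^\vee)+b^\nabla((M')^\vee)+1$, and invoke Theorem~\ref{thm:inv} once more to rearrange this into the desired bound for $M$.

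The main obstacle is controlling $H^{-1}$: a priori its entries live only in $\mathcal{E}\{X-t\}$ and could carry positive log-growth, which would inflate the estimate to something like $\lambda'+2\lambda''+1$ or worse. Pinning the constant to $\lambda'+\lambda''+1$ therefore depends essentially on invoking dual invariance of $b^\nabla$ (Theorem~\ref{thm:inv}) to recognize $H^{-1}$ as a fundamental matrix for the dual connection and hence force it to be bounded.
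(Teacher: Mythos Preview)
Your argument is correct and follows the same route as the paper: reduce to the case $M''$ bounded via duality and Theorem~\ref{thm:inv}, then bound the values of an arbitrary solution on lifts of a basis of $\tau^*M''$ by a single integration. The paper shortcuts your variation-of-parameters step by lifting a \emph{horizontal} basis of $\tau^*M''$ (which exists over $\mathcal{E}[\![X-t]\!]_0$ precisely because $M''$ is bounded), so that $g''=0$ and the inhomogeneous system reads $(\vec\psi)'=N\vec\phi$ outright---no $H$ or $H^{-1}$ enters. Incidentally, your concern about $H^{-1}$ can be dispelled without Theorem~\ref{thm:inv}: since $\det H\in\mathcal{E}[\![X-t]\!]_0$ has nonzero constant term (evaluate at $X-t=0$), it is already a unit in $\mathcal{E}[\![X-t]\!]_0$, so $H^{-1}$ is bounded automatically.
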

\begin{proof}
By taking dual and using Theorem~\ref{thm:inv}, we may assume that $M''$ is bounded. Let $\{e_1,\dots,e_{r+s}\}$ be a basis of $\tau^*M$ such that $\{e_1,\dots,e_r\}$ is a basis of $\tau^*M'$ and $\{e_{r+1},\dots,e_{r+s}\}$ modulo $\tau^*M'$ is a horizontal basis of $\tau^*M''$. We have only to prove that for any $f\in \Sol(M)$, we have $f(e_i)\in\mathcal{E}[\![X-t]\!]_{b^{\nabla}(M')+1}$ for all $i$. Since $f|_{\tau^*M'}\in \Sol(M')$ is of log-growth $b^{\nabla}(M')$, we have $f(e_1),\dots,f(e_r)\in \mathcal{E}[\![X-t]\!]_{b^{\nabla}(M')}$. Write
\[
D(e_{r+1},\dots,e_{r+s})=(e_1,\dots,e_r)A,\ A\in \M_{rs}(\mathcal{E}[\![X-t]\!]_0).
\]
Then,
\begin{align*}
D(f(e_{r+1}),\dots,f(e_{r+s}))=&f(D(e_{r+1},\dots,e_{r+s}))=f((e_1,\dots,e_r)A)\\
=&(f(e_1),\dots,f(e_r))A\in (\mathcal{E}[\![X-t]\!]_{b^{\nabla}(M')})^r
\end{align*}
by Lemma \ref{lem:property Robba} (i). By Lemma \ref{lem:property Robba} (iv),
\[
f(e_{r+1}),\dots,f(e_{r+s})\in\mathcal{E}[\![X-t]\!]_{b^{\nabla}(M')+1},
\]
which implies the assertion.
\end{proof}

\begin{rem}
\begin{enumerate}
\item The above proof in the case that $M''$ is bounded is based on the proof of \cite[Theorem 2]{Dw}.
\item Without the boundedness assumptions on $M'$ or $M''$, we have a similar inequality with an extra factor
\[
b^{\nabla}(M)\le b^{\nabla}(M')+b^{\nabla}(M'')+\min\{b^{\nabla}(M'),b^{\nabla}(M'')\}+1.
\]
In fact, we may assume $b^{\nabla}(M')\le b^{\nabla}(M'')$ by taking dual and using Theorem~\ref{thm:inv}. We consider the short exact sequence $0\to M'\otimes_{\mathcal{E}} (M'')\spcheck\to Q\to\mathcal{E}\to 0$ associated to the original short exact sequence under the canonical isomorphism $\Ext^1(M'',M')\cong \Ext^1(\mathcal{E},M'\otimes_{\mathcal{E}} (M'')\spcheck)$ of Yoneda extension groups in the category of $(\varphi,\nabla)$-modules over $\mathcal{E}$ (\cite[Lemma 5.3.3]{pde}). Then we have
\[
b^{\nabla}(Q)\le b^{\nabla}(M'\otimes_{\mathcal{E}} (M'')\spcheck)+1=b^{\nabla}(M')+b^{\nabla}((M'')\spcheck)+1=b^{\nabla}(M')+b^{\nabla}(M'')+1,
\]
where the inequality follows from Lemma \ref{lem:Dw}, and the first and second equalities follow from Corollary \ref{cor:tensor bound}, Theorem \ref{thm:inv} respectively. Since $M$ is isomorphic to a quotient of $Q\otimes_{\mathcal{E}} M''$, we have
\[
b^{\nabla}(M)\le b^{\nabla}(Q\otimes_{\mathcal{E}} M'')=b^{\nabla}(Q)+b^{\nabla}(M'')\le 2b^{\nabla}(M')+b^{\nabla}(M'')+1,
\]
where the first inequality follows from Lemma \ref{lem:bound2} (ii), and the equality follows from Corollary \ref{cor:tensor bound}.
\item We have the following analogous assertion with a similar proof as above: if $0\to M'\to M\to M''\to 0$ is an exact sequence of $(\varphi,\nabla)$-modules over $\mathcal{R}^{\bd}$ solvable in $\mathcal{R}_{\log}$ such that $M''$ is a trivial $\nabla$-module over $\mathcal{R}^{\bd}$, then $b^{\nabla}(M)\le b^{\nabla}(M')+1$.
\end{enumerate}
\end{rem}

The statement of the following theorem is inspired by that of Drinfeld-Kedlaya theorem on Frobenius Newton polygons for indecomposable convergent $F$-isocrystals (\cite[Theorem 1.1.5]{DK}). However we do not know that (some special case of) Drinfeld-Kedlaya theorem implies our theorem, or vice versa.

\begin{thm}\label{thm:Gri}
Let $M$ be a $(\varphi,\nabla)$-module over $\mathcal{E}$ of rank $n$. Then the slope multiset $\{\lambda_i(M);1\le i\le n\}$ of the log-growth Newton polygon of $M$ satisfies
\[
\lambda_{i+1}(M)-\lambda_i(M)\le 1\text{ for }i=1,\dots,n-1.
\]
\end{thm}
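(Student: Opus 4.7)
The plan is to combine the PBQ filtration of \S \ref{sec:max} with Theorem \ref{thm:inv} and Lemma \ref{lem:Dw}: first reduce to the case that $M$ is PBQ, then handle the PBQ case by a duality-based top-slope argument.

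For the reduction, let $0=P_0(M)\subset P_1(M)\subset\dots\subset P_r(M)=M$ be the PBQ filtration, with graded pieces $G_i=P_{i+1}(M)/P_i(M)$ PBQ. Iterating the short exact sequence from Lemma \ref{lem:exact PBQ}, applied to each quotient $M/P_{i-1}(M)$ whose maximally PBQ submodule is $G_{i-1}$ by Proposition \ref{prop:qt inv PBQ fil} (i), yields
\[
\dim_{\mathcal{E}}\Sol_{\lambda}(M)=\sum_{i=0}^{r-1}\dim_{\mathcal{E}}\Sol_{\lambda}(G_i)\quad\text{for all }\lambda\in\mathbb{R}.
\]
The slope multiset of $\Sol_{\bullet}(M)$, which agrees with that of $\NP(M)$ by duality, is thus the disjoint union of those of $\Sol_{\bullet}(G_i)$. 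Each $\Sol_{\bullet}(G_i)$ contains $0$ as a slope by Lemma \ref{lem:lgE1} (ii), and an elementary amalgamation observation---the multiset union of finitely many sorted real multisets that all contain $0$ and have consecutive gaps $\le 1$ again has consecutive gaps $\le 1$---reduces the theorem to the PBQ case.

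Assume $M$ is PBQ with distinct Frobenius slopes $\mu_1<\dots<\mu_s$. By Theorem \ref{conj:CTgen} (ii), $\Sol_{\lambda}(M)=S_{\lambda-\mu_s}(\Sol(M))$, so the sorted slopes of $\Sol_{\bullet}(M)$ are the $\mu_s-\mu_j$ with multiplicities and their consecutive gaps coincide with the Frobenius slope gaps $\mu_{j+1}-\mu_j$. Fix $j\in\{1,\dots,s-1\}$ and put $M':=M/\bigcup_{\mu<\mu_j}S_{\mu}(M)$. As a quotient of the PBQ module $M$, $M'$ is PBQ by Corollary \ref{cor:qt inv of PBQ}, its Frobenius slopes are $\mu_j<\dots<\mu_s$, and Lemma \ref{lem:dual} (i) gives $b^{\nabla}(M')=\mu_s-\mu_j$. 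Theorem \ref{thm:inv} transports this across duality: $b^{\nabla}(M'\spcheck)=\mu_s-\mu_j$. The quotient $Q:=M'\spcheck/S_{-\mu_{j+1}}(M'\spcheck)$ is pure of the single Frobenius slope $-\mu_j$, hence bounded by Proposition \ref{prop:lgE5}. Applying Lemma \ref{lem:Dw} to the short exact sequence $0\to S_{-\mu_{j+1}}(M'\spcheck)\to M'\spcheck\to Q\to 0$, together with the elementary bound $b^{\nabla}(N)\le\lambda_{\max}(N)-\lambda_{\min}(N)$ (immediate from Proposition \ref{prop:CT2} by taking $\lambda=\lambda_{\max}(N)-\lambda_{\min}(N)$), yields
\[
\mu_s-\mu_j=b^{\nabla}(M'\spcheck)\le b^{\nabla}(S_{-\mu_{j+1}}(M'\spcheck))+0+1\le(\mu_s-\mu_{j+1})+1,
\]
so $\mu_{j+1}-\mu_j\le 1$, as required.

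The hard part is the PBQ step. Lemma \ref{lem:Dw} is perfectly suited to bounding the \emph{top} Frobenius slope gap of a PBQ module $M$, since the top-slope quotient $M/\bigcup_{\mu<\mu_s}S_{\mu}(M)$ is pure and hence bounded; but for an intermediate gap $\mu_{j+1}-\mu_j$ one cannot directly produce a quotient of $M$ whose top Frobenius slope is $\mu_{j+1}$, because by Slope criterion (Proposition \ref{prop:key}) every non-zero quotient of a PBQ module retains the top slope $\mu_s$. The essential trick is to convert the middle-gap problem in $M'$ into a top-slope problem in $M'\spcheck$: in the dual, the bottom slope $\mu_{j+1}$ of $M'$ becomes the second-to-top slope, so that Lemma \ref{lem:Dw} applies with the pure bounded quotient of slope $-\mu_j$. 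Since the proof of Theorem \ref{thm:inv} uses Generating theorem in an essential way, this is precisely the sense in which Generating theorem enters the proof.
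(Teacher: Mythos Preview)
Your proof is correct (with one omission: the PBQ filtration of \S\ref{sec:max} requires $k$ perfect, so you should first base change via Lemma~\ref{lem:coefficient Amice} to assume $k$ algebraically closed), but it takes a genuinely different route from the paper's.

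The paper's argument works directly with the log-growth filtration $M^{\bullet}$, with no PBQ machinery at all. For a fixed gap $\lambda_i<\lambda_{i+1}$ it writes down the single exact sequence
\[
0\to (M^{\lambda_i}/M^{\lambda_{i+1}})/(M^{\lambda_i}/M^{\lambda_{i+1}})^0\to (M/M^{\lambda_{i+1}})/(M^{\lambda_i}/M^{\lambda_{i+1}})^0\to M/M^{\lambda_i}\to 0,
\]
whose left term is a bounded quotient and hence bounded; one application of Lemma~\ref{lem:Dw} gives $b^{\nabla}$ of the middle term $\le b^{\nabla}(M/M^{\lambda_i})+1=\lambda_i+1$, and a short comparison with $M^{\lambda_i}$ forces $\lambda_{i+1}\le\lambda_i+1$. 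Your approach instead pays the overhead of the PBQ filtration, the amalgamation lemma, and the explicit identification of log-growth slopes with shifted Frobenius slopes via Theorem~\ref{conj:CTgen}~(ii), in exchange for reducing to a clean statement about \emph{Frobenius} slope gaps of PBQ modules; you then handle those by the nice duality trick of turning an intermediate gap in $M'$ into the top gap of $M'{}\spcheck$. Both proofs ultimately rest on Theorem~\ref{thm:inv} and Lemma~\ref{lem:Dw}; the paper's is shorter and avoids Theorem~\ref{conj:CTgen}~(ii) and the PBQ filtration entirely, while yours makes more transparent that for PBQ modules the constraint is really one on consecutive Frobenius slopes.
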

\begin{proof}
Let $\lambda_i$ denote $\lambda_i(M)$ for simplicity. We may assume $\lambda_{i}<\lambda_{i+1}$. Consider an exact sequence of $(\varphi,\nabla)$-modules over $\mathcal{E}$
\begin{equation}\label{eq:Gri1}
0\to (M^{\lambda_i}/M^{\lambda_{i+1}})/(M^{\lambda_i}/M^{\lambda_{i+1}})^0\to (M/M^{\lambda_{i+1}})/(M^{\lambda_i}/M^{\lambda_{i+1}})^0\to M/M^{\lambda_{i}}\to 0.
\end{equation}
Write the middle term as $M/M'$. Then we have
\[
b^{\nabla}(M/M')\le b^{\nabla}(M/M^{\lambda_i})+1=\lambda_i+1,
\] 
where the inequality is obtained by applying Lemma \ref{lem:Dw} to (\ref{eq:Gri1}), and the equality is obtained by Corollary \ref{cor:right continuous} and $(M/M^{\lambda_i})^{\lambda}=(M^{\lambda}+M^{\lambda_i})/M^{\lambda_i}$ (Lemma \ref{lem:lgE2} (ii)). 

Since $M^{\lambda_i}/M^{\lambda_{i+1}}\neq 0$, we also have $(M^{\lambda_i}/M^{\lambda_{i+1}})/(M^{\lambda_i}/M^{\lambda_{i+1}})^0\neq 0$ by Lemma \ref{lem:lgE1} (ii). Hence $M'\subsetneq M^{\lambda_i}$ by the definition of $M'$. Since $(M/M')^{b^{\nabla}(M/M')}=(M^{b^{\nabla}(M/M')}+M')/M'=0$ by Lemma \ref{lem:lgE2} (ii) and Corollary \ref{cor:right continuous}, we have $M^{b^{\nabla}(M/M')}\subset M'$. Hence $M^{b^{\nabla}(M/M')}\subsetneq M^{\lambda_i}$. By Lemma \ref{lem:right continuous}, we have $\lambda_{i+1}=\min\{\lambda;M^{\lambda}\subsetneq M^{\lambda_i}\}\le b^{\nabla}(M/M')$ as desired.
\end{proof}

\begin{rem}
Dwork proves the following result (see \cite[p. 368]{Dw}): let $M$ be a $\nabla$-module over $\mathcal{E}$ of rank $n$ such that $\tau^*M$ is solvable in $\mathcal{E}\{X-t\}$. Let $P(n)$ denote the lower convex polygon with left endpoint fixed at $(0,0)$, whose slope multiset is $\{0,1,\dots,n-1\}$ with multiplicity. Then the log-growth Newton polygon of $M$ (with left endpoint fixed at $(0,0)$) is bounded from above by $P(n)$. Theorem \ref{thm:Gri} is a refinement of Dwork's result under the existence of Frobenius structures.
\end{rem}

With notation as in the remark above, one may ask when $\NP(M)$ coincides with $P(n)$. An answer is given as follows.

\begin{cor}
Let $M$ be a $(\varphi,\nabla)$-module over $\mathcal{E}$ of rank $n\ge 1$. If $b^{\nabla}(M)=n-1$, then
\[
\lambda_i(M)=i-1\text{ for }i=1,\dots,n.
\]
\end{cor}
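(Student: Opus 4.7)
The plan is to deduce the corollary as a short pigeonhole argument from Theorem \ref{thm:Gri} together with two ``anchoring'' identifications of the extreme slopes of the log-growth Newton polygon. Since the slope multiset $\lambda_1(M)\le\dots\le\lambda_n(M)$ has exactly $n$ entries and the total spread $\lambda_n(M)-\lambda_1(M)$ equals $n-1$ under the hypothesis, the gap-one bound from Theorem \ref{thm:Gri} must saturate at every consecutive step.

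First, I would pin down the endpoints. The maximum slope is
\[
\lambda_n(M)=b^{\nabla}(M)=n-1
\]
by definition of $b^{\nabla}$ and the hypothesis. For the minimum slope, I would invoke Lemma \ref{lem:lgE1}(ii), which states that for $M\neq 0$ the minimum slope of $M^{\bullet}$ equals zero; transported along the canonical identification $V(M)^{\bullet}\cong M^{\bullet}$ (Lemmas \ref{lem:evaluation} and \ref{lem:CTgeneric2}(ii)), this gives $\lambda_1(M)=0$.

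Next, I would apply Theorem \ref{thm:Gri} to obtain $\lambda_{i+1}(M)-\lambda_i(M)\le 1$ for $i=1,\dots,n-1$. Telescoping,
\[
n-1=\lambda_n(M)-\lambda_1(M)=\sum_{i=1}^{n-1}\bigl(\lambda_{i+1}(M)-\lambda_i(M)\bigr)\le n-1,
\]
forcing each summand to be exactly $1$. Therefore $\lambda_i(M)=i-1$ for all $i$, which is the claimed equality.

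There is no genuine obstacle here: the whole content is packaged inside Theorem \ref{thm:Gri} (the nontrivial analogue of Drinfeld--Kedlaya, where the real work is done via Generating theorem and Lemma \ref{lem:Dw}) and Lemma \ref{lem:lgE1}(ii). The only point worth being careful about is the bookkeeping between the decreasing filtration $M^{\bullet}$ and the Newton polygon conventions of the paper, but this is settled by the preamble on Newton polygons in \S 0 and by the fact that $b^{\nabla}(M)$ is defined precisely as the top slope $\lambda_n$.
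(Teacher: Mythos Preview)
Your proof is correct and is precisely the argument the paper intends: the corollary is stated without proof because it follows immediately from Theorem \ref{thm:Gri} by the telescoping pigeonhole you describe, with the endpoints anchored by $\lambda_n(M)=b^{\nabla}(M)$ (definition) and $\lambda_1(M)=0$ (Lemma \ref{lem:lgE1}(ii) via Lemma \ref{lem:CTgeneric2}(ii)).
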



\section{Appendix: List of notation}
The following is a list of notation in order defined.

Part I
\begin{itemize}
\item[\S \ref{subsec:Robba1}] $\mathcal{R}^r,\mathcal{R},|\cdot|_r,\mathcal{R}^{\Int},\mathcal{R}^{\bd},\mathcal{E},\varphi$
\item[\S \ref{subsec:Robba2}] $k((t^{\mathbb{Q}})),\tilde{\mathcal{R}}^r,\tilde{\mathcal{R}},\tilde{\mathcal{R}}^{\bd},\tilde{\mathcal{R}}^{\Int},\tilde{\mathcal{E}},\psi$
\item[\S \ref{subsec:Robba3}] $\mathcal{R}_{\log},\tilde{\mathcal{R}}_{\log}$
\item[\S \ref{subsec:lg1}] $\Fil_{\bullet}\tilde{\mathcal{R}}$
\item[\S \ref{subsec:lg2}] $\Fil_{\bullet}\mathcal{R},K[\![t]\!]_{\bullet}$
\item[\S \ref{subsec:lg3}] $\Fil_{\bullet}\mathcal{R}_{\log},\Fil_{\bullet}\tilde{\mathcal{R}}_{\log},K\{t\},K\{t\}_{\log},\Fil_{\bullet}K\{t\}_{\log}$
\item[\S \ref{subsec:phinabla1}] $e_c,R(c),M(c)$
\item[\S \ref{subsec:phinabla3}] $\nabla_{\log},\tau$
\item[\S \ref{subsec:uni}] $\mathbf{V}(\cdot)$
\item[\S \ref{sec:slope}] $|\cdot|_{\SP},S_{\bullet}(\cdot)$
\item[\S \ref{subsec:logfil1}] $V(\cdot),\Sol(\cdot),\Sol_{\bullet}(\cdot),V(\cdot)^{\bullet}$
\item[\S \ref{subsec:logfil3}] $(\cdot)^{\bullet}$
\item[\S \ref{subsec:statement1}] $\lambda_{\max},\mathbf{LGF}_{K[\![t]\!]_0}$
\item[\S \ref{subsec:statement2}] $\mathbf{LGF}_{\mathcal{E}}$
\item[\S \ref{sec:criterion}] $\lambda_{\max}(\cdot),\lambda_{\min}(\cdot)$
\end{itemize}

Part II
\begin{itemize}
\item[\S \ref{sec:CTgen}] $\ev$
\item[\S \ref{sec:max}] $P_{\bullet}(\cdot)$
\item[\S \ref{subsec:Dwo}] $\NP(\cdot),\lambda_i(\cdot),b^{\nabla}(\cdot),m(\cdot),\mathbf{LGF}_{\mathrm{Dw}},\mathcal{NP}(\cdot)$
\item[\S \ref{sec:ten}] $\Psi$
\end{itemize}

For $0\le \lambda_1\le \lambda_2$, we have the following diagram of spaces of functions: all the morphisms other than those denoted by $\psi$ are the natural inclusions.
\[\xymatrix{
&K[\![t]\!]_0\ar[r]\ar[d]&K[\![t]\!]_{\lambda_1}\ar[r]\ar[d]&K[\![t]\!]_{\lambda_2}\ar[r]\ar[d]&K\{t\}\ar[r]\ar[d]&K[\![t]\!]\\
\mathcal{E}\ar[d]^{\psi}&\mathcal{R}^{\bd}\ar[l]\ar[r]\ar[d]^{\psi}&\Fil_{\lambda_1}\mathcal{R}_{(\log)}\ar[r]\ar[d]^{\psi}&\Fil_{\lambda_2}\mathcal{R}_{(\log)}\ar[r]\ar[d]^{\psi}&\mathcal{R}_{(\log)}\ar[d]^{\psi}&\\
\tilde{\mathcal{E}}&\tilde{\mathcal{R}}^{\bd}\ar[l]\ar[r]&\Fil_{\lambda_1}\tilde{\mathcal{R}}_{(\log)}\ar[r]&\Fil_{\lambda_2}\tilde{\mathcal{R}}_{(\log)}\ar[r]&\tilde{\mathcal{R}}_{(\log)}&
}\]


\section*{Acknowledgement}
The author thanks Nobuo Tsuzuki for discussion and many comments on the results of this paper. This work is supported by JSPS KAKENHI Grant-in-Aid for Young Scientists (B) JP17K14161.

\end{document}